\theoremstyle{definition}
\newtheorem{definition}{Definition}[section]
\newtheorem{example}[definition]{Example}
\newtheorem{remark}[definition]{Remark}
\newtheorem{note}[definition]{Note}
\newcommand\scalemath[2]{\scalebox{#1}{\mbox{\ensuremath{\displaystyle #2}}}}
\theoremstyle{plain}
\newtheorem{theorem}[definition]{Theorem}
\newtheorem{lemma}[definition]{Lemma}
\newtheorem{corollary}[definition]{Corollary}
\newtheorem{proposition}[definition]{Proposition}
\def\I{\mathbb I}
\def\C{\mathbb C}
\def\H{\hat{H}_{q}}
\def\CX{\mathbb{C}^{X}}
\def\MX{{\bf M}_{X}(\mathbb{C})}
\def\M{{\bf M}_{d+1}(\mathbb{C})}
\def\Mx{\mcal{M}\hat{x}}
\def\Mxp{\mcal{M}\hat{x}^{\perp}}
\def\MC{\mcal{M}\hat{C}}
\def\MCp{\mcal{M}\hat{C}^{\perp}}
\def\wtPp{\widetilde{\Phi}^{\perp}}
\def\diag{{\rm diag}}
\def\bdiag{{\rm blockdiag}}
\def\End{{\rm{End}}}
\def\bv{{\bf v}}
\def\X{{\bf X}}
\def\Y{{\bf Y}}
\def\A{{\bf A}}
\def\B{{\bf B}}
\def\V{{\bf V}}
\def\W{{\bf W}}
\def\T{{\bf T}}
\def\wt{\widetilde}
\def\ta{\widetilde{a}}
\def\tb{\widetilde{b}}
\def\tc{\widetilde{c}}
\def\tv{\widetilde{v}}
\def\ttht{\widetilde{\theta}}
\def\tvarphi{\widetilde{\varphi}}
\def\tphi{\widetilde{\phi}}
\def\tthtp{\widetilde{\theta}^{\perp}}
\def\tthtsp{\widetilde{\theta}^{*\perp}}
\def\tvarphip{\widetilde{\varphi}^{\perp}}
\def\tphip{\widetilde{\phi}^{\perp}}
\def\thtp{{\theta}^{\perp}}
\def\thtsp{{\theta}^{*\perp}}
\def\varphip{{\varphi}^{\perp}}
\def\phip{{\phi}^{\perp}}
\def\sp{{s}^{\perp}}
\def\ssp{{s}^{*\perp}}
\def\hp{{h}^{\perp}}
\def\hsp{{h}^{*\perp}}
\def\rp{{r}^{\perp}}
\def\tsp{\widetilde{s}^{\perp}}
\def\tssp{\widetilde{s}^{*\perp}}
\def\thp{\widetilde{h}^{\perp}}
\def\thsp{\widetilde{h}^{*\perp}}
\def\trp{\widetilde{r}^{\perp}}
\def\vp{{v}^{\perp}}
\def\ap{{a}^{\perp}}
\def\bp{{b}^{\perp}}
\def\cp{{c}^{\perp}}
\def\tap{{\widetilde a}^{\perp}}
\def\tbp{{\widetilde b}^{\perp}}
\def\tcp{{\widetilde c}^{\perp}}
\def\tvp{{\widetilde v}^{\perp}}
\newcommand{\Ga}{\ensuremath{\Gamma}}
\newcommand{\tht}{\ensuremath{\theta}}
\newcommand{\mfrk}{\ensuremath{\mathfrak}}
\newcommand{\mbb}{\ensuremath{\mathbb}}
\newcommand{\mcal}{\ensuremath{\mathcal}}
\begin{document}
\title{\bf 
$Q$-polynomial distance-regular graphs and a\\
double affine Hecke algebra of rank one
}

\author{Jae-Ho Lee}
\date{}

\maketitle
\begin{abstract}

We study a relationship between $Q$-polynomial distance-regular graphs and the double affine Hecke algebra of type $(C^{\vee}_1,C_1)$. Let $\Ga$ denote a $Q$-polynomial distance-regular graph with vertex set $X$. We assume that $\Ga$ has $q$-Racah type and contains a Delsarte clique $C$. Fix a vertex $x\in C$. We partition $X$ according to the path-length distance to both $x$ and $C$. This is an equitable partition. For each cell in this partition, consider the corresponding characteristic vector. These characteristic vectors form a basis for a $\C$-vector space $\W$.

The universal double affine Hecke algebra of type $(C^{\vee}_1,C_1)$ is the $\C$-algebra $\H$ defined by generators $\{t^{\pm1}_n\}^3_{n=0}$ and relations (i) $t_nt_n^{-1}=t_n^{-1}t_n=1$; (ii) $t_n+t_n^{-1}$ is central; (iii) $t_0t_1t_2t_3 = q^{-1/2}$. In this paper, we display an $\H$-module structure for $\W$. For this module and up to affine transformation,
\begin{itemize}
\item $t_0t_1+(t_0t_1)^{-1}$ acts as the adjacency matrix of $\Ga$;
\item $t_3t_0+(t_3t_0)^{-1}$ acts as the dual adjacency matrix of $\Ga$ with respect to $C$;
\item $t_1t_2+(t_1t_2)^{-1}$ acts as the dual adjacency matrix of $\Ga$ with respect to $x$.
\end{itemize}
To obtain our results we use the theory of Leonard systems.

\bigskip
\noindent
{\bf Keywords}. Leonard system, Distance-regular graph, $Q$-polynomial, DAHA of rank one.
\hfil\break
\noindent {\bf 2010 Mathematics Subject Classification}.
Primary: 05E30. Secondary: 33D80. 
 \end{abstract}

\section{Introduction}



This paper is about three classes of objects:~(i)~Leonard systems;~(ii)~$Q$-polynomial distance-regular graphs;~(iii)~double affine Hecke algebras. To motivate our results we will provide some background on each of these topics.



\medskip
\noindent
The concept of a Leonard system was introduced by Terwilliger\cite[Definition~1.4]{Terwilli}. 
To explain what this concept is, we begin with a more basic concept called a Leonard pair\cite[Definition~1.1]{Terwilli}. 
Roughly speaking, a Leonard pair consists of two diagonalizable linear transformations on a finite-dimensional vector space, each of which acts in an irreducible tridiagonal fashion on an eigenbasis for the other one. 
A Leonard system is a Leonard pair together with an appropriate ordering of their primitive idempotents.
Leonard systems are classified up to isomorphism\cite[Theorem~1.9]{Terwilli}.
This classification yields a bijection between the Leonard systems and a family of orthogonal polynomials consisting of the $q$-Racah polynomials and their relatives in the Askey scheme\cite{Terwilli:LP-qRacah}.
The Leonard systems that correspond to the $q$-Racah polynomials are said to have $q$-Racah type. This is the most general type of Leonard system. 
We will focus on the Leonard systems of $q$-Racah type.



\medskip
\noindent
In \cite{Delsarte}, Delsarte introduced the $Q$-polynomial property for distance-regular graphs.
Since then, this property has been investigated by many authors\cite{B-Ito, BCN, Caughman2, Curtin,  JTZ, LP, MP, Tanaka, Ter1}. 
A $Q$-polynomial distance-regular graph can be regarded as a discrete analogue of a rank 1 symmetric space\cite[p.~311,~312]{B-Ito}.  
Let $\Ga$ denote a $Q$-polynomial distance-regular graph with vertex set $X$.
Let $\MX$ denote the $\mbb{C}$-algebra consisting of the matrices with entries in $\mbb{C}$ whose rows and columns are indexed by $X$.
Let $\V$ denote the $\mathbb{C}$-vector space consisting of column vectors with rows indexed by $X$.
We view $\V$ as a left module for $\MX$.
Fix $x \in X$.
In \cite{T-algebra-I} Terwilliger introduced the subconstituent algebra $T=T(x)$ (or Terwilliger algebra).
The algebra $T$ is the subalgebra of $\MX$ generated by the adjacency matrix $A$ of $\Ga$ and a certain diagonal matrix $A^*=A^*(x)$, called the dual adjacency matrix of $\Ga$ with respect to $x$.
We now discuss the $T$-modules.
By a $T$-module we mean a $T$-submodule of $\V$.
The algebra $T$ is semisimple\cite[Lemma~3.4]{T-algebra-I}, so $\V$ decomposes into a direct sum of irreducible $T$-modules. 
Let $W$ denote an irreducible $T$-module.
Then $W$ is called thin whenever the intersection of $W$ with each eigenspace of $A$ and $A^*$ has dimension at most 1.
It is known that the matrices $A, A^*$ act as a Leonard pair on each thin irreducible $T$-module\cite{T-algebra-I}.
We now recall the primary $T$-module\cite[Lemma~3.6]{T-algebra-I}.
Consider the $T$-module generated by the characteristic vector of $x$. 
This module is thin and irreducible. 
We call this the primary $T$-module.
We say that $\Ga$ has $q$-Racah type whenever the Leonard system induced by the primary $T$-module has $q$-Racah type.

\medskip \noindent
In \cite{Suzuki} Suzuki extended the Terwilliger algebra concept by defining the Terwilliger algebra with respect to a set of vertices.
For our purpose the set of vertices will be a Delsarte clique.
A Delsarte clique $C$ of $\Ga$ is a nonempty set of mutually adjacency vertices of $\Ga$ that has cardinality $1-k/\tht_{\rm min}$, where $k$ is the valency of $\Ga$ and $\tht_{\rm min}$ is the minimum eigenvalue of $A$. 
The Terwilliger algebra $\wt{T}=\wt{T}(C)$ is the subalgebra of $\MX$ generated by $A$ and a certain diagonal matrix $\wt{A}^*=\wt{A}^*(C)$, called the dual adjacency matrix of $\Ga$ with respect to $C$. 
By a $\wt{T}$-module we mean a $\wt{T}$-submodule of $\V$.
As we will see in Section 4, the algebra $\wt{T}$ is semisimple. So $\V$ decomposes into a direct sum of irreducible $\wt{T}$-modules. 
Let $W$ denote an irreducible $\wt{T}$-module.
Then $W$ is called thin whenever the intersection of $W$ with each eigenspace of $A$ and $\wt{A}^*$ has dimension at most 1.
By \cite[Lemma~3.2]{T-algebra-I} the matrices $A, \wt{A}^*$ act as a Leonard pair on each thin irreducible $\wt{T}$-module.
We now recall the primary $\wt{T}$-module\cite[Section~7]{Suzuki}.
Consider the $\wt{T}$-module generated by the characteristic vector of $C$. 
It turns out that this module is thin and irreducible. We call this the primary $\wt{T}$-module.

\medskip \noindent
In this paper we introduce an algebra $\T$. 
To define $\T$ we assume that $\Ga$ contains a Delsarte clique $C$. 
Fix a vertex $x\in C$.
The algebra $\T=\T(x,C)$ is generated by $T=T(x)$ and $\wt{T}=\wt{T}(C)$. 
The algebra $\T$ is semisimple.
It turns out that the $\T$-module generated by the characteristic vector of $x$ is equal to the $\T$-module generated by the characteristic vector of $C$. 
We denote this module by $\W$.
As we will see, the $\T$-module $\W$ is irreducible.
Moreover, the $T$-module $\W$ decomposes into the direct sum of two irreducible $T$-modules, one of which is the primary $T$-module.
Also, the $\wt{T}$-module $\W$ decomposes into the direct sum of two irreducible $\wt{T}$-modules, one of which is the primary $\wt{T}$-module.
The $\T$-module $\W$ will play a role in our main result.



\medskip \noindent
In \cite{chered}, Cherednik introduced the double affine Hecke algebra (or DAHA) for a reduced root system. 
In \cite{Sahi}, Sahi extended this definition to include non-reduced root systems. 
In the present paper we consider the DAHA of type $(C^{\vee}_1,C_1)$. 
This is the most general DAHA of rank 1\cite{Oblomkov}.
In \cite{NS}, Noumi and Stokman treated this algebra in detail, in order to study the Askey-Wilson polynomials.
We now define the DAHA of type $(C^{\vee}_1,C_1)$.
\begin{definition}\cite{mac}
Fix nonzero scalars $k_0, k_1, k_2, k_3, q$ in $\mbb{C}$. Let $H=H(k_0,k_1,k_2,k_3;q)$ denote the $\mbb{C}$-algebra defined by generators $\{t_n\}^{3}_{n=0}$ and relations
\begin{align*}
(t_n-k_n)(t_n-k_n^{-1}) =0, \qquad \qquad 
t_0t_1t_2t_3 = q^{-1/2}.
\end{align*}
This algebra is called the DAHA of type $(C^{\vee}_1,C_1)$.
\end{definition}
\noindent
In \cite{Ter:AW+DAHA} Terwilliger defined a central extension $\H$ of $H$.
By definition  $\H$ has  generators $\{t^{\pm1}_n\}^3_{n=0}$ and relations (i) $t_nt_n^{-1} = t_n^{-1}t_n=1$; (ii) $t_n + t_n^{-1}$ is central; (iii) $t_0t_1t_2t_3=q^{-1/2}$. 
We call $\H$ the universal DAHA of type $(C^{\vee}_1,C_1)$.
In the present paper we will focus on $\H$ .




\medskip \noindent
We now summarize our main results. 
Let $\Ga$ denote a $Q$-polynomial distance-regular graph with vertex set $X$ and diameter $d\geq3$. 
We assume that $\Ga$ has $q$-Racah type and contains a Delsarte clique $C$. Fix a vertex $x\in C$. 
Consider the module $\W$ for $\T=\T(x,C)$ which was discussed earlier.
We show that $\W$ is an irreducible $\H$-module as well as an irreducible $\T$-module.
We show how the $\H$-action on $\W$ is related to the $\T$-action on $\W$. 
Our central result of this paper is as follows. Define
$$
\A =t_0t_1 +(t_0t_1)^{-1}, \qquad \B=t_3t_0 +(t_3t_0)^{-1}, \qquad \B^{\dagger}=t_1t_2 +(t_1t_2)^{-1}.
$$
On $\W$ and up to affine transformation,
\begin{itemize}
\item[(i)] $\A$ acts as the adjacency matrix of $\Ga$;
\item[(ii)] $\B$ acts as the dual adjacency matrix of $\Ga$ with respect to $C$;
\item[(iii)] $\B^{\dagger}$ acts as the dual adjacency matrix of $\Ga$ with respect to $x$.
\end{itemize}
Moreover on $\W$ and for appropriate $k_0, k_1 \in \mbb{C}$,
\begin{itemize}
\item[(iv)] $\frac{t_0-k_0^{-1}}{k_0-k_0^{-1}}$ acts as the projection from $\W$ onto the primary $\wt{T}$-module;
\item[(v)] $\frac{t_1-k_1^{-1}}{k_1-k_1^{-1}}$ acts as the projection from $\W$ onto the primary ${T}$-module.
\end{itemize}



\medskip \noindent
The paper is organized as follows.
It consists of two parts. In Part I, we discuss Leonard systems and $Q$-polynomial distance-regular graphs. In Part II, we discuss  the DAHA of type $(C^{\vee}_1, C_1)$.
Part I is organized as follows. 
In Section \ref{PALS} we provide some background regarding Leonard systems.
In Section \ref{q-DRG} we recall some background concerning $Q$-polynomial distance-regular graphs and the Terwilliger algebra.
In Section \ref{DelC} we review the basic properties of a Delsarte clique $C$ and study the Terwilliger algebra with respect to $C$. 
In Section \ref{subspaceW} we define the algebra $\T$ and construct a $\T$-module $\W$. 
In Sections \ref{W-T(x)}--\ref{2maps} we study the Leonard systems associated with $\W$.
In Section \ref{matrices} we identify five bases for $\W$, and display the transition matrices between certain pairs of bases among the five. 
We also display the matrices representing various linear maps in $\End(\W)$ with respect to the five bases.
These matrices will be used to prove the main results of the paper. 
Part II is organized as follows.
In Section \ref{Hq} we define the algebra $\H$ and the elements $\A, \B, \B^{\dagger}$ in $\H$.
In Section \ref{H-mod(W)} we turn $\W$ into an $\H$-module.
In Section \ref{mainsection}  we display how the $\H$-action on $\W$ is related to the $\T$-action on $\W$. 
Theorem \ref{main_thm} is the main result of the paper.
This paper ends with an Appendix in which many details are made explicit for the case of diameter 4.\\


 \ \\

\noindent
\scalemath{0.87}{
\textbf{\LARGE Part I: Leonard Systems and $Q$-polynomial Distance-Regular Graphs}}

\section{Preliminaries: Leonard systems and parameter arrays}\label{PALS}

Throughout the paper, let $d$ denote a positive integer and let $q \in \mbb{C}$ be a nonzero scalar such that $q^2 \ne 1$.
Let $\M$ denote the $\mbb{C}$-algebra consisting of all $(d+1) \times (d+1)$
matrices that have entries in $\mbb{C}$.
We index the rows and columns by $0, 1, \ldots, d$.
Throughout the paper 
$V$ denotes a vector space over $\mbb{C}$ with dimension $d+1$.
Let $\End(V)$ denote the $\mbb{C}$-algebra consisting of all 
$\mbb{C}$-linear transformations from $V$ to $V$. 
Let $I$ denote the identity of $\End(V)$.
For $A \in \End(V)$, $A$ is called {\it multiplicity-free} whenever
$A$ has $d+1$ mutually distinct eigenvalues.
Assume $A$ is multiplicity-free. Let $\{\theta_i\}^d_{i=0}$ denote
an ordering of the eigenvalues of $A$. For $0 \leq i \leq d$
let $V_i$ denote the eigenspace of $A$ associated with $\theta_i$.
Define $E_i \in \End(V)$ such that 
$(E_i-I)V_i=0$ and $E_iV_j=0$ for $j \ne i \ (0 \leq j \leq d)$.
We call $E_i$ the {\it primitive idempotent} of $A$ corresponding to 
$V_i$ (or $\theta_i$). Observe that
${\rm (i)\ } E_iE_j = \delta_{ij}E_i \ (0 \leq i,j \leq d)$;
${\rm (ii)\ } I = \sum^d_{i=0} E_i$;
${\rm (iii)\ } A = \sum^d_{i=0}\theta_iE_i$. Moreover,
\begin{equation*}
E_i = \prod_{\substack{0 \leq j \leq d \\ j \ne i}}
\frac{A-\tht_jI}{\tht_i-\tht_j}
\qquad \qquad \qquad 
(0 \leq i \leq d).
\end{equation*}
Let $\mfrk{A}$ denote the $\mbb{C}$-subalgebra of $\End(V)$
generated by $A$. Observe that each of $\{A^i\}^d_{i=0}$
and $\{E_i\}^d_{i=0}$ is a basis for $\mfrk{A}$.
We now define a Leonard system on $V$.

\begin{definition}\label{Def:LS} {\rm\cite[Definition 1.4]{Terwilli}}
By a {\it Leonard system} on ${V}$ we mean a sequence
$$\Phi := (A; A^*; \{E_i\}^d_{i=0}; \{E^*_i\}^d_{i=0})$$
that satisfies {(i)--(v)} below.
\begin{enumerate}
\item[(i)] Each of $A, A^*$ is a multiplicity-free element in $\End(V)$.
\item[(ii)] 
$\{E_i\}^d_{i=0}$ is an ordering of the primitive idempotents of $A$.
\item[(iii)] 
$\{E^*_i\}^d_{i=0}$ is an ordering 
				of the primitive idempotents of $A^*$.
\item[(iv)] For $0 \leq i, j \leq d$,
$$
E_iA^*E_j=
\begin{cases}
0 & \text{ if } |i-j|>1, \\
\neq 0 & \text{ if } |i-j| = 1.
\end{cases}
$$
\item[(v)] For $0\leq i, j \leq d$,
$$
E^*_iAE^*_j = 
\begin{cases}
0 & \text{ if } |i-j|>1, \\
\neq 0 & \text{ if } |i-j|=1.
\end{cases}
$$
\end{enumerate}
We call $d$ the {\it diameter} of $\Phi$, and say $\Phi$ is {\it over} $\mbb{C}$.
\end{definition}

\noindent
There exists a natural action of the dihedral group $D_4$
on the set of all Leonard systems.
The action is described as follows.
Let $\Phi$ denote the Leonard system from Definition \ref{Def:LS}.
 Then each of the following is a Leonard system on ${V}$:
\begin{eqnarray}
\label{LS;Phi*}
\Phi^* & := & (A^*; A; \{E^*_i\}^d_{i=0}; \{E_i\}^d_{i=0}), \\
\nonumber
\Phi^{\downarrow} & := & (A; A^*; \{E_i\}^d_{i=0}; \{E^*_{d-i}\}^d_{i=0}), \\
\nonumber
\Phi^{\Downarrow} & := & (A; A^*; \{E_{d-i}\}^d_{i=0}; \{E^*_{i}\}^d_{i=0}).
\end{eqnarray}
Viewing $*, \downarrow, \Downarrow$ as permutations on the set of all Leonard
systems, we have 
\begin{gather}
\label{D4}
*^2 =  \ \downarrow^2 \ = \ \Downarrow^2 \ = 1,  \qquad
\Downarrow * \ =\  * \downarrow, \qquad
\downarrow * \ =\  * \Downarrow, \qquad
\downarrow \Downarrow \ =\  \Downarrow \downarrow.
\end{gather}
The group generated by the symbols $*, \downarrow, \Downarrow$
subject to the relations (\ref{D4}) is the dihedral group $D_4$ of order $8$.
The permutations $*, \downarrow, \Downarrow$ induce an action of $D_4$
on the set of all Leonard systems.

\medskip \noindent
We recall the notion of  isomorphism for Leonard systems. Let 
$\Phi$ denote the Leonard system from Definition \ref{Def:LS}.
Let $V'$ denote a vector space over $\mbb{C}$ with dimension $d+1$.
Let $f: \End(V) \to \End(V')$ denote an isomorphism of $\mbb{C}$-algebras.
Write $\Phi^f=(A^f; A^{*f}; \{E^f_i\}^d_{i=0}; \{E^{*f}_i\}^d_{i=0})$ and observe
$\Phi^f$ is a Leonard system on $V'$. 
Let $\Phi$ and $\Phi'$ denote any Leonard systems over $\mbb{C}$. By an 
{\it isomorphism of Leonard systems} from $\Phi$ to $\Phi'$ we mean an isomorphism
of $\mbb{C}$-algebras $f : \End(V) \to \End(V')$ such that $\Phi^f = \Phi'$. 
We say that $\Phi$ and $\Phi'$ are {\it isomorphic}
whenever there exists an isomorphism of Leonard systems from $\Phi$ to $\Phi'$.

\medskip \noindent
Let $\Phi$ denote the Leonard system from Definition \ref{Def:LS}.
For $0 \leq i \leq d$ let $\theta_i$ (resp. $\theta^*_i$) denote the eigenvalue of
$A$ (resp. $A^*$) associated with $E_i$ (resp. $E^*_i$). 
We call $\{\tht_i\}^d_{i=0}$ (resp. $\{\tht^*_i\}^d_{i=0}$) the {\it eigenvalue
sequence} (resp. {\it dual eigenvalue sequence}) of $\Phi$.
By \cite[Theorem 3.2]{Terwilli} 
there exist nonzero scalars $\{\varphi_i\}^d_{i=1}$
and a $\mbb{C}$-algebra isomorphism 
$\natural : \End(V) \rightarrow {\bf M}_{d+1}(\mbb{C})$ such that
\begin{equation}\label{split_seq}
A^{\natural} = 
\left[
\begin{array}{cccccc}
\tht_0 &&&&& {\bf 0} \\
1 & \tht_1 &\\
& 1 & \tht_2\\
&& \cdot & \cdot \\
&&& \cdot & \cdot \\
{\bf 0} &&&&1&\tht_d
\end{array}
\right],
\qquad \qquad
A^{*\natural}=
\left[
\begin{array}{cccccc}
\tht^*_0 & \varphi_1 &&&&{\bf 0} \\
& \tht^*_1 & \varphi_2 &\\
&& \tht^*_2 & \cdot \\
&&&\cdot & \cdot & \\
&&&& \cdot & \varphi_d \\
{\bf 0} &&&&& \tht^*_d
\end{array}
\right].
\end{equation}
We call the sequence $\{\varphi_i\}^d_{i=1}$ the {\it first split sequence} of $\Phi$. 
We let $\{\phi_i\}^d_{i=0}$ denote the first split sequence of 
$\Phi^{\Downarrow}$ and call this the {\it second split sequence} of $\Phi$.
By the {\it parameter array} of $\Phi$ we mean the sequence
\begin{equation}\label{PArray}
(\{\tht_i\}^d_{i=0}, \{\tht^*_i\}^d_{i=0}, \{\varphi_i\}^d_{i=1}, \{\phi_i\}^d_{i=1}).
\end{equation}
We denote this parameter array by $p(\Phi)$.
The following theorem shows that the isomorphism class of $\Phi$ is determined by $p(\Phi)$.
\begin{theorem}{\rm\cite[Theorem 1.9]{Terwilli}}\label{thm:LS<->PA}
Let 
\begin{equation}\label{parameter_array}
(\{\tht_i\}^d_{i=0}, \{\tht^*_i\}^d_{i=0}, \{\varphi_i\}^d_{i=1}, \{\phi_i\}^d_{i=1})
\end{equation}
denote a sequence of scalars in $\mbb{C}$. Then there exists a Leonard system $\Phi$
over $\mbb{C}$ with the parameter array {\rm(\ref{parameter_array})}
if and only if the following conditions {\rm(PA1)--(PA5)} hold:
\begin{enumerate}
\item[\rm (PA1)] $\tht_i \ne \tht_j,$ \quad $\tht^*_i \ne \tht^*_j$ \quad
if \quad $i \ne j$ \qquad $(0 \leq i,j \leq d)$. 
\item[\rm(PA2)] $\varphi_i \ne 0$, \quad $\phi_i \ne 0$ \qquad $(1 \leq i \leq d)$.
\item[\rm(PA3)] 
$\varphi_i = \phi_1\sum^{i-1}_{h=0}\frac{\tht_h-\tht_{d-h}}{\tht_0-\tht_d}+
(\tht^*_i-\tht^*_0)(\tht_{i-1}-\tht_d)$ \qquad $(1 \leq i \leq d)$.
\item[\rm(PA4)] 
$\phi_i = \varphi_1\sum^{i-1}_{h=0}\frac{\tht_h-\tht_{d-h}}{\tht_0-\tht_d}+
(\tht^*_i-\tht^*_0)(\tht_{d-i+1}-\tht_0)$ \qquad $(1 \leq i \leq d)$.
\item[\rm(PA5)] The expressions
$$
\frac{\tht_{i-2}-\tht_{i+1}}{\tht_{i-1}-\tht_i}, \qquad 
\frac{\tht^*_{i-2}-\tht^*_{i+1}}{\tht^*_{i-1}-\tht^*_i}
$$
are equal and independent of $i$ for $2 \leq i \leq d-1$.
\end{enumerate}
Moreover if {\rm (PA1)--(PA5)} hold then $\Phi$ is unique up to isomorphism
of Leonard systems.
\end{theorem}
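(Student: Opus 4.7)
The plan is to prove three assertions separately: necessity of (PA1)-(PA5), existence of a Leonard system $\Phi$ realizing the prescribed parameter array when (PA1)-(PA5) hold, and uniqueness of $\Phi$ up to isomorphism. All three steps hinge on the split-basis normal form (\ref{split_seq}), which reduces the structural claims to explicit matrix computations.

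For necessity, (PA1) is immediate from the multiplicity-free condition (i) of Definition \ref{Def:LS}. For (PA2), I would observe that $E_{i-1}A^*E_i$ is proportional to the $(i-1,i)$-entry of $A^{*\natural}$, so condition (iv) of Definition \ref{Def:LS} forces $\varphi_i \ne 0$; applying the same argument to $\Phi^{\Downarrow}$ yields $\phi_i \ne 0$. For (PA3), I would impose that the upper bidiagonal $A^{*\natural}$ in (\ref{split_seq}) actually has eigenvalue set $\{\tht^*_i\}^d_{i=0}$; writing $A^{*\natural} = D + N$ with $D$ diagonal and $N$ strictly upper triangular and tracking the relations forced on $\varphi_i$ by matching traces of powers (or characteristic polynomials) yields the closed form in (PA3). (PA4) comes from running the same analysis on $\Phi^{\Downarrow}$. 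For (PA5) I would compare the three-term recurrences satisfied by the polynomials expressing $E^*_i$ in $A^*$ and $E_i$ in $A$: the tridiagonal interactions (iv), (v) force the ratios of successive leading coefficients to coincide, which is exactly the Askey-Wilson constraint (PA5).

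For existence, set $V = \mbb{C}^{d+1}$ and define $A, A^*$ by the right-hand sides of (\ref{split_seq}); each is multiplicity-free with the prescribed (dual) eigenvalues by (PA1) and the triangular shapes, and the ordering of primitive idempotents inherited from (\ref{split_seq}) provides (ii) and (iii) of Definition \ref{Def:LS}. Condition (iv) follows from (PA2) and the upper bidiagonal shape of $A^{*\natural}$. The core step is condition (v): one must show that $A$ acts tridiagonally with nonzero off-diagonals in the $A^*$-eigenbasis. For this I would construct the transition matrix $P$ from the split basis to the $A^*$-eigenbasis — its entries are explicit polynomials in the $\tht^*_j$ and $\varphi_j$ — compute $P^{-1}AP$, and verify directly that the entries outside the tridiagonal band vanish while the first super/sub-diagonals are nonzero. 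Uniqueness then follows at once: any two Leonard systems $\Phi, \Phi'$ with common parameter array admit split bases in which the matrix realizations of their operators are literally identical, so the $\mbb{C}$-algebra isomorphism $\End(V) \to \End(V')$ induced by matching the two split bases sends $\Phi$ to $\Phi'$.

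The main obstacle is the verification of condition (v) in the existence step: the vanishing of $(P^{-1}AP)_{ij}$ for $|i-j|>1$ is a polynomial identity in the $\tht_i, \tht^*_i, \varphi_i, \phi_i$ that is not at all obvious a priori. The combinatorial miracle that makes it work is the Askey-Wilson relation (PA5), which, combined with the closed forms in (PA3) and (PA4), produces exactly the cancellations required. Without (PA5) the matrices $A$ and $A^*$ remain well-defined multiplicity-free operators with the right spectra, but $A$ fails to be tridiagonal in an $A^*$-eigenbasis, and no Leonard system arises.
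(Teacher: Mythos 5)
You should first note that the paper offers no proof of this statement to compare against: it is imported verbatim, with citation, from Terwilliger's classification paper (Theorem 1.9 there), so your proposal has to stand on its own as a proof of that classification theorem, whose published proof occupies the bulk of the cited paper. Measured against that standard, the outline contains two genuine gaps, one of which is a step that would actually fail as described.

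The failing step is your derivation of (PA3). You propose to obtain it by requiring that $A^{*\natural}$ in (\ref{split_seq}) have eigenvalue set $\{\tht^*_i\}^d_{i=0}$ and then ``matching traces of powers (or characteristic polynomials)''. But $A^{*\natural}$ is upper bidiagonal with diagonal entries $\tht^*_0,\ldots,\tht^*_d$, so its characteristic polynomial --- and hence the trace of every power --- is completely independent of the superdiagonal entries $\varphi_1,\ldots,\varphi_d$. This computation therefore imposes no relation whatsoever on the $\varphi_i$ and cannot yield (PA3); note also that (PA3) couples $\varphi_i$ to $\phi_1$, i.e.\ to the second split decomposition, so any correct derivation must use the two matrices $A,A^*$ (equivalently, the split decompositions of $\Phi$ and $\Phi^{\Downarrow}$) simultaneously, not spectral data of one matrix alone. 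The second gap is in the existence direction: you rightly identify that everything reduces to verifying condition (v) of Definition \ref{Def:LS} for the operators defined by (\ref{split_seq}), but the proposal only asserts that the entries of $P^{-1}AP$ outside the tridiagonal band vanish and attributes this to a ``combinatorial miracle'' driven by (PA5). No formula for the transition matrix $P$ is produced, no identity is checked, and it is not explained how (PA3)--(PA5) enter the cancellation; as written this is a restatement of what must be proved, and it is exactly the hard technical content of the theorem. Your treatment of uniqueness via matching split bases, and of the necessity of (PA1), (PA2) (and, in spirit, (PA5)), is sound.
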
 
\noindent
By a {\it parameter array over $\mbb{C}$ of diameter $d$} we mean
a sequence of complex scalars 
$(\{\tht_i\}^d_{i=0}, \{\tht^*_i\}^d_{i=0},$ $\{\varphi_i\}^d_{i=1}, \{\phi_i\}^d_{i=1})$
which satisfies the conditions (PA1)--(PA5).
By Theorem \ref{thm:LS<->PA}, the map which sends a given
Leonard system to its parameter array induces a bijection from the set of
isomorphism classes of Leonard systems over $\mbb{C}$ to the set of 
parameter arrays over $\mbb{C}$. 
In \cite{Terwilli:PA} Terwilliger displayed all the parameter arrays over $\mbb{C}$.

\medskip \noindent
Let $\Phi$ denote the Leonard system from Definition \ref{Def:LS}.
We now recall the $\Phi$-standard basis.
Let $\bv$ be a nonzero vector in $E_0V$.
By \cite[Lemma 10.2]{Terwilli:Madrid}, the sequence $\{E^*_i\bv\}^d_{i=0}$
is a basis for $V$.
This basis is said to be {\it $\Phi$-standard}.
The following is a characterization of the $\Phi$-standard basis.

\begin{lemma}{\rm\cite[Lemma 10.4]{Terwilli:Madrid}}\label{Phi-sb}
Let $\{\bv_i\}^d_{i=0}$ denote a sequence of vectors in $V$, not all $0$.
Then this sequence is a $\Phi$-standard basis for $V$ if and only if both
\begin{enumerate}
\item[\rm(i)] $\bv_i \in E^*_i V$ for $0 \leq i \leq d$;
\item[\rm(ii)] $\sum^d_{i=0}\bv_i \in E_0 V$.
\end{enumerate}
\end{lemma}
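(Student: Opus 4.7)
The plan is to prove both directions directly from the definition of a $\Phi$-standard basis together with the idempotent relations $E^*_iE^*_j = \delta_{ij}E^*_i$ and $\sum_{i=0}^d E^*_i = I$.

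For the forward direction (necessity), I would assume $\bv_i = E^*_i \bv$ for $0 \le i \le d$, where $\bv$ is a nonzero vector in $E_0V$. Condition~(i) is immediate from $\bv_i = E^*_i\bv \in E^*_iV$. For condition~(ii) I would compute
\begin{equation*}
\sum_{i=0}^d \bv_i \;=\; \sum_{i=0}^d E^*_i \bv \;=\; \Bigl(\sum_{i=0}^d E^*_i\Bigr)\bv \;=\; I\bv \;=\; \bv \;\in\; E_0V.
\end{equation*}

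For the reverse direction (sufficiency), I would assume (i) and (ii), and set $\bv := \sum_{i=0}^d \bv_i$. Then $\bv \in E_0V$ by (ii). Applying $E^*_j$ and using (i) together with $E^*_jE^*_i = \delta_{ij}E^*_i$, I would get
\begin{equation*}
E^*_j \bv \;=\; \sum_{i=0}^d E^*_j \bv_i \;=\; \bv_j \qquad (0 \le j \le d).
\end{equation*}
To conclude that $\{\bv_i\}^d_{i=0}$ is a $\Phi$-standard basis, I still need $\bv \ne 0$. If $\bv$ were zero, then each $\bv_j = E^*_j\bv$ would vanish, contradicting the hypothesis that the sequence $\{\bv_i\}^d_{i=0}$ is not identically zero. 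Hence $\bv$ is a nonzero vector in $E_0V$, and $\{\bv_i\}^d_{i=0} = \{E^*_i\bv\}^d_{i=0}$ is a $\Phi$-standard basis in the sense of the preceding definition (the fact that this sequence is a basis for $V$ is exactly the cited result \cite[Lemma~10.2]{Terwilli:Madrid}).

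There is no real obstacle here: the argument is a short manipulation using only the resolution of unity $\sum_i E^*_i = I$ and orthogonality of the primitive idempotents of $A^*$. The only subtle point is the role of the ``not all zero'' hypothesis, which is exactly what is needed to exclude the trivial sequence $\bv_i = 0$ (which would satisfy (i) and (ii) vacuously) and thereby guarantee that the associated $\bv \in E_0V$ is nonzero.
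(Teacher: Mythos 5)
Your proof is correct. Note that the paper itself gives no proof of this lemma—it is quoted directly from \cite[Lemma~10.4]{Terwilli:Madrid}—and your argument (forward direction from $\sum_i E^*_i = I$; reverse direction by applying $E^*_j$ to $\bv=\sum_i\bv_i$, with the ``not all zero'' hypothesis forcing $\bv\ne 0$, and the basis property supplied by \cite[Lemma~10.2]{Terwilli:Madrid}) is exactly the standard verification used in that source, so there is nothing to add.
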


\noindent
Let $\Phi$ denote the Leonard system from Definition \ref{Def:LS} and 
$(\{\tht_i\}^d_{i=0}, \{\tht^*_i\}^d_{i=0}, \{\varphi_i\}^d_{i=1}, \{\phi_i\}^d_{i=1})$ 
denote the parameter array of  $\Phi$. For all $X \in \End(V)$ 
let $X^{\flat}$ denote the matrix in $\M$ that represents $X$ relative to
a $\Phi$-standard basis for $V$. By Definition \ref{Def:LS}  and construction 
we have
\begin{equation}\label{[A^(*flat)]}
A^{*\flat} = \diag
(\tht^*_0, \tht^*_1, \tht^*_2, \ldots, \tht^*_d).
\end{equation}
Moreover, there exist scalars
$\{a_i\}^d_{i=0},\{b_i\}^{d-1}_{i=0}, \{c_i\}^{d}_{i=1}$ in $\mathbb{C}$ such that 
\begin{equation}\label{[A^(flat)]}
A^{\flat} = 
\left[
\begin{array}{ccccc}
a_0 & b_0 &&& {\bf 0}\\
c_1 & a_1 & b_1 && \\
& c_2 & a_2 & \ddots & \\
&& \ddots &\ddots & b_{d-1} \\
{\bf 0}&&&c_d&a_d
\end{array}
\right].
\end{equation}
We call $a_i, b_i, c_i$ the {\it intersection numbers of $\Phi$}.
For notational convenience define $b_d=0$ and $c_0=0$.
By \cite[Lemma 10.5]{Terwilli:Madrid}, $A^{\flat}$ has constant row sum $\tht_0$. 
Therefore $a_i = \tht_0 - b_i - c_i \ (0 \leq i \leq d)$.
In the following two lemmas, we give the 
intersection numbers of $\Phi$ in terms of the parameter array of $\Phi$.
\begin{lemma}{\rm\cite[Theorem 23.5]{Terwilli:Madrid}}\label{Madrid(bi;ci)}
The intersection numbers $b_i, c_i$ of $\Phi$ are 
\begin{align}
\label{b_i;parray}
& b_i = 
\varphi_{i+1}\frac{(\tht^*_i-\tht^*_0)(\tht^*_i-\tht^*_1)\cdots(\tht^*_i-\tht^*_{i-1})}
			{(\tht^*_{i+1}-\tht^*_0)(\tht^*_{i+1}-\tht^*_1)\cdots(\tht^*_{i+1}-\tht^*_i)}
			\qquad \qquad (0 \leq i \leq d-1), &&& \\
\label{c_i;parray}
& c_i =
\phi_i\frac{(\tht^*_i-\tht^*_d)(\tht^*_i-\tht^*_{d-1})\cdots(\tht^*_i-\tht^*_{i+1})}
		{(\tht^*_{i-1}-\tht^*_d)(\tht^*_{i-1}-\tht^*_{d-1})\cdots(\tht^*_{i-1}-\tht^*_i)}
		\qquad \qquad (1 \leq i \leq d).	&&&
\end{align}
\end{lemma}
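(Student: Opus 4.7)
The plan is to pass from the $\Phi$-standard basis to the split basis of (\ref{split_seq}) via an upper triangular change of basis, and extract the tridiagonal entries of $A^\flat$ by matching coefficients. Fix nonzero $\mathbf{v}\in E_0 V$ and let $\{v_j := E^*_j\mathbf{v}\}_{j=0}^d$ be the $\Phi$-standard basis. Let $\{u_i\}_{i=0}^d$ be the split basis of (\ref{split_seq}), normalized so that $u_0 = v_0$; since both bases refine the filtration $\sum_{k\le i} E^*_k V$, the matrix $U$ defined by $v_j = \sum_{i=0}^{j} U_{ij} u_i$ is upper triangular.

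Substituting into $A^* v_j = \theta^*_j v_j$ and using the upper bidiagonal form of $A^{*\natural}$ gives the two-term recurrence $\varphi_{i+1} U_{i+1,j} = (\theta^*_j - \theta^*_i) U_{ij}$, whose solution is
\[
U_{ij} = U_{0,j} \prod_{k=0}^{i-1} \frac{\theta^*_j - \theta^*_k}{\varphi_{k+1}}.
\]
I then expand $A v_j$ in the $u$-basis using the lower bidiagonal form $A^\natural$ and match with the tridiagonal expansion $A v_j = b_{j-1} v_{j-1} + a_j v_j + c_{j+1} v_{j+1}$. Matching coefficients of $u_{j+1}$ yields $c_{j+1} = U_{j,j}/U_{j+1,j+1}$, and matching coefficients of $u_j$ yields, after applying the recurrence, the clean form $a_j = \theta_j + \varphi_j/(\theta^*_j - \theta^*_{j-1}) - \varphi_{j+1}/(\theta^*_{j+1} - \theta^*_j)$.

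To derive (\ref{b_i;parray}) and (\ref{c_i;parray}) explicitly, I would repeat the same construction for $\Phi^\Downarrow$, whose first split sequence is $\{\phi_i\}$ and whose dual eigenvalue sequence is again $\{\theta^*_i\}$. This yields a second upper triangular transition matrix $W$, satisfying the analogous recurrence with $\phi_i$ in place of $\varphi_i$, and a parallel formula $c_{j+1} = W_{j,j}/W_{j+1,j+1}$. Matching the $U$- and $W$-expressions for $c_{j+1}$ pins down the free normalizations $U_{0,j}$ and $W_{0,j}$ in terms of $\{\varphi_i\}$, $\{\phi_i\}$ and the $\theta^*_k$, and substitution produces (\ref{c_i;parray}) for $c_i$ and --- via the constant-row-sum identity $b_i = \theta_0 - a_i - c_i$ --- the formula (\ref{b_i;parray}) for $b_i$. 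The main obstacle is this coordination between the two split bases; the asymmetric-looking denominators of (\ref{b_i;parray}) versus (\ref{c_i;parray}) arise precisely from the $D_4$ symmetry $\Phi \leftrightarrow \Phi^\Downarrow$ that swaps $\varphi_i \leftrightarrow \phi_i$ while preserving $\theta^*_i$.
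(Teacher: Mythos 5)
The paper offers no proof of this lemma; it is imported verbatim from Terwilliger's lecture notes \cite[Theorem 23.5]{Terwilli:Madrid}, so your argument has to stand on its own. Its first half does: with $v_j=E^*_j\mathbf{v}$ and the split basis $\{u_i\}$ normalized by $u_0=v_0$, the recurrence $\varphi_{i+1}U_{i+1,j}=(\theta^*_j-\theta^*_i)U_{ij}$ is correct, as are its consequences $c_{j+1}=U_{jj}/U_{j+1,j+1}$ and the expression for $a_j$ (which matches Lemma \ref{Madrid(ai)}); the $a_j$ computation succeeds precisely because every $U$-ratio appearing there lies inside a single column, so the unknown column scalars $U_{0j}$ cancel.

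The gap is in the closing step. Writing $U_{ij}=U_{0j}\prod_{k<i}(\theta^*_j-\theta^*_k)/\varphi_{k+1}$, your own identity becomes
\[
c_{j+1}\;=\;\frac{U_{0,j}}{U_{0,j+1}}\;\varphi_{j+1}\,
\frac{(\theta^*_j-\theta^*_0)\cdots(\theta^*_j-\theta^*_{j-1})}{(\theta^*_{j+1}-\theta^*_0)\cdots(\theta^*_{j+1}-\theta^*_j)},
\]
so the scalars $U_{0j}$ are not free normalizations: the ratio $U_{0,j}/U_{0,j+1}$ equals $c_{j+1}$ divided by the claimed right-hand side of (\ref{b_i;parray}), i.e.\ it carries exactly the information the lemma asserts. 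Your mechanism for pinning it down fails in either reading. If ``repeat the construction for $\Phi^{\Downarrow}$'' means using the $\Phi^{\Downarrow}$-standard basis (built from a vector of $E_dV$), the parallel identity computes the intersection numbers of $\Phi^{\Downarrow}$, which are not those of $\Phi$ (apply the two displayed formulas with $\varphi\leftrightarrow\phi$), so you are matching expressions for different quantities. If instead you keep the $\Phi$-standard basis and expand it in the $\Phi^{\Downarrow}$-split basis, then both identities do compute $c_{j+1}$, but matching them gives only the single relation $U_{0,j}\varphi_{j+1}/U_{0,j+1}=W_{0,j}\phi_{j+1}/W_{0,j+1}$ for each $j$ --- one equation in two unknown ratio sequences --- which determines neither. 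What is missing is the global input that actually fixes the transition matrices: the characterization $\sum_j v_j\in E_0V$ (equivalently the constant row sum $\theta_0$ of $A^{\flat}$) used together with the corresponding data on the $E_dV$ side, or, equivalently, the explicit standard-to-split transition entries in terms of $\tau^*_i(\lambda)=(\lambda-\theta^*_0)\cdots(\lambda-\theta^*_{i-1})$ and $\eta^*_i(\lambda)=(\lambda-\theta^*_d)\cdots(\lambda-\theta^*_{d-i+1})$; this is the substance of the cited result. As written, your scheme delivers $a_i$ and the relation $b_i+c_i=\theta_0-a_i$, but not the individual product formulas (\ref{b_i;parray}) and (\ref{c_i;parray}).
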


\begin{lemma}{\rm\cite[Theorem 23.6]{Terwilli:Madrid}}\label{Madrid(ai)}
The intersection numbers $a_i$ of $\Phi$ are
\begin{align}
\label{Madrid(ai);eq1}
a_0 & = \tht_0 + \frac{\varphi_1}{\tht^*_0-\tht^*_1}, \\
\nonumber
a_i &= \tht_i +\frac{\varphi_i}{\tht^*_i-\tht^*_{i-1}} + \frac{\varphi_{i+1}}{\tht^*_i-\tht^*_{i+1}}
\qquad \qquad (1 \leq i \leq d-1),&& \\
\nonumber
a_d & = \tht_d + \frac{\varphi_d}{\tht^*_d-\tht^*_{d-1}}.
\end{align} 
\end{lemma}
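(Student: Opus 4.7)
The plan is to extract each $a_i$ via a trace computation. Since $A^*$ is multiplicity-free we have $\dim E^*_i V = 1$, and from (\ref{[A^(flat)]}) we see that $E^*_i A E^*_i$ acts on $E^*_i V$ as the scalar $a_i$; hence $E^*_i A E^*_i = a_i E^*_i$, which on taking traces and using $\tr E^*_i = 1$ together with cyclicity gives
\begin{equation*}
a_i \;=\; \tr(A E^*_i).
\end{equation*}

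To evaluate the right-hand side I pass to the split basis of (\ref{split_seq}), in which $A$ is lower bidiagonal with diagonal $\{\tht_k\}_{k=0}^d$ and subdiagonal entries all equal to $1$, while $A^*$ is upper bidiagonal with diagonal $\{\tht^*_k\}_{k=0}^d$ and superdiagonal $(\varphi_1,\ldots,\varphi_d)$. Since $E^*_i$ is a polynomial in $A^*$, it is upper triangular in the split basis; pairing this with the lower-bidiagonal shape of $A$ leaves only two kinds of surviving products when expanding the trace:
\begin{equation*}
\tr(A E^*_i) \;=\; \sum_{k=0}^{d} \tht_k\, (E^*_i)_{k,k} \;+\; \sum_{k=1}^{d} (E^*_i)_{k-1,k}.
\end{equation*}

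Next I compute the required entries of $E^*_i$. Writing $E^*_i = f_i(A^*)$ for the Lagrange interpolant $f_i(x) = \prod_{j\neq i}(x-\tht^*_j)/(\tht^*_i - \tht^*_j)$, the diagonal values are immediate: $(E^*_i)_{k,k} = f_i(\tht^*_k) = \delta_{ik}$. For the superdiagonal I establish the divided-difference identity
\begin{equation*}
(f(A^*))_{k-1,k} \;=\; \varphi_k \,\frac{f(\tht^*_{k-1}) - f(\tht^*_k)}{\tht^*_{k-1} - \tht^*_k} \qquad (1 \leq k \leq d),
\end{equation*}
valid for every polynomial $f$. For $f(x)=x^n$ this comes from the direct expansion
\begin{equation*}
((A^*)^n)_{k-1,k} \;=\; \varphi_k \sum_{a=0}^{n-1} (\tht^*_{k-1})^{a}(\tht^*_k)^{n-1-a},
\end{equation*}
obtained by tracking the unique step along a length-$n$ path from row $k-1$ to row $k$ at which the row index jumps; the general polynomial case then follows by linearity. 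Applied to $f_i$, this identity forces $(E^*_i)_{k-1,k}=0$ unless $k\in\{i,i+1\}$, and at those indices produces $\varphi_i/(\tht^*_i-\tht^*_{i-1})$ and $\varphi_{i+1}/(\tht^*_i-\tht^*_{i+1})$, respectively.

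Assembling these pieces, the diagonal sum collapses to $\tht_i$ and the superdiagonal sum contributes the two $\varphi$-terms, with the endpoint cases $i=0$ and $i=d$ automatically dropping whichever contribution is out of range. This yields exactly the three formulas of (\ref{Madrid(ai);eq1}). The only non-routine step is the divided-difference identity, whose verification is a short self-contained expansion, so I do not anticipate any serious obstacle.
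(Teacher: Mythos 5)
Your argument is correct. Note first that the paper itself gives no proof of this lemma: it is quoted verbatim from Terwilliger's Madrid lecture notes \cite[Theorem~23.6]{Terwilli:Madrid}, so there is no in-paper argument to compare against. Your proof is a legitimate self-contained verification, and each step checks out: $E^*_i$ has rank one, so $E^*_iAE^*_i=a_iE^*_i$ (read off from (\ref{[A^(flat)]})) together with $\tr E^*_i=1$ and cyclicity gives $a_i=\tr(AE^*_i)$; the trace may be computed through the isomorphism $\natural$ of (\ref{split_seq}) since an isomorphism onto ${\bf M}_{d+1}(\mbb{C})$ preserves traces; the diagonal entries of $E^*_i=f_i(A^*)$ in that picture are $f_i(\tht^*_k)=\delta_{ik}$ because $A^{*\natural}$ is upper triangular; and your divided-difference identity $(f(A^*))_{k-1,k}=\varphi_k\bigl(f(\tht^*_{k-1})-f(\tht^*_k)\bigr)/(\tht^*_{k-1}-\tht^*_k)$ is exactly right for an upper-bidiagonal matrix (the path-counting expansion for monomials is valid, and the $\tht^*_k$ are distinct by (PA1)). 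Applying it to the Lagrange interpolant kills all superdiagonal entries except at $k=i$ and $k=i+1$, which produce the two $\varphi$-terms, and the endpoint cases $i=0$, $i=d$ drop the out-of-range term as you say. This trace-plus-split-basis computation is essentially the standard way these formulas are obtained in the cited source, so beyond being correct it is also in the spirit of the reference; its main virtue here is that it makes the lemma independent of the citation at the cost of only the short divided-difference computation.
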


\noindent
We recall the Leonard system $\Phi^*$ from (\ref{LS;Phi*}). 
By {\it the dual intersection numbers of $\Phi$} 
we mean the intersection numbers of $\Phi^*$.
These are denoted by $a^*_i, b^*_i, c^*_i$.

\begin{lemma}{\rm\cite[Theorem 1.11]{Terwilli}},
{\rm\cite[Theorem 23.5]{Terwilli:Madrid}}\label{Madrid(bi;ci*)}
The dual intersection numbers $b^*_i, c^*_i$ of $\Phi$ are
\begin{align*}
& b^*_i = 
\varphi_{i+1}\frac{(\tht_i-\tht_0)(\tht_i-\tht_1)\cdots(\tht_i-\tht_{i-1})}
			{(\tht_{i+1}-\tht_0)(\tht_{i+1}-\tht_1)\cdots(\tht_{i+1}-\tht_i)}
			\qquad \qquad \qquad (0 \leq i \leq d-1), &&&\\
& c^*_i =
\phi_{d-i+1}\frac{(\tht_i-\tht_d)(\tht_i-\tht_{d-1})\cdots(\tht_i-\tht_{i+1})}
		{(\tht_{i-1}-\tht_d)(\tht_{i-1}-\tht_{d-1})\cdots(\tht_{i-1}-\tht_i)}
		\qquad \qquad (1 \leq i \leq d).	&&&
\end{align*}
\end{lemma}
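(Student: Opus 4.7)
The plan is to derive these formulas by applying Lemma \ref{Madrid(bi;ci)} to the Leonard system $\Phi^*$ from (\ref{LS;Phi*}). Since by definition $b_i^* = b_i(\Phi^*)$ and $c_i^* = c_i(\Phi^*)$, the formulas of Lemma \ref{Madrid(bi;ci)} will produce the desired expressions for $b_i^*$ and $c_i^*$ as soon as the parameter array $p(\Phi^*)$ is known in terms of $p(\Phi)$.

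The claim to be established is that $p(\Phi^*) = (\{\theta^*_i\}_{i=0}^d, \{\theta_i\}_{i=0}^d, \{\varphi_i\}_{i=1}^d, \{\phi_{d-i+1}\}_{i=1}^d)$. The eigenvalue and dual eigenvalue sequences of $\Phi^*$ are immediate from the definition (\ref{LS;Phi*}). For the split sequences, I would invoke the $D_4$ action on parameter arrays developed in \cite[Theorem~1.11]{Terwilli}, from which one reads off that the $*$ permutation preserves the first split sequence and reverses the index of the second split sequence. Alternatively these identifications can be verified directly by constructing a split basis for $\Phi^*$: the first split sequence of $\Phi^*$ coincides with $\{\varphi_i\}_{i=1}^d$ by a $\natural$-calculation parallel to (\ref{split_seq}), and the second split sequence of $\Phi^*$, defined as the first split sequence of $(\Phi^*)^{\Downarrow}$, equals the first split sequence of $(\Phi^{\downarrow})^*$ via the $D_4$ relation $\Downarrow * = *\downarrow$ from (\ref{D4}); the previous argument (applied with $\Phi^{\downarrow}$ in place of $\Phi$) then reduces this to the first split sequence of $\Phi^{\downarrow}$, which a further $\natural$-computation (incorporating the index reversal $E^*_i \mapsto E^*_{d-i}$) identifies as $\{\phi_{d-i+1}\}_{i=1}^d$.

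Substituting this $p(\Phi^*)$ into the formulas (\ref{b_i;parray})--(\ref{c_i;parray}) of Lemma \ref{Madrid(bi;ci)} produces the claimed expressions for $b_i^*$ and $c_i^*$. The main obstacle is pinning down the second split sequence, since it requires composing the $*$ and $\Downarrow$ operations and tracking the attendant index reversal; by contrast, the first split sequence identification is a single-step $\natural$-calculation and poses no real difficulty. As a sanity check I would also verify directly that the proposed $p(\Phi^*)$ satisfies (PA3)--(PA4) by substitution from the corresponding relations for $p(\Phi)$; by the uniqueness part of Theorem \ref{thm:LS<->PA}, this independently confirms the identification and therefore the lemma.
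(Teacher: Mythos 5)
Your argument is correct and matches the paper's (implicit) approach: the lemma is stated in the paper without proof, and the two sources cited in its header --- \cite[Theorem~1.11]{Terwilli}, which identifies $p(\Phi^*)=(\{\tht^*_i\}^d_{i=0},\{\tht_i\}^d_{i=0},\{\varphi_i\}^d_{i=1},\{\phi_{d-i+1}\}^d_{i=1})$, and \cite[Theorem~23.5]{Terwilli:Madrid}, which is Lemma~\ref{Madrid(bi;ci)} --- are exactly the two facts you combine. The only nitpick is notational: to identify $(\Phi^*)^{\Downarrow}$ with $(\Phi^{\downarrow})^*$ you should invoke the relation $\downarrow * \,=\, *\,\Downarrow$ from (\ref{D4}) rather than $\Downarrow * \,=\, *\downarrow$, which does not affect the substance of your argument.
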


\begin{lemma}{\rm\cite[Theorem 1.11]{Terwilli}},
{\rm\cite[Theorem 23.6]{Terwilli:Madrid}}
\label{Madrid(ai*)}
The dual intersection numbers $a^*_i$ of $\Phi$ are
\begin{align}
\nonumber
a^*_0 & = \tht^*_0 + \frac{\varphi_1}{\tht_0-\tht_1}, \\
\nonumber
a^*_i &= \tht^*_i +\frac{\varphi_i}{\tht_i-\tht_{i-1}} + \frac{\varphi_{i+1}}{\tht_i-\tht_{i+1}}
\qquad \qquad (1 \leq i \leq d-1),&&\\
\nonumber
a^*_d & = \tht^*_d + \frac{\varphi_d}{\tht_d-\tht_{d-1}}.
\end{align} 
\end{lemma}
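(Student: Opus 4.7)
The plan is to reduce Lemma~\ref{Madrid(ai*)} to Lemma~\ref{Madrid(ai)} by applying the latter not to $\Phi$ but to $\Phi^*$. Indeed, by definition the dual intersection numbers of $\Phi$ are the intersection numbers of $\Phi^*$, so once we know the parameter array of $\Phi^*$ in terms of $p(\Phi) = (\{\tht_i\}^d_{i=0}, \{\tht^*_i\}^d_{i=0}, \{\varphi_i\}^d_{i=1}, \{\phi_i\}^d_{i=1})$, the formulas in Lemma~\ref{Madrid(ai)} will translate directly into the desired expressions for $a^*_0$, $a^*_i$ ($1 \le i \le d-1$), and $a^*_d$.

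The required information about $p(\Phi^*)$ is as follows. From the definition $\Phi^* = (A^*; A; \{E^*_i\}^d_{i=0}; \{E_i\}^d_{i=0})$ we read off that the eigenvalue sequence of $\Phi^*$ is $\{\tht^*_i\}^d_{i=0}$ and its dual eigenvalue sequence is $\{\tht_i\}^d_{i=0}$. The crucial extra ingredient is that the first split sequence of $\Phi^*$ coincides with that of $\Phi$, i.e.\ $\varphi_i(\Phi^*) = \varphi_i(\Phi)$ for $1 \le i \le d$. Granting this, Lemma~\ref{Madrid(ai)} applied to $\Phi^*$ gives
\begin{align*}
a^*_0 &= \tht^*_0 + \frac{\varphi_1(\Phi^*)}{\tht_0-\tht_1}, \\
a^*_i &= \tht^*_i + \frac{\varphi_i(\Phi^*)}{\tht_i-\tht_{i-1}} + \frac{\varphi_{i+1}(\Phi^*)}{\tht_i-\tht_{i+1}} \qquad (1 \le i \le d-1), \\
a^*_d &= \tht^*_d + \frac{\varphi_d(\Phi^*)}{\tht_d-\tht_{d-1}},
\end{align*}
which becomes the asserted formulas after substituting $\varphi_j(\Phi^*) = \varphi_j$.

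The main obstacle is therefore verifying the invariance $\varphi_i(\Phi^*) = \varphi_i(\Phi)$. The cleanest way is to invoke Terwilliger's description of the $D_4$-action on parameter arrays from \cite[Theorem~1.11]{Terwilli}, which records precisely this invariance. A self-contained justification proceeds from the split representation (\ref{split_seq}): if $\{v_i\}_{i=0}^d$ is the split basis of $\Phi$ in which $A$ is lower bidiagonal with subdiagonal entries $1$ and $A^*$ is upper bidiagonal with superdiagonal entries $\{\varphi_i\}_{i=1}^d$, then rescaling to $v'_i = \varphi_1 \varphi_2 \cdots \varphi_i \, v_i$ (with $v'_0 = v_0$) produces a basis in which $A^*$ is lower bidiagonal (with subdiagonal $1$) and $A$ is upper bidiagonal, whose superdiagonal entries can be computed from the $1$'s below the diagonal of the original $A^{\natural}$ to be exactly $\{\varphi_i\}_{i=1}^d$ again. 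By the uniqueness of the split sequence, this rescaled basis is a split basis for $\Phi^*$ and its first split sequence is $\{\varphi_i\}_{i=1}^d$. With this identification in hand, the three displayed formulas follow from Lemma~\ref{Madrid(ai)} without further computation.
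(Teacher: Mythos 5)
Your reduction is exactly what the paper's citation amounts to: the paper offers no proof of Lemma \ref{Madrid(ai*)}, citing \cite[Theorem~1.11]{Terwilli} and \cite[Theorem~23.6]{Terwilli:Madrid}, and the content of those citations is precisely your argument — the dual intersection numbers of $\Phi$ are by definition the intersection numbers of $\Phi^*$, whose parameter array is $(\{\tht^*_i\}^d_{i=0}, \{\tht_i\}^d_{i=0}, \{\varphi_i\}^d_{i=1}, \{\phi_{d-i+1}\}^d_{i=1})$ by the $D_4$ action recorded in \cite[Theorem~1.11]{Terwilli}, so Lemma \ref{Madrid(ai)} applied to $\Phi^*$ yields the three displayed formulas (the reversal of the $\phi$'s is harmless here because only the $\varphi$'s enter).

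One caution about your optional ``self-contained justification'': as written it does not work. With the conventions of (\ref{split_seq}) one has $A v_i = \tht_i v_i + v_{i+1}$ and $A^* v_i = \tht^*_i v_i + \varphi_i v_{i-1}$, so the rescaling $v'_i = \varphi_1\varphi_2\cdots\varphi_i\, v_i$ gives $A^* v'_i = \tht^*_i v'_i + \varphi_i^{2}\, v'_{i-1}$, not off-diagonal entries $1$; the scaling must be by $(\varphi_1\cdots\varphi_i)^{-1}$. Even with the corrected scaling, the matrices one obtains have $A^*$ upper bidiagonal with $1$'s on the superdiagonal and $A$ lower bidiagonal with the $\varphi_i$ below the diagonal — the transpose of the shape that (\ref{split_seq}) requires for the Leonard system $\Phi^*$ — so an additional step (the transpose antiautomorphism, or the uniqueness coming from Theorem \ref{thm:LS<->PA}, or simply \cite[Theorem~1.11]{Terwilli} itself) is still needed before one may identify $\{\varphi_i\}^d_{i=1}$ as the first split sequence of $\Phi^*$. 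Since your primary route is the citation of \cite[Theorem~1.11]{Terwilli}, which is exactly what the paper relies on, this does not affect the correctness of the proposal, but the sketch should not be presented as a complete standalone proof.
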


\noindent
Let $\Phi$ denote the Leonard system from Definition \ref{Def:LS}.
Recall the parameter array $p(\Phi)$ from (\ref{PArray}).
Let $\lambda$ denote  an indeterminate and let $\mbb{C}[\lambda]$
denote the $\mbb{C}$-algebra consisting of the polynomials in $\lambda$
that have all coefficients in $\mbb{C}$.
For $0 \leq i \leq d$ define a polynomial $\mfrk{u}_i \in \mbb{C}[\lambda]$ by 
\begin{equation}\label{poly(u_i)}
\mfrk{u}_i = 
\sum^i_{n=0} \frac{(\tht^*_i-\tht^*_0)(\tht^*_i-\tht^*_1)\cdots(\tht^*_i-\tht^*_{n-1})
					(\lambda-\tht_0)(\lambda-\tht_1)\cdots(\lambda-\tht_{n-1})}
{\varphi_1\varphi_2 \cdots \varphi_n}.
\end{equation}
By \cite[Theorem 7.2]{Terwilli:PA}, 
$$
\tht_j\mfrk{u}_i(\tht_j) = c_i\mfrk{u}_{i-1}(\tht_j) + a_i\mfrk{u}_i(\tht_j) + b_i\mfrk{u}_{i+1}(\tht_j)
\qquad \qquad (0 \leq i,j \leq d),
$$
where $a_i, b_i, c_i$ are the intersection numbers of $\Phi$
and $\mfrk{u}_{-1}$ and $\mfrk{u}_{d+1}$ are indeterminates.
By \cite[Section 10]{Terwilli:PA},
\begin{equation}\label{u_i(tht_d)}
\mfrk{u}_i(\tht_d) = \frac{\phi_1\phi_2 \dotsm \phi_i}{\varphi_1\varphi_2 \dotsm \varphi_i}
\qquad (0 \leq i \leq d).
\end{equation}

 \noindent
We now recall the $q$-Racah family of parameter arrays \cite{Terwilli:PA}.
This is the most general family.

\begin{example}\label{q-rac;PA}
\cite[Example 5.3]{Terwilli:PA}
For $0 \leq i \leq d$ define
\begin{eqnarray}
\label{theta_i}
\theta_i 	& = & \theta_0+h(1-q^{i})(1-sq^{i+1})q^{-i}, \\
\label{theta^*_i}
\theta^*_i 	& = & \theta^*_0+h^*(1-q^{i})(1-s^*q^{i+1})q^{-i},
\end{eqnarray}
and for $1\leq i \leq d$ define
\begin{eqnarray}
\label{varphi_i}
\varphi_i	& = & hh^{*}q^{1-2i}(1-q^{i})(1-q^{i-d-1})(1-r_1q^i)(1-r_2q^i), \\
\label{phi_i}
\phi_i 		& = & hh^{*}q^{1-2i}(1-q^{i})(1-q^{i-d-1})(r_1-s^*q^i)(r_2-s^*q^i)/s^*,
\end{eqnarray} 
where $\tht_0$ and $\tht_0^*$ are scalars in $\mbb{C}$, and
where $h, h^*, s, s^*, r_1, r_2$ are nonzero scalars in $\mbb{C}$ such that
$r_1r_2=ss^*q^{d+1}$.
To avoid degenerate situations assume that none of 
$q^i, r_1q^i, r_2q^i, s^*q^i/r_1, s^*q^i/r_2$ is equal to 1 for $1\leq i \leq d$
and that neither of $sq^i, s^*q^i$ is equal to 1 for $2 \leq i \leq 2d$.
Then the sequence
$(\{\tht_i\}^d_{i=0}, \{\tht^*_i\}^d_{i=0},\{\varphi_i\}^d_{i=1}, \{\phi_i\}^d_{i=1})$
is a parameter array over $\mbb{C}$.
This parameter array is said to have {\it $q$-Racah type}.
\end{example}

\noindent
Let $\Phi$ denote the Leonard system from Definition \ref{Def:LS}.
We say that $\Phi$ has {\it  $q$-Racah type} whenever its parameter array has 
$q$-Racah type. Assume that $\Phi$ has $q$-Racah type
with the parameter array as in Example \ref{q-rac;PA}.
Recall the intersection numbers $\{b_i\}^{d-1}_{i=0}, \{c_i\}^d_{i=1}$ of $\Phi$ from Lemma \ref{Madrid(bi;ci)}.
Evaluating (\ref{b_i;parray}) and (\ref{c_i;parray})
using (\ref{theta_i})--(\ref{phi_i})
we find 
\begin{align}
\label{b_0;q-terms}
b_0 &= \frac{h(1-q^{-d})(1-r_1q)(1-r_2q)}{1-s^*q^2}, \\
\label{b_i;q-terms}
b_i &= \frac{h(1-q^{i-d})(1-s^*q^{i+1})(1-r_1q^{i+1})(1-r_2q^{i+1})}
{(1-s^*q^{2i+1})(1-s^*q^{2i+2})} &&(1 \leq i \leq d-1), \\
\label{c_i;q-terms}
c_i &= \frac{h(1-q^i)(1-s^*q^{i+d+1})(r_1-s^*q^i)(r_2-s^*q^i)}
{s^*q^d(1-s^*q^{2i})(1-s^*q^{2i+1})} && (1 \leq i \leq d-1), \\
\label{c_d;q-terms}
c_d &= \frac{h(1-q^d)(r_1-s^*q^d)(r_2-s^*q^d)}{s^*q^d(1-s^*q^{2d})}; 
\end{align}
see \cite[Section 24]{Terwilli:Madrid}.
To obtain the dual intersection numbers $\{b^*_i\}^{d-1}_{i=0}, \{c^*_i\}^d_{i=1}$ of $\Phi$, 
replace $(h, s^*)$ by $(h^*, s)$ in (\ref{b_0;q-terms})--(\ref{c_d;q-terms}).

\medskip \noindent
Now consider (\ref{poly(u_i)}). Pick integers $i,j\ (0 \leq i,j \leq d)$.
Evaluating (\ref{poly(u_i)}) at $\lambda=\tht_j$ and simplifying the result 
using (\ref{theta_i})--(\ref{phi_i}) we get
\begin{equation}\label{u_i(tht_j)}
\mfrk{u}_i(\tht_j) = \sum^i_{n=0}
	\frac{(q^{-i};q)_n(s^*q^{i+1};q)_n(q^{-j};q)_n(sq^{j+1};q)_nq^n}
		{(r_1q;q)_n(r_2q;q)_n(q^{-d};q)_n(q;q)_n},
\end{equation}
where 
$$
(a;q)_n := (1-a)(1-aq)(1-aq^2) \cdots (1-aq^{n-1}) \qquad n=0,1,2, \ldots
$$
From the definition of basic hypergeometric series \cite[p. 4]{Gasper}, 
the sum on the right in (\ref{u_i(tht_j)}) is 
\begin{equation}\label{u_i(tht_j)hgs}
\  _{4}\phi_{3}
\left(
\begin{matrix}
q^{-i}, s^*q^{i+1}, q^{-j}, sq^{j+1}\\
r_1q, r_2q, q^{-d}
\end{matrix}
\ \middle| \ q,q
\right).
\end{equation}
The $q$-Racah polynomials are defined in \cite{Askey}.
By (\ref{u_i(tht_j)}) and (\ref{u_i(tht_j)hgs}), the $\{\mfrk{u}_i\}^d_{i=0}$ are $q$-Racah polynomials.


\medskip
\section{Preliminaries: $Q$-polynomial distance-regular graphs} \label{q-DRG}

We now turn our attention to distance-regular graphs.
In this section we review those aspects of $Q$-polynomial distance-regular graphs
that we will need later in the paper.
For more background information we refer the reader
to Brouwer, Cohen and Neumaier \cite{BCN} and Terwilliger \cite{T-algebra-I}.

\medskip \noindent
Let $X$ denote a nonempty finite set.
Let $\MX$ denote the $\mbb{C}$-algebra consisting of the matrices
with entries in $\mbb{C}$ whose rows and columns are indexed by $X$. 
Let $\V=\CX$ denote the $\mbb{C}$-vector space consisting of column vectors
with entries in $\mbb{C}$ and rows indexed by $X$.
We view $\V$ as a left module for $\MX$ and call this the {\it standard module}.
We endow $\V$ with the Hermitean inner product
$\langle \ , \ \rangle$ that satisfies 
$\langle {u},  {v} \rangle = {u}^t{\overline{v}}$ for
${u}, {v} \in \V$, where
$t$ denotes transpose and $\ \bar{\ } $ denotes complex conjugate.
For all $x \in X$ let $\hat{x}$ denote the vector in $\V$ with
a $1$ in the $x$ coordinate and $0$ in all other coordinates.
Note that the set $\{\hat{x} \mid x \in X \}$ is an orthonormal basis for $\V$.
For a subset $Y \subseteq X$, define $\hat{Y} = \sum_{y \in Y} \hat{y}$
and call this the {\it characteristic vector of $Y$}.
We abbreviate ${\bf j} = \hat{X}$. The vector $\bf j$ has $x$-coordinate $1$ for all $x \in X$.

\medskip \noindent
Let $\Ga$ denote an undirected, connected graph, without loops or multiple edges, with vertex set $X$ and diameter $d \geq 3$.
For $x \in X$ define
\begin{equation}\label{Ga_i}
\Ga_i(x) = \{y \in X \mid \partial(x,y) = i \} \qquad (0 \leq  i \leq d),
\end{equation}
where $\partial$ denotes the path-length distance function.
We abbreviate $\Ga(x) = \Ga_1(x)$.
For an integer $k \geq 0$, $\Ga$ is said to be {\it regular with valency $k$}
whenever $|\Ga(x)| = k$ for all $x \in X$.
We say that $\Ga$ is {\it distance-regular} whenever for all integers $h, i, j$
$(0 \leq h, i, j \leq d)$ and all vertices $x,y \in X$ with $\partial(x,y) = h$,
the number $p^h_{i,j} = |\Ga_i(x) \cap \Ga_j(y)|$ is independent of $x$ and $y$.
The numbers $p^h_{i,j}$ are called the {\it intersection numbers of $\Ga$}.
Abbreviate
$$
a_i(\Ga)=p^i_{1,i} \ (0 \leq i \leq d), \qquad 
b_i(\Ga)=p^i_{1,i+1} \ (0 \leq i \leq d-1), \qquad 
c_i(\Ga)=p^i_{1,i-1} \ (1 \leq i \leq d),
$$
and define $b_d(\Ga)=0$ and $c_0(\Ga) = 0$.
By construction $a_0(\Ga)=0$ and $c_1(\Ga)=1$.
For the rest of the paper, assume that 
 $\Ga$ is distance-regular.
Observe that $\Ga$ is regular with valency $k=b_0(\Ga)$.

\medskip \noindent
We recall the Bose-Mesner algebra of $\Ga$.
For $0 \leq i \leq d$ let $A_i$ denote the matrix in $\MX$ with 
$(x,y)$-entry 
$$
(A_i)_{xy}  =
\begin{cases}
1 & \text{if } \partial(x,y) = i \\
0 & \text{if } \partial(x,y) \ne i
\end{cases}
\qquad (x, y \in X).
$$
We call $A_i$ the {\it $i$-th distance matrix of $\Ga$}.
Observe that $\sum^d_{i=0}A_i = {\bf J}$, the all-ones matrix,
and $A_iA_j = \sum^d_{h=0}p^h_{i,j}A_h$ for $0 \leq i, j \leq d$.
We abbreviate $A=A_1$ and call this the 
{\it adjacency matrix of $\Ga$}.
Let $\mcal{M}$ denote the subalgebra of $\MX$ generated by $A$.
The algebra $\mcal{M}$ is commutative.
We call $\mcal{M}$ the {\it Bose-Mesner algebra} of $\Ga$.
By \cite[p.~127]{BCN} the set $\{A_i\}^d_{i=0}$ is a basis for $\mcal{M}$.
The algebra $\mcal{M}$ is  semisimple
since it is closed under the conjugate-transpose map. By \cite[p.~45]{BCN}
$\mcal{M}$ has a basis $\{E_i\}^d_{i=0}$ such that
(i) $E_0 = |X|^{-1}{\bf J}$; (ii) $I=\sum^d_{i=0} E_i$; (iii) $\overline{E}_i = E_i~(0 \leq i \leq d)$;
(iv) $E^{t}_i = E_i~(0 \leq i \leq d)$; (v) $E_iE_j = \delta_{ij}E_i~(0 \leq i, j \leq d)$.
We call $\{E_i\}^d_{i=0}$ the {\it primitive idempotents} of $\Ga$. 
Since $\{E_i\}^d_{i=0}$ is a basis for $\mcal{M}$ there exist complex scalars
$\{\tht_i\}^d_{i=0}$ such that $A=\sum^d_{i=0}\tht_iE_i$.
Observe that $AE_i=E_iA=\tht_iE_i$ for $0 \leq i \leq d$.
By  \cite[p.~197]{B-Ito} the scalars $\{\tht_i\}^d_{i=0}$ are real.
The scalars $\{\tht_i\}^d_{i=0}$ are mutually distinct since
$A$ generates $\mcal{M}$. We call $\tht_i$ 
the {\it eigenvalue of $\Ga$} associated with $E_i$ $(0 \leq i \leq d)$.
By (i) we have $k=\tht_0$.
Note that $\V=\sum^d_{i=0}E_i\V$  (orthogonal direct sum).
For $0 \leq i \leq d$,  $E_i\V$ is the eigenspace of $A$ associated with $\tht_i$.
Let $m_i$ denote the rank of $E_i$ and 
note that $m_i$ is the dimension of $E_i\V$. We call $m_i$
the {\it multiplicity} of $E_i$ (or $\tht_i$).
Note that the vector ${\bf j}$ is a basis for $E_0\V$.

\medskip \noindent
We recall the $Q$-polynomial property. 
Let $\circ$ denote the entrywise product in $\MX$.
Observe that $A_i \circ A_j = \delta_{ij}A_i$ for $0 \leq i, j \leq d$, so
$\mcal{M}$ is closed under $\circ$. Therefore,
there exist complex scalars $q^h_{i,j} \ (0 \leq h, i, j \leq d)$ such that
$E_i \circ E_j = |X|^{-1}\sum^d_{h=0}q^{h}_{i,j}E_h$ for $0 \leq i, j \leq d$.
By \cite[p.~48,~49]{BCN} each $q^h_{i,j}$ is real and nonnegative.
The $q^h_{i,j}$ are called the {\it dual intersection numbers} 
(or {\it Krein parameters}) of $\Ga$. We abbreviate
$$
a^*_i(\Ga) = q^i_{1,i} \ (0 \leq i \leq d), \qquad
b^*_i(\Ga) = q^i_{1, i+1} \ (0 \leq i \leq d-1), \qquad
c^*_i(\Ga) = q^i_{1, i-1} \ (1 \leq i \leq d),
$$
and define $b_d^*(\Ga) = 0$ and $c^*_0(\Ga) = 0$.
By \cite[p.~193]{B-Ito} $c^*_1(\Ga)=1$.
The graph $\Ga$ is said to be {\it $Q$-polynomial} (with respect to
the ordering $\{E_i\}^d_{i=0}$ of the primitive idempotents) whenever
for $0 \leq h,i,j \leq d$, $q^h_{i,j} = 0$ (resp. $q^h_{i,j} \ne 0$)
if one of $h,i,j$ is greater than (resp. equal to)
the sum of the other two \cite[p.~235]{BCN}. 
For the rest of this paper,  assume that 
$\Ga$ is $Q$-polynomial with respect to the ordering $\{E_i\}^d_{i=0}$.
It is not necessarily the case that $\tht_0>\tht_1>\cdots>\tht_d$.
However, this ordering occurs in many examples \cite[Chapter~8]{BCN}.
To keep our discussion simple, 
we always assume that our $Q$-polynomial structure satisfies 
$\tht_0>\tht_1>\cdots>\tht_d$.

\medskip \noindent
We recall the dual Bose-Mesner algebra of $\Ga$.
For the rest of this section, fix $x \in X$. We view $x$ as a base vertex.
For $0 \leq i \leq d$ let $E^*_i = E^*_i(x)$ denote the diagonal
matrix in $\MX$ with $(y,y)$-entry
\begin{equation}\label{def(E^*(x))}
(E^*_i)_{yy} =
\begin{cases}
1 & \text{if } \partial(x,y) = i \\
0 & \text{if } \partial(x,y) \ne i
\end{cases}
\qquad (y \in X).
\end{equation}
We call $E_i^*$ the {\it $i$-th dual primitive idempotent} of $\Ga$ with respect to $x$.
Observe that (i) $I=\sum^d_{i=0}E^*_i$; (ii) $\overline{E^*_i} = E^*_i \ (0 \leq i \leq d)$; 
(iii) $(E^*_i)^{t} = E^*_i \ (0 \leq i \leq d)$; (iv) $E^*_iE^*_j = \delta_{ij}E^*_i$ $(0 \leq i, j \leq d)$.
By these facts, $\{E^*_i\}^d_{i=0}$ forms a basis for a commutative subalgebra
$\mcal{M}^*=\mcal{M}^*(x)$ of $\MX$.
We call $\mcal{M}^*$ the {\it dual Bose-Mesner algebra} of $\Ga$ with respect to $x$.
The algebra $\mcal{M}^*$ is semisimple since it is closed under the conjugate-transpose map.
For $0 \leq i \leq d$ let $A^*_i = A^*_i(x)$ denote the diagonal matrix in $\MX$
with $(y,y)$-entry
$$
(A^*_i)_{yy} = |X|(E_i)_{xy} \qquad (y \in X).
$$
We call $A^*_i$ the {\it $i$-th dual distance matrix} of $\Ga$ with respect to $x$.
By \cite[p.~379]{T-algebra-I}, $\{A^*_i\}^d_{i=0}$ is a basis for $\mcal{M}^*$.
We abbreviate $A^*=A^*_1$ and call this the {\it dual adjacency matrix} of $\Ga$
with respect to $x$. By \cite[Lemma 3.11]{T-algebra-I}, $A^*$ generates 
$\mcal{M}^*$. Since $\{E^*_i\}^d_{i=0}$ is a basis for $\mcal{M}^*$, there
exist complex scalars $\{\tht^*_i\}^d_{i=0}$ such that $A^*=\sum^d_{i=0}\tht^*_iE^*_i$.
Observe that $A^*E^*_i=E^*_iA^* = \tht^*_iE^*_i$ for $0 \leq  i \leq d$.
By \cite[Lemma 3.11]{T-algebra-I}, the $\{\tht^*_i\}^d_{i=0}$ are real. 
These scalars are mutually distinct since $A^*$ generates $\mcal{M}^*$.
We call $\tht^*_i$ the {\it dual eigenvalue} of $\Ga$ associated with $E^*_i~(0 \leq i \leq d)$.
Note that $\V=\sum^{d}_{i=0}E^*_i\V~\text{(orthogonal direct sum)}$.
From (\ref{def(E^*(x))}), for $0 \leq i \leq d$ we find $E^*_i\V = \text{Span}\{\hat{y} \mid y \in X, \partial(x,y)=i\}$. $E^*_i\V$ is the eigenspace of $A^*$ associated with $\tht^*_i$. We call $E^*_i\V$ the {\it $i$-th subconstituent} of $\Ga$ with respect to $x$.

\medskip \noindent
We now recall the Terwilliger algebra.
Let $T=T(x)$ denote the subalgebra of $\MX$ generated by $\mcal{M}$ and $\mcal{M}^*$.
$T$ is called the {\it Terwilliger algebra} (or {\it subconstituent algebra}) of $\Ga$
with respect to $x$; see \cite{T-algebra-I}. Note that $A, A^*$ generate $T$.
$T$ is finite-dimensional and noncommutative. 
Moreover, $T$ is semisimple since it is closed under the conjugate-transpose map.
By \cite[Lemma~3.2]{T-algebra-I}, the following are relations in $T$. For $0 \leq h,i,j \leq d$,
\begin{align*}
E^*_iA_hE^*_j = 0 \qquad \text{ if and only if } \qquad p^h_{ij}=0;\\
E_iA^*_hE_j = 0 \qquad \text{ if and only if } \qquad q^h_{ij}=0.
\end{align*}
Setting $h=1$ we find that for $0 \leq i, j \leq d$,
\begin{align} 
E^*_iAE^*_j = 0 \qquad \text{ if  } \qquad |i-j|>1;\nonumber \\
\label{EA*E}
E_iA^*E_j = 0 \qquad \text{ if  } \qquad |i-j|>1.
\end{align}

\noindent
By a {\it $T$-module}, we mean a subspace $W \subseteq \V$ such that
$BW \subseteq W$ for all $B \in T$. Let $W$ denote a $T$-module
and let $U$ be a $T$-submodule of $W$. Since
$T$ is closed under the conjugate-transpose map, the orthogonal complement 
of $U$ in $W$ is a $T$-module. Hence $W$ decomposes into 
an orthogonal direct sum of irreducible $T$-modules. In particular,
$\V$ is an orthogonal direct sum of irreducible $T$-modules.

\medskip \noindent
Let $W$ denote an irreducible $T$-module. 
Then $W$ is a direct sum of the nonzero spaces among $\{E^*_iW\}^d_{i=0}$,
and also a direct sum of the nonzero spaces among $\{E_iW\}^d_{i=0}$.
By the {\it endpoint} of $W$ we mean $\min\{ i \mid 0 \leq i \leq d, E^*_iW \ne 0 \}$.
By the {\it dual endpoint} of $W$ we mean $\min\{ i \mid 0 \leq i \leq d, E_iW \ne 0 \}$.
By the {\it diameter} of $W$ we mean $|\{ i \mid 0 \leq i \leq d, E^*_iW \ne 0 \}| - 1$.
By the {\it dual diameter} of $W$ we mean $|\{ i \mid 0 \leq i \leq d, E_iW \ne 0 \}| - 1$.
By \cite[Corollary 3.3]{Pascasio} the diameter of $W$ is equal to the dual diameter of $W$.
By \cite[Lemma 3.9, Lemma 3.12]{T-algebra-I}
$\dim{E^*_iW} \leq 1~(0 \leq i \leq d)$ if and only if $\dim{E_iW} \leq 1~(0 \leq i \leq d)$;
in this case $W$ is said to be {\it thin}. 

\begin{lemma}{\rm\cite[Lemma 14.8]{Boyd}}\label{LS(U)}
Let $W$ denote a thin irreducible $T$-module with  endpoint $\sigma$,
 dual endpoint $\sigma^*$ and diameter $\rho$.
Then the elements
$$(A; A^*; \{E_{i}\}^{\rho+\sigma^*}_{i=\sigma^*}; \{E^*_{i}\}^{\rho+\sigma}_{i=\sigma})$$
act on $W$ as a Leonard system.
\end{lemma}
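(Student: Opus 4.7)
The plan is to verify the five axioms of Definition \ref{Def:LS} for the tuple
$(A; A^*; \{E_i\}_{i=\sigma^*}^{\sigma^*+\rho}; \{E_i^*\}_{i=\sigma}^{\sigma+\rho})$
restricted to $W$. The two essential ingredients are the tridiagonal relations collected in (\ref{EA*E}) (and their counterparts $E_i^* A E_j^* = 0$ for $|i-j|>1$) together with the irreducibility of the $T$-module $W$; everything else is a packaging of eigenspace information.

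First I would pin down the shape of the supports $S^* := \{i \mid E_i^* W \ne 0\}$ and $S := \{i \mid E_i W \ne 0\}$. The claim is that each is a block of consecutive integers. Suppose $S^*$ had a gap, i.e., $a < c$ lie in $S^*$ but $E_{a+1}^* W = \dotsb = E_{c-1}^* W = 0$. The subspace $U := \sum_{k \le a} E_k^* W$ is invariant under $A^*$ because $A^*$ preserves each $E_k^* \V$, and it is invariant under $A$ because the tridiagonal relation $E_i^* A E_j^* = 0$ for $|i-j|>1$ forces $A E_k^* W \subseteq E_{k-1}^* W + E_k^* W + E_{k+1}^* W$ for every $k$, and at $k=a$ the third summand is zero by assumption. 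Hence $U$ is a $T$-submodule that is nonzero (it contains $E_a^* W$) and proper (since $E_c^* W \ne 0$), contradicting irreducibility. The same argument with $A^*$ and $\{E_i\}$ in place of $A$ and $\{E_i^*\}$ shows $S$ is consecutive. From the definitions of $\sigma$, $\sigma^*$, the diameter $\rho$, and Pascasio's equality of diameter and dual diameter, this forces $S^* = \{\sigma, \dotsc, \sigma+\rho\}$ and $S = \{\sigma^*, \dotsc, \sigma^* + \rho\}$.

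Next, since $W$ is thin, each nonzero $E_i^* W$ is one-dimensional and coincides with the $\tht_i^*$-eigenspace of $A^*$ acting on $W$. Since $\tht_0^*, \dotsc, \tht_d^*$ are mutually distinct, $A^*$ acts on $W$ with $\rho + 1 = \dim W$ distinct eigenvalues, so it is multiplicity-free, and the restrictions of $E_\sigma^*, \dotsc, E_{\sigma+\rho}^*$ to $W$ are exactly its primitive idempotents. The symmetric argument applied to $A$ and $\{E_i\}$ yields axioms (i)--(iii) of Definition \ref{Def:LS}. For axioms (iv) and (v), the vanishing when $|i-j|>1$ is immediate from the global tridiagonal relations. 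The nondegeneracy when $|i-j|=1$ is once more an irreducibility argument: if $E_{i+1}^* A E_i^*$ vanished on $W$ for some $\sigma \le i < \sigma + \rho$, then $U := \bigoplus_{k=\sigma}^{i} E_k^* W$ would be preserved by $A^*$ (automatic) and by $A$ (since $A E_k^* W$ for $k \le i$ can no longer escape to $E_{i+1}^* W$), giving a nonzero proper $T$-submodule and contradicting the irreducibility of $W$. Swapping the roles of $(A, \{E_i^*\})$ and $(A^*, \{E_i\})$ handles (iv) in the same way.

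The only place where a little care is needed is the consecutive-support step and the nondegeneracy of the adjacent off-diagonal projections, but these are both instances of the same ``if the support could be cut at index $i$, the prefix would be a proper $T$-submodule'' device, so I do not anticipate any serious obstacle.
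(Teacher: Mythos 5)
Your proof is correct, but the comparison with the paper is moot in one respect: the paper offers no proof of Lemma \ref{LS(U)} at all --- it is imported wholesale from \cite[Lemma 14.8]{Boyd}, and ultimately goes back to Terwilliger's subconstituent algebra papers. Your self-contained argument is essentially that standard one, and it isolates exactly the ingredients needed: the relations $E^*_iAE^*_j=0$ and $E_iA^*E_j=0$ for $|i-j|>1$, thinness and irreducibility of $W$, the mutual distinctness of the $\tht_i$ and of the $\tht^*_i$, and Pascasio's equality of diameter and dual diameter (used to pin the support of $\{E_iW\}$ down to $\{\sigma^*,\ldots,\sigma^*+\rho\}$). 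What the citation buys the paper is brevity; what your argument buys is transparency about why thinness and irreducibility are precisely the hypotheses required. One small point of precision: axiom (v) of Definition \ref{Def:LS} demands both $E^*_{i+1}AE^*_i\ne0$ and $E^*_iAE^*_{i+1}\ne0$ on $W$. Your prefix device $U=\sum_{k\le i}E^*_kW$ rules out only the first degeneracy; for the second you should run the mirror argument with the suffix $\sum_{k\ge i+1}E^*_kW$, or simply observe that $A$ and the $E^*_k$ are real symmetric and $W$ is invariant under conjugate-transposition, so the two products are adjoints of one another on $W$ and vanish simultaneously. The same remark applies to axiom (iv). This is a one-line repair, not a genuine gap.
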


\noindent
We give an example of a thin irreducible $T$-module.

\begin{example}\cite[Lemma 3.6]{T-algebra-I}\label{ex;Mx}
The all-ones vector $\bf{j}$
satisfies $A_i\hat{x} = E^*_i{\bf j}$ and $A^*_i{\bf j} = |X|E_i\hat{x}$
for $0 \leq i \leq d$. Therefore $\Mx = \mcal{M}^*{\bf j}$.
The space $\Mx$ is a thin irreducible $T$-module with endpoint $0$, dual endpoint $0$, and diameter $d$.
The space $\Mx$ has bases $\{A_i\hat{x}\}^d_{i=0}$ and $\{E_i\hat{x}\}^d_{i=0}$.
We call $\Mx$ the {\it primary} $T$-module.
\end{example}

\begin{lemma}{\rm\cite[Theorem 4.1]{T-algebra-II}}\label{IN(Ga,Phi)}
Let $\Phi$ denote the Leonard system on $\Mx$ from Lemma {\rm\ref{LS(U)}}.
\begin{enumerate}
\item[\rm(i)] For $0 \leq i \leq d$, the intersection numbers $a_i, b_i, c_i$ of $\Phi$ satisfy
\begin{align*}
a_i = a_i(\Ga), && b_i =  b_i(\Ga),  && c_i = c_i(\Ga). &&&&&&
\end{align*}
\item[\rm(ii)] For $0 \leq i \leq d$, the dual intersection numbers $a^*_i, b^*_i, c^*_i$ of $\Phi$
satisfy
\begin{align*}
a^*_i = a^*_i(\Ga), && b^*_i =  b^*_i(\Ga),  && c^*_i = c^*_i(\Ga). &&&&&&
\end{align*}
\end{enumerate}
\end{lemma}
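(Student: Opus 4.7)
The plan is to prove both (i) and (ii) by exhibiting explicit standard bases for the Leonard system $\Phi$ (resp.\ $\Phi^*$) on the primary $T$-module $\Mx$, and then reading off the intersection numbers from the three-term action of $A$ (resp.\ $A^*$) on these bases against the tridiagonal shape in (\ref{[A^(flat)]}).

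For (i), I would propose $\{A_i \hat{x}\}_{i=0}^d$ as the $\Phi$-standard basis of $\Mx$. By Example \ref{ex;Mx} we have $A_i \hat{x} = E^*_i {\bf j}$, so each $A_i \hat{x}$ lies in $E^*_i \V$, while $\sum_{i=0}^d A_i \hat{x} = {\bf j} \in E_0 \V$. Lemma \ref{Phi-sb} then confirms this sequence is $\Phi$-standard. Next, applying to $\hat{x}$ the classical distance-regular identity
$$
A A_i = b_{i-1}(\Ga)\, A_{i-1} + a_i(\Ga)\, A_i + c_{i+1}(\Ga)\, A_{i+1}
$$
(the $h=1$ case of $A_1 A_i = \sum_h p^h_{1,i} A_h$, together with $p^h_{1,i} = 0$ for $|h-i|>1$) and comparing the resulting coefficients with the entries of $A^\flat$ in (\ref{[A^(flat)]}) immediately yields $a_i = a_i(\Ga)$, $b_i = b_i(\Ga)$, $c_i = c_i(\Ga)$.

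For (ii), I would mirror this argument for $\Phi^*$ using $\{E_i \hat{x}\}_{i=0}^d$ as the $\Phi^*$-standard basis of $\Mx$. Lemma \ref{Phi-sb} applied to $\Phi^*$ requires $E_i \hat{x} \in E_i \V$ and $\sum_i E_i \hat{x} = \hat{x} \in E^*_0 \V$, both of which are immediate from the definitions. The crucial new input is the dual three-term identity
$$
A^* A^*_i = b^*_{i-1}(\Ga)\, A^*_{i-1} + a^*_i(\Ga)\, A^*_i + c^*_{i+1}(\Ga)\, A^*_{i+1},
$$
which I would obtain in two steps: first establish $A^*_i A^*_j = \sum_h q^h_{ij} A^*_h$ by comparing diagonal entries against the Hadamard formula $E_i \circ E_j = |X|^{-1} \sum_h q^h_{ij} E_h$ (using $(A^*_h)_{yy} = |X|(E_h)_{xy}$), and then invoke the $Q$-polynomial hypothesis to force $q^h_{1,i}=0$ unless $|h-i|\le 1$. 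Applying this identity to ${\bf j}$ and using $A^*_i {\bf j} = |X| E_i \hat{x}$ from Example \ref{ex;Mx} produces the three-term action of $A^*$ on $\{E_i \hat{x}\}$, and comparison with the $\Phi^*$-analogue of (\ref{[A^(flat)]}) extracts the dual intersection numbers.

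The bulk of the proof is an unwinding of definitions, so I expect no real difficulty in (i). The one step that genuinely uses the $Q$-polynomial hypothesis and is the main obstacle to execute cleanly is the dual recursion in (ii), namely establishing $A^* A^*_i = b^*_{i-1}(\Ga) A^*_{i-1} + a^*_i(\Ga) A^*_i + c^*_{i+1}(\Ga) A^*_{i+1}$ via the Krein parameters $q^h_{1,i}$.
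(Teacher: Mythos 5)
Your proof is correct: $\{A_i\hat{x}\}_{i=0}^d$ and $\{E_i\hat{x}\}_{i=0}^d$ do form $\Phi$- and $\Phi^*$-standard bases of $\Mx$ (both sequences lie in $\Mx$, so Lemma \ref{Phi-sb} applies within that module), the recurrence $AA_i=b_{i-1}(\Ga)A_{i-1}+a_i(\Ga)A_i+c_{i+1}(\Ga)A_{i+1}$ needs only the triangle inequality, and the dual recurrence follows from $A^*_iA^*_j=\sum_h q^h_{ij}A^*_h$ together with the $Q$-polynomial condition exactly as you describe, so comparing with (\ref{[A^(flat)]}) and its $\Phi^*$-analogue gives the claim. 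Note that the paper itself offers no proof here — it quotes the result from \cite[Theorem 4.1]{T-algebra-II} — and your argument is essentially the standard one underlying that citation, so there is nothing to fix.
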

\noindent
The Leonard system $\Phi$ in Lemma \ref{IN(Ga,Phi)} will be called {\it primary}, 
and its parameter array will also be called {\it primary}. 
Moreover, the intersection numbers and dual intersection numbers of $\Phi$ 
will be called {\it primary}. From now on, the notation
\begin{equation*}\label{PrimaryPA}
p(\Phi) = (\{\tht_i\}^d_{i=0}, \{\tht^*_i\}^d_{i=0}, \{\varphi_i\}^d_{i=1}, \{\phi_i\}^d_{i=1})
\end{equation*}
refers to the primary parameter array. 
Furthermore,  the notation $a_i, b_i, c_i$ (resp. $a^*_i, b^*_i, c^*_i$) 
refer to the primary intersection numbers (resp. dual intersection numbers) of $\Ga$
which are also the intersection numbers (resp. dual intersection numbers) of $\Ga$.

\medskip \noindent
We say that $\Ga$ has {\it $q$-Racah type} whenever the primary Leonard system
has $q$-Racah type. 
For the rest of the paper, assume that $\Ga$ has $q$-Racah type.
Since the primary parameter array has $q$-Racah type, 
it satisfies (\ref{theta_i})--(\ref{phi_i}) for some scalars 
$h, h^*, s, s^*, r_1, r_2$. We fix this notation for the rest of the paper.

\begin{note}\label{h,h*}
We mentioned earlier that the intersection number $c_1=1$.
In (\ref{c_i;q-terms}) we set $i=1$ and use $c_1=1$ to get
\begin{equation}\label{scalar(h)}
h = 
\frac{s^*q^d(1-s^*q^2)(1-s^*q^3)}{(1-q)(1-s^*q^{d+2})(r_1-s^*q)(r_2-s^*q)}.
\end{equation}
To get $h^*$ replace $s^*$ by $s$ in (\ref{scalar(h)}). In the resulting formula,
eliminate $s$ using $r_1r_2=ss^*q^{d+1}$ to get
\begin{equation}\label{scalar(h*)}
h^* =
\frac{q^{2d-1}(s^*-r_1r_2q^{1-d})(s^*-r_1r_2q^{2-d})}{(1-q)(s^*-r_1r_2q)(r_1-s^*q^d)(r_2-s^*q^d)}.
\end{equation}
\end{note}


\section{Delsarte cliques in $\Ga$}\label{DelC}

In this section we discuss the basic properties of a Delsarte clique $C$ of $\Ga$.
We also discuss the Terwilliger algebra associated with $C$.

\medskip \noindent
We recall the definition of a Delsarte clique.
By a {\it clique} of $\Ga$ we mean a nonempty subset of $X$ 
such that any two distinct vertices of this subset are adjacent.
Let $C$ denote a clique of $\Ga$. We mention an upper bound on $|C|$.
Recall that $k$ is the valency of $\Ga$. 
By \cite[Corollary 3.5.4]{BCN} $\tht_d \leq -1$.
By \cite[Proposition 4.4.6]{BCN}
$|C| \leq 1 - {k}/{\tht_d}$.
We say that $C$ is {\it Delsarte} whenever $|C| = 1-{k}/{\tht_d}$.
Throughout the rest of this paper 
we will assume that  $\Ga$ contains a Delsarte clique $C$.
In our study of $C$, we will use the following fact.

\begin{lemma}{\rm\cite[Lemma 2.1]{JKT}}\label{JKT}
For $0 \leq j \leq d$ and for $y,z \in X$,
\begin{equation}\label{EiEj=m,u,tht}
\langle E_j\hat{y}, E_j\hat{z} \rangle
={|X|}^{-1}m_j\mfrk{u}_i(\tht_j),
\end{equation}
where $i=\partial(y,z)$ and $\frak{u}_i$ is the polynomial from {\rm(\ref{poly(u_i)})}
attached to the primary parameter array.
Recall that $m_j$ is the multiplicity of $\tht_j$.
\end{lemma}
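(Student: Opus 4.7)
The plan is to reduce the inner product $\langle E_j\hat{y},E_j\hat{z}\rangle$ to a single entry of $E_j$, observe via distance-regularity that this entry depends only on $i=\partial(y,z)$, and then identify the resulting sequence in $i$ by matching a three-term recurrence to the one satisfied by $\mfrk{u}_i(\tht_j)$.

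First I would use that $E_j$ has real entries ($\overline{E_j}=E_j$), is symmetric ($E_j^{t}=E_j$), and is idempotent ($E_j^{2}=E_j$). Since $\hat{z}$ is real,
$$\langle E_j\hat{y},E_j\hat{z}\rangle \;=\; \hat{y}^{t}E_j^{t}\,\overline{E_j\hat{z}} \;=\; \hat{y}^{t}E_j^{2}\hat{z} \;=\; \hat{y}^{t}E_j\hat{z} \;=\; (E_j)_{yz}.$$
Because $E_j\in\mcal{M}$ is a linear combination of the distance matrices $A_0,\dots,A_d$, the entry $(E_j)_{yz}$ depends only on $i=\partial(y,z)$; denote its common value by $f_j(i)$. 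Summing along the diagonal gives $\tr(E_j)=|X|f_j(0)$, while $\tr(E_j)=m_j$ since $E_j$ is a projection, so $f_j(0)=m_j/|X|$. This matches the claim at $i=0$ since $\mfrk{u}_0(\tht_j)=1$.

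To propagate to larger $i$ I would exploit $AE_j=\tht_jE_j$. Fixing $y,z\in X$ with $\partial(y,z)=i$ and writing $(E_jA)_{yz}=\sum_{w\sim z}(E_j)_{yw}$, the number of neighbours $w$ of $z$ with $\partial(y,w)$ equal to $i-1$, $i$, or $i+1$ is respectively $c_i$, $a_i$, $b_i$ by distance-regularity. This yields
$$\tht_j f_j(i) \;=\; c_i f_j(i-1) + a_i f_j(i) + b_i f_j(i+1) \qquad (0\le i\le d),$$
with the conventions $c_0=0$ and $b_d=0$ handling the boundary indices. This is exactly the recurrence recorded in the excerpt just before (\ref{u_i(tht_d)}) that is satisfied by $\{\mfrk{u}_i(\tht_j)\}_{i=0}^{d}$; by Lemma \ref{IN(Ga,Phi)} the intersection numbers $a_i,b_i,c_i$ of $\Ga$ coincide with those of the primary parameter array, so the two recurrences are literally the same. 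Since $b_i\ne 0$ for $0\le i\le d-1$, the recurrence together with the initial datum $f_j(0)=m_j/|X|$ determines all subsequent values, whence $f_j(i)=|X|^{-1}m_j\mfrk{u}_i(\tht_j)$ for $0\le i\le d$, which is the claim.

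The argument is essentially routine once the recurrence match is spotted; the only point needing care is the bookkeeping at the boundary values, which is taken care of by the standard conventions $c_0=0$, $b_d=0$, and $\mfrk{u}_{-1}=0$.
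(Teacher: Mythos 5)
Your proof is correct and complete. Note that the paper itself offers no proof of this statement\,---\,it is imported verbatim from \cite[Lemma~2.1]{JKT}\,---\,so there is no internal argument to compare against; what you have written (reduce $\langle E_j\hat{y},E_j\hat{z}\rangle$ to the entry $(E_j)_{yz}$ using $E_j^t=\overline{E_j}=E_j$ and $E_j^2=E_j$, observe via $E_j\in\mcal{M}$ that this entry depends only on $i=\partial(y,z)$, fix the value at $i=0$ by the trace, and propagate with the three-term recurrence obtained from $AE_j=\tht_jE_j$, identified with the recurrence for $\mfrk{u}_i(\tht_j)$ through Lemma \ref{IN(Ga,Phi)} and the nonvanishing of $b_0,\dots,b_{d-1}$) is precisely the standard cosine-sequence argument behind the cited result, with the boundary conventions handled correctly.
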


\begin{lemma}\label{EdC=0}
$E_j\hat{C} \ne 0$ for $0 \leq j \leq d-1$. Moreover, $E_d\hat{C} = 0$.
\end{lemma}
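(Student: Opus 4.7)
The plan is to compute $\|E_j\hat{C}\|^2$ explicitly using Lemma \ref{JKT} and exploit the fact that vertices in a clique are at distance $0$ or $1$ from each other.

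First I would write $E_j\hat{C} = \sum_{y\in C} E_j\hat{y}$, so that
\[
\langle E_j\hat{C},E_j\hat{C}\rangle \;=\; \sum_{y,z\in C}\langle E_j\hat{y},E_j\hat{z}\rangle \;=\; |X|^{-1}m_j\sum_{y,z\in C}\mfrk{u}_{\partial(y,z)}(\tht_j)
\]
by Lemma \ref{JKT}. Since $C$ is a clique, the distance $\partial(y,z)$ for $y,z\in C$ is either $0$ (when $y=z$) or $1$ (when $y\ne z$). Splitting the sum accordingly gives
\[
\langle E_j\hat{C},E_j\hat{C}\rangle \;=\; |X|^{-1}m_j|C|\bigl(\mfrk{u}_0(\tht_j)+(|C|-1)\mfrk{u}_1(\tht_j)\bigr).
\]

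Next I would evaluate $\mfrk{u}_0$ and $\mfrk{u}_1(\tht_j)$. From (\ref{poly(u_i)}) we read off $\mfrk{u}_0=1$, while the three-term recurrence displayed just after (\ref{poly(u_i)}) at $i=0$ yields $\tht_j=a_0+b_0\mfrk{u}_1(\tht_j)$, so $\mfrk{u}_1(\tht_j)=(\tht_j-a_0)/b_0$. Since $a_0=a_0(\Ga)=0$ and $b_0=b_0(\Ga)=k$ by Lemma \ref{IN(Ga,Phi)}, we obtain $\mfrk{u}_1(\tht_j)=\tht_j/k$. Substituting and using $|C|-1=-k/\tht_d$ (from the Delsarte condition $|C|=1-k/\tht_d$), we arrive at
\[
\langle E_j\hat{C},E_j\hat{C}\rangle \;=\; \frac{m_j|C|}{|X|}\cdot\frac{\tht_d-\tht_j}{\tht_d}.
\]

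Finally, since the eigenvalues $\{\tht_i\}_{i=0}^{d}$ are mutually distinct and $\tht_d\ne 0$, the right-hand side vanishes precisely when $j=d$ and is nonzero for $0\le j\le d-1$. Hence $E_j\hat{C}\ne 0$ for $0\le j\le d-1$ while $E_d\hat{C}=0$. There is no real obstacle here; the only slightly delicate point is invoking the correct primary intersection numbers $a_0=0$, $b_0=k$ to identify $\mfrk{u}_1(\tht_j)$, which is handled by Lemma \ref{IN(Ga,Phi)}.
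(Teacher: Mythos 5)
Your proof is correct and follows essentially the same route as the paper: expand $\|E_j\hat{C}\|^2$ via Lemma \ref{JKT}, split the double sum over the clique into the $y=z$ and $y\ne z$ cases using $\mfrk{u}_0=1$ and $\mfrk{u}_1(\tht_j)=\tht_j/k$, and apply the Delsarte condition $|C|=1-k/\tht_d$ to obtain $\|E_j\hat{C}\|^2=|X|^{-1}m_j|C|(\tht_d-\tht_j)/\tht_d$, which vanishes exactly when $j=d$. The only cosmetic difference is that you justify $\mfrk{u}_1(\tht_j)=\tht_j/k$ from the three-term recurrence and Lemma \ref{IN(Ga,Phi)}, whereas the paper simply quotes this value.
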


\begin{proof}
We evaluate $\|E_j\hat{C}\|^2$ for $0 \leq j \leq d$. 
We find
\begin{equation}\label{EdC}
\|E_j\hat{C}\|^2 = \sum_{y,z \in C} \langle E_j\hat{y}, E_j\hat{z} \rangle.
\end{equation}
For $y,z \in C$ consider the corresponding summand in (\ref{EdC}).
First assume that $y=z$. 
In (\ref{EiEj=m,u,tht}) set $i=0$ and $\frak{u}_0 = 1$ to find
$\| E_j\hat{y} \|^2 = |X|^{-1}m_j.$
Next assume that $y \ne z$. 
Then $y,z$ are adjacent. In (\ref{EiEj=m,u,tht})
set $i=1$ and $\frak{u}_1(\tht_j)=\tht_j/k$ to find
$ \langle E_j\hat{y}, E_j\hat{z} \rangle = |X|^{-1}m_j\tht_j/k. $
Evaluate (\ref{EdC}) using these comments to get
\begin{equation}\label{pf(EdC=0)}
\|E_j\hat{C}\|^2 = |C|\frac{m_j}{|X|} + |C|(|C|-1)\frac{m_j\tht_j}{|X|k}.
\end{equation}
In (\ref{pf(EdC=0)}) divide both sides by $|C|$ and use $|C|=1-k/\tht_d$ to get 
\begin{equation*}\label{EjC}
\frac{\|E_j\hat{C}\|^2}{|C|} = \frac{m_j(\tht_d-\tht_j)}{|X|\tht_d}.
\end{equation*}
The factor $\tht_d-\tht_j$ is nonzero for $0 \leq j \leq d-1$, and zero for $j=d$.
The result follows.
\end{proof}

\noindent
By a {\it partition} of $X$, we mean a set of mutually disjoint non-empty subsets of $X$ whose union is $X$.
For $y \in X$, define $\partial(y,C) = \min\{ \partial(y,z) \mid z \in C\}$.
By the {\it covering radius} of $C$ we mean
$\max\{\partial(y,C) \mid y \in X\}$.
By \cite[p.~277]{Godsil} the covering radius of $C$ is $d-1$.
For $0 \leq i \leq d-1$ define
\begin{equation}\label{C_i}
C_i := \{ y \in X \mid \partial(y,C) = i\}.
\end{equation}
Observe that $C_0 = C$. Note that $\{C_i\}^{d-1}_{i=0}$ is the partition of $X$,
and hence ${\bf j} = \sum^{d-1}_{i=0}\hat{C}_i$.
Shortly we will show that this partition is equitable in the sense of \cite[p.~75]{Godsil}. 
\begin{lemma}{\rm\cite[Corollary 4.1.2]{BCN}}\label{Sturm}
Let $\{\frak{u}_i\}^d_{i=0}$ be the polynomials from {\rm(\ref{poly(u_i)})} 
attached to the primary parameter array. For $0 \leq i \leq d-1$, we have 
$(-1)^i\frak{u}_i(\tht_d)>0$.
\end{lemma}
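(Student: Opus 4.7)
The plan is to recognize $\mfrk{u}_i(\theta_d)$ as the evaluation of a monic orthogonal polynomial at a point that lies strictly below all of its zeros, and then read off the sign from the factored form.

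First I would rescale to monic polynomials. Set $p_i(\lambda) := \mfrk{u}_i(\lambda)\prod_{k=0}^{i-1}b_k$ for $0 \le i \le d+1$, with $p_{-1}=0$, $p_0 = 1$, and with intersection numbers $a_i,b_i,c_i$ as in Lemma \ref{Madrid(bi;ci)}. The three-term recurrence recalled from \cite{Terwilli:PA} just before (\ref{u_i(tht_d)}) holds at the $d+1$ distinct scalars $\lambda = \theta_j$; since both sides are polynomials in $\lambda$ of degree at most $d+1$, it is a polynomial identity. After rescaling it becomes
$$ p_{i+1}(\lambda) = (\lambda - a_i)\,p_i(\lambda) - b_{i-1}c_i\, p_{i-1}(\lambda) \qquad (0 \le i \le d). $$
Because $\Gamma$ is a distance-regular graph of diameter $d$, each $b_{i-1}$ and $c_i$ is a positive integer, so $b_{i-1}c_i > 0$ for $1 \le i \le d$. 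In particular, each $p_i$ is monic of degree $i$, and this is precisely the recurrence satisfied by the characteristic polynomials of the top-left $i \times i$ principal submatrix of the tridiagonal matrix $A^{\flat}$ displayed in (\ref{[A^(flat)]}).

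Next I would carry out the interlacing step. Although $A^{\flat}$ is not symmetric, conjugation by the positive diagonal matrix with $(i,i)$-entry $\sqrt{b_0 b_1 \cdots b_{i-1}/(c_1 c_2 \cdots c_i)}$ turns it into a symmetric Jacobi matrix whose off-diagonal entries are the square roots $\sqrt{b_{i-1}c_i}>0$; this similarity preserves the characteristic polynomials of all principal submatrices. Cauchy's interlacing theorem then yields strict interlacing: the $i$ roots of $p_i$ are real and simple, and they strictly separate the $i+1$ roots of $p_{i+1}$. In particular the smallest root of $p_i$ strictly exceeds the smallest root of $p_{i+1}$. Iterating this downward from $p_{d+1}$, whose roots are the pairwise distinct eigenvalues $\theta_0 > \theta_1 > \cdots > \theta_d$ of $A^{\flat}$, shows that every root of $p_i$ strictly exceeds $\theta_d$ for all $0 \le i \le d$.

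Finally I would write $p_i(\lambda) = \prod_{j=1}^{i}(\lambda - \mu_j^{(i)})$ with each $\mu_j^{(i)} > \theta_d$. Evaluating at $\lambda = \theta_d$ produces $i$ strictly negative factors, so $p_i(\theta_d)$ has sign $(-1)^i$. Dividing by $\prod_{k=0}^{i-1}b_k > 0$ transports this sign to $\mfrk{u}_i(\theta_d)$, giving $(-1)^i \mfrk{u}_i(\theta_d) > 0$ for $0 \le i \le d$ — slightly stronger than the stated range.

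The main obstacle is justifying strict interlacing despite $A^{\flat}$ not being symmetric; the workaround is the diagonal similarity to a symmetric Jacobi matrix, which makes Cauchy's interlacing theorem directly applicable and turns the desired sign pattern into the classical statement that a monic orthogonal polynomial, evaluated below its smallest zero, carries sign $(-1)^{\deg}$.
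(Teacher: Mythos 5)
The paper gives no argument for this lemma at all: it is quoted verbatim from {\rm\cite[Corollary~4.1.2]{BCN}}, the classical fact that the standard sequence attached to the minimal eigenvalue of a distance-regular graph alternates strictly in sign. Your proposal reconstructs, correctly in substance, the textbook argument behind that citation: rescale the $\mathfrak{u}_i$ to monic polynomials $p_i$, identify these with the characteristic polynomials of the leading principal blocks of the tridiagonal matrix (\ref{[A^(flat)]}) (whose entries are the graph's intersection numbers by Lemma \ref{IN(Ga,Phi)}, hence $b_{i-1}c_i>0$), symmetrize by a positive diagonal similarity, and read off the sign of $p_i(\tht_d)$ from the fact that every zero of $p_i$ lies strictly above $\tht_d$. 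This is a legitimate self-contained route, and it even yields the conclusion for $i=d$, which the lemma does not claim.

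Two steps should be tightened. First, your justification that the three-term recurrence is a polynomial identity (``both sides have degree at most $d+1$ and agree at the $d+1$ points $\tht_j$'') is not valid as stated: $d+1$ evaluation points determine a polynomial only up to degree $d$. For $1\leq i\leq d-1$ the degrees are at most $d$, so the identity does hold, and that is all you need to conclude $p_i=\chi_i$ for $0\leq i\leq d$; at $i=d$ the relation is genuinely not a polynomial identity, and your definition $p_{d+1}=\mathfrak{u}_{d+1}\prod_{k=0}^{d}b_k$ is vacuous, since $b_d=0$ and $\mathfrak{u}_{d+1}$ is not a polynomial. The repair is to take $p_{d+1}$ to be the characteristic polynomial of $A^{\flat}$ itself, whose roots are $\tht_0>\cdots>\tht_d$ because $A$ is multiplicity-free on the primary module. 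Second, Cauchy's interlacing theorem only gives weak interlacing; you need strictness, since equality at the bottom would permit $p_i(\tht_d)=0$ and destroy the sign statement. Strict interlacing is standard for an unreduced symmetric tridiagonal matrix, and follows here because $p_i$ and $p_{i+1}$ cannot share a zero: a common zero would propagate down the recurrence (using $b_{i-1}c_i\neq0$) and force $p_0=1$ to vanish. With these two repairs your argument is complete and agrees with the content of the result the paper cites.
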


\noindent
For $0 \leq i \leq d-1$ and $z \in C_i$,
define
\begin{equation}\label{Def(Ni)}
N_i(z) = |\Ga_i(z) \cap C|.
\end{equation}
The following lemma shows that
$N_i(z)$ is independent of the choice of $z$.

\begin{lemma}\label{formula(Ni)}
For $0 \leq i \leq d-1$ and $z \in C_i$,
\begin{equation}\label{N_i(x)}
N_i(z) = |C|\frac{\mfrk{u}_{i+1}(\theta_d)}{\mfrk{u}_{i+1}(\theta_d)-\mfrk{u}_i(\theta_d)},
\end{equation}
where $\{\frak{u}_j\}^d_{j=0}$ are the polynomials from {\rm(\ref{poly(u_i)})} 
attached to the primary parameter array. In {\rm(\ref{N_i(x)})} the denominator 
is nonzero by Lemma {\rm\ref{Sturm}}.
\end{lemma}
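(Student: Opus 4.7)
The plan is to exploit the fact that $E_d \hat{C}=0$, established in Lemma~\ref{EdC=0}, by taking inner products against it. Specifically, I would fix $z\in C_i$ and compute $\langle E_d\hat{z},E_d\hat{C}\rangle$ in two different ways.

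First I would collect the geometric input about distances from $z$ to $C$. Since $z\in C_i$ there is some $y'\in C$ with $\partial(z,y')=i$, and every other vertex $y\in C$ is adjacent to $y'$, so by the triangle inequality $\partial(z,y)\leq i+1$; combined with $\partial(z,y)\geq\partial(z,C)=i$, this forces $\partial(z,y)\in\{i,i+1\}$ for every $y\in C$. By the definition \eqref{Def(Ni)} of $N_i(z)$, exactly $N_i(z)$ of the $|C|$ vertices of $C$ lie at distance $i$ from $z$, and the remaining $|C|-N_i(z)$ lie at distance $i+1$.

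Next I would expand
$$
0 \;=\; \langle E_d\hat{z},\,E_d\hat{C}\rangle \;=\; \sum_{y\in C}\langle E_d\hat{z},E_d\hat{y}\rangle,
$$
using Lemma~\ref{EdC=0} on the left. Applying Lemma~\ref{JKT} with $j=d$ to each summand and splitting the sum over $C$ according to whether $\partial(z,y)=i$ or $i+1$ yields
$$
0 \;=\; |X|^{-1}m_d\bigl(N_i(z)\,\mfrk{u}_i(\tht_d)+(|C|-N_i(z))\,\mfrk{u}_{i+1}(\tht_d)\bigr).
$$
Since $m_d\neq 0$, I can clear it and solve the resulting linear equation for $N_i(z)$, obtaining exactly the claimed formula \eqref{N_i(x)}.

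There is no real obstacle here; the one thing to verify is that the denominator $\mfrk{u}_{i+1}(\tht_d)-\mfrk{u}_i(\tht_d)$ is nonzero, but this is immediate from Lemma~\ref{Sturm}, which says the signs $(-1)^i\mfrk{u}_i(\tht_d)$ are strictly positive for $0\leq i\leq d-1$ so consecutive values of $\mfrk{u}_i(\tht_d)$ have opposite signs. The slight subtlety worth flagging in the write-up is justifying that $\partial(z,y)\in\{i,i+1\}$ uniformly over $y\in C$; everything else is bookkeeping on top of Lemmas~\ref{JKT} and~\ref{EdC=0}.
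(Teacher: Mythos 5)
Your proposal is correct and follows essentially the same route as the paper: both compute $\langle E_d\hat{z},E_d\hat{C}\rangle=0$ via Lemma \ref{EdC=0}, expand the sum over $y\in C$ using Lemma \ref{JKT} with $j=d$, split according to $\partial(z,y)\in\{i,i+1\}$, and solve the resulting linear equation for $N_i(z)$. Your explicit triangle-inequality justification that every $y\in C$ lies at distance $i$ or $i+1$ from $z$ is a small point the paper leaves implicit, but otherwise the two arguments coincide.
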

\begin{proof}
For notational convenience abbreviate $N_i = N_i(z)$.
Observe that 
\begin{equation}\label{pf(Ni-formula)}
\langle E_d\hat{z}, E_d\hat{C} \rangle = 
\sum_{y \in C}\langle E_d\hat{z}, E_d\hat{y} \rangle.
\end{equation}
The left-hand side of (\ref{pf(Ni-formula)}) is zero by Lemma \ref{EdC=0}.
Concerning the right-hand side of (\ref{pf(Ni-formula)}), among $y \in C$ exactly $N_i$ are contained in $\Ga_i(z)$ by (\ref{Def(Ni)}), and for such $y$ the summand in (\ref{pf(Ni-formula)}) becomes $|X|^{-1}m_d\frak{u}_i(\tht_d)$ by (\ref{EiEj=m,u,tht}). The remaining $y \in C$ are contained in $\Ga_{i+1}(z)$, and for such $y$ the summand in (\ref{pf(Ni-formula)}) becomes $|X|^{-1}m_d\frak{u}_{i+1}(\tht_d)$ by (\ref{EiEj=m,u,tht}). By these comments, line (\ref{pf(Ni-formula)}) becomes
$$
0 =N_i{|X|}^{-1}m_d\frak{u}_i(\tht_d) + (|C|-N_i){|X|^{-1}}m_d\frak{u}_{i+1}(\tht_d).
$$
Solving the above equation for $N_i$, we obtain (\ref{N_i(x)}).
\end{proof}

\noindent
Referring to Lemma \ref{formula(Ni)}, for $0 \leq i \leq d-1$ the scalar $N_i(z)$ is independent of $z$.
Therefore we define $N_i=N_i(z)$. By construction $N_0=1$. For notational convenience, define $N_{-1}=0$. Recall the scalars $h, s, s^*, r_1, r_2$ from above Note \ref{h,h*}.

\begin{lemma}\label{formula(Ni);q,h,s,r}
For $0 \leq i \leq d-1$ both
\begin{align}
\label{N_i;q,h,s,r}
N_i & =  \frac{h(q^d-1)(r_1-s^*q^{i+1})(r_2-s^*q^{i+1})}{\tht_d s^*q^d(1-s^*q^{2i+2})}, \\
\label{|C|-N_i;q,h,s,r}
|C|-N_i &= -\frac{h(q^d-1)(1-r_1q^{i+1})(1-r_2q^{i+1})}{\tht_dq^d(1-s^*q^{2i+2})}.
\end{align}
\end{lemma}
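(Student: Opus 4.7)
The plan is to start from the formula (\ref{N_i(x)}) for $N_i$, rewrite it using the closed-form expression (\ref{u_i(tht_d)}) for $\mfrk{u}_i(\tht_d)$, and then substitute the $q$-Racah parametrizations of $\varphi_i$ and $\phi_i$ from Example \ref{q-rac;PA}. Since $\mfrk{u}_{i+1}(\tht_d)/\mfrk{u}_i(\tht_d) = \phi_{i+1}/\varphi_{i+1}$, dividing numerator and denominator in (\ref{N_i(x)}) by $\mfrk{u}_i(\tht_d)$ yields
\begin{equation*}
N_i = |C|\,\frac{\phi_{i+1}}{\phi_{i+1}-\varphi_{i+1}}, \qquad |C|-N_i = -|C|\,\frac{\varphi_{i+1}}{\phi_{i+1}-\varphi_{i+1}}.
\end{equation*}
This reduces the lemma to computing the ratios on the right using the explicit formulas (\ref{theta_i}), (\ref{varphi_i}), (\ref{phi_i}) for $\tht_j$, $\varphi_{i+1}$, $\phi_{i+1}$, together with $|C|=1-k/\tht_d=(\tht_d-\tht_0)/\tht_d$.

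The first auxiliary step is to express $|C|$ in $q$-Racah form. From (\ref{theta_i}) at $i=d$ and $i=0$ one gets $\tht_d-\tht_0 = h(1-q^d)(1-sq^{d+1})q^{-d}$ (up to a sign), so
\begin{equation*}
|C| = \frac{h(1-q^d)(1-sq^{d+1})q^{-d}}{\tht_d}.
\end{equation*}
The second and main step is to simplify $\phi_{i+1}-\varphi_{i+1}$. Both terms share the common factor $hh^{*}q^{-1-2i}(1-q^{i+1})(1-q^{i-d})$, so the question is to identify
\begin{equation*}
\frac{(r_1-s^*q^{i+1})(r_2-s^*q^{i+1})}{s^*} - (1-r_1q^{i+1})(1-r_2q^{i+1}).
\end{equation*}
Expanding both products and using the relation $r_1r_2=ss^*q^{d+1}$ from Example \ref{q-rac;PA}, the mixed $q^{i+1}(r_1+r_2)$ terms cancel and one is left with $(sq^{d+1}-1)+s^*q^{2i+2}(1-sq^{d+1})$, which factors as $(sq^{d+1}-1)(1-s^*q^{2i+2})$. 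I expect this factorization to be the main (though still routine) obstacle, since it is the only place where the constraint $r_1r_2=ss^*q^{d+1}$ is genuinely used.

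With that in hand, the rest is a bookkeeping substitution. Plugging the factored form of $\phi_{i+1}-\varphi_{i+1}$ together with $\phi_{i+1}$, $\varphi_{i+1}$, and $|C|$ into the two displayed expressions for $N_i$ and $|C|-N_i$, the common factor $hh^{*}q^{-1-2i}(1-q^{i+1})(1-q^{i-d})$ cancels against the analogous factors in $\phi_{i+1}-\varphi_{i+1}$; the factor $(1-sq^{d+1})$ from $|C|$ cancels against $(sq^{d+1}-1)$ from $\phi_{i+1}-\varphi_{i+1}$, producing the overall signs; and shifting $q^{i-d}=-q^{-d}(q^d-1)$ times $-q^{i}$ rearranges to the factor $q^d$ in the denominators of (\ref{N_i;q,h,s,r}) and (\ref{|C|-N_i;q,h,s,r}). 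Matching each surviving factor against the claimed formulas completes the proof.
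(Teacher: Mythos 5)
Your argument is correct, and it reaches the same intermediate identity as the paper but finishes it by a different computation. Like the paper, you reduce (\ref{N_i(x)}) via (\ref{u_i(tht_d)}) to $N_i=|C|\,\phi_{i+1}/(\phi_{i+1}-\varphi_{i+1})$ (the paper's form (\ref{pf(N_i;q,h,s,r)}) is the same statement since $|C|=(\tht_d-k)/\tht_d$). From there the paper does not expand the $q$-Racah products: it invokes two general Leonard-system identities from Terwilliger, namely $\varphi_{i+1}-\phi_{i+1}=(\tht^*_{i+1}-\tht^*_i)\sum_{h=0}^{i}(\tht_h-\tht_{d-h})$ and the closed form of $\sum_{h=0}^{i}\frac{\tht_h-\tht_{d-h}}{\tht_0-\tht_d}$, arriving at (\ref{pf2(N_i;q,h,s,r)}) before substituting (\ref{theta^*_i}) and (\ref{phi_i}); it then gets (\ref{|C|-N_i;q,h,s,r}) by combining (\ref{N_i;q,h,s,r}) with (\ref{|C|}) and eliminating $s$. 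You instead substitute (\ref{varphi_i}), (\ref{phi_i}) directly and factor
\begin{equation*}
\frac{(r_1-s^*q^{i+1})(r_2-s^*q^{i+1})}{s^*}-(1-r_1q^{i+1})(1-r_2q^{i+1})=(sq^{d+1}-1)(1-s^*q^{2i+2}),
\end{equation*}
which I checked and which is exactly where $r_1r_2=ss^*q^{d+1}$ enters, as you say; you also treat $|C|-N_i=-|C|\,\varphi_{i+1}/(\phi_{i+1}-\varphi_{i+1})$ symmetrically rather than deriving it from the first formula. Your route is self-contained (no appeal to the cited lemmas of Terwilliger) at the cost of a direct expansion; the paper's route avoids expanding four-factor products and reuses standard parameter-array identities. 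One small inaccuracy in your last sentence: the factor $q^d$ in the denominators does not come from any rearrangement of $q^{i-d}$ (the factors $(1-q^{i+1})(1-q^{i-d})$ cancel entirely in the ratio); it comes from $|C|=h(1-q^d)(1-sq^{d+1})/(\tht_dq^d)$, with the sign $(1-sq^{d+1})/(sq^{d+1}-1)=-1$ producing $(q^d-1)$. This does not affect the validity of the proof.
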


\begin{proof}
We first show (\ref{N_i;q,h,s,r}). To do this, evaluate (\ref{N_i(x)})
using $|C|=1-{k}/{\tht_d}$ and (\ref{u_i(tht_d)}).
Simplify the result to find
\begin{equation}\label{pf(N_i;q,h,s,r)}
N_i = \frac{k-\tht_d}{\tht_d}\frac{\phi_{i+1}}{\varphi_{i+1}-\phi_{i+1}}.
\end{equation}
By \cite[Lemma 6.5]{Terwilli},
$$\varphi_{i+1}-\phi_{i+1} = (\tht^*_{i+1}-\tht^*_i)\sum^i_{h=0}(\tht_h-\tht_{d-h}).$$
Also by \cite[Lemma 10.2]{Terwilli},
$$\sum^i_{h=0}\frac{\tht_h-\tht_{d-h}}{\tht_0-\tht_d} = \frac{(q^{i+1}-1)(q^{d-i}-1)}{(q-1)(q^d-1)}.$$
Recall $k=\tht_0$. 
Evaluate (\ref{pf(N_i;q,h,s,r)}) using these comments and simplify the result to find
\begin{equation}\label{pf2(N_i;q,h,s,r)}
N_i = \frac{\phi_{i+1}(q-1)(q^d-1)}
	{\tht_d(\tht^*_{i+1}-\tht^*_i)(q^{i+1}-1)(q^{d-i}-1)}.
\end{equation}
In (\ref{pf2(N_i;q,h,s,r)}), 
evaluate $\tht^*_{i+1}-\tht^*_i$ using (\ref{theta^*_i}) and 
evaluate $\phi_{i+1}$ using (\ref{phi_i}). Simplify the result to get (\ref{N_i;q,h,s,r}).\\
We now verify (\ref{|C|-N_i;q,h,s,r}).
In the equation $|C|=1-k/\tht_d$ , evaluate the right-hand side using (\ref{theta_i}) 
and $k=\tht_0$ to get
\begin{equation}\label{|C|}
|C| = {h(1-q^d)(1-sq^{d+1})}/{\tht_dq^d}.
\end{equation}
Combine this with (\ref{N_i;q,h,s,r}) and eliminate $s$ using $r_1r_2=ss^*q^{d+1}$.
Simplify the result to get (\ref{|C|-N_i;q,h,s,r}).
\end{proof}

\begin{corollary}\label{0<Ni<C}
We have $0 < N_i < |C|$ for $0 \leq i \leq d-1$.
\end{corollary}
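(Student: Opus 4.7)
The plan is to deduce both strict inequalities from Lemma \ref{formula(Ni);q,h,s,r} together with the non-degeneracy conditions built into the $q$-Racah parameter array (Example \ref{q-rac;PA}). The key observation is that $N_i = |\Gamma_i(z)\cap C|$ is by definition a non-negative integer, and moreover $N_i \leq |C|$ is immediate since $\Gamma_i(z)\cap C \subseteq C$. Thus both $N_i$ and $|C|-N_i$ are non-negative integers, and the corollary will follow once we verify that neither of them is zero.

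First I would check that the common denominator $\tht_d\, s^* q^d (1-s^*q^{2i+2})$ appearing in (\ref{N_i;q,h,s,r}) and (\ref{|C|-N_i;q,h,s,r}) is nonzero. Indeed $\tht_d\leq-1$ from the discussion preceding Lemma \ref{JKT}, while $s^*$ and $q$ are nonzero by assumption; and $1-s^*q^{2i+2}\neq 0$ because the $q$-Racah non-degeneracy hypothesis rules out $s^*q^j=1$ for $2\leq j\leq 2d$, and $j=2i+2$ lies in this range for $0\leq i\leq d-1$.

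Next I would verify that the numerator in (\ref{N_i;q,h,s,r}) has no vanishing factor. The scalar $h$ is nonzero by definition and $q^d-1\neq 0$ because $q^j\neq 1$ for $1\leq j\leq d$. The factors $r_\ell-s^*q^{i+1}$ ($\ell=1,2$) are nonzero because the $q$-Racah hypothesis asserts $s^*q^j/r_\ell\neq 1$ for $1\leq j\leq d$, applied with $j=i+1\in[1,d]$. Hence $N_i\neq 0$, and since $N_i$ is a non-negative integer, $N_i\geq 1>0$. A parallel argument handles $|C|-N_i$: the only factors to inspect in (\ref{|C|-N_i;q,h,s,r}) beyond those already treated are $1-r_\ell q^{i+1}$ ($\ell=1,2$), which are nonzero by the $q$-Racah hypothesis $r_\ell q^j\neq 1$ for $1\leq j\leq d$ with $j=i+1$. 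Hence $|C|-N_i\neq 0$, and being a non-negative integer it is at least $1$, so $N_i<|C|$.

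There is essentially no obstacle in this strategy; the only point demanding care is matching the index ranges of the non-degeneracy conditions (which are stated for $1\leq i\leq d$ and $2\leq i\leq 2d$ in Example \ref{q-rac;PA}) with the index shifts that appear in the factors of $N_i$ and $|C|-N_i$.
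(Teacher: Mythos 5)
Your proof is correct and follows essentially the same route as the paper: the bound $0 \leq N_i \leq |C|$ comes from the definition (\ref{Def(Ni)}), and strictness is obtained by checking that every factor in (\ref{N_i;q,h,s,r}) and (\ref{|C|-N_i;q,h,s,r}) is nonzero via the non-degeneracy conditions of Example \ref{q-rac;PA}. Your explicit verification of the index ranges ($i+1\in[1,d]$ and $2i+2\in[2,2d]$) simply spells out what the paper leaves implicit.
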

\begin{proof}
By (\ref{Def(Ni)}), $0 \leq N_i \leq |C|$. We show $N_i \ne 0$ and $N_i \ne |C|$. To do this we use Lemma \ref{formula(Ni);q,h,s,r}. In lines (\ref{N_i;q,h,s,r}), (\ref{|C|-N_i;q,h,s,r}) each factor is nonzero by the inequalities in Example \ref{q-rac;PA}. The result follows.
\end{proof}

\noindent
For $0 \leq  i \leq d-1$ and $z \in C_i$, define
\begin{equation}\label{int_num;C}
 \wt{c}_i(z)	= | \Ga(z) \cap C_{i-1} |, \qquad
 \ta_i(z)	= |\Ga(z) \cap C_i|, \qquad
 \tb_i(z)	= |\Ga(z) \cap C_{i+1}|,
\end{equation}
where $C_{-1}$ and $C_{d}$ are empty sets. Observe
\begin{align}\label{al+be+ga=k}
&&\tc_i(z)+\ta_i(z)+\tb_i(z) = k.  & &
\end{align}
The following theorem shows that  $\ta_i(z), \tb_i(z), \tc_i(z)$ are independent of $z$.

\begin{theorem}\label{gamma;beta} The following {\rm(i)}, {\rm(ii)} hold:
\begin{enumerate}
\item[\rm(i)] For $1\leq i \leq d-1$ and $z \in C_i$, 
\begin{equation}\label{*} \tc_i(z) = \frac{N_i}{N_{i-1}}c_i; \qquad \quad \end{equation}
\item[\rm(ii)] For $0\leq i \leq d-2$ and $z \in C_i$, \quad
\begin{equation}\label{**} \tb_i(z) = \frac{|C|-N_i}{|C|-N_{i+1}}b_{i+1}.\end{equation}
\end{enumerate}
\end{theorem}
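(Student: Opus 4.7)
The plan is a double-counting argument, done separately for each part. For part (i), fix $z \in C_i$ with $1 \leq i \leq d-1$ and consider
\[
S = \{(y, w) \in \Ga(z) \times C \,:\, \partial(z, w) = i, \ \partial(y, w) = i-1\}.
\]
Counting $|S|$ by first fixing $w$: there are $N_i$ vertices $w \in C$ with $\partial(z, w) = i$, and for each such $w$ the distance-regularity of $\Ga$ gives exactly $c_i$ neighbors $y$ of $z$ at distance $i-1$ from $w$. Hence $|S| = N_i c_i$.

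Counting $|S|$ by first fixing $y$: I would verify that whenever $y \in \Ga(z)$ and $w \in C$ satisfy $\partial(y, w) = i-1$, the triangle inequality together with $z \in C_i$ forces $\partial(z, w) = i$, and also $y \in C_{i-1}$ (since $i-1 \geq \partial(y, C) \geq \partial(z, C) - 1 = i-1$). Conversely, for each of the $\tc_i(z)$ neighbors $y \in \Ga(z) \cap C_{i-1}$, the definition of $C_{i-1}$ gives exactly $N_{i-1}$ vertices $w \in C$ with $\partial(y, w) = i-1$, and each such $w$ automatically satisfies $\partial(z, w) = i$ as just noted. So $|S| = \tc_i(z) N_{i-1}$, and equating the two counts yields (i).

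Part (ii) follows by the same method applied to
\[
S' = \{(y, w) \in \Ga(z) \times C \,:\, \partial(z, w) = i+1, \ \partial(y, w) = i+2\}.
\]
The only new ingredient is the clique property of $C$: since any two vertices of $C$ are equal or adjacent, one has $|\partial(z,w) - \partial(z,w')| \leq 1$ for all $w, w' \in C$, so $\partial(z,w) \leq \partial(z, C) + 1 = i+1$ for every $w \in C$. Combined with the lower bound $\partial(z,w) \geq \partial(y,w) - 1 = i+1$, this pins down $\partial(z,w) = i+1$ whenever $\partial(y, w) = i+2$ and $y \in \Ga(z)$. The first count then gives $|S'| = (|C|-N_i)\, b_{i+1}$, and the second gives $|S'| = \tb_i(z)(|C| - N_{i+1})$, yielding (ii).

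The argument is elementary: it uses only distance-regularity of $\Ga$, the definitions of $C_i$ and $N_i$, and the fact that $C$ is a clique; the $q$-Racah hypothesis and the Leonard system apparatus of Section~\ref{PALS} play no role here. The only point requiring some care is checking that the distance constraints appearing in $S$ and $S'$ are forced by the remaining conditions, so that both counts really are computing the same cardinality; I expect no more serious obstacle than this bookkeeping.
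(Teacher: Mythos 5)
Your proof is correct and uses essentially the same argument as the paper: both parts are proved by counting the same set of ordered pairs (a vertex of $C$ at the relevant distance from $z$, a neighbor of $z$) in two ways, with the membership constraints pinned down by the triangle inequality and the clique property of $C$. You actually spell out slightly more of the routine bookkeeping than the paper does, but the approach is identical.
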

\begin{proof}
(i) Let $m$ denote the number of ordered pairs $(y,w)$ such that $y \in \Ga_i(z) \cap C$ and
$w \in \Ga(z)\cap C_{i-1}$ and $\partial(y,w)=i-1$. We compute $m$ in two ways. 
First, there are $N_i$ choices for $y$ since $|\Ga_i(z)\cap C| = N_i$. For 
$y \in \Ga_i(z) \cap C$ there are exactly $c_i$ vertices $w \in \Ga(z)\cap C_{i-1}$ such that
$\partial(y,w)=i-1$. Therefore $m=N_ic_i$.
Secondly, there are $\tc_i(z)$ choices for $w$ since $|\Ga(z)\cap C_{i-1}|=\tc_i(z)$.
For $w \in \Ga(z)\cap C_{i-1}$ there are exactly $N_{i-1}$ vertices $y \in  \Ga_i(z) \cap C$ such that
$\partial(y,w)=i-1$. Therefore $m=\tc_i(z)N_{i-1}$.
By these comments $N_ic_i=\tc_i(z)N_{i-1}$. The result follows.

\medskip\noindent
(ii) In a similar manner to (i), compute the number of ordered pairs $(y,w)$ such that
$y\in \Ga_{i+1}(z) \cap C$ and $w \in \Ga(z)\cap C_{i+1}$ and $\partial(y,w)=i+2$. Use $|\Ga_{i+1}(z)\cap C|=|C|-N_{i}$ and $|\Ga(z) \cap C_{i+1}|=\tb_i(z)$ to get
the result.
\end{proof}

\noindent
By (\ref{al+be+ga=k}) and Theorem \ref{gamma;beta},
for $0 \leq i \leq d-1$ and $z \in C_i$ the scalars $\ta_i(z),~\tb_i(z),~\tc_i(z)$ are independent of $z$. We define
\begin{equation}\label{def(ta,tb,tc)}
\ta_i = \ta_i(z), \qquad \qquad \tb_i = \tb_i(z), \qquad \qquad \tc_i = \tc_i(z).
\end{equation}
Note that
$$
 \tc_0 = 0, \qquad  \qquad \ta_0 = |C|-1, \qquad \qquad   \tb_0 = k-|C|+1, 
$$
and $\tb_{d-1}=0$. For notational convenience, define $\wt{b}_{-1}=0$ and $\wt{c}_d=0$.

\begin{corollary}{\rm\cite[Section 11.1]{BCN}}
The partition $\{C_i\}^{d-1}_{i=0}$ is equitable.
\end{corollary}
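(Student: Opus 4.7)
The plan is to observe that this corollary is an almost immediate consequence of Theorem \ref{gamma;beta} together with the triangle inequality. Recall that an equitable partition of $X$ is one for which, given any two cells $C_i$ and $C_j$, every vertex $z \in C_i$ has the same number of neighbors in $C_j$. So I need to show that for all $i,j$ with $0 \leq i,j \leq d-1$, the quantity $|\Ga(z) \cap C_j|$ depends only on $i$ and $j$, not on the particular choice of $z \in C_i$.

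First I would fix $z \in C_i$ and any neighbor $w \in \Ga(z)$. By the triangle inequality applied to the distance $\partial(\,\cdot\,,C)$, we have $|\partial(w,C) - \partial(z,C)| \leq \partial(z,w) = 1$. Consequently $w \in C_{i-1} \cup C_i \cup C_{i+1}$, so $|\Ga(z) \cap C_j| = 0$ whenever $|i-j|>1$. For such $j$ the number is trivially independent of $z$.

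Next I would handle the remaining three cases $j \in \{i-1, i, i+1\}$. By definitions (\ref{int_num;C}) these counts are $\tc_i(z)$, $\ta_i(z)$, and $\tb_i(z)$, respectively. By Theorem \ref{gamma;beta}(i) the value $\tc_i(z)$ equals $(N_i/N_{i-1})c_i$ (when $i \geq 1$), and by Theorem \ref{gamma;beta}(ii) the value $\tb_i(z)$ equals $((|C|-N_i)/(|C|-N_{i+1}))b_{i+1}$ (when $i \leq d-2$); the boundary cases $\tc_0(z)=0$ and $\tb_{d-1}(z)=0$ are immediate from the conventions $C_{-1}=\emptyset$ and $C_d=\emptyset$. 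In particular both $\tc_i(z)$ and $\tb_i(z)$ depend only on $i$. Finally, (\ref{al+be+ga=k}) gives $\ta_i(z) = k - \tc_i(z) - \tb_i(z)$, so $\ta_i(z)$ also depends only on $i$. Hence all three neighbor counts equal the constants $\tc_i, \ta_i, \tb_i$ defined in (\ref{def(ta,tb,tc)}), and the partition $\{C_i\}_{i=0}^{d-1}$ is equitable.

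There is no real obstacle here; the entire substance is packaged into the already-proved Theorem \ref{gamma;beta}, whose proof used a double-counting argument exploiting that $N_i$ is constant on $C_i$. The only thing to be careful about is to explicitly invoke the triangle inequality to rule out neighbors in cells $C_j$ with $|i-j|>1$, and to dispose of the boundary cases $i=0$ and $i=d-1$ using the empty-set conventions.
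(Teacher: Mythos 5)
Your proof is correct and takes essentially the same route as the paper, which simply cites the relation $\tc_i(z)+\ta_i(z)+\tb_i(z)=k$ from (\ref{al+be+ga=k}) together with Theorem \ref{gamma;beta}. Your added remarks (the triangle-inequality observation that neighbors lie only in $C_{i-1}\cup C_i\cup C_{i+1}$, and the boundary conventions) just make explicit what the paper leaves implicit.
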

\begin{proof}
Immediate from (\ref{al+be+ga=k}) and Theorem \ref{gamma;beta}.
\end{proof}

\begin{corollary}\label{be,ga(q-term)}
The following hold.
\begin{align}
& \tb_0  = 
\label{be_0;q-term}\frac{h(1-q^{-d+1})(1-r_1q)(1-r_2q)}
	{(1-s^*q^{3})}, & \\
& \tb_i  = 
\label{be_i;q-term}\frac{h(1-q^{i-d+1})(1-s^*q^{i+2})(1-r_1q^{i+1})(1-r_2q^{i+1})}
	{(1-s^*q^{2i+2})(1-s^*q^{2i+3})} &(1 \leq i \leq d-2),& \\
\label{ga_i;q-term}
& \tc_i = 
\frac{h(1-q^i)(1-s^*q^{i+d+1})(r_1-s^*q^{i+1})(r_2-s^*q^{i+1})}
	{s^*q^d(1-s^*q^{2i+1})(1-s^*q^{2i+2})} & (1 \leq i \leq d-2),&\\
\label{ga_(d-1);q-term}
& \tc_{d-1} = 
\frac{h(1-q^{d-1})(r_1-s^*q^{d})(r_2-s^*q^{d})}
	{s^*q^d(1-s^*q^{2d-1})}. &
\end{align}
\end{corollary}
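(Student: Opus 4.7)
The plan is to combine Theorem \ref{gamma;beta} with Lemma \ref{formula(Ni);q,h,s,r} and the explicit $q$-formulas (\ref{b_0;q-terms})--(\ref{c_d;q-terms}) for the primary intersection numbers. By Theorem \ref{gamma;beta} we have $\tb_i = \frac{|C|-N_i}{|C|-N_{i+1}}\,b_{i+1}$ and $\tc_i = \frac{N_i}{N_{i-1}}\,c_i$, so in each of the four cases the right-hand side is already a completely explicit rational function of $h, s^*, r_1, r_2, q$. The entire proof is then a matter of substitution followed by cancellation of common $q$-Pochhammer-like factors.

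For the $\tb$-identities I would first handle $\tb_0$ using $N_0 = 1$ and observe that (\ref{|C|-N_i;q,h,s,r}) at $i=0$ indeed produces the value $|C| - 1$ (which can be cross-checked against (\ref{|C|})). Multiplying by $b_1$ from (\ref{b_i;q-terms}) and by the reciprocal of (\ref{|C|-N_i;q,h,s,r}) at $i=1$, the prefactor $h(q^d-1)/(\tht_d q^d)$ cancels, as do the factors $(1-s^*q^2)$, $(1-s^*q^4)$ and $(1-r_jq^2)$; what remains is (\ref{be_0;q-term}). For $1 \leq i \leq d-2$ the same manipulation with the generic $b_{i+1}$ from (\ref{b_i;q-terms}) causes $(1-r_jq^{i+2})$ and $(1-s^*q^{2i+4})$ to cancel, leaving (\ref{be_i;q-term}). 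A minor point to watch here is the sign: (\ref{|C|-N_i;q,h,s,r}) carries an explicit minus sign and a factor $(q^d - 1)$ rather than $(1-q^d)$, so the two copies must be tracked consistently so that the signs combine to $+$.

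For the $\tc$-identities the strategy is symmetric: substitute $N_i$ and $N_{i-1}$ from (\ref{N_i;q,h,s,r}) and $c_i$ from (\ref{c_i;q-terms}). For $1 \leq i \leq d-2$, the common factor $h(q^d-1)/(\tht_d s^* q^d)$ cancels in the ratio $N_i/N_{i-1}$, and the factors $(r_j - s^*q^i)$ and $(1-s^*q^{2i})$ cancel against the corresponding terms in $c_i$, leaving precisely (\ref{ga_i;q-term}). The case $\tc_{d-1}$ is treated by the same substitution using $c_{d-1}$ from (\ref{c_i;q-terms}) evaluated at $i=d-1$; the factor $(1-s^*q^{2d})$ appearing in $N_{d-1}$ through the denominator cancels against the $(1-s^*q^{2d})$ appearing in $c_{d-1}$, which is exactly the simplification that turns the general formula (\ref{ga_i;q-term}) into the cleaner expression (\ref{ga_(d-1);q-term}).

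The main obstacle is purely combinatorial: there is no conceptual step beyond the ones already present in Theorem \ref{gamma;beta} and Lemma \ref{formula(Ni);q,h,s,r}, but each of the four identities requires careful bookkeeping of roughly half a dozen factors, together with the sign conventions mentioned above. The inequalities of Example \ref{q-rac;PA} guarantee that every denominator encountered along the way is nonzero, so no additional hypotheses are needed.
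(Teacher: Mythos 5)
Your proposal is correct and follows essentially the same route as the paper: the paper's proof also evaluates the relations $\tb_i = \frac{|C|-N_i}{|C|-N_{i+1}}b_{i+1}$ and $\tc_i = \frac{N_i}{N_{i-1}}c_i$ from Theorem \ref{gamma;beta} using (\ref{b_i;q-terms}), (\ref{c_i;q-terms}) and Lemma \ref{formula(Ni);q,h,s,r}, then simplifies. Your factor-by-factor cancellation bookkeeping (including the sign and the $(1-s^*q^{2d})$ cancellation in the $\tc_{d-1}$ case) is accurate, so no issues remain.
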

\begin{proof}
To get (\ref{be_0;q-term}), (\ref{be_i;q-term}) evaluate (\ref{**}) using (\ref{b_i;q-terms}), (\ref{|C|-N_i;q,h,s,r})
and simplify the result. Lines (\ref{ga_i;q-term}), (\ref{ga_(d-1);q-term}) are similarly obtained using (\ref{c_i;q-terms}), (\ref{N_i;q,h,s,r}).
\end{proof}

\begin{lemma}\label{tb,tc;nonzero}
We have $ \tb_i \ne 0$ for $0 \leq i \leq d-2$ and  $\tc_i \ne 0$ for $1 \leq i \leq d-1.$
\end{lemma}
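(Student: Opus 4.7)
The plan is to read the conclusion directly off the explicit $q$-Racah expressions in Corollary~\ref{be,ga(q-term)} and invoke the non-degeneracy assumptions stated in Example~\ref{q-rac;PA}. Concretely, it suffices to check that every factor appearing in the numerators of the formulas (\ref{be_0;q-term})--(\ref{ga_(d-1);q-term}) is nonzero, since the denominators are automatically nonzero (otherwise these formulas would not define scalars). The scalar $h$ is nonzero by Note~\ref{h,h*}, and $s^*, q, r_1, r_2$ are nonzero by hypothesis, so the only factors requiring attention are those of the form $1-q^a$, $1-s^*q^a$, $1-r_jq^a$, and $r_j-s^*q^a$ with $j\in\{1,2\}$.

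First I would dispose of the factors in the numerators of $\tb_i$ for $0\le i\le d-2$. The factor $1-q^{i-d+1}$ equals $-q^{i-d+1}(q^{d-i-1}-1)$ and is nonzero because $q^{j}\ne 1$ for $1\le j\le d$, applied with $j=d-i-1\in\{1,\ldots,d-1\}$. The factors $1-r_jq^{i+1}$ ($j=1,2$) are nonzero because $r_jq^{i+1}\ne 1$ for $i+1\in\{1,\ldots,d-1\}\subseteq\{1,\ldots,d\}$. The factor $1-s^*q^{i+2}$ (present only when $i\ge 1$) is nonzero because $s^*q^{i+2}\ne 1$ for $i+2\in\{3,\ldots,d\}\subseteq\{2,\ldots,2d\}$.

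Next I would handle the numerators of $\tc_i$ for $1\le i\le d-1$. The factor $1-q^i$ is nonzero for $i\in\{1,\ldots,d-1\}\subseteq\{1,\ldots,d\}$. The factor $1-s^*q^{i+d+1}$ (present only when $i\le d-2$) is nonzero because $i+d+1\in\{d+2,\ldots,2d-1\}\subseteq\{2,\ldots,2d\}$. Each factor $r_j-s^*q^{i+1}$ can be rewritten as $r_j(1-s^*q^{i+1}/r_j)$; since $r_j\ne 0$ and $s^*q^{i+1}/r_j\ne 1$ for $i+1\in\{2,\ldots,d\}\subseteq\{1,\ldots,d\}$, this factor is nonzero. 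The same verification handles $\tc_{d-1}$, where $r_j - s^*q^d$ corresponds to the boundary case $i+1=d$.

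There is no substantive obstacle here: the argument is purely bookkeeping, matching each exponent against the list of non-degeneracy inequalities from Example~\ref{q-rac;PA}. The only mild subtlety is the need to rewrite $r_j-s^*q^{i+1}$ in the form $r_j(1-s^*q^{i+1}/r_j)$ so that it fits the template $s^*q^i/r_j\ne 1$ from the non-degeneracy conditions.
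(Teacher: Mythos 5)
Your proof is correct and follows exactly the second of the two routes the paper's proof indicates, namely reading the nonvanishing off Corollary~\ref{be,ga(q-term)} together with the non-degeneracy inequalities of Example~\ref{q-rac;PA}; your exponent bookkeeping checks out, including the rewriting $r_j-s^*q^{i+1}=r_j(1-s^*q^{i+1}/r_j)$. (The paper also notes an alternative combinatorial route via Corollary~\ref{0<Ni<C} and Theorem~\ref{gamma;beta}, but your argument is one of the intended proofs.)
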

\begin{proof}
Either use Corollary \ref{0<Ni<C} and Theorem \ref{gamma;beta} or use Example \ref{q-rac;PA} and Corollary \ref{be,ga(q-term)}.
\end{proof}

\noindent
We now recall the Terwilliger algebra associated with $C$ \cite{Suzuki}.
For $0 \leq i \leq d-1$, let $\wt{E}^*_{i}$ denote the diagonal matrix in $\MX$ with $(y,y)$-entry
\begin{equation}\label{def(E(C))}
(\wt{E}^*_{i})_{yy} =
\begin{cases}
1 & \text{if } y \in C_i \\
0 & \text{if } y \notin C_i
\end{cases}
\qquad (y \in X).
\end{equation}
We call $\wt{E}^*_{i}$ the $i$-th {\it dual primitive idempotent of $\Ga$ with respect to $C$}.
Observe that (i) $I=\sum^{d-1}_{i=0}\wt{E}^*_{i}$; 
(ii) $\overline{\wt{E}^*_{i}}=\wt{E}^*_{i}~(0\leq i \leq d-1)$;
(iii) $(\wt{E}^{*}_{i})^t = \wt{E}^{*}_{i}~(0 \leq i \leq d-1)$;
(iv) $\wt{E}^{*}_{i}\wt{E}^{*}_{j} = \delta_{ij}\wt{E}^{*}_{i}~(0 \leq i,j \leq d-1)$.
By these facts, $\{\wt{E}^*_{i}\}^{d-1}_{i=0}$ forms a basis for a commutative subalgebra $\wt{\mcal{M}}^*$ of $\MX$. 
The algebra $\wt{\mcal{M}}^*$ is semisimple since it is closed under the conjugate-transpose map. 
We comment on how $\mcal{M}^*$ and $\wt{\mcal{M}}^*$ are related. 
For these subalgebras each element is a diagonal matrix. Therefore any element of $\mcal{M}^*$ commutes with any element of $\wt{\mcal{M}}^*$.

\medskip \noindent
Define the diagonal matrix $\wt{A}^* \in \MX$ by
\begin{equation}\label{def(A*C)}
\wt{A}^*= {|C|}^{-1}\sum_{y \in C}A^*(y). 
\end{equation}
We call $\wt{A}^*$ the {\it dual adjacency matrix of $\Ga$ with respect to $C$}. 

\begin{lemma}\label{wt(A)*=wt(tht)*E*} 
With the above notation,
\begin{equation}\label{A*C=sum(tht*E*C)}
\wt{A}^*=\sum^{d-1}_{i=0}\wt{\tht}^*_i\wt{E}^{*}_{i},
\end{equation}
where 
\begin{equation}\label{wt(tht)*}
\hspace{3cm} \wt{\tht}^*_i = \frac{N_i\tht^*_i + (|C|-N_i)\tht^*_{i+1}}{|C|} \qquad \qquad (0 \leq i \leq d-1).
\end{equation}
\end{lemma}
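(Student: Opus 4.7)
\medskip
\noindent
\textbf{Proof plan.} Since $\wt{A}^*$ is defined in (\ref{def(A*C)}) as a sum of diagonal matrices $A^*(y)$, it is itself diagonal. Thus to prove the identity $\wt{A}^* = \sum_{i=0}^{d-1} \wt{\tht}^*_i \wt{E}^*_{i}$, it suffices, for each $z \in X$, to compare the $(z,z)$-entries on both sides. By (\ref{def(E(C))}), the right-hand side has $(z,z)$-entry equal to $\wt{\tht}^*_i$ for the unique $i$ with $z \in C_i$. So the task reduces to showing $(\wt{A}^*)_{zz} = \wt{\tht}^*_i$ whenever $\partial(z,C) = i$.

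\medskip
\noindent
I would carry this out as follows. Fix $z \in X$ with $\partial(z,C) = i$, where $0 \leq i \leq d-1$. By the definition of $A^*(y)$ from (\ref{def(E^*(x))}) and the discussion around it, $A^*(y)$ is diagonal with $(z,z)$-entry $\tht^*_{\partial(y,z)}$. Therefore
\begin{equation*}
(\wt{A}^*)_{zz} = |C|^{-1}\sum_{y \in C}\tht^*_{\partial(y,z)}.
\end{equation*}
The key geometric observation is that, since $C$ is a clique, any two vertices of $C$ are at distance at most $1$, and by the triangle inequality the distance $\partial(y,z)$ for $y \in C$ can take only the two values $i$ and $i+1$: it is at least $i$ by the definition of $\partial(z,C)$, and for any two $y, y' \in C$ we have $|\partial(y,z) - \partial(y',z)| \leq \partial(y,y') \leq 1$, so all these distances lie in $\{i, i+1\}$. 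By (\ref{Def(Ni)}) exactly $N_i$ of the $y \in C$ satisfy $\partial(y,z) = i$, and the remaining $|C|-N_i$ satisfy $\partial(y,z) = i+1$. Substituting,
\begin{equation*}
(\wt{A}^*)_{zz} = |C|^{-1}\bigl(N_i \tht^*_i + (|C|-N_i)\tht^*_{i+1}\bigr) = \wt{\tht}^*_i,
\end{equation*}
which matches the $(z,z)$-entry of $\sum_j \wt{\tht}^*_j \wt{E}^*_{j}$ for $z \in C_i$. This yields (\ref{A*C=sum(tht*E*C)}).

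\medskip
\noindent
\textbf{Main obstacle.} The argument is essentially routine once one has the two-valued distance observation; the only substantive point to justify carefully is that for $y \in C$ the distance $\partial(y,z)$ can only be $i$ or $i+1$. This is immediate from the triangle inequality together with the fact that $C$ is a clique (so any two members of $C$ have distance $\leq 1$). Everything else is bookkeeping that unpacks the definitions (\ref{def(A*C)}), (\ref{Def(Ni)}), (\ref{def(E(C))}) and the well-definedness of $N_i$ from Lemma \ref{formula(Ni)}.
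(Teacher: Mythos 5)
Your proof is correct and follows essentially the same route as the paper: both reduce to comparing $(z,z)$-entries for $z\in C_i$ and count that exactly $N_i$ vertices $y\in C$ lie at distance $i$ from $z$ and the remaining $|C|-N_i$ at distance $i+1$, each $A^*(y)$ contributing $\tht^*_{\partial(y,z)}$. Your explicit justification via the triangle inequality that $\partial(y,z)\in\{i,i+1\}$ is a detail the paper leaves implicit, but the argument is the same.
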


\begin{proof} For $0\leq i \leq d-1$ and  $z\in C_i$ it suffices to show that $\wt{\tht}^*_i$ is the $(z,z)$-entry of $\wt{A}^*$. Consider the right-hand side in (\ref{def(A*C)}).
Among $y\in C$ there are exactly $N_i$ with $\partial(y,z)=i$, and for such $y$ the $(z,z)$-entry of $A^*(y)$ is $\tht_i^*$. The remaining $y \in C$ satisfy $\partial(y,z)=i+1$, and for such $y$ the $(z,z)$-entry of $A^*(y)$ is $\tht_{i+1}^*$. The result follows.
\end{proof}

\begin{corollary}
$\wt{A}^* \in \wt{\mcal{M}}^*$. Moreover $\{\wt{\tht}^*_i\}^{d-1}_{i=0}$ are the eigenvalues of $\wt{A}^*$.
\end{corollary}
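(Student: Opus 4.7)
The plan is to reduce both assertions directly to the decomposition (\ref{A*C=sum(tht*E*C)}) supplied by Lemma \ref{wt(A)*=wt(tht)*E*}, together with the defining property (\ref{def(E(C))}) of the idempotents $\wt{E}^{*}_{i}$. There is essentially no obstacle here: the corollary is a structural repackaging of the preceding lemma, and the work consists in unpacking what $\wt{E}^{*}_{i}$ does on the standard basis of $\V$.

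For the first assertion, I would note that by definition $\{\wt{E}^{*}_{i}\}^{d-1}_{i=0}$ is a $\mbb{C}$-basis for $\wt{\mcal{M}}^*$. The identity (\ref{A*C=sum(tht*E*C)}) exhibits $\wt{A}^*$ as an explicit $\mbb{C}$-linear combination of this basis, so $\wt{A}^*\in\wt{\mcal{M}}^*$.

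For the second assertion, I would proceed by displaying an eigenbasis. From (\ref{def(E(C))}), for each $y\in X$ the standard basis vector $\hat y$ satisfies $\wt{E}^{*}_{i}\hat y=\hat y$ when $y\in C_i$ and $\wt{E}^{*}_{j}\hat y=0$ for $j\neq i$. Hence applying (\ref{A*C=sum(tht*E*C)}) yields $\wt{A}^*\hat y=\wt{\tht}^{*}_{i}\hat y$ whenever $y\in C_i$. Consequently $\{\hat y:y\in X\}$ is an eigenbasis of $\wt{A}^*$ whose eigenvalues lie in $\{\wt{\tht}^{*}_{i}\}^{d-1}_{i=0}$; conversely each $\wt{\tht}^{*}_{i}$ is attained, since the cell $C_i$ is nonempty by definition (\ref{C_i}) and by the fact that the covering radius of $C$ equals $d-1$. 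Therefore $\{\wt{\tht}^{*}_{i}\}^{d-1}_{i=0}$ is precisely the set of eigenvalues of $\wt{A}^*$, with $\wt{E}^{*}_{i}\V$ sitting inside the $\wt{\tht}^{*}_{i}$-eigenspace.

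If the statement is to be read as asserting mutual distinctness of the $\wt{\tht}^{*}_{i}$, this can be extracted without additional work from (\ref{wt(tht)*}): by Corollary \ref{0<Ni<C} the ratio $N_i/|C|$ lies strictly between $0$ and $1$, so $\wt{\tht}^{*}_{i}$ is a strict convex combination of $\tht^{*}_{i}$ and $\tht^{*}_{i+1}$. Since the $\tht^{*}_{i}$ are mutually distinct (and in fact strictly monotonic under the standing $Q$-polynomial ordering of this paper), these convex combinations interlace and are pairwise distinct. But this refinement is not needed for the literal statement of the corollary and can be deferred.
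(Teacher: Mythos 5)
Your proof is correct and is essentially the paper's argument: the corollary is immediate from Lemma \ref{wt(A)*=wt(tht)*E*}, since (\ref{A*C=sum(tht*E*C)}) writes $\wt{A}^*$ in the basis $\{\wt{E}^*_i\}^{d-1}_{i=0}$ of $\wt{\mcal{M}}^*$ and each nonempty cell $C_i$ supplies eigenvectors with eigenvalue $\wt{\tht}^*_i$. Your deferred aside on distinctness is indeed not needed here, and as written it would require care: the paper never assumes the $\tht^*_i$ are monotone (only the $\tht_i$), so the convexity/interlacing argument is not justified as stated; the paper instead proves distinctness of the $\wt{\tht}^*_i$ later, in Lemma \ref{tht;til;dist} and Corollary \ref{tht;til;md}, from the explicit $q$-Racah formulas.
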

\begin{proof}
Immediate from Lemma \ref{wt(A)*=wt(tht)*E*}.
\end{proof}

\begin{lemma}\label{(tht)tilde*}Referring to {\rm(\ref{wt(tht)*})}, 
\begin{equation}\label{(tht)dist} 
\wt{\tht}^*_i = {\wt{\tht}}^*_0 + \wt{h}^*(1-q^i)(1-\wt{s}^{\hspace{0.05cm}*}q^{i+1})q^{-i} \qquad (0 \leq i \leq d-1),
\end{equation}
where 
\begin{align}
\label{(tht)tilde*(1)}
&\wt{s}^{\hspace{0.05cm}*} = s^*q, \qquad \qquad \wt{h}^*=\tfrac{s^*q^{-1}-r_1r_2}{s^*-r_1r_2}h^*,&&&&\\
\label{(tht)tilde*(2)}
&\wt{\tht}^*_0 = \tht^*_0 +h^*\left(\tfrac{s^*(q-1)(r_1+r_2)}{s^*-r_1r_2}+\tfrac{(s^*q^{-1}-r_1r_2)(1+s^*q^2)}{s^*-r_1r_2} - 1 - s^*q  \right).&&&&
\end{align}
Moreover $\wt{h}^*$ is nonzero.
\end{lemma}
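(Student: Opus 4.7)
The plan is to verify (\ref{(tht)dist})--(\ref{(tht)tilde*(2)}) by substituting the known closed forms into (\ref{wt(tht)*}) and matching coefficients. First, I rewrite (\ref{theta^*_i}) as
$$
\tht^*_i = \bigl(\tht^*_0 - h^*(1+s^*q)\bigr) + h^* q^{-i} + h^* s^* q^{i+1},
$$
and note that with $\wt{s}^{\hspace{0.05cm}*}=s^*q$ the target form (\ref{(tht)dist}) takes the analogous shape
$$
\wt{\tht}^*_i = \bigl(\wt{\tht}^*_0 - \wt{h}^*(1+s^*q^2)\bigr) + \wt{h}^* q^{-i} + \wt{h}^* s^* q^{i+2}.
$$
Hence, once the right-hand side of (\ref{wt(tht)*}) is shown to be a Laurent polynomial in $q^i$ supported only on $1, q^{-i}, q^{i+1}$, matching the three coefficients will pin down the three unknowns $\wt{\tht}^*_0, \wt{h}^*, \wt{s}^{\hspace{0.05cm}*}$.

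The first real step is to express $N_i/|C|$ as a rational function of $q^i$. Combining Lemma \ref{formula(Ni);q,h,s,r}, (\ref{|C|}), and $r_1r_2 = ss^*q^{d+1}$ (to eliminate $s$), one finds
$$
\frac{N_i}{|C|} = -\frac{(r_1-s^*q^{i+1})(r_2-s^*q^{i+1})}{(1-s^*q^{2i+2})(s^*-r_1r_2)}.
$$
Next, I rewrite $\wt{\tht}^*_i = \tht^*_{i+1} - (N_i/|C|)(\tht^*_{i+1}-\tht^*_i)$ and use the identity
$$
\tht^*_{i+1} - \tht^*_i = -h^*(q-1)q^{-i-1}(1 - s^*q^{2i+2}),
$$
which follows directly from the expansion of $\tht^*_i$ above. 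The factor $1 - s^*q^{2i+2}$ cancels the denominator of $N_i/|C|$, leaving an expression with only the monomials $q^{-i-1}$, $1$, $q^{i+1}$.

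Matching the coefficient of $q^{-i-1}$ against the target yields a linear equation whose unique solution is $\wt{h}^* = \tfrac{s^*q^{-1}-r_1r_2}{s^*-r_1r_2}\, h^*$; the $q^{i+1}$ coefficient independently yields the same value of $\wt{h}^*$ (providing an internal consistency check), and the constant term then determines $\wt{\tht}^*_0$ as in (\ref{(tht)tilde*(2)}). For the final assertion $\wt{h}^*\neq 0$, one applies $r_1r_2 = ss^*q^{d+1}$ once more to rewrite $s^*q^{-1}-r_1r_2 = s^*(1-sq^{d+2})$ and $s^*-r_1r_2 = s^*(1-sq^{d+1})$, then invokes the inequalities in Example \ref{q-rac;PA}, where $sq^j \ne 1$ for $2 \le j \le 2d$; since $d \ge 3$, both $d+1$ and $d+2$ lie in this range.

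The only real obstacle is the arithmetic clean-up; in particular, the crucial step is the cancellation of $1-s^*q^{2i+2}$, which is precisely the feature that reduces the expression to just three monomial types and makes the coefficient comparison possible. This cancellation is, structurally, a manifestation of the $q$-Racah compatibility between the eigenvalue formula (\ref{theta^*_i}) and the closed form for $N_i$ in Lemma \ref{formula(Ni);q,h,s,r}.
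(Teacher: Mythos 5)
Your proof is correct and takes essentially the same route as the paper: evaluate (\ref{wt(tht)*}) using (\ref{theta^*_i}), Lemma \ref{formula(Ni);q,h,s,r} and (\ref{|C|}) and simplify (your monomial coefficient-matching is just an organized form of that simplification), and establish $\wt{h}^*\ne 0$ exactly as the paper does, via $r_1r_2=ss^*q^{d+1}$ and the inequalities $sq^i\ne1$ for $2\le i\le 2d$ with $d\ge3$. One cosmetic slip: $s^*q^{-1}-r_1r_2=s^*q^{-1}(1-sq^{d+2})$ rather than $s^*(1-sq^{d+2})$, which does not affect the nonvanishing conclusion.
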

\begin{proof}
To get (\ref{(tht)dist}) evaluate (\ref{wt(tht)*}) using (\ref{theta^*_i}) and Lemma \ref{formula(Ni);q,h,s,r} together with (\ref{|C|}), and simplify. We now show $\wt{h}^*\ne 0$. Since $h^* \ne 0$, it suffices to show that $r_1r_2 \ne s^*q^{-1}$. By Example \ref{q-rac;PA} $r_1r_2 = ss^*q^{d+1}$ and $sq^i \ne 1$ for $2 \leq i \leq 2d$. Recall $d\geq3$. The result follows.
\end{proof}

\begin{lemma} \label{tht;til;dist}
Referring to Lemma {\rm\ref{(tht)tilde*}}, 
$$
\wt{\tht}^*_i - \wt{\tht}^*_j = \wt{h}^*(1-q^{i-j})(1-s^*q^{i+j+2})q^{-i} \qquad (0 \leq i, j \leq d-1).
$$
\end{lemma}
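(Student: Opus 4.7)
The plan is a direct computation starting from the explicit formula for $\wt{\tht}^*_i$ given in Lemma \ref{(tht)tilde*}. Since both $\wt{\tht}^*_i$ and $\wt{\tht}^*_j$ differ from $\wt{\tht}^*_0$ by terms of the same shape, the constant $\wt{\tht}^*_0$ will cancel in the difference, leaving only the $\wt{h}^*$-term. Thus the whole proof reduces to verifying a polynomial identity in $q$, $s^*$, $q^i$, $q^j$.

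Concretely, I would first substitute $\wt{s}^{\hspace{0.05cm}*} = s^*q$ (from \eqref{(tht)tilde*(1)}) into \eqref{(tht)dist} to obtain
\begin{equation*}
\wt{\tht}^*_i \;=\; \wt{\tht}^*_0 + \wt{h}^*(1-q^i)(1-s^*q^{i+2})q^{-i} \qquad (0 \le i \le d-1).
\end{equation*}
Then I would expand $(1-q^i)(1-s^*q^{i+2})q^{-i} = q^{-i} - 1 - s^*q^2 + s^*q^{i+2}$ and subtract the corresponding expression with $j$ in place of $i$. The terms $-1-s^*q^2$ cancel, yielding
\begin{equation*}
\wt{\tht}^*_i - \wt{\tht}^*_j \;=\; \wt{h}^*\bigl[\,q^{-i}-q^{-j} + s^*q^{i+2}-s^*q^{j+2}\,\bigr].
\end{equation*}
Finally, I would factor: write $q^{-i}-q^{-j}=q^{-i}(1-q^{i-j})$ and $s^*q^{i+2}-s^*q^{j+2}=-s^*q^{j+2}(1-q^{i-j})$, then pull out the common factor $(1-q^{i-j})q^{-i}$ to obtain
\begin{equation*}
\wt{\tht}^*_i - \wt{\tht}^*_j \;=\; \wt{h}^*\,(1-q^{i-j})\,q^{-i}\bigl(1 - s^*q^{i+j+2}\bigr),
\end{equation*}
which is the desired identity.

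There is no serious obstacle here; the only subtlety is keeping the bookkeeping on the powers of $q$ straight after substituting $\wt{s}^{\hspace{0.05cm}*}=s^*q$, which shifts the second factor from $(1-s^*q^{i+1})$ to $(1-s^*q^{i+2})$ and is precisely what produces the exponent $i+j+2$ in the final expression.
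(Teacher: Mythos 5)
Your computation is correct and is exactly the argument the paper intends: its proof reads "Routine using (\ref{(tht)dist}) and the first equation in (\ref{(tht)tilde*(1)})," which is precisely your substitution of $\wt{s}^{\hspace{0.05cm}*}=s^*q$ into (\ref{(tht)dist}) followed by expanding, cancelling the constants, and factoring out $(1-q^{i-j})q^{-i}$. Nothing to add.
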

\begin{proof}
Routine using (\ref{(tht)dist}) and the first equation in (\ref{(tht)tilde*(1)}).
\end{proof}

\begin{corollary}\label{tht;til;md}
The scalars $\{\wt{\tht}^*_i\}^{d-1}_{i=0}$ are mutually distinct. Moreover $\wt{A}^*$ generates $\wt{\mcal{M}}^*$.
\end{corollary}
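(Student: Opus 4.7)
The plan is to derive both claims directly from Lemma \ref{tht;til;dist} together with the non-degeneracy restrictions imposed in Example \ref{q-rac;PA}.

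For the mutual distinctness, I would fix distinct $i,j$ with $0\leq i,j\leq d-1$ and show that the right-hand side of the identity
$$
\wt{\tht}^*_i-\wt{\tht}^*_j = \wt{h}^{*}(1-q^{i-j})(1-s^*q^{i+j+2})q^{-i}
$$
from Lemma \ref{tht;til;dist} is nonzero. The factor $\wt{h}^{*}$ is nonzero by Lemma \ref{(tht)tilde*}. For $(1-q^{i-j})$, note that $|i-j|$ is an integer in $\{1,2,\dots,d-1\}$, and Example \ref{q-rac;PA} asserts that $q^n\ne 1$ for $1\le n\le d$, so both $q^{i-j}\ne 1$ and $q^{j-i}\ne 1$; hence this factor is nonzero. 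For $(1-s^{*}q^{i+j+2})$, assume without loss of generality $i>j$, so that $i+j+2$ is an integer ranging between $3$ (when $(i,j)=(1,0)$) and $2d-1$ (when $(i,j)=(d-1,d-2)$). Since Example \ref{q-rac;PA} requires $s^{*}q^{n}\ne 1$ for $2\le n\le 2d$, this factor is likewise nonzero. Combining these, the difference is nonzero, so the $\{\wt{\tht}^{*}_{i}\}_{i=0}^{d-1}$ are mutually distinct.

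For the second assertion, recall that $\{\wt{E}^{*}_{i}\}_{i=0}^{d-1}$ is a basis for $\wt{\mcal{M}}^{*}$ consisting of mutually orthogonal idempotents summing to $I$, and by Lemma \ref{wt(A)*=wt(tht)*E*} we have the spectral decomposition $\wt{A}^{*}=\sum_{i=0}^{d-1}\wt{\tht}^{*}_{i}\wt{E}^{*}_{i}$. Since the eigenvalues $\{\wt{\tht}^{*}_{i}\}_{i=0}^{d-1}$ are mutually distinct by the first part, each $\wt{E}^{*}_{i}$ can be expressed as a polynomial in $\wt{A}^{*}$ via the Lagrange interpolation formula
$$
\wt{E}^{*}_{i} \;=\; \prod_{\substack{0\le j\le d-1\\ j\ne i}} \frac{\wt{A}^{*}-\wt{\tht}^{*}_{j}I}{\wt{\tht}^{*}_{i}-\wt{\tht}^{*}_{j}}\qquad (0\le i\le d-1).
$$
Therefore every basis element of $\wt{\mcal{M}}^{*}$ lies in the $\C$-subalgebra generated by $\wt{A}^{*}$, and consequently $\wt{A}^{*}$ generates $\wt{\mcal{M}}^{*}$.

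Neither step poses a real obstacle; the only point requiring care is tracking the index ranges for $i-j$ and $i+j+2$ to confirm that they fall within the windows where Example \ref{q-rac;PA} guarantees non-vanishing of the relevant $q$-factors.
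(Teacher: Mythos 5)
Your proof is correct and follows the same route as the paper: Lemma \ref{tht;til;dist} together with $\wt{h}^*\ne 0$ and the non-degeneracy conditions of Example \ref{q-rac;PA} give distinctness, and the generation claim is the standard consequence (which you spell out via Lagrange interpolation, while the paper leaves it implicit). No issues.
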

\begin{proof}
To obtain the first assertion, use Lemma \ref{tht;til;dist}. By Lemma \ref{(tht)tilde*} $\wt{h}^* \ne 0$. By Example \ref{q-rac;PA} $q^i \ne 1$ for $1\leq i \leq d$ and $s^*q^i\ne1$ for $2 \leq i \leq 2d$. The second assertion follows from the first assertion.
\end{proof}

 \noindent
Let $\wt{T}$ denote the subalgebra of $\MX$ generated by $\mcal{M}$ and $\wt{\mcal{M}}^*$.
$\wt{T}$ is called the {\it Terwilliger algebra of $\Ga$ with respect to $C$} \cite{Suzuki}.
The algebra $\wt{T}$ is finite-dimensional and noncommutative.
Observe that $\wt{T}$ is generated by $A, \wt{A}^*$. $\wt{T}$ is semisimple since it is closed under the conjugate-transpose map. By a {\it $\wt{T}$-module} we mean a subspace $W \subseteq \V$ such that
$BW \subseteq W$ for all $B \in \wt{T}$. Let $W$ denote a $\wt{T}$-module. Since $\wt{T}$ is closed under the conjugate-transpose map, $W$ is an orthogonal direct sum of irreducible $\wt{T}$-modules. In particular, $\V$ is an orthogonal direct sum of irreducible $\wt{T}$-modules. For more background information on $\wt{T}$ we refer the reader to \cite{Suzuki}.

\begin{lemma}\label{prim-T(C)} The following hold:
\begin{enumerate}
\item[\rm(i)] $\hat{C}_i = \wt{E}^*_i{\bf j}  \quad (0 \leq i \leq d-1)$.
\item[\rm(ii)] $A_i \hat{C} = (|C|-N_{i-1})\hat{C}_{i-1} + N_i\hat{C}_i  \quad (0 \leq i \leq d-1)$.
\item[\rm(iii)] $A_d\hat{C} = (|C|-N_{d-1})\hat{C}_{d-1}$.
\item[\rm(iv)] $\MC = \wt{\mcal{M}}^*{\bf j}$.
\item[\rm(v)] $\MC$ has bases $\{\hat{C}_i\}^{d-1}_{i=0}$, $\{E_i\hat{C}\}^{d-1}_{i=0},$ and $\{A_i\hat{C}\}^{d-1}_{i=0}$.
\item[\rm(vi)] $\MC$ is an irreducible $\wt{T}$-module.
\end{enumerate}
We call $\MC$ the {\it primary $\wt{T}$-module}.
\end{lemma}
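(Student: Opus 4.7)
My plan is to handle the six parts in order, with (i)--(iii) reducing to direct counts, (iv)--(v) being linear-algebraic consequences, and (vi) requiring the equitability structure established in Theorem \ref{gamma;beta}.

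For (i), I would simply unpack the definition: since $\wt{E}^*_i$ is diagonal with $(y,y)$-entry equal to $1$ if $y\in C_i$ and $0$ otherwise, multiplication by $\mathbf{j}$ just picks out the characteristic vector $\hat{C}_i$. For (ii), I would evaluate $A_i\hat{C}=\sum_{y\in C}A_i\hat{y}$ entrywise: the $z$-coordinate counts $|\Ga_i(z)\cap C|$. The key observation is that for $z\in C_j$, every vertex of $C$ lies at distance $j$ or $j+1$ from $z$; this follows because $C$ is a clique so any two vertices of $C$ are adjacent, combined with the minimality in the definition of $\partial(z,C)=j$. Hence $|\Ga_i(z)\cap C|$ equals $N_j$ if $i=j$, equals $|C|-N_j$ if $i=j+1$, and vanishes otherwise, which gives the claimed formula. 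For (iii), the covering radius of $C$ is $d-1$, so $C_d=\emptyset$ and the contribution $N_d\hat{C}_d$ disappears, leaving only the term $(|C|-N_{d-1})\hat{C}_{d-1}$.

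For (iv) and (v), the vectors $\{\hat{C}_i\}_{i=0}^{d-1}$ are linearly independent since they have pairwise disjoint supports, and by (i) they span $\wt{\mcal{M}}^*\mathbf{j}$, so this is a basis of $\wt{\mcal{M}}^*\mathbf{j}$. By (ii) and (iii), the transition matrix expressing $\{A_i\hat{C}\}_{i=0}^{d-1}$ in terms of $\{\hat{C}_i\}_{i=0}^{d-1}$ is lower triangular with diagonal entries $N_i$, which are nonzero by Corollary \ref{0<Ni<C}; therefore $\{A_i\hat{C}\}_{i=0}^{d-1}$ is also a basis for the same space. Since $\MC=\mcal{M}\hat{C}$ is spanned by $\{A_i\hat{C}\}_{i=0}^{d}$ and, by (iii), $A_d\hat{C}$ is already a scalar multiple of $\hat{C}_{d-1}$, we conclude $\MC=\wt{\mcal{M}}^*\mathbf{j}$. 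For the third basis, Lemma \ref{EdC=0} tells us $E_i\hat{C}\ne 0$ for $0\le i\le d-1$ and $E_d\hat{C}=0$, so $\MC=\mathrm{Span}\{E_i\hat{C}\}_{i=0}^{d-1}$; these are nonzero vectors lying in the pairwise distinct eigenspaces $E_i\V$ of $A$, hence linearly independent.

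For (vi), I would first verify that $\MC$ is a $\wt{T}$-module. It is $\mcal{M}$-invariant by definition, and from (iv) it equals $\wt{\mcal{M}}^*\mathbf{j}$, which is manifestly invariant under $\wt{\mcal{M}}^*$ since that algebra is closed under multiplication. As $\wt{T}$ is generated by $\mcal{M}\cup\wt{\mcal{M}}^*$, this yields $\wt{T}$-invariance. The main obstacle is irreducibility: I would exploit equitability of $\{C_i\}_{i=0}^{d-1}$ (Theorem \ref{gamma;beta}) to compute
\begin{equation*}
A\hat{C}_j=\tb_{j-1}\hat{C}_{j-1}+\ta_j\hat{C}_j+\tc_{j+1}\hat{C}_{j+1}\qquad(0\le j\le d-1),
\end{equation*}
where $\tb_{j-1}\ne 0$ for $j\ge 1$ and $\tc_{j+1}\ne 0$ for $j\le d-2$ by Lemma \ref{tb,tc;nonzero}. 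Given a nonzero $\wt{T}$-submodule $W\subseteq\MC$, the decomposition $W=\sum_i\wt{E}^*_iW$ and the one-dimensionality of each $\wt{E}^*_i\MC=\mathrm{Span}\{\hat{C}_i\}$ force $W$ to contain $\hat{C}_i$ for some $i$. Repeated application of $A$ then reaches every $\hat{C}_j$ by the nonvanishing of the off-diagonal coefficients, so $W=\MC$ and $\MC$ is irreducible.
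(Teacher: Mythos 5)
Your proof is correct, and parts (i)--(iii) coincide with the paper's distance counts (your clique-plus-minimality observation that every vertex of $C$ lies at distance $j$ or $j+1$ from a vertex of $C_j$ is exactly what the paper leaves implicit). For (iv)--(v) your route is a mild streamlining: you get $\MC=\wt{\mcal{M}}^*{\bf j}$ and the basis claim for $\{A_i\hat{C}\}^{d-1}_{i=0}$ in one stroke from the triangular (bidiagonal) transition to $\{\hat{C}_i\}^{d-1}_{i=0}$ with nonzero diagonal entries $N_i$ (Corollary \ref{0<Ni<C}), whereas the paper first uses Lemma \ref{EdC=0} to see that $\{E_i\hat{C}\}^{d-1}_{i=0}$ is a basis, hence $\dim \MC = d$, and then compares dimensions; you still need Lemma \ref{EdC=0} only for the third basis. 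The genuine divergence is (vi): the paper writes $\MC$ as an orthogonal direct sum of irreducible $\wt{T}$-modules (semisimplicity via closure under conjugate-transpose), picks the summand $W$ not orthogonal to $\hat{C}$, deduces $\hat{C}\in W$ from $\wt{E}^*_0\MC={\rm Span}\{\hat{C}\}$, and concludes $\MC=\mcal{M}\hat{C}\subseteq W$. You instead show directly that every nonzero $\wt{T}$-submodule $W$ is all of $\MC$: some $\hat{C}_i\in W$ because $W=\sum_i\wt{E}^*_iW$ and each $\wt{E}^*_i\MC$ is spanned by $\hat{C}_i$, and then the irreducible tridiagonal action $A\hat{C}_j=\tb_{j-1}\hat{C}_{j-1}+\ta_j\hat{C}_j+\tc_{j+1}\hat{C}_{j+1}$, with $\tb_i\ne0$, $\tc_i\ne0$ from Lemma \ref{tb,tc;nonzero}, propagates to all $\hat{C}_j$ (to extract $\hat{C}_{i\pm1}$ from $A\hat{C}_i$ you should say explicitly that you apply $\wt{E}^*_{i\pm1}\in\wt{\mcal{M}}^*\subseteq\wt{T}$, but that is immediate from your setup). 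Your argument is more elementary in that it avoids the semisimple decomposition of $\MC$ and in fact proves the stronger statement that $\MC$ has no nonzero proper $\wt{T}$-submodule at all, at the cost of invoking the nonvanishing of the $\tb_i,\tc_i$; the paper's argument is shorter given semisimplicity and needs only that $\hat{C}$ generates $\MC$ as an $\mcal{M}$-module.
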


\begin{proof}
(i) Use (\ref{def(E(C))}).\\
(ii) Assume $i \ne 0$; otherwise the result is trivial. For $z \in C_{i-1}$, there are precisely $|C|-N_{i-1}$ vertices $y \in C$ with $\partial(y,z)=i$. For $z \in C_i$, there are precisely $N_i$ vertices $y \in C$ with $\partial(y,z) = i$. The equation follows.\\
(iii) Similar to (ii).\\
(iv) By construction $\{\wt{E}^*_i{\bf j}\}^{d-1}_{i=0}$ forms a basis for $\wt{\mcal{M}}^*{\bf j}$, so $\wt{\mcal{M}}^*{\bf j}$ has dimension $d$. Recall $\{A_i\}^d_{i=0}$ spans $\mcal{M}$ so $\{A_i\hat{C}\}^d_{i=0}$ spans $\MC$. By (i)--(iii) $A_i\hat{C} \in \wt{\mcal{M}}^*{\bf j}$ for $0 \leq i \leq d$. Therefore $\MC \subseteq \wt{\mcal{M}}^*{\bf j}$. By Lemma \ref{EdC=0} $\{E_i\hat{C}\}^{d-1}_{i=0}$ forms a basis for $\MC$. Therefore $\MC$ has dimension $d$. By these comments, $\MC=\wt{\mcal{M}}^*{\bf j}$.\\
(v) In the proof of (iv), we saw that $\{\wt{E}^*_i{\bf j}\}^{d-1}_{i=0}$ is a basis for $\MC$. By this and (i) the set $\{\hat{C}_i\}^{d-1}_{i=0}$ forms a basis for $\MC$. Also, in the proof of (iv) we saw that $\{E_i\hat{C}\}^{d-1}_{i=0}$ is a basis for $\MC$. We now show that $\{A_i\hat{C}\}^{d-1}_{i=0}$ is a basis for $\MC$. By construction Span$\{A_i\hat{C}\}^{d-1}_{i=0}$ is a subspace of $\MC$. Since $\MC$ has dimension $d$, it suffices to show that the vectors $\{A_i\hat{C}\}^{d-1}_{i=0}$ are linearly independent. Note that the $\{N_i\}^{d-1}_{i=0}$ are nonzero by Corollary \ref{0<Ni<C} and $\{\hat{C}_i\}^{d-1}_{i=0}$ are linearly independent. By these comments and (ii), the vector $\{A_i\hat{C}\}^{d-1}_{i=0}$ are linearly independent.\\
(vi) By (iv), $\MC$ is a $\wt{T}$-module. We show the $\wt{T}$-module $\MC$ is irreducible. Express $\MC$ as the orthogonal direct sum of irreducible $\wt{T}$-modules. Since $\hat{C} \in \MC$, among these irreducible $\wt{T}$-modules there exists one, denoted $W$, that is not orthogonal to $\hat{C}$. Observe $\wt{E}^*_0W \ne 0$. Also $\wt{E}^*_0W\subseteq \wt{E}^*_0\MC = \text{Span}\{\hat{C}\}$, so $\wt{E}^*_0W = \text{Span}\{\hat{C}\}$.
By this and since $W$ is a $\wt{T}$-module, we have $\hat{C} \in \wt{E}^*_0W \subseteq W$. But then $\MC \subseteq W$ and thus $\MC=W$ by the irreducibility of $W$.
\end{proof}

\begin{corollary}\label{mal-free(A,A*)} ~
\begin{enumerate}
\item[\rm (i)] $A$ is multiplicity-free on $\MC$ with eigenvalues $\{\tht_i\}^{d-1}_{i=0}$.
\item[\rm (ii)] $\wt{A}^*$ is multiplicity-free on $\MC$ with eigenvalues $\{\wt{\tht}^*_i\}^{d-1}_{i=0}$.
\end{enumerate}
\end{corollary}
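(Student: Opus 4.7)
The plan is to observe that both assertions follow immediately from bases for $\MC$ already identified in Lemma \ref{prim-T(C)}(v), combined with the fact that the relevant eigenvalues are mutually distinct.

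For (i), I would recall from Lemma \ref{prim-T(C)}(v) that $\{E_i\hat{C}\}_{i=0}^{d-1}$ is a basis for $\MC$, so $\dim\MC = d$. Since $AE_i = \tht_iE_i$ in $\mcal{M}$, each vector $E_i\hat{C}$ is an eigenvector of $A$ with eigenvalue $\tht_i$. The eigenvalues $\{\tht_i\}_{i=0}^{d-1}$ are mutually distinct (they are a subset of the $d+1$ distinct eigenvalues of $A$ on $\V$, as noted in Section \ref{q-DRG}), and the corresponding eigenvectors $E_i\hat{C}$ form a basis for $\MC$. Hence the restriction of $A$ to $\MC$ is diagonalizable with exactly $d$ distinct eigenvalues $\{\tht_i\}_{i=0}^{d-1}$, so $A$ is multiplicity-free on $\MC$.

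For (ii), I would use the other basis $\{\hat{C}_i\}_{i=0}^{d-1}$ of $\MC$ from Lemma \ref{prim-T(C)}(v), together with the identification $\hat{C}_i = \wt{E}^*_i{\bf j}$ from Lemma \ref{prim-T(C)}(i). Then the spectral decomposition (\ref{A*C=sum(tht*E*C)}) together with the orthogonality $\wt{E}^*_i\wt{E}^*_j = \delta_{ij}\wt{E}^*_i$ gives
\begin{equation*}
\wt{A}^*\hat{C}_i \;=\; \Bigl(\sum_{j=0}^{d-1}\wt{\tht}^*_j\wt{E}^*_j\Bigr)\wt{E}^*_i{\bf j} \;=\; \wt{\tht}^*_i \wt{E}^*_i {\bf j} \;=\; \wt{\tht}^*_i \hat{C}_i.
\end{equation*}
By Corollary \ref{tht;til;md} the scalars $\{\wt{\tht}^*_i\}_{i=0}^{d-1}$ are mutually distinct, so the basis $\{\hat{C}_i\}_{i=0}^{d-1}$ of $\MC$ consists of eigenvectors for $\wt{A}^*$ with $d$ distinct eigenvalues, which is the definition of multiplicity-freeness on the $d$-dimensional space $\MC$.

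There is no real obstacle here; both parts reduce to citing earlier lemmas (the dimension count in Lemma \ref{prim-T(C)}, the spectral decomposition (\ref{A*C=sum(tht*E*C)}), and the distinctness statements from Section \ref{q-DRG} and Corollary \ref{tht;til;md}). The only minor care needed is to confirm that the $d$ eigenvalues $\{\tht_i\}_{i=0}^{d-1}$ of $A$ arising on $\MC$ are distinct and to quote the correct distinctness result for $\{\wt{\tht}^*_i\}_{i=0}^{d-1}$.
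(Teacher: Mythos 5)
Your proof is correct and follows essentially the same route as the paper, which simply cites Lemma \ref{prim-T(C)}(v) for (i) and Lemma \ref{prim-T(C)}(i),(v) for (ii); you have merely made explicit the eigenvector bases $\{E_i\hat{C}\}^{d-1}_{i=0}$ and $\{\hat{C}_i\}^{d-1}_{i=0}$ and the distinctness of $\{\tht_i\}^{d-1}_{i=0}$ and $\{\wt{\tht}^*_i\}^{d-1}_{i=0}$ (the latter via Corollary \ref{tht;til;md}), which the paper leaves implicit.
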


\begin{proof}
(i) Follows from Lemma \ref{prim-T(C)}(v).\\
(ii) Follows from Lemma \ref{prim-T(C)}(i), (v). 
\end{proof}

\begin{lemma}\label{LS(MC)} Consider the matrices
\begin{equation}\label{LS(A;A*C)}
(A; \wt{A}^*; \{E_i\}^{d-1}_{i=0}; \{\wt{E}^{*}_{i}\}^{d-1}_{i=0}).
\end{equation}
Then the elements {\rm(\ref{LS(A;A*C)})} act on $\MC$ as a Leonard system.
\end{lemma}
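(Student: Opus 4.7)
The plan is to verify, one by one, the five conditions of Definition \ref{Def:LS}, treating every operator as its restriction to $\MC$. Condition (i) is immediate from Corollary \ref{mal-free(A,A*)}. For condition (ii), note that $\mcal{M}$ is commutative and $E_d\hat{C}=0$ by Lemma \ref{EdC=0}, so $E_d\MC = \mcal{M}E_d\hat{C}=0$; the restrictions $\{E_i|_{\MC}\}^{d-1}_{i=0}$ are therefore idempotents summing to the identity on $\MC$ and projecting onto the respective $\tht_i$-eigenspaces, which are each $1$-dimensional by Corollary \ref{mal-free(A,A*)}(i). Hence they are the $d$ primitive idempotents of $A|_{\MC}$ in the stated order. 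Condition (iii) is analogous, using Lemma \ref{prim-T(C)}(i) and Corollary \ref{mal-free(A,A*)}(ii).

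For condition (v), I would exploit the equitability of the partition $\{C_i\}^{d-1}_{i=0}$ (the corollary following Theorem \ref{gamma;beta}) together with the definitions (\ref{int_num;C}), (\ref{def(ta,tb,tc)}). Counting neighbours in each cell gives
\[
A\,\hat{C}_i = \tb_{i-1}\hat{C}_{i-1} + \ta_i\hat{C}_i + \tc_{i+1}\hat{C}_{i+1} \qquad (0 \leq i \leq d-1),
\]
with boundary conventions $\tb_{-1}=\tc_d=0$. Since $\hat{C}_i = \wt{E}^*_i{\bf j}$ by Lemma \ref{prim-T(C)}(i), this tridiagonal form shows $\wt{E}^*_iA\wt{E}^*_j|_{\MC}=0$ whenever $|i-j|>1$, and Lemma \ref{tb,tc;nonzero} supplies the non-vanishing when $|i-j|=1$.

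For condition (iv), the tridiagonal half comes from the $Q$-polynomial property: since (\ref{EA*E}) does not depend on the choice of base vertex, one has $E_iA^*(y)E_j = 0$ in $\End(\V)$ for $|i-j|>1$ and every $y\in X$; averaging over $y\in C$ yields $E_i\wt{A}^*E_j=0$ in $\End(\V)$, which restricts to $\MC$. The main obstacle will be the non-vanishing for $|i-j|=1$, which I plan to handle indirectly via irreducibility: if $E_{i+1}\wt{A}^*E_i|_{\MC}=0$ for some $0 \leq i \leq d-2$, then $U := \bigoplus_{j=0}^{i} E_j\MC$ is closed under $\wt{A}^*$ (by the tridiagonal shape together with the vanishing upward step) and closed under $A$ (as a sum of $A$-eigenspaces), so $U$ is a proper nonzero $\wt{T}$-submodule of $\MC$, contradicting Lemma \ref{prim-T(C)}(vi). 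The mirror case $E_i\wt{A}^*E_{i+1}|_{\MC}=0$ is equivalent by Hermiticity: since $E_i$ and $\wt{A}^*$ are each Hermitian and $\{E_i\hat{C}\}^{d-1}_{i=0}$ is orthogonal, the matrix of $\wt{A}^*$ in this basis satisfies $M_{ij}=0 \Leftrightarrow M_{ji}=0$. Assembling these pieces yields all five axioms, with the non-vanishing step in (iv) being the only nontrivial point.
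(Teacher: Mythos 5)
Your proof is correct, and it differs from the paper's in how the last two axioms are handled. The paper's proof is a three-line citation: conditions (i)--(iii) from Corollary \ref{mal-free(A,A*)} (as you do), condition (iv) from (\ref{EA*E}) and (\ref{def(A*C)}) (your averaging argument is exactly this), and condition (v) by an appeal to Suzuki's Lemma 4.1. You replace the external citation for (v) with a direct computation: the equitable partition gives $A\hat{C}_i=\tb_{i-1}\hat{C}_{i-1}+\ta_i\hat{C}_i+\tc_{i+1}\hat{C}_{i+1}$ (this is precisely the tridiagonal matrix (\ref{[A]_(Ci)}) displayed later in the paper), and Lemma \ref{tb,tc;nonzero} supplies the off-diagonal non-vanishing; this makes the step self-contained within the paper's own results. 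For (iv) you also go further than the paper: the paper's citation only visibly covers the vanishing for $|i-j|>1$, whereas you prove the non-vanishing for $|i-j|=1$ by showing that a vanishing step would make $\bigoplus_{j\le i}E_j\MC$ a proper nonzero $\wt{T}$-submodule, contradicting Lemma \ref{prim-T(C)}(vi), with the mirror case handled by symmetry of $E_i$ and $\wt{A}^*$ relative to the orthogonal basis $\{E_i\hat{C}\}^{d-1}_{i=0}$. Both of these additions are sound (the invariance of $\bigoplus_{j\le i}E_j\MC$ under $A$ is clear since it is a sum of eigenspaces, and under $\wt{A}^*$ by the already-established vanishing plus the hypothesized zero step), so your argument trades the paper's brevity and reliance on \cite{Suzuki} for a longer but fully self-contained verification that also makes explicit a point the paper leaves implicit.
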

\begin{proof}
We show that (\ref{LS(A;A*C)}) satisfies conditions (i)--(v) in Definition \ref{Def:LS}.
Conditions (i)--(iii) follow from Corollary \ref{mal-free(A,A*)},
condition (iv) is from (\ref{EA*E}) and (\ref{def(A*C)}),
and condition (v) follows from \cite[Lemma 4.1]{Suzuki}. 
\end{proof}

\noindent
We let $\wt{\Phi}$ denote the Leonard system on $\MC$ from Lemma \ref{LS(MC)}. We call $\wt{\Phi}$ the {\it primary Leonard system with respect to $C$}. Recall the basis $\{\hat{C}_i\}^{d-1}_{i=0}$ for $\MC$ from Lemma \ref{prim-T(C)}(v).

\begin{lemma}\label{SB(MC)}
The vectors $\{\hat{C}_i\}^{d-1}_{i=0}$ form a $\wt{\Phi}$-standard basis for $\MC$.
\end{lemma}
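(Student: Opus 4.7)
The plan is to invoke the characterization of standard bases from Lemma \ref{Phi-sb}. Applied to the Leonard system $\wt{\Phi}$ acting on the space $\MC$, whose role here is that of the ambient space ``$V$'' in Lemma \ref{Phi-sb}, this reduces the problem to verifying the following two conditions: (i) $\hat{C}_i \in \wt{E}^*_i \MC$ for $0 \leq i \leq d-1$; (ii) $\sum^{d-1}_{i=0}\hat{C}_i \in E_0 \MC$.

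For (i), the plan is to use Lemma \ref{prim-T(C)}(i), which gives $\hat{C}_i = \wt{E}^*_i {\bf j}$. Applying $\wt{E}^*_i$ and using the idempotency relation $(\wt{E}^*_i)^2 = \wt{E}^*_i$ yields $\wt{E}^*_i \hat{C}_i = \hat{C}_i$. Since $\hat{C}_i$ lies in $\MC$ by Lemma \ref{prim-T(C)}(v), this places it in $\wt{E}^*_i \MC$.

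For (ii), since $\{C_i\}^{d-1}_{i=0}$ is a partition of $X$, the sum collapses to ${\bf j}$. Recalling that $E_0 = |X|^{-1}{\bf J}$ and that ${\bf j}$ spans $E_0 \V$, we have $E_0 {\bf j} = {\bf j}$. Moreover ${\bf j}$ lies in $\MC$, being a linear combination of the basis vectors $\hat{C}_i$ from Lemma \ref{prim-T(C)}(v). Hence ${\bf j} = E_0 {\bf j} \in E_0 \MC$, which is exactly condition (ii).

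I do not anticipate any real obstacle here: the proof is a short verification once Lemma \ref{Phi-sb} is invoked, with every needed ingredient already established in Lemma \ref{prim-T(C)} and in the basic fact that the all-ones vector generates $E_0 \V$.
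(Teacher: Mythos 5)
Your proof is correct and follows essentially the same route as the paper: invoke Lemma \ref{Phi-sb} with $\MC$ as the ambient space, get condition (i) from Lemma \ref{prim-T(C)}(i), and get condition (ii) by noting the sum of the $\hat{C}_i$ is ${\bf j}\in E_0\MC$. The extra details you supply (idempotency of $\wt{E}^*_i$, $E_0{\bf j}={\bf j}$) are just a more explicit rendering of the paper's argument.
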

\begin{proof}
We invoke Lemma \ref{Phi-sb}. Abbreviate $W=\MC$. By Lemma \ref{prim-T(C)}(i) we have $\hat{C}_i \in \wt{E}^*_iW$ for $0 \leq i \leq d-1$. By Lemma \ref{prim-T(C)}(i) and the construction, $\sum^{d-1}_{j=0}\hat{C}_j = {\bf j} \in {E}_0W$. The result follows in view of Lemma \ref{Phi-sb}.
\end{proof}

\noindent
By Lemma \ref{wt(A)*=wt(tht)*E*}, the matrix representing $\wt{A}^*$ relative to the basis $\{\hat{C}_i\}^{d-1}_{i=0}$ is
\begin{equation}\label{[A^*(C)]_(C^)}
\diag(\wt{\tht}^*_0, \wt{\tht}^*_1, \wt{\tht}^*_2, \ldots, \wt{\tht}^*_{d-1}).
\end{equation}
By (\ref{int_num;C}) and (\ref{def(ta,tb,tc)}), the matrix representing $A$ relative to $\{\hat{C}_i\}^{d-1}_{i=0}$ is 
\begin{equation}\label{[A]_(Ci)}
\left[
\begin{array}{ccccc}
\vspace{0.2cm}
\ta_0 & \tb_0 & & & {\bf 0} \\
\tc_1 & \ta_1 & \tb_1 & \\
& \tc_2 & \ta_2 & \ddots \\
\vspace{0.2cm}
& & \ddots & \ddots & \tb_{d-2} \\
{\bf 0} & & & \tc_{d-1} & \ta_{d-1}
\end{array}
\right].
\end{equation}
Comparing (\ref{[A^(flat)]}) and (\ref{[A]_(Ci)}) we see that the $\wt{a}_i, \wt{b}_i, \wt{c}_i$ are the intersection numbers of $\wt{\Phi}$.
We now consider the parameter array of $\wt{\Phi}$. We denote this parameter array by
\begin{equation*}\label{PA;wt(Phi)}
p(\wt{\Phi}) = (\{\ttht_i\}^{d-1}_{i=0}; \{\ttht^*_i\}^{d-1}_{i=0}; \{\tvarphi_i\}^{d-1}_{i=1}; \{\tphi_i\}^{d-1}_{i=1}).
\end{equation*}
The scalars $\{\wt{\tht^*_i}\}^{d-1}_{i=0}$ were evaluated in Lemma \ref{(tht)tilde*}. We now consider $\{\wt{\tht}_i\}^{d-1}_{i=0}$.

\begin{lemma}\label{tht=ttht}
We have
$$\qquad \ttht_i=\tht_i \qquad \qquad (0 \leq i \leq d-1).$$
\end{lemma}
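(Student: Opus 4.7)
The plan is to unwind the definition of the eigenvalue sequence of a Leonard system. Recall from Definition \ref{Def:LS} and the discussion following it that if $\Phi = (A; A^*; \{E_i\}_{i=0}^{d}; \{E_i^*\}_{i=0}^{d})$ is a Leonard system, then the $i$-th eigenvalue $\theta_i$ of $A$ in $\Phi$ is, by construction, the scalar associated with the primitive idempotent $E_i$, i.e.\ the scalar satisfying $AE_i = \theta_iE_i$. So the statement $\wt{\theta}_i=\theta_i$ is really the assertion that the primitive idempotents $\{E_i\}_{i=0}^{d-1}$ appearing in the Leonard system $\wt{\Phi}$ on $\MC$ are the restrictions of the global primitive idempotents of $\Ga$, paired with the same eigenvalues $\theta_i$ of $A$.

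First I would note that the $E_i$ appearing in $\wt{\Phi}$ in (\ref{LS(A;A*C)}) are exactly the primitive idempotents of $\Ga$ from the Bose--Mesner algebra $\mcal{M}$. As elements of $\End(\V)$ these satisfy $AE_i = \theta_iE_i$, so the restriction of $E_i$ to $\MC$ satisfies the same identity. By Lemma \ref{EdC=0}, $E_i\hat{C}\ne 0$ for $0\le i\le d-1$, and since $\hat{C}\in\MC$ and $E_i\MC\subseteq\MC$, we conclude $E_i\MC\ne 0$ for $0\le i\le d-1$. Consequently the restriction $E_i|_{\MC}$ is a nonzero idempotent whose image lies in the $\theta_i$-eigenspace of $A$.

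Next I would invoke Corollary \ref{mal-free(A,A*)}(i), which says $A$ is multiplicity-free on $\MC$ with eigenvalue set $\{\theta_j\}_{j=0}^{d-1}$. Combined with the previous paragraph, this forces each $E_i|_{\MC}$ (for $0\le i\le d-1$) to be precisely the primitive idempotent of $A$ on $\MC$ corresponding to the eigenvalue $\theta_i$. By the definition of the eigenvalue sequence of $\wt{\Phi}$, this is $\wt{\theta}_i$. Therefore $\wt{\theta}_i=\theta_i$ for $0\le i\le d-1$, as claimed.

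There is really no hard step: everything reduces to the observation that, globally, $E_i$ is attached to $\theta_i$, and this attachment is preserved on any $A$-invariant subspace on which $E_i$ acts nontrivially. The only mild point to be careful about is ensuring the $E_i$'s listed in $\wt{\Phi}$ really are the global ones (so that the association with $\theta_i$ is inherited) rather than some other ordering of primitive idempotents of $A|_{\MC}$ — but this is exactly how $\wt{\Phi}$ was defined in Lemma \ref{LS(MC)}.
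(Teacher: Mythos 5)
Your proof is correct and follows essentially the same route as the paper: the paper's own argument is the one-line observation that on $\MC$ one has $AE_i=\wt{\tht}_iE_i$ by construction while globally $AE_i=\tht_iE_i$, so the two eigenvalues coincide. Your extra care (checking $E_i\MC\ne 0$ via Lemma \ref{EdC=0} and invoking Corollary \ref{mal-free(A,A*)}(i)) just spells out details the paper leaves implicit.
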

\begin{proof} 
By construction,  $AE_i = \wt{\tht}_iE_i$ on $\MC$. But $AE_i = \tht_iE_i$, so $\ttht_i=\tht_i$.
\end{proof}

\noindent
Recall the scalars $h, h^*, s, s^*, r_1, r_2$ from above Note \ref{h,h*}
\begin{theorem}\label{q-rac;til(Phi)}
For $0 \leq i \leq d-1$,
\begin{eqnarray}
\label{q-rac;ttht_i}
\ttht_i & = & \ttht_0+\wt{h}(1-q^i)(1-\wt{s}q^{i+1})q^{-i},\\
\label{q-rac;ttht*_i}
\ttht^*_i & = & \ttht^*_0 + \wt{h}^*(1-q^i)(1-\wt{s}^*q^{i+1})q^{-i},
\end{eqnarray}
and for $1 \leq i \leq d-1$,
\begin{eqnarray}
\label{q-rac;tvarphi}
\tvarphi_i & = & \wt{h}\wt{h}^*q^{1-2i}(1-q^i)(1-q^{i-d})(1-\wt{r}_1q^i)(1-\wt{r}_2q^i), \\
\label{q-rac;tphi}
\tphi_i & = & \wt{h}\wt{h}^*q^{1-2i}(1-q^i)(1-q^{i-d})(\wt{r}_1-\wt{s}^*q^{i})(\wt{r}_2-\wt{s}^*q^{i})/\wt{s}^*,
\end{eqnarray} 
where
\begin{align}
\label{th,ts...;eq(1)}
&\wt{h} = h, && \wt{s} = s, &&\wt{r}_1=r_1,&\\
\label{th,ts...;eq(2)}
&\wt{h}^* = \tfrac{s^*q^{-1}-r_1r_2}{s^*-r_1r_2}h^* ,&&  \wt{s}^* = s^*q, && \wt{r}_2=r_2,
\end{align}
and where $\ttht^*_0$ is from {\rm(\ref{(tht)tilde*(2)})}.
\end{theorem}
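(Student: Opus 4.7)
The strategy is to identify the four sequences of $p(\wt{\Phi})$ one at a time, reducing to data already in hand. The eigenvalue sequence is immediate: Lemma \ref{tht=ttht} gives $\wt{\tht}_i=\tht_i$ for $0\le i\le d-1$, and substituting (\ref{theta_i}) furnishes (\ref{q-rac;ttht_i}) with $\wt{h}=h$ and $\wt{s}=s$. The dual eigenvalue sequence is likewise recorded in Lemma \ref{(tht)tilde*}, which produces (\ref{q-rac;ttht*_i}) with $\wt{s}^{\hspace{0.05cm}*}=s^*q$, $\wt{h}^*=\tfrac{s^*q^{-1}-r_1r_2}{s^*-r_1r_2}h^*$, and $\wt{\tht}^*_0$ as in (\ref{(tht)tilde*(2)}). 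Thus all scalars in (\ref{th,ts...;eq(1)})--(\ref{th,ts...;eq(2)}) are determined, apart from the identifications $\wt{r}_1=r_1$ and $\wt{r}_2=r_2$, which will appear only through the split sequences.

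To obtain $\tvarphi_i$ I would apply (\ref{b_i;parray}) of Lemma \ref{Madrid(bi;ci)} to $\wt{\Phi}$ (a Leonard system of diameter $d-1$) and solve for $\tvarphi_{i+1}$:
\begin{equation*}
\tvarphi_{i+1} \;=\; \wt{b}_i\,\cdot\,\frac{\prod_{j=0}^{i}\bigl(\ttht^*_{i+1}-\ttht^*_j\bigr)}{\prod_{j=0}^{i-1}\bigl(\ttht^*_i-\ttht^*_j\bigr)},
\end{equation*}
then substitute the closed form for $\wt{b}_i$ from Corollary \ref{be,ga(q-term)} together with the difference formula $\ttht^*_i-\ttht^*_j=\wt{h}^*(1-q^{i-j})(1-s^*q^{i+j+2})q^{-i}$ from Lemma \ref{tht;til;dist}. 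The telescoping products $\prod_j(1-s^*q^{i+j+2})$ collapse against the $(1-s^*q^{2i+2})(1-s^*q^{2i+3})$ denominators inside $\wt{b}_i$, and after simplification the expression rearranges into $\wt{h}\wt{h}^*q^{1-2i}(1-q^i)(1-q^{i-d})(1-r_1q^i)(1-r_2q^i)$, which is (\ref{q-rac;tvarphi}) with $\wt{r}_1=r_1$, $\wt{r}_2=r_2$. The analogous computation for $\tphi_i$ applies (\ref{c_i;parray}) together with $\wt{c}_i$ from Corollary \ref{be,ga(q-term)} and yields (\ref{q-rac;tphi}) with the same assignment of $\wt{r}_1,\wt{r}_2$.

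The main obstacle is the bookkeeping of $q$-shifted factorial products: the $(r_k-s^*q^{i+1})$ factors embedded in $\wt{b}_i,\wt{c}_i$ must be reassembled into the intended $(1-\wt{r}_kq^i)$ and $(\wt{r}_k-\wt{s}^{\hspace{0.05cm}*}q^i)$ factors under the assignment $\wt{r}_k=r_k$. A useful preliminary sanity check is the $q$-Racah compatibility condition, which for a parameter array of diameter $d-1$ reads $\wt{r}_1\wt{r}_2=\wt{s}\,\wt{s}^{\hspace{0.05cm}*}q^{d}$; this collapses to $r_1r_2=s\cdot(s^*q)\cdot q^{d}=ss^*q^{d+1}$, matching the original constraint from Example \ref{q-rac;PA} and signaling that the proposed $q$-Racah form is at least internally consistent. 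Because Theorem \ref{thm:LS<->PA} ensures that a Leonard system is determined up to isomorphism by its parameter array, verifying these algebraic identities completes the proof.
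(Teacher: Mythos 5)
Your proposal is correct and matches the paper's own proof: the paper likewise gets (\ref{q-rac;ttht_i}) from Lemma \ref{tht=ttht} and (\ref{theta_i}), gets (\ref{q-rac;ttht*_i}) from Lemma \ref{(tht)tilde*}, and obtains (\ref{q-rac;tvarphi}), (\ref{q-rac;tphi}) by combining Lemma \ref{Madrid(bi;ci)} applied to $\wt{\Phi}$ with Lemma \ref{tht;til;dist} and the formulas (\ref{be_0;q-term})--(\ref{ga_(d-1);q-term}) for $\tb_i,\tc_i$, exactly as you describe. Your explicit inversion of (\ref{b_i;parray}), (\ref{c_i;parray}) and the check $\wt{r}_1\wt{r}_2=\wt{s}\,\wt{s}^{\hspace{0.05cm}*}q^{d}$ simply spell out the same computation in more detail.
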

\begin{proof}
The Leonard system $\wt{\Phi}$ has diameter $d-1$. To verify (\ref{q-rac;ttht_i}), evaluate each term using (\ref{theta_i}) and Lemma \ref{tht=ttht}. Line (\ref{q-rac;ttht*_i}) is from Lemma \ref{(tht)tilde*}. To verify (\ref{q-rac;tvarphi}), (\ref{q-rac;tphi}) use Lemma \ref{Madrid(bi;ci)}, Lemma \ref{tht;til;dist} and (\ref{be_0;q-term})--(\ref{ga_(d-1);q-term}) along with (\ref{th,ts...;eq(1)}), (\ref{th,ts...;eq(2)}). 
\end{proof}

\begin{corollary}
The Leonard system $\wt{\Phi}$ has $q$-Racah type.
\end{corollary}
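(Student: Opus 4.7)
The plan is to verify directly that the parameter array $p(\wt{\Phi})$ produced in Theorem~\ref{q-rac;til(Phi)} satisfies the defining conditions of a $q$-Racah parameter array as laid out in Example~\ref{q-rac;PA}. Indeed, formulas \eqref{q-rac;ttht_i}--\eqref{q-rac;tphi} already put $p(\wt{\Phi})$ in the required shape, with the scalars $\wt{h}, \wt{h}^*, \wt{s}, \wt{s}^*, \wt{r}_1, \wt{r}_2$ playing the roles of $h, h^*, s, s^*, r_1, r_2$ for the diameter $d-1$ system $\wt{\Phi}$. So the only remaining task is to check that these ``tilded'' scalars satisfy the nonvanishing and nondegeneracy constraints demanded in Example~\ref{q-rac;PA}.

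First I would check the nonvanishing: $\wt{h}=h\ne 0$ by assumption, $\wt{h}^*\ne 0$ by Lemma~\ref{(tht)tilde*}, and $\wt{s}=s$, $\wt{s}^*=s^*q$, $\wt{r}_1=r_1$, $\wt{r}_2=r_2$ are all nonzero since $s, s^*, r_1, r_2, q$ are. Next I would verify the Askey--Wilson type relation $\wt{r}_1\wt{r}_2=\wt{s}\wt{s}^*q^{(d-1)+1}$: this is an immediate consequence of $r_1r_2=ss^*q^{d+1}$ and the substitutions \eqref{th,ts...;eq(1)}, \eqref{th,ts...;eq(2)}.

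The main step, and the only one requiring any bookkeeping, is the family of inequalities. I must show that for $1\leq i\leq d-1$ none of $q^i,\, \wt{r}_1q^i,\, \wt{r}_2q^i,\, \wt{s}^*q^i/\wt{r}_1,\, \wt{s}^*q^i/\wt{r}_2$ equals $1$, and that for $2\leq i\leq 2(d-1)$ neither of $\wt{s}q^i,\, \wt{s}^*q^i$ equals $1$. Using \eqref{th,ts...;eq(1)}, \eqref{th,ts...;eq(2)} each of these tilded conditions rewrites as an inequality for the original scalars: e.g.\ $\wt{s}^*q^i/\wt{r}_1=s^*q^{i+1}/r_1$ and $\wt{s}^*q^i=s^*q^{i+1}$. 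In every case the index shift is at most by one and stays within the range guaranteed by the original nondegeneracy hypotheses on $\Ga$ (namely $q^i, r_1q^i, r_2q^i, s^*q^i/r_1, s^*q^i/r_2\ne 1$ for $1\leq i\leq d$, and $sq^i, s^*q^i\ne 1$ for $2\leq i\leq 2d$). So each required inequality for $\wt{\Phi}$ is a special case of an inequality already assumed for $\Phi$.

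There is no serious obstacle; the argument amounts to a line-by-line verification that the new parameters meet the hypotheses of Example~\ref{q-rac;PA}. The one place that required real work—showing $\wt{h}^*\ne 0$—has already been handled in Lemma~\ref{(tht)tilde*}, so the corollary now follows by assembling these observations and invoking the definition of $q$-Racah type.
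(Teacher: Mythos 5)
Your proposal is correct and follows essentially the same route as the paper, whose proof simply says to compare Theorem \ref{q-rac;til(Phi)} with Example \ref{q-rac;PA}; you have just made explicit the implicit checks (the relation $\wt{r}_1\wt{r}_2=\wt{s}\,\wt{s}^*q^{d}$ for diameter $d-1$, the nonvanishing of $\wt{h},\wt{h}^*$ via Lemma \ref{(tht)tilde*}, and the shifted nondegeneracy inequalities inherited from those for $\Phi$). Nothing further is needed.
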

\begin{proof}
Compare Example \ref{q-rac;PA} and Theorem \ref{q-rac;til(Phi)}.
\end{proof}


\section{The subspace $\W$}\label{subspaceW}

We continue to discuss the Delsarte clique $C$ of $\Ga$. For the rest of the paper fix a vertex $x \in C$. In this section, using $x$ and $C$ we will construct a certain partition of $X$. Using this partition we construct a subspace $\W$ of $\V$ which has a module structure for both $T=T(x)$ and $\wt{T}$. 
Recall the partition $\{C_i\}^{d-1}_{i=0}$ of $X$ from (\ref{C_i}). For $0 \leq i \leq d-1$ define
\begin{equation}\label{def(C-;C+)}
C_i^{-}=C_i \cap \Gamma_i, \qquad  \qquad \quad 
	C_i^{+}=C_i \cap \Gamma_{i+1},
\end{equation}
where $\Ga_j=\Ga_j(x)$ for $0 \leq j \leq d$. For notational convenience, define
${C}^{\pm}_{-1} =\emptyset$ and ${C}^{\pm}_{d} = \emptyset.$
Observe that $C_i = C^-_i \cup C^+_i$ for $0 \leq i \leq d-1$. Also
$\Ga_i = C^+_{i-1} \cup C^-_i$ for $1 \leq i \leq d-1$ and $\Ga_0=C^-_0=\{x\}, \Ga_d = C^+_{d-1}.$
We visualize the $\{C^{\pm}_i\}^{d-1}_{i=0}$ as follows:

\begin{center}
\begin{tikzpicture}
  [scale=.8,thick,auto=left,every node/.style={circle,fill=blue!20,draw}]
  \node (n1) at (0,0) {$C^-_0$};
  \node (n2) at (2,0)  {$C^+_0$};
  \node (n3) at (2,2)  {$C^-_1$};
  \node (n4) at (4,2) {$C^+_1$};
  \node (n5) at (4,4)  {$C^-_2$};
  \node (n6) at (6,4)  {$C^+_2$};   
  \node (n7) at (6,6)  {$C^-_3$};
  \node (n8) at (8,6)  {$C^+_3$};
 
  \node (c0) at (-2.5,0) {${~}C^{~}_0$};
  \node (c1) at (-2.5,2) {${~}C^{~}_1$};
  \node (c2) at (-2.5,4) {${~}C^{~}_2$};
  \node (c3) at (-2.5,6) {${~}C^{~}_3$};
  
  \node (r0) at (0,-2.5) {${~}\Ga_0$};
  \node (r1) at (2,-2.5) {${~}\Ga_1$};
  \node (r2) at (4,-2.5) {${~}\Ga_2$};
  \node (r3) at (6,-2.5) {${~}\Ga_3$};
  \node (r4) at (8,-2.5) {${~}\Ga_4$};

  \foreach \from/\to in {n1/n2,n2/n3,n3/n4,n4/n5,n5/n6,n6/n7,n7/n8,n1/n3,n3/n5,n5/n7,n2/n4,n4/n6,n6/n8, c0/c1,c1/c2,c2/c3, r0/r1, r1/r2, r2/r3, r3/r4}
    \draw (\from) -- (\to);
    
    \draw (c0) -- (n1) [dashed];
    \draw (c1) -- (n3) [dashed];
    \draw (c2) -- (n5) [dashed];
    \draw (c3) -- (n7) [dashed];
    
    \draw (r0) -- (n1) [dashed];
    \draw (r1) -- (n2) [dashed];
    \draw (r2) -- (n4) [dashed];
    \draw (r3) -- (n6) [dashed];
    \draw (r4) -- (n8) [dashed];

\end{tikzpicture}

\medskip
{\small Example:  The sets $\{C^{\pm}_i\}^{d-1}_{i=0}$ of $X$ when $d=4$.}
\end{center}

\begin{lemma}\label{2-dim C-}
The following {\rm (i)--(iii)} hold.
\begin{enumerate}
\item[\rm(i)] For  $z \in C_0^-$,

\begin{tabular}{lcl}
$\bullet$ $z$ is adjacent to precisely & $0$ & vertices in ${C}^-_{0}$,\\

$\bullet$ $z$ is adjacent to precisely &$b_0-\tb_0$& vertices in ${C}^+_{0}$,\\

$\bullet$ $z$ is adjacent to precisely &$\tb_0$& vertices in ${C}^-_{1}$.
\end{tabular}

\item[\rm(ii)] For $1 \leq i \leq d-2$ and $z\in C_i^-$,

\begin{tabular}{lcl}
$\bullet$ $z$ is adjacent to precisely & $c_i$ & vertices in ${C}^-_{i-1}$, \\

$\bullet$ $z$ is adjacent to precisely &$\wt{c}_i-c_i$ & vertices in ${C}^+_{i-1}$,\\

$\bullet$ $z$ is adjacent to precisely &$a_i-\wt{c}_i+c_i$ & vertices in ${C}^-_{i}$,\\

$\bullet$ $z$ is adjacent to precisely &$b_i-\wt{b}_i$ & vertices in ${C}^+_{i}$,\\

$\bullet$ $z$ is adjacent to precisely &$\wt{b}_i$ & vertices in ${C}^-_{i+1}$.
\end{tabular}

\item[\rm(iii)] For  $z \in C_{d-1}^-$,

\begin{tabular}{lcl}
$\bullet$ $z$ is adjacent to precisely & $c_{d-1}$ & vertices in ${C}^-_{d-2}$, \\

$\bullet$ $z$ is adjacent to precisely &$\wt{c}_{d-1}-c_{d-1}$ & vertices in ${C}^+_{d-2}$,\\

$\bullet$ $z$ is adjacent to precisely &$a_{d-1}-\wt{c}_{d-1}+c_{d-1}$ & vertices in ${C}^-_{d-1}$,\\

$\bullet$ $z$ is adjacent to precisely &$b_{d-1}$ & vertices in ${C}^+_{d-1}$.
\end{tabular}
\end{enumerate}
\end{lemma}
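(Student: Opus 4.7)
The plan is to fix $z \in C_i^-$ and count, cell by cell, the neighbors of $z$ in the refined partition $\{C_j^{\pm}\}$. First I would determine which cells can contain a neighbor of $z$. Any $w$ with $\partial(z,w)=1$ satisfies $\partial(x,w) \in \{i-1,i,i+1\}$ and $\partial(w,C) \in \{i-1,i,i+1\}$ by the triangle inequality. Because $x \in C$ we have $\partial(w,C) \le \partial(w,x)$, and because any two vertices of $C$ are adjacent we have $\partial(w,x) \le \partial(w,C)+1$. Hence every vertex of $X$ lies in exactly one of the cells $C_j^-$ (characterized by $\partial(w,x)=\partial(w,C)=j$) or $C_j^+$ (characterized by $\partial(w,x)=\partial(w,C)+1=j+1$). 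Combining these facts shows that the only cells that can meet $\Ga(z)$ are $C_{i-1}^-,\,C_{i-1}^+,\,C_i^-,\,C_i^+,\,C_{i+1}^-$; the cell $C_{i+1}^+$ is excluded because $\partial(w,x)=i+2$ violates the first distance constraint.

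Let $n_1,n_2,n_3,n_4,n_5$ denote the numbers of neighbors of $z$ in these five cells, respectively. Using $\Ga_j(x) = C_{j-1}^+ \cup C_j^-$ for $1 \le j \le d-1$, together with $\Ga_0(x) = C_0^- = \{x\}$ and $\Ga_d(x) = C_{d-1}^+$, the distance-regularity of $\Ga$ gives
\[
n_1 = c_i, \qquad n_2 + n_3 = a_i, \qquad n_4 + n_5 = b_i.
\]
Partitioning $C_{i-1},C_i,C_{i+1}$ according to distance from $x$ and invoking Theorem \ref{gamma;beta} together with (\ref{def(ta,tb,tc)}) yields
\[
n_1 + n_2 = \tc_i, \qquad n_3 + n_4 = \ta_i, \qquad n_5 = \tb_i.
\]
Solving this linear system produces $n_2 = \tc_i - c_i$, $n_4 = b_i - \tb_i$, and the two expressions $a_i - \tc_i + c_i = \ta_i - b_i + \tb_i$ for $n_3$, which coincide because $a_i+b_i+c_i = k = \ta_i+\tb_i+\tc_i$ by (\ref{al+be+ga=k}). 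This is precisely the content of part (ii).

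For the boundary cases, part (i) is the specialization to $i=0$: the cells $C_{-1}^{\pm}$ are empty, $C_0^- = \{x\}$ contributes $0$ neighbors (no self-loops), and the remaining counts reduce to $b_0 - \tb_0$ in $C_0^+$ and $\tb_0$ in $C_1^-$ after setting $c_0 = \tc_0 = 0$. Part (iii) is the specialization to $i = d-1$: the cells $C_d^{\pm}$ are empty because the covering radius of $C$ equals $d-1$, so $n_5 = 0$; together with $\tb_{d-1} = 0$ (from the convention just before Corollary \ref{be,ga(q-term)}), the five-cell formula collapses to the four-cell statement. I do not foresee a genuine obstacle: the essential content is the triangle-inequality step that eliminates $C_{i+1}^+$ as a possible host, after which the lemma reduces to a small consistent linear system.
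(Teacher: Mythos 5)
Your proof is correct, and it is essentially the detailed write-out of the argument the paper treats as immediate: the paper's proof of this lemma is simply ``Routine using (\ref{def(C-;C+)})'', i.e.\ a direct count of neighbors cell by cell using the definition of $C_j^{\pm}$ together with the intersection numbers $a_i,b_i,c_i$ and $\ta_i,\tb_i,\tc_i$. Your triangle-inequality elimination of $C_{i+1}^{+}$ and the resulting linear system are exactly the routine verification intended, and your treatment of the boundary cases $i=0$ and $i=d-1$ (using $\tc_0=0$, $\tb_{d-1}=0$, and covering radius $d-1$) is consistent with the paper's conventions.
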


\begin{proof}
Routine using (\ref{def(C-;C+)}).
\end{proof}

\begin{lemma}\label{2-dim C+}
The following {\rm (i)--(iii)} hold.
\begin{enumerate}
\item[\rm(i)] For  $z \in C_0^+$,

\begin{tabular}{lcl}
$\bullet$ $z$ is adjacent to precisely & $c_1$ & vertex in ${C}^-_{0}$,\\

$\bullet$ $z$ is adjacent to precisely &$\ta_0-c_1$& vertices in ${C}^+_{0}$,\\

$\bullet$ $z$ is adjacent to precisely &$\wt{b}_0-b_1$& vertices in ${C}^-_{1}$, \\

$\bullet$ $z$ is adjacent to precisely &$b_1$& vertices in ${C}^+_{1}$.
\end{tabular}

\item[\rm(ii)] For $1 \leq i \leq d-2$ and $z\in C_i^+$,

\begin{tabular}{lcl}
$\bullet$ $z$ is adjacent to precisely & $\wt{c}_i$ & vertices in ${C}^+_{i-1}$, \\

$\bullet$ $z$ is adjacent to precisely &$c_{i+1}-\wt{c}_i$ & vertices in ${C}^-_{i}$,\\

$\bullet$ $z$ is adjacent to precisely &$\wt{a}_i-c_{i+1}+\wt{c}_i$ & vertices in ${C}^+_{i}$,\\

$\bullet$ $z$ is adjacent to precisely &$\wt{b}_i-b_{i+1}$ & vertices in ${C}^-_{i+1}$,\\

$\bullet$ $z$ is adjacent to precisely &$b_{i+1}$ & vertices in ${C}^+_{i+1}$.
\end{tabular}

\item[\rm(iii)] For  $z \in C_{d-1}^+$,

\begin{tabular}{lcl}
$\bullet$ $z$ is adjacent to precisely & $\wt{c}_{d-1}$ & vertices in ${C}^+_{d-2}$, \\

$\bullet$ $z$ is adjacent to precisely &$c_d-\wt{c}_{d-1}$ & vertices in ${C}^-_{d-1}$,\\

$\bullet$ $z$ is adjacent to precisely &$\wt{a}_{d-1}-{c}_{d}+\wt{c}_{d-1}$ & vertices in ${C}^+_{d-1}$.
\end{tabular}
\end{enumerate}
\end{lemma}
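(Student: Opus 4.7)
The plan is to mirror the argument outlined for Lemma \ref{2-dim C-} above, using the definition of $C_i^{\pm}$ in (\ref{def(C-;C+)}), the triangle inequality, and the two equitable partitions of $X$ available to us. Fix $z \in C_i^+$, so $\partial(x,z) = i+1$ and $\partial(C,z) = i$. For any neighbor $w$ of $z$, the triangle inequality forces $\partial(x,w) \in \{i, i+1, i+2\}$ and $\partial(C,w) \in \{i-1, i, i+1\}$. I would first rule out several cells: no neighbor of $z$ lies in $C_{i-1}^-$ (which sits in $\Ga_{i-1}$, too close to $x$) nor in $C_{i+2}^{\pm}$ (too far from $C$). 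Hence all neighbors of $z$ are distributed among the five cells $C_{i-1}^+, C_i^-, C_i^+, C_{i+1}^-, C_{i+1}^+$.

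Next I would feed in two equitable partitions. By Theorem \ref{gamma;beta} together with (\ref{int_num;C}), the partition $\{C_j\}^{d-1}_{j=0}$ is equitable, so $z$ has exactly $\wt{c}_i$, $\wt{a}_i$, $\wt{b}_i$ neighbors in $C_{i-1}, C_i, C_{i+1}$ respectively. Since $\partial(x,z) = i+1$, the distance partition $\{\Ga_j\}^{d}_{j=0}$ yields $c_{i+1}$, $a_{i+1}$, $b_{i+1}$ neighbors of $z$ in $\Ga_i, \Ga_{i+1}, \Ga_{i+2}$. Using the identifications $\Ga_j = C_{j-1}^+ \cup C_j^-$ for $1 \leq j \leq d-1$ and $\Ga_d = C_{d-1}^+$, the key observation is that among the five candidate cells, $\Ga_{i+2}$ intersects only $C_{i+1}^+$; thus $z$ has exactly $b_{i+1}$ neighbors in $C_{i+1}^+$.

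From here the remaining counts are forced by simple bookkeeping: the $\wt{c}_i$ neighbors in $C_{i-1}$ all lie in $C_{i-1}^+$; the neighbors in $\Ga_i$ split as $\wt{c}_i$ in $C_{i-1}^+$ and $c_{i+1} - \wt{c}_i$ in $C_i^-$; the neighbors in $C_i^+$ then number $\wt{a}_i - (c_{i+1} - \wt{c}_i) = \wt{a}_i - c_{i+1} + \wt{c}_i$; and the neighbors in $C_{i+1}^-$ number $\wt{b}_i - b_{i+1}$. This proves part (ii).

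For the boundary cases (i) and (iii), the same recipe applies with the conventions $C_{-1}^{\pm} = C_d^{\pm} = \emptyset$, $b_d = 0 = \wt{b}_{d-1}$. For $i=0$, part (i) uses $c_1 = 1$ (so the entire contribution from $\Ga_0$ lies in $C_0^- = \{x\}$) and $\wt{a}_0 = |C|-1$. For $i = d-1$, the cell $C_d^-$ is absent and $\Ga_d = C_{d-1}^+$; one should verify that the formula $\wt{a}_{d-1} - c_d + \wt{c}_{d-1}$ for the count in $C_{d-1}^+$ agrees with the alternative value $a_d$ obtained directly from $\Ga_d = C_{d-1}^+$, which follows from $\wt{a}_{d-1} + \wt{c}_{d-1} = k = a_d + c_d$. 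I do not anticipate any genuine obstacle; the whole proof is a bookkeeping exercise whose only mild subtleties are correctly excluding the empty cells at the boundary and being careful that the intersection-number identity just mentioned keeps the counts consistent.
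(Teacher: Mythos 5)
Your proof is correct, and it is exactly the verification the paper has in mind: the paper's own proof of this lemma is the one-line remark ``Routine using (\ref{def(C-;C+)})'', and your argument (restricting the possible cells via the two distance functions, then counting with the equitable partitions $\{\Ga_j\}_{j=0}^{d}$ and $\{C_j\}_{j=0}^{d-1}$, with the boundary conventions $\wt c_0=0$, $\wt b_{d-1}=0$, $b_d=0$) supplies precisely the routine details, including the correct consistency check $\ta_{d-1}-c_d+\wt c_{d-1}=a_d$ in case (iii).
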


\begin{proof}
Routine using (\ref{def(C-;C+)}).
\end{proof}

\begin{corollary}\label{|edges|} The following {\rm(i)--(iv)} hold.
\begin{enumerate}
\item[\rm(i)]   $\tb_i|C^-_i| = c_{i+1}|C^-_{i+1}|$  for $0 \leq i \leq d-2$.
\item[\rm(ii)]  $b_{i+1}|C^+_i| = \tc_{i+1}|C^+_{i+1}|$  for $0 \leq i \leq d-2$.
\item[\rm(iii)] $(b_i-\tb_i)|C^-_i| = (c_{i+1}-\tc_i)|C^+_i|$ for $0 \leq i \leq d-1$.
\item[\rm(iv)] $(\tb_i-b_{i+1})|C^+_i| = (\tc_{i+1}-c_{i+1})|C^-_{i+1}|$ for $0 \leq i \leq d-2$.
\end{enumerate}
\end{corollary}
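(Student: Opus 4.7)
The plan is to prove all four identities by the same method: double-counting the edges between an appropriate pair of cells in the refined partition $\{C_i^\pm\}_{i=0}^{d-1}$, using the adjacency counts already tabulated in Lemmas \ref{2-dim C-} and \ref{2-dim C+}. Each part corresponds to one specific pair of cells, and the two sides of the claimed equality arise from counting edges from each side of that pair in turn.

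More concretely, for part (i) I would count the edges with one endpoint in $C_i^-$ and the other in $C_{i+1}^-$. Counting from $C_i^-$, Lemma \ref{2-dim C-} tells us each vertex of $C_i^-$ has exactly $\wt{b}_i$ neighbours in $C_{i+1}^-$, giving $\wt{b}_i|C_i^-|$ such edges, while counting from $C_{i+1}^-$, the same lemma says each vertex of $C_{i+1}^-$ has $c_{i+1}$ neighbours in $C_i^-$, giving $c_{i+1}|C_{i+1}^-|$. For part (ii), I would carry out the analogous double count for edges between $C_i^+$ and $C_{i+1}^+$, now pulling the adjacency counts $b_{i+1}$ and $\wt{c}_{i+1}$ off of Lemma \ref{2-dim C+}. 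For part (iii), I would count edges between $C_i^-$ and $C_i^+$: from the $C_i^-$ side Lemma \ref{2-dim C-} gives $b_i-\wt{b}_i$ neighbours in $C_i^+$ per vertex, and from the $C_i^+$ side Lemma \ref{2-dim C+} gives $c_{i+1}-\wt{c}_i$ neighbours in $C_i^-$ per vertex. Finally for part (iv), I would count edges between $C_i^+$ and $C_{i+1}^-$, using $\wt{b}_i-b_{i+1}$ from Lemma \ref{2-dim C+} and $\wt{c}_{i+1}-c_{i+1}$ from Lemma \ref{2-dim C-}.

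The only slight subtlety is handling the boundary indices $i=0$ and $i=d-1$ in part (iii), where either Lemma \ref{2-dim C-}(i) or Lemma \ref{2-dim C-}(iii) (respectively Lemma \ref{2-dim C+}(i), (iii)) applies instead of the generic middle case. Here the conventions $\wt{c}_0=0$ and $\wt{b}_{d-1}=0$ (recorded just before Corollary \ref{|edges|}), together with $C_{-1}^\pm=C_d^\pm=\emptyset$, reconcile the boundary formulas with the uniform expressions $(b_i-\wt{b}_i)|C_i^-|=(c_{i+1}-\wt{c}_i)|C_i^+|$ stated in the corollary. There is no real obstacle; the whole argument is a clean application of the equitable-partition bookkeeping already assembled in Section \ref{subspaceW}.
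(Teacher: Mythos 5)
Your proposal is correct and is essentially the paper's own argument: the paper proves (i) by double-counting edges between $C^-_i$ and $C^-_{i+1}$ via the adjacency data of Lemma \ref{2-dim C-}, and declares (ii)--(iv) similar, exactly as you do. Your extra care with the boundary indices in (iii) (using $\wt{c}_0=0$, $\wt{b}_{d-1}=0$) is consistent with the conventions set before the corollary and causes no divergence.
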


\begin{proof}
(i) By the data of Lemma \ref{2-dim C-}, every vertex in $C^-_i$ is adjacent to precisely $\tb_i$ vertices in $C^-_{i+1}$ and every vertex in $C^-_{i+1}$ is adjacent to precisely $c_{i+1}$ vertices in $C^-_i$. The result follows.\\
(ii)--(iv) Similar to (i).
\end{proof}

\noindent
We now find the cardinality for each of $\{C^{\pm}_i\}^{d-1}_{i=0}$.
\begin{lemma}\label{cardofCi}
For $0 \leq i \leq d-1$, 
\begin{equation*} \label{|C^+-i|}
|C^{-}_{i}| = \frac{\tb_{0} \tb_{1}  \dotsm \tb_{i-1}}{c_1 c_2  \dotsm c_i},
	\qquad \qquad  
	|C^{+}_{i}| = \frac{b_1 b_2 \dotsm b_i}{\tc_1 \tc_2 \dotsm \tc_i}(|C|-1). 
\end{equation*}
\end{lemma}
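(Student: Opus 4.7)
The plan is to prove both formulas by induction on $i$, using the edge-counting identities from Corollary \ref{|edges|}(i), (ii) as the recursive step and identifying the base cases directly from the definitions in (\ref{def(C-;C+)}).

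For the formula for $|C^-_i|$, I would first observe that $C^-_0 = C_0 \cap \Ga_0(x) = C \cap \{x\} = \{x\}$ since $x \in C$, hence $|C^-_0| = 1$. This matches the claimed formula when $i=0$ under the usual convention that an empty product equals $1$. For the inductive step, Corollary \ref{|edges|}(i) gives
$$|C^-_{i+1}| = \frac{\tb_i}{c_{i+1}}\,|C^-_i| \qquad (0 \leq i \leq d-2),$$
and division is legitimate because $c_{i+1} \neq 0$ (the standard distance-regular fact $c_i \neq 0$ for $1 \leq i \leq d$, which is also visible in the $q$-Racah intersection numbers (\ref{c_i;q-terms})--(\ref{c_d;q-terms}) under the nondegeneracy conditions of Example \ref{q-rac;PA}). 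Iterating the recursion yields the first formula.

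For the formula for $|C^+_i|$, the base case is $C^+_0 = C_0 \cap \Ga_1(x) = C \setminus \{x\}$, so $|C^+_0| = |C| - 1$, consistent with the claimed formula at $i=0$. Corollary \ref{|edges|}(ii) gives the recursion
$$|C^+_{i+1}| = \frac{b_{i+1}}{\tc_{i+1}}\,|C^+_i| \qquad (0 \leq i \leq d-2),$$
where the denominators $\tc_{i+1}$ are nonzero by Lemma \ref{tb,tc;nonzero} for $1 \leq i+1 \leq d-1$. Iterating gives the second formula.

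There is no real obstacle here; both formulas drop out almost immediately from the double-counting identities in Corollary \ref{|edges|}(i), (ii) combined with the correct base cases. The only subtlety worth flagging in writing it up is identifying $|C^-_0|$ and $|C^+_0|$ carefully from the definition (\ref{def(C-;C+)}) (which crucially uses $x \in C$, so that $x$ is the unique vertex of $C$ at distance $0$ from itself and the remaining $|C|-1$ vertices of $C$ lie at distance $1$ from $x$), and confirming the nonvanishing of all denominators so that the recursions can be iterated — both of which are handled by Lemma \ref{tb,tc;nonzero} and the standing $q$-Racah assumptions.
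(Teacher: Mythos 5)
Your proof is correct and follows essentially the same route as the paper, which simply cites Corollary \ref{|edges|}(i) and (ii); you have merely made explicit the base cases $|C^-_0|=1$, $|C^+_0|=|C|-1$ and the iteration, with the nonvanishing of $c_{i+1}$ and $\tc_{i+1}$ justifying the divisions.
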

\begin{proof}
To get the equation on the left, use Corollary \ref{|edges|}(i). To get the equation on the right, use Corollary \ref{|edges|}(ii).
\end{proof}

\begin{corollary}\label{C+-;nonempty}
For $0 \leq i \leq d-1$, each of $C^{\pm}_i$ is nonempty.
\end{corollary}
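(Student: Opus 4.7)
The proof is an immediate unpacking of Lemma \ref{cardofCi}, so the plan is to simply verify that every factor appearing in the two expressions for $|C^{\pm}_i|$ is nonzero.

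For $|C^-_i|$, I would argue as follows. The numerator is $\wt{b}_0\wt{b}_1\dotsm\wt{b}_{i-1}$, with indices in the range $[0,d-2]$, so every factor is nonzero by Lemma \ref{tb,tc;nonzero}. The denominator is $c_1c_2\dotsm c_i$ with indices in the range $[1,d-1]$; since these are (primary) intersection numbers of the distance-regular graph $\Ga$, they are positive integers. Hence $|C^-_i|>0$.

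For $|C^+_i|$, the numerator $b_1b_2\dotsm b_i$ has indices in $[1,d-1]$ and is a product of (positive) intersection numbers of $\Ga$, while the denominator $\wt{c}_1\wt{c}_2\dotsm\wt{c}_i$ has indices in $[1,d-1]$ and is nonzero by Lemma \ref{tb,tc;nonzero}. The only remaining factor is $|C|-1$, and here I would observe that $|C|=1-k/\theta_d$ with $k>0$ and $\theta_d<0$ (since $\theta_d\leq -1$ by \cite[Corollary 3.5.4]{BCN}), so $|C|-1=-k/\theta_d>0$. Thus $|C^+_i|>0$.

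The argument involves no real obstacle; every ingredient has already been proved earlier in the section. The only point worth being explicit about is $|C|-1\neq 0$, which is why I would include the brief computation from the definition of a Delsarte clique rather than leaving it implicit.
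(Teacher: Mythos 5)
Your proposal is correct and is essentially the paper's own proof, which simply cites Lemma \ref{tb,tc;nonzero} together with the cardinality formulas of Lemma \ref{cardofCi}; you merely make explicit the routine facts the paper leaves implicit (positivity of the intersection numbers $b_j, c_j$ and $|C|-1>0$).
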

\begin{proof}
By Lemma \ref{tb,tc;nonzero} and Lemma \ref{cardofCi}.
\end{proof}
\noindent
By Corollary \ref{C+-;nonempty} and the construction, the $\{C^{\pm}_i\}^{d-1}_{i=0}$ is a partition of $X$. 
\begin{proposition}\label{2-dim partition}
The partition $\{C^{\pm}_i\}^{d-1}_{i=0}$ of $X$ is equitable.
\end{proposition}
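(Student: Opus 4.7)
The plan is to observe that this proposition is essentially a direct corollary of Lemmas \ref{2-dim C-} and \ref{2-dim C+}. Recall that a partition $\{X_\alpha\}$ of $X$ is equitable whenever, for every ordered pair of cells $(X_\alpha, X_\beta)$, each vertex $z \in X_\alpha$ has the same number of neighbors in $X_\beta$. So I need to check that, for every pair of cells in the refined partition $\{C^{\pm}_i\}^{d-1}_{i=0}$, the corresponding neighbor count is independent of the representative vertex.

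First, I would dispose of the pairs $(C^{\epsilon}_i, C^{\epsilon'}_j)$ with $|i-j|>1$. Given $z \in C^{\epsilon}_i$ and a neighbor $w$ of $z$, we have $|\partial(x,z)-\partial(x,w)|\leq 1$ and $|\partial(C,z)-\partial(C,w)|\leq 1$. Since $C^{\epsilon}_i \subseteq C_i \cap (\Gamma_i \cup \Gamma_{i+1})$, the neighbor $w$ must lie in some $C^{\epsilon''}_k$ with $|k-i|\leq 1$, so the count is $0$ whenever $|i-j|>1$.

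Next, for the remaining pairs (those with $|i-j|\leq 1$), the exact counts are catalogued in Lemmas \ref{2-dim C-} and \ref{2-dim C+}. Each expression appearing there, such as $c_i$, $\tc_i - c_i$, $a_i - \tc_i + c_i$, $b_i - \tb_i$, $\tb_i$, and their $C^+_i$-analogues, is built from the intersection numbers $a_k, b_k, c_k$ of $\Ga$ and the numbers $\ta_k, \tb_k, \tc_k$ attached to the coarser partition $\{C_i\}^{d-1}_{i=0}$. Distance-regularity makes $a_k, b_k, c_k$ invariants of $\Ga$, while Theorem \ref{gamma;beta} (together with \eqref{def(ta,tb,tc)}) makes $\ta_k, \tb_k, \tc_k$ depend only on $k$. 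Thus every count depends only on the indices $(i,\epsilon,j,\epsilon')$ and not on the specific $z \in C^{\epsilon}_i$, which is exactly what equitability requires.

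There is no real obstacle here, since all the combinatorial work has been done upstream: the content of the proposition is precisely that Lemmas \ref{2-dim C-} and \ref{2-dim C+} are \emph{uniform} in the choice of $z$, which is already manifest in their statements. If one were to prove these lemmas from scratch, the only mild subtlety would be verifying that a neighbor $w$ of $z \in C^{-}_i$ in, say, $C_{i-1}$ lies in $C^{-}_{i-1}$ exactly when $\partial(x,w)=i-1$; this uses $\partial(C,w)\leq \partial(x,w)$ (because $x\in C$) together with $\partial(C,w)\geq \partial(C,z)-1$ to pin down $\partial(C,w)=i-1$ from $\partial(x,w)=i-1$. With that remark in hand, the decomposition of the neighbors of $z$ by the joint distances $(\partial(x,\cdot),\partial(C,\cdot))$ yields all the formulas and hence the proposition.
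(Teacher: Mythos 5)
Your proposal is correct and follows the same route as the paper, whose proof consists exactly of citing Lemma \ref{2-dim C-} and Lemma \ref{2-dim C+}, the point being that the neighbor counts recorded there depend only on the cell indices and not on the chosen vertex. Your additional remarks (disposing of the pairs with $|i-j|>1$ and the distance bookkeeping behind the lemmas) are fine but not needed beyond what the two lemmas already provide.
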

\begin{proof}
By Lemma \ref{2-dim C-} and Lemma \ref{2-dim C+}.
\end{proof}

\noindent
Recall the standard module $\V$ from above line (\ref{Ga_i}). Using the equitable partition $\{C^{\pm}_i\}^{d-1}_{i=0}$ we get a subspace $\W$ of $\V$ as follows. For $0 \leq i \leq d-1$, recall the characteristic vector $\hat{C}^{\pm}_i$ of $C^{\pm}_{i}$. Let $\W$ denote the subspace of $\V$ spanned by $\{\hat{C}^{\pm}_{i}\}^{d-1}_{i=0}$. Note that $\W$ contains the vectors $\hat{x}, \hat{C}$. Recall the all $1$'s vector ${\bf j} = \sum^{d-1}_{i=0}\hat{C}_i$ from below (\ref{C_i}). The subspace $\W$ contains $\bf j$ since
\begin{equation}\label{Ci=(C-i)+(C+i)}
\hat{C}_i = \hat{C}^-_i + \hat{C}^+_i \qquad \qquad (0 \leq i \leq d-1).
\end{equation}

\begin{lemma}\label{OGB(W)}
The vectors $\{\hat{C}^{\pm}_i\}^{d-1}_{i=0}$ form an orthogonal basis for $\W$. Moreover,
\begin{equation*}
\|\hat{C}^{-}_{i}\|^2 = \frac{\tb_0 \tb_1 \dotsm \tb_{i-1}}{c_1 c_2 \dotsm c_i},
	\qquad \qquad  
	\|\hat{C}^{+}_{i}\|^2 = \frac{b_1 b_2 \dotsm b_{i}}{\tc_1 \tc_2 \dotsm \tc_i}(|C|-1). 
\end{equation*}
\end{lemma}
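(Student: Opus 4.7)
The statement is essentially a bookkeeping consequence of the partition structure established just before the lemma, so the plan is to combine three ingredients that have already been set up.

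First, I would note that by the very definition of $C_i^{\pm}$ in \eqref{def(C-;C+)} together with Corollary \ref{C+-;nonempty}, the collection $\{C_i^{\pm}\}_{i=0}^{d-1}$ is a partition of $X$ into nonempty cells. Hence for any two cells $C_i^{\epsilon}$ and $C_j^{\eta}$ (with $\epsilon,\eta\in\{+,-\}$), their intersection is empty unless $(\epsilon,i)=(\eta,j)$. Since the inner product of characteristic vectors of subsets of $X$ computes cardinalities of intersections (using the fact that $\{\hat x : x\in X\}$ is an orthonormal basis of $\V$), I get
\[
\langle \hat C_i^{\epsilon}, \hat C_j^{\eta}\rangle \;=\; |C_i^{\epsilon}\cap C_j^{\eta}| \;=\; \delta_{\epsilon,\eta}\,\delta_{i,j}\,|C_i^{\epsilon}|.
\]
This gives orthogonality and, since each $|C_i^{\epsilon}|$ is strictly positive by Corollary \ref{C+-;nonempty}, also shows that each $\hat C_i^{\epsilon}$ is nonzero.

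Next, orthogonal families of nonzero vectors are linearly independent, so $\{\hat C_i^{\pm}\}_{i=0}^{d-1}$ is a linearly independent set. By construction $\W$ was defined as the span of these vectors, so they form a basis (indeed an orthogonal basis) for $\W$.

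Finally, the norm formulas follow immediately by specialising the displayed inner-product identity to $(\epsilon,i)=(\eta,j)$, which yields $\|\hat C_i^{\pm}\|^2 = |C_i^{\pm}|$, and then substituting the cardinalities computed in Lemma \ref{cardofCi}. There is no real obstacle here; the only thing to be careful about is invoking Corollary \ref{C+-;nonempty} to ensure that none of the cells is empty (which is needed both for linear independence and for the denominators in Lemma \ref{cardofCi} to be meaningful), but this has already been established.
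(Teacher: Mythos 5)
Your proof is correct and follows essentially the same route as the paper: disjointness of the cells $\{C^{\pm}_i\}^{d-1}_{i=0}$ gives orthogonality, Corollary \ref{C+-;nonempty} gives nonvanishing (hence linear independence, and a basis since $\W$ is defined as the span), and the norm formulas are just $\|\hat{C}^{\pm}_i\|^2=|C^{\pm}_i|$ combined with Lemma \ref{cardofCi}. No issues.
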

\begin{proof}
By construction the $\{{C}^{\pm}_i\}^{d-1}_{i=0}$ are mutually disjoint, so the $\{\hat{C}^{\pm}_{i}\}^{d-1}_{i=0}$ are mutually orthogonal.  By Corollary \ref{C+-;nonempty}, each of $\{\hat{C}^{\pm}_i\}^{d-1}_{i=0}$ is  nonzero. The first assertion follows from these comments. The second assertion follows from Lemma \ref{cardofCi}.
\end{proof}

\begin{corollary}\label{dim(W)=2d}
The dimension of $\W$ is $2d$.
\end{corollary}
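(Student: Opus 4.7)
The plan is to obtain this corollary as an immediate consequence of Lemma \ref{OGB(W)}. By construction, $\W$ is defined as the span of the $2d$ vectors $\{\hat{C}^{\pm}_i\}^{d-1}_{i=0}$ (there are $d$ values of the index $i$, and for each we get two characteristic vectors $\hat{C}^-_i$ and $\hat{C}^+_i$). So the dimension of $\W$ is at most $2d$, and equality will follow once we know that these $2d$ spanning vectors are linearly independent.

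The linear independence is exactly what Lemma \ref{OGB(W)} delivers: the vectors $\{\hat{C}^{\pm}_i\}^{d-1}_{i=0}$ are pairwise orthogonal (since the sets $\{C^{\pm}_i\}^{d-1}_{i=0}$ are mutually disjoint by the construction in \eqref{def(C-;C+)}) and each is nonzero (since each $C^{\pm}_i$ is nonempty by Corollary \ref{C+-;nonempty}). A collection of pairwise orthogonal nonzero vectors in an inner product space is linearly independent, so $\{\hat{C}^{\pm}_i\}^{d-1}_{i=0}$ is indeed a basis for $\W$, and counting gives $\dim \W = 2d$.

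There is no real obstacle here; this is a one-line bookkeeping consequence of the previous lemma. The only step that requires any substance is the nonemptiness of the cells $C^{\pm}_i$ for $0 \leq i \leq d-1$, but this was already secured in Corollary \ref{C+-;nonempty} via the cardinality formulas of Lemma \ref{cardofCi} together with the nonvanishing of the intersection numbers $\tb_i$ and $\tc_i$ in Lemma \ref{tb,tc;nonzero}.
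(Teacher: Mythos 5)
Your proposal is correct and follows the same route as the paper: the paper's proof of Corollary \ref{dim(W)=2d} is simply ``Immediate from Lemma \ref{OGB(W)},'' and your argument just unpacks that lemma (orthogonality from disjointness of the cells, nonvanishing from Corollary \ref{C+-;nonempty}) before counting the $2d$ basis vectors.
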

\begin{proof}
Immediate from Lemma \ref{OGB(W)}.
\end{proof}

\noindent
In Lemma \ref{OGB(W)} we gave a basis for $\W$. We now give the action of $A$ on this basis. Recall $C^{\pm}_{-1}=\emptyset$ and $C^{\pm}_d=\emptyset$, so
$$
\hat{C}^-_{-1} =0, \qquad \hat{C}^+_{-1}=0, \qquad
\hat{C}^-_{d} =0, \qquad \hat{C}^+_{d}=0. 
$$

\begin{lemma}\label{action(A;W)}
The element $A$ acts on $\{\hat{C}^{\pm}_{i}\}^{d-1}_{i=0}$
as follows: for $0 \leq i \leq d-1$ both
\begin{align}
\label{A.C-}
A.\hat{C}^-_i &= \tb_{i-1} \hat{C}^-_{i-1} + (\tb_{i-1}-b_i)\hat{C}^+_{i-1}
				+ (\ta_i-b_i+\tb_i)\hat{C}^-_{i} + (c_{i+1}-\tc_i)\hat{C}^+_{i}
				+c_{i+1}\hat{C}^-_{i+1}, &&\\
\label{A.C+}
A.\hat{C}^+_i &= b_i \hat{C}^+_{i-1} + (b_i-\tb_i)\hat{C}^-_{i}
				+ (\ta_i-c_{i+1}+\tc_i)\hat{C}^+_{i} + (\tc_{i+1}-c_{i+1})\hat{C}^-_{i+1}
				+\tc_{i+1}\hat{C}^+_{i+1}. &&
\end{align}
\end{lemma}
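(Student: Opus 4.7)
My plan is to compute each coefficient on the right-hand side by a direct counting argument, exploiting the fact that the partition $\{C^{\pm}_i\}^{d-1}_{i=0}$ is equitable (Proposition \ref{2-dim partition}). For any subset $Y \subseteq X$, the $z$-coordinate of $A\hat{Y}$ equals $|\Ga(z) \cap Y|$. Thus when $Y = C^-_i$ (or $Y = C^+_i$), this count is constant on each cell of the equitable partition, so $A.\hat{C}^{\pm}_i$ is automatically a $\C$-linear combination of $\{\hat{C}^{\pm}_j\}^{d-1}_{j=0}$, with coefficient on $\hat{C}^{\pm}_j$ equal to $|\Ga(z) \cap C^{\pm}_i|$ for any $z \in C^{\pm}_j$.

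For equation (\ref{A.C-}), I read the five nonzero coefficients directly: from Lemma \ref{2-dim C-}(ii) applied at index $i-1$, a vertex $z \in C^-_{i-1}$ has exactly $\tb_{i-1}$ neighbors in $C^-_i$; from Lemma \ref{2-dim C+}(ii) at index $i-1$, a vertex $z \in C^+_{i-1}$ has exactly $\tb_{i-1} - b_i$ neighbors in $C^-_i$; from Lemma \ref{2-dim C+}(ii) at index $i$, a vertex $z \in C^+_i$ has exactly $c_{i+1} - \tc_i$ neighbors in $C^-_i$; and from Lemma \ref{2-dim C-}(ii) at index $i+1$, a vertex $z \in C^-_{i+1}$ has exactly $c_{i+1}$ neighbors in $C^-_i$. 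All other cells $C^{\pm}_j$ contribute zero neighbors. Equation (\ref{A.C+}) is handled symmetrically using the same two lemmas.

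The one subtlety is the diagonal coefficient. For $z \in C^-_i$, Lemma \ref{2-dim C-}(ii) reports $a_i - \tc_i + c_i$ neighbors in $C^-_i$, whereas the statement claims $\ta_i - b_i + \tb_i$. These agree by the identity $a_i + b_i + c_i = k = \ta_i + \tb_i + \tc_i$ (equation (\ref{al+be+ga=k}) and the standard valency relation), both sides equalling $k - b_i - \tc_i$. The same check is needed for the diagonal coefficient on $\hat{C}^+_i$ in (\ref{A.C+}), where the two expressions $\ta_i - c_{i+1} + \tc_i$ and $a_i - b_i + \tb_i$ both equal $k - b_i - c_{i+1} + \tc_i + \tb_i - \tb_i = k - b_i - c_{i+1} + \tc_i$ (via the same two identities).

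Finally, the boundary cases $i=0$ and $i=d-1$ are dispatched using parts (i) and (iii) of Lemmas \ref{2-dim C-} and \ref{2-dim C+}, together with the conventions $\hat{C}^{\pm}_{-1} = \hat{C}^{\pm}_d = 0$, $\tb_{-1} = \tc_d = 0$, $b_d = c_0 = 0$, under which the five-term expressions in (\ref{A.C-}) and (\ref{A.C+}) collapse correctly. The main obstacle is not conceptual but bookkeeping: one must carefully distinguish, at each cell, whether a neighbor lies in the ``$-$'' or ``$+$'' part of an adjacent $C_j$, which is precisely what Lemmas \ref{2-dim C-} and \ref{2-dim C+} were designed to record.
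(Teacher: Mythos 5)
Your proposal is correct and follows the same route as the paper: the paper's proof is simply ``Use Lemma \ref{2-dim C-} and Lemma \ref{2-dim C+}'', i.e.\ exactly the coefficient-reading argument you spell out, and your handling of the diagonal coefficient in (\ref{A.C-}) via $a_i+b_i+c_i=k=\ta_i+\tb_i+\tc_i$ (both expressions equal $k-b_i-\tc_i$) is right, as is the treatment of the boundary conventions.

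One slip to fix: your second ``check'' is both unnecessary and arithmetically wrong. For the coefficient of $\hat{C}^+_i$ in (\ref{A.C+}), Lemma \ref{2-dim C+}(ii) already gives $\ta_i-c_{i+1}+\tc_i$ verbatim, so no identity is needed there; moreover the equality you assert, $\ta_i-c_{i+1}+\tc_i=a_i-b_i+\tb_i=k-b_i-c_{i+1}$, is false in general, since $\ta_i-c_{i+1}+\tc_i=k-\tb_i-c_{i+1}$ while $a_i-b_i+\tb_i=k-2b_i-c_i+\tb_i$, and $\tb_i\ne b_i$ in general. Deleting that sentence (or replacing it with the observation that the lemma's entry matches the stated coefficient exactly) leaves a complete and correct proof.
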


\begin{proof}
Use Lemma \ref{2-dim C-} and Lemma \ref{2-dim C+}.
\end{proof}

\noindent
Evaluating (\ref{A.C-}), (\ref{A.C+}) using (\ref{b_0;q-terms})--(\ref{c_d;q-terms}) and (\ref{be_0;q-term})--(\ref{ga_(d-1);q-term}), we obtain the following. For $0 \leq i \leq d-1$,

\noindent
$A.\hat{C}^-_i = $
\begin{equation}\label{A_C;odd-col}
\begin{tabular}{c|l}
term & \hspace{2cm} coefficient \\
\hline \hline &\\
$\hat{C}^-_{i-1}$ & 
	$\frac{h(1-q^{i-d})(1-s^*q^{i+1})(1-r_1q^i)(1-r_2q^i)}{(1-s^*q^{2i})(1-s^*q^{2i+1})}$ 
\\ &\\

$\hat{C}^{+}_{i-1}$ & 
	$\frac{h(1-q^{i-d})(1-s^*q^{i+1})}{1-s^*q^{2i+1}} 
	\left(\frac{(1-r_1q^i)(1-r_2q^i)}{1-s^*q^{2i}} - \frac{(1-r_1q^{i+1})(1-r_2q^{i+1})}{1-s^*q^{2i+2}}\right)$
\\ &\\

$\hat{C}^{-}_{i}$ &  
	$b_0 - h\left(\frac{(1-q^i)(1-s^*q^{i+d+1})(r_1-s^*q^{i+1})(r_2-s^*q^{i+1})}
			{s^*q^d(1-s^*q^{2i+1})(1-s^*q^{2i+2})}
 + \ \frac{(1-q^{i-d})(1-s^*q^{i+1})(1-r_1q^{i+1})(1-r_2q^{i+1})}
		{(1-s^*q^{2i+1})(1-s^*q^{2i+2})} \right)$
		
\\ & \\ 

$\hat{C}^{+}_{i}$ & 
	$\frac{h(r_1-s^*q^{i+1})(r_2-s^*q^{i+1})}{s^*q^d(1-s^*q^{2i+2})} 
	\left(\frac{(1-q^{i+1})(1-s^*q^{i+d+2})}{1-s^*q^{2i+3}}-
		\frac{(1-q^i)(1-s^*q^{i+d+1})}{1-s^*q^{2i+1}}\right)$
\\ &\\

$\hat{C}^{-}_{i+1}$ & 
	$\frac{h(1-q^{i+1})(1-s^*q^{i+d+2})(r_1-s^*q^{i+1})(r_2-s^*q^{i+1})}
			{s^*q^d(1-s^*q^{2i+2})(1-s^*q^{2i+3})}$

\end{tabular}
\end{equation}

\noindent
and also

\medskip
\noindent$A.\hat{C}^+_i = $
\begin{equation}\label{A_C;even_col}
\begin{tabular}{c|l}
term & \hspace{2cm} coefficient \\
\hline \hline &\\
$\hat{C}^+_{i-1}$ & 
	$\frac{h(1-q^{i-d})(1-s^*q^{i+1})(1-r_1q^{i+1})(1-r_2q^{i+1})}
			{(1-s^*q^{2i+1})(1-s^*q^{2i+2})}$
\\ &\\

$\hat{C}^{-}_{i}$ & 
	$\frac{h(1-r_1q^{i+1})(1-r_2q^{i+1})}{1-s^*q^{2i+2}}
	\left(\frac{(1-q^{i-d})(1-s^*q^{i+1})}{1-s^*q^{2i+1}}
	-\frac{(1-q^{i-d+1})(1-s^*q^{i+2})}{1-s^*q^{2i+3}}\right)$
	
\\ &\\

$\hat{C}^{+}_{i}$ &  
	$b_0 - h\left(\frac{(1-q^{i-d+1})(1-s^*q^{i+2})(1-r_1q^{i+1})(1-r_2q^{i+1})}
			{(1-s^*q^{2i+2})(1-s^*q^{2i+3})}
			 +\frac{(1-q^{i+1})(1-s^*q^{i+d+2})(r_1-s^*q^{i+1})(r_2-s^*q^{i+1})}
		{s^*q^d(1-s^*q^{2i+2})(1-s^*q^{2i+3})} \right)$

\\ & \\ 

$\hat{C}^{-}_{i+1}$ & 
	$\frac{h(1-q^{i+1})(1-s^*q^{i+d+2})}{s^*q^d(1-s^*q^{2i+3})}
	\left(\frac{(r_1-s^*q^{i+2})(r_2-s^*q^{i+2})}{1-s^*q^{2i+4}}
	-\frac{(r_1-s^*q^{i+1})(r_2-s^*q^{i+1})}{1-s^*q^{2i+2}}\right)$
\\ &\\

$\hat{C}^{+}_{i+1}$ & 
	$\frac{h(1-q^{i+1})(1-s^*q^{i+d+2})(r_1-s^*q^{i+2})(r_2-s^*q^{i+2})}
	{s^*q^d(1-s^*q^{2i+3})(1-s^*q^{2i+4})}$

\end{tabular}
\end{equation}
The scalars $b_0$ and $h$ in the above tables are given in (\ref{b_0;q-terms}) and (\ref{scalar(h)}), respectively. For $0 \leq r \leq d$ we now give the action of $E^*_r$ on $\{\hat{C}^{\pm}_{i}\}^{d-1}_{i=0}$.

\begin{lemma}\label{action(E*;W)}
For $0 \leq r \leq d$ and $0 \leq i \leq d-1$, 
\begin{equation*}
E^*_r. \hat{C}^-_i = \delta_{ri}\hat{C}^-_i , \qquad \qquad \quad
E^*_r. \hat{C}^+_{i} = \delta_{r,{i+1}}\hat{C}^+_{i} .
\end{equation*}
\end{lemma}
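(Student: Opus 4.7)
The proof is essentially immediate from the definitions of $E^*_r$ and $C^{\pm}_i$, so the plan is short and mostly bookkeeping.

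The plan is to unpack the definition of $E^*_r = E^*_r(x)$ from equation~(\ref{def(E^*(x))}) and recall the definition of $C^{\pm}_i$ from equation~(\ref{def(C-;C+)}). Since $E^*_r$ is the diagonal matrix whose $(y,y)$-entry is $1$ if $\partial(x,y)=r$ and $0$ otherwise, it acts on the basis vectors $\{\hat y\}_{y\in X}$ of $\V$ by $E^*_r\hat y = \hat y$ if $\partial(x,y)=r$ and $E^*_r\hat y = 0$ otherwise.

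First, I would observe that by (\ref{def(C-;C+)}), $C^-_i = C_i \cap \Ga_i(x)$, so every $y\in C^-_i$ satisfies $\partial(x,y)=i$. Hence
\[
E^*_r\hat{C}^-_i \;=\; \sum_{y\in C^-_i} E^*_r\hat y \;=\; \sum_{\substack{y\in C^-_i\\ \partial(x,y)=r}} \hat y,
\]
which equals $\hat{C}^-_i$ when $r=i$ and $0$ otherwise; this gives $E^*_r\hat{C}^-_i = \delta_{ri}\hat{C}^-_i$. Similarly, every $y\in C^+_i$ satisfies $\partial(x,y)=i+1$ by (\ref{def(C-;C+)}), so the same computation yields $E^*_r\hat{C}^+_i = \delta_{r,i+1}\hat{C}^+_i$.

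There is no real obstacle; the content of the lemma is just that the partition $\{C^{\pm}_i\}_{i=0}^{d-1}$ refines the partition of $X$ into distance classes from $x$, with $C^-_i\subseteq\Ga_i(x)$ and $C^+_i\subseteq\Ga_{i+1}(x)$. The only thing to be careful about is the index shift in the $C^+_i$ case (the eigenvalue index is $i+1$, not $i$), which is built directly into the definition and explains the Kronecker delta $\delta_{r,i+1}$ in the statement.
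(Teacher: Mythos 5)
Your proof is correct and follows exactly the route the paper intends: the paper's proof is simply ``Use (\ref{def(C-;C+)}),'' and your argument just spells out that $C^-_i\subseteq\Gamma_i(x)$ and $C^+_i\subseteq\Gamma_{i+1}(x)$, so $E^*_r$ acts on $\hat{C}^-_i$ and $\hat{C}^+_i$ by the stated Kronecker deltas via (\ref{def(E^*(x))}). No gaps.
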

\begin{proof}
Use (\ref{def(C-;C+)}).
\end{proof}

\begin{corollary}\label{action(A*;W)}
The matrix $A^*$ acts on $\{\hat{C}^{\pm}_{i}\}^{d-1}_{i=0}$ as follows. For $0 \leq i \leq d-1$,
$$
A^*.\hat{C}^-_i = \tht^*_i\hat{C}^-_i, \qquad \qquad \quad A^*.\hat{C}^+_i = \tht^*_{i+1}\hat{C}^+_i.
$$
\end{corollary}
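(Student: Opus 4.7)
The plan is to prove this corollary by a one-line spectral computation, using the expansion of $A^*$ in terms of the dual primitive idempotents $\{E^*_r\}_{r=0}^d$ of $\Ga$ with respect to the base vertex $x$, together with Lemma \ref{action(E*;W)}.

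More concretely, I would first recall from Section \ref{q-DRG} that
\[
A^* = \sum_{r=0}^d \tht^*_r E^*_r,
\]
since $A^*$ is diagonal in the basis of primitive idempotents of the dual Bose-Mesner algebra $\mcal{M}^*(x)$ with eigenvalues $\{\tht^*_r\}_{r=0}^d$. Then I would apply this spectral decomposition to $\hat{C}^-_i$ and invoke Lemma \ref{action(E*;W)}, which states that $E^*_r \hat{C}^-_i = \delta_{ri}\hat{C}^-_i$. This immediately collapses the sum:
\[
A^* \hat{C}^-_i = \sum_{r=0}^d \tht^*_r E^*_r \hat{C}^-_i = \sum_{r=0}^d \tht^*_r \delta_{ri}\hat{C}^-_i = \tht^*_i \hat{C}^-_i.
\]
The computation for $\hat{C}^+_i$ is identical, using instead that $E^*_r \hat{C}^+_i = \delta_{r,i+1}\hat{C}^+_i$ from Lemma \ref{action(E*;W)}, giving $A^*\hat{C}^+_i = \tht^*_{i+1}\hat{C}^+_i$.

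Conceptually, this reflects the fact that $C^-_i \subseteq \Ga_i(x)$ and $C^+_i \subseteq \Ga_{i+1}(x)$, so the characteristic vectors $\hat{C}^-_i$ and $\hat{C}^+_i$ sit inside the eigenspaces $E^*_i \V$ and $E^*_{i+1}\V$ of $A^*$, respectively. There is no significant obstacle here: the work has already been done in establishing Lemma \ref{action(E*;W)}, which in turn follows directly from the definition (\ref{def(C-;C+)}) of $C^{\pm}_i$ and the definition (\ref{def(E^*(x))}) of $E^*_r$. Thus the corollary is a routine consequence and requires no further machinery.
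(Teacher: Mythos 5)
Your proof is correct and is essentially the same as the paper's: the paper also deduces the corollary from Lemma \ref{action(E*;W)} together with the expansion $A^*=\sum^d_{r=0}\tht^*_rE^*_r$. Nothing further is needed.
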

\begin{proof} 
By Lemma \ref{action(E*;W)} and since $A^*=\sum^d_{r=0}\tht^*_rE^*_r$.
\end{proof}

\begin{lemma}\label{W=ods(E*i)}
We have
$$
\W = \sum^d_{i=0} E^*_i\W \qquad \qquad \text{\rm (orthogonal direct sum).}
$$
Moreover,
\begin{enumerate}
\item[\rm(i)] $\hat{C}^-_0$ is a basis for $E^*_0\W$.
\item[\rm(ii)] For $1\leq i\leq d-1$ the vectors $\hat{C}^{+}_{i-1}, \hat{C^-_i}$ form a basis for $E^*_i\W$.
\item[\rm(iii)] $\hat{C}^+_{d-1}$ is a basis for $E^*_d\W$. 
\end{enumerate}
\end{lemma}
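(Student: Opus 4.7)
The plan is to leverage Lemma \ref{action(E*;W)}, which already tells us exactly how each primitive idempotent $E^*_r$ acts on the orthogonal basis $\{\hat{C}^{\pm}_i\}^{d-1}_{i=0}$ of $\W$ from Lemma \ref{OGB(W)}. That action sends every basis vector either to itself or to $0$. From this alone, two things follow immediately: first, $\W$ is invariant under $\mcal{M}^*$, so each $E^*_r\W$ is a subspace of $\W$; and second, the basis $\{\hat{C}^{\pm}_i\}^{d-1}_{i=0}$ of $\W$ partitions according to which index $r$ fixes each vector.

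First I would recall the general fact that $\V = \sum^d_{i=0} E^*_i\V$ is an orthogonal direct sum, since the $E^*_i$ are pairwise orthogonal projections summing to $I$. Restricting to $\W$, and using the invariance noted above, this yields $\W = \sum^d_{i=0} E^*_i\W$ as an orthogonal direct sum. Next I would read off from Lemma \ref{action(E*;W)} precisely which basis vectors lie in each summand: the vector $\hat{C}^-_j$ lies in $E^*_j\W$ (as $E^*_j \hat{C}^-_j = \hat{C}^-_j$ and $E^*_r \hat{C}^-_j = 0$ for $r\neq j$), and the vector $\hat{C}^+_j$ lies in $E^*_{j+1}\W$. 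Grouping the $2d$ basis vectors of $\W$ accordingly produces exactly the claimed bases:
\begin{itemize}
\item[(i)] $E^*_0\W$ contains only $\hat{C}^-_0$;
\item[(ii)] for $1\leq i\leq d-1$, $E^*_i\W$ contains the two vectors $\hat{C}^-_i$ and $\hat{C}^+_{i-1}$;
\item[(iii)] $E^*_d\W$ contains only $\hat{C}^+_{d-1}$.
\end{itemize}
Linear independence of $\hat{C}^-_i$ and $\hat{C}^+_{i-1}$ in case (ii) is inherited from the orthogonality established in Lemma \ref{OGB(W)}, and the total count $1+2(d-1)+1=2d$ matches Corollary \ref{dim(W)=2d}, confirming that no other vectors are needed.

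There is no serious obstacle here; the lemma is essentially a bookkeeping consequence of the refinement $C^-_i\subseteq \Ga_i$ and $C^+_i\subseteq \Ga_{i+1}$ inherent in Definition (\ref{def(C-;C+)}), which guarantees that the characteristic vectors $\hat{C}^{\pm}_i$ are eigenvectors of each $E^*_r$ with eigenvalue in $\{0,1\}$. The only mild point to articulate cleanly is that invariance of $\W$ under $\mcal{M}^*$ is automatic from the action on a spanning set, so no additional computation with $A$ or $\wt A^*$ is required for this statement.
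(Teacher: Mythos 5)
Your proposal is correct and follows essentially the same route as the paper, whose proof simply invokes (\ref{def(C-;C+)}) and Lemma \ref{action(E*;W)}: since each $E^*_r$ acts on the orthogonal basis $\{\hat{C}^{\pm}_i\}^{d-1}_{i=0}$ by fixing or annihilating each vector, the decomposition and the listed bases follow by grouping the basis vectors according to the index that fixes them. Your additional remarks on invariance, orthogonality, and the dimension count $2d$ are just an explicit spelling-out of this same bookkeeping.
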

\begin{proof}
Use (\ref{def(C-;C+)}) and Lemma \ref{action(E*;W)}.
\end{proof}

\begin{corollary}\label{dim(E*iW)}
For $0 \leq i \leq d$,
\begin{equation*}
\dim{E^*_i\W} = 
\begin{cases}
2 & \text{\rm \quad if \quad} 1 \leq i \leq d-1, \\
1 & \text{\rm \quad if \quad} i \in \{0, d\}.
\end{cases}
\end{equation*}
\end{corollary}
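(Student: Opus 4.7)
The plan is to read the dimensions directly off of the bases furnished by Lemma~\ref{W=ods(E*i)}. That lemma already exhibits, for each $i$ with $0 \leq i \leq d$, an explicit spanning set for $E^*_i \W$ that it asserts to be a basis. So all that remains is to count.

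Concretely, for $i = 0$ the given basis is the singleton $\{\hat{C}^-_0\}$, yielding $\dim E^*_0 \W = 1$. For $i = d$ the given basis is $\{\hat{C}^+_{d-1}\}$, yielding $\dim E^*_d \W = 1$. For $1 \leq i \leq d-1$ the given basis is the pair $\{\hat{C}^+_{i-1}, \hat{C}^-_i\}$, yielding $\dim E^*_i \W = 2$.

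The only point one might wish to double-check is that in the middle range the two exhibited vectors are genuinely linearly independent (not merely a spanning set), but this is automatic: the subsets $C^+_{i-1}$ and $C^-_i$ are disjoint by construction in~(\ref{def(C-;C+)}), and each is nonempty by Corollary~\ref{C+-;nonempty}, so $\hat{C}^+_{i-1}$ and $\hat{C}^-_i$ are two nonzero vectors with disjoint supports, hence orthogonal by Lemma~\ref{OGB(W)}. There is no real obstacle; the corollary is essentially a bookkeeping consequence of the preceding lemma.
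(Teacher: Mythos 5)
Your proposal is correct and follows exactly the paper's route: the corollary is stated there as an immediate consequence of Lemma~\ref{W=ods(E*i)}, and you simply count the basis vectors that lemma exhibits for each $E^*_i\W$. Your extra remark on linear independence (disjoint supports, nonemptiness from Corollary~\ref{C+-;nonempty}) is a harmless verification of what the lemma already asserts.
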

\begin{proof}
Immediate from Lemma \ref{W=ods(E*i)}.
\end{proof}

\begin{lemma}\label{W:T(x)-mod} The subspace $\W$ is a $T$-module.
\end{lemma}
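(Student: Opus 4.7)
The plan is very short: the algebra $T = T(x)$ is generated by $A$ and $A^*$ (as noted in Section \ref{q-DRG}, right after the definition of $T$), so to check that $\W$ is a $T$-module it suffices to verify $A\W \subseteq \W$ and $A^*\W \subseteq \W$. Both inclusions are already implicit in the computations just made.

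First I would invoke Lemma \ref{action(A;W)}: the formulas (\ref{A.C-}) and (\ref{A.C+}) express $A\hat{C}^-_i$ and $A\hat{C}^+_i$ as explicit $\C$-linear combinations of elements of the spanning set $\{\hat{C}^{\pm}_j\}_{j=0}^{d-1}$ (using the boundary conventions $\hat{C}^{\pm}_{-1} = 0$ and $\hat{C}^{\pm}_d = 0$). Since these generators span $\W$, we conclude $A\W \subseteq \W$.

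Next I would invoke Corollary \ref{action(A*;W)}: it states $A^*\hat{C}^-_i = \theta^*_i \hat{C}^-_i$ and $A^*\hat{C}^+_i = \theta^*_{i+1}\hat{C}^+_i$, so each basis vector is an eigenvector for $A^*$. In particular $A^*\W \subseteq \W$. Combining the two inclusions, every element of the subalgebra of $\MX$ generated by $A$ and $A^*$ preserves $\W$, i.e.\ $T \W \subseteq \W$, so $\W$ is a $T$-module.

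There is no real obstacle here; the lemma is essentially a corollary of Lemma \ref{action(A;W)} and Corollary \ref{action(A*;W)}, and the proof is a one-sentence appeal to those two statements together with the fact that $T$ is generated by $A$ and $A^*$.
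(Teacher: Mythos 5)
Your proof is correct and follows essentially the same route as the paper: the paper's proof also reduces to checking that $\W$ is invariant under the generators, citing Lemma \ref{action(A;W)} for $A$ and Lemma \ref{action(E*;W)} for the dual idempotents, whereas you use Corollary \ref{action(A*;W)} for $A^*$ — an immaterial difference since $A^*$ generates $\mcal{M}^*$.
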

\begin{proof}
Follows from Lemma \ref{action(A;W)} and Lemma \ref{action(E*;W)}.
\end{proof}

\noindent
For $0 \leq r \leq d-1$ we now give the action of $\wt{E}^*_r$ on $\{\hat{C}^{\pm}_{i}\}^{d-1}_{i=0}$. 

\begin{lemma}\label{action(E*(C);W)}
For $0 \leq i,r \leq d-1$,
\begin{equation*}
\wt{E}^{*}_{r}. \hat{C}^-_i = \delta_{ri}\hat{C}^-_i , \qquad \qquad \quad
\wt{E}^{*}_{r}. \hat{C}^+_{i} = \delta_{ri}\hat{C}^+_{i}.
\end{equation*}
\end{lemma}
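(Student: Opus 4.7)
The plan is to reduce the claim directly to the definitions; no real obstacle is expected.

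First I would unpack $\wt{E}^{*}_r$ using (\ref{def(E(C))}): it is the diagonal matrix whose $(y,y)$-entry equals $1$ when $y \in C_r$ and $0$ otherwise. Consequently, for every $y \in X$,
\begin{equation*}
\wt{E}^{*}_r \hat{y} =
\begin{cases} \hat{y} & \text{if } y \in C_r, \\ 0 & \text{otherwise.} \end{cases}
\end{equation*}

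Next I would invoke (\ref{def(C-;C+)}), which gives $C^{-}_i = C_i \cap \Ga_i \subseteq C_i$ and $C^{+}_i = C_i \cap \Ga_{i+1} \subseteq C_i$. Hence for any $y \in C^{-}_i$ (respectively $y \in C^{+}_i$), $y$ lies in $C_r$ if and only if $r = i$. Therefore $\wt{E}^{*}_r \hat{y} = \delta_{ri}\hat{y}$ for each such $y$.

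Finally I would sum over $y$: since $\hat{C}^{-}_i = \sum_{y \in C^{-}_i}\hat{y}$ and $\hat{C}^{+}_i = \sum_{y \in C^{+}_i}\hat{y}$, and since $\wt{E}^{*}_r$ is linear, the two asserted identities
\begin{equation*}
\wt{E}^{*}_r \hat{C}^{-}_i = \delta_{ri}\hat{C}^{-}_i, \qquad \wt{E}^{*}_r \hat{C}^{+}_i = \delta_{ri}\hat{C}^{+}_i
\end{equation*}
follow immediately. The entire argument is a direct bookkeeping from the definitions of $\wt{E}^{*}_r$ and $C^{\pm}_i$; there is no substantive obstacle.
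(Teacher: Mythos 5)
Your proof is correct and matches the paper's argument, which is exactly this bookkeeping: the paper's proof simply cites (\ref{def(C-;C+)}), relying on $C^{\pm}_i \subseteq C_i$, the disjointness of the cells $C_r$, and the diagonal form of $\wt{E}^{*}_r$ from (\ref{def(E(C))}). Nothing further is needed.
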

\begin{proof}
Use (\ref{def(C-;C+)}).
\end{proof}

\begin{corollary}\label{action(A*C;W)}
The matrix $\wt{A}^*$ acts on $\{\hat{C}^{\pm}_i\}^{d-1}_{i=0}$ as follows. For $0 \leq i \leq d-1$,
$$
\wt{A}^*.\hat{C}^-_i = \wt{\tht}^*_i\hat{C}^-_i, \qquad \qquad \quad 
\wt{A}^*.\hat{C}^+_i = \wt{\tht}^*_i\hat{C}^+_i.
$$
\end{corollary}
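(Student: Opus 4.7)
The plan is straightforward: this corollary follows by combining the spectral decomposition of $\wt{A}^*$ from Lemma \ref{wt(A)*=wt(tht)*E*} with the action of the dual primitive idempotents $\wt{E}^*_r$ on the basis vectors $\hat{C}^{\pm}_i$ computed in Lemma \ref{action(E*(C);W)}.

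Specifically, I would first recall from Lemma \ref{wt(A)*=wt(tht)*E*} that
$$
\wt{A}^* = \sum^{d-1}_{r=0} \wt{\tht}^*_r \wt{E}^*_r.
$$
Then, fixing $i$ with $0 \leq i \leq d-1$ and applying this operator to $\hat{C}^-_i$, I would use Lemma \ref{action(E*(C);W)} — which gives $\wt{E}^*_r \hat{C}^-_i = \delta_{ri} \hat{C}^-_i$ — to collapse the sum to a single term, yielding $\wt{A}^* \hat{C}^-_i = \wt{\tht}^*_i \hat{C}^-_i$. The identical argument applied to $\hat{C}^+_i$, using $\wt{E}^*_r \hat{C}^+_i = \delta_{ri} \hat{C}^+_i$, gives $\wt{A}^* \hat{C}^+_i = \wt{\tht}^*_i \hat{C}^+_i$.

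There is no real obstacle here; the content of the corollary is essentially a direct consequence of the fact that both $\hat{C}^-_i$ and $\hat{C}^+_i$ belong to $C_i$, and hence lie in the eigenspace $\wt{E}^*_i \V$ of $\wt{A}^*$ associated with eigenvalue $\wt{\tht}^*_i$. The only point worth emphasizing is that although $\hat{C}^-_i$ and $\hat{C}^+_i$ are distinguished by their behavior under the $x$-based idempotent $E^*_i$ (as seen in Corollary \ref{action(A*;W)}, where they have different eigenvalues $\tht^*_i$ and $\tht^*_{i+1}$), they are not distinguished by the $C$-based idempotent $\wt{E}^*_i$, so they share the common $\wt{A}^*$-eigenvalue $\wt{\tht}^*_i$.
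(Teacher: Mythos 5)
Your proposal is correct and follows exactly the paper's own argument: the paper proves this corollary by citing (\ref{A*C=sum(tht*E*C)}) together with Lemma \ref{action(E*(C);W)}, which is precisely the combination of the spectral decomposition of $\wt{A}^*$ and the $\wt{E}^*_r$-action that you use. Nothing is missing.
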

\begin{proof}
By (\ref{A*C=sum(tht*E*C)}) and Lemma \ref{action(E*(C);W)}.
\end{proof}

\begin{lemma}\label{W=ods(E*(C)i)} We have
$$
\W = \sum^{d-1}_{i=0} \wt{E}^*_i\W \qquad \qquad \text{\rm (orthogonal direct sum).}
$$
Moreover, for $0\leq i\leq d-1$ the vectors $\hat{C}^{+}_{i}, \hat{C}^{-}_{i}$ form a basis for $\wt{E}^*_i\W$.
\end{lemma}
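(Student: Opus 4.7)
The plan is to mimic the standard argument that any $\wt{\mcal{M}}^*$-stable subspace of $\V$ decomposes orthogonally under the primitive idempotents $\{\wt{E}^*_i\}^{d-1}_{i=0}$, and then use Lemma \ref{action(E*(C);W)} to identify the resulting pieces explicitly.

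First I would verify that $\W$ is invariant under each $\wt{E}^*_i$. This is immediate from Lemma \ref{action(E*(C);W)}: the idempotent $\wt{E}^*_i$ fixes $\hat{C}^{\pm}_i$ and annihilates $\hat{C}^{\pm}_j$ for $j \neq i$, hence it sends the spanning set of $\W$ into $\W$. Combined with the resolution of the identity $I = \sum^{d-1}_{i=0} \wt{E}^*_i$ noted just below \eqref{def(E(C))}, this yields $w = \sum^{d-1}_{i=0} \wt{E}^*_i w$ for every $w \in \W$, so $\W = \sum^{d-1}_{i=0} \wt{E}^*_i \W$.

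Next I would check orthogonality of the summands. Each $\wt{E}^*_i$ is a real diagonal matrix, so $(\wt{E}^*_i)^t = \wt{E}^*_i$ and $\overline{\wt{E}^*_i} = \wt{E}^*_i$; thus $\wt{E}^*_i$ is self-adjoint with respect to the Hermitian inner product of Section \ref{q-DRG}. For $i \ne j$ and $u, v \in \W$,
\[
\langle \wt{E}^*_i u, \wt{E}^*_j v \rangle
= \langle u, \wt{E}^*_i \wt{E}^*_j v \rangle
= \langle u, 0 \rangle = 0,
\]
using $\wt{E}^*_i \wt{E}^*_j = 0$. Hence the decomposition is orthogonal, which also forces the sum to be direct.

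Finally I would identify $\wt{E}^*_i \W$ explicitly. Applying Lemma \ref{action(E*(C);W)} to an arbitrary $w = \sum^{d-1}_{j=0}(\alpha_j^- \hat{C}^-_j + \alpha_j^+ \hat{C}^+_j) \in \W$ gives $\wt{E}^*_i w = \alpha_i^- \hat{C}^-_i + \alpha_i^+ \hat{C}^+_i$, so $\wt{E}^*_i \W \subseteq \mathrm{Span}\{\hat{C}^-_i, \hat{C}^+_i\}$; the reverse inclusion holds because $\hat{C}^{\pm}_i$ are themselves fixed by $\wt{E}^*_i$. By Corollary \ref{C+-;nonempty} both vectors are nonzero, and by Lemma \ref{OGB(W)} they are orthogonal, so they form a basis for $\wt{E}^*_i \W$. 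A sanity check via dimensions, $\sum^{d-1}_{i=0}\dim \wt{E}^*_i \W = 2d = \dim \W$ by Corollary \ref{dim(W)=2d}, confirms the decomposition. I do not expect any genuine obstacle here; the argument is parallel to the decomposition $\W = \sum^d_{i=0} E^*_i \W$ already established in Lemma \ref{W=ods(E*i)}, and the only mild point worth care is recording that $\wt{E}^*_i$ is self-adjoint so as to upgrade the direct sum to an orthogonal direct sum.
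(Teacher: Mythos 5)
Your proposal is correct and follows essentially the same route as the paper, which proves this lemma directly from the definition of the sets $C^{\pm}_i$ and Lemma \ref{action(E*(C);W)}; your version merely spells out the orthogonality and span-identification steps that the paper leaves implicit. No gaps.
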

\begin{proof}
Use (\ref{def(C-;C+)}) and Lemma \ref{action(E*(C);W)}.
\end{proof}

\begin{corollary}\label{dim(E(C)*iW)}
The dimension of $\wt{E}^*_i\W$ is $2$ for $0 \leq i \leq d-1$.
\end{corollary}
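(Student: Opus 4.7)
The plan is to deduce this as a one-line consequence of the preceding Lemma \ref{W=ods(E*(C)i)}, which already exhibits an explicit two-element spanning set for each $\wt{E}^*_i\W$. Specifically, for $0 \leq i \leq d-1$, that lemma identifies $\hat{C}^+_i$ and $\hat{C}^-_i$ as a basis for $\wt{E}^*_i\W$, so the dimension is immediate once we confirm that these two vectors are in fact linearly independent.

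The linear independence is essentially built into what we have done. By definition \eqref{def(C-;C+)} the sets $C^-_i$ and $C^+_i$ are disjoint (one lies in $\Ga_i$, the other in $\Ga_{i+1}$), so their characteristic vectors have disjoint support and are therefore orthogonal with respect to $\langle\ ,\ \rangle$. Moreover, by Corollary \ref{C+-;nonempty}, both $C^-_i$ and $C^+_i$ are nonempty, so neither characteristic vector is zero. Two nonzero orthogonal vectors are linearly independent, hence form a basis for the 2-dimensional space they span.

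Thus the entire argument is: invoke Lemma \ref{W=ods(E*(C)i)} to reduce to counting the basis vectors $\hat{C}^+_i, \hat{C}^-_i$, and invoke Corollary \ref{C+-;nonempty} together with the disjointness in \eqref{def(C-;C+)} to verify that these two vectors are linearly independent. There is no real obstacle here; this corollary is a direct bookkeeping consequence of what has already been established, and could reasonably be stated as a one-line ``Immediate from Lemma~\ref{W=ods(E*(C)i)}.''
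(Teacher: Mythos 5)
Your proposal is correct and matches the paper, whose proof is exactly the one-liner ``Immediate from Lemma \ref{W=ods(E*(C)i)}'' that you anticipate; since that lemma already asserts that $\hat{C}^+_i, \hat{C}^-_i$ form a basis for $\wt{E}^*_i\W$, your extra verification of their linear independence (via disjoint supports and Corollary \ref{C+-;nonempty}) is harmless but not needed.
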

\begin{proof}
Immediate from Lemma \ref{W=ods(E*(C)i)}.
\end{proof}

\begin{lemma}\label{W:T(C)-mod} The subspace $\W$ is a $\wt{T}$-module.
\end{lemma}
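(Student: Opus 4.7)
The plan is to show that $\W$ is closed under the action of a generating set of $\wt{T}$, which suffices since $\wt{T}$ is a subalgebra of $\MX$. Recall that $\wt{T}$ is generated by $\mcal{M}$ and $\wt{\mcal{M}}^*$, where $\mcal{M}$ is generated by $A$ and $\wt{\mcal{M}}^*$ has the basis $\{\wt{E}^*_i\}^{d-1}_{i=0}$. So it is enough to check that $A\W \subseteq \W$ and $\wt{E}^*_i\W \subseteq \W$ for $0 \leq i \leq d-1$.

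First I would invoke Lemma \ref{action(A;W)}, which gives explicit formulas for $A.\hat{C}^{\pm}_i$ as linear combinations of $\hat{C}^{\pm}_{i-1}, \hat{C}^{\pm}_i, \hat{C}^{\pm}_{i+1}$. Since $\{\hat{C}^{\pm}_i\}^{d-1}_{i=0}$ is a basis for $\W$ (Lemma \ref{OGB(W)}), this shows that $A$ maps each basis vector into $\W$, hence $A\W \subseteq \W$. Consequently $\mcal{M}\W \subseteq \W$.

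Next, Lemma \ref{action(E*(C);W)} shows that each $\wt{E}^*_r$ sends every basis vector $\hat{C}^{\pm}_i$ either to itself or to zero, so $\wt{E}^*_r\W \subseteq \W$ for all $r$, giving $\wt{\mcal{M}}^*\W \subseteq \W$.

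Since $\wt{T}$ is the subalgebra of $\MX$ generated by $\mcal{M}\cup\wt{\mcal{M}}^*$, and both of these map $\W$ into itself, we conclude that $B\W \subseteq \W$ for every $B \in \wt{T}$. Thus $\W$ is a $\wt{T}$-module. No obstacle arises; the key work was already done in establishing Lemmas \ref{action(A;W)} and \ref{action(E*(C);W)}, and this result is just the synthesis.
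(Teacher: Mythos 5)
Your proof is correct and follows exactly the paper's route: the paper proves this lemma by citing Lemma \ref{action(A;W)} and Lemma \ref{action(E*(C);W)}, and your argument simply spells out why closure under $A$ and under the $\wt{E}^*_i$ (which generate $\wt{T}$ together) suffices. No gaps.
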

\begin{proof}
Follows from Lemma \ref{action(A;W)} and Lemma \ref{action(E*(C);W)}.
\end{proof}

\noindent
Motivated by Lemma \ref{W:T(x)-mod} and Lemma \ref{W:T(C)-mod}, we make a definition.

\begin{definition}\label{Algebra(T)}
Let $\T$ denote the subalgebra of $\MX$ generated by
$T$ and $\wt{T}$. The algebra $\T$ is finite-dimensional and noncommutative.
Observe that $A, A^*, \wt{A}^*$ generate $\T$. The algebra $\T$ is semisimple since it is closed under the conjugate-transpose map.
\end{definition}

\noindent
By a {\it \T-module} we mean a subspace $W \subseteq \V$ such that $BW \subseteq W$ for all $B \in \T$. 
By Lemma \ref{W:T(x)-mod} and Lemma \ref{W:T(C)-mod} $\W$ is a $\T$-module. We call $\W$ the {\it primary $\T$-module}. We now describe the $\T$-module $\W$.

\begin{lemma}\label{W=DS(E*E*)} We have
$$
\W = \sum^{d-1}_{i=0}\sum^{i+1}_{j=i}\wt{E}^*_iE^*_j\W  \qquad \qquad \text{\rm (orthogonal direct sum).}
$$
Moreover, for $0 \leq i \leq d-1$ both
\begin{enumerate}
\item[\rm(i)] The vector $\hat{C}^-_i$ is a basis for $\wt{E}^*_iE^*_i\W$,
\item[\rm(ii)] The vector $\hat{C}^+_i$ is a basis for $\wt{E}^*_iE^*_{i+1}\W$.
\end{enumerate}
\end{lemma}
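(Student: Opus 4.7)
The plan is to establish the lemma by a direct computation on the basis $\{\hat{C}^{\pm}_i\}_{i=0}^{d-1}$ of $\W$ from Lemma~\ref{OGB(W)}, leveraging the fact that the idempotents $E^*_j$ and $\wt{E}^*_i$ are both diagonal matrices and therefore commute.

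First I would note that for any $0 \leq i \leq d-1$ and any $0 \leq j \leq d$, the operator $\wt{E}^*_i E^*_j$ is well defined on $\W$ (since $\W$ is a $T$-module and a $\wt{T}$-module by Lemmas~\ref{W:T(x)-mod} and~\ref{W:T(C)-mod}). Applying it to the basis vectors using Lemma~\ref{action(E*;W)} (which gives the action of $E^*_r$) followed by Lemma~\ref{action(E*(C);W)} (which gives the action of $\wt{E}^*_r$), I would compute:
\begin{align*}
\wt{E}^*_i E^*_i \hat{C}^-_k &= \delta_{ik}\hat{C}^-_i, & \wt{E}^*_i E^*_i \hat{C}^+_k &= 0,\\
\wt{E}^*_i E^*_{i+1} \hat{C}^-_k &= 0, & \wt{E}^*_i E^*_{i+1} \hat{C}^+_k &= \delta_{ik}\hat{C}^+_i.
\end{align*}
The vanishings come from the fact that the two Kronecker deltas $\delta_{ik}$ and $\delta_{i+1,k}$ cannot simultaneously hold. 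These four identities immediately yield assertions (i) and (ii), since $\hat{C}^-_i$ and $\hat{C}^+_i$ are nonzero by Corollary~\ref{C+-;nonempty}.

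For the decomposition, since $\{\hat{C}^{\pm}_i\}_{i=0}^{d-1}$ spans $\W$ by Lemma~\ref{OGB(W)}, and since (i) and (ii) place each of these basis vectors inside one of the subspaces $\wt{E}^*_i E^*_j \W$ with $j \in \{i, i+1\}$, we obtain
\[
\W = \sum_{i=0}^{d-1}\sum_{j=i}^{i+1} \wt{E}^*_i E^*_j \W.
\]
Orthogonality of the sum follows because the basis $\{\hat{C}^{\pm}_i\}_{i=0}^{d-1}$ is already orthogonal (Lemma~\ref{OGB(W)}), and each summand $\wt{E}^*_i E^*_j \W$ is spanned by exactly one of these vectors. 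I do not anticipate any substantial obstacle here; the entire argument is a bookkeeping exercise combining the two idempotent-action lemmas with the orthogonal basis of Lemma~\ref{OGB(W)}.
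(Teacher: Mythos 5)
Your proof is correct and is essentially the paper's argument: the paper simply combines Lemma \ref{W=ods(E*i)} and Lemma \ref{W=ods(E*(C)i)}, which themselves rest on the same action formulas (Lemmas \ref{action(E*;W)} and \ref{action(E*(C);W)}) that you apply directly to the orthogonal basis $\{\hat{C}^{\pm}_i\}^{d-1}_{i=0}$. Your computation just unpacks those intermediate lemmas, so there is no substantive difference in approach.
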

\begin{proof}
Use Lemma \ref{W=ods(E*i)} and Lemma \ref{W=ods(E*(C)i)}.
\end{proof}

\begin{lemma}\label{E*E*=C+-}
For $0 \leq i \leq d-1$, 
\begin{equation*}
\wt{E}^*_iE^*_{i}{\bf j} = \hat{C}^{-}_i, \qquad \qquad \quad 
\wt{E}^*_iE^*_{i+1}{\bf j} = \hat{C}^{+}_i.
\end{equation*}
\end{lemma}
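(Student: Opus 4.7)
The plan is to prove both identities directly from the definitions of $E^*_i$, $\wt{E}^*_i$, and the sets $C^{\pm}_i$. I would first note that ${\bf j}$ is the all-ones vector, so for any diagonal matrix $D \in \MX$ with $(y,y)$-entries $D_{yy}$, we have $D{\bf j} = \sum_{y\in X} D_{yy}\hat{y}$. In particular, since $\wt{E}^*_i$ and $E^*_j$ are both diagonal (so they commute and their product is diagonal with $(y,y)$-entry equal to the product of the individual entries), the vector $\wt{E}^*_i E^*_j {\bf j}$ is simply the characteristic vector of the set of $y\in X$ for which both $(\wt{E}^*_i)_{yy}=1$ and $(E^*_j)_{yy}=1$.

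Next I would apply the defining equations \eqref{def(E^*(x))} and \eqref{def(E(C))}: $(\wt{E}^*_i)_{yy}=1$ iff $y \in C_i$, and $(E^*_j)_{yy}=1$ iff $\partial(x,y)=j$, i.e.\ $y \in \Ga_j = \Ga_j(x)$. Consequently
\[
\wt{E}^*_i E^*_j {\bf j} \;=\; \sum_{y \in C_i \cap \Ga_j} \hat{y} \;=\; \widehat{C_i \cap \Ga_j}.
\]
Specializing to $j=i$ and $j=i+1$ and invoking the definitions \eqref{def(C-;C+)} of $C^-_i=C_i\cap \Ga_i$ and $C^+_i = C_i \cap \Ga_{i+1}$ yields the two claimed equalities.

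There is essentially no obstacle here; the lemma is a bookkeeping statement that unwinds the two projection matrices when applied to ${\bf j}$. The only thing to double-check is that the product $\wt{E}^*_i E^*_{i+1}$ (and $\wt{E}^*_i E^*_i$) is nonzero in the expected places, which is guaranteed by Corollary \ref{C+-;nonempty}, but this is not even needed for the identity itself --- it only confirms that the vectors on the right-hand side are nonzero.
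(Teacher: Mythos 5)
Your proof is correct and is essentially the paper's argument: the paper simply routes the same definitional bookkeeping through Lemma \ref{action(E*;W)} and Lemma \ref{action(E*(C);W)} applied to ${\bf j}=\sum^{d-1}_{i=0}(\hat{C}^-_i+\hat{C}^+_i)$, whereas you inline those lemmas by intersecting the supports of the two diagonal idempotents directly via (\ref{def(E^*(x))}), (\ref{def(E(C))}) and (\ref{def(C-;C+)}). No gap; your closing remark is also right that nonemptiness of $C^{\pm}_i$ is not needed for the identities themselves.
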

\begin{proof}
Use Lemma \ref{action(E*;W)} and Lemma \ref{action(E*(C);W)} along with  ${\bf j} = \sum^{d-1}_{i=0}(\hat{C}^{-}_i+\hat{C}^+_i)$. 
\end{proof}

\begin{lemma}\label{C+=...;C-=...}
For $0 \leq i \leq d-1$, 
\begin{equation*}
\hat{C}^-_i = \sum^i_{j=0}A_j\hat{x} - \sum^{i-1}_{j=0}\hat{C}_j,
\qquad \qquad \quad
\hat{C}^+_i = \sum^i_{j=0}\hat{C}_j - \sum^i_{j=0}A_j\hat{x}.
\end{equation*}
\end{lemma}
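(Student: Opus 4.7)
The plan is to reduce both formulas to a single telescoping identity involving the characteristic vectors of the spheres $\Gamma_j(x)$ and of the cells $C_j^{\pm}$.

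First, I would observe that $A_j\hat{x}$ is the characteristic vector of $\Gamma_j(x)$, the set of vertices at distance $j$ from $x$. The main geometric fact I need is the decomposition
$$
\Gamma_j(x) = C_{j-1}^+ \cup C_j^- \qquad (0 \leq j \leq d),
$$
where the union is disjoint and we use the conventions $C_{-1}^{\pm}=\emptyset=C_d^{\pm}$ (except $C_{d-1}^+=\Gamma_d$). This holds because, for any $y\in X$ with $\partial(x,y)=j$, the triangle inequality applied with $z\in C$ and the fact that $C$ is a clique containing $x$ (so $\partial(x,z)\leq 1$) forces $j-1\leq \partial(y,C)\leq j$, i.e., $y\in C_{j-1}\cup C_j$. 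Intersecting with $\Gamma_j(x)$ gives the claim. The two edge cases $\Gamma_0=\{x\}=C_0^-$ and $\Gamma_d=C_{d-1}^+$ are immediate from the definitions.

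Summing this decomposition and using $\hat C_j=\hat C_j^-+\hat C_j^+$ (from \eqref{Ci=(C-i)+(C+i)}), I obtain the master identity
$$
\sum_{j=0}^{i}A_j\hat x \;=\; \hat C_0^- + \sum_{j=1}^{i}\bigl(\hat C_{j-1}^+ + \hat C_j^-\bigr) \;=\; \sum_{j=0}^{i}\hat C_j^- + \sum_{j=0}^{i-1}\hat C_j^+
$$
valid for $0\leq i\leq d-1$. From this the two formulas of the lemma drop out by telescoping: subtracting $\sum_{j=0}^{i-1}\hat C_j=\sum_{j=0}^{i-1}\hat C_j^- + \sum_{j=0}^{i-1}\hat C_j^+$ cancels everything except $\hat C_i^-$, giving the first identity, and subtracting the master identity itself from $\sum_{j=0}^{i}\hat C_j=\sum_{j=0}^{i}\hat C_j^- + \sum_{j=0}^{i}\hat C_j^+$ leaves $\hat C_i^+$, giving the second.

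There is really no obstacle here; the content is purely combinatorial bookkeeping once the decomposition $\Gamma_j(x)=C_{j-1}^+\cup C_j^-$ is in hand. The only place where one must be a little careful is verifying the endpoint cases $i=0$ and $i=d-1$, and checking that the conventions $\hat C_{-1}^{\pm}=0$ make the formulas consistent. No deep tool is needed beyond the definition of $C_j^{\pm}$ in \eqref{def(C-;C+)} and the observation about distances to $C$ stated above.
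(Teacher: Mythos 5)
Your proof is correct and matches the paper's approach in substance: the paper's proof is simply ``use the definition \eqref{def(C-;C+)} and induction on $i$,'' and the decomposition $\Gamma_j(x)=C^+_{j-1}\cup C^-_j$ you rely on is exactly the relation recorded in the paper right below \eqref{def(C-;C+)}. Replacing the induction by a telescoping sum is only a cosmetic repackaging of the same bookkeeping.
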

\begin{proof}
Use (\ref{def(C-;C+)}) and induction on $i$.
\end{proof}

\begin{corollary}\label{Mx+MC=W} The following hold.
\begin{enumerate} 
\item[\rm (i)] $\Mx + \MC = \W$.
\item[\rm (ii)] $\Mx \cap \MC = \mbb{C}{\bf j}$.
\end{enumerate}
\end{corollary}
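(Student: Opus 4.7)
The plan is to prove both assertions by combining the explicit bases already established with a dimension count.

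For part (i), I would first show the inclusion $\W \subseteq \Mx + \MC$ by directly reading off Lemma \ref{C+=...;C-=...}: the formulas express each $\hat{C}^{\pm}_i$ as a linear combination of vectors $A_j\hat{x} \in \Mx$ and $\hat{C}_j \in \MC$. Since Lemma \ref{OGB(W)} says $\{\hat{C}^{\pm}_i\}_{i=0}^{d-1}$ spans $\W$, this half is immediate. For the reverse inclusion $\Mx + \MC \subseteq \W$, I would note that $A_i\hat{x}$ is the characteristic vector of $\Ga_i(x)$, and by the definitions $\Ga_0=C_0^-$, $\Ga_i = C_{i-1}^+ \cup C_i^-$ for $1\leq i \leq d-1$, and $\Ga_d = C_{d-1}^+$, so each $A_i\hat{x}$ is a sum of characteristic vectors $\hat{C}^{\pm}_j$, placing it in $\W$; similarly $\hat{C}_i = \hat{C}_i^- + \hat{C}_i^+ \in \W$ by (\ref{Ci=(C-i)+(C+i)}), so $\MC \subseteq \W$ as well.

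For part (ii), the containment $\mbb{C}{\bf j} \subseteq \Mx \cap \MC$ follows from ${\bf j} = \sum_{i=0}^{d}A_i\hat{x} \in \Mx$ (using Example \ref{ex;Mx}) and ${\bf j} = \sum_{i=0}^{d-1}\hat{C}_i \in \MC$ (using Lemma \ref{prim-T(C)}(v)). For the reverse, I would invoke the standard dimension formula
\[
\dim(\Mx \cap \MC) = \dim\Mx + \dim\MC - \dim(\Mx+\MC).
\]
Here $\dim\Mx = d+1$ by Example \ref{ex;Mx}, $\dim\MC = d$ by Lemma \ref{prim-T(C)}(v), and $\dim\W = 2d$ by Corollary \ref{dim(W)=2d}. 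Combined with part (i), this gives $\dim(\Mx \cap \MC) = (d+1) + d - 2d = 1$, and since $\bf j$ is a nonzero element of the intersection, $\Mx \cap \MC = \mbb{C}{\bf j}$.

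There is no real obstacle here; the work was done in Lemma \ref{C+=...;C-=...} and in establishing the dimensions of $\Mx$, $\MC$, and $\W$. The only subtlety is making sure to use the right dimensions ($\dim\MC = d$, not $d+1$, a consequence of $E_d\hat{C}=0$ from Lemma \ref{EdC=0}), which is precisely what forces the intersection to be one-dimensional.
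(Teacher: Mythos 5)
Your proof is correct and follows essentially the same route as the paper: the inclusion $\W \subseteq \Mx+\MC$ via Lemma \ref{C+=...;C-=...} together with the spanning set $\{\hat{C}^{\pm}_i\}^{d-1}_{i=0}$, and the intersection via the dimension count $(d+1)+d-2d=1$ using Example \ref{ex;Mx}, Lemma \ref{prim-T(C)}(v), and Corollary \ref{dim(W)=2d}. The only (immaterial) difference is in the reverse inclusion of (i), where the paper cites the $\mcal{M}$-invariance of $\W$ from Lemma \ref{action(A;W)} with $\hat{x},\hat{C}\in\W$, while you verify directly that the basis vectors $A_i\hat{x}$ and $\hat{C}_i$ are sums of the $\hat{C}^{\pm}_j$.
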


\begin{proof}
(i) By Lemma \ref{action(A;W)} $\W$ is invariant under $\mcal{M}$. Also $\hat{x}, \hat{C} \in \W$. Thus $\Mx+\MC \subseteq \W$. Concerning the reverse inclusion, recall by Lemma \ref{prim-T(C)}(v) that $\{\hat{C}_j\}^{d-1}_{j=0}$ forms a basis for $\MC$. By Lemma \ref{C+=...;C-=...}, the set $\{\hat{C}^{\pm}_i\}^{d-1}_{i=0}$ is contained in $\Mx+\MC$. The $\{\hat{C}^{\pm}_i\}^{d-1}_{i=0}$ span $\W$ and therefore $\W$ is contained in $\Mx+\MC$. The result follows.\\
(ii) We saw earlier that ${\bf j} \in \Mx$ and ${\bf j} \in \MC$, so $\mbb{C}{\bf j} \subseteq \Mx \cap \MC$.
To finish the proof we show that the dimension of $\Mx \cap \MC$ is 1. Using Example \ref{ex;Mx}, Lemma \ref{prim-T(C)}(v), and part (i) along with Corollary \ref{dim(W)=2d},
\begin{align*}
\dim(\Mx \cap \MC) & = \dim{\Mx}+\dim{\MC} - \dim(\Mx+\MC) \\
& = d+1+d-2d  \\ &=1.
\end{align*}
The result follows.
\end{proof}

\begin{proposition}\label{W;irreducible} The $\T$-module $\W$ is irreducible.
\end{proposition}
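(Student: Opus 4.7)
The plan is to argue by contradiction using Schur's lemma together with the sharp dimension bound $\dim\W=2d$ of Corollary \ref{dim(W)=2d}. Since $\T$ is semisimple (being closed under the conjugate-transpose map, by Definition \ref{Algebra(T)}), suppose for contradiction that $\W$ admits a nontrivial $\T$-module direct sum decomposition $\W=U_1\oplus U_2$ with $U_1,U_2\ne 0$. The projections $\pi_i\colon\W\to U_i$ are then $\T$-module homomorphisms; in particular they are simultaneously $T$-module homomorphisms and $\wt T$-module homomorphisms, since $T\subseteq\T$ and $\wt T\subseteq\T$.

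First I would apply Schur's lemma to the irreducible $T$-submodule $\Mx\subseteq\W$ (Example \ref{ex;Mx}), which has dimension $d+1$. Each restriction $\pi_i|_{\Mx}$ is a $T$-module map from the irreducible module $\Mx$ to $U_i$, hence either zero or injective. If both were injective then $\dim U_1+\dim U_2\ge 2(d+1)=2d+2>2d=\dim\W$, a contradiction. So, without loss of generality, $\pi_2|_{\Mx}=0$ and $\Mx\subseteq U_1$, forcing $\dim U_1\ge d+1$ and hence $\dim U_2\le d-1$.

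Next I would apply Schur's lemma to the irreducible $\wt T$-submodule $\MC\subseteq\W$ (Lemma \ref{prim-T(C)}(vi)), which has dimension $d$ by Lemma \ref{prim-T(C)}(v). The map $\pi_2|_{\MC}$ is again either zero or injective; injectivity would force $\dim U_2\ge d$, contradicting $\dim U_2\le d-1$. Therefore $\pi_2|_{\MC}=0$, i.e.\ $\MC\subseteq U_1$. Combining with the previous step and using Corollary \ref{Mx+MC=W}(i), we obtain $\W=\Mx+\MC\subseteq U_1$, which contradicts $U_2\ne 0$. Hence $\W$ is irreducible as a $\T$-module.

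The potential obstacle I anticipate is really only bookkeeping: verifying that the projections are $T$- and $\wt T$-module maps (immediate from $T,\wt T\subseteq\T$) and that Schur applies as stated (immediate once we restrict to an irreducible submodule). The dimension gap between $\dim\MC=d$ and $\dim U_2\le d-1$ is just wide enough for the argument to close without invoking Corollary \ref{Mx+MC=W}(ii); note, however, that the ${\bf j}$-in-intersection observation of that corollary provides an alternative finish in the event one prefers to split into the two cases $\MC\subseteq U_1$ versus $\MC\subseteq U_2$ and derive a contradiction in the latter from ${\bf j}\in U_1\cap U_2=0$.
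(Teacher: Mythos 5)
Your proof is correct, and it takes a genuinely different route from the paper's. The paper decomposes $\W$ orthogonally into irreducible $\T$-modules, picks a summand $W$ not orthogonal to $\hat{x}$, uses $E^*_0\W=\text{Span}\{\hat{x}\}$ to force $\hat{x}\in W$, hence $\Mx\subseteq W$ and ${\bf j}\in W$, and then invokes Lemma \ref{E*E*=C+-} to show that $\T{\bf j}$ already contains all the vectors $\hat{C}^{\pm}_i$, so $W=\W$; this is a constructive "generation from $\hat{x}$" argument. You instead argue by contradiction with a two-summand decomposition $\W=U_1\oplus U_2$ (legitimate, since the orthogonal complement of a $\T$-submodule is again a $\T$-submodule) and use the kernel half of Schur's lemma: restricting the projections to the irreducible $T$-module $\Mx$ (dimension $d+1$) and the irreducible $\wt T$-module $\MC$ (dimension $d$), the bound $\dim\W=2d$ forces $\Mx\subseteq U_1$ and then $\MC\subseteq U_1$, so Corollary \ref{Mx+MC=W}(i) gives $\W\subseteq U_1$, a contradiction. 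Your route avoids Lemma \ref{E*E*=C+-} and any explicit vectors, at the cost of needing the precise dimension data ($\dim\W=2d$, $\dim\Mx=d+1$, $\dim\MC=d$), the irreducibility of $\MC$ (Lemma \ref{prim-T(C)}(vi)), and $\W=\Mx+\MC$; the paper's route needs only $\Mx$ and the explicit action of $\T$ on ${\bf j}$, and as a by-product exhibits $\W$ as the $\T$-module generated by $\hat{x}$, which is used elsewhere in the paper's narrative. One cosmetic remark: what you use is not Schur's lemma in full (the targets $U_i$ need not be irreducible) but only the statement that a module map out of an irreducible module is zero or injective, exactly as you note; all lemmas you cite occur before Proposition \ref{W;irreducible}, so there is no circularity.
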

\begin{proof}
Since $\T$ is closed under the conjugate-transpose map, $\W$ is an orthogonal direct sum of irreducible $\T$-modules. Among these $\T$-modules, there exists one, denoted $W$, that is not orthogonal to $\hat{x}$. Now $E^*_0W \ne 0$. Also $E^*_0W \subseteq E^*_0\W = \text{Span}\{\hat{x} \}$. So $\hat{x} \in E^*_0W \subseteq W$ and hence $\Mx \subseteq W$. Thus ${\bf j} \in \Mx \subseteq W$, and so $\T{\bf j} \subseteq W$. Now $\{\hat{C}^{\pm}_{i}\}^{d-1}_{i=0} \subseteq W$ by Lemma \ref{E*E*=C+-}. The $\{\hat{C}^{\pm}_i\}^{d-1}_{i=0}$ span $\W$ and therefore $\W \subseteq W$. Consequently $\W=W$. The result follows.
\end{proof}

%

\noindent
We finish this section with a comment.

\begin{lemma}\label{dim(EiW)} For $0 \leq i \leq d$,
\begin{equation*}\label{dim(EiW)eq}
\dim{E_i\W} = 
\begin{cases}
2 & \quad \text{\rm if } \quad 1 \leq i \leq d-1, \\
1 & \quad \text{\rm if } \quad i \in \{0, d\}.
\end{cases}
\end{equation*}
\end{lemma}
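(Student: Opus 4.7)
The plan is to combine three ingredients that are already in place: (i) the decomposition $\W = \Mx + \MC$ from Corollary \ref{Mx+MC=W}, (ii) the descriptions of the primary $T$-module and the primary $\wt{T}$-module from Example \ref{ex;Mx} and Lemma \ref{prim-T(C)}, and (iii) a dimension count using $\dim\W = 2d$ from Corollary \ref{dim(W)=2d}.

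First I would observe that $\W$ is $A$-invariant (since $A\in T\subseteq\T$), hence stable under each primitive idempotent $E_i$, and so
\begin{equation*}
\W = \sum_{i=0}^{d} E_i\W \qquad \text{(orthogonal direct sum).}
\end{equation*}
Next, using Corollary \ref{Mx+MC=W}(i) I would write $E_i\W = E_i\Mx + E_i\MC$. Since $\{E_j\hat{x}\}_{j=0}^{d}$ is a basis for $\Mx$ (Example \ref{ex;Mx}), we have $\dim E_i\Mx = 1$ for every $0\leq i\leq d$. Since $\{E_j\hat{C}\}_{j=0}^{d-1}$ is a basis for $\MC$ (Lemma \ref{prim-T(C)}(v)) and $E_d\hat{C}=0$ (Lemma \ref{EdC=0}), we have $\dim E_i\MC = 1$ for $0\leq i\leq d-1$ and $\dim E_d\MC=0$. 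Therefore
\begin{equation*}
\dim E_i\W \leq 2 \qquad (0\leq i\leq d),
\end{equation*}
with equality possible only in the range $1\leq i\leq d-1$.

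Next I would pin down the two endpoint cases. For $i=0$, both $E_0\hat{x}$ and $E_0\hat{C}$ are nonzero scalar multiples of ${\bf j}$ (because $E_0 = |X|^{-1}{\bf J}$), so $E_0\W = \mbb{C}{\bf j}$ is one-dimensional. For $i=d$, $E_d\MC=0$ by Lemma \ref{EdC=0}, hence $E_d\W = E_d\Mx = \mbb{C}E_d\hat{x}$, which is one-dimensional because $E_d\hat{x}\neq 0$ (since $\{E_j\hat{x}\}_{j=0}^{d}$ is a basis for $\Mx$).

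Finally I would close by a dimension count. Using $\dim\W = 2d$ from Corollary \ref{dim(W)=2d},
\begin{equation*}
2d = \dim\W = \dim E_0\W + \sum_{i=1}^{d-1}\dim E_i\W + \dim E_d\W = 1 + \sum_{i=1}^{d-1}\dim E_i\W + 1.
\end{equation*}
So $\sum_{i=1}^{d-1}\dim E_i\W = 2(d-1)$, and since each summand is at most $2$ with $d-1$ terms, each must equal $2$. The only non-routine step is ruling out collapses at the endpoints, but this is immediate from $E_0=|X|^{-1}{\bf J}$ and from Lemma \ref{EdC=0}; no serious obstacle is expected.
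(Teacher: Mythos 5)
Your proof is correct and follows essentially the same route as the paper: decompose $E_i\W = E_i\Mx + E_i\MC$, bound the summand dimensions via Example \ref{ex;Mx}, Lemma \ref{prim-T(C)}(v) and Lemma \ref{EdC=0}, and close with the count against $\dim\W = 2d$ from Corollary \ref{dim(W)=2d}. The only (harmless) difference is at the endpoints: the paper caps $\dim E_i\W$ there using $\Mx\cap\MC=\mbb{C}{\bf j}$ from Corollary \ref{Mx+MC=W}(ii), whereas you compute $E_0\W=\mbb{C}{\bf j}$ and $E_d\W=\mbb{C}E_d\hat{x}$ directly from $E_0=|X|^{-1}{\bf J}$ and $E_d\hat{C}=0$, which is equally valid.
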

\begin{proof}
Using Corollary \ref{Mx+MC=W}(i), $E_i\W = E_i(\Mx+\MC) = E_i\Mx + E_i\MC$. By linear algebra,
\begin{equation}\label{dim(EiW);(1)}
\dim{E_i\W}  = \dim{E_i\Mx}+\dim{E_i\MC} - \dim{(E_i\Mx \cap E_i\MC)}.
\end{equation}
Observe that $E_i(\Mx\cap\MC)$ is contained in $E_i\Mx \cap E_i\MC$. By this and Corollary \ref{Mx+MC=W}(ii),
\begin{equation}\label{dim(EiW);(2)}
\dim{E_i(\mbb{C}{\bf j})} \leq \dim{(E_i\Mx \cap E_i\MC)}.
\end{equation}
Combining (\ref{dim(EiW);(1)}) and (\ref{dim(EiW);(2)}),
$$\dim{E_i\W}  \leq \dim{E_i\Mx}+\dim{E_i\MC} - \dim{E_i(\mbb{C}{\bf j})}.$$
By Example \ref{ex;Mx}, $\dim{E_r\Mx} =1$ for $0 \leq r \leq d$. By Lemma \ref{prim-T(C)}(v), $\dim{E_r\MC}=1$ for $0 \leq r \leq d-1$. Also ${E_d\MC}=0$ by Lemma \ref{EdC=0}. Moreover,  $\dim{E_0(\mbb{C}{\bf j})}=1$ and $E_r$ vanishes on $\mbb{C}{\bf j}$ for $1 \leq r \leq d$. By these comments the dimension of $E_i\W$ is at most $2$ for $1 \leq i \leq d-1$ and at most $1$ for $i \in \{0, d\}$. The sum of these upper bounds is $2d$. Also $\sum^d_{i=0}\dim{E_i{\W}} = \dim{\W}=2d$. By these comments the dimension of $E_i\W$ equals $2$ for $ 1 \leq i \leq d-1$ and equals $1$ for $i \in \{0, d\}$. The result follows.
\end{proof}


\medskip
\section{$\W$ as a $T$-module}\label{W-T(x)}

Recall the subspace $\W$ from above line (\ref{Ci=(C-i)+(C+i)}). In Lemma \ref{W:T(x)-mod} we saw that $\W$ is a $T$-module. Throughout this section we adopt this point of view. Recall the primary $T$-module $\Mx$ from Example \ref{ex;Mx}. Observe that $\Mx$ is an irreducible $T$-submodule of $\W$. Let $\Mxp$ denote the orthogonal complement of $\Mx$ in $\W$. Observe that $\dim{\Mxp}=d-1$, since $\dim{\W}=2d$ by Corollary \ref{dim(W)=2d} and $\dim{\Mx}=d+1$ by Example \ref{ex;Mx}. The subspace $\Mxp$ is a $T$-submodule of $\W$ since $T$ is closed under the conjugate-transpose map. By construction
\begin{equation}\label{ODS(T-mods)}
\qquad \qquad  \W = \Mx + \Mxp \qquad \qquad \text{\rm (orthogonal direct sum of $T$-modules).}
\end{equation}


\begin{lemma}\label{Mxp}
The $T$-module $\Mxp$ is irreducible and thin, with endpoint $1$, dual endpoint $1$, and diameter $d-2$. 
\end{lemma}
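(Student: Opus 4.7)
The plan is first to tabulate the dimensions of $E_i^* \Mxp$ and $E_i \Mxp$ for $0 \leq i \leq d$, and then to establish irreducibility by a connectedness argument on the support $\{i : E_i^* \Mxp \neq 0\}$ that exploits the tridiagonality of $A$ relative to $\{E_i^*\}$.

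Since $\W = \Mx \oplus \Mxp$ is an orthogonal direct sum of $T$-modules and each $E_i^*, E_i$ lies in $T$, each primitive idempotent splits across the summands, so
$$
\dim E_i^* \Mxp = \dim E_i^* \W - \dim E_i^* \Mx, \qquad \dim E_i \Mxp = \dim E_i \W - \dim E_i \Mx.
$$
Combining Corollary \ref{dim(E*iW)} with Example \ref{ex;Mx} (which gives $\dim E_i^* \Mx = 1$), and Lemma \ref{dim(EiW)} with the analogous $\dim E_i \Mx = 1$, I obtain that both $\dim E_i^* \Mxp$ and $\dim E_i \Mxp$ equal $1$ for $1 \leq i \leq d-1$ and $0$ for $i \in \{0,d\}$. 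Once irreducibility is known, this table forces $\Mxp$ to be thin, with endpoint $1$, dual endpoint $1$, and diameter $(d-1)-1 = d-2$.

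For irreducibility, let $U$ be any nonzero $T$-submodule of $\Mxp$. Since $\mcal{M}^* \subseteq T$, we have $U = \bigoplus_i E_i^* U$, and because $\dim E_i^* \Mxp \leq 1$, each summand is either $0$ or all of $E_i^* \Mxp$. Setting $S = \{i : E_i^* U \neq 0\} \subseteq \{1,\ldots,d-1\}$, I would show that $S$ is closed both upward and downward within $\{1,\ldots,d-1\}$, so that $S = \{1,\ldots,d-1\}$ and $U = \Mxp$. Fix $i \in S$. Using Lemma \ref{W=ods(E*i)} and orthogonality to $E_i^* \Mx = \mbb{C}\hspace{0.02cm}(\hat{C}^+_{i-1}+\hat{C}^-_i)$, I find that $E_i^* \Mxp$ is spanned by the explicit vector $w_i := |C_i^-|\hat{C}^+_{i-1} - |C_{i-1}^+|\hat{C}^-_i$. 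Since $A$ is tridiagonal relative to $\{E_j^*\}$ and $A w_i \in U$, we have $E_{i\pm 1}^* A w_i \in E_{i\pm 1}^* U$. Substituting $w_i$ into the formulas of Lemma \ref{action(A;W)}, the $E_{i+1}^*$-component of $Aw_i$ contains the term $-|C_{i-1}^+|\, c_{i+1}\, \hat{C}^-_{i+1}$, nonzero by Corollary \ref{C+-;nonempty} and $c_{i+1} \neq 0$; symmetrically, the $E_{i-1}^*$-component contains the term $|C_i^-|\, b_{i-1}\, \hat{C}^+_{i-2}$, nonzero by $b_{i-1} \neq 0$. These yield $i+1 \in S$ whenever $i \leq d-2$ and $i-1 \in S$ whenever $i \geq 2$, which is the desired closure.

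The only delicate point is the non-vanishing of the off-diagonal coefficients in the tridiagonal action on the two-component vector $w_i$: one must rule out a cancellation between the contributions of $\hat{C}^+_{i-1}$ and $\hat{C}^-_i$ in the neighboring $E_{i\pm1}^*$-components. Inspection of Lemma \ref{action(A;W)} reveals that only $\hat{C}^-_i$ contributes to the extreme vector $\hat{C}^-_{i+1}$, and only $\hat{C}^+_{i-1}$ contributes to the extreme vector $\hat{C}^+_{i-2}$, so no cancellation is possible. The standard non-vanishing $c_j \neq 0$ $(1\leq j \leq d)$ and $b_j \neq 0$ $(0\leq j \leq d-1)$ then completes the argument.
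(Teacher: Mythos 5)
Your proposal is correct, and the dimension bookkeeping in your first part is exactly the paper's: both compute $\dim E^*_i\Mxp$ and $\dim E_i\Mxp$ (equal to $1$ for $1\leq i\leq d-1$, $0$ for $i\in\{0,d\}$) by subtracting the primary-module dimensions from those of $\W$, and then read off thinness, endpoint, dual endpoint, and diameter. Where you genuinely diverge is the irreducibility step. The paper decomposes $\Mxp$ into irreducible $T$-modules, notes that one summand $U$ must have endpoint $1$, and then invokes the external result \cite[Lemma~5.1]{JSC} that an irreducible $T$-module of endpoint $1$ has dimension at least $d-1$; since $\dim\Mxp=d-1$ this forces $U=\Mxp$. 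You instead give a self-contained support-connectedness argument: you identify the spanning vector of $E^*_i\Mxp$ explicitly as $|C^-_i|\hat{C}^+_{i-1}-|C^+_{i-1}|\hat{C}^-_i$ (this is, up to scalar, the vector the paper only introduces later in Lemma \ref{pre(OB;Mxp)} via $\epsilon_i$), apply the $A$-action of Lemma \ref{action(A;W)}, and observe that the coefficient of the extreme vector $\hat{C}^-_{i+1}$ (resp.\ $\hat{C}^+_{i-2}$) is $-|C^+_{i-1}|c_{i+1}$ (resp.\ $|C^-_i|b_{i-1}$), which is nonzero and immune to cancellation since only one of the two components of $w_i$ can reach it; this propagates the support of any nonzero submodule to all of $\{1,\dots,d-1\}$. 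Your route costs a short explicit computation but buys independence from Caughman's endpoint-$1$ dimension bound, and it is in the same spirit as (though structurally different from) the cyclic-vector irreducibility arguments the paper uses for $\W$ and $\MCp$; the paper's route is shorter but leans on the cited structural result.
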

\begin{proof}
Abbreviate $W=\Mxp$. Using Example \ref{ex;Mx} and Corollary \ref{dim(E*iW)}, 
\begin{equation}\label{Mxp;pf(1)}
\dim{E^*_iW} = 
\begin{cases}
1 & \text{ \quad if \quad} 1 \leq i \leq d-1, \\
0 & \text{ \quad if \quad} i \in \{0, d\}.
\end{cases}
\end{equation}
Using Example \ref{ex;Mx} and Lemma \ref{dim(EiW)},
\begin{equation*}
\dim{E_iW} = 
\begin{cases}
1 & \text{ \quad if \quad} 1 \leq i \leq d-1, \\
0 & \text{ \quad if \quad} i \in \{0, d\}.
\end{cases}
\end{equation*}
To finish the proof it suffices to show that the $T$-module $W$ is irreducible. Write $W$ as a direct sum of irreducible $T$-modules. Among these $T$-modules, there exists a module $U$ with endpoint $1$ since $E^*_0W = 0$ and $E^*_1W \ne 0$ by (\ref{Mxp;pf(1)}). By \cite[Lemma 5.1]{JSC}, the dimension of $U$ is at least $d-1$. But $U \subseteq W$ and the dimension of $W$ is $d-1$, so the dimension of $U$ is at most $d-1$. Therefore $U=W$, which means $W$ is irreducible. The result follows.
\end{proof}

\noindent
Recall the primary Leonard system $\Phi$ on $\Mx$ from Lemma \ref{IN(Ga,Phi)},
with its parameter array
$$p(\Phi)=(\{\tht_i\}^{d}_{i=0}; \{\tht^*_i\}^{d}_{i=0}; \{\varphi_i\}^{d}_{i=1}; \{\phi_i\}^{d}_{i=1}).$$
We now consider the Leonard system on $\Mxp$. By Lemma \ref{LS(U)} and Lemma \ref{Mxp}, the elements
\begin{equation*}
(A; A^*; \{E_{i}\}^{d-1}_{i=1}; \{E^*_{i}\}^{d-1}_{i=1})
\end{equation*}
act on $\Mxp$ as a Leonard system; we denote this Leonard system by $\Phi^{\perp}$. We denote the  parameter array of $\Phi^{\perp}$ by
\begin{equation}\label{PA;x-perp}
p(\Phi^{\perp}) =
(\{\thtp_i\}^{d-2}_{i=0}; \{\thtsp_i\}^{d-2}_{i=0}; \{\varphip_i\}^{d-2}_{i=1}; \{\phip_i\}^{d-2}_{i=1}).
\end{equation}

\begin{lemma}\label{tht(i)=thtp(i)}
With the above notation,
\begin{align}\label{tht(i)=thtp(i);eq}
&& \tht_i^{\perp} = \tht_{i+1} , & & \tht^{*\perp}_i = \tht^*_{i+1},  && (0 \leq i \leq d-2).
\end{align}
\end{lemma}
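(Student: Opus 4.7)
The proof is essentially a re-indexing observation, but let me explain the plan carefully.

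The plan is to read off both identities directly from the definition of $\Phi^{\perp}$ together with the standard definitions of eigenvalue sequence and dual eigenvalue sequence of a Leonard system. Recall from Section~\ref{PALS} that if $\Psi = (A; A^*; \{\mcal{E}_i\}^{D}_{i=0}; \{\mcal{E}^*_i\}^{D}_{i=0})$ is a Leonard system of diameter $D$, then by definition the eigenvalue sequence $\{\tht_i(\Psi)\}^{D}_{i=0}$ (resp.\ dual eigenvalue sequence $\{\tht^*_i(\Psi)\}^{D}_{i=0}$) satisfies $A\mcal{E}_i = \tht_i(\Psi)\mcal{E}_i$ (resp.\ $A^* \mcal{E}^*_i = \tht^*_i(\Psi)\mcal{E}^*_i$) on the underlying space.

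Now apply this to $\Phi^{\perp} = (A; A^*; \{E_i\}^{d-1}_{i=1}; \{E^*_i\}^{d-1}_{i=1})$, whose diameter is $d-2$. For $0 \leq i \leq d-2$, the $i$-th primitive idempotent of $\Phi^{\perp}$ is $E_{i+1}$, and the $i$-th dual primitive idempotent is $E^*_{i+1}$. On the whole standard module $\V$ (and hence on the submodule $\Mxp$) we have $A E_{i+1} = \tht_{i+1} E_{i+1}$ and $A^* E^*_{i+1} = \tht^*_{i+1} E^*_{i+1}$, where $\{\tht_i\}^d_{i=0}$ and $\{\tht^*_i\}^d_{i=0}$ are the primary eigenvalue and dual eigenvalue sequences. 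This yields $\tht_i^{\perp} = \tht_{i+1}$ and $\tht_i^{*\perp} = \tht^*_{i+1}$, which is exactly \eqref{tht(i)=thtp(i);eq}.

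There is no genuine obstacle here; the only thing to check is that the restriction of each $E_{i+1}$ (resp.\ $E^*_{i+1}$) to $\Mxp$ really is the primitive idempotent used in $\Phi^{\perp}$, which is immediate from the definition of $\Phi^{\perp}$ together with Lemma~\ref{Mxp} (which guarantees that $E_{i+1}\Mxp \neq 0$ and $E^*_{i+1}\Mxp \neq 0$ for $1 \leq i+1 \leq d-1$, so no index collapses). Thus the proof will simply be a two-line invocation of the above identities and the definition of $\Phi^{\perp}$.
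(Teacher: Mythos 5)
Your proof is correct and is essentially the paper's own argument: the paper's proof is the one-line remark that the endpoint and dual endpoint of $\Mxp$ are both $1$, which is exactly the index shift you make explicit by noting that the $i$-th (dual) primitive idempotent of $\Phi^{\perp}$ is $E_{i+1}$ (resp.\ $E^*_{i+1}$) and that $AE_{i+1}=\tht_{i+1}E_{i+1}$, $A^*E^*_{i+1}=\tht^*_{i+1}E^*_{i+1}$ hold on $\V$ and hence on $\Mxp$. No issues.
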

\begin{proof}
The endpoint and dual endpoint of $\Mxp$ are both $1$.
\end{proof}

\noindent
Recall from Example \ref{ex;Mx} that the $\{A_i\hat{x}\}^d_{i=0}$ form a $\Phi$-standard basis for $\Mx$. For notational convenience define $v_i = A_i\hat{x}$ for $0 \leq i \leq d$. Observe that
\begin{equation}\label{Phi-SB}
v_0 = \hat{C}^-_0, \qquad
v_i = \hat{C}^+_{i-1}+\hat{C}^-_i \ (1 \leq i \leq d-1), \qquad
v_d = \hat{C}^+_{d-1}.
\end{equation}
We now discuss a $\Phi^{\perp}$-standard basis for $\Mxp$. Define the scalars
\begin{equation}\label{epsilon_i}
\qquad \epsilon_i := - \frac{|C^+_{i-1}|}{|C^-_{i}|}  \qquad \qquad (1 \leq i \leq d-1).
\end{equation} 
By Corollary \ref{C+-;nonempty}, $\epsilon_i$ is well-defined and nonzero for $1 \leq i \leq d-1$.

\begin{lemma} For $1 \leq i \leq d-1$,
\begin{equation}\label{epsilon_i(4)}
\epsilon_i = \frac{(1-q^i)(1-s^*q^{i+d+1})}{q^d(1-q^{i-d})(1-s^*q^{i+1})}.
\end{equation}
\end{lemma}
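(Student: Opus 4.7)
The plan is to reduce $\epsilon_i$ to a ratio that can be read off the coefficient tables (\ref{A_C;odd-col}) and (\ref{A_C;even_col}), and then verify one short algebraic identity in the $q$-Racah parameters $r_1, r_2, s^*, q$.

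First I would use Corollary \ref{|edges|}(iv) applied with index $i-1$, namely
\begin{equation*}
(\tb_{i-1} - b_i)\,|C^{+}_{i-1}| = (\tc_i - c_i)\,|C^{-}_i|,
\end{equation*}
together with the definition (\ref{epsilon_i}) of $\epsilon_i$, to obtain
\begin{equation*}
\epsilon_i \;=\; -\frac{|C^{+}_{i-1}|}{|C^{-}_i|} \;=\; \frac{c_i - \tc_i}{\tb_{i-1} - b_i}.
\end{equation*}
This turns the problem into the evaluation of two explicit differences of $q$-Racah parameters. Both of these differences already appear, in factored form, inside the coefficient tables of Section~4: comparing (\ref{A.C-}) with (\ref{A_C;odd-col}), the coefficient of $\hat{C}^{+}_{i-1}$ gives $\tb_{i-1} - b_i = \tfrac{h(1-q^{i-d})(1-s^*q^{i+1})}{1-s^*q^{2i+1}}\cdot F$, while shifting $i \mapsto i-1$ in the $\hat{C}^{-}_{i+1}$-coefficient of (\ref{A.C+}) via (\ref{A_C;even_col}) gives $c_i - \tc_i = \tfrac{h(1-q^i)(1-s^*q^{i+d+1})}{s^*q^d(1-s^*q^{2i+1})}\cdot G$, where
\begin{equation*}
F = \frac{(1-r_1q^i)(1-r_2q^i)}{1-s^*q^{2i}} - \frac{(1-r_1q^{i+1})(1-r_2q^{i+1})}{1-s^*q^{2i+2}},
\end{equation*}
\begin{equation*}
G = \frac{(r_1-s^*q^i)(r_2-s^*q^i)}{1-s^*q^{2i}} - \frac{(r_1-s^*q^{i+1})(r_2-s^*q^{i+1})}{1-s^*q^{2i+2}}.
\end{equation*}
After cancellation of the common factor $h/(1-s^*q^{2i+1})$, this yields
\begin{equation*}
\epsilon_i = \frac{(1-q^i)(1-s^*q^{i+d+1})}{s^*q^d(1-q^{i-d})(1-s^*q^{i+1})} \cdot \frac{G}{F}.
\end{equation*}

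The key step, and the main obstacle, is to show that $G = s^{*}F$. I would handle this by proving the single-term identity
\begin{equation*}
(r_1-s^*q^j)(r_2-s^*q^j) - s^{*}(1-r_1q^j)(1-r_2q^j) = (r_1 r_2 - s^{*})(1 - s^*q^{2j}),
\end{equation*}
which follows by expanding both sides and comparing coefficients of $1, q^j, q^{2j}$. Dividing by $1 - s^*q^{2j}$ shows that $\frac{(r_1-s^*q^j)(r_2-s^*q^j)}{1-s^*q^{2j}} - \frac{s^{*}(1-r_1q^j)(1-r_2q^j)}{1-s^*q^{2j}}$ equals the constant $r_1 r_2 - s^{*}$, independent of $j$. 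Forming the difference $G - s^{*}F$ telescopes this constant between $j=i$ and $j=i+1$ and therefore vanishes. Substituting $G/F = s^{*}$ into the displayed expression for $\epsilon_i$ cancels the stray $s^{*}$ in the denominator and produces exactly (\ref{epsilon_i(4)}), completing the proof.
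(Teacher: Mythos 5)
Your proof is correct, and it departs from the paper's argument only after the first step, which you share: both you and the paper combine Corollary \ref{|edges|}(iv) at index $i-1$ with the definition (\ref{epsilon_i}) to express $\epsilon_i$ as the ratio $(c_i-\tc_i)/(\tb_{i-1}-b_i)$. From there the paper invokes Theorem \ref{gamma;beta} to rewrite this as $\tfrac{N_i-|C|}{N_{i-1}}\cdot\tfrac{c_i}{b_i}$ and finishes by substituting (\ref{b_i;q-terms}), (\ref{c_i;q-terms}) and Lemma \ref{formula(Ni);q,h,s,r}, so the cancellation you work out by hand is absorbed into the ratio of the two $N$-formulas. You bypass $N_i$ altogether: you substitute the $q$-expressions for the two differences directly (your reading of the coefficients in (\ref{A_C;odd-col}) and (\ref{A_C;even_col}) is accurate, and those tables are obtained from the intersection-number formulas independently of this lemma, so there is no circularity), and you reduce everything to the identity
\begin{equation*}
(r_1-s^*q^{j})(r_2-s^*q^{j})-s^*(1-r_1q^{j})(1-r_2q^{j})=(r_1r_2-s^*)(1-s^*q^{2j}),
\end{equation*}
which is correct and does telescope to $G=s^*F$, giving (\ref{epsilon_i(4)}). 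What your route buys is that the simplification is made fully explicit as a short polynomial identity, self-contained at the level of (\ref{b_0;q-terms})--(\ref{c_d;q-terms}) and (\ref{be_0;q-term})--(\ref{ga_(d-1);q-term}); what the paper's route buys is brevity and reuse of machinery already in place ($N_i$, Theorem \ref{gamma;beta}, Lemma \ref{formula(Ni);q,h,s,r}). One shared caveat, not a defect relative to the paper: writing $\epsilon_i=(c_i-\tc_i)/(\tb_{i-1}-b_i)$ divides by $\tb_{i-1}-b_i$ (your $F$), an implicit nonvanishing assumption the paper's own chain of equalities also makes; if you wanted to be airtight you could instead compute $\epsilon_i$ directly from the cardinalities in Lemma \ref{cardofCi}, which avoids the division.
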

\begin{proof}
Using Theorem \ref{gamma;beta}, Corollary \ref{|edges|}(iv), and (\ref{epsilon_i}),
$$
\epsilon_i = \frac{\tc_i - c_i}{b_i - \tb_{i-1}} =  \frac{N_i - |C|}{N_{i-1}} \frac{c_i}{b_i}.
$$
In the above line simplify the expression on the right using (\ref{b_i;q-terms}), (\ref{c_i;q-terms}), and Lemma \ref{formula(Ni);q,h,s,r} to get (\ref{epsilon_i(4)}).
\end{proof}

\begin{lemma}\label{pre(OB;Mxp)}
For $1 \leq i \leq d-1$ the vector 
\begin{equation*}
\hat{C}^{+}_{i-1} + \epsilon_i\hat{C}^{-}_i
\end{equation*}
is a basis for $E^*_i(\Mxp)$.
\end{lemma}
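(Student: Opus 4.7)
The plan is to exploit the orthogonal decomposition $\W = \Mx + \Mxp$ from (\ref{ODS(T-mods)}) after applying the self-adjoint projection $E^*_i$. Since $E^*_i$ is Hermitian (it is diagonal with $0/1$ entries) and since $\Mx, \Mxp$ are mutually orthogonal $T$-submodules of $\W$, applying $E^*_i$ preserves this orthogonality; hence
\begin{equation*}
E^*_i\W \;=\; E^*_i\Mx \;\oplus\; E^*_i\Mxp \qquad \text{(orthogonal direct sum).}
\end{equation*}
By Corollary \ref{dim(E*iW)} the left-hand side has dimension $2$, and from Example \ref{ex;Mx} together with (\ref{Phi-SB}) the summand $E^*_i\Mx$ is $1$-dimensional and spanned by $v_i = \hat{C}^+_{i-1}+\hat{C}^-_i$. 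Therefore $E^*_i\Mxp$ is $1$-dimensional and equals the orthogonal complement of $\mathbb{C}\,v_i$ inside $E^*_i\W$.

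Next I would write a general element of $E^*_i\W$ in the basis of Lemma \ref{W=ods(E*i)}(ii) as $\alpha\hat{C}^+_{i-1}+\beta\hat{C}^-_i$ and impose orthogonality against $v_i$. Since $\hat{C}^+_{i-1}$ and $\hat{C}^-_i$ are orthogonal (they are characteristic vectors of disjoint sets), this yields the single scalar condition
\begin{equation*}
\alpha\,\|\hat{C}^+_{i-1}\|^2 \;+\; \beta\,\|\hat{C}^-_i\|^2 \;=\; 0,
\end{equation*}
equivalently $\alpha\,|C^+_{i-1}| + \beta\,|C^-_i| = 0$. Taking $\alpha=1$ and $\beta=\epsilon_i=-|C^+_{i-1}|/|C^-_i|$ from (\ref{epsilon_i}) solves this equation exactly, and the resulting vector $\hat{C}^+_{i-1}+\epsilon_i\hat{C}^-_i$ is nonzero since $\epsilon_i\neq 0$ (both $|C^+_{i-1}|$ and $|C^-_i|$ are positive by Corollary \ref{C+-;nonempty}). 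Being a nonzero element of the $1$-dimensional space $E^*_i\Mxp$, it forms a basis as claimed.

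There is no real obstacle here; the lemma is essentially a dimension count followed by an orthogonality calculation, and the scalar $\epsilon_i$ was defined precisely to make the orthogonality relation hold. The only thing to double-check is that $\hat{C}^+_{i-1}+\epsilon_i\hat{C}^-_i$ does lie in $E^*_i\W$, but this is immediate from Lemma \ref{action(E*;W)}, which gives $E^*_i\hat{C}^+_{i-1}=\hat{C}^+_{i-1}$ and $E^*_i\hat{C}^-_i=\hat{C}^-_i$.
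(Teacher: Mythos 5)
Your proof is correct and follows essentially the same route as the paper: both identify $E^*_i(\Mxp)$ as the orthogonal complement of $\mathbb{C}v_i$ inside the $2$-dimensional space $E^*_i\W$, verify $\langle v_i,\hat{C}^+_{i-1}+\epsilon_i\hat{C}^-_i\rangle = |C^+_{i-1}|+\epsilon_i|C^-_i|=0$, and finish with the one-dimensionality of $E^*_i(\Mxp)$. The only cosmetic difference is that you re-derive $\dim E^*_i(\Mxp)=1$ by the dimension count, whereas the paper cites the same fact already recorded in the proof of Lemma \ref{Mxp}.
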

\begin{proof}
By Lemma \ref{W=ods(E*i)}(ii), $0 \ne \hat{C}^{+}_{i-1} + \epsilon_i\hat{C}^{-}_i \in E^*_i\W$. By (\ref{ODS(T-mods)}) the subspace $E^*_i(\Mxp)$ is the orthogonal complement of $E^*_i\Mx$ in $E^*_i\W$. Recall $v_i$ is a basis for $E^*_i\Mx$. Using (\ref{Phi-SB}),
$$
\langle v_i, \hat{C}^{+}_{i-1} + \epsilon_i\hat{C}^{-}_i  \rangle 
	 =  \lVert \hat{C}^{+}_{i-1} \rVert^2 + \epsilon_i\lVert \hat{C}^{-}_{i} \rVert^2 
	 = |{C}^+_{i-1}| + \epsilon_i|{C}^-_i|=0.
$$
Therefore $\hat{C}^{+}_{i-1} + \epsilon_i\hat{C}^{-}_i \in E^*_i(\Mxp)$. The result follows since $E^*_i(\Mxp)$ has dimension $1$ by  (\ref{Mxp;pf(1)}).
\end{proof}

\begin{corollary}\label{OB;Mxp}
The vectors
\begin{equation*}\label{v'_i}
~ \qquad  \qquad \hat{C}^{+}_{i-1} + \epsilon_i\hat{C}^{-}_i \qquad \qquad (1 \leq i \leq d-1)
\end{equation*}
form an orthogonal basis for $\Mxp$.
\end{corollary}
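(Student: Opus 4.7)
The plan is to combine Lemma \ref{pre(OB;Mxp)} with the orthogonal decomposition of $\Mxp$ coming from the fact that $\Mxp$ is a $T$-module, hence stable under $\mcal{M}^*$.

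First, since $\Mxp$ is a $T$-submodule of $\V$ and $\mcal{M}^*\subseteq T$, the subspace $\Mxp$ is invariant under each $E^*_i$. Because the $\{E^*_i\}^{d}_{i=0}$ are mutually orthogonal projections with $I=\sum^{d}_{i=0}E^*_i$, we obtain
\[
\Mxp \;=\; \sum_{i=0}^{d} E^*_i(\Mxp) \qquad (\text{orthogonal direct sum}).
\]
By (\ref{Mxp;pf(1)}), the summands for $i=0$ and $i=d$ vanish, while $\dim E^*_i(\Mxp)=1$ for $1\le i\le d-1$. Hence $\Mxp=\bigoplus_{i=1}^{d-1}E^*_i(\Mxp)$, and a choice of nonzero vector in each $E^*_i(\Mxp)$ yields an orthogonal basis for $\Mxp$.

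Next, by Lemma \ref{pre(OB;Mxp)}, for each $1 \leq i \leq d-1$ the vector $\hat{C}^{+}_{i-1}+\epsilon_i\hat{C}^{-}_i$ is a basis for $E^*_i(\Mxp)$. Substituting these basis vectors into the orthogonal decomposition above produces the desired orthogonal basis for $\Mxp$. The orthogonality between vectors with different indices $i\ne j$ is automatic: $\hat{C}^{+}_{i-1}+\epsilon_i\hat{C}^{-}_i$ is supported on $C^+_{i-1}\cup C^-_i\subseteq \Ga_i$, and for $i\neq j$ the subconstituents $\Ga_i$ and $\Ga_j$ are disjoint subsets of $X$, so the vectors are orthogonal with respect to $\langle\,,\,\rangle$.

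There is essentially no obstacle here beyond assembling the ingredients already proved: the dimension count (\ref{Mxp;pf(1)}) from the proof of Lemma \ref{Mxp}, the stability of $\Mxp$ under $\mcal{M}^*$, and the explicit representative from Lemma \ref{pre(OB;Mxp)}. The statement is a direct corollary of these.
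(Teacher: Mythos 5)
Your proof is correct and follows essentially the same route as the paper: the paper's proof of this corollary is simply ``Follows from Lemma \ref{pre(OB;Mxp)},'' with the implicit decomposition $\Mxp=\sum_{i=1}^{d-1}E^*_i(\Mxp)$ (orthogonal direct sum) that you have spelled out. Your additional remarks (the dimension count (\ref{Mxp;pf(1)}) and the disjoint-support argument for orthogonality) are accurate but just make explicit what the paper leaves implicit.
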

\begin{proof}
Follows from Lemma \ref{pre(OB;Mxp)}.
\end{proof}

 \noindent
In Lemma \ref{OB;Mxp} we found a basis $\{\hat{C}^{+}_{i-1} + \epsilon_i\hat{C}^{-}_i\}^{d-1}_{i=1}$ for $\Mxp$. As we will see, this basis is not a $\Phi^{\perp}$-standard basis for $\Mxp$. In order to turn it into a $\Phi^{\perp}$-standard basis we make an adjustment. Pick a nonzero $w \in E_1(\Mxp)$. Define the complex scalars $\{\xi_i\}^{d-1}_{i=1}$ such that
\begin{equation}\label{w(-E1Mxp1}
w = \sum^{d-1}_{i=1} \xi_i(\hat{C}^{+}_{i-1} + \epsilon_i\hat{C}^{-}_i).
\end{equation}
For notational convenience we rewrite (\ref{w(-E1Mxp1}) as
\begin{equation*}\label{w(-E1Mxp}
w = \sum^{d-2}_{i=0} \xi_{i+1}(\hat{C}^{+}_{i} + \epsilon_{i+1}\hat{C}^{-}_{i+1}).
\end{equation*}
Define
\begin{equation}\label{vp_i}
~ \qquad \qquad \vp_i = \xi_{i+1}(\hat{C}^{+}_{i} + \epsilon_{i+1}\hat{C}^{-}_{i+1}) \qquad \qquad (0 \leq i \leq d-2).
\end{equation}
For notational convenience define $\vp_{-1} = 0$ and $\vp_{d-1} = 0$.

\begin{lemma}\label{Phip-SB}
The vectors $\{\vp_i\}^{d-2}_{i=0}$ form a $\Phi^{\perp}$-standard basis for $\Mxp$.
\end{lemma}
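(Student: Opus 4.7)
The plan is to invoke the characterization of standard bases from Lemma \ref{Phi-sb}, applied to the Leonard system $\Phi^{\perp}$ on $\Mxp$. Recall that the primitive idempotents of $\Phi^{\perp}$ are $E^{\perp}_i := E_{i+1}$ and $E^{*\perp}_i := E^*_{i+1}$ for $0 \leq i \leq d-2$. So by Lemma \ref{Phi-sb}, it suffices to verify that the sequence $\{\vp_i\}_{i=0}^{d-2}$ is not identically zero and satisfies
\begin{enumerate}
\item[\rm(i)] $\vp_i \in E^*_{i+1}(\Mxp)$ for $0 \leq i \leq d-2$;
\item[\rm(ii)] $\sum_{i=0}^{d-2}\vp_i \in E_1(\Mxp)$.
\end{enumerate}

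First I would dispatch condition (i). By Lemma \ref{pre(OB;Mxp)}, the vector $\hat{C}^{+}_{j-1} + \epsilon_j \hat{C}^{-}_{j}$ is a basis for $E^*_j(\Mxp)$ for each $1 \leq j \leq d-1$. Setting $j=i+1$ and multiplying by the scalar $\xi_{i+1}$, the definition \eqref{vp_i} gives $\vp_i \in E^*_{i+1}(\Mxp)$ immediately.

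Next I would handle condition (ii). By construction, the scalars $\{\xi_i\}_{i=1}^{d-1}$ are the coordinates of the chosen nonzero vector $w \in E_1(\Mxp)$ with respect to the basis $\{\hat{C}^+_{i-1} + \epsilon_i \hat{C}^-_i\}_{i=1}^{d-1}$ of $\Mxp$ from Corollary \ref{OB;Mxp}. Reindexing $i \mapsto i+1$ in \eqref{w(-E1Mxp1} and using \eqref{vp_i} I obtain $\sum_{i=0}^{d-2}\vp_i = w$, so this sum lies in $E_1(\Mxp)$, verifying (ii). Finally, $w \neq 0$ and $w = \sum \vp_i$ forces at least one $\vp_i$ to be nonzero, so the sequence is not identically zero. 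There is no serious obstacle here: the definitions of $\epsilon_i$, $\xi_i$, and $\vp_i$ were tailored precisely so that Lemma \ref{Phi-sb} applies, and the nonvanishing of $E_1(\Mxp)$ (needed for $w$ to exist) is guaranteed by Lemma \ref{Mxp}, which records that $\Mxp$ has dual endpoint $1$.
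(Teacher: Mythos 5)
Your proof is correct and follows essentially the same route as the paper: the paper likewise cites Lemma \ref{pre(OB;Mxp)} for $\vp_i \in E^*_{i+1}(\Mxp)$, observes $\sum_{i=0}^{d-2}\vp_i = w \in E_1(\Mxp)$, and concludes via Lemma \ref{Phi-sb}. Your extra remarks (identifying the idempotents of $\Phi^{\perp}$ as the restrictions of $E_{i+1}$, $E^*_{i+1}$, and noting that $w \ne 0$ guarantees the sequence is not all zero) are details the paper leaves implicit, but they do not change the argument.
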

\begin{proof}
By Lemma \ref{pre(OB;Mxp)}, $\vp_i \in E^*_{i+1}(\Mxp)$ for $0 \leq i \leq d-2$. Also, $\sum^{d-2}_{i=0}\vp_i=w \in E_1(\Mxp)$. The result follows from these comments and Lemma \ref{Phi-sb}.
\end{proof}

\begin{corollary}
The scalars $\{\xi_i\}^{d-1}_{i=1}$ from {\rm(\ref{w(-E1Mxp1})} are all nonzero.
\end{corollary}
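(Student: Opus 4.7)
The plan is to derive the corollary almost immediately from Lemma \ref{Phip-SB}, which was just established. That lemma asserts that $\{\vp_i\}^{d-2}_{i=0}$ is a $\Phi^{\perp}$-standard basis for the $(d-1)$-dimensional space $\Mxp$; in particular this sequence is a basis, so none of its vectors can be zero. Hence $\vp_i \ne 0$ for $0 \leq i \leq d-2$.

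Next I will combine this observation with the defining formula (\ref{vp_i}),
$$
\vp_i = \xi_{i+1}(\hat{C}^{+}_{i} + \epsilon_{i+1}\hat{C}^{-}_{i+1}),
$$
together with the fact, from Lemma \ref{pre(OB;Mxp)}, that the bracketed vector $\hat{C}^{+}_{i} + \epsilon_{i+1}\hat{C}^{-}_{i+1}$ is a basis for the one-dimensional subspace $E^*_{i+1}(\Mxp)$ and is therefore nonzero. These two facts force the scalar $\xi_{i+1}$ to be nonzero for $0 \leq i \leq d-2$; re-indexing by $j = i+1$ then gives $\xi_j \ne 0$ for $1 \leq j \leq d-1$, which is exactly the claim.

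I do not anticipate any real obstacle. The only point worth a sanity check is that Lemma \ref{Phip-SB} really does apply: its proof invokes Lemma \ref{Phi-sb}, whose hypothesis requires the candidate sequence to be not identically zero. This is guaranteed because $\sum^{d-2}_{i=0}\vp_i = w$, and $w$ was explicitly chosen nonzero in $E_1(\Mxp)$ at (\ref{w(-E1Mxp1}). Thus the chain Lemma \ref{Phi-sb} $\Rightarrow$ Lemma \ref{Phip-SB} $\Rightarrow$ the present corollary is entirely clean.
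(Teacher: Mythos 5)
Your argument is correct and is essentially the paper's own proof: the paper likewise deduces from Lemma \ref{Phip-SB} that the vectors $\{\vp_i\}^{d-2}_{i=0}$ are nonzero and then reads off $\xi_{i+1}\ne 0$ from (\ref{vp_i}). Your added remark that $\hat{C}^{+}_{i}+\epsilon_{i+1}\hat{C}^{-}_{i+1}\ne 0$ (via Lemma \ref{pre(OB;Mxp)}) just makes explicit a step the paper leaves implicit.
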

\begin{proof}
By Lemma \ref{Phip-SB} $\{\vp_i\}^{d-2}_{i=0}$ are all nonzero. By this and (\ref{vp_i}) the result follows.
\end{proof}

\noindent
By the comments below Lemma \ref{Phi-sb}, the matrix representing $A^*$ relative to the basis $\{\vp_i\}^{d-2}_{i=0}$ is
\begin{equation}\label{[A*]_(vp)}
\diag(\thtsp_0, \thtsp_1, \thtsp_2, \ldots, \thtsp_{d-2}).
\end{equation}
Let $\{a^{\perp}_i\}^{d-2}_{i=0}, \{b^{\perp}_i\}^{d-3}_{i=0}, \{c^{\perp}_i\}^{d-2}_{i=1}$ denote the intersection numbers of the Leonard system $\Phi^{\perp}$. By construction the matrix representing $A$ relative to $\{\vp_i\}^{d-2}_{i=0}$ is 
\begin{align}\label{[A]_(vp)}
\left[
\begin{array}{ccccc}
\vspace{0.2cm}
\ap_0 & \bp_0 && &{\bf 0}\\
\cp_1 & \ap_1 & \bp_1& \\
&\cp_2 & \ap_2 & \ddots& \\
\vspace{0.2cm}
&& \ddots & \ddots & \bp_{d-3} \\
{\bf 0}&&& \cp_{d-2} & \ap_{d-2}
\end{array}
\right]
\end{align}
For convenience, define $\bp_{-1}=0$ and $\cp_{d-1}=0$.
Recall the scalars $a_i, b_i, c_i$ from below Lemma \ref{IN(Ga,Phi)} and the  $\ta_i, \tb_i, \tc_i$ from (\ref{def(ta,tb,tc)}).

\begin{lemma}\label{6-terms} With the above notation the following {\rm(i)}--{\rm(vi)} hold.
\begin{enumerate}
\item[\rm (i)] $\bp_{i-1}\xi_{i}  =  \xi_{i+1}b_{i}$ \qquad \qquad  $(1 \leq i \leq d-2)$.
\item[\rm (ii)] $\bp_{i-1}\xi_i\epsilon_i = \xi_{i+1}(b_i-\tb_i)+ \xi_{i+1}\epsilon_{i+1}\tb_{i} \qquad \qquad (1 \leq i \leq d-2)$.
\item[\rm(iii)] $\ap_i = \ta_i-c_{i+1}+\tc_i + \epsilon_{i+1}(\tb_i-b_{i+1}) \qquad \qquad (0 \leq i \leq d-2)$.
\item[\rm(iv)] $\ap_i\epsilon_{i+1} = \tc_{i+1}-c_{i+1}+\epsilon_{i+1}(\ta_{i+1}-b_{i+1}+\tb_{i+1})$ \qquad \qquad $(0 \leq i \leq d-2)$.
\item[\rm(v)] $\cp_{i+1}\xi_{i+2} = \xi_{i+1}\tc_{i+1}+ \xi_{i+1}\epsilon_{i+1}(c_{i+2}-\tc_{i+1})$ \qquad\qquad $(0 \leq i \leq d-3)$.
\item[\rm(vi)] $\cp_{i+1}\xi_{i+2}\epsilon_{i+2} =  \xi_{i+1}\epsilon_{i+1}c_{i+2}$ \qquad \qquad $(0 \leq i \leq d-3)$.
\end{enumerate}
\end{lemma}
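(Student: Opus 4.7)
The plan is to compute $A\vp_i$ in two different ways, and then equate the coefficients of the orthogonal basis of $\W$ given by Lemma \ref{OGB(W)}.

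First, from the definition (\ref{vp_i}) we have $\vp_i = \xi_{i+1}\hat{C}^+_{i} + \xi_{i+1}\epsilon_{i+1}\hat{C}^-_{i+1}$. Applying $A$ and using the two formulas (\ref{A.C-}), (\ref{A.C+}) of Lemma \ref{action(A;W)} (the first with index $i+1$ in place of $i$, the second as stated), we expand $A\vp_i$ as a linear combination of the six vectors
\begin{equation*}
\hat{C}^+_{i-1},\quad \hat{C}^-_{i},\quad \hat{C}^+_{i},\quad \hat{C}^-_{i+1},\quad \hat{C}^+_{i+1},\quad \hat{C}^-_{i+2},
\end{equation*}
with coefficients, in this order,
$\xi_{i+1}b_i$; $\ \xi_{i+1}(b_i-\tb_i)+\xi_{i+1}\epsilon_{i+1}\tb_i$; $\ \xi_{i+1}(\ta_i-c_{i+1}+\tc_i)+\xi_{i+1}\epsilon_{i+1}(\tb_i-b_{i+1})$; $\ \xi_{i+1}(\tc_{i+1}-c_{i+1})+\xi_{i+1}\epsilon_{i+1}(\ta_{i+1}-b_{i+1}+\tb_{i+1})$; $\ \xi_{i+1}\tc_{i+1}+\xi_{i+1}\epsilon_{i+1}(c_{i+2}-\tc_{i+1})$; $\ \xi_{i+1}\epsilon_{i+1}c_{i+2}$.

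Second, $\Mxp$ is an $A$-invariant subspace by Lemma \ref{Mxp}, and relative to the $\Phi^{\perp}$-standard basis $\{\vp_j\}^{d-2}_{j=0}$ the matrix of $A$ is the tridiagonal matrix displayed in (\ref{[A]_(vp)}). Reading off its $i$-th column gives
\begin{equation*}
A\vp_i = \bp_{i-1}\vp_{i-1} + \ap_i\vp_i + \cp_{i+1}\vp_{i+1},
\end{equation*}
with the edge conventions $\vp_{-1}=\vp_{d-1}=0$ and $\bp_{-1}=\cp_{d-1}=0$ already in place. Substituting the three definitions of $\vp_{i-1},\vp_i,\vp_{i+1}$ from (\ref{vp_i}), this expression expands, in the same basis of $\W$ and in the same order, with coefficients $\bp_{i-1}\xi_i$; $\ \bp_{i-1}\xi_i\epsilon_i$; $\ \ap_i\xi_{i+1}$; $\ \ap_i\xi_{i+1}\epsilon_{i+1}$; $\ \cp_{i+1}\xi_{i+2}$; $\ \cp_{i+1}\xi_{i+2}\epsilon_{i+2}$.

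The six stated identities (i)--(vi) now follow by matching coefficients of the six vectors $\hat{C}^+_{i-1},\hat{C}^-_{i},\hat{C}^+_{i},\hat{C}^-_{i+1},\hat{C}^+_{i+1},\hat{C}^-_{i+2}$; this matching is legitimate because all six are part of the orthogonal basis of $\W$ supplied by Lemma \ref{OGB(W)}. Identities (iii) and (iv) emerge after dividing by the nonzero scalar $\xi_{i+1}$. There is no real obstacle in the argument; the only care required is bookkeeping at the boundary values $i\in\{0,d-2\}$, where some of the six vectors coincide with the formal zero vectors $\hat{C}^{\pm}_{-1}$ or $\hat{C}^{\pm}_{d}$ introduced just before Lemma \ref{action(A;W)}. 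Inspection shows that the index ranges specified in (i)--(vi) are precisely those for which the asserted relation involves no vanishing vector on either side, so no special case is lost.
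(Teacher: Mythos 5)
Your proposal is correct and follows essentially the same route as the paper: evaluate $A\vp_i$ once via the tridiagonal matrix (\ref{[A]_(vp)}) and once by substituting (\ref{vp_i}) into Lemma \ref{action(A;W)}, then compare coefficients with respect to the linearly independent vectors $\{\hat{C}^{\pm}_j\}^{d-1}_{j=0}$. The only cosmetic difference is that you invoke orthogonality (Lemma \ref{OGB(W)}) rather than linear independence and comment explicitly on the boundary indices, which the paper leaves implicit.
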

\begin{proof}
For $0 \leq i \leq d-2$ we evaluate $A\vp_i$ in two ways. First, using (\ref{[A]_(vp)}),
\begin{equation*}\label{A.vp_i}
A\vp_i = \bp_{i-1}\vp_{i-1} + \ap_i\vp_i + \cp_{i+1}\vp_{i+1}.
\end{equation*}
In this equation evaluate the right-hand side using (\ref{vp_i}). Secondly, in $A\vp_i$ eliminate $\vp_i$ using  (\ref{vp_i}) and evaluate the result using Lemma \ref{action(A;W)}. We have just evaluated $A\vp_i$ in two ways. Compare the results using the linear independence of $\{\hat{C}^{\pm}_j\}^{d-1}_{j=0}$. The result  follows.
\end{proof}

\begin{corollary}\label{(a,b,c)perp} Referring to Lemma {\rm\ref{6-terms}},  the following hold.
\begin{enumerate}
\item[\rm(i)] $\bp_{i} = \frac{\xi_{i+2}}{\xi_{i+1}}b_{i+1}$ \qquad $(0 \leq i \leq d-3)$.
\item[\rm(ii)] $\ap_{i} = b_0 -\tb_{i} - \tc_{i+1}$ \qquad $(0 \leq i \leq d-2)$.
\item[\rm(iii)] $\cp_i = \frac{\xi_i\epsilon_i}{\xi_{i+1}\epsilon_{i+1}}c_{i+1}$ \qquad $(1 \leq i \leq d-2)$.
\end{enumerate}
\end{corollary}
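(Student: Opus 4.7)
The plan is to prove parts (i) and (iii) as direct rearrangements of Lemma \ref{6-terms}(i) and (vi), and to reduce part (ii) to Lemma \ref{6-terms}(iii) using the regularity relation $\ta_i + \tb_i + \tc_i = k = b_0$ from \eqref{al+be+ga=k} together with a closed form for $\epsilon_{i+1}$ that comes from Corollary \ref{|edges|}(iv).

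For part (i), I would take Lemma \ref{6-terms}(i), namely $\bp_{i-1}\xi_i = \xi_{i+1}b_i$, recall from Lemma \ref{Phip-SB} and the remark below Lemma \ref{pre(OB;Mxp)} that $\xi_i \ne 0$, divide by $\xi_i$, and shift the index $i \mapsto i+1$. For part (iii), I would take Lemma \ref{6-terms}(vi), $\cp_{i+1}\xi_{i+2}\epsilon_{i+2} = \xi_{i+1}\epsilon_{i+1}c_{i+2}$, note $\epsilon_j \ne 0$ by Corollary \ref{C+-;nonempty} and the definition \eqref{epsilon_i}, divide by $\xi_{i+2}\epsilon_{i+2}$, and shift the index down by one.

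Part (ii) is the only nontrivial step. Starting from Lemma \ref{6-terms}(iii),
\[
\ap_i = \ta_i - c_{i+1} + \tc_i + \epsilon_{i+1}(\tb_i - b_{i+1}),
\]
I would substitute $\ta_i = b_0 - \tb_i - \tc_i$ (this is exactly \eqref{al+be+ga=k} with $k = b_0$) to get
\[
\ap_i = b_0 - \tb_i - c_{i+1} + \epsilon_{i+1}(\tb_i - b_{i+1}).
\]
It then remains to verify $\epsilon_{i+1}(\tb_i - b_{i+1}) = c_{i+1} - \tc_{i+1}$. For this I would use Corollary \ref{|edges|}(iv), which rearranges to $|C^+_i|/|C^-_{i+1}| = (\tc_{i+1} - c_{i+1})/(\tb_i - b_{i+1})$, and combine with the definition $\epsilon_{i+1} = -|C^+_i|/|C^-_{i+1}|$ from \eqref{epsilon_i}.

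The only substantive obstacle is confirming that the denominator $\tb_i - b_{i+1}$ that appears implicitly is nonzero so that the rearrangement of Corollary \ref{|edges|}(iv) is legitimate; this is guaranteed since $\epsilon_{i+1} \ne 0$ and $|C^+_i|, |C^-_{i+1}|$ are positive by Corollary \ref{C+-;nonempty}. The rest is bookkeeping. I would note in passing that Lemma \ref{6-terms}(ii), (iv), (v) serve as consistency checks but are not needed for this corollary.
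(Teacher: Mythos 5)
Your proposal is correct and follows essentially the same route as the paper: parts (i) and (iii) are read off from Lemma \ref{6-terms}(i),(vi), and for (ii) the paper likewise eliminates $\epsilon_{i+1}$ from Lemma \ref{6-terms}(iii) via Corollary \ref{|edges|}(iv) together with (\ref{epsilon_i}) and then simplifies with $\ta_i+\tb_i+\tc_i=b_0$. The one remark I would drop is your "substantive obstacle" about $\tb_i-b_{i+1}\ne 0$: you never need to divide by that quantity (just divide Corollary \ref{|edges|}(iv) by $|C^-_{i+1}|\ne 0$ and multiply by $-1$ to get $\epsilon_{i+1}(\tb_i-b_{i+1})=c_{i+1}-\tc_{i+1}$ directly), and in any case the justification you give ($\epsilon_{i+1}\ne 0$ and positivity of the cell sizes) would not by itself rule out $\tb_i=b_{i+1}$.
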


\begin{proof}
(i) Immediate from Lemma \ref{6-terms}(i).\\
(ii) In the equation from Lemma \ref{6-terms}(iii), eliminate $\epsilon_{i+1}$ using Corollary \ref{|edges|}(iv) along with (\ref{epsilon_i}) and simplify the result using $\ta_i+\tb_i+\tc_i = b_0$.\\
(iii) Immediate from Lemma \ref{6-terms}(vi).
\end{proof}

\noindent
We now compare the parameter arrays $p(\Phi)$ and $p(\Phi^{\perp})$. Recall that $\Phi$ has $q$-Racah type. Recall the scalars $h, h^*, s, s^*, r_1, r_2$ from above Note \ref{h,h*}. Recall the parameter array of $\Phi^{\perp}$ from (\ref{PA;x-perp}).

\begin{theorem}\label{q-rac(x-perp)} With the above notation, for $0 \leq i \leq d-2$,
\begin{eqnarray}
\label{thtperp}
\thtp_i 	& = & \thtp_0+\hp(1-q^{i})(1-\sp q^{i+1})q^{-i}, \\
\label{tht*perp}
\thtsp_i 	& = & \thtsp_0+\hsp(1-q^{i})(1-\ssp q^{i+1})q^{-i},
\end{eqnarray}
and for $1 \leq i \leq d-2$,
\begin{eqnarray}
\label{varphip_i}
\varphip_i	& = & \hp\hsp q^{1-2i}(1-q^{i})(1-q^{i-d+1})(1-\rp_1q^i)(1-\rp_2q^i), \\
\label{phip_i}
\phip_i 		& = & \hp\hsp q^{1-2i}(1-q^{i})(1-q^{i-d+1})(\rp_1-\ssp q^i)(\rp_2-\ssp q^i)/\ssp,
\end{eqnarray} 
where $\thtp_0$, $\thtsp_0$ are from {\rm(\ref{tht(i)=thtp(i);eq})} and
\begin{align}
\label{h,s;perp(1)}
&\hp = hq^{-1}, && \sp = sq^2, &&\rp_1=r_1q,&\\
\label{h,s;perp(2)}
&\hsp = h^*q^{-1} ,&&  \ssp = s^*q^2, &&\rp_2=r_2q.
\end{align}
\end{theorem}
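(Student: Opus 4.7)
The plan is to show that $p(\Phi^{\perp})$ coincides with the proposed $q$-Racah parameter array of diameter $d-2$, reducing this to matching three ingredients: the two eigenvalue sequences and the single split value $\varphip_1$. This exploits the rigidity of axioms (PA3), (PA4) from Theorem \ref{thm:LS<->PA}.

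First I would verify (\ref{thtperp}) and (\ref{tht*perp}). By Lemma \ref{tht(i)=thtp(i)}, $\thtp_i = \tht_{i+1}$ and $\thtsp_i = \tht^*_{i+1}$ for $0 \le i \le d-2$. Expanding $\tht_{i+1}-\tht_1$ using (\ref{theta_i}) yields, after a short simplification, the identity $\tht_{i+1}-\tht_1 = hq^{-1}(1-q^i)(1-sq^{i+3})q^{-i}$, which is exactly the shape (\ref{thtperp}) with $\thtp_0 = \tht_1$, $\hp = hq^{-1}$, $\sp = sq^2$. A parallel argument with (\ref{theta^*_i}) gives (\ref{tht*perp}) with $\hsp = h^*q^{-1}$, $\ssp = s^*q^2$.

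Next I would observe that the proposed sequence (\ref{thtperp})--(\ref{phip_i}) with parameters (\ref{h,s;perp(1)})--(\ref{h,s;perp(2)}) is itself a $q$-Racah parameter array of diameter $d-2$ in the sense of Example \ref{q-rac;PA}; in particular, $\rp_1\rp_2 = r_1q\cdot r_2q = r_1r_2q^2 = ss^*q^{d+3} = \sp\ssp q^{(d-2)+1}$, so the defining $q$-Racah constraint holds and hence the proposed array satisfies (PA1)--(PA5). The parameter array of \emph{any} Leonard system also satisfies (PA3) and (PA4). Applying (PA3) with $i=1$ determines $\phip_1$ from $\varphip_1$ and the eigenvalues, and then (PA4) and (PA3) recursively determine $\{\phip_i\}$ and $\{\varphip_i\}$, respectively. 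Consequently both $p(\Phi^{\perp})$ and the proposed array are determined by their shared eigenvalue and dual eigenvalue sequences together with the single value $\varphip_1$, so it suffices to prove
$$
\varphip_1 = hh^*q^{-3}(1-q)(1-q^{2-d})(1-r_1q^2)(1-r_2q^2),
$$
which is (\ref{varphip_i}) at $i=1$.

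To compute $\varphip_1$ I would apply Lemma \ref{Madrid(ai)} to $\Phi^{\perp}$ at $i=0$, giving $\varphip_1 = (\ap_0 - \thtp_0)(\thtsp_0 - \thtsp_1)$. By Corollary \ref{(a,b,c)perp}(ii), $\ap_0 = b_0 - \tb_0 - \tc_1$, so using $\thtp_0 = \tht_1$ and (\ref{theta_i}) this becomes $\ap_0 - \thtp_0 = (\tht_0 - \tht_1) - \tb_0 - \tc_1$. I would then substitute the closed forms (\ref{b_0;q-terms}), (\ref{be_0;q-term}), (\ref{ga_i;q-term}) for $b_0, \tb_0, \tc_1$, together with the identity $\thtsp_0 - \thtsp_1 = -h^*q^{-2}(1-q)(1-s^*q^4)$ derived from (\ref{theta^*_i}), and simplify to reach the target. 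The main obstacle will be this final simplification: the three summands in $\ap_0 - \thtp_0$ have denominators $(1-s^*q^2)$, $(1-s^*q^3)$, and $s^*q^d(1-s^*q^3)(1-s^*q^4)$, so they must be combined over a common denominator and the resulting numerator must be factored to reveal the $(1-r_1q^2)(1-r_2q^2)$ piece. Once this polynomial identity is established, the theorem follows.
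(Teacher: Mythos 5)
Your proposal is correct and follows essentially the same route as the paper: eigenvalue sequences via Lemma \ref{tht(i)=thtp(i)} and (\ref{theta_i}), (\ref{theta^*_i}); the single value $\varphip_1=(\ap_0-\thtp_0)(\thtsp_0-\thtsp_1)$ computed from Lemma \ref{Madrid(ai)} and Corollary \ref{(a,b,c)perp}(ii) with the explicit $q$-formulae; and propagation to all $i$ through (PA3), (PA4) of Theorem \ref{thm:LS<->PA}. Your framing of the last step as a comparison of two arrays both satisfying (PA3)--(PA4) is only a cosmetic repackaging of the paper's direct use of those conditions, so no substantive difference remains.
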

\begin{proof}
Recall that $\Phi^{\perp}$ has diameter $d-2$. To get (\ref{thtperp}), (\ref{tht*perp}) use  (\ref{theta_i}), (\ref{theta^*_i}) and Lemma \ref{tht(i)=thtp(i)}. We now verify (\ref{varphip_i}), (\ref{phip_i}). To this end, recall the intersection numbers $\{\ap_j\}^{d-2}_{j=0}$ of $\Phi^{\perp}$. 
Using (\ref{Madrid(ai);eq1}), 
$\varphip_1 = (\ap_0 - \thtp_0)(\thtsp_0-\thtsp_1).$
Evaluate this using Lemma \ref{tht(i)=thtp(i)} and Corollary \ref{(a,b,c)perp}(ii)  to get
\begin{equation}\label{pf(PA;perp);eq3}
\varphip_1 = (b_0-\tb_0-\tc_1-\tht_1)(\tht^*_1-\tht^*_2).
\end{equation}
Evaluate the right-hand side of (\ref{pf(PA;perp);eq3}) using $b_0=\tht_0$, (\ref{theta_i}), (\ref{theta^*_i}), and Corollary \ref{be,ga(q-term)} and simplify the result using (\ref{h,s;perp(1)}), (\ref{h,s;perp(2)}). This yields (\ref{varphip_i}) for $i=1$.
Using this together with (\ref{thtperp}), (\ref{tht*perp}) and the condition (PA4) of Theorem \ref{thm:LS<->PA} we obtain (\ref{phip_i}) for $1 \leq i \leq d-2$. We now use (\ref{phip_i}) at $i=1$ together with (\ref{thtperp}), (\ref{tht*perp}) and the condition (PA3) of Theorem \ref{thm:LS<->PA} to obtain (\ref{varphip_i}) for $1 \leq i \leq d-2$. The result follows.
\end{proof}

\begin{corollary}
The Leonard system $\Phi^{\perp}$ has $q$-Racah type.
\end{corollary}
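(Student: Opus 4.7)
The proof will be extremely short: essentially a side-by-side comparison of Theorem \ref{q-rac(x-perp)} with the definition of a $q$-Racah type parameter array in Example \ref{q-rac;PA}. The formulas (\ref{thtperp})--(\ref{phip_i}) are already written in the exact form of (\ref{theta_i})--(\ref{phi_i}), but with diameter $d-2$ in place of $d$, with base scalars $\theta^{\perp}_0, \theta^{*\perp}_0$, and with the six scalars $h^{\perp}, h^{*\perp}, s^{\perp}, s^{*\perp}, r^{\perp}_1, r^{\perp}_2$ determined by (\ref{h,s;perp(1)})--(\ref{h,s;perp(2)}). Hence the only content is to check the hypotheses of Example \ref{q-rac;PA} in the perpendicular parameters.

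First I would verify the constraint $r^{\perp}_1 r^{\perp}_2 = s^{\perp} s^{*\perp} q^{(d-2)+1}$: using (\ref{h,s;perp(1)})--(\ref{h,s;perp(2)}) the left side equals $r_1 r_2 q^2$, and since $r_1 r_2 = ss^* q^{d+1}$ (a hypothesis of Example \ref{q-rac;PA} applied to $\Phi$), the left side equals $ss^* q^{d+3}$; meanwhile the right side equals $(sq^2)(s^*q^2)q^{d-1} = ss^* q^{d+3}$. Next I would confirm that the six scalars $h^{\perp}, h^{*\perp}, s^{\perp}, s^{*\perp}, r^{\perp}_1, r^{\perp}_2$ are all nonzero, which is immediate from (\ref{h,s;perp(1)})--(\ref{h,s;perp(2)}) since $h, h^*, s, s^*, r_1, r_2$ are nonzero.

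Finally I would check the non-degeneracy inequalities. For $1 \leq i \leq d-2$ the quantities
\[
q^i,\quad r^{\perp}_1 q^i = r_1 q^{i+1},\quad r^{\perp}_2 q^i = r_2 q^{i+1}, \quad s^{*\perp} q^i/r^{\perp}_1 = s^* q^{i+1}/r_1, \quad s^{*\perp} q^i/r^{\perp}_2 = s^* q^{i+1}/r_2
\]
correspond under $i \mapsto i+1$ (or $i$ itself, for $q^i$) to the analogous quantities for $\Phi$ on the range $2 \leq i+1 \leq d-1 \subseteq \{1,\dots,d\}$, and are therefore not equal to $1$. Similarly, for $2 \leq i \leq 2(d-2)$ we have $s^{\perp} q^i = s q^{i+2}$ and $s^{*\perp} q^i = s^* q^{i+2}$, and these are $\neq 1$ because $4 \leq i+2 \leq 2d-2 \subseteq \{2,\dots,2d\}$. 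This exhausts every condition imposed by Example \ref{q-rac;PA}, so $p(\Phi^{\perp})$ has $q$-Racah type, and therefore so does $\Phi^{\perp}$.

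There is no real obstacle here; the work was done in Theorem \ref{q-rac(x-perp)}, and this corollary is essentially a bookkeeping check that the substitution (\ref{h,s;perp(1)})--(\ref{h,s;perp(2)}) lands inside the admissible parameter region of Example \ref{q-rac;PA}.
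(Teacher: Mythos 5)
Your proposal is correct and follows the same route as the paper, whose entire proof is ``Compare Example \ref{q-rac;PA} and Theorem \ref{q-rac(x-perp)}''; you simply carry out that comparison explicitly (the constraint $r^{\perp}_1 r^{\perp}_2 = s^{\perp}s^{*\perp}q^{d-1}$ and the non-degeneracy inequalities), and all of those checks are accurate.
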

\begin{proof}
Compare Example \ref{q-rac;PA} and Theorem \ref{q-rac(x-perp)}.
\end{proof}


\noindent
Recall the intersection numbers $\{b^{\perp}_i\}^{d-3}_{i=0}, \{c^{\perp}_i\}^{d-2}_{i=1}$ of the Leonard system $\Phi^{\perp}$. 

\begin{corollary}\label{b,c;perp}
With the above notation, 
\begin{align}
\label{b0;h,s,...}
& \bp_0	
 =	\frac{h(1-q^{-d+2})(1-r_1q^{2})(1-r_2q^{2})}
		{q(1-s^*q^{4})}, &&  \\
\label{bp;h,s,...}
&\bp_i	
 =	\frac{h(1-q^{i-d+2})(1-s^*q^{i+3})(1-r_1q^{i+2})(1-r_2q^{i+2})}
		{q(1-s^*q^{2i+3})(1-s^*q^{2i+4})} && &&(1\leq i \leq d-3), \\
\label{cp;h,s,...}
&\cp_i
 =	\frac{h(1-q^{i})(1-s^*q^{i+d+1})(r_1-s^*q^{i+1})(r_2-s^*q^{i+1})}
		{s^*q^{d-1}(1-s^*q^{2i+2})(1-s^*q^{2i+3})} && &&  (1\leq i \leq d-3), \\
\label{c(d-2);h,s,...}
&\cp_{d-2}
 = 	\frac{h(1-q^{d-2})(r_1-s^*q^{d-1})(r_2-s^*q^{d-1})}{s^*q^{d-1}(1-s^*q^{2d-2})}.	
\end{align}
\end{corollary}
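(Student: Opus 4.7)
The plan is to reduce the corollary to a direct substitution into the $q$-Racah formulas for intersection numbers that are already recorded earlier in the paper. The Leonard system $\Phi^\perp$ has been shown (Theorem~\ref{q-rac(x-perp)} together with the corollary immediately following it) to have $q$-Racah type with diameter $d-2$ and with parameters $(h^\perp,s^{*\perp},r^\perp_1,r^\perp_2)$ related to $(h,s^*,r_1,r_2)$ via (\ref{h,s;perp(1)}), (\ref{h,s;perp(2)}). Therefore I would simply specialize the general $q$-Racah intersection-number formulas (\ref{b_0;q-terms})--(\ref{c_d;q-terms}), which express the intersection numbers of an arbitrary $q$-Racah Leonard system of diameter $d$ in terms of $h,s^*,r_1,r_2,q$, by replacing $d$ with $d-2$ and $(h,s^*,r_1,r_2)$ with their perp versions.

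More concretely, I would proceed in four steps. First, apply Lemma~\ref{Madrid(bi;ci)} to the parameter array $p(\Phi^\perp)$ from (\ref{PA;x-perp}); this expresses $b_i^\perp,c_i^\perp$ in terms of $\{\theta^{*\perp}_j\}_{j=0}^{d-2}$ and $\{\varphi^\perp_i\}_{i=1}^{d-2}$, $\{\phi^\perp_i\}_{i=1}^{d-2}$. Second, substitute the closed forms (\ref{tht*perp})--(\ref{phip_i}) from Theorem~\ref{q-rac(x-perp)}. Third, observe that the resulting expressions are exactly what one obtains from (\ref{b_0;q-terms})--(\ref{c_d;q-terms}) after the replacements
\begin{equation*}
d \longmapsto d-2, \qquad h \longmapsto hq^{-1}, \qquad s^* \longmapsto s^*q^2, \qquad r_1 \longmapsto r_1q, \qquad r_2 \longmapsto r_2q.
\end{equation*}
Fourth, carry out the algebraic simplification: the factor $q^{-1}$ from $h^\perp$ contributes the denominator $q$ in (\ref{b0;h,s,...})--(\ref{bp;h,s,...}) and $q^{d-1}$ in (\ref{cp;h,s,...})--(\ref{c(d-2);h,s,...}) (via $s^{*\perp}q^{d-2} = s^*q^d$), and exponent bookkeeping on the $q$-shifted factors $(1-r_i^\perp q^{j+1}) = (1-r_i q^{j+2})$, etc., gives the displayed formulas.

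The reasoning requires no obstacle of substance beyond the bookkeeping just described, since the hard content, namely identification of $\Phi^\perp$ as $q$-Racah with the explicit parameter substitutions, is already contained in Theorem~\ref{q-rac(x-perp)}. The only place where one must be careful is the boundary case $i = d-2$ in $c_i^\perp$: the formula for $c_d$ in the $q$-Racah list (\ref{c_d;q-terms}) has a different shape from that of $c_i$ for $1\le i\le d-1$, so one should apply (\ref{c_d;q-terms}) (with $d$ replaced by $d-2$) rather than (\ref{c_i;q-terms}) in order to recover (\ref{c(d-2);h,s,...}). With that caveat, each of the four displayed identities is a short direct computation, and no new ideas beyond those in Theorem~\ref{q-rac(x-perp)} are needed.
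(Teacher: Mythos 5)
Your proposal is correct and is essentially the paper's own proof: the paper's argument is precisely to substitute the parameters from (\ref{h,s;perp(1)}), (\ref{h,s;perp(2)}) (with diameter $d-2$) into the general $q$-Racah intersection-number formulas (\ref{b_0;q-terms})--(\ref{c_d;q-terms}). Your extra remark about using the $c_d$-type formula (\ref{c_d;q-terms}) for the boundary case $c^{\perp}_{d-2}$ is exactly the right bookkeeping and matches the displayed identities.
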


\begin{proof}
Use (\ref{b_0;q-terms})--(\ref{c_d;q-terms}) and (\ref{h,s;perp(1)}), (\ref{h,s;perp(2)}).
\end{proof}

\noindent
We finish this section with some comments about the scalars $\{\xi_i\}^{d-1}_{i=1}$ from (\ref{w(-E1Mxp1}).

\begin{lemma}\label{Lem(xi_i)}
The vector $w$ in line {\rm(\ref{w(-E1Mxp1})} can be chosen such that 
\begin{equation}\label{xi_i}
\xi_i = q^{1-i}(1-q^{i-d})(1-s^*q^{i+1}) \qquad \qquad (1 \leq i \leq d-1).
\end{equation}
\end{lemma}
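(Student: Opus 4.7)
The plan is to pin down the ratios $\xi_{j+1}/\xi_j$ from the module structure of $\Mxp$ and then verify that the proposed values satisfy them. Since $\Mxp$ is thin irreducible with dual endpoint~$1$ (Lemma~\ref{Mxp}), the subspace $E_1(\Mxp)$ is one-dimensional, so the nonzero vector $w$ in~(\ref{w(-E1Mxp1}) is unique up to a nonzero scalar. Rescaling $w$ multiplies every $\xi_i$ by a common constant, hence it suffices to match the ratios $\xi_{j+1}/\xi_j$; an appropriate choice of $w$ then fixes the overall normalization to produce~(\ref{xi_i}).

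To compute those ratios I would invoke Corollary~\ref{(a,b,c)perp}(i), rewritten as
$$\frac{\xi_{j+1}}{\xi_j} \;=\; \frac{\bp_{j-1}}{b_j} \qquad (1 \leq j \leq d-2),$$
and then substitute the $q$-Racah expressions for $b_j$ from~(\ref{b_i;q-terms}) and for $\bp_{j-1}$ from Corollary~\ref{b,c;perp}. The factors $h$, $(1-r_1q^{j+1})(1-r_2q^{j+1})$ and $(1-s^*q^{2j+1})(1-s^*q^{2j+2})$ cancel, leaving
$$\frac{\xi_{j+1}}{\xi_j} \;=\; \frac{(1-q^{j+1-d})(1-s^*q^{j+2})}{q\,(1-q^{j-d})(1-s^*q^{j+1})}.$$
Setting $\zeta_i := q^{1-i}(1-q^{i-d})(1-s^*q^{i+1})$ for the asserted values, a direct one-line calculation yields $\zeta_{j+1}/\zeta_j$ equal to exactly the same expression. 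Hence $\xi_i$ and $\zeta_i$ agree up to a common scalar that is absorbed by rescaling $w$.

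The verification is essentially routine bookkeeping; the only minor care needed is at the boundary $j=1$, where $\bp_0$ is given by~(\ref{b0;h,s,...}) but coincides with the generic expression~(\ref{bp;h,s,...}) at $i=0$ after the extraneous factor $(1-s^*q^3)$ cancels. I do not anticipate any substantive obstacle. (As a sanity check one could also derive the same ratio from Corollary~\ref{(a,b,c)perp}(iii) together with~(\ref{epsilon_i(4)}), and the two derivations are guaranteed to agree by Lemma~\ref{6-terms}.)
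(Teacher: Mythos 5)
Your proposal is correct and follows essentially the same route as the paper: the relation $\xi_{i+1}/\xi_i=\bp_{i-1}/b_i$ you take from Corollary \ref{(a,b,c)perp}(i) is exactly Lemma \ref{6-terms}(i), which the paper iterates (after fixing $\xi_1$ by rescaling $w$) and then evaluates with (\ref{b_i;q-terms}), (\ref{b0;h,s,...}), (\ref{bp;h,s,...}); your ratio-matching is just a repackaging of that telescoping. (Only a cosmetic remark: avoid reusing the symbol $\zeta_i$ for your candidate values, since the paper already uses $\zeta_i$ in (\ref{u=sum(SB)}).)
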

\begin{proof}
Observe that the vector $w$ is defined up to multiplication by a nonzero scalar in $\mbb{C}$. Multiplying $w$ by a nonzero scalar if necessary, we may assume that
\begin{equation}\label{xi_1}
\xi_1 = (1-q^{1-d})(1-s^*q^2).
\end{equation}
Using Lemma \ref{6-terms}(i) and induction on $i$,
\begin{equation}\label{xi_1;eq(1)}
\xi_i = \frac{\bp_0\bp_1 \cdots \bp_{i-2}}{b_1b_2 \cdots b_{i-1}}\xi_1 \qquad \qquad (1 \leq i \leq d-1).
\end{equation}
Evaluate (\ref{xi_1;eq(1)}) using (\ref{b_i;q-terms}), (\ref{b0;h,s,...}), (\ref{bp;h,s,...}) and (\ref{xi_1}) to get (\ref{xi_i}). The result follows.
\end{proof}

\noindent
From now on we assume that the vector $w$ in line (\ref{w(-E1Mxp1}) has been chosen such that (\ref{xi_i}) holds. Recall the vectors $\{\vp_i\}^{d-2}_{i=0}$ from (\ref{vp_i}).

\begin{lemma} 
For $0 \leq i \leq d-2$, 
$$
\vp_i = q^{-i}(1-q^{1+i-d})(1-s^*q^{i+2})\hat{C}^{+}_{i}
+q^{-i-d}(1-q^{i+1})(1-s^*q^{i+d+2})\hat{C}^{-}_{i+1}.
$$
\end{lemma}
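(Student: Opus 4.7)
The plan is to obtain this formula by direct substitution into the definition of $\vp_i$. Recall from line (\ref{vp_i}) that
$$
\vp_i = \xi_{i+1}\hat{C}^{+}_{i} + \xi_{i+1}\epsilon_{i+1}\hat{C}^{-}_{i+1} \qquad (0 \leq i \leq d-2),
$$
so the proof reduces to evaluating the two scalar coefficients $\xi_{i+1}$ and $\xi_{i+1}\epsilon_{i+1}$ in closed form.

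First I would read off $\xi_{i+1}$ directly from Lemma \ref{Lem(xi_i)}, replacing $i$ by $i+1$ in (\ref{xi_i}): this immediately yields $\xi_{i+1}=q^{-i}(1-q^{i+1-d})(1-s^*q^{i+2})$, which is precisely the claimed coefficient of $\hat{C}^{+}_i$. Next, I would multiply this expression by $\epsilon_{i+1}$, using the closed form (\ref{epsilon_i(4)}) with $i$ replaced by $i+1$, namely
$$
\epsilon_{i+1} = \frac{(1-q^{i+1})(1-s^*q^{i+d+2})}{q^d(1-q^{i+1-d})(1-s^*q^{i+2})}.
$$
The two factors $(1-q^{i+1-d})$ and $(1-s^*q^{i+2})$ in the numerator of $\xi_{i+1}$ cancel the corresponding factors in the denominator of $\epsilon_{i+1}$, leaving $\xi_{i+1}\epsilon_{i+1}=q^{-i-d}(1-q^{i+1})(1-s^*q^{i+d+2})$, which is exactly the claimed coefficient of $\hat{C}^{-}_{i+1}$.

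There is no real obstacle here: the lemma is just an explicit bookkeeping consequence of Lemma \ref{Lem(xi_i)} together with formula (\ref{epsilon_i(4)}), and the only thing one needs to check is that the cancellation is legitimate, i.e.\ that the denominator of $\epsilon_{i+1}$ is nonzero. This follows from the inequalities in Example \ref{q-rac;PA}, which guarantee $q^{i+1-d}\ne 1$ for $0\le i\le d-2$ and $s^*q^{i+2}\ne 1$ in the relevant range (since $2\le i+2\le 2d$). Combining the two computations gives the displayed formula for $\vp_i$ and completes the proof.
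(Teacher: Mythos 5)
Your proposal is correct and matches the paper's own proof, which likewise just evaluates the definition (\ref{vp_i}) using the closed forms (\ref{xi_i}) for $\xi_{i+1}$ and (\ref{epsilon_i(4)}) for $\epsilon_{i+1}$ and cancels. The only difference is cosmetic: the nonvanishing of the cancelled factors is already built into the paper's setup (these $\epsilon_i$ are well defined and nonzero by Corollary \ref{C+-;nonempty}), so your extra check via Example \ref{q-rac;PA} is harmless but not needed.
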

\begin{proof} 
Evaluate (\ref{vp_i}) using (\ref{epsilon_i(4)}) and (\ref{xi_i}).
\end{proof}


\medskip
\section{$\W$ as a $\wt{T}$-module}\label{W-T(C)}

In the previous section we discussed $\W$ as a $T$-module. Recall the algebra $\wt{T}$ from above Lemma \ref{prim-T(C)}. In Lemma \ref{W:T(C)-mod} we saw that $\W$ is a $\wt{T}$-module. Throughout this section we adopt this point of view. Recall the primary $\wt{T}$-module $\MC$ from Lemma \ref{prim-T(C)}. Observe that $\MC$ is an irreducible $\wt{T}$-submodule of $\W$. Let $\MCp$ denote the orthogonal complement of $\MC$ in $\W$. Observe that $\dim{\MCp}=d$, since $\dim{\W}=2d$ and $\dim{\MC}=d$. The subspace $\MCp$ is a $\wt{T}$-submodule of $\W$ since $\wt{T}$ is closed under the conjugate-transpose map. By construction
\begin{equation}\label{ODS(TC-mods)}
\qquad \qquad \W = \MC + \MCp \qquad \qquad \text{\rm(orthogonal direct sum of $\wt{T}$-modules).}
\end{equation}
Our next goal is to show that the $\wt{T}$-module $\MCp$ is irreducible. 

\begin{lemma}\label{dim;E,E*;MCp}
Let $W$ denote the $\wt{T}$-module $\MCp$. The following hold.
\begin{enumerate}
\item[\rm(i)] The dimension of $E_iW$ is $1$ for $1 \leq i \leq d$. Moreover, $E_0W=0$.
\item[\rm{(ii)}] The dimension of $\wt{E}^*_iW$ is $1$ for $0 \leq i \leq d-1$.
\end{enumerate}
\end{lemma}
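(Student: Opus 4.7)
The plan is to exploit the orthogonal decomposition $\W = \MC + \MCp$ of $\wt{T}$-modules from (\ref{ODS(TC-mods)}), together with the already-computed dimensions of $E_i\W$, $\wt{E}^*_i\W$, $E_i\MC$, and $\wt{E}^*_i\MC$. The key observation is that $E_i$ and $\wt{E}^*_i$ both lie in $\wt{T}$, so they preserve each of the two summands, and the images in the two summands are mutually orthogonal. This will turn the computation into simple subtraction.

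For part (i), I would proceed as follows. Since $E_i \in \mcal{M} \subseteq \wt{T}$, we have $E_i\MC \subseteq \MC$ and $E_i\MCp \subseteq \MCp$, and these two subspaces are orthogonal. Therefore $E_iW = E_i\MC \oplus E_i\MCp$ as an orthogonal direct sum, so
\begin{equation*}
\dim E_i\MCp = \dim E_i\W - \dim E_i\MC.
\end{equation*}
By Lemma \ref{dim(EiW)}, $\dim E_i\W$ equals $2$ for $1 \leq i \leq d-1$ and equals $1$ for $i \in \{0,d\}$. By Lemma \ref{prim-T(C)}(v), $\{E_i\hat{C}\}^{d-1}_{i=0}$ is a basis for $\MC$, so $\dim E_i\MC = 1$ for $0 \leq i \leq d-1$; and by Lemma \ref{EdC=0}, $\dim E_d\MC = 0$. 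Plugging these into the displayed equation yields the desired values: $\dim E_0\MCp = 0$, $\dim E_i\MCp = 1$ for $1 \leq i \leq d-1$, and $\dim E_d\MCp = 1$.

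For part (ii), the argument is entirely analogous. Since $\wt{E}^*_i \in \wt{\mcal{M}}^* \subseteq \wt{T}$, we again have an orthogonal decomposition $\wt{E}^*_i W = \wt{E}^*_i\MC \oplus \wt{E}^*_i\MCp$, hence
\begin{equation*}
\dim \wt{E}^*_i\MCp = \dim \wt{E}^*_i\W - \dim \wt{E}^*_i\MC.
\end{equation*}
By Corollary \ref{dim(E(C)*iW)}, $\dim \wt{E}^*_i\W = 2$ for $0 \leq i \leq d-1$. Using Lemma \ref{prim-T(C)}(i) together with Lemma \ref{action(E*(C);W)} (which gives $\wt{E}^*_r \hat{C}_i = \delta_{ri}\hat{C}_i$), the basis $\{\hat{C}_i\}^{d-1}_{i=0}$ of $\MC$ is a simultaneous eigenbasis for $\wt{\mcal{M}}^*$, so $\dim \wt{E}^*_i\MC = 1$ for $0 \leq i \leq d-1$. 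Subtraction gives $\dim \wt{E}^*_i\MCp = 1$, as required.

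There is no real obstacle here: everything reduces to (a) knowing the dimensions of $E_i\W$ and $\wt{E}^*_i\W$ (already done in Section~5) and (b) observing that $\MC$ and $\MCp$ are orthogonal $\wt{T}$-submodules so the idempotents $E_i, \wt{E}^*_i$ respect this decomposition. The only mild care needed is to justify that $E_i\MC$ and $E_i\MCp$ are genuinely orthogonal (not merely that their sum is $E_i\W$); this follows immediately from $\MC \perp \MCp$ and the containments $E_i\MC \subseteq \MC$, $E_i\MCp \subseteq \MCp$.
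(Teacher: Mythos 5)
Your proof is correct and follows essentially the same route as the paper: decompose via $\W = \MC + \MCp$ from (\ref{ODS(TC-mods)}) and subtract the known dimensions of $E_i\MC$ and $\wt{E}^*_i\MC$ from those of $E_i\W$ (Lemma \ref{dim(EiW)}) and $\wt{E}^*_i\W$ (Corollary \ref{dim(E(C)*iW)}). The paper cites Corollary \ref{mal-free(A,A*)} where you cite Lemma \ref{prim-T(C)} and Lemma \ref{EdC=0} directly, but these supply the same facts about the summand $\MC$.
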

\begin{proof}
(i) Use Corollary \ref{mal-free(A,A*)}(i), Lemma \ref{dim(EiW)}, and (\ref{ODS(TC-mods)}).\\
(ii) Use Corollary \ref{mal-free(A,A*)}(ii), Corollary \ref{dim(E(C)*iW)}, and (\ref{ODS(TC-mods)}).
\end{proof}

\begin{lemma}\label{C-perp}
The vector $\hat{C} - |C|\hat{x}$ is a basis for $\wt{E}^*_0(\MCp)$.
\end{lemma}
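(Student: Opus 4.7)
The plan is to exhibit the vector $v=\hat{C}-|C|\hat{x}$ as a nonzero element of $\wt{E}^{*}_{0}(\MCp)$ and then invoke Lemma \ref{dim;E,E*;MCp}(ii) to conclude that it spans this $1$-dimensional space.

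First I would verify that $v \in \wt{E}^{*}_{0}\W$. Since $x \in C = C_{0}$ and $C \subseteq C_{0}$, Lemma \ref{action(E*(C);W)} (or directly (\ref{def(E(C))})) gives $\wt{E}^{*}_{0}\hat{x}=\hat{x}$ and $\wt{E}^{*}_{0}\hat{C}=\hat{C}$, so $\wt{E}^{*}_{0}v = v$. Next I would check that $v \neq 0$. Indeed, $|C|=1-k/\tht_{d}\geq 2$ (since $k>0$ and $\tht_{d}\leq -1$ by \cite[Corollary 3.5.4]{BCN}), whereas the support of $|C|\hat{x}$ is $\{x\}$ and the support of $\hat{C}$ contains vertices other than $x$; so the two cannot cancel.

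The core step is to show $v \perp \MC$. By Lemma \ref{prim-T(C)}(v), $\{E_{i}\hat{C}\}_{i=0}^{d-1}$ is a basis for $\MC$, so it suffices to prove
\begin{equation*}
\langle \hat{C}, E_{i}\hat{C}\rangle \;=\; |C|\,\langle \hat{x}, E_{i}\hat{C}\rangle
\qquad (0 \leq i \leq d-1).
\end{equation*}
For the left-hand side, since $E_{i}$ is real symmetric and idempotent, $\langle\hat{C},E_{i}\hat{C}\rangle=\|E_{i}\hat{C}\|^{2}$, which was computed in the proof of Lemma \ref{EdC=0} (equation (\ref{pf(EdC=0)})) to equal $|C|m_{i}/|X|+|C|(|C|-1)m_{i}\tht_{i}/(|X|k)$. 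For the right-hand side, I would expand $\langle\hat{x},E_{i}\hat{C}\rangle=\sum_{y\in C}\langle E_{i}\hat{x},E_{i}\hat{y}\rangle$ and apply Lemma \ref{JKT} with $j=i$: the term $y=x$ contributes $|X|^{-1}m_{i}\mfrk{u}_{0}(\tht_{i})=|X|^{-1}m_{i}$, and each of the $|C|-1$ remaining $y\in C$ (which satisfy $\partial(x,y)=1$ because $C$ is a clique) contributes $|X|^{-1}m_{i}\mfrk{u}_{1}(\tht_{i})=|X|^{-1}m_{i}\tht_{i}/k$. Multiplying by $|C|$ yields exactly the previous expression, and the required equality follows.

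Combining the three observations, $v \in \wt{E}^{*}_{0}\W \cap (\MC)^{\perp} = \wt{E}^{*}_{0}(\MCp)$ and $v\neq 0$; since Lemma \ref{dim;E,E*;MCp}(ii) gives $\dim\wt{E}^{*}_{0}(\MCp)=1$, the vector $v$ is a basis. The only mildly delicate point is the orthogonality computation, but it reduces cleanly to Lemma \ref{JKT} together with the values $\mfrk{u}_{0}(\lambda)=1$ and $\mfrk{u}_{1}(\tht_{i})=\tht_{i}/k$ from (\ref{poly(u_i)}); everything else is bookkeeping.
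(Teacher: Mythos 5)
Your proof is correct, and its skeleton matches the paper's: exhibit $\hat{C}-|C|\hat{x}$ as a nonzero vector of $\wt{E}^*_0\W$ orthogonal to the right subspace, then quote Lemma \ref{dim;E,E*;MCp}(ii) to finish by a dimension count. Where you genuinely diverge is in the orthogonality step. The paper uses that (\ref{ODS(TC-mods)}) is an orthogonal direct sum of $\wt{T}$-modules, so $\wt{E}^*_0(\MCp)$ is the orthogonal complement of $\wt{E}^*_0\MC=\mathrm{Span}\{\hat{C}\}$ inside $\wt{E}^*_0\W$; hence one only needs the single inner product $\langle \hat{C}-|C|\hat{x},\hat{C}\rangle=\|\hat{C}\|^2-|C|=0$, immediate since $x\in C$. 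You instead prove the stronger statement $\hat{C}-|C|\hat{x}\perp\MC$ directly, testing against the basis $\{E_i\hat{C}\}_{i=0}^{d-1}$ via Lemma \ref{JKT} with $\mfrk{u}_0=1$ and $\mfrk{u}_1(\tht_i)=\tht_i/k$, in effect redoing the computation from the proof of Lemma \ref{EdC=0}. This is valid (and your implicit identification $\wt{E}^*_0\W\cap(\MC)^{\perp}=\wt{E}^*_0(\MCp)$ does hold, since $\wt{E}^*_0$ is a Hermitian idempotent leaving both summands of (\ref{ODS(TC-mods)}) invariant), but it is heavier than needed: the module-theoretic reduction collapses the check to one trivial inner product, whereas your route trades that structural input for an explicit eigenspace computation. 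Your nonvanishing argument via $|C|\ge 2$ is also fine, if more elaborate than the paper's remark that the vector is nonzero by construction.
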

\begin{proof}
By (\ref{ODS(TC-mods)}) the subspace $\wt{E}^*_0(\MCp)$ is the orthogonal complement of $\wt{E}^*_0\MC$ in $\wt{E}^*_0\W$. By construction $\hat{C} - |C|\hat{x}$ is nonzero and contained in $\wt{E}^*_0\W$. Recall $\hat{C}$ is a basis for $\wt{E}^*_0\MC$. The vector $\hat{C} - |C|\hat{x}$ is orthogonal to $\hat{C}$ since
$$
\langle \hat{C} - |C|\hat{x}, \hat{C} \rangle = \| \hat{C} \|^2 -|C| =0.
$$
Therefore  $\hat{C} - |C|\hat{x} \in \wt{E}^*_0(\MCp)$. The result follows since $\wt{E}^*_0(\MCp)$ has dimension 1 by Lemma \ref{dim;E,E*;MCp}(ii).
\end{proof}

\begin{lemma}\label{basis(Mu)}
Abbreviate $u = \hat{C}-|C|\hat{x}$. 
\begin{enumerate}
\item[\rm(i)] The vectors $\{A_iu\}^{d-1}_{i=0}$ form a basis for $\MCp$.
\item[\rm(ii)] The ${\mcal{M}}$-module $\MCp$ is generated by $u$.
\end{enumerate}
\end{lemma}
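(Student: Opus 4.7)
The plan is to exploit the orthogonal decomposition $\W = \MC \oplus \MCp$ from \eqref{ODS(TC-mods)}, regarded as a decomposition of $\wt{T}$-modules, and to study the projection $\pi: \W \to \MCp$ along $\MC$. Since $\mcal{M} \subseteq \wt{T}$ and both summands are $\wt{T}$-invariant (hence $\mcal{M}$-invariant), the map $\pi$ commutes with $A$ and therefore with every element of $\mcal{M}$. By Lemma \ref{C-perp} we have $u \in \MCp$, so $\pi(u) = u$; combining this with $\hat{C} \in \MC$ and the identity $u = \hat{C} - |C|\hat{x}$ forces
\[
\pi(\hat{x}) = -\tfrac{1}{|C|}u.
\]

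Next I would apply $\pi$ to the basis $\{A_i\hat{x}\}_{i=0}^d$ of the primary $T$-module $\Mx$ recorded in Example \ref{ex;Mx}. The $\mcal{M}$-linearity of $\pi$ gives $\pi(A_i\hat{x}) = A_i\pi(\hat{x}) = -A_iu/|C|$ for each $i$. By Corollary \ref{Mx+MC=W}(ii), $\ker(\pi|_{\Mx}) = \Mx \cap \MC = \mbb{C}{\bf j}$, and since $\dim \Mx = d+1$ this forces $\dim \pi(\Mx) = d = \dim \MCp$, so that $\pi(\Mx) = \MCp$. Because $\sum_{i=0}^{d} A_i = {\bf J}$ we have $\sum_{i=0}^{d} A_i\hat{x} = {\bf j}$, so the cosets of $A_0\hat{x}, A_1\hat{x}, \ldots, A_{d-1}\hat{x}$ form a basis for the quotient $\Mx/\mbb{C}{\bf j}$. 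Applying the induced isomorphism $\Mx/\mbb{C}{\bf j} \xrightarrow{\sim} \MCp$, it follows that $\{-A_iu/|C|\}_{i=0}^{d-1}$, and equivalently $\{A_iu\}_{i=0}^{d-1}$, is a basis for $\MCp$; this proves (i).

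Part (ii) is then a one-line corollary: the containment $\mcal{M}u \subseteq \MCp$ holds because $u \in \MCp$ and $\MCp$ is $\mcal{M}$-invariant, while the reverse containment $\mcal{M}u \supseteq \mathrm{Span}\{A_iu\}_{i=0}^{d-1} = \MCp$ is immediate from (i). I do not anticipate any serious obstacle; the only conceptual point worth double-checking is the $\mcal{M}$-linearity of $\pi$, and this is guaranteed by the $\wt{T}$-module structure on $\W$ established in Lemma \ref{W:T(C)-mod}, which ensures that both $\MC$ and $\MCp$ are $A$-invariant.
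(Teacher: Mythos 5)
Your argument is correct, but it runs along a genuinely different track from the paper's. The paper proves (i) by brute force: it expands $A_iu$ in the orthogonal basis $\{\hat{C}^{\pm}_j\}^{d-1}_{j=0}$ via Lemma \ref{prim-T(C)}(ii), obtaining $A_iu = (|C|-N_{i-1})\hat{C}^-_{i-1} - N_{i-1}\hat{C}^+_{i-1} + (N_i-|C|)\hat{C}^-_i + N_i\hat{C}^+_i$, and reads off linear independence from the triangular shape of these expansions together with $N_i\neq 0$ (Corollary \ref{0<Ni<C}), finishing with $\dim\MCp=d$; part (ii) is then handled exactly as you do. You instead work structurally with the projection $\pi=I-\wt{\mfrk{p}}$ of $\W$ onto $\MCp$ along $\MC$ (available already from (\ref{ODS(TC-mods)}), so there is no circularity even though $\wt{\mfrk{p}}$ is only named in Section \ref{2maps}): its $\mcal{M}$-linearity, the identity $\pi(\hat{x})=-u/|C|$, and the facts $\Mx\cap\MC=\mbb{C}{\bf j}$, $\dim\Mx=d+1$, $\dim\MCp=d$ from Corollary \ref{Mx+MC=W} and Example \ref{ex;Mx} give an induced isomorphism $\Mx/\mbb{C}{\bf j}\to\MCp$ carrying the cosets of $\{A_i\hat{x}\}^{d-1}_{i=0}$ to scalar multiples of $\{A_iu\}^{d-1}_{i=0}$. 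What each approach buys: the paper's computation yields the explicit coefficients of $A_iu$ in the $\hat{C}^{\pm}_j$ basis (in the spirit of the matrix data it compiles later), while yours avoids all coefficient bookkeeping, exposes the conceptual reason the statement holds (namely $\MCp\cong\Mx/\mbb{C}{\bf j}$ as $\mcal{M}$-modules via $\pi$), and anticipates the role of $\wt{\mfrk{p}}$ in Section \ref{2maps}; the only point requiring care, the commutation of $\pi$ with $A$, you justify correctly from the $\wt{T}$-invariance of both summands.
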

\begin{proof}
(i) For $0 \leq i \leq d-1$, evaluate $A_iu$  using Lemma \ref{prim-T(C)}(ii). Express the result as a linear combination of $\{\hat{C}^{\pm}_j\}^{d-1}_{j=0}$ using (\ref{Ci=(C-i)+(C+i)}) and (\ref{Phi-SB}). We find
\begin{equation}\label{Aiu=lincombo}
A_iu = (|C|-N_{i-1})\hat{C}^-_{i-1} + (-N_{i-1})C^+_{i-1} + (N_i-|C|)\hat{C}^-_i + N_i\hat{C}^+_i.
\end{equation}
Consider the last term on the right-hand side of (\ref{Aiu=lincombo}). Recall $N_i$ is nonzero by Corollary \ref{0<Ni<C}. Therefore the $\{A_iu\}^{d-1}_{i=0}$ are linearly independent. The result follows since $\dim{\MCp}=d$.\\
(ii) Consider $\mcal{M}u = \text{Span}\{A_iu\}^d_{i=0}$. Since $u\in\MCp$, we have $\mcal{M}u\subseteq\MCp$. But $\mcal{M}u$ contains $\text{Span}\{Au_i\}^{d-1}_{i=0}$, which is equal to $\MCp$ by part (i). By these comments $\mcal{M}u = \MCp$. The result follows.
\end{proof}

\begin{proposition}\label{MCp}
The $\wt{T}$-module $\MCp$ is irreducible.
\end{proposition}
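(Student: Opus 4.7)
The plan is to mimic the irreducibility argument used in Proposition \ref{W;irreducible}, but with $\hat{x}$ replaced by the vector $u = \hat{C} - |C|\hat{x}$ that was singled out in Lemma \ref{C-perp}. The key ingredients already at our disposal are: (a) semisimplicity of $\wt{T}$, which lets us write $\MCp$ as an orthogonal direct sum of irreducible $\wt{T}$-modules; (b) the fact from Lemma \ref{dim;E,E*;MCp}(ii) and Lemma \ref{C-perp} that $\wt{E}^*_0(\MCp)$ is one-dimensional, spanned by $u$; and (c) Lemma \ref{basis(Mu)}(ii), which says that the $\mcal{M}$-module generated by $u$ is all of $\MCp$.

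First I would decompose $\MCp = \bigoplus_\alpha U_\alpha$ as an orthogonal direct sum of irreducible $\wt{T}$-modules, which is legitimate since $\wt{T}$ is closed under the conjugate-transpose map. Since $u$ lies in $\MCp$ and is nonzero, at least one summand $U$ is not orthogonal to $u$. Projecting onto $U$ using $\wt{E}^*_0$ and using $u \in \wt{E}^*_0(\MCp)$, one deduces $\wt{E}^*_0(U) \neq 0$. The containment $\wt{E}^*_0(U) \subseteq \wt{E}^*_0(\MCp) = \mathbb{C}u$ forces $\wt{E}^*_0(U) = \mathbb{C}u$, so in particular $u \in U$.

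Next I would use that $U$ is closed under the action of $\wt{T}$, hence under $\mcal{M} \subseteq \wt{T}$, to conclude $\mcal{M}u \subseteq U$. By Lemma \ref{basis(Mu)}(ii) the left-hand side equals $\MCp$, so $\MCp \subseteq U$. Combined with $U \subseteq \MCp$ this gives $U = \MCp$, and the decomposition $\MCp = \bigoplus_\alpha U_\alpha$ collapses to a single summand, proving irreducibility.

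I do not anticipate a real obstacle here: every ingredient has been prepared in advance. The only place where one must be slightly careful is the step "not orthogonal to $u$ implies $\wt{E}^*_0(U) \neq 0$," which uses that $\wt{E}^*_0$ is an orthogonal projection onto $\wt{E}^*_0(\V)$ (this follows from $\wt{E}^*_0$ being a real symmetric idempotent) together with $u \in \wt{E}^*_0(\V)$; once this is noted, the remainder is a short bookkeeping argument.
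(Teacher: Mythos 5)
Your proposal is correct and follows essentially the same route as the paper's proof: decompose $\MCp$ into irreducible $\wt{T}$-modules, pick a summand $U$ not orthogonal to $u=\hat{C}-|C|\hat{x}$, use $\wt{E}^*_0(\MCp)=\mathbb{C}u$ (Lemma \ref{C-perp} with Lemma \ref{dim;E,E*;MCp}(ii)) to get $u\in U$, and conclude via Lemma \ref{basis(Mu)}(ii) that $\MCp=\mcal{M}u\subseteq U$. Your extra remark justifying the step ``not orthogonal to $u$ implies $\wt{E}^*_0U\ne 0$'' is a sound elaboration of what the paper leaves implicit.
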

\begin{proof}
Abbreviate $u =\hat{C}-|C|\hat{x}$. Express $\MCp$ as the orthogonal direct sum of irreducible $\wt{T}$-modules. Since $u \in \MCp$, among these irreducible $\wt{T}$-modules there exists one, denoted $U$, that is not orthogonal to $u$. Observe $\wt{E}^*_0U \ne 0$. Also $\wt{E}^*_0U \subseteq \wt{E}^*_0(\MCp) = \text{Span}\{u\}$, so $\wt{E}^*_0U$ is spanned by $u$. By this and since $U$ is $\wt{T}$-module, we get $u \in \wt{E}^*_0U \subseteq U$. But then $\mcal{M}u \subseteq U$. By this and Lemma \ref{basis(Mu)}(ii) $\MCp \subseteq U$ and therefore $\MCp=U$, as desired.
\end{proof}

\noindent
We now show that the irreducible $\wt{T}$-module $\MCp$ supports a Leonard system.

\begin{lemma}\label{LS(MCp)}The following matrices
\begin{equation}\label{LS(MCp);mat}
(A; \wt{A}^*; \{E_i\}^{d}_{i=1}; \{\wt{E}^*_i\}^{d-1}_{i=0})
\end{equation}
act on $\MCp$ as a Leonard system. 
\end{lemma}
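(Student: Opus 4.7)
The plan is to verify the five conditions of Definition \ref{Def:LS} for the sequence (\ref{LS(MCp);mat}) acting on $\MCp$, by mimicking the proof of Lemma \ref{LS(MC)}. Conditions (i)--(iii), multiplicity-freeness and the two orderings of primitive idempotents, follow immediately from Lemma \ref{dim;E,E*;MCp}: on $\MCp$ the subspaces $\{E_i\MCp\}_{i=1}^{d}$ give a direct sum decomposition with each summand one-dimensional, and the corresponding eigenvalues $\tht_1, \ldots, \tht_d$ of $A$ are mutually distinct; likewise $\{\wt{E}^*_i\MCp\}_{i=0}^{d-1}$ is a decomposition into one-dimensional $\wt{A}^*$-eigenspaces whose eigenvalues $\wt{\tht}^*_0, \ldots, \wt{\tht}^*_{d-1}$ are distinct by Corollary \ref{tht;til;md}.

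For condition (iv), I split it into a vanishing and a non-vanishing part. The vanishing $E_i\wt{A}^*E_j = 0$ on all of $\V$ for $|i-j|>1$ follows by writing $\wt{A}^* = |C|^{-1}\sum_{y\in C} A^*(y)$ as in (\ref{def(A*C)}) and applying (\ref{EA*E}) to each summand, since that relation holds equally with base vertex $y$ in place of $x$. The non-vanishing $E_{i-1}\wt{A}^*E_i \neq 0$ on $\MCp$ is the substantive step. I would argue by contradiction: if it vanished for some $i \in \{2,\ldots,d\}$, then combining the tridiagonality just obtained with the fact that $A$ commutes with each $E_j$ would show that $\bigoplus_{j=i}^{d} E_j \MCp$ is a proper nonzero $\wt{T}$-submodule of $\MCp$, contradicting Proposition \ref{MCp}. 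The case $j = i-1$ follows from the symmetry $E_j\wt{A}^*E_i = (E_i\wt{A}^*E_j)^{t}$, using that $E_i$ and $\wt{A}^*$ are real symmetric.

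Condition (v) is handled by the same two-part pattern. The vanishing $\wt{E}^*_i A \wt{E}^*_j = 0$ for $|i-j|>1$ is supplied by \cite[Lemma 4.1]{Suzuki}, the same reference used in the proof of Lemma \ref{LS(MC)}. The non-vanishing for $|i-j|=1$ again follows from the irreducibility of the $\wt{T}$-module $\MCp$: if $\wt{E}^*_{i-1} A \wt{E}^*_i$ were zero on $\MCp$ for some $i \in \{1,\ldots,d-1\}$, then $\bigoplus_{j=i}^{d-1} \wt{E}^*_j \MCp$ would be a proper nonzero $\wt{T}$-invariant subspace, again contradicting Proposition \ref{MCp}.

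The main obstacle is not computational but conceptual: establishing the non-vanishing of the off-diagonal blocks of $\wt{A}^*$ in the $E_i$-decomposition of $\MCp$, and of $A$ in the $\wt{E}^*_i$-decomposition. Everything else reduces to the dimension count in Lemma \ref{dim;E,E*;MCp} combined with standard facts about commuting families of projections, and the non-vanishing itself is ultimately a consequence of the irreducibility proved in Proposition \ref{MCp} rather than of any detailed knowledge of the $q$-Racah parameters.
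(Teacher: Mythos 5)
Your proof follows essentially the same route as the paper: it verifies conditions (i)--(v) of Definition \ref{Def:LS} with the same ingredients, namely Lemma \ref{dim;E,E*;MCp} and Corollary \ref{tht;til;md} for (i)--(iii), the relation (\ref{EA*E}) together with (\ref{def(A*C)}) for the vanishing part of (iv), and \cite[Lemma 4.1]{Suzuki} for (v). The only difference is that you spell out the non-vanishing parts of (iv) and (v) via the irreducibility of the $\wt{T}$-module $\MCp$ (Proposition \ref{MCp}), a step the paper leaves to its citations; that supplementary argument is correct and uses only results established earlier in the paper.
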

\begin{proof}
We show that (\ref{LS(MCp);mat}) satisfies conditions (i)--(v) in Definition \ref{Def:LS}. Conditions (i)--(iii) follow from (\ref{A*C=sum(tht*E*C)}), Corollary \ref{tht;til;md} and Lemma \ref{dim;E,E*;MCp}, condition (iv) follows from (\ref{EA*E}) and (\ref{def(A*C)}), and condition (v) follows from \cite[Lemma 4.1]{Suzuki}. 
\end{proof}

\noindent
We let $\wt{\Phi}^{\perp}$ denote the Leonard system on $\MCp$ from Lemma \ref{LS(MCp)}. We denote the parameter array of  $\wtPp$ by

\begin{equation}\label{PA;til(Phi)perp}
p(\wtPp) = (\{\tthtp_i\}^{d-1}_{i=0}; \{\tthtsp_i\}^{d-1}_{i=0}; \{\tvarphip_i\}^{d-1}_{i=1}; \{\tphip_i\}^{d-1}_{i=1}).
\end{equation}

\begin{lemma}\label{ttht=tthts}
With the above notation,
\begin{align} \label{ttht=tthts;eq}
&& \tthtp_i = \tht_{i+1}, && \tthtsp_i = \ttht^*_i && (0 \leq i \leq d-1).
\end{align}
\end{lemma}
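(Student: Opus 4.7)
The plan is to observe that both claimed identities are simply compatibility statements between eigenvalues of $A$ (resp.\ $\wt{A}^*$) computed on different $\mcal{M}$- (resp.\ $\wt{\mcal{M}}^*$-) invariant subspaces of $\V$. Concretely, recall from Lemma \ref{LS(MCp)} that
$$\wtPp = (A;\ \wt{A}^*;\ \{E_i\}^{d}_{i=1};\ \{\wt{E}^*_i\}^{d-1}_{i=0})$$
is a Leonard system on $\MCp$ of diameter $d-1$. By definition of its parameter array $p(\wtPp)$ in \eqref{PA;til(Phi)perp}, the scalar $\tthtp_i$ is the eigenvalue of $A$ on $\MCp$ associated with the $i$-th primitive idempotent in the ordering $\{E_j\}^{d}_{j=1}$, and $\tthtsp_i$ is the eigenvalue of $\wt{A}^*$ on $\MCp$ associated with $\wt{E}^*_i$.

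For the first identity, I would observe that the $i$-th entry of the list $\{E_j\}^{d}_{j=1}$ is $E_{i+1}$. Since $AE_{i+1}=\tht_{i+1}E_{i+1}$ as a global identity on $\V$ (from the definition of $\tht_{i+1}$ as the eigenvalue of $\Ga$ associated with $E_{i+1}$ in Section \ref{q-DRG}), this relation restricts to $\MCp$, which is $A$-invariant. Therefore the eigenvalue of $A$ associated with $E_{i+1}$ when acting on $\MCp$ is still $\tht_{i+1}$, yielding $\tthtp_i=\tht_{i+1}$.

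For the second identity, I would appeal to Lemma \ref{wt(A)*=wt(tht)*E*}, which gives the global identity $\wt{A}^*\wt{E}^*_i=\wt{\tht}^*_i\wt{E}^*_i$ on $\V$. Restricting to $\MCp$ shows that the eigenvalue of $\wt{A}^*$ on $\MCp$ associated with $\wt{E}^*_i$ is $\wt{\tht}^*_i$. The very same restriction argument applied to $\MC$ (together with the definition of $\wt{\Phi}$ on $\MC$ in Lemma \ref{LS(MCp)}) shows that $\ttht^*_i$, the $i$-th dual eigenvalue of the primary Leonard system $\wt{\Phi}$, also equals $\wt{\tht}^*_i$. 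Hence $\tthtsp_i=\wt{\tht}^*_i=\ttht^*_i$.

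This argument is essentially a definition-chase, so there is no real obstacle; the only care needed is to match up indices correctly between the orderings of primitive idempotents in $\wt{\Phi}$ and $\wtPp$, and to keep track of the fact that $\wt{\tht}^*_i$ (introduced in Lemma \ref{wt(A)*=wt(tht)*E*}) is the same scalar that is denoted $\ttht^*_i$ in the parameter array $p(\wt{\Phi})$.
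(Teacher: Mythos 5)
Your proposal is correct and follows essentially the same route as the paper: the paper's proof also just restricts the global relations $AE_{i+1}=\tht_{i+1}E_{i+1}$ and $\wt{A}^*\wt{E}^*_i=\wt{\tht}^*_i\wt{E}^*_i$ to the invariant subspace $\MCp$ and compares with the defining relations of the Leonard system $\wtPp$. Your extra remark about matching $\ttht^*_i$ with $\wt{\tht}^*_i$ is harmless, since these denote the same scalars in the paper's notation.
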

\begin{proof}
By construction, $AE_{i+1} = \tthtp_iE_{i+1}$ on $\MCp$. But $AE_{i+1}=\tht_{i+1}E_{i+1}$, so $\tthtp_i=\tht_{i+1}$. Similarly we get $ \tthtsp_i = \ttht^*_i$.
\end{proof}

\noindent
In Lemma \ref{SB(MC)}, we saw that the $\{\hat{C}_i\}^{d-1}_{i=0}$ form a $\wt{\Phi}$-standard basis for $\MC$. To keep our notation consistent, define $\tv_i = \hat{C}_i$ for $0 \leq i \leq d-1$. By (\ref{Ci=(C-i)+(C+i)}),
\begin{equation}\label{SB-MC}
\qquad \qquad \tv_i = \hat{C}^-_i + \hat{C}^+_i \qquad \qquad (0 \leq i \leq d-1).
\end{equation}
Our next goal is to find a $\wtPp$-standard basis for $\MCp$. Define the scalars
\begin{equation}\label{tau_i}
\qquad \tau_i := -\frac{|C^{+}_{i}|}{|C^{-}_{i}|} \qquad \qquad (0 \leq i \leq d-1).
\end{equation}
By Corollary \ref{C+-;nonempty}, $\tau_i$ is well-defined and nonzero for $0 \leq i \leq d-1$.

\begin{lemma}\label{tau;q-term}  For $0 \leq i \leq d-1$,
\begin{equation}\label{tau;q-term;eq}
\tau_i =  \frac{s^*(1-r_1q^{i+1})(1-r_2q^{i+1})}{(r_1-s^*q^{i+1})(r_2-s^*q^{i+1})}.
\end{equation}
\end{lemma}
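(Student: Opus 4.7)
The plan is to reduce $\tau_i$ to a ratio of $|C|-N_i$ and $N_i$, and then apply the closed-form expressions for $N_i$ and $|C|-N_i$ that were obtained in Lemma \ref{formula(Ni);q,h,s,r}. First I would substitute the cardinality formulas from Lemma \ref{cardofCi} into the definition $\tau_i = -|C^+_i|/|C^-_i|$ to obtain
\[
\tau_i \;=\; -(|C|-1)\,\prod_{j=1}^{i}\frac{c_j}{\tc_j}\cdot\prod_{j=0}^{i-1}\frac{b_{j+1}}{\tb_j}.
\]
The main simplification then comes from telescoping the two products using Theorem \ref{gamma;beta}. Namely, equation (\ref{*}) gives $c_j/\tc_j = N_{j-1}/N_j$, so the first product collapses to $N_0/N_i = 1/N_i$; and equation (\ref{**}) gives $b_{j+1}/\tb_j = (|C|-N_{j+1})/(|C|-N_j)$, so the second product collapses to $(|C|-N_i)/(|C|-N_0) = (|C|-N_i)/(|C|-1)$ (recall $N_0 = 1$). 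Combining these yields the clean intermediate identity
\[
\tau_i \;=\; -\,\frac{|C|-N_i}{N_i}.
\]

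It remains to plug in the explicit $q$-expressions for $N_i$ and $|C|-N_i$. By Lemma \ref{formula(Ni);q,h,s,r}, both of these quantities have the common factor $h(q^d-1)/(\tht_d q^d(1-s^*q^{2i+2}))$, with the remaining factors $(r_1-s^*q^{i+1})(r_2-s^*q^{i+1})/s^*$ in $N_i$ and $-(1-r_1q^{i+1})(1-r_2q^{i+1})$ in $|C|-N_i$. Taking the ratio, the common factor cancels entirely, the two minus signs cancel, and the factor $1/s^*$ in the denominator of $N_i$ produces the $s^*$ in the numerator of (\ref{tau;q-term;eq}). This is the only step that requires any calculation, and it is routine cancellation, so no serious obstacle is expected. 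The result is precisely
\[
\tau_i \;=\; \frac{s^*(1-r_1q^{i+1})(1-r_2q^{i+1})}{(r_1-s^*q^{i+1})(r_2-s^*q^{i+1})},
\]
as required.
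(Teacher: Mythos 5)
Your proof is correct and follows essentially the same route as the paper: it evaluates $\tau_i$ via Lemma \ref{cardofCi}, telescopes the products using Theorem \ref{gamma;beta} to reach the intermediate identity $\tau_i = (N_i-|C|)/N_i$, and then substitutes the $q$-expressions from Lemma \ref{formula(Ni);q,h,s,r}. The only difference is that you spell out the telescoping and the final cancellation explicitly, which the paper leaves as "simplify."
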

\begin{proof}
Evaluate (\ref{tau_i}) using Lemma \ref{cardofCi} and eliminate the scalars $\{\tb_j\}^{i-1}_{j=0}, \{\tc_j\}^{i}_{j=1}$ using Theorem \ref{gamma;beta}. Simplify the result to find
$$
\tau_i = \frac{N_i-|C|}{N_i}.
$$
Evaluate this equation using Lemma \ref{formula(Ni);q,h,s,r} to obtain the result.
\end{proof}

\begin{lemma}\label{pre(OB;MCp)}
For $0 \leq i \leq d-1$ the vector
\begin{equation*}
\tau_i\hat{C}^{-}_i + \hat{C}^{+}_{i} 
\end{equation*}
is a basis for $\wt{E}^*_i(\MCp)$.
\end{lemma}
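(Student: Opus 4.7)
The plan is to follow the same template as the proof of Lemma \ref{pre(OB;Mxp)}, which handled the analogous statement for $\Mxp$. There are three things to verify: (a) that the proposed vector is nonzero and lies in $\wt{E}^*_i\W$; (b) that it is orthogonal to $\wt{E}^*_i\MC$; and (c) a dimension count that forces it to span $\wt{E}^*_i(\MCp)$.

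For step (a), I would invoke Lemma \ref{W=ods(E*(C)i)}, which says that $\hat{C}^-_i, \hat{C}^+_i$ form a basis for $\wt{E}^*_i\W$. Since $\tau_i$ is nonzero by Lemma \ref{tau;q-term} (equivalently, by Corollary \ref{C+-;nonempty} which shows $|C^+_i|/|C^-_i|$ is a nonzero scalar), the vector $\tau_i\hat{C}^-_i + \hat{C}^+_i$ is a nonzero element of $\wt{E}^*_i\W$.

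For step (b), recall that by Lemma \ref{SB(MC)} the vectors $\{\hat{C}_i\}^{d-1}_{i=0}$ form a $\wt{\Phi}$-standard basis for $\MC$, so in particular $\tv_i = \hat{C}_i = \hat{C}^-_i + \hat{C}^+_i$ spans $\wt{E}^*_i\MC$. By the orthogonal decomposition (\ref{ODS(TC-mods)}), the subspace $\wt{E}^*_i(\MCp)$ is the orthogonal complement of $\wt{E}^*_i\MC$ inside $\wt{E}^*_i\W$. Using the orthogonality of $\hat{C}^-_i$ and $\hat{C}^+_i$ (Lemma \ref{OGB(W)}) together with (\ref{tau_i}), compute
\[
\langle \tv_i,\ \tau_i\hat{C}^-_i + \hat{C}^+_i \rangle
= \tau_i\|\hat{C}^-_i\|^2 + \|\hat{C}^+_i\|^2
= \tau_i|C^-_i| + |C^+_i| = 0,
\]
so $\tau_i\hat{C}^-_i + \hat{C}^+_i \in \wt{E}^*_i(\MCp)$.

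For step (c), note that $\dim \wt{E}^*_i(\MCp) = 1$ by Lemma \ref{dim;E,E*;MCp}(ii), so the nonzero vector from step (b) must be a basis. No step here is really an obstacle; the only place requiring the specific form of $\tau_i$ is verifying the orthogonality relation, and that reduces immediately to the defining identity $\tau_i|C^-_i| + |C^+_i| = 0$.
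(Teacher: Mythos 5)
Your proposal is correct and follows essentially the same route as the paper's own proof: nonzero membership in $\wt{E}^*_i\W$ via Lemma \ref{W=ods(E*(C)i)}, orthogonality to $\tv_i=\hat{C}^-_i+\hat{C}^+_i$ computed from $\tau_i|C^-_i|+|C^+_i|=0$, and the dimension count from Lemma \ref{dim;E,E*;MCp}(ii). No gaps to report.
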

\begin{proof}
By Lemma \ref{W=ods(E*(C)i)}, $0 \ne \tau_i\hat{C}^-_i + \hat{C}^+_i \in \wt{E}^*_i\W$. By (\ref{ODS(TC-mods)}) the subspace $\wt{E}^*_i(\MCp)$ is the orthogonal complement of $\wt{E}^*_i\MC$ in $\wt{E}^*_i\W$. Recall $\tv_i$ is a basis for $\wt{E}^*_i\MC$. Using (\ref{SB-MC}),
$$
\langle \tv_i, \tau_i\hat{C}^-_i + \hat{C}^+_i\rangle = \tau_i\|\hat{C}^-_i\|^2 + \|\hat{C}^+_i\|^2 = \tau_i|C^-_i| + |C^+_i|=0.
$$
Therefore $\tau_i\hat{C}^-_i + \hat{C}^+_i \in \wt{E}^*_i(\MCp)$. The result follows since $\wt{E}^*_i(\MCp)$ has dimension $1$ by Lemma \ref{dim;E,E*;MCp}(ii).
\end{proof}

\begin{corollary} \label{OB;MCp}
The vectors
\begin{equation*}\label{u'_i}
~ \qquad \qquad \tau_i\hat{C}^{-}_i + \hat{C}^{+}_{i} \qquad \qquad (0 \leq i \leq d-1)
\end{equation*}
form an orthogonal basis for $\MCp$.
\end{corollary}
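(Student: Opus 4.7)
The plan is to derive this corollary as an immediate consequence of Lemma~\ref{pre(OB;MCp)} together with the orthogonal decomposition of $\MCp$ under the idempotents $\{\wt{E}^*_i\}_{i=0}^{d-1}$. First I would observe that $\MCp$, being a $\wt{T}$-module, is invariant under each $\wt{E}^*_i$, and hence admits the orthogonal decomposition
\begin{equation*}
\MCp = \sum_{i=0}^{d-1}\wt{E}^*_i(\MCp) \qquad \text{(orthogonal direct sum)},
\end{equation*}
which follows from restricting the decomposition of $\W$ in Lemma~\ref{W=ods(E*(C)i)} to $\MCp$.

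Next I would invoke Lemma~\ref{pre(OB;MCp)}, which states that for each $i$ with $0 \leq i \leq d-1$, the vector $\tau_i\hat{C}^{-}_i+\hat{C}^{+}_i$ spans $\wt{E}^*_i(\MCp)$. Combining this with the orthogonal direct sum above, the family $\{\tau_i\hat{C}^{-}_i+\hat{C}^{+}_i\}_{i=0}^{d-1}$ consists of one nonzero vector chosen from each summand of an orthogonal direct sum, hence forms an orthogonal basis for $\MCp$.

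As a consistency check one could verify the dimension count: by Lemma~\ref{dim;E,E*;MCp}(ii) each $\wt{E}^*_i(\MCp)$ has dimension $1$, so the sum has dimension $d$, matching $\dim\MCp=d$ noted above (\ref{ODS(TC-mods)}). I do not expect any serious obstacle here; the statement is essentially a packaging of Lemma~\ref{pre(OB;MCp)} once one recognizes that $\MCp$ inherits the $\wt{E}^*_i$-decomposition from $\W$.
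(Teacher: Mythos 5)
Your proposal is correct and follows essentially the same route as the paper, whose proof simply cites Lemma \ref{pre(OB;MCp)}: each vector $\tau_i\hat{C}^-_i+\hat{C}^+_i$ spans the one-dimensional space $\wt{E}^*_i(\MCp)$, and these spaces are mutually orthogonal and sum to $\MCp$. Your write-up merely makes explicit the decomposition and dimension count that the paper leaves implicit.
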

\begin{proof}
Follows from Lemma \ref{pre(OB;MCp)}.
\end{proof}

\noindent
As we will see, the vectors $\{\tau_i\hat{C}^{-}_i + \hat{C}^{+}_{i}\}^{d-1}_{i=0}$ do not form a $\wtPp$-standard basis for $\MCp$. In order to turn it into a $\wtPp$-standard basis we make an adjustment. 
Pick a nonzero $\wt{w} \in E_1(\MCp)$. Define the complex scalars $\{\zeta_i\}^{d-1}_{i=0}$ such that
\begin{equation}\label{u=sum(SB)}
\wt{w} = \sum^{d-1}_{i=0} \zeta_i(\tau_i\hat{C}^-_i+\hat{C}^+_i).
\end{equation}
Define 
\begin{equation}\label{tilde(vp_i)}
\qquad \qquad \tvp_i = \zeta_i(\tau_i\hat{C}^-_i+\hat{C}^+_i) \qquad \qquad (0 \leq i \leq d-1).
\end{equation}
For notational convenience define $\tvp_{-1}=0$ and $\tvp_{d}=0$.

\begin{lemma}\label{SB(MCp)}
The vectors $\{\tvp_i\}^{d-1}_{i=0}$ form a $\wtPp$-standard basis for $\MCp$.
\end{lemma}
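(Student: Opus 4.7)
The plan is to invoke Lemma \ref{Phi-sb} applied to the Leonard system $\wtPp$ on $\MCp$. The Leonard system $\wtPp$ has diameter $d-1$; its primitive idempotents for $A$ are the $\{E_i\}^{d}_{i=1}$ (so the ``first'' one in its ordering is $E_1$) and its primitive idempotents for $\wt{A}^*$ are the $\{\wt{E}^*_i\}^{d-1}_{i=0}$. Therefore, after reindexing the first family to start at $0$, Lemma \ref{Phi-sb} tells us that a nonzero sequence $\{\tvp_i\}^{d-1}_{i=0}$ in $\MCp$ is a $\wtPp$-standard basis precisely when both (i) $\tvp_i \in \wt{E}^*_i(\MCp)$ for $0 \leq i \leq d-1$; and (ii) $\sum^{d-1}_{i=0}\tvp_i \in E_1(\MCp)$.

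First I would dispatch condition (i). By Lemma \ref{pre(OB;MCp)}, the vector $\tau_i\hat{C}^-_i + \hat{C}^+_i$ is a basis for the one-dimensional space $\wt{E}^*_i(\MCp)$, and from the definition in \eqref{tilde(vp_i)} we have $\tvp_i = \zeta_i(\tau_i\hat{C}^-_i + \hat{C}^+_i)$, which clearly lies in $\wt{E}^*_i(\MCp)$.

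Next I would verify condition (ii) together with the non-vanishing requirement. The scalars $\zeta_i$ were defined in \eqref{u=sum(SB)} precisely so that $\sum^{d-1}_{i=0}\zeta_i(\tau_i\hat{C}^-_i + \hat{C}^+_i) = \wt{w}$. Combining with \eqref{tilde(vp_i)} gives $\sum^{d-1}_{i=0}\tvp_i = \wt{w}$, and by construction $\wt{w}$ is a nonzero element of $E_1(\MCp)$. This simultaneously yields (ii) and ensures that the $\tvp_i$ are not all zero, so the hypotheses of Lemma \ref{Phi-sb} are met and the conclusion follows.

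No genuine obstacle arises here; the statement is essentially a bookkeeping consequence of the choices already made when $\wt{w}$, the $\zeta_i$, and the $\tvp_i$ were introduced. The only point requiring minor care is aligning the indexing conventions: Lemma \ref{Phi-sb} expects the primitive idempotents of the ``first'' generator to be indexed from $0$, whereas in $\wtPp$ they are written as $E_1, E_2, \ldots, E_d$, so the role of ``$E_0V$'' in Lemma \ref{Phi-sb} is played by $E_1(\MCp)$ in the present setting.
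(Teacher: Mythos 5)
Your proposal is correct and follows essentially the same route as the paper: condition (i) of Lemma \ref{Phi-sb} via Lemma \ref{pre(OB;MCp)} and the definition (\ref{tilde(vp_i)}), condition (ii) via $\sum^{d-1}_{i=0}\tvp_i = \wt{w} \in E_1(\MCp)$, then conclude by Lemma \ref{Phi-sb}. Your extra remark on aligning the index of the first primitive idempotent ($E_1$ playing the role of ``$E_0$'' for $\wtPp$) is a point the paper leaves implicit but does not change the argument.
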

\begin{proof}
By Lemma \ref{pre(OB;MCp)}, $\tvp_i \in \wt{E}^*_{i}(\MCp)$ for $0 \leq i \leq d-1$. Also, $\sum^{d-1}_{i=0}\tvp_i = \wt{w} \in E_1(\MCp)$. The result follows from these comments and Lemma \ref{Phi-sb}.
\end{proof}

\begin{corollary} The scalars $\{\zeta_i\}^{d-1}_{i=0}$ from {\rm(\ref{u=sum(SB)})} are all nonzero.
\end{corollary}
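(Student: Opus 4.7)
The plan is essentially to mirror the argument given just above for the analogous statement about the scalars $\{\xi_i\}^{d-1}_{i=1}$. The key observation is that Lemma \ref{SB(MCp)} has already established that $\{\tvp_i\}^{d-1}_{i=0}$ is a $\wtPp$-standard basis for $\MCp$, and in particular a basis, so each individual $\tvp_i$ must be nonzero.

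With that in hand, I would use formula (\ref{tilde(vp_i)}), namely $\tvp_i = \zeta_i(\tau_i \hat{C}^-_i + \hat{C}^+_i)$, and argue that the vector $\tau_i\hat{C}^-_i + \hat{C}^+_i$ is itself nonzero. This is immediate from Corollary \ref{C+-;nonempty} (which says $C^+_i$ is nonempty, so $\hat{C}^+_i \ne 0$) together with the linear independence of $\hat{C}^-_i$ and $\hat{C}^+_i$ coming from the orthogonality relations in Lemma \ref{OGB(W)}; alternatively, one could note that Lemma \ref{pre(OB;MCp)} has already declared this vector to be a \emph{basis} of the one-dimensional space $\wt{E}^*_i(\MCp)$, so it is nonzero by definition.

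Once we have $\tvp_i \ne 0$ and $\tau_i\hat{C}^-_i + \hat{C}^+_i \ne 0$, the scalar $\zeta_i$ in the equation $\tvp_i = \zeta_i(\tau_i\hat{C}^-_i + \hat{C}^+_i)$ must be nonzero. Doing this for each $i$ with $0 \le i \le d-1$ yields the conclusion.

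There is no real obstacle here — the corollary is a one-line consequence of the preceding lemma, and is included only to record a fact that will be needed when computing ratios $\zeta_{i+1}/\zeta_i$ later (by analogy with how the nonvanishing of $\xi_i$ was used in Lemma \ref{Lem(xi_i)} to evaluate these scalars explicitly in terms of $q$, $s^*$, etc.). The proof as written should therefore be no more than two or three sentences.
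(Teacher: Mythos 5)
Your proposal is correct and follows exactly the paper's argument: the paper also deduces $\zeta_i\ne 0$ from Lemma \ref{SB(MCp)} (each $\tvp_i$ is nonzero, being part of a $\wtPp$-standard basis) together with the defining relation (\ref{tilde(vp_i)}). The extra observation that $\tau_i\hat{C}^-_i+\hat{C}^+_i\ne 0$ is harmless but not even needed, since $\zeta_i=0$ would force $\tvp_i=0$ directly.
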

\begin{proof}
By Lemma \ref{SB(MCp)} $\{\tvp_i\}^{d-1}_{i=0}$ are all nonzero. By this and (\ref{tilde(vp_i)}) the result follows.
\end{proof}

\noindent
The matrix representing $\wt{A}^*$ relative to the basis $\{\tvp_i\}^{d-1}_{i=0}$ is
\begin{equation}\label{[A^*(C)]_(tvp)}
\diag(\tthtsp_0, \tthtsp_1, \tthtsp_2, \ldots, \tthtsp_{d-1}).
\end{equation}
Let $\{\tap_i\}^{d-1}_{i=0}, \{\tbp_i\}^{d-2}_{i=0}, \{\tcp_i\}^{d-1}_{i=1}$ denote the intersection numbers of the Leonard system $\wtPp$. By construction the matrix representing $A$ relative to $\{\tvp_i\}^{d-1}_{i=0}$ is
\begin{equation}\label{[A]_(upi)}
\left[
\begin{array}{ccccc}
\vspace{0.2cm}
\tap_0 & \tbp_0 & & & {\bf 0} \\
\tcp_1 & \tap_1 & \tbp_1 & \\
& \tcp_2 & \tap_2 & \ddots \\
\vspace{0.2cm}
& & \ddots & \ddots & \tbp_{d-2} \\
{\bf 0} & & & \tcp_{d-1} & \tap_{d-1}
\end{array}
\right].
\end{equation}
For convenience define $\tbp_{-1}=0$ and $\tcp_{d}=0$. Recall the scalars $a_i, b_i, c_i$ from below Lemma \ref{IN(Ga,Phi)} and the  $\ta_i, \tb_i, \tc_i$ from (\ref{def(ta,tb,tc)}).

\begin{lemma}\label{6-terms;MC}
With the above notation the following {\rm(i)}--{\rm(vi)} hold.
\begin{enumerate}
\item[\rm(i)] $\zeta_{i-1}\tau_{i-1}\tbp_{i-1} = \zeta_i\tau_i\tb_{i-1}$ \qquad \qquad $(1 \leq i \leq d-1)$,
\item[\rm(ii)] $\zeta_{i-1}\tbp_{i-1} = \zeta_i\tau_i(\tb_{i-1}-b_i)+\zeta_ib_i$ \qquad \qquad $(1 \leq i \leq d-1)$,
\item[\rm(iii)] $\tap_i\tau_i = \tau_i(\ta_i-b_i+\tb_i)+(b_i-\tb_i)$ \qquad \qquad $(0 \leq i \leq d-1)$,
\item[\rm(iv)] $\tap_i = \tau_i(c_{i+1}-\tc_i)+\ta_i-c_{i+1}+\tc_i$ \qquad \qquad $(0 \leq i \leq d-1)$,
\item[\rm(v)] $\zeta_{i+1}\tau_{i+1}\tcp_{i+1} = \zeta_i\tau_ic_{i+1} + \zeta_i(\tc_{i+1}-c_{i+1})$ \qquad \qquad $(0 \leq i \leq d-2)$,
\item[\rm(vi)] $\zeta_{i+1}\tcp_{i+1} = \zeta_i\tc_{i+1}$ \qquad \qquad $(0 \leq i \leq d-2)$.
\end{enumerate}
\end{lemma}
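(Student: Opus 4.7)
The plan is to mimic the argument used for Lemma~\ref{6-terms}, evaluating $A\tvp_i$ in two different ways and equating coefficients. First, by the tridiagonal form (\ref{[A]_(upi)}) of $A$ on the $\wtPp$-standard basis, we have
$$
A\tvp_i \;=\; \tbp_{i-1}\tvp_{i-1} \;+\; \tap_i\tvp_i \;+\; \tcp_{i+1}\tvp_{i+1}
$$
for each $0\leq i\leq d-1$. Expanding each $\tvp_j$ on the right via the definition (\ref{tilde(vp_i)}) rewrites this expression as an explicit $\mbb{C}$-linear combination of the six vectors
$\hat{C}^-_{i-1}, \hat{C}^+_{i-1}, \hat{C}^-_i, \hat{C}^+_i, \hat{C}^-_{i+1}, \hat{C}^+_{i+1}$,
whose coefficients involve $\tbp_{i-1}, \tap_i, \tcp_{i+1}$ together with the scalars $\zeta_{i-1},\zeta_i,\zeta_{i+1}$ and $\tau_{i-1},\tau_i,\tau_{i+1}$.

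Second, I would compute $A\tvp_i$ directly from (\ref{tilde(vp_i)}):
$$
A\tvp_i \;=\; \zeta_i\tau_i\,A\hat{C}^-_i \;+\; \zeta_i\,A\hat{C}^+_i,
$$
and expand $A\hat{C}^-_i, A\hat{C}^+_i$ using Lemma \ref{action(A;W)}. This yields a second explicit linear combination of the same six vectors, this time with coefficients built from the intersection numbers $a_i, b_i, c_i, \ta_i, \tb_i, \tc_i$ (with the usual boundary convention $\hat{C}^{\pm}_{-1}=\hat{C}^{\pm}_d=0$).

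By Lemma \ref{OGB(W)}, the set $\{\hat{C}^{\pm}_j\}^{d-1}_{j=0}$ is linearly independent, so the two expressions for $A\tvp_i$ must agree coefficient-by-coefficient. Matching the coefficients of the six basis vectors produces exactly six identities: the coefficient of $\hat{C}^-_{i-1}$ yields (i); the coefficient of $\hat{C}^+_{i-1}$ yields (ii); the coefficients of $\hat{C}^-_i$ and $\hat{C}^+_i$, after cancelling the common factor $\zeta_i$, yield (iii) and (iv); and the coefficients of $\hat{C}^-_{i+1}$ and $\hat{C}^+_{i+1}$ yield (v) and (vi), respectively. The index ranges stated in (i)--(vi) correspond precisely to those values of $i$ for which the relevant $\hat{C}^{\pm}_j$ is not forced to be zero by the boundary conventions.

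There is no real obstacle here: the argument is a bookkeeping exercise in comparing coefficients, entirely parallel to the proof of Lemma \ref{6-terms}. The main thing to be careful about is tracking the boundary values so that the degenerate cases at $i=0$ and $i=d-1$ are handled correctly, which is automatic once one uses the conventions $\tvp_{-1}=\tvp_d=0$ and $\tbp_{-1}=\tcp_d=0$ together with $\hat{C}^{\pm}_{-1}=\hat{C}^{\pm}_d=0$.
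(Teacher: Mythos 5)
Your proposal is correct and follows essentially the same route as the paper: the paper also evaluates $A\tvp_i$ once via the tridiagonal matrix (\ref{[A]_(upi)}) and once via (\ref{tilde(vp_i)}) together with Lemma \ref{action(A;W)}, then compares coefficients using the linear independence of $\{\hat{C}^{\pm}_j\}^{d-1}_{j=0}$. Your identification of which basis-vector coefficient yields which of (i)--(vi), including the cancellation of $\zeta_i$ in (iii), (iv) and the boundary conventions, matches the intended argument.
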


\begin{proof}
For $0 \leq i \leq d-1$ we evaluate $A\tvp_i$ in two ways. First, using (\ref{[A]_(upi)}),
$$
A\tvp_i = \tbp_{i-1}\tvp_{i-1} + \tap_i\tvp_i + \tcp_{i+1}\tvp_{i+1}.
$$
In this equation evaluate the right-hand side using (\ref{tilde(vp_i)}). Secondly, in $A\tvp_i$ eliminate $\tvp_i$ using (\ref{tilde(vp_i)}) and evaluate the result using Lemma \ref{action(A;W)}. We have evaluated $A\tvp_i$ in two ways. Compare the results using the linear independence of $\{\hat{C}^{\pm}_j\}^{d-1}_{j=0}$. The result follows.
\end{proof}

\begin{corollary}\label{int_num(wtPp)}
Referring to Lemma {\rm\ref{6-terms;MC}}, the following holds.
\begin{enumerate}
\item[\rm(i)] $\tbp_i = \frac{\zeta_{i+1}\tau_{i+1}}{\zeta_{i}\tau_i}\tb_i$ \qquad \qquad $(0 \leq i \leq d-2)$.
\item[\rm(ii)] $\tap_i = b_0-b_i-c_{i+1}$ \qquad \qquad $(0\leq i \leq d-1)$.
\item[\rm(iii)] $\tcp_i = \frac{\zeta_{i-1}}{\zeta_i}\tc_i$ \qquad \qquad $(1 \leq i \leq d-1)$.
\end{enumerate}
\end{corollary}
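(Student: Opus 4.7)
Parts (i) and (iii) should fall out immediately from Lemma \ref{6-terms;MC}. For (i), solve the equation in Lemma \ref{6-terms;MC}(i) for $\tbp_{i-1}$ and reindex $i\mapsto i+1$; this is legal because $\zeta_i\ne 0$ for all $i$ (Corollary after Lemma \ref{SB(MCp)}) and $\tau_i\ne 0$ for $0\le i\le d-1$ (comment below \eqref{tau_i}). Similarly, for (iii), solve Lemma \ref{6-terms;MC}(vi) for $\tcp_{i+1}$ and reindex $i\mapsto i-1$.

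The substantive work is in part (ii). The plan is to start from the cleaner of the two expressions for $\tap_i$, namely Lemma \ref{6-terms;MC}(iv):
\[
\tap_i \;=\; \tau_i(c_{i+1}-\tc_i)+\ta_i-c_{i+1}+\tc_i.
\]
The key step is to eliminate the product $\tau_i(c_{i+1}-\tc_i)$. Recall $\tau_i = -|C^+_i|/|C^-_i|$ from \eqref{tau_i}, and combine this with the edge-counting identity of Corollary \ref{|edges|}(iii), which reads $(b_i-\tb_i)|C^-_i| = (c_{i+1}-\tc_i)|C^+_i|$. Together these yield
\[
\tau_i(c_{i+1}-\tc_i) \;=\; \tb_i-b_i.
\]
Substituting back gives $\tap_i = \ta_i+\tb_i+\tc_i-b_i-c_{i+1}$. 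Finally, use the relation $\ta_i+\tb_i+\tc_i=b_0$, which is \eqref{al+be+ga=k} combined with $k=b_0$, to conclude $\tap_i = b_0-b_i-c_{i+1}$. This mirrors exactly the strategy used to prove Corollary \ref{(a,b,c)perp}(ii), where $\epsilon_{i+1}$ was eliminated via Corollary \ref{|edges|}(iv); here the roles of $C^-$ and $C^+$ are swapped in accordance with the definition of $\tau_i$ versus $\epsilon_i$.

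There is no serious obstacle in this proof: (i) and (iii) are essentially tautological, and (ii) is a three-line manipulation once one chooses to start from Lemma \ref{6-terms;MC}(iv) rather than (iii). The only thing to watch is matching the correct edge-counting identity in Corollary \ref{|edges|} (part (iii), not (iv)) to the vertex sets involved in $\tau_i$, so that the $|C^+_i|/|C^-_i|$ ratio cancels cleanly with the ratio $(c_{i+1}-\tc_i)/(b_i-\tb_i)$.
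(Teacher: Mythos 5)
Your proposal is correct and matches the paper's proof: parts (i) and (iii) are obtained exactly by solving Lemma \ref{6-terms;MC}(i),(vi) and reindexing, and for (ii) the paper likewise starts from Lemma \ref{6-terms;MC}(iv), eliminates $\tau_i$ via Corollary \ref{|edges|}(iii) together with (\ref{tau_i}) to get $\tau_i(c_{i+1}-\tc_i)=\tb_i-b_i$, and finishes with $\ta_i+\tb_i+\tc_i=b_0$. Your identification of the parallel with Corollary \ref{(a,b,c)perp}(ii) and the nonvanishing of $\zeta_i,\tau_i$ is also accurate.
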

\begin{proof}
(i) Follows from Lemma \ref{6-terms;MC}(i).\\
(ii) In the equation from Lemma \ref{6-terms;MC}(iv), eliminate $\tau_i$ using Corollary \ref{|edges|}(iii) along with (\ref{tau_i}) and simplify the result using $\ta_i+\tb_i+\tc_i = b_0$.\\
(iii) Follows from Lemma \ref{6-terms;MC}(vi).
\end{proof}

\noindent
Our next goal is to show that the Leonard system $\wtPp$ has  $q$-Racah type. To this end we compare the parameter arrays $p(\Phi)$ and $p(\wtPp)$. Recall the scalars $h, h^*, s, s^*, r_1, r_2$ from above Note \ref{h,h*}. Recall the parameter array $p(\wtPp)$ from (\ref{PA;til(Phi)perp}).

\begin{theorem}\label{q-rac(C-perp)}
With the above notation, for $0 \leq i \leq d-1$,
\begin{eqnarray}
\label{tthtperp}
\tthtp_i 	& = & \tthtp_0+\thp(1-q^{i})(1-\tsp q^{i+1})q^{-i}, \\
\label{ttht*perp}
\tthtsp_i 	& = & \tthtsp_0+\thsp(1-q^{i})(1-\tssp q^{i+1})q^{-i},
\end{eqnarray}
and for $1 \leq i \leq d-1$,
\begin{eqnarray}
\label{tvarphip_i}
\tvarphip_i	& = & \thp\thsp q^{1-2i}(1-q^{i})(1-q^{i-d})(1-\trp_1q^i)(1-\trp_2q^i), \\
\label{tphip_i}
\tphip_i 		& = & \thp\thsp q^{1-2i}(1-q^{i})(1-q^{i-d})(\trp_1-\tssp q^i)(\trp_2-\tssp q^i)/\tssp,
\end{eqnarray} 
where $\tthtp_0$, $\tthtsp_0$ are from {\rm(\ref{ttht=tthts;eq})} and
\begin{align}
\label{th,ts;perp(1)}
&\thp = hq^{-1}, && \tsp = sq^2, &&\trp_1=r_1q,&\\
\label{th,ts;perp(2)}
&\thsp = \tfrac{s^*q^{-1}-r_1r_2}{s^*-r_1r_2}h^*  ,&&  \tssp = s^*q, &&\trp_2=r_2q.
\end{align}
\end{theorem}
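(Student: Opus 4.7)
The proof mirrors that of Theorem \ref{q-rac(x-perp)}. The Leonard system $\wtPp$ has diameter $d-1$, and by Theorem \ref{thm:LS<->PA} its parameter array is uniquely determined by the two eigenvalue sequences together with any single $\tvarphip_i$, via the recursions (PA3), (PA4). Accordingly my plan has three steps: (a) identify $\{\tthtp_i\}^{d-1}_{i=0}$ and $\{\tthtsp_i\}^{d-1}_{i=0}$ in the claimed $q$-Racah form; (b) compute $\tvarphip_1$ in closed form and recognize it as (\ref{tvarphip_i}) at $i=1$; (c) invoke (PA3), (PA4) to recover the full formulas (\ref{tvarphip_i}), (\ref{tphip_i}).

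For (a), Lemma \ref{ttht=tthts} gives $\tthtp_i = \tht_{i+1}$ and $\tthtsp_i = \ttht^*_i$. Starting from (\ref{theta_i}) with $j \mapsto i+1$, a short algebraic identity (essentially $q^i(1-q)(1-sq^2)+(1-q^i)(1-sq^{i+3})=(1-q^{i+1})(1-sq^{i+2})$) rewrites $\tht_{i+1}$ in the shape $\tht_1 + (hq^{-1})(1-q^i)(1-sq^2\cdot q^{i+1})q^{-i}$, matching (\ref{tthtperp}) with $\thp=hq^{-1}$ and $\tsp = sq^2$. For $\tthtsp_i$, equation (\ref{q-rac;ttht*_i}) of Theorem \ref{q-rac;til(Phi)} together with (\ref{th,ts...;eq(2)}) already presents $\ttht^*_i$ in $q$-Racah form with $\thsp=\wt{h}^*$ and $\tssp=s^*q$, matching (\ref{th,ts;perp(2)}). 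The $q$-Racah admissibility condition $\trp_1\trp_2 = \tsp\tssp q^d$ is a direct check: both sides equal $ss^*q^{d+3}$ upon using $r_1r_2 = ss^*q^{d+1}$.

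For (b), I apply Lemma \ref{Madrid(ai)} to $\wtPp$ at $i=0$, obtaining
\begin{equation*}
\tvarphip_1 = (\tap_0 - \tthtp_0)(\tthtsp_0 - \tthtsp_1).
\end{equation*}
By Corollary \ref{int_num(wtPp)}(ii), $\tap_0 = b_0 - b_0 - c_1 = -1$ (since $c_1 = 1$), so $\tap_0 - \tthtp_0 = -1 - \tht_1$. For the second factor, Lemma \ref{tht;til;dist} at $(i,j)=(0,1)$ gives $\tthtsp_0 - \tthtsp_1 = \wt{h}^*(1-q^{-1})(1-s^*q^3)$. Substituting the explicit $q$-Racah expressions for $\tht_0 = b_0$ (from (\ref{b_0;q-terms})), the increment $\tht_1 - \tht_0$ (from (\ref{theta_i})), and $\wt{h}^*$ (from (\ref{th,ts...;eq(2)})), and collapsing the ratio $\tfrac{s^*q^{-1}-r_1r_2}{s^*-r_1r_2}$ by means of $r_1 r_2 = ss^*q^{d+1}$, the product should reduce to (\ref{tvarphip_i}) at $i=1$ with $\trp_1=r_1q$ and $\trp_2=r_2q$. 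Step (c) is then automatic: (PA4) determines each $\tphip_i$ from $\tvarphip_1$ and the eigenvalue data, and (PA3) reciprocally determines each $\tvarphip_i$ from $\tphip_1$; substituting the $q$-Racah formulas from (a) into these recursions yields (\ref{tvarphip_i}), (\ref{tphip_i}) in the stated form.

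The principal obstacle is the simplification in step (b): the stray constant $-1$ coming from $c_1 = 1$, together with $-\tht_1$, must be reorganized into a clean product of four $q$-shifted factors matching $(1-q)(1-q^{1-d})(1-r_1 q^2)(1-r_2 q^2)$. Concretely, after all substitutions the identity to verify reduces to showing $(1+\tht_1)(1-s^*q^3) = hq^{-1}(1-q^{1-d})(1-r_1q^2)(1-r_2q^2)$, which is a tedious but routine manipulation using the closed form (\ref{scalar(h)}) of $h$ (the incarnation of the normalization $c_1 = 1$). Once this calculation is carried out, the rest of the theorem follows mechanically from the uniqueness half of Theorem \ref{thm:LS<->PA}.
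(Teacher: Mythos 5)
Your proposal is correct and follows essentially the same route as the paper: obtain the eigenvalue sequences from Lemma \ref{ttht=tthts} together with (\ref{theta_i}) and (\ref{(tht)dist}), compute $\tvarphip_1=(\tap_0-\tthtp_0)(\tthtsp_0-\tthtsp_1)$ via (\ref{Madrid(ai);eq1}) and Corollary \ref{int_num(wtPp)}(ii), simplify it to the $i=1$ case of (\ref{tvarphip_i}), and then propagate with (PA4) and (PA3) of Theorem \ref{thm:LS<->PA}. The only cosmetic difference is that you substitute $c_1=1$ immediately (so the normalization enters through (\ref{scalar(h)})), whereas the paper keeps $c_1$ and evaluates it with (\ref{c_i;q-terms}); the computational content, including your reduced identity $(1+\tht_1)(1-s^*q^3)=hq^{-1}(1-q^{1-d})(1-r_1q^2)(1-r_2q^2)$, is the same.
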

\begin{proof}
Recall that the Leonard system $\wtPp$ has diameter $d-1$. To get (\ref{tthtperp}), (\ref{ttht*perp}) use (\ref{theta_i}), (\ref{(tht)dist}) and Lemma \ref{ttht=tthts}. We now verify (\ref{tvarphip_i}), (\ref{tphip_i}). To this end, recall the intersection numbers $\{\tap_j\}^{d-1}_{j=0}$ of $\wtPp$. Using (\ref{Madrid(ai);eq1}), $\tvarphip_1=(\tap_0-\tthtp_0)(\tthtsp_0-\tthtsp_1)$. Evaluate this using Lemma \ref{ttht=tthts} and Corollary \ref{int_num(wtPp)}(ii) to get 
\begin{equation}\label{tvarphip_1}
\tvarphip_1=(c_1+\tht_1)(\ttht^*_1-\ttht^*_0).
\end{equation}
Evaluate the right-hand side of (\ref{tvarphip_1}) using (\ref{theta_i}), (\ref{c_i;q-terms}) and Lemma \ref{tht;til;dist}, and simplify the result using (\ref{th,ts;perp(1)}), (\ref{th,ts;perp(2)}). This yields (\ref{tvarphip_i}) for $i=1$. Using this together with (\ref{tthtperp}), (\ref{ttht*perp}) and the condition (PA4) of Theorem \ref{thm:LS<->PA} we obtain (\ref{tphip_i}) for $1 \leq i \leq d-1$. We now use (\ref{tphip_i}) at $i=1$ together with (\ref{tthtperp}), (\ref{ttht*perp}) and the condition (PA3) of Theorem \ref{thm:LS<->PA} to obtain (\ref{tvarphip_i}) for $1 \leq i \leq d-1$. The result follows.
\end{proof}

\begin{corollary}
The Leonard system $\wtPp$ has $q$-Racah type.
\end{corollary}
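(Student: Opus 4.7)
The proof is essentially a direct comparison of two explicit formulas: the $q$-Racah template in Example \ref{q-rac;PA} and the formulas in Theorem \ref{q-rac(C-perp)}. The plan is to observe that the parameter array of $\wtPp$ has diameter $d-1$ and that its four sequences $\{\tthtp_i\}, \{\tthtsp_i\}, \{\tvarphip_i\}, \{\tphip_i\}$ given in \eqref{tthtperp}--\eqref{tphip_i} match, term for term, the $q$-Racah template from \eqref{theta_i}--\eqref{phi_i} (with $d$ replaced by $d-1$) under the substitution of the tilde-perp parameters from \eqref{th,ts;perp(1)}--\eqref{th,ts;perp(2)} in place of $h,h^*,s,s^*,r_1,r_2$.

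The one substantive point to verify is the Askey-Wilson-type constraint $r_1 r_2 = s s^* q^{d+1}$ from Example \ref{q-rac;PA}, now applied to the diameter $d-1$ case: we need $\trp_1 \trp_2 = \tsp \tssp q^d$. Using \eqref{th,ts;perp(1)}--\eqref{th,ts;perp(2)} I would compute
\[
\trp_1 \trp_2 = (r_1 q)(r_2 q) = r_1 r_2 q^2 = s s^* q^{d+3},
\qquad
\tsp \tssp q^d = (sq^2)(s^* q) q^d = s s^* q^{d+3},
\]
so the constraint holds.

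The remaining task is to check the non-degeneracy conditions from Example \ref{q-rac;PA}, namely that none of $q^i, \trp_1 q^i, \trp_2 q^i, \tssp q^i/\trp_1, \tssp q^i/\trp_2$ is equal to $1$ for $1 \leq i \leq d-1$ and that neither of $\tsp q^i, \tssp q^i$ equals $1$ for $2 \leq i \leq 2(d-1)$. Rewriting each of these in terms of the original parameters (using \eqref{th,ts;perp(1)}--\eqref{th,ts;perp(2)}), they become conditions like $r_1 q^{i+1} \ne 1$, $s^* q^{i+1}/r_1 \ne 1$, $s q^{i+2} \ne 1$, $s^* q^{i+1} \ne 1$ in appropriate ranges, each of which is implied by the corresponding non-degeneracy condition on the primary parameter array (in Example \ref{q-rac;PA} for $\Phi$). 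There is no real obstacle here; the only mild nuisance will be checking that the ranges of $i$ line up correctly after the index shifts, but since the diameter drops from $d$ to $d-1$ the original conditions are strictly stronger than what is required.

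Thus the whole proof reduces to the sentence: compare Example \ref{q-rac;PA} and Theorem \ref{q-rac(C-perp)}, using \eqref{th,ts;perp(1)}--\eqref{th,ts;perp(2)} and the identity $\trp_1 \trp_2 = \tsp \tssp q^d$ verified above. The hard work has already been done in proving Theorem \ref{q-rac(C-perp)}; this corollary is a bookkeeping consequence.
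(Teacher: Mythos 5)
Your proposal is correct and follows essentially the same route as the paper, whose entire proof is the one-line instruction to compare Example \ref{q-rac;PA} with Theorem \ref{q-rac(C-perp)}; your explicit check of the constraint $\trp_1\trp_2=\tsp\tssp q^{d}$ and of the non-degeneracy conditions simply spells out what that comparison leaves implicit.
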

\begin{proof}
Compare Example \ref{q-rac;PA} and Theorem \ref{q-rac(C-perp)}.
\end{proof}

\begin{corollary} With the above notation,
\begin{align}
\label{tbp0;h,s,...}
& \tbp_0	
 =	\frac{h(1-q^{-d+1})(1-r_1q^{2})(1-r_2q^{2})}
		{q(1-s^*q^{3})}, &&  \\
\label{tbp;h,s,...}
&\tbp_i	
 =	\frac{h(1-q^{i-d+1})(1-s^*q^{i+2})(1-r_1q^{i+2})(1-r_2q^{i+2})}
		{q(1-s^*q^{2i+2})(1-s^*q^{2i+3})}  && &&(1\leq i \leq d-2), \\
\label{tcp;h,s,...}
&\tcp_i
 =	\frac{h(1-q^i)(1-s^*q^{i+d+1})(r_1-s^*q^i)(r_2-s^*q^i)}
		{s^*q^{d-1}(1-s^*q^{2i+1})(1-s^*q^{2i+2})} && &&  (1\leq i \leq d-2), \\
\label{tc_(d-1);h,s,...}
&\tcp_{d-1}
 = 	\frac{h(1-q^{d-1})(r_1-s^*q^{d-1})(r_2-s^*q^{d-1})}
		{s^*q^{d-1}(1-s^*q^{2d-1})}.	
\end{align}
\end{corollary}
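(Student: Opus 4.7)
The plan is to apply Lemma \ref{Madrid(bi;ci)} to the Leonard system $\wtPp$, whose parameter array has $q$-Racah type by Theorem \ref{q-rac(C-perp)}. Explicitly, formulas (\ref{b_0;q-terms})--(\ref{c_d;q-terms}) express the intersection numbers of a $q$-Racah type Leonard system in terms of the six scalars $h, s^*, r_1, r_2$ (together with $q$ and the diameter). Since $\wtPp$ has diameter $d-1$, with corresponding parameters $(\thp, \tssp, \trp_1, \trp_2) = (hq^{-1},\, s^*q,\, r_1q,\, r_2q)$ coming from (\ref{th,ts;perp(1)})--(\ref{th,ts;perp(2)}), the entire corollary reduces to a routine substitution.

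Concretely, I would proceed as follows. First, to get $\tbp_0$, substitute $(h, s^*, r_1, r_2, d) \mapsto (\thp, \tssp, \trp_1, \trp_2, d-1)$ into the right-hand side of (\ref{b_0;q-terms}); the factor $q^{-1}$ from $\thp$ produces the $q$ in the denominator of (\ref{tbp0;h,s,...}). Next, for $1 \le i \le d-2$, perform the same substitution in (\ref{b_i;q-terms}) to obtain (\ref{tbp;h,s,...}); each $q$-shifted factor cleanly lines up with the claimed expression. For $1 \le i \le d-2$, substitute into (\ref{c_i;q-terms}); here one uses the identity $\trp_j - \tssp q^i = q(r_j - s^*q^i)$ for $j=1,2$, which combined with the $(\tssp q^{d-1})^{-1} = (s^*q^d)^{-1}$ in the prefactor produces an overall factor of $q\cdot q^2 \cdot (s^*q^d)^{-1} \cdot q^{-1} = q^{1-d}(s^*)^{-1}$, matching (\ref{tcp;h,s,...}). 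Finally, for $\tcp_{d-1}$, substitute into (\ref{c_d;q-terms}) and simplify the same way to obtain (\ref{tc_(d-1);h,s,...}).

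There is no genuine obstacle here: every identity follows from the template formulas of Lemma \ref{Madrid(bi;ci)} applied to the parameters read off from Theorem \ref{q-rac(C-perp)}. The only place where one must be slightly careful is in the $\tcp_i$ computations, where each factor of the form $(\trp_j - \tssp q^i)$ contributes a hidden power of $q$ that must be balanced against the $q$-powers arising from $\thp = hq^{-1}$ and $\tssp q^{d-1} = s^*q^d$; once this bookkeeping is done, the stated formulas drop out immediately. Thus the entire corollary is a direct consequence of Lemma \ref{Madrid(bi;ci)} and Theorem \ref{q-rac(C-perp)}.
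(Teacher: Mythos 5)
Your proposal is correct and is essentially the paper's own proof: the paper likewise obtains the corollary by substituting the parameters $(\thp,\tssp,\trp_1,\trp_2)=(hq^{-1},s^*q,r_1q,r_2q)$ and diameter $d-1$ from Theorem \ref{q-rac(C-perp)} into the $q$-Racah intersection-number formulas (\ref{b_0;q-terms})--(\ref{c_d;q-terms}), using $\trp_j-\tssp q^i=q(r_j-s^*q^i)$ exactly as you do. One trivial slip: your intermediate product $q\cdot q^2\cdot(s^*q^d)^{-1}\cdot q^{-1}$ contains a stray factor of $q$ (it equals $q^{2-d}/s^*$), although the correct overall factor $q^{1-d}/s^*$ that you state as the conclusion, and hence the displayed formulas, are right.
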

\begin{proof}
Use (\ref{b_0;q-terms})--(\ref{c_d;q-terms}) and (\ref{th,ts;perp(1)}), (\ref{th,ts;perp(2)}).
\end{proof}

\noindent
We finish this section with some comments about the scalars $\{\zeta_i\}^{d-1}_{i=0}$ from (\ref{u=sum(SB)}).

\begin{lemma}\label{Lem(zeta_i)}
The vector $\wt{w}$ in line {\rm(\ref{u=sum(SB)})} can be chosen such that
\begin{equation}\label{zeta_i}
\qquad \zeta_i = q^{-i}(r_1-s^*q^{i+1})(r_2-s^*q^{i+1}) \qquad \qquad (0 \leq i \leq d-1).
\end{equation}
\end{lemma}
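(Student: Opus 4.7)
The plan is to mimic the strategy used in Lemma \ref{Lem(xi_i)}. Since $E_1(\MCp)$ is one-dimensional by Lemma \ref{dim;E,E*;MCp}(i), the vector $\wt{w}$ is determined up to a nonzero scalar multiple, and hence so is the entire sequence $\{\zeta_i\}^{d-1}_{i=0}$ up to a common nonzero scalar. I will first normalize the scalar so that $\zeta_0 = (r_1 - s^*q)(r_2 - s^*q)$ (which is nonzero by the inequalities in Example \ref{q-rac;PA}), matching the right-hand side of (\ref{zeta_i}) at $i=0$.

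Next, I would derive a product formula for $\zeta_i$ in terms of $\zeta_0$ and the intersection numbers. The cleanest recursion comes from Lemma \ref{6-terms;MC}(vi), which reads $\zeta_{i+1}\tcp_{i+1} = \zeta_i\tc_{i+1}$. Iterating and using $\tcp_i \ne 0$ (which holds since $\wtPp$ is a Leonard system on $\MCp$), one obtains
\[
\zeta_i = \zeta_0 \prod_{j=1}^{i}\frac{\tc_j}{\tcp_j} \qquad (0 \leq i \leq d-1).
\]

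The final step is to evaluate each ratio $\tc_j/\tcp_j$ using the explicit $q$-formulas. Comparing the formulas for $\tc_j$ from (\ref{ga_i;q-term}) and (\ref{ga_(d-1);q-term}) with the formulas for $\tcp_j$ from (\ref{tcp;h,s,...}) and (\ref{tc_(d-1);h,s,...}), one observes that the factors $h$, $(1-q^j)$, $(1-s^*q^{j+d+1})$ and $(1-s^*q^{2j+1})(1-s^*q^{2j+2})$ (resp.\ $(1-s^*q^{2d-1})$ when $j=d-1$) cancel, leaving in every case
\[
\frac{\tc_j}{\tcp_j} = q^{-1}\,\frac{(r_1-s^*q^{j+1})(r_2-s^*q^{j+1})}{(r_1-s^*q^{j})(r_2-s^*q^{j})}.
\]
The product over $j$ telescopes to $q^{-i}(r_1-s^*q^{i+1})(r_2-s^*q^{i+1})/((r_1-s^*q)(r_2-s^*q))$, and multiplication by the chosen $\zeta_0$ yields exactly (\ref{zeta_i}).

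There is no real obstacle here; the only care required is verifying that the two different formulas for $\tc_i, \tcp_i$ (for $1 \leq i \leq d-2$ versus $i=d-1$) produce the same ratio, so the telescoping is uniform across the full range $1 \leq j \leq d-1$. Once that unification is observed, the proof is a short calculation.
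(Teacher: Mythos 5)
Your proposal is correct and follows essentially the same route as the paper: normalize $\zeta_0=(r_1-s^*q)(r_2-s^*q)$, iterate Lemma \ref{6-terms;MC}(vi) to get $\zeta_i=\zeta_0\,\tc_1\cdots\tc_i/(\tcp_1\cdots\tcp_i)$, and evaluate with (\ref{ga_i;q-term}), (\ref{ga_(d-1);q-term}), (\ref{tcp;h,s,...}), (\ref{tc_(d-1);h,s,...}). Your explicit observation that each ratio $\tc_j/\tcp_j$ equals $q^{-1}(r_1-s^*q^{j+1})(r_2-s^*q^{j+1})/\big((r_1-s^*q^{j})(r_2-s^*q^{j})\big)$ uniformly in $j$, so the product telescopes, is just a slightly more detailed version of the paper's final computation.
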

\begin{proof}
Observe that the vector $\wt{w}$ is defined up to multiplication by a nonzero scalar in $\mbb{C}$. Multiplying $\wt{w}$ by a nonzero scalar if necessary, we may assume that 
\begin{equation}\label{zeta_0}
\zeta_0 = (r_1-s^*q)(r_2-s^*q).
\end{equation}
Using Lemma \ref{6-terms;MC}(vi) and induction on $i$,
\begin{equation}\label{zeta_i;eq}
\qquad \zeta_i = \frac{\tc_1\tc_2\cdots\tc_i}{\tcp_1\tcp_2\cdots\tcp_i}\zeta_0 \qquad \qquad (0 \leq i \leq d-1).
\end{equation}
Evaluate (\ref{zeta_i;eq}) using (\ref{ga_i;q-term}), (\ref{ga_(d-1);q-term}), (\ref{tcp;h,s,...}), (\ref{tc_(d-1);h,s,...}) and (\ref{zeta_0}) to get (\ref{zeta_i}). The result follows.
\end{proof}

\noindent
From now on we assume that the vector $\wt{w}$ in line (\ref{u=sum(SB)}) has been chosen such that (\ref{zeta_i}) holds. Recall the $\wtPp$-standard basis $\{\tvp_i\}^{d-1}_{i=0}$ for $\MCp$ from (\ref{tilde(vp_i)}).

\begin{lemma} For $0 \leq i \leq d-1$,
$$
\tvp_i = q^{-i}s^*(1-r_1q^{i+1})(1-r_2q^{i+1})\hat{C}^-_i + q^{-i}(r_1-s^*q^{i+1})(r_2-s^*q^{i+1})\hat{C}^+_i.
$$
\end{lemma}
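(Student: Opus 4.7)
The plan is to obtain the claim by direct substitution into the defining formula \eqref{tilde(vp_i)}, namely
\[
\tvp_i = \zeta_i\bigl(\tau_i\hat{C}^-_i + \hat{C}^+_i\bigr),
\]
using the explicit expressions for $\tau_i$ and $\zeta_i$ that have already been established. Specifically, Lemma \ref{tau;q-term} gives
\[
\tau_i = \frac{s^*(1-r_1q^{i+1})(1-r_2q^{i+1})}{(r_1-s^*q^{i+1})(r_2-s^*q^{i+1})},
\]
and Lemma \ref{Lem(zeta_i)} (together with the standing choice of $\wt{w}$ made just after that lemma) gives
\[
\zeta_i = q^{-i}(r_1-s^*q^{i+1})(r_2-s^*q^{i+1}).
\]

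First I would compute the coefficient of $\hat{C}^+_i$ in $\tvp_i$, which is simply $\zeta_i = q^{-i}(r_1-s^*q^{i+1})(r_2-s^*q^{i+1})$, matching the right-hand side of the asserted identity. Next I would compute the coefficient of $\hat{C}^-_i$, namely the product
\[
\zeta_i\tau_i = q^{-i}(r_1-s^*q^{i+1})(r_2-s^*q^{i+1})\cdot\frac{s^*(1-r_1q^{i+1})(1-r_2q^{i+1})}{(r_1-s^*q^{i+1})(r_2-s^*q^{i+1})}.
\]
The factors $(r_1-s^*q^{i+1})(r_2-s^*q^{i+1})$ cancel (they are nonzero by Example \ref{q-rac;PA}, which ensures that none of $s^*q^i/r_1, s^*q^i/r_2$ equals $1$ in the relevant range, so the $\tau_i$ expression is well-defined), yielding $q^{-i}s^*(1-r_1q^{i+1})(1-r_2q^{i+1})$, as required.

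There is essentially no obstacle: both $\tau_i$ and $\zeta_i$ have been computed in closed form, the cancellation of $(r_1-s^*q^{i+1})(r_2-s^*q^{i+1})$ is transparent, and no additional identity from the $q$-Racah parameter array is needed. The only point to flag is the nonvanishing of the cancelled factors, which is guaranteed by the standing nondegeneracy assumptions on the parameter array (Example \ref{q-rac;PA}), so the argument is a one-line verification once the two preceding lemmas are in hand.
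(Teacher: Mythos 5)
Your proposal is correct and follows exactly the paper's own argument: substitute the closed forms of $\tau_i$ (Lemma \ref{tau;q-term}) and $\zeta_i$ (Lemma \ref{Lem(zeta_i)}) into the definition $\tvp_i = \zeta_i(\tau_i\hat{C}^-_i+\hat{C}^+_i)$ from (\ref{tilde(vp_i)}) and simplify. Your added remark on the nonvanishing of the cancelled factors $(r_1-s^*q^{i+1})(r_2-s^*q^{i+1})$ is a harmless extra justification that the paper leaves implicit.
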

\begin{proof}
Evaluate (\ref{tilde(vp_i)}) using (\ref{tau;q-term;eq}) and (\ref{zeta_i}). 
\end{proof}


\medskip
\section{The maps $\mfrk{p}$ and $\wt{\mfrk{p}}$}\label{2maps}

Recall the subspace $\W$ from above line (\ref{Ci=(C-i)+(C+i)}). 
In this section we introduce two $\mbb{C}$-linear maps $\mfrk{p} : \W \to \W$ and $\wt{\mfrk{p}} : \W \to \W$. These maps are defined as follows. In (\ref{ODS(T-mods)}) and (\ref{ODS(TC-mods)}) we obtained the following direct sum decompositions of $\W$:
$$
\W = \Mx + \Mxp, \qquad \qquad \W=\MC + \MCp.
$$
Define a $\mbb{C}$-linear map $\mfrk{p} : \W \to \W$ such that $(\mfrk{p}-1)\Mx = 0$ and $\mfrk{p}(\Mxp)=0$. Thus $\mfrk{p}$ is the projection from $\W$ onto $\Mx$. Define a $\mbb{C}$-linear map $\wt{\mfrk{p}} : \W \to \W$ such that $(\wt{\mfrk{p}}-1)\MC = 0$ and $\wt{\mfrk{p}}(\MCp)=0$. Thus $\wt{\mfrk{p}}$ is the projection from $\W$ onto $\MC$. 

\begin{lemma}\label{Tp=pT}
For all $B \in T$ the action of $B$ on $\W$ commutes with $\mfrk{p}$. In particular, on $\W$ each of $A, A^*$ commutes with $\mfrk{p}$.
\end{lemma}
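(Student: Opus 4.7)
The plan is to exploit the decomposition $\W = \Mx + \Mxp$ from (\ref{ODS(T-mods)}), which is a direct sum decomposition into $T$-submodules. Since $\mfrk{p}$ is defined as the projection onto $\Mx$ along $\Mxp$, the claim reduces to the standard fact that any linear map preserving both summands of a direct sum commutes with the associated projection.

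Concretely, I would argue as follows. Pick $B \in T$ and $v \in \W$. Write $v = v_1 + v_2$ with $v_1 \in \Mx$ and $v_2 \in \Mxp$, so that by definition $\mfrk{p}(v) = v_1$. Because $\Mx$ is a $T$-submodule of $\V$ (Example \ref{ex;Mx}), we have $Bv_1 \in \Mx$. Because $\Mxp$ is a $T$-submodule of $\W$ (noted just above (\ref{ODS(T-mods)}), using that $T$ is closed under the conjugate-transpose map), we have $Bv_2 \in \Mxp$. Hence $Bv = Bv_1 + Bv_2$ is precisely the decomposition of $Bv$ in $\Mx + \Mxp$, so $\mfrk{p}(Bv) = Bv_1 = B\mfrk{p}(v)$. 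This shows $\mfrk{p} B = B \mfrk{p}$ on $\W$.

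The particular assertion about $A$ and $A^*$ is then immediate, since both $A$ and $A^*$ lie in $T$ (they generate it). There is essentially no obstacle here: the entire content of the lemma is the observation that $\Mxp$ was constructed to be a $T$-submodule complement of $\Mx$ inside $\W$, which in turn rests on the facts (already established) that $\W$ is a $T$-module (Lemma \ref{W:T(x)-mod}), that $\Mx \subseteq \W$ is an irreducible $T$-submodule, and that $T$ is closed under the conjugate-transpose map so that the orthogonal complement of a $T$-submodule is again a $T$-submodule.
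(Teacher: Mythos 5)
Your proof is correct and is essentially the same argument as the paper's: decompose $w \in \W$ as a sum of components in $\Mx$ and $\Mxp$, use that both are $T$-submodules, and conclude that $\mfrk{p}$ commutes with every $B \in T$.
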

\begin{proof}
For $w \in \W$ we show $B\mfrk{p}w=\mfrk{p}Bw$. Write $w=u+v$, where $u \in \Mx$ and $v \in \Mxp$. Observe $\mfrk{p}u=u$ and $\mfrk{p}v = 0$, so $B\mfrk{p}w=Bu$. Each of $\Mx, \Mxp$ is a $T$-module, so $Bu \in \Mx$ and $Bv \in \Mxp$. Now $\mfrk{p}Bu = Bu$ and $\mfrk{p}Bv = 0$, so $\mfrk{p}Bw=Bu.$ The result follows.
\end{proof}

\begin{lemma}\label{tTtp=tptT}
For all ${B} \in \wt{T}$ the action of ${B}$ on $\W$ commutes with $\wt{\mfrk{p}}$. In particular, on $\W$ each of $A, \wt{A}^*$ commutes with $\wt{\mfrk{p}}$.
\end{lemma}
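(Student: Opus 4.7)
The statement is the exact analogue of Lemma \ref{Tp=pT}, with $T$ replaced by $\wt{T}$, the decomposition $\W=\Mx+\Mxp$ replaced by $\W=\MC+\MCp$ from (\ref{ODS(TC-mods)}), and $\mfrk{p}$ replaced by $\wt{\mfrk{p}}$. So the plan is to mimic the previous proof verbatim, swapping these ingredients.

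Concretely, the plan is as follows. Pick an arbitrary $w\in\W$ and use the decomposition (\ref{ODS(TC-mods)}) to write $w=u+v$ with $u\in\MC$ and $v\in\MCp$. From the definition of $\wt{\mfrk{p}}$ one has $\wt{\mfrk{p}}u=u$ and $\wt{\mfrk{p}}v=0$, so $B\wt{\mfrk{p}}w=Bu$. Now invoke the key structural facts: by Lemma \ref{prim-T(C)}(vi) the subspace $\MC$ is a $\wt{T}$-module, and by Proposition \ref{MCp} the subspace $\MCp$ is also a $\wt{T}$-module. Hence $Bu\in\MC$ and $Bv\in\MCp$, which gives $\wt{\mfrk{p}}Bu=Bu$ and $\wt{\mfrk{p}}Bv=0$, so $\wt{\mfrk{p}}Bw=Bu=B\wt{\mfrk{p}}w$, as required.

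The ``In particular'' clause then follows because $A\in\mcal{M}\subseteq\wt{T}$ and $\wt{A}^*\in\wt{\mcal{M}}^*\subseteq\wt{T}$ by the very definition of $\wt{T}$ as the subalgebra of $\MX$ generated by $\mcal{M}$ and $\wt{\mcal{M}}^*$ (see the discussion after Corollary \ref{tht;til;md}).

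There is essentially no obstacle: the entire argument hinges on the $\wt{T}$-invariance of both summands of $\W=\MC+\MCp$, which has already been established. The only subtle point is that for $\MCp$ this invariance depends on $\wt{T}$ being closed under the conjugate-transpose map (so that the orthogonal complement of the $\wt{T}$-submodule $\MC$ inside the $\wt{T}$-module $\W$ is again a $\wt{T}$-module); this was noted in the paragraph introducing $\MCp$ and in Proposition \ref{MCp}.
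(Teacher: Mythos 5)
Your proof is correct and is exactly the argument the paper intends: the paper's proof of this lemma just says ``Similar to the proof of Lemma \ref{Tp=pT},'' and your adaptation (decompose $w\in\W$ via $\W=\MC+\MCp$, use that both summands are $\wt{T}$-modules) is precisely that adaptation, with the invariance of $\MCp$ justified the same way. No changes needed.
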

\begin{proof}
Similar to the proof of Lemma \ref{Tp=pT}.
\end{proof}

\noindent
Our next goal is to show how $\mfrk{p}$ acts on the basis $\{\hat{C}^{\pm}_i\}^{d-1}_{i=0}$ for $\W$. Recall from (\ref{Phi-SB}) the $\Phi$-standard basis $\{v_i\}^{d}_{i=0}$ for $\Mx$, and  from (\ref{vp_i}) the $\Phi^{\perp}$-standard basis $\{\vp_i\}^{d-2}_{i=0}$ for $\Mxp$. By construction,
\begin{equation}\label{p(v)=v;p(vp)=0}
\mfrk{p}v_i = v_i \quad (0 \leq i \leq d), \qquad \qquad \mfrk{p}\vp_i=0 \quad (0 \leq i \leq d-2).
\end{equation}

\begin{lemma}\label{C^pm=vi+vip} For $1 \leq i \leq d-1$,
\begin{align*}\label{eq;C^pm=vi+vip}
&\hat{C}^+_{i-1} = \tfrac{\epsilon_i}{\epsilon_i-1}v_i + \tfrac{1}{\xi_i(1-\epsilon_i)}\vp_{i-1}, \\
&\hat{C}^-_i = \tfrac{1}{1-\epsilon_i}v_i + \tfrac{1}{\xi_i(\epsilon_i-1)}\vp_{i-1}.
\end{align*}
Moreover
$$
\hat{C}^-_0 = v_0, \qquad \qquad \hat{C}^+_{d-1}=v_d.
$$
\end{lemma}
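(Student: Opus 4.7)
The plan is to solve a two-by-two linear system. For $1 \leq i \leq d-1$, recall from (\ref{Phi-SB}) that
\[
v_i = \hat{C}^+_{i-1} + \hat{C}^-_i,
\]
and from (\ref{vp_i}) that
\[
\vp_{i-1} = \xi_i\bigl(\hat{C}^+_{i-1} + \epsilon_i\hat{C}^-_i\bigr).
\]
These are two linear equations expressing $v_i$ and $\vp_{i-1}$ in terms of the two vectors $\hat{C}^+_{i-1}$ and $\hat{C}^-_i$. The coefficient matrix has determinant $\xi_i(\epsilon_i-1)$, which is nonzero since $\xi_i \ne 0$ by the corollary after Lemma \ref{Phip-SB} and $\epsilon_i \ne 1$ by (\ref{epsilon_i(4)}) together with the inequalities of Example \ref{q-rac;PA} (here I am using $d \geq 3$ and the restrictions on $q, s^*$). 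Hence the system is invertible and I can solve for $\hat{C}^+_{i-1}$ and $\hat{C}^-_i$ in terms of $v_i, \vp_{i-1}$.

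Concretely, I would subtract $\xi_i^{-1}\vp_{i-1}$ from $v_i$ to get $(1-\epsilon_i)\hat{C}^-_i$, and then solve to obtain
\[
\hat{C}^-_i = \tfrac{1}{1-\epsilon_i}v_i + \tfrac{1}{\xi_i(\epsilon_i-1)}\vp_{i-1}.
\]
Substituting back into $v_i = \hat{C}^+_{i-1} + \hat{C}^-_i$ and simplifying gives
\[
\hat{C}^+_{i-1} = v_i - \hat{C}^-_i = \tfrac{\epsilon_i}{\epsilon_i-1}v_i + \tfrac{1}{\xi_i(1-\epsilon_i)}\vp_{i-1},
\]
which matches the claimed formulas.

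The boundary identities $\hat{C}^-_0 = v_0$ and $\hat{C}^+_{d-1} = v_d$ are immediate from (\ref{Phi-SB}), which records that $v_0 = \hat{C}^-_0$ and $v_d = \hat{C}^+_{d-1}$. There is no real obstacle here; the only subtle point is verifying that $1-\epsilon_i \ne 0$ so that the inversion is legitimate, and that is handled by the formula (\ref{epsilon_i(4)}) together with the nondegeneracy constraints on the $q$-Racah parameters.
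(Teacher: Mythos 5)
Your proof is correct and is essentially the paper's argument: the paper's proof is simply to invert the pair of relations (\ref{Phi-SB}) and (\ref{vp_i}), exactly as you do. (The nonvanishing of $1-\epsilon_i$ can be seen even more directly from (\ref{epsilon_i}), since $\epsilon_i=-|C^+_{i-1}|/|C^-_i|$ is negative, but your justification via (\ref{epsilon_i(4)}) is also fine.)
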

\begin{proof}
Use (\ref{Phi-SB}) and (\ref{vp_i}).
\end{proof}

\begin{lemma}\label{action(p)} The map $\mfrk{p}$ acts on $\{\hat{C}^{\pm}_{i}\}^{d-1}_{i=0}$ as follows. For $1 \leq i \leq d-1$,
\begin{equation*}
\mfrk{p}\hat{C}^-_i = \tfrac{1}{1-\epsilon_i}(\hat{C}^+_{i-1}+\hat{C}^-_i), \qquad \qquad 
\mfrk{p}\hat{C}^+_{i-1} = \tfrac{\epsilon_i}{\epsilon_i-1}(\hat{C}^+_{i-1}+\hat{C}^-_i). 
\end{equation*}
Moreover $\mfrk{p} \hat{C}^-_0 = \hat{C}^-_0$ and $\mfrk{p}\hat{C}^+_{d-1}=\hat{C}^+_{d-1}$.
\end{lemma}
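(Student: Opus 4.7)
The plan is to reduce the action of $\mfrk{p}$ on the basis $\{\hat{C}^{\pm}_i\}^{d-1}_{i=0}$ to the action of $\mfrk{p}$ on the two bases already at our disposal: the $\Phi$-standard basis $\{v_i\}^d_{i=0}$ for $\Mx$ and the $\Phi^{\perp}$-standard basis $\{\vp_i\}^{d-2}_{i=0}$ for $\Mxp$. Since $\mfrk{p}$ is, by definition, the projection of $\W$ onto $\Mx$ along $\Mxp$, its action on these two bases is recorded in \eqref{p(v)=v;p(vp)=0}: it fixes each $v_i$ and annihilates each $\vp_i$. The change-of-basis between the two descriptions of $E^*_i\W$ for $1 \leq i \leq d-1$ is exactly what Lemma \ref{C^pm=vi+vip} supplies.

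First I would fix an index $i$ with $1 \leq i \leq d-1$ and apply $\mfrk{p}$ term by term to the two expressions in Lemma \ref{C^pm=vi+vip}. The $\vp_{i-1}$ summands vanish and the $v_i$ summands survive unchanged, yielding
$$
\mfrk{p}\hat{C}^-_i = \tfrac{1}{1-\epsilon_i}v_i, \qquad \qquad
\mfrk{p}\hat{C}^+_{i-1} = \tfrac{\epsilon_i}{\epsilon_i-1}v_i.
$$
The final step is to re-express $v_i$ in terms of the $\hat{C}^{\pm}$-basis using \eqref{Phi-SB}, which gives $v_i = \hat{C}^+_{i-1}+\hat{C}^-_i$. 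Substituting this identity into the two displayed equations produces the claimed formulas. Note that $\epsilon_i \ne 1$ (since $\epsilon_i < 0$ by \eqref{epsilon_i}), so no denominators vanish.

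For the boundary cases $\hat{C}^-_0$ and $\hat{C}^+_{d-1}$, observe from \eqref{Phi-SB} that $\hat{C}^-_0 = v_0$ and $\hat{C}^+_{d-1} = v_d$. Both vectors therefore lie in $\Mx$ and are fixed by $\mfrk{p}$.

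There is no real obstacle in this argument: it is an immediate consequence of Lemma \ref{C^pm=vi+vip} once one remembers the defining property of the projection $\mfrk{p}$. The only substantive work has already been packaged into the computation of the decompositions in Lemma \ref{C^pm=vi+vip}, which in turn rested on the choice of the scalars $\epsilon_i$ in \eqref{epsilon_i}.
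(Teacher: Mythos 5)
Your proposal is correct and follows the same route as the paper, whose proof simply cites (\ref{p(v)=v;p(vp)=0}) together with Lemma \ref{C^pm=vi+vip}; you have just written out the term-by-term application of $\mfrk{p}$ and the substitution $v_i=\hat{C}^+_{i-1}+\hat{C}^-_i$ from (\ref{Phi-SB}) explicitly. The added remark that $\epsilon_i\ne 1$ is harmless but not needed, since those denominators already appear in Lemma \ref{C^pm=vi+vip}.
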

\begin{proof}
By  (\ref{p(v)=v;p(vp)=0}) and Lemma \ref{C^pm=vi+vip}.
\end{proof}

\noindent
We now show how $\wt{\mfrk{p}}$ acts on the basis $\{\hat{C}^{\pm}_{i}\}^{d-1}_{i=0}$ for $\W$. Recall from (\ref{SB-MC}) the $\wt{\Phi}$-standard basis $\{\tv_i\}^{d-1}_{i=0}$ for $\MC$, and from (\ref{tilde(vp_i)}) the $\wtPp$-standard basis $\{\tvp_i\}^{d-1}_{i=0}$ for $\MCp$. By construction, 
\begin{equation}\label{tp(tv)=tv;tp(tvp)=0}
\wt{\mfrk{p}}\tv_i = \tv_i, \quad \qquad \qquad  \wt{\mfrk{p}}\tvp_i = 0 \qquad \qquad \quad(0 \leq i \leq d-1).
\end{equation}

\begin{lemma}\label{C^pm=tvi+tvip} For $0 \leq i \leq d-1$,
\begin{align*}\label{eq;C^pm=tvi+tvip}
&\hat{C}^-_i = \tfrac{1}{1-\tau_i}\tv_i + \tfrac{1}{\zeta_i(\tau_i-1)}\tvp_i, \\
&\hat{C}^+_i = \tfrac{\tau_i}{\tau_i-1}\tv_i + \tfrac{1}{\zeta_i(1-\tau_i)}\tvp_i.
\end{align*}
\end{lemma}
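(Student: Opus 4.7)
The plan is to observe that the statement is simply the inversion of a $2 \times 2$ linear system relating the pairs $(\hat{C}^{-}_i, \hat{C}^{+}_i)$ and $(\tv_i, \tvp_i)$ in $\wt{E}^*_i \W$, a space which has dimension $2$ by Corollary \ref{dim(E(C)*iW)}.

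First I would recall the two defining relations. From (\ref{SB-MC}) we have $\tv_i = \hat{C}^{-}_i + \hat{C}^{+}_i$, and from (\ref{tilde(vp_i)}) we have $\tvp_i = \zeta_i \tau_i \hat{C}^{-}_i + \zeta_i \hat{C}^{+}_i$. Viewing the pair $\{\hat{C}^{-}_i, \hat{C}^{+}_i\}$ as unknowns, this reads
\begin{equation*}
\begin{pmatrix} \tv_i \\ \tvp_i \end{pmatrix} \;=\; \begin{pmatrix} 1 & 1 \\ \zeta_i \tau_i & \zeta_i \end{pmatrix} \begin{pmatrix} \hat{C}^{-}_i \\ \hat{C}^{+}_i \end{pmatrix},
\end{equation*}
whose coefficient matrix has determinant $\zeta_i(1-\tau_i)$.

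Next I would verify that this determinant is nonzero, so that the matrix is invertible. The scalar $\zeta_i$ is nonzero by the Corollary immediately after Lemma \ref{SB(MCp)}. The factor $1 - \tau_i$ is nonzero because (\ref{tau_i}) together with Corollary \ref{C+-;nonempty} gives $\tau_i = -|C^{+}_i|/|C^{-}_i| < 0$, hence $\tau_i \neq 1$.

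Finally I would carry out the inversion by straightforward elimination. Solving the first equation for $\hat{C}^{+}_i = \tv_i - \hat{C}^{-}_i$ and substituting into the second yields
\begin{equation*}
\hat{C}^{-}_i \;=\; \frac{1}{1-\tau_i}\tv_i \;+\; \frac{1}{\zeta_i(\tau_i - 1)}\tvp_i,
\end{equation*}
and then back-substitution gives
\begin{equation*}
\hat{C}^{+}_i \;=\; \frac{\tau_i}{\tau_i - 1}\tv_i \;+\; \frac{1}{\zeta_i(1-\tau_i)}\tvp_i,
\end{equation*}
which matches the claim. There is no serious obstacle here; the lemma is simply a bookkeeping step, the dual to Lemma \ref{C^pm=vi+vip}, written in the form most convenient for computing the action of the projection $\wt{\mfrk{p}}$ on the basis $\{\hat{C}^{\pm}_i\}^{d-1}_{i=0}$ via (\ref{tp(tv)=tv;tp(tvp)=0}).
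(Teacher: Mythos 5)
Your proposal is correct and follows the same route as the paper, which proves the lemma by simply inverting the relations $\tv_i = \hat{C}^-_i + \hat{C}^+_i$ from (\ref{SB-MC}) and $\tvp_i = \zeta_i(\tau_i\hat{C}^-_i + \hat{C}^+_i)$ from (\ref{tilde(vp_i)}). Your extra check that $\zeta_i(1-\tau_i)\neq 0$ is a harmless elaboration of what the paper leaves implicit.
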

\begin{proof}
Use (\ref{SB-MC}) and (\ref{tilde(vp_i)}).
\end{proof}

\begin{lemma}\label{action(tp)} The map $\wt{\mfrk{p}}$ acts on $\{\hat{C}^{\pm}_{i}\}^{d-1}_{i=0}$ as follows. For $0 \leq i \leq d-1$,
\begin{equation*}
\wt{\mfrk{p}}\hat{C}^-_i = \tfrac{1}{1-\tau_i}(\hat{C}^-_i + \hat{C}^+_i), \qquad \qquad
\wt{\mfrk{p}}\hat{C}^+_i= \tfrac{\tau_i}{\tau_i-1}(\hat{C}^-_i + \hat{C}^+_i).
\end{equation*}
\end{lemma}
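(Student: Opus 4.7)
The plan is to mirror the proof of Lemma~\ref{action(p)} exactly, replacing the ingredients for the $T$-module decomposition with those for the $\wt{T}$-module decomposition. The key ingredients are already in place: the expressions for $\hat{C}^{\pm}_i$ in terms of the bases $\{\tv_i\}^{d-1}_{i=0}$ of $\MC$ and $\{\tvp_i\}^{d-1}_{i=0}$ of $\MCp$ given in Lemma~\ref{C^pm=tvi+tvip}, together with the defining action of $\wt{\mfrk{p}}$ on these two bases recorded in~(\ref{tp(tv)=tv;tp(tvp)=0}).

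Concretely, I would apply $\wt{\mfrk{p}}$ to each of the two equations in Lemma~\ref{C^pm=tvi+tvip}, using $\mbb{C}$-linearity together with $\wt{\mfrk{p}}\tv_i = \tv_i$ and $\wt{\mfrk{p}}\tvp_i = 0$ from~(\ref{tp(tv)=tv;tp(tvp)=0}). This immediately yields
$$
\wt{\mfrk{p}}\hat{C}^-_i = \tfrac{1}{1-\tau_i}\tv_i, \qquad \wt{\mfrk{p}}\hat{C}^+_i = \tfrac{\tau_i}{\tau_i-1}\tv_i \qquad (0 \leq i \leq d-1).
$$
Finally, substitute $\tv_i = \hat{C}^-_i + \hat{C}^+_i$ from~(\ref{SB-MC}) to obtain the stated formulas. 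Note that the denominators $1-\tau_i$ are nonzero: from~(\ref{tau_i}), $\tau_i$ is a negative real number, so in particular $\tau_i \ne 1$.

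There is essentially no obstacle here. All the heavy lifting has already been done in the preceding sections, namely constructing the explicit $\wt{\Phi}$- and $\wtPp$-standard bases and expressing the characteristic vectors $\hat{C}^{\pm}_i$ in those bases (which is what determines the action of the projection). The lemma is a one-line corollary of these preparations, exactly in parallel with the passage from Lemma~\ref{C^pm=vi+vip} to Lemma~\ref{action(p)} in the previous section.
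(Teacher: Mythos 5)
Your proposal is correct and matches the paper's own proof, which likewise cites (\ref{tp(tv)=tv;tp(tvp)=0}) together with Lemma \ref{C^pm=tvi+tvip}; applying $\wt{\mfrk{p}}$ to the expansions there and rewriting $\tv_i=\hat{C}^-_i+\hat{C}^+_i$ via (\ref{SB-MC}) is exactly the intended one-line argument. Your added remark that $\tau_i\ne 1$ (indeed $\tau_i<0$ by (\ref{tau_i})) is a harmless extra check already implicit in the earlier well-definedness of $\tau_i$.
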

\begin{proof}
By (\ref{tp(tv)=tv;tp(tvp)=0}) and Lemma \ref{C^pm=tvi+tvip}.
\end{proof}


\medskip
\section{Five bases and five linear maps for $\W$}\label{matrices}
Recall the subspace $\W$ from above line (\ref{Ci=(C-i)+(C+i)}). In this section we display five bases for $\W$. We then display the transition matrices between certain pairs of bases among the five. We also display the matrix representations of  $A, A^*, \wt{A}^*, \mfrk{p}, \wt{\mfrk{p}}$ relative to these five bases. 

\medskip \noindent
We now define our five bases for $\W$. Recall the vectors $\{\hat{C}^{\pm}_i\}^{d-1}_{i=0}$  from Lemma \ref{OGB(W)}. The first basis for $\W$ is
\begin{equation}\label{basis(C)}
\mcal{C} := \{\hat{C}^-_0, \hat{C}^+_0, \hat{C}^-_1, \hat{C}^+_1,
\hat{C}^-_2, \hat{C}^+_2, \ldots, \hat{C}^-_{d-1}, \hat{C}^+_{d-1}\}.
\end{equation}
In Section \ref{W-T(x)} we saw the $\Phi$-standard basis $\{v_i\}^d_{i=0}$ for $\Mx$ and the $\Phi^{\perp}$-standard basis $\{\vp_i\}^{d-2}_{i=0}$ for $\Mxp$. The second basis for $\W$ is
\begin{equation}\label{basis(B)}
\mcal{B} := \{v_0, v_1, v_2, \ldots, v_d, \vp_0, \vp_1, \vp_2, \ldots, \vp_{d-2} \}.
\end{equation}
The third basis for $\W$ is 
\begin{equation}\label{basis(Ba)}
\mcal{B}_{alt} := \{v_0, v_1, \vp_0, v_2, \vp_1, v_3, \vp_2, \ldots, v_{d-1}, \vp_{d-2}, v_d \}.
\end{equation}
In Section \ref{W-T(C)} we saw the $\wt{\Phi}$-standard basis $\{\wt{v}_i\}^{d-1}_{i=0}$ for $\MC$ and the $\wtPp$-standard basis $\{\tvp_i\}^{d-1}_{i=0}$ for $\MCp$. The fourth basis for $\W$ is
\begin{equation}\label{basis(tB)}
\wt{\mcal{B}} := \{\tv_0, \tv_1, \tv_2, \ldots, \tv_{d-1}, \tvp_0, \tvp_1, \tvp_2, \ldots, \tvp_{d-1} \}.
\end{equation}
The fifth basis for $\W$ is
\begin{equation}\label{basis(tBa)}
\wt{\mcal{B}}_{alt} := \{ \tv_0, \tvp_0, \tv_1, \tvp_1, \tv_2, \tvp_2, \ldots, \tv_{d-1}, \tvp_{d-1} \}.
\end{equation}
We now describe the transition matrices between certain pairs of bases among the five. A pair of bases will be considered whenever they are adjacent in the following diagram:
\begin{equation}\label{diagram}
\mcal{B} \ \  -\!\!\!-\!\!\!-\!\!\!- \ \ \mcal{B}_{alt}
\ \ -\!\!\!-\!\!\!-\!\!\!- \ \ \mcal{C}
\ \  -\!\!\!-\!\!\!-\!\!\!- \ \ \wt{\mcal{B}}_{alt}
\ \ -\!\!\!-\!\!\!-\!\!\!- \ \ \wt{\mcal{B}}
\end{equation}

\begin{lemma}\label{TM(C<=>Bxa)}
The transition matrix from $\mcal{C}$ to $\mcal{B}_{alt}$ is 
\begin{equation}\label{tran(C=>Bxa)}
\bdiag\big[
{\bf T}_0, {\bf T}_1, {\bf T}_2 \dotsm, {\bf T}_{d-1}, {\bf T}_d
\big],
\end{equation}
where 
$$
{\bf T}_0 = [1], \qquad \qquad {\bf T}_d= [1],
$$ 
and for $1 \leq i \leq d-1$, 
$${\bf T}_i =
\begin{bmatrix}
1 & \xi_i \\
1 & \xi_i \epsilon_i
\end{bmatrix}, $$where
$$\xi_i 	= q^{1-i}(1-q^{i-d})(1-s^*q^{i+1}), \qquad
\xi_i\epsilon_i = q^{1-i-d}(1-q^i)(1-s^*q^{i+d+1}).$$
The transition matrix from $\mcal{B}_{alt}$ to $\mcal{C}$ is 
\begin{equation}\label{tran(Bxa=>C)}
\bdiag \big[
{\bf S}_0, {\bf S}_1, {\bf S}_2, \dotsm, {\bf S}_{d-1}, {\bf S}_d
\big],
\end{equation}
where
$$ {\bf S}_0 = [1], \qquad \qquad {\bf S}_d = [1], $$ 
and for $1 \leq i \leq d-1$,
$$
{\bf S}_i =
\begin{bmatrix}
\frac{\epsilon_i}{\epsilon_i-1} 			& \frac{1}{1-\epsilon_i} \\
\frac{1}{\xi_i(1-\epsilon_i)} 	& \frac{1}{\xi_i(\epsilon_i-1)}
\end{bmatrix},
$$where
\begin{align*}
&\tfrac{\epsilon_i}{\epsilon_i-1}	= 	\tfrac{(1-q^{i})(1-s^*q^{i+d+1})} 
								{(1-q^d)(1-s^*q^{2i+1})}, & 
&\tfrac{1}{1-\epsilon_i}	= 	\tfrac{q^d(1-q^{i-d})(1-s^*q^{i+1})}
								{(q^d-1)(1-s^*q^{2i+1})}, & \\
&\tfrac{1}{\xi_i(\epsilon_i-1)} = \tfrac{q^{d+i-1}}{(1-q^d)(1-s^*q^{2i+1})},  & 
&\tfrac{1}{\xi_i(1-\epsilon_i)}	 = 	\tfrac{q^{d+i-1}}{(q^d-1)(1-s^*q^{2i+1})}. &
\end{align*}
\end{lemma}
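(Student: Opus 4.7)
The plan is to observe that both bases $\mcal{C}$ and $\mcal{B}_{alt}$ respect the orthogonal decomposition $\W = \sum_{i=0}^{d} E^*_i\W$ of Lemma \ref{W=ods(E*i)}, and, once ordered as in (\ref{basis(C)}) and (\ref{basis(Ba)}), they align by the index $i$. Indeed, $v_0 = \hat{C}^-_0$ spans $E^*_0\W$; by Lemma \ref{W=ods(E*i)} and Lemma \ref{Phip-SB}, the pair $(v_i,\vp_{i-1})$ and the pair $(\hat{C}^+_{i-1},\hat{C}^-_i)$ each form a basis for $E^*_i\W$ for $1 \leq i \leq d-1$; and $v_d = \hat{C}^+_{d-1}$ spans $E^*_d\W$. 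Consequently the transition matrix in either direction is block-diagonal with block sizes $1,2,2,\ldots,2,1$ summing to $2d$, and it suffices to identify each block.

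To obtain (\ref{tran(C=>Bxa)}), I would express the vectors of $\mcal{B}_{alt}$ in the basis $\mcal{C}$. From (\ref{Phi-SB}) one reads $v_i = \hat{C}^+_{i-1} + \hat{C}^-_i$ for $1 \leq i \leq d-1$, and from (\ref{vp_i}) one reads $\vp_{i-1} = \xi_i\hat{C}^+_{i-1} + \xi_i\epsilon_i\hat{C}^-_i$. Arranging these as columns indexed by $(v_i,\vp_{i-1})$ and rows by $(\hat{C}^+_{i-1},\hat{C}^-_i)$ gives precisely ${\bf T}_i$. The closed forms for the entries follow by substituting $\xi_i$ from Lemma \ref{Lem(xi_i)} and $\epsilon_i$ from (\ref{epsilon_i(4)}): the expression for $\xi_i$ is immediate, and the expression for $\xi_i\epsilon_i$ is obtained by a direct multiplication in which the factors $(1-q^{i-d})(1-s^*q^{i+1})$ cancel.

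To obtain (\ref{tran(Bxa=>C)}), I would simply take the block-wise inverse, which is already carried out in Lemma \ref{C^pm=vi+vip}: the entries of ${\bf S}_i$ are exactly the coefficients appearing there, recorded as columns indexed by $(\hat{C}^+_{i-1},\hat{C}^-_i)$ and rows by $(v_i,\vp_{i-1})$. The remaining task is purely algebraic: using (\ref{epsilon_i(4)}) to simplify $1-\epsilon_i$ into a single rational function in $q,s^*$. The driving identity is
\[
q^d(1-q^{i-d})(1-s^*q^{i+1}) - (1-q^i)(1-s^*q^{i+d+1}) \;=\; (q^d-1)(1-s^*q^{2i+1}),
\]
which is verified by expanding both sides and comparing the four monomials. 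Once this identity is in hand, the claimed closed forms for $\tfrac{1}{1-\epsilon_i}$, $\tfrac{\epsilon_i}{\epsilon_i-1}$, $\tfrac{1}{\xi_i(1-\epsilon_i)}$, and $\tfrac{1}{\xi_i(\epsilon_i-1)}$ follow by substitution and trivial rearrangement.

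The main obstacle is really just this one $q$-identity and the bookkeeping required to confirm that the block ordering I describe matches the block-diagonal statement in the lemma; neither is substantive, so the proof should be essentially routine once the block structure is pinned down from Lemma \ref{W=ods(E*i)}.
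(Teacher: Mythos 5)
Your proof is correct and follows essentially the same route as the paper: the blocks of (\ref{tran(C=>Bxa)}) are read off from (\ref{Phi-SB}) and (\ref{vp_i}), the blocks of (\ref{tran(Bxa=>C)}) from Lemma \ref{C^pm=vi+vip}, and the closed-form entries follow from (\ref{epsilon_i(4)}), (\ref{xi_i}) together with the $q$-identity you state, which does hold and gives the stated simplifications. Your preliminary block-alignment argument via Lemma \ref{W=ods(E*i)} and the explicit verification of the entries are just the routine details the paper leaves implicit, not a different approach.
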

\begin{proof}
Use (\ref{Phi-SB}) and (\ref{vp_i}) to obtain the matrix (\ref{tran(C=>Bxa)}). Use Lemma \ref{C^pm=vi+vip} to obtain the matrix (\ref{tran(Bxa=>C)}).
\end{proof}

\begin{lemma} The transition matrix from $\mcal{C}$ to $\wt{\mcal{B}}_{alt}$ is
\begin{equation}\label{tran(C=>BCa)}
\bdiag\big[
{\bf R}_0, {\bf R}_1, {\bf R}_2 , \dotsm, {\bf R}_{d-1}
\big],
\end{equation}
where for $0 \leq i \leq d-1$, 
$$
{\bf R}_i = 
\begin{bmatrix}
1	&	\zeta_i\tau_i\\
1	&	\zeta_i
\end{bmatrix},
$$ and
$$
\zeta_i\tau_i = q^{-i}s^*(1-r_1q^{i+1})(1-r_2q^{i+1}), \qquad
\zeta_i = q^{-i}(r_1-s^*q^{i+1})(r_2-s^*q^{i+1}).
$$
The transition matrix from $\wt{\mcal{B}}_{alt}$ to $\mcal{C}$ is
\begin{equation}\label{tran(BCa=>C)}
\bdiag\big[
{\bf Q}_0, {\bf Q}_1, {\bf Q}_2,  \dotsm, {\bf Q}_{d-1}
\big],
\end{equation}
where for $0 \leq i \leq d-1$,
$$
{\bf Q}_i =
\begin{bmatrix}
\frac{1}{1-\tau_i}	&	\frac{\tau_i}{\tau_i-1}	\\
\frac{1}{\zeta_i(\tau_i-1)}	&	\frac{1}{\zeta_i(1-\tau_i)}
\end{bmatrix},
$$ and
\begin{align*}
&\tfrac{1}{1-\tau_i} = \tfrac{(r_1-s^*q^{i+1})(r_2-s^*q^{i+1})}{(r_1r_2-s^*)(1-s^*q^{2i+2})}, &
&\tfrac{\tau_i}{\tau_i-1} = 
		\tfrac{s^*(1-r_1q^{i+1})(1-r_2q^{i+1})}{(s^*-r_1r_2)(1-s^*q^{2i+2})}, & \\
&\tfrac{1}{\zeta_i(\tau_i-1)} = \tfrac{q^i}{(s^*-r_1r_2)(1-s^*q^{2i+2})}, &
&\tfrac{1}{\zeta_i(1-\tau_i)} = 
		\tfrac{q^i}{(r_1r_2-s^*)(1-s^*q^{2i+2})}. &
\end{align*}

\end{lemma}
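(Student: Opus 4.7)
The plan is to read off the transition matrix from $\mcal{C}$ to $\wt{\mcal{B}}_{alt}$ directly from the known expressions of $\tv_i$ and $\tvp_i$ in the $\mcal{C}$ basis, and then to obtain the reverse transition matrix by inverting each $2\times 2$ block.

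I would first observe that both bases decompose compatibly with $\W = \sum_{i=0}^{d-1} \wt{E}^*_i\W$: in $\mcal{C}$, the consecutive pair $\hat{C}^-_i, \hat{C}^+_i$ spans $\wt{E}^*_i\W$ by Lemma \ref{W=ods(E*(C)i)}, and in $\wt{\mcal{B}}_{alt}$ the consecutive pair $\tv_i, \tvp_i$ spans $\wt{E}^*_i\W$ by construction in Section \ref{W-T(C)}. Consequently the transition matrix is automatically block-diagonal with $d$ blocks of size $2\times 2$, and I only need to compute each ${\bf R}_i$ separately. From (\ref{SB-MC}), $\tv_i = \hat{C}^-_i + \hat{C}^+_i$, which gives the first column $(1,1)^T$ of ${\bf R}_i$. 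From (\ref{tilde(vp_i)}), $\tvp_i = \zeta_i\tau_i\,\hat{C}^-_i + \zeta_i\,\hat{C}^+_i$, which gives the second column $(\zeta_i\tau_i,\zeta_i)^T$. The explicit $q$-expressions for $\zeta_i\tau_i$ and $\zeta_i$ then follow from Lemma \ref{Lem(zeta_i)} combined with Lemma \ref{tau;q-term}, completing (\ref{tran(C=>BCa)}).

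For (\ref{tran(BCa=>C)}) I would invert ${\bf R}_i$ directly. A short computation gives $\det{\bf R}_i = \zeta_i - \zeta_i\tau_i = \zeta_i(1-\tau_i)$, so
$$
{\bf R}_i^{-1} = \frac{1}{\zeta_i(1-\tau_i)}\begin{bmatrix} \zeta_i & -\zeta_i\tau_i \\ -1 & 1 \end{bmatrix},
$$
which already matches the shape of the claimed ${\bf Q}_i$. The remaining task is the $q$-simplification of its four entries. The key algebraic identity is
$$
(r_1-s^*q^{i+1})(r_2-s^*q^{i+1}) - s^*(1-r_1q^{i+1})(1-r_2q^{i+1}) = (r_1r_2-s^*)(1-s^*q^{2i+2}),
$$
which is immediate by expansion and yields
$$
1-\tau_i \;=\; \frac{(r_1r_2-s^*)(1-s^*q^{2i+2})}{(r_1-s^*q^{i+1})(r_2-s^*q^{i+1})}.
$$
Substituting this together with the value of $\zeta_i$ from Lemma \ref{Lem(zeta_i)} into each entry of ${\bf R}_i^{-1}$ produces the four expressions claimed for ${\bf Q}_i$.

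The main obstacle is purely clerical: the $2\times 2$ inversion is elementary, the blocks are read off directly from the basis descriptions, and the only substantive calculation is verifying the factoring identity above and tracking signs while simplifying $\tfrac{1}{1-\tau_i}$, $\tfrac{\tau_i}{\tau_i-1}$ and the two $\zeta_i$-entries. No structural input beyond the material of Section \ref{W-T(C)} is required.
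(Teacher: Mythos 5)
Your proposal is correct and follows essentially the same route as the paper: the forward matrix is read off from (\ref{SB-MC}) and (\ref{tilde(vp_i)}) exactly as in the paper's proof, and your direct inversion of each block $\mathbf{R}_i$ (with $\det \mathbf{R}_i=\zeta_i(1-\tau_i)$ and the factoring identity $(r_1-s^*q^{i+1})(r_2-s^*q^{i+1})-s^*(1-r_1q^{i+1})(1-r_2q^{i+1})=(r_1r_2-s^*)(1-s^*q^{2i+2})$) is just an inline re-derivation of Lemma \ref{C^pm=tvi+tvip}, which is what the paper cites for the reverse matrix.
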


\begin{proof}
Use (\ref{SB-MC}) and (\ref{tilde(vp_i)}) to obtain the matrix (\ref{tran(C=>BCa)}). Use Lemma \ref{C^pm=tvi+tvip} to obtain the matrix (\ref{tran(BCa=>C)}).
\end{proof}

\noindent
For $0 \leq j \leq d-1$ let ${\bf e}_j$ denote the column vector with rows indexed by $0, 1, 2, \ldots, 2d-1$, and with a $1$ in the $j$-th coordinate and $0$ in every other coordinate.

\begin{lemma}\label{TM:B<->Balt}
The transition matrix from $\mcal{B}$ to $\mcal{B}_{alt}$ is the permutation matrix 
\begin{equation*}\label{tm:Bx->Bxa}
\big[{\bf e}_0, {\bf e}_1, {\bf e}_{d+1}, {\bf e}_2, {\bf e}_{d+2}, {\bf e}_3, {\bf e}_{d+3}, \ldots, 
{\bf e}_{d-1}, {\bf e}_{2d-1}, {\bf e}_{d}\big].
\end{equation*}
The transition matrix from $\mcal{B}_{alt}$ to $\mcal{B}$ is the permutation matrix
\begin{equation*}\label{tm:Bxa->Bx}
\big[
{\bf e}_0, {\bf e}_1, {\bf e}_3, {\bf e}_5, {\bf e}_7, \ldots, {\bf e}_{2d-1}, 
			{\bf e}_2, {\bf e}_4, {\bf e}_6, \ldots, {\bf e}_{2d-2}
\big].
\end{equation*}
\end{lemma}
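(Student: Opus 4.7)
The strategy is to observe first that $\mcal{B}$ and $\mcal{B}_{alt}$ are reorderings of exactly the same collection of $2d$ vectors, namely the $d+1$ vectors $v_0, v_1, \ldots, v_d$ of the $\Phi$-standard basis for $\Mx$ together with the $d-1$ vectors $\vp_0, \vp_1, \ldots, \vp_{d-2}$ of the $\Phi^{\perp}$-standard basis for $\Mxp$. Since the two bases differ only by a permutation of their entries, the transition matrix from one to the other must be a permutation matrix, and the inverse transition matrix is the matrix of the inverse permutation. Consequently the proof reduces to pure bookkeeping: identifying the permutation and writing down its columns.

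For the transition matrix from $\mcal{B}$ to $\mcal{B}_{alt}$ I would proceed column by column. The $j$-th column records the coordinates, with respect to $\mcal{B}$, of the $j$-th element of $\mcal{B}_{alt}$; since the latter is a specific element of $\mcal{B}$, this column is a unit vector ${\bf e}_{\sigma(j)}$ for an appropriate index $\sigma(j)$. Indexing the entries of $\mcal{B}$ so that $v_i$ sits at position $i$ for $0 \leq i \leq d$ and $\vp_i$ sits at position $d+1+i$ for $0 \leq i \leq d-2$, one reads off from the explicit listing of $\mcal{B}_{alt}$: $v_0$ at position $0$, $v_1$ at position $1$, and for $1 \leq k \leq d-1$, $\vp_{k-1}$ at position $2k$ and $v_{k+1}$ at position $2k+1$, with $v_d$ in the final slot $2d-1$. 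Reading the resulting values of $\sigma(j)$ gives precisely the claimed column sequence ${\bf e}_0, {\bf e}_1, {\bf e}_{d+1}, {\bf e}_2, {\bf e}_{d+2}, \ldots, {\bf e}_{d-1}, {\bf e}_{2d-1}, {\bf e}_d$.

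The transition matrix from $\mcal{B}_{alt}$ to $\mcal{B}$ is the inverse of the previous one, so it is the permutation matrix of $\sigma^{-1}$. Equivalently, I would read off the position in $\mcal{B}_{alt}$ of each element of $\mcal{B}$ in order: $v_0$ occupies position $0$ and $v_1$ occupies position $1$; for $2 \leq k \leq d-1$, $v_k$ occupies position $2k-1$, with $v_d$ at position $2d-1$; then $\vp_k$ occupies position $2k+2$ for $0 \leq k \leq d-2$. This produces the columns ${\bf e}_0, {\bf e}_1, {\bf e}_3, {\bf e}_5, \ldots, {\bf e}_{2d-1}, {\bf e}_2, {\bf e}_4, \ldots, {\bf e}_{2d-2}$ stated in the lemma.

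There is no genuine obstacle here; the only step that requires care is keeping the indexing straight between the two orderings, especially near the boundary positions $0, 1, 2d-2$, and $2d-1$ where the interleaving pattern of $\mcal{B}_{alt}$ starts and ends. As a final sanity check I would verify $\sigma \circ \sigma^{-1} = \mathrm{id}$ by composing the two explicit column lists and confirming that the resulting matrix is the $2d \times 2d$ identity.
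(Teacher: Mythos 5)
Your proof is correct and follows the same route as the paper, whose entire argument is to compare the explicit listings of $\mcal{B}$ and $\mcal{B}_{alt}$ in (\ref{basis(B)}) and (\ref{basis(Ba)}); you simply spell out the resulting permutation bookkeeping, and your column lists match the lemma under the paper's convention (column $j$ of the transition matrix gives the coordinates of the $j$-th vector of the target basis with respect to the source basis). No gap.
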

\begin{proof}
Compare (\ref{basis(B)}) and (\ref{basis(Ba)}).
\end{proof}

\begin{lemma}\label{TM(BC<=>BCa)}
The transition matrix from $\wt{\mcal{B}}$ to $\wt{\mcal{B}}_{alt}$ is the permutation matrix
\begin{equation*}\label{tm:BC->BCa}
\big[
{\bf e}_0, {\bf e}_d, {\bf e}_1, {\bf e}_{d+1}, {\bf e}_2, {\bf e}_{d+2}, {\bf e}_3, {\bf e}_{d+3},
\ldots, {\bf e}_{d-1}, {\bf e}_{2d-1}
\big].
\end{equation*}
The transition matrix from $\wt{\mcal{B}}_{alt}$ to $\wt{\mcal{B}}$ is the permutation matrix
\begin{equation*}\label{tm:BCa->BC}
\big[
{\bf e}_0, {\bf e}_2, {\bf e}_4, {\bf e}_6, \ldots, {\bf e}_{2d-2}, 
{\bf e}_1, {\bf e}_3, {\bf e}_5, {\bf e}_7, \ldots, {\bf e}_{2d-1}
\big].
\end{equation*}
\end{lemma}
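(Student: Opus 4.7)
The plan is to obtain both transition matrices by direct inspection of the two bases. Indeed, both $\wt{\mcal{B}}$ and $\wt{\mcal{B}}_{alt}$ consist of the same $2d$ vectors $\tv_0, \tv_1, \ldots, \tv_{d-1}, \tvp_0, \tvp_1, \ldots, \tvp_{d-1}$, arranged in different orders. Consequently each transition matrix is a permutation matrix, and the proof reduces to bookkeeping with two indexings.

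First I would fix the indexing of $\wt{\mcal{B}}$ as in (\ref{basis(tB)}), so that $\tv_i$ corresponds to the standard coordinate vector ${\bf e}_i$ for $0 \leq i \leq d-1$ and $\tvp_i$ corresponds to ${\bf e}_{d+i}$ for $0 \leq i \leq d-1$. To read off the transition matrix from $\wt{\mcal{B}}$ to $\wt{\mcal{B}}_{alt}$, I express each vector of $\wt{\mcal{B}}_{alt}$ (in the order given in (\ref{basis(tBa)})) as a column relative to $\wt{\mcal{B}}$. The vector occupying position $2i$ in $\wt{\mcal{B}}_{alt}$ is $\tv_i$, contributing the column ${\bf e}_i$, while the vector at position $2i+1$ is $\tvp_i$, contributing the column ${\bf e}_{d+i}$. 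Listing these columns in order gives
\[
[{\bf e}_0, {\bf e}_d, {\bf e}_1, {\bf e}_{d+1}, {\bf e}_2, {\bf e}_{d+2}, \ldots, {\bf e}_{d-1}, {\bf e}_{2d-1}],
\]
which is exactly the first matrix of the lemma.

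Next I would compute the transition matrix in the other direction by the analogous procedure, using the indexing of $\wt{\mcal{B}}_{alt}$ in which $\tv_i$ sits at position $2i$ and $\tvp_i$ sits at position $2i+1$. Each $\tv_i$ $(0 \leq i \leq d-1)$ yields the column ${\bf e}_{2i}$, and each $\tvp_i$ $(0 \leq i \leq d-1)$ yields the column ${\bf e}_{2i+1}$, giving the second displayed matrix. A one-line verification that the two matrices are inverse permutation matrices serves as a sanity check.

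No serious obstacle is anticipated; the argument is purely combinatorial, mirroring Lemma \ref{TM:B<->Balt}. The only place to be careful is keeping the convention consistent with the rest of the paper (namely, that the transition matrix from basis $\mcal{A}$ to basis $\mcal{A}'$ has its columns equal to the coordinates of the vectors of $\mcal{A}'$ expressed in $\mcal{A}$), as this is what fixes the assignments $\tv_i \mapsto {\bf e}_i$ and $\tvp_i \mapsto {\bf e}_{d+i}$ used above.
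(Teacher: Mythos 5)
Your proposal is correct and matches the paper's proof, which simply reads off both permutation matrices by comparing the orderings in (\ref{basis(tB)}) and (\ref{basis(tBa)}); your bookkeeping ($\tv_i \mapsto {\bf e}_i$, $\tvp_i \mapsto {\bf e}_{d+i}$ in one direction and $\tv_i \mapsto {\bf e}_{2i}$, $\tvp_i \mapsto {\bf e}_{2i+1}$ in the other) agrees with the stated matrices, and your column convention is consistent with Lemma \ref{TM:B<->Balt}.
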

\begin{proof}
Compare (\ref{basis(tB)}) and (\ref{basis(tBa)}).
\end{proof}

\noindent
We have now found the transition matrices between every adjacent pair of bases in (\ref{diagram}). For any pair of bases in (\ref{diagram}) the transition matrix can be computed from the above lemmas and linear algebra. For example, the transition matrix from $\mcal{B}_{alt}$ to $\wt{\mcal{B}}_{alt}$ is the product of the transition matrix from $\mcal{B}_{alt}$ to $\mathcal{C}$ given by (\ref{tran(Bxa=>C)}), and the transition matrix from $\mathcal{C}$ to $\wt{\mcal{B}}_{alt}$ given by (\ref{tran(C=>BCa)}). 

\medskip \noindent
We now display the matrices that represent 
$$
A, \qquad A^*, \qquad \wt{A}^*, \qquad \mfrk{p}, \qquad \wt{\mfrk{p}}
$$
relative to the five bases (\ref{diagram}).
We first consider the matrices representing $A$ relative to the five bases.

\begin{lemma} The matrix representing $A$ relative to the basis $\mcal{C}$ is  block tridiagonal:
\begin{equation}\label{[A]_C}
\left[{
\begin{array}{ccccc}
\vspace{0.15cm}
{\bf A}_0	& {\bf B}_0	&	&	& {\bf 0}\\
{\bf C}_1	& {\bf A}_1	& {\bf B}_1	&	& \\
		& {\bf C}_2	& {\bf A}_2 &	\ddots & \\
		&	& \ddots	& \ddots	& {\bf B}_{d-2} \vspace{0.15cm} \\
{\bf 0}	&	&	& {\bf C}_{d-1}	& {\bf A}_{d-1} 
\end{array}
}\right],
\end{equation}
where
\begin{align*}
&{\bf B}_{i}=
\begin{bmatrix}
\tb_{i}			& 0 \\
\tb_{i}-b_{i+1}	& b_{i+1}
\end{bmatrix}
&& (0 \leq i \leq d-2), &&\\
&{\bf A}_i = 
\begin{bmatrix}
\ta_i - b_i + \tb_i	& b_i-\tb_i \\
c_{i+1}-\tc_{i}	& \ta_i-c_{i+1}+\tc_{i}
\end{bmatrix}
&& (0 \leq i \leq d-1),&&\\
&{\bf C}_{i} = 
\begin{bmatrix}
c_{i}	& \tc_{i} - c_{i} \\
0	& \tc_{i}
\end{bmatrix}
&& (1 \leq i \leq d-1).&&
\end{align*}
\end{lemma}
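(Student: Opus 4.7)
The plan is to read off the matrix entries directly from Lemma \ref{action(A;W)}, which expresses $A\hat{C}^-_i$ and $A\hat{C}^+_i$ as explicit linear combinations of the basis $\mcal{C}$. Since the basis $\mcal{C}$ in \eqref{basis(C)} is ordered by grouping consecutive pairs $(\hat{C}^-_i, \hat{C}^+_i)$ into ``blocks'' indexed by $i \in \{0,1,\ldots,d-1\}$, the matrix $[A]_{\mcal{C}}$ naturally decomposes into $2\times 2$ blocks. For each $i$, the $j$-th column-block contains the coordinates of $A\hat{C}^-_j$ and $A\hat{C}^+_j$; Lemma \ref{action(A;W)} shows that these vectors lie in the span of the $(i-1)$-th, $i$-th and $(i+1)$-th pairs only, so $[A]_{\mcal{C}}$ is block tridiagonal.

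Concretely, I would carry out three mechanical identifications. First, the diagonal block $\mathbf{A}_i$: looking at $A\hat{C}^-_i$ and $A\hat{C}^+_i$, extract the coefficients of $\hat{C}^-_i$ and $\hat{C}^+_i$; the four numbers $(\wt{a}_i-b_i+\wt{b}_i,\ c_{i+1}-\wt{c}_i,\ b_i-\wt{b}_i,\ \wt{a}_i-c_{i+1}+\wt{c}_i)$ that appear match the stated $\mathbf{A}_i$ entrywise. Second, the super-diagonal block: the contribution of $A\hat{C}^-_i$ and $A\hat{C}^+_i$ to the $(i-1)$-th pair gives $(\wt{b}_{i-1},\ \wt{b}_{i-1}-b_i)$ and $(0,\ b_i)$ respectively; after re-indexing $i \mapsto i+1$ this is precisely $\mathbf{B}_{i-1}$ as defined in the statement. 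Third, the sub-diagonal block: the contribution to the $(i+1)$-th pair gives $(c_{i+1},\ 0)$ and $(\wt{c}_{i+1}-c_{i+1},\ \wt{c}_{i+1})$, matching $\mathbf{C}_{i+1}$.

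The only subtlety is handling the edges of the chain. At $i=0$ there is no block $i-1$, and at $i=d-1$ there is no block $i+1$; these match the conventions $\hat{C}^{\pm}_{-1} = \hat{C}^{\pm}_d = 0$ set above Lemma \ref{action(A;W)}, consistent with the absence of $\mathbf{B}_{-1}$ and $\mathbf{C}_d$ from the matrix. I do not expect any serious obstacle: the proof is a straightforward bookkeeping exercise, and the main work was already done in Lemma \ref{action(A;W)}, so the proposal is simply ``compare the coefficients in Lemma \ref{action(A;W)} with the block entries, term by term.''
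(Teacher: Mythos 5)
Your proposal is correct and follows essentially the same route as the paper, whose proof is exactly ``use Lemma \ref{action(A;W)}'': you read the block entries off the expansions of $A\hat{C}^{\pm}_i$ and match them, column by column, with ${\bf A}_i$, ${\bf B}_i$, ${\bf C}_i$, handling the boundary cases via the convention $\hat{C}^{\pm}_{-1}=\hat{C}^{\pm}_d=0$. Your term-by-term identifications (including the zero entries in ${\bf B}_i$ and ${\bf C}_i$) are all accurate, so the proposal amounts to a fully written-out version of the paper's one-line argument.
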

\begin{proof}
Use Lemma \ref{action(A;W)}.
\end{proof}

\noindent
Note that the matrix (\ref{[A]_C}) is five-diagonal.

\begin{lemma}\label{lem:[A]_B}
The matrix representing $A$ relative to the basis $\mcal{B}$ is
\begin{equation*}\label{[A]_Bx}
\left[
\begin{array}{cc}
{\bf M_0} 	& {\bf 0} \\
 {\bf 0}			& {\bf M_1}
\end{array}\right],
\end{equation*}
where
\[
{\bf M_0} =
\begin{bmatrix}
\vspace{0.15cm}
a_0	& b_0 &&& \\
c_1	& a_1& b_1	&\\
	& c_2 & a_2 & \ddots	&\\ 
	& & \ddots & \ddots & b_{d-1} \vspace{0.15cm}\\
	& &	& c_d & a_d \\
\end{bmatrix},
\qquad  \qquad
{\bf M_1} = 
\begin{bmatrix}
\vspace{0.15cm}
\ap_0 & \bp_0 &&& \\
\cp_1 & \ap_1& \bp_1	&\\
	& \cp_2 & \ap_2 & \ddots	&\\ 
	& & \ddots & \ddots & \bp_{d-3} \vspace{0.15cm}\\
	& &	& \cp_{d-2} & \ap_{d-2} \\
\end{bmatrix}.
\]
\end{lemma}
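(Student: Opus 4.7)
The plan is to exploit the fact that $\mcal{B}$ splits naturally into two sub-bases, one for $\Mx$ and one for $\Mxp$, and that $A$ respects this decomposition.

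First I would observe that $A \in \mcal{M} \subseteq T$, so by Lemma \ref{W:T(x)-mod} and the decomposition (\ref{ODS(T-mods)}), namely
\[
\W = \Mx + \Mxp \qquad \text{(orthogonal direct sum of $T$-modules)},
\]
the action of $A$ on $\W$ preserves each summand $\Mx$ and $\Mxp$. Consequently, relative to the basis $\mcal{B} = \{v_0, \ldots, v_d, \vp_0, \ldots, \vp_{d-2}\}$ of (\ref{basis(B)}), which is the concatenation of the $\Phi$-standard basis $\{v_i\}_{i=0}^d$ for $\Mx$ (see (\ref{Phi-SB}) and Lemma \ref{Phip-SB}) and the $\Phi^{\perp}$-standard basis $\{\vp_i\}_{i=0}^{d-2}$ for $\Mxp$, the matrix of $A$ must be block diagonal with the upper-left block representing the restriction of $A$ to $\Mx$ and the lower-right block representing the restriction of $A$ to $\Mxp$.

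Next I would identify each diagonal block. For the restriction of $A$ to $\Mx$: the Leonard system $\Phi$ on $\Mx$ is primary, and by the general formula (\ref{[A^(flat)]}) applied to $\Phi$, the matrix of $A$ relative to the $\Phi$-standard basis $\{v_i\}_{i=0}^d$ is the tridiagonal matrix with the intersection numbers $a_i, b_i, c_i$ of $\Phi$ on the diagonal, superdiagonal, and subdiagonal respectively. By Lemma \ref{IN(Ga,Phi)} these equal the intersection numbers of $\Ga$, so this block is exactly ${\bf M}_0$. Similarly, for the restriction of $A$ to $\Mxp$, formula (\ref{[A]_(vp)}) already gives the tridiagonal matrix ${\bf M}_1$ with entries $\ap_i, \bp_i, \cp_i$ relative to the $\Phi^{\perp}$-standard basis $\{\vp_i\}_{i=0}^{d-2}$.

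Combining these two observations yields the block-diagonal form stated in the lemma. Since each ingredient has already been established in the preceding sections, this is a one-line verification rather than a substantive computation; there is no real obstacle beyond correctly invoking the direct-sum decomposition of $T$-modules and the standard-basis description of the matrix of $A$ on each summand.
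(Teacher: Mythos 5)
Your proposal is correct and follows essentially the same route as the paper, whose proof simply cites (\ref{[A^(flat)]}), (\ref{[A]_(vp)}) and (\ref{basis(B)}); you merely make explicit the $T$-module invariance of $\Mx$ and $\Mxp$ that justifies the block-diagonal shape. No issues.
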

\begin{proof}
By (\ref{[A^(flat)]}), (\ref{[A]_(vp)}) and (\ref{basis(B)}).
\end{proof}

\begin{lemma}
The matrix representing $A$ relative to the basis $\mcal{B}_{alt}$ is
\begin{equation*}\label{[A]_Bxa}
\left[
\begin{array}{ccccc}
\vspace{0.15cm}
{\bf x}_0 & {\bf y}_0 &&&{\bf 0}\\
{\bf z}_1 & {\bf x}_1 & {\bf y}_1 \\
&{\bf z}_2 & {\bf x}_2 & \ddots \\
&&\ddots & \ddots &{\bf y}_{d-1} \vspace{0.15cm}\\
{\bf 0}&&&{\bf z}_{d} & {\bf x}_{d}
\end{array}
\right],
\end{equation*}
where 
\begin{align*}
&{\bf y}_0 =\begin{bmatrix}b_0 & 0\end{bmatrix}, 
&&{\bf y}_{i} = \begin{bmatrix}
b_{i} & 0 \\
0 & \bp_{i-1}
\end{bmatrix} \qquad  (1 \leq i \leq d-2),
&&{\bf y}_{d-1}= \begin{bmatrix} b_{d-1} \\ 0 \end{bmatrix},&&\\
&{\bf x}_0 = \begin{bmatrix} a_0 \end{bmatrix},
&&{\bf x}_i = 
\begin{bmatrix}
a_{i} & 0 \\
0 & \ap_{i-1}
\end{bmatrix} \qquad (1 \leq i \leq d-1),
&&{\bf x}_d = \begin{bmatrix} a_d \end{bmatrix},&&\\
&{\bf z}_1 = \begin{bmatrix} c_1 \\ 0 \end{bmatrix},
&&{\bf z}_{i} = 
\begin{bmatrix}
c_{i} & 0 \\
0 & \cp_{i-1}
\end{bmatrix} \qquad (2 \leq i \leq d-1),
&&{\bf z}_d = \begin{bmatrix} c_d & 0\end{bmatrix}.
\end{align*}
\end{lemma}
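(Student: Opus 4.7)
The plan is to reduce this lemma to the already-proved Lemma \ref{lem:[A]_B}. By Lemma \ref{TM:B<->Balt} the transition matrix $P$ from $\mcal{B}$ to $\mcal{B}_{alt}$ is a permutation matrix, so $[A]_{\mcal{B}_{alt}} = P^{-1}[A]_{\mcal{B}} P$. Since $[A]_{\mcal{B}}$ is block-diagonal with diagonal blocks ${\bf M}_0$ and ${\bf M}_1$, this conjugate is merely the interleaving of the two tridiagonals ${\bf M}_0$ and ${\bf M}_1$ in the order $v_0, v_1, \vp_0, v_2, \vp_1, \ldots, v_{d-1}, \vp_{d-2}, v_d$ prescribed by $\mcal{B}_{alt}$. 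The task is to verify that this interleaved matrix matches the block tridiagonal form claimed.

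Conceptually, the block tridiagonal structure arises from two facts: $A$ preserves both $T$-submodules $\Mx$ and $\Mxp$ by (\ref{ODS(T-mods)}), and $A$ acts tridiagonally on each of the standard bases $\{v_j\}$ and $\{\vp_j\}$ by (\ref{[A^(flat)]}) and (\ref{[A]_(vp)}). Grouping $\mcal{B}_{alt}$ into the ``positions'' $\{v_0\}$, $\{v_1,\vp_0\}, \{v_2,\vp_1\}, \ldots, \{v_{d-1},\vp_{d-2}\}$, $\{v_d\}$, tridiagonality forces $A$ to send each position into the union of itself and its two immediate neighbors, giving block tridiagonality. Within each resulting block, the $v$- and $\vp$-contributions decouple because $\Mx \cap \Mxp = 0$, so every block is itself diagonal.

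I would then read off the individual entries of ${\bf x}_i$, ${\bf y}_i$, ${\bf z}_i$ by collecting the coefficients of $Av_j$ and $A\vp_{j-1}$ into the three positions $i-1, i, i+1$, using (\ref{[A^(flat)]}) and (\ref{[A]_(vp)}). The boundary blocks at the singleton positions $\{v_0\}$ and $\{v_d\}$ collapse in size because the $\vp$-basis has no counterparts there, and the adjacent blocks ${\bf y}_0, {\bf z}_1, {\bf y}_{d-1}, {\bf z}_d$ inherit zero entries from the absence of $\vp_{-1}$ and $\vp_{d-1}$. There is no genuine obstacle: once the $T$-module decomposition $\W = \Mx \oplus \Mxp$ is invoked, the whole argument is bookkeeping; the only subtlety is the careful tracking of these endpoint positions.
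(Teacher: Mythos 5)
Your proposal is correct and follows essentially the same route as the paper: conjugate the block-diagonal matrix from Lemma \ref{lem:[A]_B} by the permutation transition matrix of Lemma \ref{TM:B<->Balt} and read off the interleaved entries, with the endpoint positions handled as you describe. The extra remarks about the $T$-module decomposition are harmless but not needed beyond what the conjugation argument already gives.
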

\begin{proof}
Let $[A]_{\mcal{B}}$ denote the matrix representing $A$ relative to $\mcal{B}$, and let $L$ denote the transition matrix from $\mcal{B}_{alt}$ to $\mcal{B}$. By linear algebra the matrix representing $A$ relative to $\mcal{B}_{alt}$ is $L[A]_{\mcal{B}}L^{-1}$. Evaluate this matrix using Lemma \ref{TM:B<->Balt} and Lemma \ref{lem:[A]_B} to obtain the result.
\end{proof}

\begin{lemma}\label{lem:[A]_tB} The matrix representing $A$ relative to the basis $\wt{\mcal{B}}$ is
\begin{equation*}\label{[A]_BC}
\left[
\begin{array}{cc}
{\bf \wt{M}_0} 	& {\bf 0} \\
 {\bf 0}			& {\bf \wt{M}_1}
\end{array}\right],
\end{equation*}
where
\[
{\bf \wt{M}_0} =
\begin{bmatrix}
\vspace{0.15cm}
\ta_0 & \tb_0 &&& \\
\tc_1 & \ta_1& \tb_1	&\\
	& \tc_2 & \ta_2 & \ddots	&\\ 
	& & \ddots & \ddots & \tb_{d-2} \vspace{0.15cm}\\
	& &	& \tc_{d-1} & \ta_{d-1} \\
\end{bmatrix},
\qquad  \qquad
{\bf \wt{M}_1} = 
\begin{bmatrix}
\vspace{0.15cm}
\tap_0 & \tbp_0 &&& \\
\tcp_1 & \tap_1& \tbp_1	&\\
	& \tcp_2 & \tap_2 & \ddots	&\\ 
	& & \ddots & \ddots & \tbp_{d-2} \vspace{0.15cm}\\
	& &	& \tcp_{d-1} & \tap_{d-1} \\
\end{bmatrix}.
\]
\end{lemma}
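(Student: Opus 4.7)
The plan is to exploit the orthogonal direct sum decomposition $\W = \MC + \MCp$ from (\ref{ODS(TC-mods)}), which exhibits $\W$ as a sum of two $\wt{T}$-submodules. Since $A \in \wt{T}$, the action of $A$ preserves each summand. Relative to the ordering of $\wt{\mcal{B}}$ in (\ref{basis(tB)})---namely, all $\{\tv_i\}_{i=0}^{d-1}$ first, followed by all $\{\tvp_i\}_{i=0}^{d-1}$---this invariance immediately forces the matrix representing $A$ to be block diagonal, with the two diagonal blocks being the matrices representing $A\big|_{\MC}$ on $\{\tv_i\}_{i=0}^{d-1}$ and $A\big|_{\MCp}$ on $\{\tvp_i\}_{i=0}^{d-1}$ respectively.

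First I would identify the upper block. By Lemma \ref{SB(MC)} the vectors $\{\tv_i\}_{i=0}^{d-1}$ form a $\wt{\Phi}$-standard basis for $\MC$, where $\wt{\Phi}$ is the primary Leonard system with respect to $C$ from Lemma \ref{LS(MC)}. By the general formula (\ref{[A^(flat)]}) applied to $\wt{\Phi}$, the matrix representing $A$ on $\MC$ relative to this $\wt{\Phi}$-standard basis is tridiagonal with lower diagonal $\{\tc_i\}_{i=1}^{d-1}$, main diagonal $\{\ta_i\}_{i=0}^{d-1}$, and upper diagonal $\{\tb_i\}_{i=0}^{d-2}$, as already displayed in (\ref{[A]_(Ci)}). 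This is exactly the block $\wt{\bf M}_0$.

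Next I would identify the lower block. By Lemma \ref{SB(MCp)} the vectors $\{\tvp_i\}_{i=0}^{d-1}$ form a $\wtPp$-standard basis for $\MCp$. By the same general formula (\ref{[A^(flat)]}) applied to $\wtPp$, together with the definition of the intersection numbers $\{\tap_i\}_{i=0}^{d-1}, \{\tbp_i\}_{i=0}^{d-2}, \{\tcp_i\}_{i=1}^{d-1}$ made just after (\ref{[A^*(C)]_(tvp)}), the matrix representing $A$ on $\MCp$ relative to $\{\tvp_i\}_{i=0}^{d-1}$ is tridiagonal with the stated entries, as displayed in (\ref{[A]_(upi)}). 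This is exactly the block $\wt{\bf M}_1$.

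Assembling the two blocks in the order dictated by (\ref{basis(tB)}) gives the claimed block-diagonal matrix. There is essentially no obstacle: the entire content of the lemma is the invariance of the two submodules under $A$, which is immediate from $A \in \wt{T}$ and the fact that both $\MC$ and $\MCp$ are $\wt{T}$-modules (the latter by the comment above (\ref{ODS(TC-mods)})), combined with the tridiagonal shapes already established in Sections \ref{DelC} and \ref{W-T(C)}.
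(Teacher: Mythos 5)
Your proof is correct and follows essentially the same route as the paper, which simply cites the tridiagonal matrices (\ref{[A]_(Ci)}) and (\ref{[A]_(upi)}) for $A$ on $\MC$ (in the basis $\{\tv_i\}=\{\hat C_i\}$) and on $\MCp$ (in the basis $\{\tvp_i\}$), together with the ordering of $\wt{\mcal{B}}$ in (\ref{basis(tB)}). Your added remark that the block-diagonal shape comes from $A\in\wt T$ preserving the two $\wt T$-submodules in (\ref{ODS(TC-mods)}) just makes explicit what the paper leaves implicit.
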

\begin{proof}
By (\ref{[A]_(Ci)}), (\ref{[A]_(upi)}) and (\ref{basis(tB)}).
\end{proof}

\begin{lemma}\label{lem:[A]_tBalt}
The matrix representing $A$ relative to the basis $\wt{\mcal{B}}_{alt}$ is
\begin{equation*}\label{[A]_BCa}
\left[
\begin{array}{ccccc}
\vspace{0.15cm}
\wt{\bf x}_0 & \wt{\bf y}_0 &&&{\bf 0}\\
\wt{\bf z}_1 & \wt{\bf x}_1 & \wt{\bf y}_1 \\
& \wt{\bf z}_2 & \wt{\bf x}_2 & \ddots \\
&&\ddots & \ddots & \wt{\bf y}_{d-2} \vspace{0.15cm}\\
{\bf 0}&&& \wt{\bf z}_{d-1} & \wt{\bf x}_{d-1}
\end{array}
\right],
\end{equation*}
where 
\begin{align*}
& \wt{\bf y}_{i} = 
\begin{bmatrix}
\tb_{i} & 0 \\
0 & \tbp_{i}
\end{bmatrix} && (0 \leq i \leq d-2), && \\
& \wt{\bf x}_i = 
\begin{bmatrix}
\ta_{i} & 0 \\
0 & \tap_{i}
\end{bmatrix} && (0 \leq i \leq d-1), && \\
& \wt{\bf z}_{i} = 
\begin{bmatrix}
\tc_{i} & 0 \\
0 & \tcp_{i}
\end{bmatrix} && (1 \leq i \leq d-1). &&  
\end{align*}
\end{lemma}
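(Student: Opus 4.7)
The plan is to reduce this to Lemma \ref{lem:[A]_tB} via a change of basis. First I would observe that the two bases $\wt{\mcal{B}}$ and $\wt{\mcal{B}}_{alt}$ consist of the same vectors listed in a different order, so the transition matrix between them is the permutation matrix $P$ given explicitly in Lemma \ref{TM(BC<=>BCa)}. By linear algebra, if $[A]_{\wt{\mcal{B}}}$ denotes the matrix representing $A$ relative to $\wt{\mcal{B}}$ and $[A]_{\wt{\mcal{B}}_{alt}}$ the matrix representing $A$ relative to $\wt{\mcal{B}}_{alt}$, then $[A]_{\wt{\mcal{B}}_{alt}} = P^{-1} [A]_{\wt{\mcal{B}}} P$. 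By Lemma \ref{lem:[A]_tB}, the matrix $[A]_{\wt{\mcal{B}}}$ is the block-diagonal matrix $\mathrm{blockdiag}(\wt{\bf M}_0, \wt{\bf M}_1)$, where $\wt{\bf M}_0$ is the tridiagonal matrix with entries $\ta_i, \tb_i, \tc_i$ and $\wt{\bf M}_1$ is the tridiagonal matrix with entries $\tap_i, \tbp_i, \tcp_i$.

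The key structural reason the result comes out as claimed is that $\MC$ and $\MCp$ are both $\wt{T}$-submodules of $\W$ and $A \in \wt{T}$, so each of these subspaces is $A$-invariant. Thus there is no mixing between the $\tv$-coordinates and the $\tvp$-coordinates under $A$, which forces every $2 \times 2$ block in $[A]_{\wt{\mcal{B}}_{alt}}$ to be diagonal. Conjugating the $2d \times 2d$ block-diagonal matrix $\mathrm{blockdiag}(\wt{\bf M}_0, \wt{\bf M}_1)$ by the permutation $P$ that interleaves the two halves $(\tv_0, \ldots, \tv_{d-1})$ and $(\tvp_0, \ldots, \tvp_{d-1})$ into $(\tv_0, \tvp_0, \tv_1, \tvp_1, \ldots)$ sends the $(i,j)$-entry of $\wt{\bf M}_0$ to the $(2i,2j)$-position and the $(i,j)$-entry of $\wt{\bf M}_1$ to the $(2i+1, 2j+1)$-position. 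This yields exactly the claimed block-tridiagonal matrix, with the $2 \times 2$ diagonal blocks $\wt{\bf x}_i = \mathrm{diag}(\ta_i, \tap_i)$, the super-diagonal blocks $\wt{\bf y}_i = \mathrm{diag}(\tb_i, \tbp_i)$, and the sub-diagonal blocks $\wt{\bf z}_i = \mathrm{diag}(\tc_i, \tcp_i)$.

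Alternatively, and perhaps more cleanly, I would give a direct verification by computing $A \tv_i$ and $A \tvp_i$ separately. From the matrix in Lemma \ref{lem:[A]_tB} (equivalently, from (\ref{[A]_(Ci)}) and (\ref{[A]_(upi)})) we have
\[
A \tv_i = \tc_i \tv_{i-1} + \ta_i \tv_i + \tb_i \tv_{i+1}, \qquad
A \tvp_i = \tcp_i \tvp_{i-1} + \tap_i \tvp_i + \tbp_i \tvp_{i+1},
\]
with the boundary conventions $\tv_{-1} = \tv_d = 0$ and $\tvp_{-1} = \tvp_d = 0$. Reading off the coefficients of the basis vectors of $\wt{\mcal{B}}_{alt}$ in this order yields exactly the displayed block-tridiagonal matrix.

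There is no substantial obstacle here. The only thing to be careful about is the bookkeeping at the endpoints (the first and last columns), where one of the $\wt{\bf z}_1$ or $\wt{\bf y}_{d-2}$ blocks might otherwise mistakenly involve nonexistent terms; but with the standing conventions $\tbp_{-1} = \tcp_d = 0$ and $\tb_{-1} = \tc_d = 0$ from Section \ref{W-T(C)} (and similarly the analogous $\wt\Phi$-conventions), these boundary cases are automatic and no separate argument is needed. The whole lemma is really a transcription of Lemma \ref{lem:[A]_tB} into the interleaved order.
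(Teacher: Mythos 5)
Your proposal is correct and follows essentially the same route as the paper: conjugating the block-diagonal matrix of Lemma \ref{lem:[A]_tB} by the permutation (transition) matrix of Lemma \ref{TM(BC<=>BCa)}, which interleaves the $\tv_i$ and $\tvp_i$ coordinates. The direct verification via $A\tv_i$ and $A\tvp_i$ that you add is a fine equivalent reading of the same computation.
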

\begin{proof}
Let $[A]_{\wt{\mcal{B}}}$ denote the matrix representing $A$ relative to $\wt{\mcal{B}}$, and let $L$ denote the transition matrix from $\wt{\mcal{B}}_{alt}$ to $\wt{\mcal{B}}$. By linear algebra the matrix representing $A$ relative to $\mcal{B}_{alt}$ is $L[A]_{\wt{\mcal{B}}}L^{-1}$. Evaluate this matrix using Lemma \ref{TM(BC<=>BCa)} and Lemma \ref{lem:[A]_tB} to obtain the result.
\end{proof} 

\noindent
We are done with $A$. We now consider the matrices representing $A^*$ relative to the five bases.

\begin{lemma}\label{[A*]_C}
The matrix representing $A^*$ relative to the basis $\mcal{C}$ is
$$
\diag(\tht^*_0, \tht^*_1, \tht^*_1, \tht^*_2, \tht^*_2, \ldots, \tht^*_{d-1}, \tht^*_{d-1}, \tht^*_d).
$$
\end{lemma}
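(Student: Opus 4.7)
The proof is essentially immediate from Corollary \ref{action(A*;W)}. The plan is as follows.

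First, I would list the basis $\mcal{C}$ in the order given by (\ref{basis(C)}), namely
\[
\hat{C}^-_0,\ \hat{C}^+_0,\ \hat{C}^-_1,\ \hat{C}^+_1,\ \hat{C}^-_2,\ \hat{C}^+_2,\ \ldots,\ \hat{C}^-_{d-1},\ \hat{C}^+_{d-1},
\]
and then apply $A^*$ to each basis vector using Corollary \ref{action(A*;W)}. That corollary states $A^*\hat{C}^-_i=\tht^*_i\hat{C}^-_i$ and $A^*\hat{C}^+_i=\tht^*_{i+1}\hat{C}^+_i$ for $0\le i\le d-1$, so every basis vector in $\mcal{C}$ is an eigenvector of $A^*$. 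Consequently the representing matrix is diagonal, and its diagonal entries, read off in the order of $\mcal{C}$, are $\tht^*_0,\tht^*_1,\tht^*_1,\tht^*_2,\tht^*_2,\ldots,\tht^*_{d-1},\tht^*_{d-1},\tht^*_d$.

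There is no real obstacle here. The underlying content was already carried out in Lemma \ref{action(E*;W)} and its corollary, where the action of the dual primitive idempotents $E^*_r$ (and hence of $A^*=\sum_r\tht^*_rE^*_r$) on the vectors $\hat{C}^{\pm}_i$ was determined from the definition (\ref{def(C-;C+)}) of the sets $C^{\pm}_i$: a vertex in $C^-_i$ sits at distance $i$ from $x$ while a vertex in $C^+_i$ sits at distance $i+1$ from $x$. Thus the only ``work'' is to properly interleave the two sequences $\{\tht^*_i\}_{i=0}^{d-1}$ and $\{\tht^*_{i+1}\}_{i=0}^{d-1}$ as dictated by the ordering in $\mcal{C}$.
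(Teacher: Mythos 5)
Your proposal is correct and matches the paper's own proof, which likewise cites Corollary \ref{action(A*;W)} together with the ordering of the basis $\mcal{C}$ in (\ref{basis(C)}) to read off the diagonal entries. Nothing is missing.
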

\begin{proof}
From Corollary \ref{action(A*;W)} and (\ref{basis(C)}).
\end{proof}

\begin{lemma}
The matrix representing $A^*$ relative to the basis $\mcal{B}$ is
$$
\diag(\tht^*_0, \tht^*_1, \tht^*_2, \tht^*_3, \ldots, \tht^*_d, \tht^*_1, \tht^*_2, \ldots, \tht^*_{d-1}).
$$
\end{lemma}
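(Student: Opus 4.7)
The plan is to exploit the orthogonal direct sum decomposition $\W = \Mx + \Mxp$ of $\W$ as a $T$-module, as recorded in (\ref{ODS(T-mods)}). Since $A^* \in T$, each of the summands $\Mx$ and $\Mxp$ is $A^*$-invariant. The basis $\mcal{B}$ in (\ref{basis(B)}) is the concatenation of the $\Phi$-standard basis $\{v_i\}_{i=0}^{d}$ of $\Mx$ and the $\Phi^{\perp}$-standard basis $\{\vp_i\}_{i=0}^{d-2}$ of $\Mxp$. Consequently, the matrix $[A^*]_{\mcal{B}}$ is block diagonal, with an upper-left $(d+1)\times(d+1)$ block representing $A^*|_{\Mx}$ in the basis $\{v_i\}_{i=0}^{d}$ and a lower-right $(d-1)\times(d-1)$ block representing $A^*|_{\Mxp}$ in the basis $\{\vp_i\}_{i=0}^{d-2}$.

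For the upper-left block, I would invoke equation (\ref{[A^(*flat)]}) applied to the primary Leonard system $\Phi$ on $\Mx$: since $\{v_i\}_{i=0}^{d}$ is $\Phi$-standard by construction (see Example \ref{ex;Mx} together with (\ref{Phi-SB})), the upper-left block equals $\diag(\tht^*_0, \tht^*_1, \ldots, \tht^*_d)$. For the lower-right block, I would cite (\ref{[A*]_(vp)}), which gives that the matrix representing $A^*$ in the $\Phi^{\perp}$-standard basis $\{\vp_i\}_{i=0}^{d-2}$ is $\diag(\thtsp_0, \thtsp_1, \ldots, \thtsp_{d-2})$. Then Lemma \ref{tht(i)=thtp(i)}, which identifies $\thtsp_i = \tht^*_{i+1}$ for $0 \leq i \leq d-2$, rewrites this block as $\diag(\tht^*_1, \tht^*_2, \ldots, \tht^*_{d-1})$.

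Concatenating the two diagonal blocks in the order imposed by $\mcal{B}$ yields the claimed diagonal matrix
$$
\diag(\tht^*_0, \tht^*_1, \tht^*_2, \ldots, \tht^*_d, \tht^*_1, \tht^*_2, \ldots, \tht^*_{d-1}).
$$
There is no real obstacle here; the statement is a bookkeeping consequence of the $T$-module decomposition of $\W$ together with facts about the primary and perpendicular Leonard systems that have already been established. The only point requiring any care is to confirm that $\Mxp$ is indeed $A^*$-invariant, which follows because $A^* \in T$ and $\Mxp$ is a $T$-submodule of $\W$ (being the orthogonal complement of the $T$-module $\Mx$ in the $T$-module $\W$, and $T$ being closed under the conjugate-transpose map).
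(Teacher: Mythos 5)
Your proposal is correct and follows essentially the same route as the paper: the paper's proof simply cites (\ref{[A^(*flat)]}), (\ref{[A*]_(vp)}), the definition (\ref{basis(B)}) of $\mcal{B}$, and the identification (\ref{tht(i)=thtp(i);eq}), which is exactly the combination you assemble. Your additional remark verifying that $\Mxp$ is $A^*$-invariant (so that the matrix is block diagonal) is a detail the paper leaves implicit, but it changes nothing in substance.
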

\begin{proof}
Use (\ref{[A^(*flat)]}), (\ref{[A*]_(vp)}), (\ref{basis(B)}) along with (\ref{tht(i)=thtp(i);eq}).
\end{proof}

\begin{lemma} 
The matrix representing ${A}^*$ relative to the basis ${\mcal{B}}_{alt}$ is
$$
\diag(\tht^*_0, \tht^*_1, \tht^*_1, \tht^*_2, \tht^*_2, \ldots, \tht^*_{d-1}, \tht^*_{d-1}, \tht^*_d).
$$
\end{lemma}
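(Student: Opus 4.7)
The plan is direct: I would show that each vector in $\mcal{B}_{alt}$ is an eigenvector of $A^*$, and identify the corresponding eigenvalue. Since $\mcal{B}_{alt}$ is an ordered basis of $A^*$-eigenvectors, the representing matrix is automatically diagonal with entries given by the eigenvalues in the listed order.

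First I would recall that $A^* = \sum_{r=0}^{d} \tht^*_r E^*_r$, so $A^*$ acts as $\tht^*_r$ on $E^*_r \V$. From Example \ref{ex;Mx}, the vectors $\{v_i\}_{i=0}^{d}$ are a $\Phi$-standard basis for $\Mx$, so by Lemma \ref{Phi-sb}(i) we have $v_i \in E^*_i\V$, and hence
$$A^* v_i = \tht^*_i v_i \qquad (0 \le i \le d).$$
Similarly, by Lemma \ref{Phip-SB} the vectors $\{\vp_i\}_{i=0}^{d-2}$ are a $\Phi^{\perp}$-standard basis for $\Mxp$, so $\vp_i \in E^*_{i+1}(\Mxp) \subseteq E^*_{i+1}\V$, giving
$$A^* \vp_i = \tht^*_{i+1} \vp_i \qquad (0 \le i \le d-2).$$
(Here one uses Lemma \ref{tht(i)=thtp(i)}, which identifies the dual eigenvalues $\thtsp_i = \tht^*_{i+1}$ of the shifted Leonard system $\Phi^\perp$.)

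Second, I would simply read off the diagonal entries in the order prescribed by \eqref{basis(Ba)}. Listing the basis as $v_0,\; v_1,\vp_0,\; v_2,\vp_1,\; v_3,\vp_2,\; \ldots,\; v_{d-1},\vp_{d-2},\; v_d$, the associated eigenvalues are
$$\tht^*_0,\; \tht^*_1,\tht^*_1,\; \tht^*_2,\tht^*_2,\; \tht^*_3,\tht^*_3,\; \ldots,\; \tht^*_{d-1},\tht^*_{d-1},\; \tht^*_d,$$
which matches the asserted $\diag(\tht^*_0, \tht^*_1, \tht^*_1, \tht^*_2, \tht^*_2, \ldots, \tht^*_{d-1}, \tht^*_{d-1}, \tht^*_d)$.

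There is essentially no obstacle here: the entire content reduces to the membership of $v_i$ in $E^*_i\V$ and of $\vp_i$ in $E^*_{i+1}\V$, both of which are consequences of the standard-basis characterization (Lemma \ref{Phi-sb}) together with the endpoint/dual-endpoint calculation carried out in Lemma \ref{Mxp} and Lemma \ref{tht(i)=thtp(i)}. Alternatively, one could deduce this result purely by conjugation: apply the permutation matrix of Lemma \ref{TM:B<->Balt} to the diagonal matrix representing $A^*$ on $\mcal{B}$, exactly as was done for $A$ in the proof of Lemma \ref{lem:[A]_tBalt}. Both routes give the claimed identity immediately.
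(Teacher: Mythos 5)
Your proposal is correct and follows essentially the same route as the paper: the paper's proof simply cites the diagonal form of $A^*$ on the $\Phi$-standard basis (\ref{[A^(*flat)]}) and on the $\Phi^{\perp}$-standard basis (\ref{[A*]_(vp)}), the ordering (\ref{basis(Ba)}), and the identification $\thtsp_i=\tht^*_{i+1}$ from (\ref{tht(i)=thtp(i);eq}), which is exactly the content of your eigenvector argument (your use of $v_i\in E^*_i\V$ and $\vp_i\in E^*_{i+1}\V$ is just an equivalent repackaging of those citations). Your alternative conjugation route via Lemma \ref{TM:B<->Balt} is also valid but adds nothing beyond the direct reading.
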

\begin{proof}
Use (\ref{[A^(*flat)]}), (\ref{[A*]_(vp)}), (\ref{basis(Ba)}) along with (\ref{tht(i)=thtp(i);eq}).
\end{proof}

\begin{lemma}
The matrix representing $A^*$ relative to the basis $\wt{\mcal{B}}$ is
\begin{equation*}\label{[A*]_BC}
\left[
\begin{array}{cc}
{\bf C} & {\bf D} \\
{\bf E} & {\bf F}
\end{array}
\right],
\end{equation*} 
where 
\begin{eqnarray*}
{\bf C} & = & 
\diag \left[
\tfrac{\tht^*_0-\tau_0\tht^*_1}{1-\tau_0},
\tfrac{\tht^*_1-\tau_1\tht^*_2}{1-\tau_1},
\tfrac{\tht^*_2-\tau_2\tht^*_3}{1-\tau_2},
\dotsm, 
\tfrac{\tht^*_{d-1}-\tau_{d-1}\tht^*_{d}}{1-\tau_{d-1}}
\right],\\
{\bf D} & = &
\diag \left[
\tfrac{\zeta_0\tau_0(\tht^*_0-\tht^*_1)}{1-\tau_0},
\tfrac{\zeta_1\tau_1(\tht^*_1-\tht^*_2)}{1-\tau_1},
\tfrac{\zeta_2\tau_2(\tht^*_2-\tht^*_3)}{1-\tau_2},
\dotsm,
\tfrac{\zeta_{d-1}\tau_{d-1}(\tht^*_{d-1}-\tht^*_d)}{1-\tau_{d-1}}
\right], \\ 
{\bf E} & = &
\diag \left[
\tfrac{\tht^*_0-\tht^*_1}{\zeta_0(\tau_0-1)},
\tfrac{\tht^*_1-\tht^*_2}{\zeta_1(\tau_1-1)},
\tfrac{\tht^*_2-\tht^*_3}{\zeta_2(\tau_2-1)},
\dotsm,
\tfrac{\tht^*_{d-1}-\tht^*_d}{\zeta_{d-1}(\tau_{d-1}-1)}
\right], \\
{\bf F} & = &
\diag \left[
\tfrac{\tau_0\tht^*_0-\tht^*_1}{\tau_0-1},
\tfrac{\tau_1\tht^*_1-\tht^*_2}{\tau_1-1},
\tfrac{\tau_2\tht^*_2-\tht^*_3}{\tau_2-1},
\dotsm,
\tfrac{\tau_{d-1}\tht^*_{d-1}-\tht^*_d}{\tau_{d-1}-1}
\right],
\end{eqnarray*}
and for $0 \leq i \leq d-1$
\begin{equation*} 
\begin{array}{ccl}
\frac{\tht^*_i-\tau_i\tht^*_{i+1}}{1-\tau_i} &=&  
\tht^*_i+\frac{h^*s^*q^{-i-1}(1-q)(1-r_1q^{i+1})(1-r_2q^{i+1})}{s^*-r_1r_2},
\vspace{0.1cm}\\
\frac{\zeta_i\tau_i(\tht^*_i-\tht^*_{i+1})}{1-\tau_i}&=&
\frac{h^*s^*q^{-2i-1}(1-q)(r_1-s^*q^{i+1})(r_2-s^*q^{i+1})(1-r_1q^{i+1})(1-r_2q^{i+1})}{s^*-r_1r_2},
\vspace{0.1cm}\\ 
\frac{\tht^*_i-\tht^*_{i+1}}{\zeta_i(\tau_i-1)} &=&
\frac{h^*q^{-1}(q-1)}{s^*-r_1r_2}, \vspace{0.1cm}\\
\frac{\tau_i\tht^*_i-\tht^*_{i+1}}{\tau_i-1} &=&
\tht^*_{i+1}+\frac{h^*s^*q^{-i-1}(q-1)(1-r_1q^{i+1})(1-r_2q^{i+1})}{s^*-r_1r_2}.
\end{array}
\end{equation*}
\end{lemma}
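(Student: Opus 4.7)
The plan is to read off the matrix of $A^*$ on $\wt{\mcal{B}}$ directly from the defining expressions for $\tv_i$ and $\tvp_i$. First I observe from (\ref{SB-MC}) and (\ref{tilde(vp_i)}) that for each $i$ both $\tv_i$ and $\tvp_i$ lie in the two-dimensional subspace $\mbb{C}\hat{C}^-_i+\mbb{C}\hat{C}^+_i$, and they span it since $\tau_i\ne 1$ (which follows from (\ref{tau;q-term;eq}) together with the inequalities in Example \ref{q-rac;PA}). By Corollary \ref{action(A*;W)} this two-dimensional subspace is $A^*$-invariant, hence $A^*\tv_i,A^*\tvp_i\in\mbb{C}\tv_i+\mbb{C}\tvp_i$. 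Consequently the matrix of $A^*$ on $\wt{\mcal{B}}$ has the displayed $2\times 2$ block form with all four blocks $\mathbf{C},\mathbf{D},\mathbf{E},\mathbf{F}$ diagonal, and it only remains to compute the $i$-th diagonal entry of each block.

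To extract these entries, combine Corollary \ref{action(A*;W)} with (\ref{SB-MC}) and (\ref{tilde(vp_i)}) to get
\[
A^*\tv_i=\tht^*_i\hat{C}^-_i+\tht^*_{i+1}\hat{C}^+_i,\qquad
A^*\tvp_i=\zeta_i\tau_i\tht^*_i\hat{C}^-_i+\zeta_i\tht^*_{i+1}\hat{C}^+_i.
\]
Substitute the expressions for $\hat{C}^-_i,\hat{C}^+_i$ from Lemma \ref{C^pm=tvi+tvip} and collect the coefficients of $\tv_i$ and $\tvp_i$. This immediately produces the four scalar entries claimed for $\mathbf{C},\mathbf{D},\mathbf{E},\mathbf{F}$ in terms of $\tht^*_i,\tht^*_{i+1},\tau_i,\zeta_i$.

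The remaining task is to verify the closed-form $q$-expressions at the end of the lemma. For this, substitute (\ref{theta^*_i}) for the dual eigenvalues, (\ref{tau;q-term;eq}) for $\tau_i$, and (\ref{zeta_i}) for $\zeta_i$. The simplifications are unified by two intermediate identities: first, by clearing denominators in (\ref{tau;q-term;eq}),
\[
1-\tau_i=\frac{(r_1r_2-s^*)(1-s^*q^{2i+2})}{(r_1-s^*q^{i+1})(r_2-s^*q^{i+1})},
\]
which accounts for the common factor $(s^*-r_1r_2)(1-s^*q^{2i+2})$ in the denominators of the four target formulas; second, directly from (\ref{theta^*_i}),
\[
\tht^*_i-\tht^*_{i+1}=h^*q^{-i-1}(q-1)(1-s^*q^{2i+2}),
\]
which supplies the factor $h^*(q-1)$ appearing in each formula. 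With these two identities in hand, each of the four evaluations reduces to elementary cancellation. The main obstacle is therefore purely notational: careful bookkeeping of the $q$-rational simplifications, with no new conceptual ingredient beyond the lemmas already established.
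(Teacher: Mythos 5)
Your proof is correct, and it takes a slightly different route from the paper's. The paper obtains the result by pure conjugation: it takes the diagonal matrix $[A^*]_{\mcal{B}}$ representing $A^*$ relative to $\mcal{B}$ and computes $L[A^*]_{\mcal{B}}L^{-1}$, where $L$ is the transition matrix from $\wt{\mcal{B}}$ to $\mcal{B}$. You bypass $\mcal{B}$ entirely and work in the eigenbasis $\mcal{C}$, using Corollary \ref{action(A*;W)} together with (\ref{SB-MC}), (\ref{tilde(vp_i)}) and Lemma \ref{C^pm=tvi+tvip}. The content is the same, since the transition from $\wt{\mcal{B}}$ to $\mcal{B}$ factors through $\mcal{C}$ anyway, but your route makes the block shape of $\mathbf{C},\mathbf{D},\mathbf{E},\mathbf{F}$ visible immediately (each pair $\tv_i,\tvp_i$ spans the $A^*$-invariant plane $\mbb{C}\hat{C}^-_i+\mbb{C}\hat{C}^+_i$) and avoids assembling the composite transition matrix through the intermediate bases; it also mirrors the "evaluate the action in two ways" technique the paper uses elsewhere (e.g.\ Lemma \ref{6-terms;MC}). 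Your two auxiliary identities, $1-\tau_i=\tfrac{(r_1r_2-s^*)(1-s^*q^{2i+2})}{(r_1-s^*q^{i+1})(r_2-s^*q^{i+1})}$ and $\tht^*_i-\tht^*_{i+1}=h^*q^{-i-1}(q-1)(1-s^*q^{2i+2})$, are exactly what the four closed-form entries require, and they check out. (The remark that $\tau_i\ne 1$ is not strictly needed, since Lemma \ref{C^pm=tvi+tvip} already rests on the direct sum $\W=\MC+\MCp$, but it does no harm.)
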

\begin{proof}
Let $[A^*]_{\mcal{B}}$ denote the matrix representing $A^*$ relative to $\mcal{B}$, and let $L$ denote the transition matrix from $\wt{\mcal{B}}$ to $\mcal{B}$. By linear algebra the matrix representing $A^*$ relative to $\wt{\mcal{B}}$ is $L[A^*]_{\mcal{B}}L^{-1}$. The result follows.
\end{proof}

\begin{lemma}
The matrix representing $A^*$ relative to the basis $\wt{\mcal{B}}_{alt}$ is 
\begin{equation*}\label{[A*]_BCa}
\bdiag \big[
{\bf X}_0, {\bf X}_1, \dotsm, {\bf X}_{d-1}
\big],
\end{equation*}
where for $0 \leq i \leq d-1$,
$$
{\bf X}_i = 
\left[
\begin{array}{cc}
\frac{\tht^*_i-\tau_i\tht^*_{i+1}}{1-\tau_i} & 
					\frac{\zeta_i\tau_i(\tht^*_i-\tht^*_{i+1})}{1-\tau_i} \\
\frac{\tht^*_i-\tht^*_{i+1}}{\zeta_i(\tau_i-1)} &
					\frac{\tau_i\tht^*_i-\tht^*_{i+1}}{\tau_i-1}
\end{array}
\right].
$$ 
\end{lemma}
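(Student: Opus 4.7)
The plan is to compute $A^*\tv_i$ and $A^*\tvp_i$ directly and then re-express the results in the basis $\{\tv_i,\tvp_i\}$. By Corollary \ref{action(A*;W)} we have $A^*\hat{C}^-_i = \tht^*_i\hat{C}^-_i$ and $A^*\hat{C}^+_i = \tht^*_{i+1}\hat{C}^+_i$. Applying $A^*$ to $\tv_i = \hat{C}^-_i + \hat{C}^+_i$ (from (\ref{SB-MC})) and to $\tvp_i = \zeta_i(\tau_i\hat{C}^-_i+\hat{C}^+_i)$ (from (\ref{tilde(vp_i)})) produces linear combinations involving only $\hat{C}^-_i$ and $\hat{C}^+_i$ with the same index $i$; in particular, no terms at indices $j\ne i$ appear. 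This is the source of the block-diagonal structure.

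Next I would invoke Lemma \ref{C^pm=tvi+tvip} to rewrite $\hat{C}^-_i$ and $\hat{C}^+_i$ back in terms of $\tv_i$ and $\tvp_i$. Since this substitution only uses $\tv_i,\tvp_i$ (and no basis vectors at other indices), the matrix of $A^*$ in $\wt{\mcal{B}}_{alt}$ splits as a direct sum of $2\times 2$ blocks. Collecting coefficients yields
$$\mathbf{X}_i=\begin{bmatrix}\tfrac{\tht^*_i-\tau_i\tht^*_{i+1}}{1-\tau_i} & \tfrac{\zeta_i\tau_i(\tht^*_i-\tht^*_{i+1})}{1-\tau_i}\\ \tfrac{\tht^*_i-\tht^*_{i+1}}{\zeta_i(\tau_i-1)} & \tfrac{\tau_i\tht^*_i-\tht^*_{i+1}}{\tau_i-1}\end{bmatrix},$$
which is exactly the asserted block.

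It remains to evaluate each of the four entries in closed form. From (\ref{theta^*_i}) a short calculation gives $\tht^*_{i+1}-\tht^*_i = h^*q^{-i-1}(1-q)(1-s^*q^{2i+2})$. The key intermediate identity is
$$1-\tau_i=\frac{(r_1r_2-s^*)(1-s^*q^{2i+2})}{(r_1-s^*q^{i+1})(r_2-s^*q^{i+1})},$$
which follows by expanding the numerator of $1-\tau_i$ using Lemma \ref{tau;q-term} and collecting terms in powers of $s^*q^{2i+2}$. With this identity the factor $1-s^*q^{2i+2}$ cancels against the denominator in $\tht^*_{i+1}-\tht^*_i$, and substituting the expression for $\zeta_i$ from Lemma \ref{Lem(zeta_i)} yields the four displayed formulas.

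The main obstacle is the algebraic simplification in the final step, in particular the factorization of $1-\tau_i$ above; once that identity is in hand, the verification of each of the four entries is routine manipulation of rational expressions in $q, r_1, r_2, s^*, h^*$. An alternative approach would be to conjugate the matrix for $A^*$ relative to $\wt{\mcal{B}}$ (computed in the preceding lemma) by the permutation matrix of Lemma \ref{TM(BC<=>BCa)}, but the direct computation is cleaner because it makes transparent \emph{why} the matrix is block diagonal in $\wt{\mcal{B}}_{alt}$.
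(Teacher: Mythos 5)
Your proof is correct, but it takes a different route from the paper. The paper's proof is a one-line change-of-basis argument: it writes the representing matrix as $L[A^*]_{\mcal{B}_{alt}}L^{-1}$, where $[A^*]_{\mcal{B}_{alt}}$ is the diagonal matrix $\diag(\tht^*_0,\tht^*_1,\tht^*_1,\ldots,\tht^*_{d-1},\tht^*_{d-1},\tht^*_d)$ and $L$ is the transition matrix from $\wt{\mcal{B}}_{alt}$ to $\mcal{B}_{alt}$, leaving the block multiplication implicit. You instead compute directly: from Corollary \ref{action(A*;W)}, $A^*\tv_i=\tht^*_i\hat{C}^-_i+\tht^*_{i+1}\hat{C}^+_i$ and $A^*\tvp_i=\zeta_i(\tau_i\tht^*_i\hat{C}^-_i+\tht^*_{i+1}\hat{C}^+_i)$, and then Lemma \ref{C^pm=tvi+tvip} converts back to $\tv_i,\tvp_i$; collecting coefficients reproduces exactly the four entries of ${\bf X}_i$ (I checked they land in the correct positions under the paper's convention), and since only index $i$ ever appears, the block-diagonal shape is manifest rather than emerging from matrix multiplication. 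Your approach is slightly longer to write but more self-explanatory, and it is essentially the computation the paper's conjugation encodes, since $L$ and $L^{-1}$ are themselves built from (\ref{SB-MC}), (\ref{tilde(vp_i)}) and Lemma \ref{C^pm=tvi+tvip}. One remark: your final two paragraphs (the identity $1-\tau_i=\tfrac{(r_1r_2-s^*)(1-s^*q^{2i+2})}{(r_1-s^*q^{i+1})(r_2-s^*q^{i+1})}$, the formula for $\tht^*_{i+1}-\tht^*_i$, and the closed-form evaluation of the entries) are correct but not needed for this lemma: the statement gives the entries of ${\bf X}_i$ only in terms of $\tht^*_i,\tau_i,\zeta_i$; the explicit formulas in $q,s^*,r_1,r_2,h^*$ belong to the preceding lemma (the representation of $A^*$ relative to $\wt{\mcal{B}}$), so you may simply cite that lemma or omit this part.
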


\begin{proof}
Let $[A^*]_{\mcal{B}_{alt}}$ denote the matrix representing $A^*$ relative to $\mcal{B}_{alt}$, and let $L$ denote the transition matrix from $\wt{\mcal{B}}_{alt}$ to $\mcal{B}_{alt}$. By linear algebra the matrix representing $A^*$ relative to $\wt{\mcal{B}}_{alt}$ is $L[A^*]_{\mcal{B}_{alt}}L^{-1}$. The result follows.
\end{proof}

\noindent
We are done with $A^*$.  We now consider the matrices representing $\wt{A}^*$ relative to the five bases.

\begin{lemma}\label{[A*C]_C}
The matrix representing $\wt{A}^*$ relative to the basis $\mcal{C}$ is
$$
\diag(\wt{\tht}^*_0, \wt{\tht}^*_0, \wt{\tht}^*_1, \wt{\tht}^*_1, \wt{\tht}^*_2, \wt{\tht}^*_2, \ldots, \wt{\tht}^*_{d-1}, \wt{\tht}^*_{d-1}).
$$
\end{lemma}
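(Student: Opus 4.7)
The plan is to invoke Corollary \ref{action(A*C;W)} directly. That corollary states that for $0 \leq i \leq d-1$, the action of $\wt{A}^*$ on the vectors $\hat{C}^-_i, \hat{C}^+_i$ is given by
$$
\wt{A}^*.\hat{C}^-_i = \wt{\tht}^*_i\hat{C}^-_i, \qquad \qquad \wt{A}^*.\hat{C}^+_i = \wt{\tht}^*_i\hat{C}^+_i.
$$
So each vector in the basis $\mcal{C}$ is an eigenvector for $\wt{A}^*$, which immediately forces the matrix of $\wt{A}^*$ relative to $\mcal{C}$ to be diagonal.

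Next I would read off the diagonal entries in the order prescribed by $\mcal{C}$, given in (\ref{basis(C)}) as
$$
\mcal{C} = \{\hat{C}^-_0, \hat{C}^+_0, \hat{C}^-_1, \hat{C}^+_1, \ldots, \hat{C}^-_{d-1}, \hat{C}^+_{d-1}\}.
$$
Since $\hat{C}^-_i$ and $\hat{C}^+_i$ both correspond to the eigenvalue $\wt{\tht}^*_i$, the eigenvalue $\wt{\tht}^*_i$ appears twice consecutively on the diagonal for each $i \in \{0,1,\ldots,d-1\}$. This yields the claimed matrix
$$
\diag(\wt{\tht}^*_0, \wt{\tht}^*_0, \wt{\tht}^*_1, \wt{\tht}^*_1, \ldots, \wt{\tht}^*_{d-1}, \wt{\tht}^*_{d-1}).
$$

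There is no substantive obstacle here; the result is a one-line consequence of Corollary \ref{action(A*C;W)} together with the ordering convention for $\mcal{C}$. The only point worth noting is that this lemma parallels Lemma \ref{[A*]_C} (the matrix of $A^*$ relative to $\mcal{C}$), the difference being that $A^*$ separates $\hat{C}^+_{i-1}$ and $\hat{C}^-_i$ (both sitting inside $E^*_i\W$) via the dual eigenvalues $\tht^*_i$, while $\wt{A}^*$ groups $\hat{C}^-_i$ and $\hat{C}^+_i$ (both sitting inside $\wt{E}^*_i\W$) under the common eigenvalue $\wt{\tht}^*_i$. This reflects the two different partitions of $X$ (by distance to $x$ vs.\ by distance to $C$) that underlie the two dual adjacency matrices.
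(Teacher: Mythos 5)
Your proof is correct and matches the paper's argument exactly: the paper also deduces the lemma directly from Corollary \ref{action(A*C;W)} together with the ordering of the basis $\mcal{C}$ in (\ref{basis(C)}). The additional comparison with Lemma \ref{[A*]_C} is accurate but not needed.
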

\begin{proof}
From Lemma \ref{action(A*C;W)} and (\ref{basis(C)}).
\end{proof}

\begin{lemma} The matrix representing $\wt{A}^*$ relative to the basis $\wt{\mcal{B}}$ is
$$
\diag(\wt{\tht}^*_0, \wt{\tht}^*_1, \wt{\tht}^*_2, \ldots , \wt{\tht}^*_{d-1}, \wt{\tht}^*_0, \wt{\tht}^*_1, \wt{\tht}^*_2, \ldots , \wt{\tht}^*_{d-1}).
$$
\end{lemma}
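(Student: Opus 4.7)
The plan is to show that each basis vector in $\wt{\mcal{B}}$ is an eigenvector of $\wt{A}^*$, and to identify the corresponding eigenvalue. Since the basis is ordered as $\tv_0,\tv_1,\ldots,\tv_{d-1},\tvp_0,\tvp_1,\ldots,\tvp_{d-1}$, this will immediately yield the diagonal matrix displayed in the statement.

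First I would handle the vectors $\tv_i = \hat{C}_i$ for $0 \leq i \leq d-1$. By Lemma \ref{prim-T(C)}(i), $\hat{C}_i = \wt{E}^*_i {\bf j}$, so $\tv_i$ lies in the $\wt{\tht}^*_i$-eigenspace of $\wt{A}^*$. Concretely, using the decomposition $\wt{A}^* = \sum_{j=0}^{d-1}\wt{\tht}^*_j\wt{E}^*_j$ from (\ref{A*C=sum(tht*E*C)}) together with the idempotent relations $\wt{E}^*_j\wt{E}^*_i = \delta_{ij}\wt{E}^*_i$, one obtains
$$
\wt{A}^*\tv_i = \sum_{j=0}^{d-1}\wt{\tht}^*_j\wt{E}^*_j\wt{E}^*_i{\bf j} = \wt{\tht}^*_i\wt{E}^*_i{\bf j} = \wt{\tht}^*_i\tv_i.
$$

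Next I would handle the vectors $\tvp_i$ for $0 \leq i \leq d-1$. By definition (\ref{tilde(vp_i)}), $\tvp_i = \zeta_i(\tau_i\hat{C}^-_i + \hat{C}^+_i)$. By Corollary \ref{action(A*C;W)}, both $\hat{C}^-_i$ and $\hat{C}^+_i$ are eigenvectors of $\wt{A}^*$ with the same eigenvalue $\wt{\tht}^*_i$. Hence
$$
\wt{A}^*\tvp_i = \zeta_i\bigl(\tau_i\wt{\tht}^*_i\hat{C}^-_i + \wt{\tht}^*_i\hat{C}^+_i\bigr) = \wt{\tht}^*_i\tvp_i.
$$

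Combining these two computations shows that the $2d$ vectors in $\wt{\mcal{B}}$ are eigenvectors of $\wt{A}^*$ with eigenvalues $\wt{\tht}^*_0,\wt{\tht}^*_1,\ldots,\wt{\tht}^*_{d-1},\wt{\tht}^*_0,\wt{\tht}^*_1,\ldots,\wt{\tht}^*_{d-1}$ in that order, which gives the claimed diagonal matrix. There is no real obstacle here; this is a direct bookkeeping verification once one observes that both halves of $\wt{\mcal{B}}$ respect the eigenspace decomposition $\W = \sum_{i=0}^{d-1}\wt{E}^*_i\W$ from Lemma \ref{W=ods(E*(C)i)}, with $\tv_i, \tvp_i \in \wt{E}^*_i\W$.
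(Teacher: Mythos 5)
Your proof is correct. It differs mildly from the paper's route: the paper simply concatenates the previously displayed representations of $\wt{A}^*$ on the two summands $\MC$ and $\MCp$ relative to their standard bases (the diagonal matrix (\ref{[A^*(C)]_(C^)}) and the matrix (\ref{[A^*(C)]_(tvp)}) coming from the $\wtPp$-standard-basis formalism), and then invokes Lemma \ref{ttht=tthts} to identify the dual eigenvalues $\tthtsp_i$ of $\wtPp$ with $\ttht^*_i=\wt{\tht}^*_i$. You instead verify eigenvector-by-eigenvector, using $\wt{A}^*=\sum_{j}\wt{\tht}^*_j\wt{E}^*_j$ for $\tv_i=\wt{E}^*_i{\bf j}$ and the explicit formula $\tvp_i=\zeta_i(\tau_i\hat{C}^-_i+\hat{C}^+_i)$ together with Corollary \ref{action(A*C;W)} for the second half. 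Your argument is more self-contained and elementary, since it never needs the fact that $\{\tvp_i\}^{d-1}_{i=0}$ is a $\wtPp$-standard basis nor the eigenvalue identification of Lemma \ref{ttht=tthts}; it only uses that each basis vector lies in $\wt{E}^*_i\W$. The paper's proof is shorter given what has already been established and keeps the bookkeeping aligned with the Leonard-system data for $\wtPp$. Both arguments rest on the same underlying fact, namely the decomposition $\W=\sum^{d-1}_{i=0}\wt{E}^*_i\W$ of Lemma \ref{W=ods(E*(C)i)}, so there is no gap in either.
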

\begin{proof}
Use (\ref{[A^*(C)]_(C^)}), (\ref{[A^*(C)]_(tvp)}), (\ref{basis(tB)}) along with Lemma \ref{ttht=tthts}.
\end{proof}

\begin{lemma}
The matrix representing $\wt{A}^*$ relative to the basis $\wt{\mcal{B}}_{alt}$ is
$$
\diag(\wt{\tht}^*_0, \wt{\tht}^*_0, \wt{\tht}^*_1, \wt{\tht}^*_1, \wt{\tht}^*_2, \wt{\tht}^*_2, \ldots, \wt{\tht}^*_{d-1}, \wt{\tht}^*_{d-1}).
$$
\end{lemma}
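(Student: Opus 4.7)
The plan is to observe that both $\tv_i$ and $\tvp_i$ are eigenvectors of $\wt{A}^*$ with the same eigenvalue $\wt{\tht}^*_i$. Once this is established, the claim that the matrix is diagonal with the asserted entries is immediate from the definition of $\wt{\mcal{B}}_{alt}$ in (\ref{basis(tBa)}).

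First I would recall from (\ref{SB-MC}) that $\tv_i = \hat{C}^-_i + \hat{C}^+_i$, and from (\ref{tilde(vp_i)}) that $\tvp_i = \zeta_i(\tau_i\hat{C}^-_i + \hat{C}^+_i)$. Now apply Corollary \ref{action(A*C;W)}, which states that $\wt{A}^*\hat{C}^-_i = \wt{\tht}^*_i\hat{C}^-_i$ and $\wt{A}^*\hat{C}^+_i = \wt{\tht}^*_i\hat{C}^+_i$. Since each of $\hat{C}^-_i$ and $\hat{C}^+_i$ is an eigenvector of $\wt{A}^*$ with eigenvalue $\wt{\tht}^*_i$, any linear combination of them is also such an eigenvector. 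In particular, $\wt{A}^*\tv_i = \wt{\tht}^*_i\tv_i$ and $\wt{A}^*\tvp_i = \wt{\tht}^*_i\tvp_i$ for $0 \leq i \leq d-1$.

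Listing the basis $\wt{\mcal{B}}_{alt}$ in the order $\tv_0, \tvp_0, \tv_1, \tvp_1, \ldots, \tv_{d-1}, \tvp_{d-1}$ then produces the diagonal matrix with entries $\wt{\tht}^*_0, \wt{\tht}^*_0, \wt{\tht}^*_1, \wt{\tht}^*_1, \ldots, \wt{\tht}^*_{d-1}, \wt{\tht}^*_{d-1}$, as claimed. There is no real obstacle here: the result is essentially a bookkeeping corollary of Corollary \ref{action(A*C;W)} and reflects the fact that $\wt{E}^*_i\W$ is two-dimensional (Corollary \ref{dim(E(C)*iW)}) with both $\tv_i \in \wt{E}^*_i\MC$ and $\tvp_i \in \wt{E}^*_i\MCp$ sitting inside this common eigenspace of $\wt{A}^*$. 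Alternatively, one could obtain the same conclusion by conjugating the matrix of $\wt{A}^*$ relative to $\mcal{C}$ (given in Lemma \ref{[A*C]_C}) by the transition matrix from $\wt{\mcal{B}}_{alt}$ to $\mcal{C}$ from (\ref{tran(C=>BCa)}); the block-diagonal structure of that transition matrix, together with the fact that each block of Lemma \ref{[A*C]_C} is the scalar $\wt{\tht}^*_i I_2$, makes the conjugation trivial and recovers the same answer.
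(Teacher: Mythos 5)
Your proof is correct and amounts to the same argument as the paper's: both rest on the fact that $\tv_i$ and $\tvp_i$ lie in $\wt{E}^*_i\W$ and are therefore eigenvectors of $\wt{A}^*$ with the common eigenvalue $\wt{\tht}^*_i$. The only difference is bookkeeping in the citations — you obtain this from Corollary \ref{action(A*C;W)} together with (\ref{SB-MC}) and (\ref{tilde(vp_i)}), while the paper cites the diagonal representations (\ref{[A^*(C)]_(C^)}), (\ref{[A^*(C)]_(tvp)}) and Lemma \ref{ttht=tthts} (whose proof is exactly the observation you make), so no substantive gap exists.
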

\begin{proof}
Use (\ref{[A^*(C)]_(C^)}), (\ref{[A^*(C)]_(tvp)}), (\ref{basis(tBa)}) along with Lemma \ref{ttht=tthts}.
\end{proof}

\begin{lemma} The matrix representing $\wt{A}^*$ relative to the basis ${\mcal{B}}$ is
\begin{equation*}\label{[A*C]_Bx}
\left[
\begin{array}{cc}
\wt{\bf C} & \wt{\bf D} \\
\wt{\bf E} & \wt{\bf F}
\end{array}
\right],
\end{equation*}
where 
\[
\wt{\bf C}  =  \diag \left[ \wt{\tht}^*_0, 
\tfrac{\epsilon_1\wt{\tht}^*_0-\wt{\tht}^*_1}{\epsilon_1-1},
\tfrac{\epsilon_2\wt{\tht}^*_1-\wt{\tht}^*_2}{\epsilon_2-1},
\tfrac{\epsilon_3\wt{\tht}^*_2-\wt{\tht}^*_3}{\epsilon_3-1},
\dotsm,
\tfrac{\epsilon_{d-1}\wt{\tht}^*_{d-2}-\wt{\tht}^*_{d-1}}{\epsilon_{d-1}-1},
\wt{\tht}^*_{d-1}
\right],
\]
\[
\wt{\bf D} = \left[
\begin{array}{cccc}
0&0&\dotsm& 0\\
\hline
\frac{\epsilon_1\xi_1(\wt{\tht}^*_0-\wt{\tht}^*_1)}{\epsilon_1-1}&&\begin{matrix} & \\ & \end{matrix} &{\bf 0}\\
& \frac{\epsilon_2\xi_2(\wt{\tht}^*_1-\wt{\tht}^*_2)}{\epsilon_2-1} &\\
&&\ddots& \\
{\bf 0}&&\begin{matrix} & \\ & \end{matrix} &\frac{\epsilon_{d-1}\xi_{d-1}(\wt{\tht}^*_{d-2}-\wt{\tht}^*_{d-1})}{\epsilon_{d-1}-1}\\
\hline
0&0&\dotsm&0
\end{array}
\right],
\]
(in the above matrix the entries in the first row and the last row are all zero)  
\[
\wt{\bf E} = \left[
\begin{array}{c|cccc|c}
0& \frac{\wt{\tht}^*_0-\wt{\tht}^*_1}{\xi_1(1-\epsilon_1)} &  & &{\bf 0}&0\\
0& & \frac{\wt{\tht}^*_1-\wt{\tht}^*_2}{\xi_2(1-\epsilon_2)}&&&0\\
\vdots& & & \ddots & &\vdots\\
0& {\bf 0}& & &\frac{\wt{\tht}^*_{d-2}-\wt{\tht}^*_{d-1}}{\xi_{d-1}(1-\epsilon_{d-1})} &0
\end{array}
\right],
\]
(in the above matrix the entries in the first column and the last column are all zero)
\[
\wt{\bf F} =  \diag \left[
\tfrac{\wt{\tht}^*_0-\epsilon_1\wt{\tht}^*_1}{1-\epsilon_1},
\tfrac{\wt{\tht}^*_1-\epsilon_2\wt{\tht}^*_2}{1-\epsilon_2},
\tfrac{\wt{\tht}^*_2-\epsilon_3\wt{\tht}^*_3}{1-\epsilon_3},
\cdots,
\tfrac{\wt{\tht}^*_{d-2}-\epsilon_{d-1}\wt{\tht}^*_{d-1}}{1-\epsilon_{d-1}}
\right],
\]
and for $1\leq i\leq d-1$
\begin{equation*}
\begin{array}{ccl}
\frac{\epsilon_i\wt{\tht}^*_{i-1}-\wt{\tht}^*_i}{\epsilon_i-1} &=& 
	\wt{\tht}^*_{i-1} +
	\frac{\wt{h}^*(1-q)(1-q^{d-i})(1-s^*q^{i+1})}{1-q^d},\vspace{0.1cm}\\ 
\frac{\epsilon_i\xi_i(\wt{\tht}^*_{i-1}-\wt{\tht}^*_i)}{\epsilon_i-1}&=&
	\frac{\wt{h}^*q^{1-2i}(1-q)(1-q^i)(1-q^{i-d})(1-s^*q^{i+1})(1-s^*q^{d+i+1})}{q^d-1}
,\vspace{0.1cm}\\
\frac{\wt{\tht}^*_{i-1}-\wt{\tht}^*_i}{\xi_i(1-\epsilon_i)}&=&
	\frac{\wt{h}^*q^{d-1}(1-q)}{1-q^d},\vspace{0.1cm}\\
\frac{\wt{\tht}^*_{i-1}-\epsilon_i\wt{\tht}^*_i}{1-\epsilon_i}&=&
\wt{\tht}^*_{i}+
\frac{\wt{h}^*(1-q)(1-q^{d-i})(1-s^*q^{i+1})}{q^d-1}.
\end{array}
\end{equation*}
\end{lemma}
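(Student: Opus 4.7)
The plan is to compute $[\wt{A}^*]_{\mcal{B}}$ by passing through the basis $\mcal{C}$, on which $\wt{A}^*$ acts diagonally. Concretely, I will use (\ref{Phi-SB}) and (\ref{vp_i}) to write each vector of $\mcal{B}$ as a linear combination of $\{\hat{C}^{\pm}_i\}^{d-1}_{i=0}$, apply $\wt{A}^*$ via Corollary \ref{action(A*C;W)} (recalling $\wt{A}^*\hat{C}^{\pm}_i=\wt{\tht}^*_i\hat{C}^{\pm}_i$), and then re-expand the image in the basis $\mcal{B}$ using Lemma \ref{C^pm=vi+vip}. A key observation from (\ref{Phi-SB}) and (\ref{vp_i}) is that for $1\leq i\leq d-1$ the vectors $v_i,\vp_{i-1}$ both live in $E^*_i\W=\mathrm{Span}\{\hat{C}^+_{i-1},\hat{C}^-_i\}$, while $v_0=\hat{C}^-_0$ and $v_d=\hat{C}^+_{d-1}$; since $\wt{A}^*$ stabilizes each $E^*_i\W$, this explains at once the block shape and the pattern of zero rows and columns in $\wt{\bf D}$ and $\wt{\bf E}$.

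For the entries, start with $1\leq i\leq d-1$: using $v_i=\hat{C}^+_{i-1}+\hat{C}^-_i$ we get
\begin{equation*}
\wt{A}^*v_i=\wt{\tht}^*_{i-1}\hat{C}^+_{i-1}+\wt{\tht}^*_i\hat{C}^-_i,
\end{equation*}
and substituting the expressions of $\hat{C}^+_{i-1},\hat{C}^-_i$ in terms of $v_i,\vp_{i-1}$ from Lemma \ref{C^pm=vi+vip} yields
\begin{equation*}
\wt{A}^*v_i=\frac{\epsilon_i\wt{\tht}^*_{i-1}-\wt{\tht}^*_i}{\epsilon_i-1}\,v_i+\frac{\wt{\tht}^*_{i-1}-\wt{\tht}^*_i}{\xi_i(1-\epsilon_i)}\,\vp_{i-1},
\end{equation*}
which reads off the $i$-th diagonal entry of $\wt{\bf C}$ and the corresponding entry of $\wt{\bf E}$. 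Similarly, for $0\leq i\leq d-2$ the identity $\vp_i=\xi_{i+1}(\hat{C}^+_i+\epsilon_{i+1}\hat{C}^-_{i+1})$ gives
\begin{equation*}
\wt{A}^*\vp_i=\xi_{i+1}\wt{\tht}^*_i\hat{C}^+_i+\xi_{i+1}\epsilon_{i+1}\wt{\tht}^*_{i+1}\hat{C}^-_{i+1},
\end{equation*}
and the same substitution produces the stated entries of $\wt{\bf F}$ and $\wt{\bf D}$. The boundary cases $\wt{A}^*v_0=\wt{\tht}^*_0 v_0$ and $\wt{A}^*v_d=\wt{\tht}^*_{d-1}v_d$ give the two extreme diagonal entries of $\wt{\bf C}$.

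Finally, the closed-form expressions listed after the block description are obtained by substituting the explicit formulas (\ref{epsilon_i(4)}) for $\epsilon_i$, (\ref{xi_i}) for $\xi_i$, and the formula of Lemma \ref{tht;til;dist} for $\wt{\tht}^*_{i-1}-\wt{\tht}^*_i$ in terms of $\wt{h}^*, q, s^*$. For instance, to verify
\begin{equation*}
\frac{\epsilon_i\wt{\tht}^*_{i-1}-\wt{\tht}^*_i}{\epsilon_i-1}=\wt{\tht}^*_{i-1}+\frac{\wt{h}^*(1-q)(1-q^{d-i})(1-s^*q^{i+1})}{1-q^d},
\end{equation*}
one writes the left-hand side as $\wt{\tht}^*_{i-1}+\frac{\wt{\tht}^*_{i-1}-\wt{\tht}^*_i}{1-\epsilon_i}$ and substitutes the two closed forms; the resulting ratio simplifies by direct $q$-arithmetic. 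The three remaining entries of $\wt{\bf D},\wt{\bf E},\wt{\bf F}$ are simplified analogously. The main obstacle is not conceptual but purely bookkeeping: one must organize the $q$-algebraic simplifications and keep careful track of the ranges of $i$ for which each formula is valid, in particular the two boundary entries of $\wt{\bf C}$ that come from $v_0$ and $v_d$ rather than from the generic formula.
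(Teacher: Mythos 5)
Your proposal is correct and is essentially the paper's argument: the paper computes the matrix as $L[\wt{A}^*]_{\wt{\mcal{B}}}L^{-1}$, i.e.\ a change of basis from a basis in which $\wt{A}^*$ acts diagonally, and you carry out the same change-of-basis computation by hand, passing instead through $\mcal{C}$ (where $\wt{A}^*$ is also diagonal, by Corollary \ref{action(A*C;W)}) and re-expanding with Lemma \ref{C^pm=vi+vip}; your block entries, the zero-row/column pattern, and the boundary cases coming from $v_0$ and $v_d$ all check out. One small slip in your verification sketch: the correct rewriting is $\frac{\epsilon_i\wt{\tht}^*_{i-1}-\wt{\tht}^*_i}{\epsilon_i-1}=\wt{\tht}^*_{i-1}+\frac{\wt{\tht}^*_{i-1}-\wt{\tht}^*_i}{\epsilon_i-1}$ (denominator $\epsilon_i-1$, not $1-\epsilon_i$); with that sign fixed, substituting (\ref{epsilon_i(4)}), (\ref{xi_i}) and Lemma \ref{tht;til;dist} yields exactly the displayed closed forms.
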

\begin{proof}
Let $[\wt{A}^*]_{\wt{\mcal{B}}}$ denote the matrix representing $\wt{A}^*$ relative to $\wt{\mcal{B}}$, and let $L$ denote the transition matrix from $\mcal{B}$ to $\wt{\mcal{B}}$. By linear algebra the matrix representing $\wt{A}^*$ relative to $\mcal{B}$ is $L[\wt{A}^*]_{\wt{\mcal{B}}}L^{-1}$. The result follows.
\end{proof}

\begin{lemma}
The matrix representing $\wt{A}^*$ relative to the basis ${\mcal{B}}_{alt}$ is 
\begin{equation*}\label{[A*C]_Bxa}
\bdiag\big[
\wt{\bf X}_0, \wt{\bf X}_1, \wt{\bf X}_2, \ldots, \wt{\bf X}_{d-1}, \wt{\bf X}_{d}
\big],
\end{equation*}
where 
$$
\wt{\bf X}_0=[\wt{\tht}^*_0],\qquad \qquad   \wt{\bf X}_{d} = [\wt{\tht}^*_{d-1}]
$$ 
and for $1 \leq i \leq d-1$
$$
\wt{\bf X}_i = \left[
\begin{array}{cc}
\frac{\epsilon_i\wt{\tht}^*_{i-1}-\wt{\tht}^*_i}{\epsilon_i-1} & \frac{\epsilon_i\xi_i(\wt{\tht}^*_{i-1}-\wt{\tht}^*_i)}{\epsilon_i-1}\\
\frac{\wt{\tht}^*_{i-1}-\wt{\tht}^*_i}{\xi_i(1-\epsilon_i)} & \frac{\wt{\tht}^*_{i-1}-\epsilon_i\wt{\tht}^*_i}{1-\epsilon_i}
\end{array}
\right].
$$
\end{lemma}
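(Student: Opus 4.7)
The plan is to compute $[\wt{A}^*]_{\mcal{B}_{alt}}$ by conjugating the already-diagonal matrix $[\wt{A}^*]_{\mcal{C}}$ (from Lemma \ref{[A*C]_C}) by the transition matrices between $\mcal{C}$ and $\mcal{B}_{alt}$ from Lemma \ref{TM(C<=>Bxa)}. Let ${\bf T}$ denote the transition matrix from $\mcal{C}$ to $\mcal{B}_{alt}$ and ${\bf S}={\bf T}^{-1}$ the transition matrix from $\mcal{B}_{alt}$ to $\mcal{C}$. By the convention used throughout Section \ref{matrices}, $[\wt{A}^*]_{\mcal{B}_{alt}}={\bf S}\,[\wt{A}^*]_{\mcal{C}}\,{\bf T}$.

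The key observation is that the block-diagonal form of ${\bf T}=\bdiag[{\bf T}_0,{\bf T}_1,\ldots,{\bf T}_d]$ given in Lemma \ref{TM(C<=>Bxa)} groups the coordinate indices of $\mcal{C}$ into the blocks $\{\hat{C}^-_0\}$, $\{\hat{C}^+_0,\hat{C}^-_1\}$, $\{\hat{C}^+_1,\hat{C}^-_2\}$, $\ldots$, $\{\hat{C}^+_{d-2},\hat{C}^-_{d-1}\}$, $\{\hat{C}^+_{d-1}\}$, matching the partition of $\mcal{B}_{alt}$ into $\{v_0\}$, $\{v_1,\vp_0\}$, $\{v_2,\vp_1\}$, $\ldots$, $\{v_{d-1},\vp_{d-2}\}$, $\{v_d\}$. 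Under this same grouping of the coordinate indices, the diagonal matrix $[\wt{A}^*]_{\mcal{C}}$ from Lemma \ref{[A*C]_C} is block diagonal with blocks
\[
[\wt{\tht}^*_0],\qquad \diag(\wt{\tht}^*_0,\wt{\tht}^*_1),\qquad \diag(\wt{\tht}^*_1,\wt{\tht}^*_2),\qquad \ldots,\qquad \diag(\wt{\tht}^*_{d-2},\wt{\tht}^*_{d-1}),\qquad [\wt{\tht}^*_{d-1}].
\]
Consequently the conjugate ${\bf S}[\wt{A}^*]_{\mcal{C}}{\bf T}$ is block diagonal, and the first and last blocks are immediately $[\wt{\tht}^*_0]$ and $[\wt{\tht}^*_{d-1}]$, matching $\wt{\bf X}_0$ and $\wt{\bf X}_d$.

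For $1 \leq i \leq d-1$ the $i$-th block is the product
\[
\wt{\bf X}_i = {\bf S}_i \cdot \diag(\wt{\tht}^*_{i-1},\wt{\tht}^*_i) \cdot {\bf T}_i,
\]
which is a routine $2\times 2$ multiplication using the explicit entries of ${\bf T}_i$ and ${\bf S}_i$ from Lemma \ref{TM(C<=>Bxa)}. The four entries come out to $(\epsilon_i\wt{\tht}^*_{i-1}-\wt{\tht}^*_i)/(\epsilon_i-1)$, $\epsilon_i\xi_i(\wt{\tht}^*_{i-1}-\wt{\tht}^*_i)/(\epsilon_i-1)$, $(\wt{\tht}^*_{i-1}-\wt{\tht}^*_i)/(\xi_i(1-\epsilon_i))$, and $(\wt{\tht}^*_{i-1}-\epsilon_i\wt{\tht}^*_i)/(1-\epsilon_i)$, which are exactly the entries of $\wt{\bf X}_i$.

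There is no real obstacle beyond the bookkeeping of the block structure. The only point requiring care is recognizing that although $[\wt{A}^*]_{\mcal{C}}$ is diagonal, the grouping of indices dictated by ${\bf T}$ pairs coordinate $\hat{C}^+_{i-1}$ (eigenvalue $\wt{\tht}^*_{i-1}$) with $\hat{C}^-_i$ (eigenvalue $\wt{\tht}^*_i$) rather than pairing $\hat{C}^-_i$ with $\hat{C}^+_i$; hence the diagonal blocks $\diag(\wt{\tht}^*_{i-1},\wt{\tht}^*_i)$ are not proportional to the identity and the conjugation produces genuinely nontrivial $2\times 2$ blocks.
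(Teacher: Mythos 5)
Your proposal is correct, and the block computation checks out: with the paper's conventions (columns of a transition matrix give the coordinates of the new basis vectors, so $[\wt{A}^*]_{\mcal{B}_{alt}}={\bf S}[\wt{A}^*]_{\mcal{C}}{\bf T}$ with ${\bf S}={\bf T}^{-1}$), the products ${\bf S}_i\,\diag(\wt{\tht}^*_{i-1},\wt{\tht}^*_i)\,{\bf T}_i$ do yield exactly the stated blocks $\wt{\bf X}_i$, and your remark about the index pairing $(\hat{C}^+_{i-1},\hat{C}^-_i)$ versus $(\hat{C}^-_i,\hat{C}^+_i)$ is precisely the point that makes the blocks non-scalar. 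Your route differs mildly from the paper's: the paper conjugates the diagonal representation of $\wt{A}^*$ relative to $\wt{\mcal{B}}_{alt}$ by $L$, the transition matrix from $\mcal{B}_{alt}$ to $\wt{\mcal{B}}_{alt}$, which is not an adjacent pair in the diagram (\ref{diagram}) and so must first be assembled as a product of two displayed transition matrices whose block structures do not align; you instead conjugate the diagonal representation relative to $\mcal{C}$ (Lemma \ref{[A*C]_C}) by the adjacent-pair transition matrices of Lemma \ref{TM(C<=>Bxa)}, which are block diagonal with blocks matching the target's block structure, so the whole computation decouples into explicit $2\times2$ multiplications. Both arguments are instances of the same change-of-basis technique, but your choice of intermediate basis gives a shorter and more transparent verification, at the cost of nothing; the paper's choice has the virtue of uniformity with the neighboring lemmas, which all conjugate from the basis in which the operator in question was originally defined to be diagonal or tridiagonal.
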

\begin{proof}
Let $[\wt{A}^*]_{\wt{\mcal{B}}_{alt}}$ denote the matrix representing $\wt{A}^*$ relative to $\wt{\mcal{B}}_{alt}$, and let $L$ denote the transition matrix from $\mcal{B}_{alt}$ to $\wt{\mcal{B}}_{alt}$. By linear algebra the matrix representing $\wt{A}^*$ relative to $\mcal{B}_{alt}$ is $L[\wt{A}^*]_{\wt{\mcal{B}}_{alt}}L^{-1}$. The result follows.
\end{proof}

\noindent
We are done with $\wt{A}^*$. We now consider the matrices representing $\mfrk{p}$ relative to the five bases.

\begin{lemma}
The matrix representing $\mfrk{p}$ relative to the basis $\mcal{C}$ is
\begin{equation}\label{[p_x]_C}
\bdiag\big[{\bf Y}_0, {\bf Y}_1, {\bf Y}_2, \ldots, {\bf Y}_{d-1}, {\bf Y}_d \big],
\end{equation}
where
$${\bf Y}_0 = [1], \qquad \qquad {\bf Y}_d= [1]$$ 
and for $1 \leq i \leq d-1$, 
$$
{\bf Y}_i = 
\left[
\begin{array}{cc}
\frac{\epsilon_i}{\epsilon_i-1} & \frac{1}{1-\epsilon_i} \\
\frac{\epsilon_i}{\epsilon_i-1} & \frac{1}{1-\epsilon_i}
\end{array}
\right].
$$
\end{lemma}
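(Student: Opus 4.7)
The statement is a direct rewriting of the action formulas from Lemma~\ref{action(p)} as a matrix in block form, so my plan is simply to compute each column of the representing matrix and observe that the off-block entries all vanish.

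First I would recall the ordering of the basis $\mcal{C}$ from \eqref{basis(C)}: position $0$ holds $\hat{C}^-_0$, positions $2i-1,2i$ hold $\hat{C}^+_{i-1},\hat{C}^-_i$ for $1\leq i \leq d-1$, and position $2d-1$ holds $\hat{C}^+_{d-1}$. The claim is that the representing matrix is block diagonal with the blocks indexed by $0,1,\ldots,d$, where block ${\bf Y}_0$ corresponds to the single basis vector $\hat{C}^-_0$, block ${\bf Y}_i$ ($1\leq i \leq d-1$) corresponds to the pair $\hat{C}^+_{i-1},\hat{C}^-_i$, and block ${\bf Y}_d$ corresponds to the single basis vector $\hat{C}^+_{d-1}$.

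Next I would read off the columns from Lemma~\ref{action(p)}. The column of $\hat{C}^-_0$ is $\hat{C}^-_0$ itself, giving the $1\times 1$ block ${\bf Y}_0=[1]$. The column of $\hat{C}^+_{d-1}$ is $\hat{C}^+_{d-1}$ itself, giving the $1\times 1$ block ${\bf Y}_d=[1]$. For $1\leq i\leq d-1$, the middle two formulas of Lemma~\ref{action(p)} give
\[
\mfrk{p}\hat{C}^+_{i-1}=\tfrac{\epsilon_i}{\epsilon_i-1}\hat{C}^+_{i-1}+\tfrac{\epsilon_i}{\epsilon_i-1}\hat{C}^-_i,
\qquad
\mfrk{p}\hat{C}^-_i=\tfrac{1}{1-\epsilon_i}\hat{C}^+_{i-1}+\tfrac{1}{1-\epsilon_i}\hat{C}^-_i.
\]
Both images lie in the span of $\{\hat{C}^+_{i-1},\hat{C}^-_i\}$, so all entries outside the $(2i-1,2i)\times (2i-1,2i)$ block vanish. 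Writing these two equations as the first and second columns of a $2\times 2$ block (with rows labeled by $\hat{C}^+_{i-1},\hat{C}^-_i$ in that order) yields exactly
\[
{\bf Y}_i=\begin{bmatrix}\tfrac{\epsilon_i}{\epsilon_i-1} & \tfrac{1}{1-\epsilon_i}\\[1mm] \tfrac{\epsilon_i}{\epsilon_i-1} & \tfrac{1}{1-\epsilon_i}\end{bmatrix}.
\]

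Finally, assembling these observations shows the representing matrix is $\bdiag[{\bf Y}_0,{\bf Y}_1,\ldots,{\bf Y}_d]$ as claimed. There is no real obstacle here: the only thing to check carefully is the bookkeeping between the basis order in \eqref{basis(C)} and the pairing $(\hat{C}^+_{i-1},\hat{C}^-_i)$ that governs the blocks, which shifts the natural index by one.
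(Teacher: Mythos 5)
Your proof is correct and is essentially the paper's own argument: the paper proves this lemma simply by citing Lemma \ref{action(p)}, and your write-up just makes explicit the bookkeeping between the ordering of $\mcal{C}$ in (\ref{basis(C)}) and the block structure $[1],2\times2,\ldots,2\times2,[1]$. Reading the formulas of Lemma \ref{action(p)} as columns (the paper's convention) for the pairs $(\hat{C}^+_{i-1},\hat{C}^-_i)$ gives exactly the blocks ${\bf Y}_i$, so nothing further is needed.
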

\begin{proof}
By Lemma \ref{action(p)}.
\end{proof}

\begin{lemma}
The matrix representing $\mfrk{p}$ relative to the basis $\mcal{B}$ is
\begin{equation*}\label{[p_x]_Bx}
\left[
\begin{array}{c|c}
{\bf I}_{d+1} & {\bf 0}\\
\hline
{\bf 0} & {\bf 0}
\end{array}
\right],
\end{equation*}
where ${\bf I}_{d+1}$ is the $(d+1)\times(d+1)$ identity matrix.
\end{lemma}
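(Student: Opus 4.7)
The statement is essentially immediate from the definition of $\mfrk{p}$ and the construction of $\mcal{B}$, so the plan is short. Recall that $\mcal{B}$ lists the $\Phi$-standard basis $\{v_i\}^d_{i=0}$ of $\Mx$ first, and then the $\Phi^{\perp}$-standard basis $\{\vp_i\}^{d-2}_{i=0}$ of $\Mxp$. Since $\mfrk{p}$ is defined to be the projection from $\W$ onto $\Mx$ along the decomposition $\W = \Mx + \Mxp$ of \eqref{ODS(T-mods)}, we have already recorded in \eqref{p(v)=v;p(vp)=0} that
\[
\mfrk{p}v_i = v_i \quad (0 \le i \le d), \qquad \mfrk{p}\vp_i = 0 \quad (0 \le i \le d-2).
\]

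Thus the plan is simply to read off the matrix columnwise against $\mcal{B}$. The first $d+1$ columns, indexed by $v_0, v_1, \ldots, v_d$, express $\mfrk{p}v_i = v_i$ as a coordinate vector relative to $\mcal{B}$, giving the standard basis vector with a $1$ in position $i$ and zeros elsewhere; together these columns form the $(d+1)\times(d+1)$ identity block ${\bf I}_{d+1}$ in the upper-left. The remaining $d-1$ columns, indexed by $\vp_0, \vp_1, \ldots, \vp_{d-2}$, are all zero because $\mfrk{p}\vp_i = 0$. Hence the matrix has the block form claimed, with ${\bf 0}$ in the off-diagonal blocks and in the lower-right block.

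There is no genuine obstacle here; the only content is bookkeeping. The one thing worth being explicit about in the write-up is why $\mcal{B}$ really is a basis of $\W$ in the stated order, which was already established in Section \ref{W-T(x)}: $\{v_i\}^d_{i=0}$ is a basis of $\Mx$ by Example \ref{ex;Mx}, $\{\vp_i\}^{d-2}_{i=0}$ is a basis of $\Mxp$ by Lemma \ref{Phip-SB}, and $\W = \Mx \oplus \Mxp$ by \eqref{ODS(T-mods)}. So the proof is a single sentence appealing to \eqref{p(v)=v;p(vp)=0}.
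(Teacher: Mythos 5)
Your proposal is correct and matches the paper's own argument, which likewise proves the lemma simply by combining the relations $\mfrk{p}v_i = v_i$ $(0 \leq i \leq d)$ and $\mfrk{p}\vp_i = 0$ $(0 \leq i \leq d-2)$ from (\ref{p(v)=v;p(vp)=0}) with the ordering of the basis $\mcal{B}$ in (\ref{basis(B)}). Your extra remark recalling why $\mcal{B}$ is a basis (Example \ref{ex;Mx}, Lemma \ref{Phip-SB}, and (\ref{ODS(T-mods)})) is harmless bookkeeping already established in the paper.
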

\begin{proof}
By (\ref{p(v)=v;p(vp)=0}) and (\ref{basis(B)}).
\end{proof}

\begin{lemma}
The matrix representing $\mfrk{p}$ relative to the basis $\mcal{B}_{alt}$ is
$$
\diag(1, 1,0, 1,0, 1,0, \ldots, 1,0, 1).
$$
\end{lemma}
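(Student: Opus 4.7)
This statement should follow immediately from the defining properties of $\mfrk{p}$ and the ordering of the basis $\mcal{B}_{alt}$, with essentially no computation. The plan is to invoke (\ref{p(v)=v;p(vp)=0}), which records that $\mfrk{p}v_i = v_i$ for $0\le i\le d$ and $\mfrk{p}\vp_i = 0$ for $0\le i\le d-2$, and then simply read off the resulting matrix in the basis
$$
\mcal{B}_{alt} = \{v_0,\ v_1,\ \vp_0,\ v_2,\ \vp_1,\ v_3,\ \vp_2,\ \ldots,\ v_{d-1},\ \vp_{d-2},\ v_d\}
$$
given in (\ref{basis(Ba)}).

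Since each basis vector is either a $v_i$ or a $\vp_j$, and $\mfrk{p}$ fixes the former and annihilates the latter, the matrix representing $\mfrk{p}$ relative to $\mcal{B}_{alt}$ is diagonal, with a $1$ in each column indexed by some $v_i$ and a $0$ in each column indexed by some $\vp_j$. Reading the pattern of $v$'s and $\vp$'s in $\mcal{B}_{alt}$ from left to right gives the sequence $1,1,0,1,0,1,0,\ldots,1,0,1$, which is exactly the claimed diagonal.

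There is no real obstacle here; the only thing to be careful about is the ordering conventions in (\ref{basis(Ba)}) at the two ends (the first two entries are $v_0,v_1$ and the last entry is $v_d$, so both ends of the diagonal carry $1$'s, matching the stated form $\diag(1,1,0,1,0,\ldots,1,0,1)$). No facts beyond (\ref{p(v)=v;p(vp)=0}) and the definition (\ref{basis(Ba)}) of $\mcal{B}_{alt}$ are needed.
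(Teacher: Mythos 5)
Your proof is correct and follows exactly the paper's argument: the paper's own proof consists of citing (\ref{p(v)=v;p(vp)=0}) and (\ref{basis(Ba)}), which is precisely what you do, with the diagonal read off from the ordering of $v_i$'s and $\vp_j$'s in $\mcal{B}_{alt}$.
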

\begin{proof}
By (\ref{p(v)=v;p(vp)=0}) and (\ref{basis(Ba)}).
\end{proof}

\begin{lemma}
The matrix representing $\mfrk{p}$ relative to the basis $\wt{\mcal{B}}$ is
\begin{equation*}\label{[pi_x]_BC}
\left[ 
\begin{array}{cc}
{\bf P} & {\bf Q} \\
{\bf R} & {\bf S}
\end{array}
\right],
\end{equation*}
where each matrix {\bf P}, {\bf Q}, {\bf R}, {\bf S} is $d \times d$ tridiagonal with rows and columns indexed by $0,1,2, \ldots, d-1$ such that
$$
{\bf P}_{00} = \tfrac{1-\epsilon_1(1-\tau_0)}{(1-\tau_0)(1-\epsilon_1)},
\qquad \qquad 
{\bf P}_{d-1,d-1} = \tfrac{1+\tau_{d-1}(\epsilon_{d-1}-1)}{(1-\tau_{d-1})(1-\epsilon_{d-1})},
$$
$$
{\bf P}_{ij} = 
\begin{cases}
\frac{-\epsilon_i}{(1-\tau_i)(1-\epsilon_i)} & \text{ if } j=i-1\quad (1\leq i\leq d-1) \\
\frac{1-\epsilon_{i+1}+\epsilon_{i+1}\tau_i(1-\epsilon_i)}{(1-\tau_i)(1-\epsilon_i)(1-\epsilon_{i+1})} &
		\text{ if } j=i \quad (1 \leq i \leq d-2) \\
\frac{-\tau_i}{(1-\tau_i)(1-\epsilon_{i+1})} & \text{ if } j=i+1 \quad (0 \leq i \leq d-2)
\end{cases}
$$
and
$$
{\bf Q}_{00} = \tfrac{\zeta_0\tau_0}{(1-\tau_0)(1-\epsilon_1)}, \qquad \qquad
{\bf Q}_{d-1,d-1}=\tfrac{\zeta_{d-1}\tau_{d-1}\epsilon_{d-1}}{(1-\tau_{d-1})(1-\epsilon_{d-1})},$$
$$
{\bf Q}_{ij} = 
\begin{cases}
\frac{-\zeta_{i-1}\epsilon_i}{(1-\tau_i)(1-\epsilon_i)} & \text{ if } j=i-1\quad(1 \leq i \leq d-1) \\
\frac{\zeta_i\tau_i(1-\epsilon_i\epsilon_{i+1})}{(1-\tau_i)(1-\epsilon_i)(1-\epsilon_{i+1})} &
		\text{ if } j=i \quad (1 \leq i \leq d-2)\\
\frac{-\tau_i\tau_{i+1}\zeta_{i+1}}{(1-\tau_i)(1-\epsilon_{i+1})} &
		\text{ if } j=i+1 \quad (0 \leq i \leq d-2)
\end{cases}
$$
and 
$$
{\bf R}_{00} = \tfrac{-1}{\zeta_0(1-\tau_0)(1-\epsilon_1)}, \qquad \qquad 
{\bf  R}_{d-1,d-1} = \tfrac{-\epsilon_{d-1}}{\zeta_{d-1}(1-\tau_{d-1})(1-\epsilon_{d-1})}, $$
$$
{\bf R}_{ij} = 
\begin{cases}
\frac{\epsilon_i}{\zeta_i(1-\tau_i)(1-\epsilon_i)} & \text{ if } j=i-1 \quad(1\leq i \leq d-1) \\
\frac{-1+\epsilon_i\epsilon_{i+1}}{\zeta_i(1-\tau_i)(1-\epsilon_i)(1-\epsilon_{i+1})} & 
		\text{ if } j=i \quad (1 \leq i \leq d-2) \\
\frac{1}{\zeta_i(1-\tau_i)(1-\epsilon_{i+1})} & \text{ if } j=i+1 \quad (0 \leq i \leq d-2)
\end{cases}
$$
and
$${\bf S}_{00}=\tfrac{\tau_0(\epsilon_1-1)-\epsilon_1}{(1-\tau_0)(1-\epsilon_1)}, \qquad \qquad
{\bf S}_{d-1,d-1} = \tfrac{1-\tau_{d-1}-\epsilon_{d-1}}{(1-\tau_{d-1})(1-\epsilon_{d-1})},$$ 
$$
{\bf S}_{ij} = 
\begin{cases}
\frac{\zeta_{i-1}\epsilon_i}{\zeta_i(1-\tau_i)(1-\epsilon_i)} & \text{ if } j=i-1 \quad (1 \leq i \leq d-1)\\
\frac{\tau_i(\epsilon_{i+1}-1)+\epsilon_{i+1}(\epsilon_i-1)}{(1-\tau_i)(1-\epsilon_i)(1-\epsilon_{i+1})} &
		\text{ if } j=i \quad(1 \leq i \leq d-2) \\
\frac{\zeta_{i+1}\tau_{i+1}}{\zeta_i(1-\tau_i)(1-\epsilon_{i+1})} &
		\text{ if } j=i+1 \quad (0 \leq i \leq d-2).
\end{cases}
$$
\end{lemma}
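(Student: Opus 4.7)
The plan is to follow the standard change-of-basis strategy used throughout Section \ref{matrices}. Let $L$ denote the transition matrix from $\wt{\mcal{B}}$ to $\mathcal{C}$. Then by linear algebra, $[\mfrk{p}]_{\wt{\mcal{B}}} = L\,[\mfrk{p}]_{\mathcal{C}}\,L^{-1}$, where $[\mfrk{p}]_{\mathcal{C}}$ is the block-diagonal matrix computed in (\ref{[p_x]_C}). The matrix $L$ factors through $\wt{\mcal{B}}_{alt}$: it is the product of the permutation matrix from $\wt{\mcal{B}}$ to $\wt{\mcal{B}}_{alt}$ (Lemma \ref{TM(BC<=>BCa)}) with the block-diagonal matrix (\ref{tran(BCa=>C)}) from $\wt{\mcal{B}}_{alt}$ to $\mathcal{C}$. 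The desired four-block presentation comes from applying the permutation last, which separates the $\tv$-coordinates from the $\tvp$-coordinates.

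A more direct route, which matches the computational style of earlier lemmas, is as follows. I would apply $\mfrk{p}$ to each element of $\wt{\mcal{B}}$ by substituting the expansions
\begin{equation*}
\tv_j = \hat{C}^-_j + \hat{C}^+_j, \qquad \tvp_j = \zeta_j\bigl(\tau_j \hat{C}^-_j + \hat{C}^+_j\bigr)
\end{equation*}
from (\ref{SB-MC}) and (\ref{tilde(vp_i)}) and invoking Lemma \ref{action(p)}. For $1 \le j \le d-2$ this gives $\mfrk{p}\tv_j$ and $\mfrk{p}\tvp_j$ as explicit linear combinations of the four vectors $\hat{C}^+_{j-1},\hat{C}^-_j,\hat{C}^+_j,\hat{C}^-_{j+1}$, with obvious truncations at $j=0$ (where only $\hat{C}^-_0, \hat{C}^+_0, \hat{C}^-_1$ appear) and at $j=d-1$ (where only $\hat{C}^+_{d-2}, \hat{C}^-_{d-1}, \hat{C}^+_{d-1}$ appear).

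To read off the matrix entries, I would then convert each of the four vectors back to the basis $\wt{\mcal{B}}$ using Lemma \ref{C^pm=tvi+tvip}: $\hat{C}^+_{j-1}$ and $\hat{C}^-_{j-1}$ involve $\tv_{j-1}, \tvp_{j-1}$; the pair $\hat{C}^-_j, \hat{C}^+_j$ involves $\tv_j, \tvp_j$; and $\hat{C}^-_{j+1}$ involves $\tv_{j+1}, \tvp_{j+1}$. This makes the tridiagonal band structure of each of the blocks ${\bf P},{\bf Q},{\bf R},{\bf S}$ manifest: the column of $[\mfrk{p}]_{\wt{\mcal{B}}}$ indexed by $\tv_j$ (resp.\ $\tvp_j$) has nonzero entries only in the rows corresponding to $\tv_k,\tvp_k$ with $|k-j|\le 1$. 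Reading off the coefficients produces precisely the twelve interior formulas for ${\bf P}_{ij},{\bf Q}_{ij},{\bf R}_{ij},{\bf S}_{ij}$ displayed in the statement, while the boundary cases $j=0, d-1$ yield the four special corner entries.

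The main obstacle is not conceptual but algebraic: matching the expressions obtained after the substitutions to the compact forms listed in the statement requires careful bookkeeping of terms containing $\epsilon_i, \tau_i$, and $\zeta_i$, together with recognizing the common denominator $(1-\tau_i)(1-\epsilon_i)(1-\epsilon_{i+1})$ in the diagonal entries and the telescoping cancellations that produce the numerators $1-\epsilon_{i+1}+\epsilon_{i+1}\tau_i(1-\epsilon_i)$ and its analogues. Since all twenty-odd entries follow the same mechanical recipe, I would present the computation in detail for the interior case (say ${\bf P}_{ij},{\bf Q}_{ij}$ with $1 \le i \le d-2$) and indicate that the remaining cases, including the four corners, are obtained by the same method using the boundary clauses $\mfrk{p}\hat{C}^-_0=\hat{C}^-_0$ and $\mfrk{p}\hat{C}^+_{d-1}=\hat{C}^+_{d-1}$ from Lemma \ref{action(p)}.
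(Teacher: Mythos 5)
Your proposal is correct, and your preferred ``direct'' route differs from the paper's proof in a worthwhile way. The paper's own argument is a one-line change of basis: it writes $[\mfrk{p}]_{\wt{\mcal{B}}} = L[\mfrk{p}]_{\mcal{B}}L^{-1}$, where $[\mfrk{p}]_{\mcal{B}}$ is the block matrix with ${\bf I}_{d+1}$ and zeros and $L$ is the transition matrix from $\wt{\mcal{B}}$ to $\mcal{B}$ (obtained by composing along the diagram (\ref{diagram})); your first suggested route, conjugating $[\mfrk{p}]_{\mcal{C}}$ from (\ref{[p_x]_C}) by the transition matrix from $\wt{\mcal{B}}$ to $\mcal{C}$, is essentially the same idea with a different intermediate basis. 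Your second route instead computes $\mfrk{p}\tv_j$ and $\mfrk{p}\tvp_j$ directly, expanding via (\ref{SB-MC}) and (\ref{tilde(vp_i)}), applying Lemma \ref{action(p)}, and converting back with Lemma \ref{C^pm=tvi+tvip}; this is exactly the style in which Lemma \ref{6-terms} and Lemma \ref{6-terms;MC} are proved, it makes the tridiagonal shape of ${\bf P},{\bf Q},{\bf R},{\bf S}$ transparent, and it avoids assembling and inverting $L$ explicitly, at the cost of the entrywise bookkeeping you describe. A spot check confirms the recipe reproduces the stated entries: for instance the coefficient of $\tv_i$ in $\mfrk{p}\tv_{i+1}$ is $\tfrac{1}{1-\epsilon_{i+1}}\cdot\tfrac{\tau_i}{\tau_i-1} = \tfrac{-\tau_i}{(1-\tau_i)(1-\epsilon_{i+1})} = {\bf P}_{i,i+1}$, and the coefficient of $\tv_0$ in $\mfrk{p}\tv_0$ is $\tfrac{1}{1-\tau_0}+\tfrac{\epsilon_1\tau_0}{(1-\epsilon_1)(1-\tau_0)} = \tfrac{1-\epsilon_1(1-\tau_0)}{(1-\tau_0)(1-\epsilon_1)} = {\bf P}_{00}$, in agreement with the lemma; your handling of the boundary columns $j=0$ and $j=d-1$ via $\mfrk{p}\hat{C}^-_0=\hat{C}^-_0$ and $\mfrk{p}\hat{C}^+_{d-1}=\hat{C}^+_{d-1}$ is also correct.
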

\begin{proof}
Let $[\mfrk{p}]_{\mcal{B}}$ denote the matrix representing $\mfrk{p}$ relative to $\mcal{B}$, and let $L$ denote the transition matrix from $\wt{\mcal{B}}$ to $\mcal{B}$. By linear algebra the matrix representing $\mfrk{p}$ relative to $\wt{\mcal{B}}$ is $L[\mfrk{p}]_{\mcal{B}}L^{-1}$. The result follows. 
\end{proof}

\begin{lemma}
The matrix representing $\mfrk{p}$ relative to the basis $\wt{\mcal{B}}_{alt}$ is
\begin{equation*}\label{[p_x]_BCa}
\left[
\begin{array}{ccccc}
{\bf a}_0 & {\bf b}_0 & & &{\bf 0} \vspace{0.15cm}\\
{\bf c}_1 & {\bf a}_1 & {\bf b}_1 & & \\
&{\bf c}_2 & {\bf a}_2 & \ddots \\
&&\ddots & \ddots & {\bf b}_{d-2} \vspace{0.15cm}\\
{\bf 0}&&&{\bf c}_{d-1} & {\bf a}_{d-1}
\end{array}
\right],
\end{equation*}
where 
\begin{align*}
 {\bf b}_{i} &= 
\left[
\begin{array}{cc}
\frac{-\tau_i}{(1-\tau_i)(1-\epsilon_{i+1})} &
\frac{-\tau_i\tau_{i+1}\zeta_{i+1}}{(1-\tau_i)(1-\epsilon_{i+1})}\vspace{0.1cm}\\
\frac{1}{\zeta_i(1-\tau_i)(1-\epsilon_{i+1})} &
\frac{\tau_{i+1}\zeta_{i+1}}{\zeta_i(1-\tau_i)(1-\epsilon_{i+1})}
\end{array}
\right] && (0 \leq i \leq d-2), && 
\end{align*}
\begin{align*}
{\bf a}_{0} & = 
\left[
\begin{array}{cc}
\frac{1-\epsilon_1(1-\tau_0)}{(1-\tau_0)(1-\epsilon_1)} &
\frac{\zeta_0\tau_0}{(1-\tau_0)(1-\epsilon_1)}\vspace{0.1cm}\\
\frac{-1}{\zeta_0(1-\tau_0)(1-\epsilon_1)}&
\frac{\tau_0(\epsilon_1-1)-\epsilon_1}{(1-\tau_0)(1-\epsilon_1)}
\end{array}
\right], && && \\
{\bf a}_i &= 
\left[
\begin{array}{cc}
\frac{1-\epsilon_{i+1}+\tau_i\epsilon_{i+1}(1-\epsilon_i)}{(1-\tau_i)(1-\epsilon_i)(1-\epsilon_{i+1})} &
\frac{\zeta_i\tau_i(1-\epsilon_i\epsilon_{i+1})}{(1-\tau_i)(1-\epsilon_i)(1-\epsilon_{i+1})}\vspace{0.1cm}\\
\frac{-1+\epsilon_i\epsilon_{i+1}}{\zeta_i(1-\tau_i)(1-\epsilon_i)(1-\epsilon_{i+1})} &
\frac{\tau_i(\epsilon_{i+1}-1)+\epsilon_{i+1}(\epsilon_i-1)}{(1-\tau_i)(1-\epsilon_i)(1-\epsilon_{i+1})}
\end{array}
\right] && (1 \leq i \leq d-2), &&\\
{\bf a}_{d-1} &= 
\left[
\begin{array}{cc}
\frac{1+\tau_{d-1}(\epsilon_{d-1}-1)}{(1-\tau_{d-1})(1-\epsilon_{d-1})} &
\frac{\zeta_{d-1}\tau_{d-1}\epsilon_{d-1}}{(1-\tau_{d-1})(1-\epsilon_{d-1})}\vspace{0.1cm}\\
\frac{-\epsilon_{d-1}}{\zeta_{d-1}(1-\tau_{d-1})(1-\epsilon_{d-1})}&
\frac{1-\tau_{d-1}-\epsilon_{d-1}}{(1-\tau_{d-1})(1-\epsilon_{d-1})}
\end{array}
\right], && && 
\end{align*}
\begin{align*}
{\bf c}_i &= 
\left[
\begin{array}{cc}
\frac{-\epsilon_i}{(1-\tau_i)(1-\epsilon_i)} & 
\frac{-\zeta_{i-1}\epsilon_i}{(1-\tau_i)(1-\epsilon_i)} \vspace{0.1cm} \\
\frac{\epsilon_i}{\zeta_i(1-\tau_i)(1-\epsilon_i)} &
\frac{\zeta_{i-1}\epsilon_i}{\zeta_i(1-\tau_i)(1-\epsilon_i)}
\end{array}
\right] 
&& (1 \leq i \leq d-1). &&
\end{align*}
\end{lemma}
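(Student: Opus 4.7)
The plan is to derive this matrix directly from the previously computed matrix representing $\mfrk{p}$ relative to $\wt{\mcal{B}}$, via a change of basis. Let $[\mfrk{p}]_{\wt{\mcal{B}}}$ denote the $2d \times 2d$ block matrix $\left[\begin{smallmatrix} {\bf P} & {\bf Q} \\ {\bf R} & {\bf S} \end{smallmatrix}\right]$ from the preceding lemma, and let $L$ denote the transition matrix from $\wt{\mcal{B}}_{alt}$ to $\wt{\mcal{B}}$, which is the explicit permutation matrix recorded in Lemma \ref{TM(BC<=>BCa)}. Then by standard linear algebra the desired matrix equals $L\,[\mfrk{p}]_{\wt{\mcal{B}}}\,L^{-1}$. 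This is exactly the strategy used in all the prior ``transport'' lemmas in this section (compare the proofs of the representations of $A, A^*, \wt{A}^*$ relative to $\wt{\mcal{B}}_{alt}$), so the approach is uniform with the rest of Section \ref{matrices}.

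The effect of conjugating by $L$ is to reindex the rows and columns so that the entries previously arranged as $(\tv_0,\dots,\tv_{d-1},\tvp_0,\dots,\tvp_{d-1})$ are reordered as $(\tv_0,\tvp_0,\tv_1,\tvp_1,\dots,\tv_{d-1},\tvp_{d-1})$. Under this reindexing, the block matrix $\left[\begin{smallmatrix} {\bf P} & {\bf Q} \\ {\bf R} & {\bf S} \end{smallmatrix}\right]$ transforms into a $d \times d$ array of $2 \times 2$ blocks whose $(i,j)$ block is $\left[\begin{smallmatrix} {\bf P}_{ij} & {\bf Q}_{ij} \\ {\bf R}_{ij} & {\bf S}_{ij} \end{smallmatrix}\right]$ for $0 \leq i,j \leq d-1$. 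Since each of ${\bf P}, {\bf Q}, {\bf R}, {\bf S}$ is tridiagonal, only the blocks with $|i-j|\leq 1$ survive, yielding the block-tridiagonal shape displayed in the lemma.

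The remaining work is simply to read off the entries: the diagonal block ${\bf a}_i$ comes from the $j=i$ entries of ${\bf P}, {\bf Q}, {\bf R}, {\bf S}$; the superdiagonal block ${\bf b}_i$ from the $j=i+1$ entries; and the subdiagonal block ${\bf c}_i$ from the $j=i-1$ entries. The three boundary cases ${\bf a}_0$, ${\bf a}_{d-1}$ (and the absence of superdiagonal/subdiagonal blocks at the corners) are handled by substituting the special diagonal values ${\bf P}_{00}, {\bf P}_{d-1,d-1}, {\bf Q}_{00}, {\bf Q}_{d-1,d-1}, {\bf R}_{00}, {\bf R}_{d-1,d-1}, {\bf S}_{00}, {\bf S}_{d-1,d-1}$ that were recorded separately in the previous lemma.

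The only thing that could go wrong is bookkeeping: ensuring that the tridiagonal entries $\mathbf{P}_{i,i\pm1}, \mathbf{Q}_{i,i\pm 1}, \mathbf{R}_{i,i\pm 1}, \mathbf{S}_{i,i\pm 1}$ get placed into the correct corners of the corresponding $2 \times 2$ block, and that the boundary corrections at $i\in\{0,d-1\}$ are not double-counted. No new computation involving $\epsilon_i$, $\tau_i$, $\xi_i$, or $\zeta_i$ is needed beyond what already appears in the proof of the preceding lemma, so the proof is essentially an algorithmic rewriting step.
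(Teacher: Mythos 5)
Your proposal is correct, and it reaches the stated block entries by a route that differs slightly from the paper's. The paper's proof conjugates the very simple matrix $[\mfrk{p}]_{\mcal{B}_{alt}}=\diag(1,1,0,1,0,\ldots,1,0,1)$ by the transition matrix from $\wt{\mcal{B}}_{alt}$ to $\mcal{B}_{alt}$ (a block $2\times2$ matrix assembled through the basis $\mcal{C}$), so the actual work is a short but nontrivial matrix multiplication with explicit $\epsilon_i,\tau_i,\xi_i,\zeta_i$ entries. You instead conjugate $[\mfrk{p}]_{\wt{\mcal{B}}}=\left[\begin{smallmatrix}{\bf P}&{\bf Q}\\ {\bf R}&{\bf S}\end{smallmatrix}\right]$ by the permutation taking $\wt{\mcal{B}}_{alt}$ to $\wt{\mcal{B}}$, which is purely a re-indexing: the $(i,j)$ block of the result is $\left[\begin{smallmatrix}{\bf P}_{ij}&{\bf Q}_{ij}\\ {\bf R}_{ij}&{\bf S}_{ij}\end{smallmatrix}\right]$, tridiagonality of ${\bf P},{\bf Q},{\bf R},{\bf S}$ gives the block-tridiagonal shape, and the listed ${\bf a}_i,{\bf b}_i,{\bf c}_i$ (including the corner blocks ${\bf a}_0,{\bf a}_{d-1}$) are read off from the generic and boundary entries of the preceding lemma; I checked that they all match. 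The trade-off is that your argument requires no new computation but depends logically on the lemma giving $[\mfrk{p}]_{\wt{\mcal{B}}}$ (whose own proof already did the heavy conjugation by the $\wt{\mcal{B}}\to\mcal{B}$ transition matrix), whereas the paper's proof is independent of that lemma and pushes the computation into the simpler factors $[\mfrk{p}]_{\mcal{B}_{alt}}$ and the $\wt{\mcal{B}}_{alt}\to\mcal{B}_{alt}$ transition matrix; your direction of conjugation ($L[\mfrk{p}]_{\wt{\mcal{B}}}L^{-1}$ with $L$ the transition matrix from $\wt{\mcal{B}}_{alt}$ to $\wt{\mcal{B}}$) is consistent with the convention the paper uses in the analogous lemmas for $A$, $A^*$, $\wt{A}^*$, so there is no orientation error.
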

\begin{proof}
Let $[\mfrk{p}]_{\mcal{B}_{alt}}$ denote the matrix representing $\mfrk{p}$ relative to $\mcal{B}_{alt}$, and let $L$ denote the transition matrix from $\wt{\mcal{B}}_{alt}$ to $\mcal{B}_{alt}$. By linear algebra the matrix representing $\mfrk{p}$ relative to $\wt{\mcal{B}}_{alt}$ is $L[\mfrk{p}]_{\mcal{B}_{alt}}L^{-1}$. The result follows. 
\end{proof}

\noindent
We are done with $\mfrk{p}$. We now consider the matrices representing $\wt{\mfrk{p}}$ relative to the five bases.

\begin{lemma}
The matrix representing $\wt{\mfrk{p}}$ relative to the basis ${\mcal{C}}$ is
\begin{equation}\label{[p_C]_C}
\bdiag\big[\wt{\bf Y}_0, \wt{\bf Y}_1, \wt{\bf Y}_2, \ldots, \wt{\bf Y}_{d-1}\big],
\end{equation}
where for $0 \leq i \leq d-1$,
$$
\wt{\bf Y}_i = 
\left[
\begin{array}{cc}
\frac{1}{1-\tau_i} & \frac{\tau_i}{\tau_i-1} \\
\frac{1}{1-\tau_i} & \frac{\tau_i}{\tau_i-1} 
\end{array}
\right].
$$
\end{lemma}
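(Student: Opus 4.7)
The plan is to apply Lemma~\ref{action(tp)} directly to read off the matrix column by column from the action of $\wt{\mfrk{p}}$ on the basis vectors $\hat C^{\pm}_i$ listed in $\mcal{C}$. First I would recall that the basis $\mcal{C}$ is ordered as
$$\hat C^-_0,\,\hat C^+_0,\,\hat C^-_1,\,\hat C^+_1,\,\ldots,\,\hat C^-_{d-1},\,\hat C^+_{d-1},$$
so that the $j$-th column of the representing matrix is the coordinate vector of $\wt{\mfrk{p}}$ applied to the $j$-th basis vector, expressed in the same ordered basis.

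Next I would quote Lemma~\ref{action(tp)}: for each $i$ with $0\le i\le d-1$,
$$\wt{\mfrk{p}}\,\hat C^-_i=\tfrac{1}{1-\tau_i}(\hat C^-_i+\hat C^+_i),\qquad \wt{\mfrk{p}}\,\hat C^+_i=\tfrac{\tau_i}{\tau_i-1}(\hat C^-_i+\hat C^+_i).$$
The crucial observation is that both $\wt{\mfrk{p}}\hat C^-_i$ and $\wt{\mfrk{p}}\hat C^+_i$ lie in the span of the pair $\{\hat C^-_i,\hat C^+_i\}$, with no contribution from any $\hat C^{\pm}_j$ for $j\ne i$. Since the basis $\mcal{C}$ groups these pairs consecutively, the representing matrix is block diagonal, with $d$ blocks of size $2$, the $i$-th block being indexed by $\hat C^-_i,\hat C^+_i$.

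Finally I would read off the entries of the $i$-th block: the first column, coming from $\wt{\mfrk{p}}\hat C^-_i$, has both entries equal to $1/(1-\tau_i)$, and the second column, coming from $\wt{\mfrk{p}}\hat C^+_i$, has both entries equal to $\tau_i/(\tau_i-1)$. This is exactly $\wt{\bf Y}_i$ as displayed, and assembling the blocks in the order $i=0,1,\ldots,d-1$ yields $\bdiag[\wt{\bf Y}_0,\wt{\bf Y}_1,\ldots,\wt{\bf Y}_{d-1}]$. There is no real obstacle here; the scalars $\tau_i$ are nonzero and $\tau_i\ne 1$ by Lemma~\ref{tau;q-term} (one checks the factors in~\eqref{tau;q-term;eq} are nonzero using the inequalities of Example~\ref{q-rac;PA}), so the denominators in $\wt{\bf Y}_i$ are well defined, and the only mild bookkeeping point is to verify that the ordering of $\mcal{C}$ indeed aligns the pair $(\hat C^-_i,\hat C^+_i)$ with the $i$-th diagonal block.
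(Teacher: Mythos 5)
Your proposal is correct and follows essentially the same route as the paper, whose proof is simply an appeal to Lemma \ref{action(tp)} (with the block-diagonal bookkeeping left implicit); your reading of the columns and of the ordering of $\mcal{C}$ matches the paper's convention, as one can confirm against the analogous matrix (\ref{[p_x]_C}) for $\mfrk{p}$. The only extra remark is that the nondegeneracy $\tau_i\neq 1$ is immediate from (\ref{tau_i}), since $\tau_i=-|C^+_i|/|C^-_i|<0$, so no appeal to Lemma \ref{tau;q-term} is needed.
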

\begin{proof}
By Lemma \ref{action(tp)}.
\end{proof}

\begin{lemma}
The matrix representing $\wt{\mfrk{p}}$ relative to the basis $\wt{\mcal{B}}$ is
\begin{equation*}
\left[
\begin{array}{c|c}
{\bf I}_{d} & {\bf 0}\\
\hline
{\bf 0} & {\bf 0}
\end{array}
\right],
\end{equation*}
where ${\bf I}_{d}$ is the $d\times d$ identity matrix.
\end{lemma}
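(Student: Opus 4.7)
The plan is to read the matrix entries directly off the definition of $\wt{\mfrk{p}}$, using the fact that $\wt{\mcal{B}}$ is built from a basis for $\MC$ followed by a basis for $\MCp$, precisely the two subspaces distinguished by $\wt{\mfrk{p}}$.

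First I would recall the construction of $\wt{\mfrk{p}}$ from Section \ref{2maps}: by definition $(\wt{\mfrk{p}}-1)\MC=0$ and $\wt{\mfrk{p}}(\MCp)=0$. Next I would note, from Lemma \ref{SB(MC)} and the labeling $\tv_i=\hat{C}_i$, that $\{\tv_i\}_{i=0}^{d-1}$ is a basis for $\MC$, while by Lemma \ref{SB(MCp)} the vectors $\{\tvp_i\}_{i=0}^{d-1}$ are a basis for $\MCp$. Consequently, exactly as summarized in equation (\ref{tp(tv)=tv;tp(tvp)=0}),
\begin{equation*}
\wt{\mfrk{p}}\,\tv_i = \tv_i, \qquad \wt{\mfrk{p}}\,\tvp_i = 0 \qquad (0 \leq i \leq d-1).
\end{equation*}

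Recall the basis $\wt{\mcal{B}} = \{\tv_0,\ldots,\tv_{d-1},\tvp_0,\ldots,\tvp_{d-1}\}$ from (\ref{basis(tB)}). The $j$-th column of the matrix representing $\wt{\mfrk{p}}$ in this basis is the coordinate vector of $\wt{\mfrk{p}}$ applied to the $j$-th basis element. For $0\leq j\leq d-1$ this column is the standard unit vector ${\bf e}_j$ (since $\wt{\mfrk{p}}\tv_j=\tv_j$), and for $d\leq j\leq 2d-1$ this column is zero (since $\wt{\mfrk{p}}\tvp_{j-d}=0$). Assembling these columns gives precisely the block matrix with $\mathbf{I}_d$ in the upper left and zeros elsewhere, as claimed.

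There is no real obstacle here; the statement is essentially a tautology once one has chosen a basis respecting the decomposition $\W = \MC \oplus \MCp$ of (\ref{ODS(TC-mods)}). The only nontrivial content sits in the earlier lemmas that guarantee $\{\tv_i\}$ and $\{\tvp_i\}$ are genuine bases of the respective summands, so I would just cite those and read off the matrix.
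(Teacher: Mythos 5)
Your proof is correct and follows the same route as the paper: the paper's proof simply cites equation (\ref{tp(tv)=tv;tp(tvp)=0}) (that $\wt{\mfrk{p}}\tv_i=\tv_i$ and $\wt{\mfrk{p}}\tvp_i=0$) together with the ordering of the basis $\wt{\mcal{B}}$ in (\ref{basis(tB)}), exactly as you do. Your additional remarks tracing these facts back to the definition of $\wt{\mfrk{p}}$ and to Lemmas \ref{SB(MC)} and \ref{SB(MCp)} are accurate but not needed beyond what the paper records.
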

\begin{proof}
By (\ref{tp(tv)=tv;tp(tvp)=0}) and (\ref{basis(tB)}).
\end{proof}

\begin{lemma}
The matrix representing $\wt{\mfrk{p}}$ relative to the basis $\wt{\mcal{B}}_{alt}$ is
$$
\diag(1,0, 1,0, 1,0, \ldots, 1,0).
$$
\end{lemma}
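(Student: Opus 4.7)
The plan is to read the matrix directly off the action of $\wt{\mfrk{p}}$ on the ordered basis vectors, using the characterization of $\wt{\mfrk{p}}$ as the projection onto $\MC$ along $\MCp$ and the fact that the basis $\wt{\mcal{B}}_{alt}$ is built from the $\wt{\Phi}$-standard basis for $\MC$ interleaved with the $\wtPp$-standard basis for $\MCp$.

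First I would recall from Section~\ref{2maps} that $\wt{\mfrk{p}}:\W\to\W$ is the $\mbb{C}$-linear map determined by the decomposition $\W=\MC\oplus\MCp$ (orthogonal direct sum from (\ref{ODS(TC-mods)})), with $(\wt{\mfrk{p}}-1)\MC=0$ and $\wt{\mfrk{p}}(\MCp)=0$. Next I would invoke the two defining identities~(\ref{tp(tv)=tv;tp(tvp)=0}), namely $\wt{\mfrk{p}}\tv_i=\tv_i$ and $\wt{\mfrk{p}}\tvp_i=0$ for $0\leq i\leq d-1$, since each $\tv_i\in\MC$ (being an element of the $\wt{\Phi}$-standard basis for $\MC$, cf.~(\ref{SB-MC})) and each $\tvp_i\in\MCp$ (being an element of the $\wtPp$-standard basis for $\MCp$, cf.~(\ref{tilde(vp_i)}) and Lemma~\ref{SB(MCp)}).

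Finally I would read off the matrix. The basis $\wt{\mcal{B}}_{alt}$ is ordered as in~(\ref{basis(tBa)}), namely $\tv_0,\tvp_0,\tv_1,\tvp_1,\ldots,\tv_{d-1},\tvp_{d-1}$, so the $2i$-th basis vector is $\tv_i$ and the $(2i{+}1)$-th basis vector is $\tvp_i$. The images under $\wt{\mfrk{p}}$ are $\tv_i$ and $0$ respectively, each of which is a $\mbb{C}$-multiple of the corresponding basis vector. Hence the matrix is diagonal with alternating entries $1,0,1,0,\ldots,1,0$, as claimed.

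There is no real obstacle here: the statement is essentially a bookkeeping verification once~(\ref{tp(tv)=tv;tp(tvp)=0}) and the ordering convention of~$\wt{\mcal{B}}_{alt}$ are in place. The proof should be a one- or two-line appeal to these two facts, in the same spirit as the preceding lemma for $\mfrk{p}$ on the basis $\mcal{B}_{alt}$.
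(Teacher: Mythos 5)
Your proposal is correct and matches the paper's own proof, which simply cites the identities $\wt{\mfrk{p}}\tv_i=\tv_i$, $\wt{\mfrk{p}}\tvp_i=0$ from (\ref{tp(tv)=tv;tp(tvp)=0}) together with the ordering of $\wt{\mcal{B}}_{alt}$ in (\ref{basis(tBa)}). Your write-up is just a more explicit version of that same bookkeeping argument.
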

\begin{proof}
By (\ref{tp(tv)=tv;tp(tvp)=0}) and (\ref{basis(tBa)}).
\end{proof}

\begin{lemma}
The matrix representing $\wt{\mfrk{p}}$ relative to the basis ${\mcal{B}}$ is
\begin{equation*}\label{[p_C]_Bx}
\left[
\begin{array}{cc}
\wt{\bf P} & \wt{\bf Q} \\
\wt{\bf R} & \wt{\bf S}
\end{array}
\right],
\end{equation*}
where $\wt{\bf P}, \wt{\bf Q}, \wt{\bf R}, \wt{\bf S}$ are described as follows. \\ The matrix $\wt{\bf P}$ is  $(d+1) \times (d+1)$ tridiagonal with rows and columns indexed by $0, 1, 2, \ldots, d$ such that
\[
\begin{array}{c}
\wt{\bf P}_{00} = \frac{1}{1-\tau_0}, \qquad 
\wt{\bf P}_{01} = \frac{\tau_0}{\tau_0-1}, \qquad
\wt{\bf P}_{d,d-1} = \frac{1}{1-\tau_{d-1}}, \qquad 
\wt{\bf P}_{d,d} = \frac{\tau_{d-1}}{\tau_{d-1}-1}, \vspace{0.2cm}\\

\wt{\bf P}_{ij} = 
\begin{cases}
\frac{-\epsilon_i}{(1-\tau_{i-1})(1-\epsilon_i)} & \text{ if } j=i-1 \quad (1 \leq i \leq d-1)\\
\frac{\epsilon_i\tau_{i-1}(\tau_i-1)+\tau_{i-1}-1}{(\tau_{i-1}-1)(\tau_i-1)(\epsilon_i-1)} &
			\text{ if } j=i \quad (1 \leq i \leq d-1)\\
\frac{-\tau_i}{(1-\epsilon_i)(1-\tau_i)} & \text{ if } j=i+1 \quad (1 \leq i \leq d-1).
\end{cases}\\
\end{array}
\] 
The matrix $\wt{\bf Q}$ is $(d+1) \times (d-1)$ with rows indexed by $0,1,2, \ldots, d$ and columns indexed by $0, 1, 2, \ldots , d-2$ such that
\[
\begin{array}{c}
\wt{\bf Q}_{00} = \frac{\tau_0\xi_1}{\tau_0-1}, \qquad \qquad 
\wt{\bf Q}_{d,d-2} = \frac{\xi_{d-1}\epsilon_{d-1}}{1-\tau_{d-1}}, \vspace{0.2cm}\\

\wt{\bf Q}_{ij} = 
\begin{cases}
\frac{-\xi_{i-1}\epsilon_{i-1}\epsilon_i}{(1-\tau_{i-1})(1-\epsilon_i)} & \text{ if } j = i-2 \quad(2 \leq i \leq d-1)\\
\frac{\xi_i\epsilon_i(1-\tau_{i-1}\tau_i)}{(1-\tau_{i-1})(1-\tau_i)(1-\epsilon_i)} & 
		\text{ if } j=i-1 \quad(1 \leq i \leq d-1)\\
\frac{-\tau_i\xi_{i+1}}{(1-\epsilon_i)(1-\tau_i)} & \text{ if } j=i \quad(1 \leq i \leq d-2)\\
0 &\text{ otherwise. } 
\end{cases}
\end{array}
\]
The matrix $\wt{\bf R}$ is $(d-1) \times (d+1)$ with rows indexed by $0,1,2, \ldots, d-2$ and columns indexed by $0, 1, 2, \ldots , d$ such that
\[
\wt{\bf R}_{ij} = 
\begin{cases}
\frac{1}{\xi_{i+1}(\epsilon_{i+1}-1)(\tau_{i}-1)} & \text{ if } j=i \quad(0 \leq i \leq d-2)\\
\frac{\tau_{i}\tau_{i+1}-1}{\xi_{i+1}(1-\epsilon_{i+1})(1-\tau_{i+1})(1-\tau_{i})} & \text{ if } j=i+1 \quad (0 \leq i \leq d-2)\\
\frac{\tau_{i+1}}{\xi_{i+1}(1-\epsilon_{i+1})(1-\tau_{i+1})} & \text{ if } j=i+2 \quad(0 \leq i \leq d-2)\\
0 & \text{otherwise}.
\end{cases}
\]
The matrix $\wt{\bf S}$ is $(d-1) \times (d-1)$ tridiagonal  with rows and columns indexed by $0, 1, 2, \ldots, d-2$ such that
\[
\wt{\bf S}_{ij} =
\begin{cases}
\frac{\xi_{i}\epsilon_{i}}{\xi_{i+1}(\epsilon_{i+1}-1)(\tau_{i}-1)} & \text{ if } j=i-1 \quad (1 \leq i \leq d-2)\\
\frac{\tau_{i}(\tau_{i+1}-1)+\epsilon_{i+1}(\tau_{i}-1)}{(1-\tau_{i})(1-\tau_{i+1})(1-\epsilon_{i+1})} &
						\text{ if } j=i \quad (0 \leq i \leq d-2) \\
\frac{\tau_{i+1}\xi_{i+2}}{\xi_{i+1}(1-\epsilon_{i+1})(1-\tau_{i+1})} & \text{ if } j=i+1 \quad (0\leq i \leq d-3).
\end{cases}
\]
\end{lemma}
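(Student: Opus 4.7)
The plan is to follow the change-of-basis recipe used repeatedly in this section. From the previous lemma, the matrix representing $\wt{\mfrk{p}}$ relative to $\wt{\mcal{B}}$ has its top-left $d\times d$ block equal to the identity and all other blocks zero, since $\wt{\mfrk{p}}$ fixes $\{\tv_i\}_{i=0}^{d-1}$ and annihilates $\{\tvp_i\}_{i=0}^{d-1}$ by (\ref{tp(tv)=tv;tp(tvp)=0}). Letting $L$ denote the transition matrix from $\mcal{B}$ to $\wt{\mcal{B}}$, the matrix representing $\wt{\mfrk{p}}$ relative to $\mcal{B}$ equals $L\,[\wt{\mfrk{p}}]_{\wt{\mcal{B}}}\,L^{-1}$, exactly as in the proofs of the preceding lemmas.

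The transition matrix $L$ is not displayed directly, but it can be obtained by composing the chain of transition matrices along $\mcal{B}\leftrightarrow\mcal{B}_{alt}\leftrightarrow\mcal{C}\leftrightarrow\wt{\mcal{B}}_{alt}\leftrightarrow\wt{\mcal{B}}$ from diagram (\ref{diagram}); these factors appear in Lemma \ref{TM:B<->Balt}, Lemma \ref{TM(C<=>Bxa)}, Lemma \ref{TM(BC<=>BCa)} and equations (\ref{tran(C=>BCa)}), (\ref{tran(BCa=>C)}). A slicker alternative is to use $[\wt{\mfrk{p}}]_{\mcal{C}}$, already displayed in (\ref{[p_C]_C}), and then conjugate only by the single transition matrix from $\mcal{B}$ to $\mcal{C}$. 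Since $[\wt{\mfrk{p}}]_{\wt{\mcal{B}}}$ zeros out the last $d$ coordinates, the product $L\,[\wt{\mfrk{p}}]_{\wt{\mcal{B}}}\,L^{-1}$ reduces to a rank-$d$ outer product $L_{1}M_{1}$, where $L_{1}$ consists of the first $d$ columns of $L$ and $M_{1}$ of the first $d$ rows of $L^{-1}$. Because $\tv_j=\hat{C}^-_j+\hat{C}^+_j$ is supported on two consecutive elements of $\mcal{C}$, only three consecutive indices $j-1,j,j+1$ contribute to each $(i,j)$-entry of $L_{1}M_{1}$, which explains the narrow banded shape of all four blocks $\wt{\bf P},\wt{\bf Q},\wt{\bf R},\wt{\bf S}$.

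The explicit entries in $\epsilon_i,\tau_i,\xi_i,\zeta_i$ then emerge from a three-term recurrence in $i$; the boundary cases at $i\in\{0,d-1\}$ collapse via the conventions $\hat{C}^\pm_{-1}=\hat{C}^\pm_d=0$ and $\vp_{-1}=\vp_{d-1}=0$, producing the distinguished corner entries of $\wt{\bf P},\wt{\bf Q},\wt{\bf R},\wt{\bf S}$ that are singled out in the statement. The main obstacle is not conceptual but purely bookkeeping: the four submatrices have different sizes and different boundary patterns, so the index shifts between the $\mcal{B}$-labelling (in which $v_i$ is indexed $0,\ldots,d$ and $\vp_j$ is indexed $0,\ldots,d-2$) and the $\mcal{C}$-labelling (in which $\hat{C}^-_i$ and $\hat{C}^+_i$ are both indexed $0,\ldots,d-1$) must be tracked carefully. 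Once this is done, an entry-by-entry comparison with the stated formulas completes the proof.
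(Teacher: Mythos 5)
Your proposal is correct and follows the same route as the paper: the paper's proof is exactly the one-line conjugation $L[\wt{\mfrk{p}}]_{\wt{\mcal{B}}}L^{-1}$, where $[\wt{\mfrk{p}}]_{\wt{\mcal{B}}}$ is the identity-block projection and $L$ is the transition matrix from $\mcal{B}$ to $\wt{\mcal{B}}$ assembled from the displayed transition matrices. Your additional remarks (the rank-$d$ outer-product reduction, the alternative via $[\wt{\mfrk{p}}]_{\mcal{C}}$, and the boundary bookkeeping) are just extra detail on the same computation.
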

\begin{proof}
Let $[\wt{\mfrk{p}}]_{\wt{\mcal{B}}}$ denote the matrix representing $\wt{\mfrk{p}}$ relative to $\wt{\mcal{B}}$, and let $L$ denote the transition matrix from $\mcal{B}$ to $\wt{\mcal{B}}$. By linear algebra the matrix representing $\wt{\mfrk{p}}$ relative to $\mcal{B}$ is $L[\wt{\mfrk{p}}]_{\wt{\mcal{B}}}L^{-1}$. The result follows.
\end{proof}

\begin{lemma}
The matrix representing $\wt{\mfrk{p}}$ relative to the basis ${\mcal{B}}_{alt}$ is
\begin{equation*}\label{[pi^C]_Bxa}
\left[
\begin{array}{ccccc}
\wt{\bf a}_0 & \wt{\bf b}_0 &&&{\bf 0} \vspace{0.15cm}\\
\wt{\bf c}_1 & \wt{\bf a}_1 & \wt{\bf b}_1 &&\\
&\wt{\bf c}_2 & \wt{\bf a}_2 & \ddots &\\
&& \ddots & \ddots & \wt{\bf b}_{d-1}\vspace{0.15cm} \\
{\bf 0}&&&\wt{\bf c}_{d} & \wt{\bf a}_{d}
\end{array}
\right],
\end{equation*}
where 
\begin{align*}
\wt{{\bf b}}_0 &= 
\left[
\begin{array}{cc}
\frac{\tau_0}{\tau_0-1} &
\frac{\tau_0\xi_1}{\tau_0-1}
\end{array}
\right], && && \\
\wt{\bf b}_{i} & = 
\left[
\begin{array}{c@{}c}
\frac{-\tau_i}{(1-\epsilon_i)(1-\tau_i)} &
\frac{-\tau_i\xi_{i+1}}{(1-\epsilon_i)(1-\tau_i)} \vspace{0.1cm}\\
\frac{\tau_i}{\xi_i(1-\epsilon_i)(1-\tau_i)} &
\frac{\tau_i\xi_{i+1}}{\xi_i(1-\epsilon_i)(1-\tau_i)}
\end{array}
\right] && {(1\leq i\leq d-2)},&&\\
\wt{\bf b}_{d-1} &=
\left[
\begin{array}{@{}c@{}}
\frac{-\tau_{d-1}}{(1-\epsilon_{d-1})(1-\tau_{d-1})} \vspace{0.1cm}\\
\frac{\tau_{d-1}}{\xi_{d-1}(1-\epsilon_{d-1})(1-\tau_{d-1})}
\end{array}
\right], && && 
\end{align*}
\begin{align*}
\wt{\bf a}_0 & = \left[\frac{1}{1-\tau_0}\right], && && \\
\wt{\bf a}_{i} &= 
\left[
\begin{array}{cc}
\frac{\epsilon_i\tau_{i-1}(\tau_i-1)+\tau_{i-1}-1}{(\tau_{i-1}-1)(\tau_i-1)(\epsilon_i-1)} &
\frac{\xi_i\epsilon_i(1-\tau_{i-1}\tau_i)}{(1-\tau_{i-1})(1-\tau_i)(1-\epsilon_i)}\vspace{0.1cm}\\
\frac{\tau_{i-1}\tau_i-1}{\xi_i(1-\epsilon_i)(1-\tau_i)(1-\tau_{i-1})} &
\frac{\tau_{i-1}(\tau_i-1)+\epsilon_i(\tau_{i-1}-1)}{(1-\epsilon_i)(1-\tau_i)(1-\tau_{i-1})}
\end{array}
\right] && {(1\leq i\leq d-1),}  && \\
\wt{\bf a}_{d} & = \left[\frac{\tau_{d-1}}{\tau_{d-1}-1}\right], && &&\\
\wt{\bf c}_1 & = 
\left[
\begin{array}{@{}c@{}}
\frac{-\epsilon_1}{(1-\epsilon_1)(1-\tau_0)}\vspace{0.1cm}\\
\frac{1}{\xi_1(1-\epsilon_1)(1-\tau_0)}
\end{array}
\right], && && \\
\wt{\bf c}_{i} &= 
\left[
\begin{array}{c@{}c}
\frac{-\epsilon_{i}}{(1-\tau_{i-1})(1-\epsilon_{i})} &
\frac{-\xi_{i-1}\epsilon_{i-1}\epsilon_i}{(1-\tau_{i-1})(1-\epsilon_i)}\vspace{0.1cm}\\
\frac{1}{\xi_i(1-\epsilon_i)(1-\tau_{i-1})} &
\frac{\xi_{i-1}\epsilon_{i-1}}{\xi_i(1-\epsilon_i)(1-\tau_{i-1})}
\end{array}
\right] && {(2\leq i \leq d-1)}, &&\\
\wt{\bf c}_{d} &= 
\left[
\begin{array}{@{}cc@{}}
\frac{1}{1-\tau_{d-1}} &
\frac{\xi_{d-1}\epsilon_{d-1}}{1-\tau_{d-1}}
\end{array}
\right]. && &&
\end{align*}
\end{lemma}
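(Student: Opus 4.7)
The plan is to mimic the template used throughout Section~\ref{matrices}, and in particular the strategy employed for the preceding lemma: compute the desired matrix via conjugation by an already-known transition matrix. Concretely, let $M = [\wt{\mfrk{p}}]_{\mcal{B}}$ denote the matrix representing $\wt{\mfrk{p}}$ relative to $\mcal{B}$, displayed in the immediately preceding lemma, and let $L$ denote the transition matrix from $\mcal{B}_{alt}$ to $\mcal{B}$, given explicitly in Lemma~\ref{TM:B<->Balt}. Then by elementary linear algebra $[\wt{\mfrk{p}}]_{\mcal{B}_{alt}} = L\, M\, L^{-1}$, so it suffices to perform this matrix product and regroup the entries.

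Because $L$ and $L^{-1}$ are permutation matrices (they simply interleave the $v_i$'s and $\vp_j$'s into the order $v_0, v_1, \vp_0, v_2, \vp_1, \ldots, v_{d-1}, \vp_{d-2}, v_d$), the conjugation $L M L^{-1}$ amounts to a simultaneous rearrangement of the rows and columns of $M$. Hence every entry of the resulting matrix can be read off directly from one of the four sub-blocks $\wt{\bf P}, \wt{\bf Q}, \wt{\bf R}, \wt{\bf S}$ of $M$. After the rearrangement, the entries cluster into the $1\times 1$, $1\times 2$, $2\times 1$, and $2\times 2$ sub-blocks dictated by the dimension structure from Lemma~\ref{W=ods(E*i)}, namely $\dim E^*_0\W = \dim E^*_d\W = 1$ and $\dim E^*_i\W = 2$ for $1\leq i\leq d-1$. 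Collecting these rearranged entries gives precisely the block-tridiagonal form claimed in the statement.

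The only obstacle is clerical: carefully matching the row/column indices before and after the permutation and verifying that each displayed entry of $\wt{\bf a}_i, \wt{\bf b}_i, \wt{\bf c}_i$ matches the appropriate entry of $\wt{\bf P}, \wt{\bf Q}, \wt{\bf R}, \wt{\bf S}$. For instance, for $1\leq i \leq d-1$ the interior block $\wt{\bf a}_i$ assembles from the $(i,i)$ entry of $\wt{\bf P}$, the $(i,i-1)$ entry of $\wt{\bf Q}$, the $(i-1,i)$ entry of $\wt{\bf R}$, and the $(i-1,i-1)$ entry of $\wt{\bf S}$; one checks that these four entries are exactly the four entries displayed in $\wt{\bf a}_i$. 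The boundary scalars $\wt{\bf a}_0 = [1/(1-\tau_0)]$ and $\wt{\bf a}_d = [\tau_{d-1}/(\tau_{d-1}-1)]$ come from the corner entries $\wt{\bf P}_{00}$ and $\wt{\bf P}_{dd}$, while the off-diagonal blocks $\wt{\bf b}_i, \wt{\bf c}_i$ (including the boundary cases $\wt{\bf b}_0, \wt{\bf b}_{d-1}, \wt{\bf c}_1, \wt{\bf c}_d$) are identified by the same procedure.

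As an alternative sanity check, one could bypass the permutation-conjugation bookkeeping and derive the matrix directly from Lemma~\ref{action(tp)}: express each vector of $\mcal{B}_{alt}$ in the basis $\mcal{C}$ using \eqref{Phi-SB} and \eqref{vp_i}, apply $\wt{\mfrk{p}}$ using Lemma~\ref{action(tp)}, and re-express the result in $\mcal{B}_{alt}$ by inverting the same expressions. This path would involve no conceptual difficulty beyond the same index-tracking, and would reproduce the identical formulas; it is nonetheless comforting to have a second computation available in the Appendix for the case $d=4$ to corroborate the entries.
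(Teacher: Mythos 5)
Your proposal is correct, and it reaches the stated matrix by a route that differs slightly from the paper's. The paper takes $[\wt{\mfrk{p}}]_{\wt{\mcal{B}}_{alt}}$, which is the very simple matrix $\diag(1,0,1,0,\ldots,1,0)$, and conjugates it by the transition matrix from $\mcal{B}_{alt}$ to $\wt{\mcal{B}}_{alt}$ (itself a product of the two block matrices through $\mcal{C}$); you instead take the matrix $[\wt{\mfrk{p}}]_{\mcal{B}}$ from the immediately preceding lemma and conjugate it by the permutation matrix of Lemma~\ref{TM:B<->Balt}, which under the paper's stated convention ($[M]_{\mcal{B}_{alt}}=L[M]_{\mcal{B}}L^{-1}$ with $L$ the transition matrix from $\mcal{B}_{alt}$ to $\mcal{B}$) is a legitimate identity. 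Since conjugation by a permutation is pure re-indexing, your computation requires no genuine matrix multiplication, and your block-assembly rule checks out: for $1\le i\le d-1$ the block $\wt{\bf a}_i$ is exactly $\bigl[\begin{smallmatrix}\wt{\bf P}_{ii}&\wt{\bf Q}_{i,i-1}\\ \wt{\bf R}_{i-1,i}&\wt{\bf S}_{i-1,i-1}\end{smallmatrix}\bigr]$, the corner scalars are $\wt{\bf P}_{00}$ and $\wt{\bf P}_{dd}$, and the off-diagonal blocks (including the boundary cases) match entrywise after noting sign identities such as $(\epsilon_i-1)(\tau_{i-1}-1)=(1-\epsilon_i)(1-\tau_{i-1})$. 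The trade-off between the two routes: the paper's argument is independent of the preceding lemma and starts from the simplest possible representation of $\wt{\mfrk{p}}$, at the cost of multiplying nontrivial block matrices; yours is computationally lighter but inherits correctness from $[\wt{\mfrk{p}}]_{\mcal{B}}$, which was itself obtained by a conjugation. Your alternative direct derivation via Lemma~\ref{action(tp)} together with (\ref{Phi-SB}) and (\ref{vp_i}) is also sound and would serve as an independent check.
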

\begin{proof}
Let $[\wt{\mfrk{p}}]_{\wt{\mcal{B}}_{alt}}$ denote the matrix representing $\wt{\mfrk{p}}$ relative to $\wt{\mcal{B}}_{alt}$, and let $L$ denote the transition matrix from $\mcal{B}_{alt}$ to $\wt{\mcal{B}}_{alt}$. By linear algebra the matrix representing $\wt{\mfrk{p}}$ relative to ${\mcal{B}}_{alt}$ is $L[\wt{\mfrk{p}}]_{\wt{\mcal{B}}_{alt}}L^{-1}$. The result follows.
\end{proof}

\ \\

\bigskip


\noindent
\scalemath{0.87}{
\textbf{\LARGE Part II: The Universal Double Affine Hecke Algebra of Type $(C^{\vee}_1,C_1)$}}

\section{The Universal DAHA $\H$}\label{Hq}

The double affine Hecke algebra (DAHA) associated with the root system $(C^{\vee}_1, C_1)$ was discovered by Sahi\cite{Sahi}. This algebra has rank one. It involves $q$ and four additional nonzero parameters. It is known that the algebra controls the algebraic structure of the Askey-Wilson polynomials\cite{NS}. In \cite{Ter:AW+DAHA} Terwilliger introduced a central extension of that algebra, denoted $\H$ and called the universal DAHA of type $(C_1^{\vee},C_1)$. $\H$ has no parameter besides $q$ and has a larger automorphism group. Our goal for the rest of this paper is to display a representation of $\H$ using a Delsarte clique of a $Q$-polynomial distance-regular graph, and to show how this representation is related to the algebra $\bf T$ from Definition \ref{Algebra(T)}. 

\medskip \noindent
We now define the algebra $\H$. For notational convenience define a four element set
$$
\I = \{0, 1, 2, 3\}.
$$
For the rest of the paper we fix a square root $q^{1/2}$ of $q$.
\begin{definition}{\cite[Definition 3.1]{Ter:AW+DAHA}}\label{DAHA}
Let $\hat{H}_{q}$ denote the $\C$-algebra defined by 
generators $\{t^{\pm1}_n\}_{n \in \I}$
and  relations
\begin{align}
\label{daha1}&t_nt_n^{-1} = t^{-1}_nt_n = 1 \qquad n \in \I; \\
\label{daha2}&t_n + t^{-1}_{n} \text{ is central} \qquad n \in \I; \\
\label{daha3}&t_0t_1t_2t_3=q^{-1/2}. 
\end{align}
We call $\H$ the {\it universal DAHA of type $(C^{\vee}_1, C_1)$}.
We note that in \cite[Definition 3.1]{Ter:AW+DAHA}
Terwilliger defined $\H$ in a slightly different way. 
To go from his definition to ours, replace $q$ by $q^{{1}/{2}}$.
\end{definition}

\begin{definition}\label{X,Y}
We define elements $\bf X$ and $\bf Y$ in $\H$ as follows:
$$
{\bf X} = t_3t_0, \qquad \qquad {\bf Y} = t_0t_1.
$$
Note that each of $\bf X$, $\bf Y$ is invertible.
\end{definition}

\begin{definition}\label{A,B,B+}
We define elements ${\bf A}, {\bf B}, {\bf B^{\dagger}}$ in 
$\H$ as follows:
\begin{align*}
&{\bf A}  = {\bf Y} + {\bf Y}^{-1} = t_0t_1 + (t_0t_1)^{-1}, \\
&{\bf B}  = {\bf X} + {\bf X}^{-1} = t_3t_0 + (t_3t_0)^{-1}, \\
&{\bf B^{\dagger}}  = q^{1/2}{\bf X} + q^{-1/2}{\bf X}^{-1} = t_1t_2 + (t_1t_2)^{-1}.
\end{align*}
\end{definition}

\begin{note}\label{commuting}
By \cite[Lemma 3.8]{Ito-Ter} each of $\bf A$, $\bf B$ commutes with $t_0$, 
and each of $\bf A$, ${\bf B^{\dagger}}$ commutes with $t_1$.
Moreover $\bf B$, ${\bf B^{\dagger}}$ commute. 
\end{note}

\medskip
\section{An action of $\H$ on the $\T$-module $\W$}\label{H-mod(W)}

We now bring in the situation of Part I. Recall
the primary $\T$-module $\W$ from below Definition \ref{Algebra(T)}.
In this section we will show that $\W$ has the structure of a $\H$-module.
To this end we define some block diagonal matrices whose blocks are $1 \times 1$ or
$2 \times 2$. Recall the scalars $h, h^*,s, s^*, r_1, r_2$ from above Note \ref{h,h*}.
In what follows, we will encounter square roots. These are interpreted as follows. 
For the rest of the paper fix square roots
$$
s^{1/2}, \quad s^{*1/2}, \quad r_1^{1/2}, \quad r_2^{1/2}
$$
such that $r_1^{1/2}r_2^{1/2}=s^{1/2}s^{*1/2}q^{(d+1)/2}$.

\begin{definition} \label{(2x2)mat}
We define some matrices as follows:
\begin{enumerate}
\item[(a)] For $0 \leq i \leq d-1$ define
$$
\mfrk{t}_{0}(i) = \left[
\begin{array}{ccc}
\frac{1}{\sqrt{s^*r_1r_2}}
\left(
\frac{(r_1-s^*q^{i+1})(r_2-s^*q^{i+1})}{1-s^*q^{2i+2}} + s^*
\right)
& \ \quad  \ &
-\sqrt{\frac{s^*}{r_1r_2}} \frac{(1-r_1q^{i+1})(1-r_2q^{i+1})}{1-s^*q^{2i+2}} \\

\ \\

\frac{1}{\sqrt{s^*r_1r_2}}
\frac{(r_1-s^*q^{i+1})(r_2-s^*q^{i+1})}{1-s^*q^{2i+2}}
& \ \quad  \ &
\sqrt{\frac{s^*}{r_1r_2}}
\left(1-\frac{(1-r_1q^{i+1})(1-r_2q^{i+1})}{1-s^*q^{2i+2}}\right)
\end{array}
\right].
$$

\item[(b)] For $1 \leq i \leq d-1$ define
$$
\mfrk{t}_{1}(i) =\left[
\begin{array}{ccc}
\frac{q^{d/2}(1-q^{i-d})(1-s^*q^{i+1})}{1-s^*q^{2i+1}} + \frac{1}{q^{d/2}}
& \ \quad  \ &
\frac{q^{d/2}(q^{i-d}-1)(1-s^*q^{i+1})}{1-s^*q^{2i+1}}\\

\ \\

\frac{(1-q^{i})(1-s^*q^{d+i+1})}{q^{d/2}(1-s^*q^{2i+1})}
& \ \quad  \ &
\frac{(q^{i}-1)(1-s^*q^{d+i+1})}{q^{d/2}(1-s^*q^{2i+1})} +\frac{1}{q^{d/2}}
\end{array}
\right].
$$
Define  $$\mfrk{t}_{1}(0)=\left[\tfrac{1}{q^{d/2}}\right], \qquad \qquad
\mfrk{t}_{1}(d)=\left[\tfrac{1}{q^{d/2}}\right].$$

\item[(c)] For $1 \leq i \leq d-1$ define
$$
\mfrk{t}_{2}(i) =\left[
\begin{array}{ccc}
\frac{1}{\sqrt{s^*q^{d+1}}}
\left(
\frac{s^*q^{d+1}(1-q^{i})(1-q^{i-d})}{1-s^*q^{2i+1}}+1
\right)
& \ \quad  \ &
\frac{q^{i}\sqrt{s^*q^{d+1}}(1-q^{i-d})(1-s^*q^{i+1})}{1-s^*q^{2i+1}}
\\

\ \\

\frac{1}{q^{i}\sqrt{s^*q^{d+1}}}
\frac{(q^{i}-1)(1-s^*q^{d+i+1})}{1-s^*q^{2i+1}}
& \ \quad  \ &
\sqrt{s^*q^{d+1}} \left(
\frac{(q^{i}-1)(1-q^{i-d})}{1-s^*q^{2i+1}} + 1
\right)
\end{array}
\right].
$$
Define $$\mfrk{t}_{2}(0)=\left[\sqrt{s^*q^{d+1}}\right], \qquad \qquad
\mfrk{t}_{2}(d)=\left[\tfrac{1}{\sqrt{s^*q^{d+1}}}\right].$$

\item[(d)] For $0\leq i \leq d-1$ define
$$
\mfrk{t}_{3}(i) =\left[
\begin{array}{ccc}
\frac{1}{q^{i+1}\sqrt{r_1r_2}}
\left(
1-\frac{(1-r_1q^{i+1})(1-r_2q^{i+1})}{1-s^*q^{2i+2}}
\right)
& \ \quad  \ &
\frac{1}{q^{i+1}\sqrt{r_1r_2}}
\frac{(1-r_1q^{i+1})(1-r_2q^{i+1})}{1-s^*q^{2i+2}} \\

\ \\

-\frac{q^{i+1}}{\sqrt{r_1r_2}}
\frac{(r_1-s^*q^{i+1})(r_2-s^*q^{i+1})}{1-s^*q^{2i+2}}
& \ \quad  \ &
\frac{q^{i+1}}{\sqrt{r_1r_2}}
\left(
\frac{(r_1-s^*q^{i+1})(r_2-s^*q^{i+1})}{1-s^*q^{2i+2}} + s^*\right)
\end{array}
\right].
$$
\end{enumerate}
\end{definition}

\begin{definition}\label{k_0,1,2,3}
We define complex scalars $\{k_n\}_{n\in\I}$ as follows:
\begin{equation}\label{k0123}
k_0 = \sqrt{\tfrac{r_1r_2}{s^*}}, \qquad k_1 = \tfrac{1}{\sqrt{q^{d}}}, \qquad
k_2 = \sqrt{s^*q^{d+1}}, \qquad k_3 = \sqrt{\tfrac{r_2}{r_1}}.
\end{equation}
Observe that $k_0, k_1, k_2, k_3$ are nonzero.
\end{definition}

\noindent
We discuss some properties of the matrices from Definition \ref{(2x2)mat}.
We begin with the $2\times2$ matrices.

\medskip
\begin{lemma}\label{det/tr(t_i)}
For $n \in \I$ define $\varepsilon = 0$ if $n \in \{0,3\}$
and $\varepsilon = 1$ if $n \in \{1,2\}$. Referring to Definition {\rm\ref{(2x2)mat}}, 
for $\varepsilon \leq i \leq d-1$ both
\begin{enumerate}
\item[{\rm(i)}] $\det(\mfrk{t}_n(i)) = 1;$
\item[\rm(ii)] ${\rm{trace}}(\mfrk{t}_n(i)) = k_n + k^{-1}_n.$
\end{enumerate}
\end{lemma}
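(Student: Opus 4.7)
The proof will be entirely computational: each $\mfrk{t}_n(i)$ is a concrete $2\times 2$ matrix, and both claims can be verified by directly computing trace and determinant entrywise and simplifying. I sketch the plan.

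First I dispose of $n=0$ and $n=3$ together, since both matrices involve the same two rational expressions. It is convenient to abbreviate
$$
\alpha = \frac{(r_1-s^*q^{i+1})(r_2-s^*q^{i+1})}{1-s^*q^{2i+2}}, \qquad \beta = \frac{(1-r_1q^{i+1})(1-r_2q^{i+1})}{1-s^*q^{2i+2}}.
$$
Then $\mfrk{t}_0(i)$ has entries $(k_0^{-1}(1+\alpha/s^*),\,-k_0^{-1}\beta;\,\alpha/(s^*k_0),\,k_0^{-1}(1-\beta))$, and similarly $\mfrk{t}_3(i)$ rescales the same pieces by powers of $q^{i+1}$ with $k_3$ in place of $k_0$. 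The trace calculation collapses after pulling out a common factor of $k_0^{-1}$ (resp.\ $k_3^{-1}$) to the elementary polynomial identity
$$
(r_1-s^*q^{i+1})(r_2-s^*q^{i+1}) - s^*(1-r_1q^{i+1})(1-r_2q^{i+1}) = (r_1r_2-s^*)(1-s^*q^{2i+2}),
$$
which I verify by expanding both sides. The determinant for both matrices reduces to $\tfrac{1}{r_1r_2}[\alpha + s^*(1-\beta)]$, which by the same identity equals $1$.

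Next I handle $n=1$ and $n=2$. Here the natural abbreviations are
$$
\gamma = \frac{(1-q^{i-d})(1-s^*q^{i+1})}{1-s^*q^{2i+1}}, \qquad \delta = \frac{(1-q^{i})(1-s^*q^{d+i+1})}{1-s^*q^{2i+1}}.
$$
For $\mfrk{t}_1(i)$ the diagonal entries are $q^{d/2}\gamma + q^{-d/2}$ and $q^{-d/2}(1-\delta)$, while the off-diagonals are $-q^{d/2}\gamma$ and $q^{-d/2}\delta$. Both $\mathrm{tr}(\mfrk{t}_1(i)) = k_1+k_1^{-1}$ and $\det(\mfrk{t}_1(i))=1$ reduce, after elementary manipulation, to the single identity
$$
q^{d}(\gamma-1) = \delta - 1,
$$
which I verify by clearing denominators and expanding (the numerators on both sides become $-q^i - s^*q^{d+i+1} + s^*q^{2i+1} + s^*q^{d+2i+1}$). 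For $\mfrk{t}_2(i)$ the same $\gamma,\delta$ appear, rescaled by powers of $q$ and $\sqrt{s^*q^{d+1}} = k_2$; the trace and determinant again reduce to $q^d(\gamma-1)=\delta-1$, and so both claims follow.

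The main obstacle is purely clerical: keeping track of the square-root prefactors and the various $q$-powers through the calculation. There is no conceptual difficulty, and in fact the form of $\mfrk{t}_n(i)$ makes the result inevitable; one should regard the characteristic polynomial $x^2 - (k_n+k_n^{-1})x + 1 = (x-k_n)(x-k_n^{-1})$ as the defining target, matching the quadratic relation $(t_n-k_n)(t_n-k_n^{-1})=0$ that $t_n$ will satisfy in the eventual representation built from these blocks.
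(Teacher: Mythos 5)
Your proposal is correct and takes essentially the same approach as the paper: the paper's proof consists of the single word ``Routine,'' i.e.\ exactly the entrywise trace/determinant verification you outline, and your key identities (e.g.\ $(r_1-s^*q^{i+1})(r_2-s^*q^{i+1})-s^*(1-r_1q^{i+1})(1-r_2q^{i+1})=(r_1r_2-s^*)(1-s^*q^{2i+2})$ for $\mfrk{t}_0(i)$, and $q^{d}(\gamma-1)=\delta-1$ for $\mfrk{t}_1(i)$) are valid and do finish those cases. One minor caveat that does not affect the plan: the trace of $\mfrk{t}_3(i)$ and the determinant of $\mfrk{t}_2(i)$ collapse via slightly different, though equally elementary, expansions than the two identities you name (for instance ${\rm trace}(\mfrk{t}_2(i))=k_2+k_2^{-1}$ cancels outright, with no identity needed, while the $\mfrk{t}_3(i)$ trace uses $(r_1+r_2)(1-s^*q^{2i+2})$ rather than $(r_1r_2-s^*)(1-s^*q^{2i+2})$), so carrying out the expansions will require these small adjustments.
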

\begin{proof}
Routine.
\end{proof}

\begin{lemma}\label{inv(t_i)}
Referring to Lemma {\rm \ref{det/tr(t_i)}},
the matrix $\mfrk{t}_n(i)$ is invertible. Moreover
\begin{equation}\label{ti(j)+ti(j)}
\mfrk{t}_n(i) + (\mfrk{t}_n(i))^{-1} = 
(k_n + k_n^{-1})I.
\end{equation}
\end{lemma}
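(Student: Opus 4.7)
The plan is to deduce this lemma directly from the previous one via the Cayley--Hamilton theorem applied in size two. The invertibility will follow from $\det(\mfrk{t}_n(i))=1\neq 0$, which was established in Lemma~\ref{det/tr(t_i)}(i), so no separate argument is needed there.

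For the main identity, I would first dispose of the $1\times 1$ cases $\mfrk{t}_1(0),\mfrk{t}_1(d),\mfrk{t}_2(0),\mfrk{t}_2(d)$. In each instance the single entry is literally $k_n$ (one reads $k_1=q^{-d/2}$ and $k_2=\sqrt{s^{*}q^{d+1}}$ off of Definition~\ref{k_0,1,2,3}), so $\mfrk{t}_n(i)+(\mfrk{t}_n(i))^{-1}=k_n+k_n^{-1}=(k_n+k_n^{-1})I$ with $I$ the $1\times 1$ identity. This is immediate and requires no computation beyond matching definitions.

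For the remaining $2\times 2$ cases, I would invoke Cayley--Hamilton: every $2\times 2$ matrix $M$ over $\mathbb{C}$ satisfies
\begin{equation*}
M^2 - \operatorname{trace}(M)\,M + \det(M)\,I = 0.
\end{equation*}
By Lemma~\ref{det/tr(t_i)} we have $\det(\mfrk{t}_n(i))=1$ and $\operatorname{trace}(\mfrk{t}_n(i))=k_n+k_n^{-1}$. Applying Cayley--Hamilton to $M=\mfrk{t}_n(i)$ gives $M^2+I=(k_n+k_n^{-1})M$. Multiplying both sides by $M^{-1}$ (which exists since $\det(M)=1$) yields $M+M^{-1}=(k_n+k_n^{-1})I$, as required.

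There is no genuine obstacle here; the entire content of the lemma has already been loaded into Lemma~\ref{det/tr(t_i)}, and the present statement is just the standard observation that a unimodular $2\times 2$ matrix satisfies $M+M^{-1}=\operatorname{trace}(M)\,I$. The only thing to be careful about is the bookkeeping for the $1\times 1$ blocks, where $I$ should be interpreted as the identity of the appropriate size; this is why I would treat those cases separately at the start.
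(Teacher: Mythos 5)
Your Cayley--Hamilton argument (det $=1$, trace $=k_n+k_n^{-1}$, hence $M+M^{-1}=(k_n+k_n^{-1})I$, with invertibility from the unit determinant) is exactly the computation the paper leaves implicit behind ``Use Lemma \ref{det/tr(t_i)}'', so your proof is correct and takes essentially the same route. One small remark: the $1\times1$ blocks you treat at the start lie outside the scope of this lemma (it refers only to the indices $\varepsilon\leq i\leq d-1$ of Lemma \ref{det/tr(t_i)}, and the $1\times1$ cases are handled separately in Lemma \ref{block(1X1)}), and in any case the entry of $\mfrk{t}_2(d)$ is $k_2^{-1}$ rather than $k_2$, although the identity $\mfrk{t}_n(i)+(\mfrk{t}_n(i))^{-1}=k_n+k_n^{-1}$ of course still holds there.
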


\begin{proof}
Use Lemma \ref{det/tr(t_i)}. 
\end{proof}

\begin{lemma}\label{ti(j)ti(j)}
With reference to Definition {\rm \ref{(2x2)mat}}
the following {\rm (i), (ii)} hold: 
\begin{enumerate}
\item[\rm(i)]
$ \mfrk{t}_3(i)\mfrk{t}_0(i) = 
\left[
\begin{array}{cc}
\frac{1}{q^{i+1}\sqrt{s^*}} & 0 \\
0 & {q^{i+1}\sqrt{s^*}}
\end{array}
\right] \qquad (0 \leq i \leq d-1).
$

\item[\rm(ii)]
$\mfrk{t}_1(i)\mfrk{t}_2(i) = 
\left[
\begin{array}{cc}
\frac{1}{q^{i}\sqrt{s^*q}} & 0 \\
0 & {q^{i}\sqrt{s^*q}}
\end{array}
\right] \qquad (1 \leq i \leq d-1).
$
\end{enumerate}
\end{lemma}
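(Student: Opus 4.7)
The plan is to verify each identity by direct $2\times 2$ matrix multiplication, using a determinant-$1$ shortcut to halve the work. By Lemma \ref{det/tr(t_i)}(i), each $\mfrk{t}_n(i)$ has determinant $1$, so $\mfrk{t}_3(i)\mfrk{t}_0(i)$ and $\mfrk{t}_1(i)\mfrk{t}_2(i)$ both have determinant $1$; since the claimed diagonal matrices also have determinant $1$, it will suffice in each part to verify that the two off-diagonal entries of the product vanish and that one diagonal entry matches---the other diagonal entry is then forced.

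For part (i), I would introduce the abbreviations
\begin{align*}
\alpha_i &= \frac{(r_1-s^*q^{i+1})(r_2-s^*q^{i+1})}{1-s^*q^{2i+2}}, &
\beta_i  &= \frac{(1-r_1q^{i+1})(1-r_2q^{i+1})}{1-s^*q^{2i+2}},
\end{align*}
so that
\begin{align*}
\mfrk{t}_0(i) &= \tfrac{1}{\sqrt{s^*r_1r_2}}\begin{bmatrix}\alpha_i+s^* & -s^*\beta_i\\ \alpha_i & s^*(1-\beta_i)\end{bmatrix}, &
\mfrk{t}_3(i) &= \tfrac{1}{q^{i+1}\sqrt{r_1r_2}}\begin{bmatrix}1-\beta_i & \beta_i\\ -q^{2i+2}\alpha_i & q^{2i+2}(\alpha_i+s^*)\end{bmatrix}.
\end{align*}
Under this parametrization the $(1,2)$ and $(2,1)$ entries of $\mfrk{t}_3(i)\mfrk{t}_0(i)$ are, up to a common prefactor, $(1-\beta_i)(-s^*\beta_i)+\beta_i\cdot s^*(1-\beta_i)=0$ and $-\alpha_i(\alpha_i+s^*)+(\alpha_i+s^*)\alpha_i=0$, so both vanish by inspection. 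The $(1,1)$ entry collapses to $\tfrac{\alpha_i+s^*(1-\beta_i)}{q^{i+1}r_1r_2\sqrt{s^*}}$, and the claim then reduces to the single polynomial identity
\begin{equation*}
\alpha_i+s^*(1-\beta_i)=r_1r_2,
\end{equation*}
whose numerator over the common denominator $1-s^*q^{2i+2}$ expands to $r_1r_2(1-s^*q^{2i+2})$ after the $s^*q^{i+1}(r_1+r_2)$ and $(s^*)^2q^{2i+2}$ cross terms cancel.

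Part (ii) proceeds analogously with abbreviations
\begin{align*}
\gamma_i &= \frac{(1-q^{i-d})(1-s^*q^{i+1})}{1-s^*q^{2i+1}}, &
\delta_i &= \frac{(1-q^i)(1-s^*q^{d+i+1})}{1-s^*q^{2i+1}}, &
\eta_i &= \frac{(1-q^i)(1-q^{i-d})}{1-s^*q^{2i+1}}.
\end{align*}
The off-diagonal cancellations now require the identities
\begin{equation*}
q^i\gamma_i+\eta_i=1-q^{i-d}, \qquad \delta_i+s^*q^{d+i+1}\eta_i=1-q^i,
\end{equation*}
each of which is a one-line factorization of the common factor $(1-q^{i-d})$ or $(1-q^i)$ out of the combined numerator. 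A short calculation then reduces the $(1,1)$ entry to the final identity $q^d\gamma_i-\delta_i=q^d-1$, whose numerator over $1-s^*q^{2i+1}$ telescopes to $(q^d-1)(1-s^*q^{2i+1})$. There is no conceptual obstacle here---the result is pure algebra---but the bookkeeping is delicate, and the above abbreviations are precisely what is needed to reduce each step to a one- or two-line polynomial identity rather than an expansion of products of four linear factors in $q^i$. The identity itself is the algebraic heart of the forthcoming $\H$-module construction on $\W$, foreshadowing that $t_3t_0$ and $t_1t_2$ act diagonally in the basis $\mcal{C}$, as must be the case if $\mathbf{B}$ and $\mathbf{B}^{\dagger}$ are to act as affine images of $\wt{A}^*$ and $A^*$ respectively.
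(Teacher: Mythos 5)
Your proposal is correct, and it is essentially the paper's argument: the paper dismisses this as a routine computation, i.e.\ direct multiplication of the $2\times 2$ blocks from Definition \ref{(2x2)mat}, which is exactly what you carry out (your three key identities, such as $\alpha_i+s^*(1-\beta_i)=r_1r_2$ and $q^d\gamma_i-\delta_i=q^d-1$, all check out). The only cosmetic difference is your use of Lemma \ref{det/tr(t_i)}(i) to deduce the second diagonal entry from the first, which is a legitimate shortcut since both index ranges fall within that lemma's hypotheses.
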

\begin{proof}
Routine.
\end{proof}

 \noindent
We have discussed the $2\times2$ matrices from Definition \ref{(2x2)mat}. 
We now consider the $1\times1$ matrices from Definition \ref{(2x2)mat}. 

\begin{lemma}\label{block(1X1)}
The following {\rm (i), (ii)} hold:
\begin{enumerate}
\item[\rm (i)] For $n \in \{1,2\}$ and $i\in \{0,d\}$,
\begin{equation*}
\mfrk{t}_n(i) + (\mfrk{t}_n(i))^{-1} = k_n + k_n^{-1}.
\end{equation*}

\item[\rm (ii)] Both
\begin{equation*}
\mfrk{t}_1(0)\mfrk{t}_2(0) = \sqrt{s^*q}, 
\qquad \qquad
\mfrk{t}_1(d)\mfrk{t}_2(d) = \frac{1}{q^d\sqrt{s^*q}}.\end{equation*}
\end{enumerate}
\end{lemma}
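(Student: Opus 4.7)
The plan is straightforward verification by direct computation from the explicit $1\times 1$ matrices given in parts (b) and (c) of Definition \ref{(2x2)mat} and the scalar values in Definition \ref{k_0,1,2,3}. Since the matrices in question are all $1\times 1$, no linear-algebraic machinery is needed; everything reduces to scalar arithmetic with $q^{1/2}$, $s^{*1/2}$, and the fixed choices of square roots.

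For part (i), I would recall that $\mfrk{t}_1(0)=\mfrk{t}_1(d)=[q^{-d/2}]$ and $k_1=q^{-d/2}$, so the sum $\mfrk{t}_1(i)+(\mfrk{t}_1(i))^{-1}$ equals $q^{-d/2}+q^{d/2}=k_1+k_1^{-1}$; similarly $\mfrk{t}_2(0)=[\sqrt{s^*q^{d+1}}]$ and $\mfrk{t}_2(d)=[1/\sqrt{s^*q^{d+1}}]$ with $k_2=\sqrt{s^*q^{d+1}}$, whence in either case the sum $\mfrk{t}_2(i)+(\mfrk{t}_2(i))^{-1}$ collapses to $k_2+k_2^{-1}$.

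For part (ii), I would multiply the scalars directly:
$$\mfrk{t}_1(0)\mfrk{t}_2(0)=q^{-d/2}\cdot\sqrt{s^*q^{d+1}}=\sqrt{s^*q},$$
and
$$\mfrk{t}_1(d)\mfrk{t}_2(d)=q^{-d/2}\cdot\tfrac{1}{\sqrt{s^*q^{d+1}}}=\tfrac{1}{\sqrt{s^*q^{2d+1}}}=\tfrac{1}{q^d\sqrt{s^*q}}.$$
There is no genuine obstacle here; the only thing to keep in mind is the convention on square roots fixed above Definition \ref{(2x2)mat}, which ensures the manipulations $q^{-d/2}\sqrt{q^{d+1}}=\sqrt{q}$ are unambiguous. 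The proof will essentially consist of quoting Definitions \ref{(2x2)mat}(b),(c) and \ref{k_0,1,2,3} and carrying out these one-line simplifications.
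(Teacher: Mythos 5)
Your proposal is correct and is exactly what the paper does: its proof reads ``Immediate from Definition \ref{(2x2)mat},'' i.e.\ the same direct scalar verification of the $1\times 1$ blocks $\mfrk{t}_1(0)=\mfrk{t}_1(d)=[q^{-d/2}]$, $\mfrk{t}_2(0)=[\sqrt{s^*q^{d+1}}]$, $\mfrk{t}_2(d)=[1/\sqrt{s^*q^{d+1}}]$ against $k_1, k_2$ from Definition \ref{k_0,1,2,3}. Your one-line computations, including the handling of the fixed square roots, match the intended argument.
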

\begin{proof}
Immediate from Definition \ref{(2x2)mat}.
\end{proof}

\medskip
\begin{definition} \label{Mat_t_i}
With reference to Definition \ref{(2x2)mat}, for $n \in \I$ we define a $2d \times 2d$ 
block diagonal matrix $\mcal{T}_n$ as follows:
\begin{align*}
\mcal{T}_0 
	& = \text{blockdiag}\big[\mfrk{t}_0(0), \mfrk{t}_0(1), \ldots, \mfrk{t}_0(d-1)\big], \\
\mcal{T}_1 
	& = \text{blockdiag}\big[\mfrk{t}_1(0), \mfrk{t}_1(1), \ldots, \mfrk{t}_1(d-1), \mfrk{t}_1(d)\big],\\
\mcal{T}_2 
	& = \text{blockdiag}\big[\mfrk{t}_2(0), \mfrk{t}_2(1), \ldots, \mfrk{t}_2(d-1), \mfrk{t}_2(d)\big], \\
\mcal{T}_3 
	& = \text{blockdiag}\big[\mfrk{t}_3(0), \mfrk{t}_3(1), \ldots, \mfrk{t}_3(d-1)\big]. 
\end{align*}
\end{definition}

\medskip
\noindent
Referring to Definition \ref{Mat_t_i}, the matrices $\mcal{T}_0$ and $\mcal{T}_3$
take the form
$$\left(
\begin{array}{ccccccccc}
* & * & &&&&&& {\bf 0} \\
* & * & &&&&&& \\
&& * & * & &&&& \\
&& * & * & &&&& \\
&&&&\ddots &&&&\\
&&&&& * & * & & \\
&&&&& * & * & & \\
&&&&&&& * & * \\
{\bf 0}&&&&&&& * & * 
\end{array}
\right),$$
and the matrices $\mcal{T}_1$ and $\mcal{T}_2$ take the form
$$\left(
\begin{array}{ccccccccc}
* && &&&&&& {\bf 0} \\
&* & * & &&&&& \\
&* & * & &&&&& \\
&&& * & * & &&& \\
&&& * & * & &&& \\
&&&&& \ddots &&&\\
&&&&&& * & * &  \\
&&&&&& * & * & \\
{\bf 0}&&&&&&&& * 
\end{array}
\right).$$

\begin{lemma}\label{X;qX}
With reference to Definition {\rm \ref{Mat_t_i}}, both
\begin{eqnarray}\label{X;T_30}
\mcal{T}_3\mcal{T}_0 &=&
\diag\left( \tfrac{1}{q\sqrt{s^*}}, q\sqrt{s^*}, \tfrac{1}{q^2\sqrt{s^*}}, q^2\sqrt{s^*},
\ldots, 
\tfrac{1}{q^{d-1}\sqrt{s^*}}, q^{d-1}\sqrt{s^*},
\tfrac{1}{q^d\sqrt{s^*}}, q^d\sqrt{s^*}\right), \\
\label{qX;T_12}
\mcal{T}_1\mcal{T}_2 &=& 
\diag\left(\sqrt{s^*q}, 
\tfrac{1}{q\sqrt{s^*q}}, q\sqrt{s^*q},
\tfrac{1}{q^2\sqrt{s^*q}}, q^2\sqrt{s^*q},
\ldots, \tfrac{1}{q^{d-1}\sqrt{s^*q}}, q^{d-1}\sqrt{s^*q}, \tfrac{1}{q^d\sqrt{s^*q}} 
\right).
\end{eqnarray}
Moreover each of $\mcal{T}_3\mcal{T}_0, \mcal{T}_1\mcal{T}_2$
is multiplicity-free.
\end{lemma}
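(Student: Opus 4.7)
The plan is to exploit the fact that the matrices $\mcal{T}_n$ in Definition \ref{Mat_t_i} are block diagonal, so that the products $\mcal{T}_3\mcal{T}_0$ and $\mcal{T}_1\mcal{T}_2$ can be computed block-by-block provided the block partitions line up. The block-level products have already been worked out: Lemma \ref{ti(j)ti(j)} handles the $2\times 2$ blocks, and Lemma \ref{block(1X1)}(ii) handles the two $1\times 1$ blocks that appear at the ends of $\mcal{T}_1$ and $\mcal{T}_2$.

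First I would observe that $\mcal{T}_3$ and $\mcal{T}_0$ share the block partition $(2,2,\dots,2)$ with $d$ blocks of size $2\times 2$, so
\begin{equation*}
\mcal{T}_3\mcal{T}_0 = \bdiag\bigl[\mfrk{t}_3(0)\mfrk{t}_0(0),\ \mfrk{t}_3(1)\mfrk{t}_0(1),\ \ldots,\ \mfrk{t}_3(d-1)\mfrk{t}_0(d-1)\bigr].
\end{equation*}
Substituting the diagonal $2\times 2$ blocks from Lemma \ref{ti(j)ti(j)}(i) immediately yields (\ref{X;T_30}). The analogous argument handles $\mcal{T}_1\mcal{T}_2$: both have common block partition $(1,2,2,\dots,2,1)$ with one $1\times 1$ block at each end and $d-1$ blocks of size $2\times 2$ in between, so
\begin{equation*}
\mcal{T}_1\mcal{T}_2 = \bdiag\bigl[\mfrk{t}_1(0)\mfrk{t}_2(0),\ \mfrk{t}_1(1)\mfrk{t}_2(1),\ \ldots,\ \mfrk{t}_1(d-1)\mfrk{t}_2(d-1),\ \mfrk{t}_1(d)\mfrk{t}_2(d)\bigr],
\end{equation*}
and plugging in Lemma \ref{ti(j)ti(j)}(ii) for the $2\times 2$ blocks together with Lemma \ref{block(1X1)}(ii) for the two $1\times 1$ blocks yields (\ref{qX;T_12}).

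It remains to verify the multiplicity-free assertion. For $\mcal{T}_3\mcal{T}_0$ the diagonal entries comprise the multiset $\{q^{-j}/\sqrt{s^*}\}_{j=1}^{d}\cup\{q^j\sqrt{s^*}\}_{j=1}^{d}$. Distinctness within either of the two sets reduces to $q^\ell\ne 1$ for $1\le \ell\le d-1$, and distinctness across the two sets reduces to $s^*q^{j+k}\ne 1$ for $2\le j+k\le 2d$; both requirements are built into the defining inequalities of Example \ref{q-rac;PA}. The argument for $\mcal{T}_1\mcal{T}_2$ is the same, with $s^*$ replaced throughout by $s^*q$, so the relevant condition becomes $s^*q^\ell\ne 1$ for $2\le \ell\le 2d$, which is likewise guaranteed by Example \ref{q-rac;PA}.

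Since all of the real content has already been distilled into Lemma \ref{ti(j)ti(j)}, Lemma \ref{block(1X1)}, and the nondegeneracy inequalities of Example \ref{q-rac;PA}, there is no substantive obstacle; the only thing requiring care is keeping the block partitions aligned and then reading off the diagonal entries of each product block into the correct positions of the resulting $2d\times 2d$ diagonal.
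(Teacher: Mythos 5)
Your proof is correct and follows the same route as the paper: block-diagonal multiplication using Lemma \ref{ti(j)ti(j)} and Lemma \ref{block(1X1)}(ii) for the product formulas, then the nondegeneracy inequalities of Example \ref{q-rac;PA} ($q^i\ne 1$ for $1\le i\le d$ and $s^*q^i\ne 1$ for $2\le i\le 2d$) for the multiplicity-free claim. Your version simply spells out the block alignment and the distinctness bookkeeping that the paper leaves implicit.
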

\begin{proof} Use Lemma \ref{ti(j)ti(j)}(i) to get (\ref{X;T_30}). Use Lemma 
\ref{ti(j)ti(j)}(ii) and Lemma \ref{block(1X1)}(ii) to get (\ref{qX;T_12}). 
To obtain the last assertion, recall by Example \ref{q-rac;PA} that
$q^i \ne 1$ for $1\leq i \leq d$ and $s^*q^i\ne1$ for $2 \leq i \leq 2d$.
\end{proof}

\medskip
\begin{lemma} \label{UDAHA(1,2,3)}
With reference to Definition {\rm\ref{Mat_t_i}},  the following {\rm (i)--(iii)} hold:
\begin{enumerate}
\item[\rm(i)] $\mcal{T}_n$ is invertible \qquad $(n \in \I)$;
\item[\rm(ii)] $\mcal{T}_n + \mcal{T}_n^{-1} = (k_n + k_n^{-1})I \qquad (n \in \I)$;
\item[\rm(iii)] $\mcal{T}_0\mcal{T}_1\mcal{T}_2\mcal{T}_3 = q^{-{1}/{2}}I$.
\end{enumerate}
\end{lemma}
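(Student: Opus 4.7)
The plan is to exploit the block diagonal structure of each $\mcal{T}_n$ together with the explicit products computed in Lemma \ref{X;qX}.

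\emph{Parts (i) and (ii).} These are immediate from the block decomposition. Each $\mcal{T}_n$ is a block diagonal matrix whose blocks are the $\mfrk{t}_n(i)$ of Definition \ref{(2x2)mat}. By Lemma \ref{inv(t_i)} each $2\times2$ block $\mfrk{t}_n(i)$ is invertible and satisfies $\mfrk{t}_n(i)+(\mfrk{t}_n(i))^{-1}=(k_n+k_n^{-1})I$; by Lemma \ref{block(1X1)}(i) the $1\times1$ blocks for $n\in\{1,2\}$, $i\in\{0,d\}$ are nonzero scalars satisfying $\mfrk{t}_n(i)+(\mfrk{t}_n(i))^{-1}=k_n+k_n^{-1}$. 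Invertibility and the central-element relation are stable under block direct sum, so both (i) and (ii) follow directly.

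\emph{Part (iii).} For this the key observation is a cyclic-rotation trick: for invertible matrices $A,B,C,D$ and a nonzero scalar $\lambda$, one has $ABCD=\lambda I$ if and only if $DABC=\lambda I$ (multiply on the left by $D$ and on the right by $D^{-1}$). In view of (i), it therefore suffices to establish the rotated identity
$$\mcal{T}_3\mcal{T}_0\mcal{T}_1\mcal{T}_2 = q^{-1/2}I.$$
But by Lemma \ref{X;qX} both $\mcal{T}_3\mcal{T}_0$ and $\mcal{T}_1\mcal{T}_2$ are diagonal, with diagonal entries listed explicitly. So the product $(\mcal{T}_3\mcal{T}_0)(\mcal{T}_1\mcal{T}_2)$ is diagonal, and its $(j,j)$-entry is simply the product of the $(j,j)$-entries of the two factors. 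I would then verify, coordinate by coordinate, that every such product equals $q^{-1/2}$: at position $2i$ one gets $\frac{1}{q^{i+1}\sqrt{s^*}}\cdot q^{i}\sqrt{s^*q}=q^{-1/2}$, and at position $2i+1$ one gets $q^{i+1}\sqrt{s^*}\cdot\frac{1}{q^{i+1}\sqrt{s^*q}}=q^{-1/2}$. (At the first and last positions one checks the $1\times1$ cases separately, using the boundary values in Lemma \ref{X;qX}.)

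There is no real obstacle here: the technical content is entirely absorbed into Lemmas \ref{inv(t_i)}, \ref{ti(j)ti(j)}, \ref{block(1X1)} and \ref{X;qX}, all already established. The only thing requiring care is the bookkeeping of block sizes—since $\mcal{T}_0,\mcal{T}_3$ and $\mcal{T}_1,\mcal{T}_2$ have different block structures, $\mcal{T}_0\mcal{T}_1\mcal{T}_2\mcal{T}_3$ itself is not block diagonal in either pattern, which is exactly why the cyclic rotation to $\mcal{T}_3\mcal{T}_0\mcal{T}_1\mcal{T}_2$ (and the resulting reduction to a product of two diagonal matrices) is the natural route.
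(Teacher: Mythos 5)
Your proposal is correct and follows essentially the same route as the paper: parts (i) and (ii) from Lemma \ref{inv(t_i)}, (\ref{ti(j)+ti(j)}) and Lemma \ref{block(1X1)}(i), and part (iii) by reducing to $\mcal{T}_3\mcal{T}_0\mcal{T}_1\mcal{T}_2=q^{-1/2}I$ and checking the diagonal product via Lemma \ref{X;qX}. You merely make explicit the cyclic-rotation step and the entrywise verification that the paper labels routine.
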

\begin{proof}[Proof \rm{:}]
(i) Use Lemma \ref{inv(t_i)}.

\smallskip \noindent
(ii)  Immediate from (\ref{ti(j)+ti(j)}) and Lemma \ref{block(1X1)}(i).

\smallskip \noindent
(iii) It suffices to show that $\mcal{T}_3\mcal{T}_0\mcal{T}_1\mcal{T}_2 = q^{-1/2}I$.
This is routinely verified using Lemma \ref{X;qX}.
\end{proof}

 \noindent
We now describe an action of $\H$ on $\W$.
Recall the basis $\mcal{C}$ for $\W$ from (\ref{basis(C)}).

\begin{proposition}\label{H-mod;W}
Let $\W$ be as below Definition {\rm\ref{Algebra(T)}}. 
Then there exists an $\H$-module structure on $\W$ such that
for $n \in \I$ the matrix $\mcal{T}_n$ 
represents the generator $t_n$ relative to $\mcal{C}$.
\end{proposition}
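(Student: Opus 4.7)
The plan is to reduce the statement to the identities already established in Lemma \ref{UDAHA(1,2,3)}, by invoking the fact that $\H$ is presented by generators and relations. First, for each $n \in \I$ I will define the linear map $T_n \in \End(\W)$ to be the map whose matrix relative to the basis $\mcal{C}$ is $\mcal{T}_n$; since each $\mcal{T}_n$ is invertible by Lemma \ref{UDAHA(1,2,3)}(i), I can also define $T_n^{-1}$ as the map with matrix $\mcal{T}_n^{-1}$ relative to $\mcal{C}$.

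The next step is to verify that the family $\{T_n^{\pm 1}\}_{n\in\I}$ satisfies the defining relations (\ref{daha1})--(\ref{daha3}) of $\H$. Relation (\ref{daha1}) holds by the very construction of $T_n^{-1}$. For relation (\ref{daha2}), Lemma \ref{UDAHA(1,2,3)}(ii) gives $\mcal{T}_n + \mcal{T}_n^{-1} = (k_n+k_n^{-1})I$, so $T_n + T_n^{-1} = (k_n+k_n^{-1})I_{\W}$, a scalar multiple of the identity on $\W$, which is trivially central in $\End(\W)$. For relation (\ref{daha3}), Lemma \ref{UDAHA(1,2,3)}(iii) gives $\mcal{T}_0 \mcal{T}_1 \mcal{T}_2 \mcal{T}_3 = q^{-1/2}I$, hence $T_0 T_1 T_2 T_3 = q^{-1/2}I_{\W}$.

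By the universal property of the presentation of $\H$ in Definition \ref{DAHA}, the assignment $t_n^{\pm 1} \mapsto T_n^{\pm 1}$ extends uniquely to a $\mbb{C}$-algebra homomorphism $\rho : \H \to \End(\W)$, and this endows $\W$ with the structure of a left $\H$-module in which $t_n$ acts as $T_n$, i.e. is represented by $\mcal{T}_n$ relative to $\mcal{C}$, as required.

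There is no serious obstacle here: all of the computational difficulty was already absorbed into the preparatory Definition \ref{(2x2)mat} and Lemmas \ref{det/tr(t_i)}--\ref{UDAHA(1,2,3)}, whose verifications are routine manipulations of the $q$-Racah scalars $h,h^*,s,s^*,r_1,r_2$. The proof of Proposition \ref{H-mod;W} itself is therefore essentially a one-line citation of Lemma \ref{UDAHA(1,2,3)} together with the universal property of $\H$.
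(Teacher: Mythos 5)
Your proposal is correct and matches the paper's argument: the paper's proof of Proposition \ref{H-mod;W} is simply a citation of Lemma \ref{UDAHA(1,2,3)}, and you have merely made explicit the standard step of invoking the presentation of $\H$ by generators and relations, which the paper leaves implicit.
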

\begin{proof} Follows from Lemma \ref{UDAHA(1,2,3)}.
\end{proof}

\noindent
It turns out that the $\H$-module $\W$ is irreducible. We will show this in Section \ref{mainsection}.

\begin{corollary}\label{X;diagonalizable}
Referring to the $\H$-module $\W$ from Proposition {\rm \ref{H-mod;W}},
the elements of $\mcal{C}$ are eigenvectors for the 
action of $\bf X$ on $\W$. Moreover the action of $\bf X$ on $\W$ is
multiplicity free.
\end{corollary}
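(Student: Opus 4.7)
The plan is immediate from the matrix calculations already assembled. By Proposition \ref{H-mod;W} the action of $t_n$ on $\W$ is represented relative to the basis $\mcal{C}$ by the block diagonal matrix $\mcal{T}_n$ of Definition \ref{Mat_t_i}. Since $\mathbf{X} = t_3 t_0$ by Definition \ref{X,Y}, the action of $\mathbf{X}$ on $\W$ is represented relative to $\mcal{C}$ by the product $\mcal{T}_3 \mcal{T}_0$.

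I would then invoke Lemma \ref{X;qX}, which was already proved (via Lemma \ref{ti(j)ti(j)}(i)) to give the explicit evaluation
\begin{equation*}
\mcal{T}_3\mcal{T}_0 =
\diag\left(\tfrac{1}{q\sqrt{s^*}},\, q\sqrt{s^*},\, \tfrac{1}{q^2\sqrt{s^*}},\, q^2\sqrt{s^*},\, \ldots,\, \tfrac{1}{q^d\sqrt{s^*}},\, q^d\sqrt{s^*}\right).
\end{equation*}
The matrix $\mcal{T}_3 \mcal{T}_0$ being diagonal relative to $\mcal{C}$ is exactly the statement that every element of $\mcal{C}$ is an eigenvector for the action of $\mathbf{X}$ on $\W$, which gives the first assertion.

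For the multiplicity-free claim, the same lemma already asserts that $\mcal{T}_3\mcal{T}_0$ is multiplicity-free; this follows since the diagonal entries are $\{q^{-i}s^{*-1/2}\}_{i=1}^{d}$ together with $\{q^{i}s^{*1/2}\}_{i=1}^{d}$, and these are pairwise distinct because $q^i \ne 1$ for $1\leq i\leq d$ and $s^*q^i\ne 1$ for $2 \leq i \leq 2d$ (see Example \ref{q-rac;PA}). There is no genuine obstacle here: the hard work was done in assembling the matrices $\mcal{T}_n$ so that products like $\mcal{T}_3\mcal{T}_0$ would collapse to diagonal form, so this corollary is essentially a bookkeeping consequence of Lemma \ref{X;qX} and Proposition \ref{H-mod;W}.
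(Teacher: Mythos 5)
Your proof is correct and follows the same route as the paper: represent $\X=t_3t_0$ by $\mcal{T}_3\mcal{T}_0$ via Proposition \ref{H-mod;W}, then read off the eigenvector and multiplicity-free statements from the diagonal form in Lemma \ref{X;qX} (whose distinctness rests on $q^i\ne1$ for $1\leq i\leq d$ and $s^*q^i\ne1$ for $2\leq i\leq 2d$). Nothing is missing; this is exactly the bookkeeping argument the paper gives.
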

\begin{proof}
Recall ${\bf X} = t_3t_0$. By this and Proposition \ref{H-mod;W}
the matrix $\mcal{T}_3\mcal{T}_0$ represents $\bf X$ relative to $\mcal{C}$.
The result follows in view of Lemma \ref{X;qX}.
\end{proof}

\begin{remark} In Proposition \ref{H-mod;W} we obtained
an $\H$-action on $\W$.
Let ${H}={H}(k_0,k_1,k_2,k_3;q)$
be a double affine Hecke algebra of type $(C^{\vee}_1, C_1)$,
where $k_0, k_1, k_2, k_3$ are from (\ref{k0123}).
Then there exists a surjective $\mbb{C}$-algebra homomorphism 
$\H \to {H}$ that sends $t_n \mapsto t_n$ for all $n\in \I$. 
Taking the quotient of $\H$ by the kernel of this homomorphism
we obtain an ${H}$-module structure on $\W$.
\end{remark}

\noindent
We mention a result for future use.
\begin{lemma}\label{k_0;k_1}
With reference to Definition \rm{\ref{k_0,1,2,3}} 
neither of $k_0, k_1$ is equal to $\pm1$.
\end{lemma}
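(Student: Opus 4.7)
The plan is to unpack the definitions of $k_0$ and $k_1$ from Definition \ref{k_0,1,2,3} and then invoke the non-degeneracy inequalities stipulated in Example \ref{q-rac;PA} (and carried throughout the paper via the $q$-Racah hypothesis on $\Ga$).

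First I would handle $k_1$. Since $k_1 = q^{-d/2}$, the statement $k_1 = \pm 1$ is equivalent to $q^d = 1$. But Example \ref{q-rac;PA} requires $q^i \ne 1$ for $1 \leq i \leq d$, in particular for $i=d$. So $k_1 \ne \pm 1$.

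For $k_0$, note that $k_0^2 = r_1 r_2 / s^*$. Using the relation $r_1 r_2 = s s^* q^{d+1}$ (Example \ref{q-rac;PA}), this simplifies to $k_0^2 = s q^{d+1}$. Thus $k_0 = \pm 1$ iff $s q^{d+1} = 1$. Since $d \geq 3$ (in particular $d \geq 1$), the exponent $d+1$ satisfies $2 \leq d+1 \leq 2d$, and Example \ref{q-rac;PA} requires $sq^i \ne 1$ for $2 \leq i \leq 2d$. Hence $k_0^2 \ne 1$, so $k_0 \ne \pm 1$.

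There is no real obstacle here; the lemma is essentially a bookkeeping check confirming that the non-degeneracy conditions built into the $q$-Racah parameter array automatically rule out the bad values $k_0 = \pm 1$ and $k_1 = \pm 1$. The only mild point is remembering to use $r_1 r_2 = ss^* q^{d+1}$ to convert the condition on $k_0$ into one directly governed by the hypotheses on $s$ and $q$.
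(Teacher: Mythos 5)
Your proof is correct and follows essentially the same route as the paper: rewrite $k_0^2 = r_1r_2/s^* = sq^{d+1}$ via $r_1r_2 = ss^*q^{d+1}$ and $k_1^2 = q^{-d}$, then invoke the non-degeneracy conditions $sq^i \ne 1$ ($2 \le i \le 2d$) and $q^i \ne 1$ ($1 \le i \le d$) from Example \ref{q-rac;PA}. Your explicit check that $2 \le d+1 \le 2d$ is a small detail the paper leaves implicit, but the argument is the same.
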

\begin{proof} 
By Example \ref{q-rac;PA} $\frac{r_1r_2}{s^*} = sq^{d+1}$ and $sq^{d+1} \ne 1$, 
so $k_0^2 \ne 1$ by (\ref{k0123}).
Also by Example \ref{q-rac;PA} $q^d \ne 1$, so $k_1^2 \ne 1$ by (\ref{k0123}).
\end{proof}


\section{How the $\H$-action on $\W$ is related to $\T$}\label{mainsection}

We continue to discuss the $\T$-module $\W$.
In the previous section we saw how the algebra $\H$ acts on $\W$.
In this section we will see how the $\H$-action on $\W$ is related
to the algebra $\T$. Recall from Definition {\ref{Algebra(T)}} 
that the algebra $\T$ is generated by $A, A^*, \wt{A}^*$.
Recall the projections $\mfrk{p}, \wt{\mfrk{p}}$ from Section \ref{2maps}.
The following theorem is the main result of the paper.

\begin{theorem}\label{main_thm}
Referring to the $\H$-module $\W$ from Proposition {\rm \ref{H-mod;W}},
for each row of the table below the two displayed elements coincide on $\W$.
\begin{equation*}\label{main_table}
\begin{tabular}{c|c}
Element in $\H$ & Element in $\T$ \\
\hline \hline
{\bf A} & \quad$\frac{1}{h\sqrt{sq}}(A-(\tht_0-h-hsq)I) \begin{matrix}& \\ &\end{matrix}$ \\ 
{\bf B} & \quad $ \frac{1}{\wt{h}^*\sqrt{\wt{s}^*q}}(\wt{A}^*-(\wt{\tht}^*_0-\wt{h}^*-\wt{h}^*\wt{s}^*q)I)$ \\ 
${{\bf B^{\dagger}}}$ & \quad $\frac{1}{h^*\sqrt{s^*q}}(A^*-(\tht^*_0-h^*-h^*s^*q)I) \begin{matrix}& \\ &\end{matrix}$ \\ 
$\frac{t_0-k_0^{-1}}{k_0-k_0^{-1}}$ & $\wt{\mfrk{p}}$ \\
$\frac{t_1-k_1^{-1}}{k_1-k_1^{-1}} $ & $\mfrk{p}$
\end{tabular}
\end{equation*}
\end{theorem}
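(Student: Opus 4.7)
The strategy is to verify the five matrix identities of Theorem~\ref{main_thm} one at a time, in each case working in the basis $\mcal{C}$ of $\W$ from (\ref{basis(C)}). Relative to $\mcal{C}$ the DAHA generators $t_n$ are represented by the explicit block-diagonal matrices $\mcal{T}_n$ of Proposition~\ref{H-mod;W}, and the five operators $A$, $A^*$, $\wt{A}^*$, $\mfrk{p}$, $\wt{\mfrk{p}}$ from $\T$ have matrix representations listed in Section~\ref{matrices}. Each identity then reduces to a matrix equation which I would check by comparing entries (or $2\times 2$ blocks).

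I would first dispose of identities (ii) and (iii), which are the simplest: all four operators involved act diagonally on $\mcal{C}$. On the $\T$-side, $\wt{A}^*$ is diagonal by Lemma~\ref{[A*C]_C} and $A^*$ is diagonal by Lemma~\ref{[A*]_C}. On the $\H$-side, Lemma~\ref{X;qX} shows that $\X=t_3t_0$ is diagonal, so $\B=\X+\X^{-1}$ is diagonal; and after rewriting $t_1t_2 = q^{-1/2}\X^{-1}$ using relation (\ref{daha3}), so is $\B^{\dagger}=q^{1/2}\X+q^{-1/2}\X^{-1}$. Matching the diagonal entries reduces to two short scalar families, for instance
\begin{equation*}
\wt{\tht}^*_i - \wt{\tht}^*_0 \ \stackrel{?}{=}\ \wt{h}^*\bigl(s^*q^{i+2} + q^{-i} - 1 - s^*q^2\bigr),
\end{equation*}
which is immediate by expanding Lemma~\ref{(tht)tilde*} and using $\wt{s}^* = s^*q$ from (\ref{(tht)tilde*(1)}), so that $\sqrt{\wt{s}^{\,*}q} = q\sqrt{s^*}$; the parallel identity for $\tht^*_i$ follows the same way from (\ref{theta^*_i}).

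Next I would treat identities (iv) and (v). In the basis $\mcal{C}$, $t_0$ and $t_1$ act as the block-diagonal $\mcal{T}_0,\mcal{T}_1$ of Definition~\ref{Mat_t_i}, and by (\ref{[p_C]_C}), (\ref{[p_x]_C}) the projections $\wt{\mfrk{p}},\mfrk{p}$ have matching block structure. Each identity therefore splits into independent $2\times 2$ (or $1\times 1$) block comparisons: $\mfrk{t}_0(i) = (k_0-k_0^{-1})\wt{\bf Y}_i + k_0^{-1}I$ and $\mfrk{t}_1(i) = (k_1-k_1^{-1}){\bf Y}_i + k_1^{-1}I$. These reduce to elementary rational identities in $q,s^*,r_1,r_2$ via the closed formulas for $\tau_i$ in Lemma~\ref{tau;q-term} and $\epsilon_i$ in (\ref{epsilon_i(4)}), together with $k_0=\sqrt{r_1r_2/s^*}$ and $k_1=q^{-d/2}$ from (\ref{k0123}). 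As a representative sample, the $(1,1)$-entry for (iv) reads $\tfrac{1}{1-\tau_i} = \tfrac{(r_1-s^*q^{i+1})(r_2-s^*q^{i+1})}{(r_1r_2-s^*)(1-s^*q^{2i+2})}$, which is a one-line simplification.

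Finally, identity (i) is the substantive step. The matrix of $\A = t_0t_1 + (t_0t_1)^{-1}$ relative to $\mcal{C}$ is $\mcal{T}_0\mcal{T}_1 + \mcal{T}_1^{-1}\mcal{T}_0^{-1}$; since $\mcal{T}_0$ couples positions $(2i,2i+1)$ while $\mcal{T}_1$ couples positions $(2i-1,2i)$, this matrix is block five-diagonal with exactly the sparsity pattern of $[A]_\mcal{C}$ from (\ref{[A]_C}). The task is then to confirm entry-by-entry that it equals $\frac{1}{h\sqrt{sq}}([A]_\mcal{C} - (\tht_0-h-hsq)I)$. To halve the labor I would invoke identity (v), already established, to deduce that $t_1$ acts as the scalar $k_1$ on $\Mx$ and as $k_1^{-1}$ on $\Mxp$; this splits the computation into two verifications, one on each $T$-invariant summand, where $A$ is already tridiagonal with intersection numbers $a_i,b_i,c_i$ (Lemma~\ref{lem:[A]_B}) or $a^\perp_i,b^\perp_i,c^\perp_i$ (Corollary~\ref{b,c;perp}), so each reduces to a short list of $q$-Racah rational identities. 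I expect the main obstacle here to be bookkeeping rather than mathematical content: managing the rational expressions from Corollaries~\ref{be,ga(q-term)} and \ref{b,c;perp}, and keeping the square-root branches fixed above Definition~\ref{(2x2)mat} consistent throughout.
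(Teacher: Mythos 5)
Your proposal is correct and takes essentially the same route as the paper: every row is verified by comparing matrix representations relative to the basis $\mcal{C}$, using the block-diagonal matrices $\mcal{T}_n$ from Proposition \ref{H-mod;W} on the $\H$-side (the paper packages these computations as Propositions \ref{A-action;W}--\ref{t_1/k_1-action}) against the representations of $A$, $A^*$, $\wt{A}^*$, $\mfrk{p}$, $\wt{\mfrk{p}}$ from Sections \ref{2maps} and \ref{matrices}, with $s$ eliminated via $r_1r_2=ss^*q^{d+1}$. Your optional shortcut for the $\A$-row—using row (v), Note \ref{commuting} and $k_1\ne k_1^{-1}$ to split along $\Mx\oplus\Mxp$—is legitimate but not what the paper does (it compares the two five-diagonal matrices directly in $\mcal{C}$), and since exploiting the splitting still requires a change of basis, it saves little labor in practice.
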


\noindent
The remainder of this section is devoted to the proof of Theorem \ref{main_thm}.
Recall from (\ref{basis(C)}) that the basis $\mcal{C}$ consists of the
vectors $\hat{C}^-_i, \hat{C}^+_i \ (0 \leq i\leq d-1)$. Recall from above Lemma \ref{action(A;W)} that
$$
\hat{C}^-_{-1} =0, \qquad \hat{C}^+_{-1}=0, \qquad
\hat{C}^-_{d} =0, \qquad \hat{C}^+_{d}=0. 
$$

\begin{lemma}\label{Y-action}
Let $\bf Y$ be as in Definition {\rm\ref{X,Y}}. Then for $0\leq i\leq d-1$, 
$\Y.\hat{C}^-_i$ and $\Y.\hat{C}^+_i$ are given as a linear combination
with the following terms and coefficients:

\medskip
 ${\bf Y}.\hat{C}^-_i =$
\begin{align}
&\begin{tabular}{l | l}
{\rm term} & \hspace{2cm} {\rm coefficient} \\
\hline \hline
 & \\
$\hat{C}^-_{i-1}$ & $\sqrt{\tfrac{s^*q^d}{r_1r_2}}
					\left(\tfrac{(1-q^{i-d})(1-s^*q^{i+1})(1-r_1q^i)(1-r_2q^i)}
					{(1-s^*q^{2i})(1-s^*q^{2i+1})}\right)$ \\
 & \\					
$\hat{C}^+_{i-1}$ & $\sqrt{\tfrac{s^*q^d}{r_1r_2}}
					\left(\tfrac{(1-q^{i-d})(1-s^*q^{i+1})}{1-s^*q^{2i+1}}\right)
					\left(\tfrac{(1-r_1q^i)(1-r_2q^i)}{1-s^*q^{2i}}-1\right)$ \\
 & \\
$\hat{C}^-_{i}$ & $\tfrac{1}{\sqrt{s^*r_1r_2q^d}}\left(
	\tfrac{(q^i-1)(1-s^*q^{d+i+1})}{1-s^*q^{2i+1}}+1\right)
	\left(\tfrac{(r_1-s^*q^{i+1})(r_2-s^*q^{i+1})}{1-s^*q^{2i+2}}+s^*\right)$ \\
 & \\
$\hat{C}^+_{i}$ & $\tfrac{1}{\sqrt{s^*r_1r_2q^d}}\left(
	\tfrac{(r_1-s^*q^{i+1})(r_2-s^*q^{i+1})}{1-s^*q^{2i+2}}\right)
	\left(\tfrac{(q^i-1)(1-s^*q^{d+i+1})}{1-s^*q^{2i+1}}+1\right)$ 
\end{tabular} &&&
\end{align}
and 

\medskip
 ${\bf Y}.\hat{C}^+_i =$
\begin{align}
&\begin{tabular}{l | l}
{\rm term} & \hspace{2cm} {\rm coefficient} \\
\hline \hline
& \\
$\hat{C}^-_{i}$ & $-\sqrt{\tfrac{s^*q^d}{r_1r_2}}\left(
 		\tfrac{(1-r_1q^{i+1})(1-r_2q^{i+1})}{1-s^*q^{2i+2}}\right)
		\left(\tfrac{(1-q^{i-d+1})(1-s^*q^{i+2})}{1-s^*q^{2i+3}}+\tfrac{1}{q^d}\right)$\\
& \\
$\hat{C}^+_{i}$ & $\sqrt{\tfrac{s^*q^d}{r_1r_2}}\left(
		1-\tfrac{(1-r_1q^{i+1})(1-r_2q^{i+1})}{1-s^*q^{2i+2}}\right)
		\left(\tfrac{(1-q^{i-d+1})(1-s^*q^{i+2})}{1-s^*q^{2i+3}}+\tfrac{1}{q^d}\right)$\\
& \\
$\hat{C}^-_{i+1}$ & $\tfrac{1}{\sqrt{s^*r_1r_2q^d}}\left(
		\tfrac{(1-q^{i+1})(1-s^*q^{d+i+2})}{1-s^*q^{2i+3}}\right)
		\left(\tfrac{(r_1-s^*q^{i+2})(r_2-s^*q^{i+2})}{1-s^*q^{2i+4}}+s^*\right)$\\
& \\
$\hat{C}^+_{i+1}$ & $\tfrac{1}{\sqrt{s^*r_1r_2q^d}}\left(
		\tfrac{(1-q^{i+1})(1-s^*q^{d+i+2})(r_1-s^*q^{i+2})(r_2-s^*q^{i+2})}
		{(1-s^*q^{2i+3})(1-s^*q^{2i+4})}\right)$
\end{tabular}&&&
\end{align}
\end{lemma}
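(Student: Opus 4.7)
The plan is a direct matrix computation in the basis $\mathcal{C}$, exploiting the $\H$-module structure already established in Proposition \ref{H-mod;W}. By that proposition, the generators $t_0$ and $t_1$ act on $\W$ via the block diagonal matrices $\mcal{T}_0$ and $\mcal{T}_1$ of Definition \ref{Mat_t_i}. Consequently the element $\Y = t_0 t_1$ of Definition \ref{X,Y} acts on $\W$ via the matrix product $\mcal{T}_0 \mcal{T}_1$ relative to $\mcal{C}$. Thus, in principle, the lemma reduces to reading off the entries of this product; the subtlety is that the $2\times 2$ blocks of $\mcal{T}_0$ and $\mcal{T}_1$ are \emph{mis-aligned}, since the blocks of $\mcal{T}_0$ pair $\hat{C}^-_i$ with $\hat{C}^+_i$, while the nontrivial blocks of $\mcal{T}_1$ pair $\hat{C}^+_{i-1}$ with $\hat{C}^-_i$.

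First, I would apply $\mcal{T}_1$ to a generic basis vector. For $1 \leq i \leq d-1$, by Definition \ref{(2x2)mat}(b), the block $\mfrk{t}_1(i)$ acts on $(\hat{C}^+_{i-1},\hat{C}^-_i)$, giving $\mcal{T}_1.\hat{C}^-_i$ as an explicit linear combination of $\hat{C}^+_{i-1}$ and $\hat{C}^-_i$; similarly $\mcal{T}_1.\hat{C}^+_i$ (for $0 \leq i \leq d-2$) expands into $\hat{C}^+_i$ and $\hat{C}^-_{i+1}$ via the block $\mfrk{t}_1(i+1)$. The boundary cases are handled by the $1\times 1$ blocks $\mfrk{t}_1(0)$ and $\mfrk{t}_1(d)$, which scale $\hat{C}^-_0$ and $\hat{C}^+_{d-1}$ respectively by $q^{-d/2}$. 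Next, I would apply $\mcal{T}_0$ to each of the four vectors $\hat{C}^-_{i-1}, \hat{C}^+_{i-1}, \hat{C}^-_i, \hat{C}^+_i$ (or the appropriate subset at the boundary), using Definition \ref{(2x2)mat}(a): the block $\mfrk{t}_0(j)$ expresses $\mcal{T}_0.\hat{C}^\pm_j$ in terms of $\hat{C}^-_j$ and $\hat{C}^+_j$.

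Composing these two steps gives $\Y.\hat{C}^-_i$ as a five-term combination involving $\hat{C}^\pm_{i-1}$ and $\hat{C}^\pm_i$, and $\Y.\hat{C}^+_i$ as a four-term combination involving $\hat{C}^\pm_i$ and $\hat{C}^\pm_{i+1}$. Each coefficient is a product of one entry of some $\mfrk{t}_1(\cdot)$ with one entry of some $\mfrk{t}_0(\cdot)$, and upon recognizing the common factor $\sqrt{s^*q^d/(r_1r_2)}$ or $1/\sqrt{s^*r_1r_2q^d}$ pulled out of the $\mcal{T}_0$-entries, the resulting expressions are exactly those asserted in the tables of the lemma. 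The interior computation $1 \leq i \leq d-2$ is the generic case; the boundary cases $i=0$ and $i=d-1$ follow by observing that $\hat{C}^{\pm}_{-1}=0=\hat{C}^{\pm}_d$ kills the terms involving the $1\times 1$ blocks, so the formulas collapse correctly.

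The expected obstacle is purely bookkeeping rather than conceptual: one must align the two different block structures of $\mcal{T}_0$ and $\mcal{T}_1$ carefully and verify that the notation $\hat{C}^{\pm}_{-1}=\hat{C}^{\pm}_d=0$ is respected at both endpoints, and one must resist the urge to simplify the two-factor products of rational functions of $q, s^*, r_1, r_2$ so that the coefficients remain in the form presented in the lemma. No new conceptual input is needed beyond Proposition \ref{H-mod;W} and Definition \ref{(2x2)mat}.
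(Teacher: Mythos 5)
Your proposal is correct and follows essentially the same route as the paper, whose proof is exactly the computation of $\mcal{T}_0\mcal{T}_1$ from Proposition \ref{H-mod;W} and Definition \ref{(2x2)mat}, with the block mis-alignment and the boundary conventions $\hat{C}^{\pm}_{-1}=\hat{C}^{\pm}_d=0$ handled as you describe. The only slips are cosmetic: $\Y.\hat{C}^-_i$ is a four-term (not five-term) combination, and the prefactors $\sqrt{s^*q^d/(r_1r_2)}$ and $1/\sqrt{s^*r_1r_2q^d}$ arise from combining the $\sqrt{s^*/(r_1r_2)}$-type factors of $\mcal{T}_0$ with the $q^{\pm d/2}$ factors coming from $\mcal{T}_1$.
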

\begin{proof} Compute $\mcal{T}_0\mcal{T}_1$ using Definition \ref{Mat_t_i} 
and the data in Definition \ref{(2x2)mat}.
\end{proof}

\begin{lemma}\label{Y^(-1)action}
Let $\bf Y$ be as in Definition {\rm\ref{X,Y}}.
For $0 \leq i \leq d-1$, $\Y^{-1}.\hat{C}^-_i$ and $\Y^{-1}.\hat{C}^+_i$ are given
as a linear combination with the following terms and coefficients:

\medskip
${\bf Y}^{-1}.\hat{C}^{-}_i=$
\begin{align}
&\begin{tabular}{l | l}
{\rm term} & \hspace{2cm} {\rm coefficient} \\
\hline \hline
& \\
$\hat{C}^{+}_{i-1}$ & $\sqrt{\tfrac{s^*q^d}{r_1r_2}}\left(
		\tfrac{(1-q^{i-d})(1-s^*q^{i+1})}{1-s^*q^{2i+1}}\right)
		\left(1-\tfrac{(1-r_1q^{i+1})(1-r_2q^{i+1})}{1-s^*q^{2i+2}}\right)$\\
& \\
$\hat{C}^{-}_{i}$ & $\sqrt{\tfrac{s^*q^d}{r_1r_2}}\left(
		1-\tfrac{(1-r_1q^{i+1})(1-r_2q^{i+1})}{1-s^*q^{2i+2}}\right)
		\left(\tfrac{(1-q^{i-d})(1-s^*q^{i+1})}{1-s^*q^{2i+1}}+\tfrac{1}{q^d}\right)$\\
& \\
$\hat{C}^{+}_{i}$ & $\tfrac{1}{\sqrt{s^*r_1r_2q^d}}\left(
		\tfrac{(r_1-s^*q^{i+1})(r_2-s^*q^{i+1})}{1-s^*q^{2i+2}}\right)
		\left(\tfrac{(1-q^{i+1})(1-s^*q^{d+i+2})}{1-s^*q^{2i+3}}-1\right)$\\
& \\
$\hat{C}^{-}_{i+1}$ & $\tfrac{1}{\sqrt{s^*r_1r_2q^d}}\left(
		\tfrac{(1-q^{i+1})(1-s^*q^{d+i+2})(r_1-s^*q^{i+1})(r_2-s^*q^{i+1})}
		{(1-s^*q^{2i+2})(1-s^*q^{2i+3})}\right)$\\
\end{tabular} &&&
\end{align} 
and

\medskip
${\bf Y}^{-1}.\hat{C}^{+}_i=$
\begin{align}
&\begin{tabular}{l | l}
{\rm term} & \hspace{2cm} {\rm coefficient} \\
\hline \hline
& \\
$\hat{C}^+_{i-1}$ & $\sqrt{\tfrac{s^*q^d}{r_1r_2}}\left( 
		\tfrac{(1-q^{i-d})(1-s^*q^{i+1})(1-r_1q^{i+1})(1-r_2q^{i+1})}
		{(1-s^*q^{2i+1})(1-s^*q^{2i+2})}\right)$\\
& \\
$\hat{C}^-_{i}$ & $\sqrt{\tfrac{s^*q^d}{r_1r_2}}\left(
		\tfrac{(1-r_1q^{i+1})(1-r_2q^{i+1})}{1-s^*q^{2i+2}}\right)
		\left(\tfrac{(1-q^{i-d})(1-s^*q^{i+1})}{1-s^*q^{2i+1}}+\tfrac{1}{q^d}\right)$\\
& \\
$\hat{C}^+_{i}$ & $\tfrac{1}{\sqrt{s^*r_1r_2q^d}}\left(
		\tfrac{(r_1-s^*q^{i+1})(r_2-s^*q^{i+1})}{1-s^*q^{2i+2}}+s^*\right)
		\left(\tfrac{(q^{i+1}-1)(1-s^*q^{d+i+2})}{1-s^*q^{2i+3}}+1\right)$\\
& \\
$\hat{C}^-_{i+1}$ & $\tfrac{1}{\sqrt{s^*r_1r_2q^d}}\left(
		\tfrac{(q^{i+1}-1)(1-s^*q^{d+i+2})}{1-s^*q^{2i+3}}\right)
		\left(\tfrac{(r_1-s^*q^{i+1})(r_2-s^*q^{i+1})}{1-s^*q^{2i+2}}+s^*\right)$\\
\end{tabular} &&&
\end{align}
\end{lemma}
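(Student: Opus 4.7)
The plan is to mimic the proof of Lemma \ref{Y-action}, with the only change being that we use $\mcal{T}_0^{-1}$ and $\mcal{T}_1^{-1}$ in place of $\mcal{T}_0$ and $\mcal{T}_1$. By Proposition \ref{H-mod;W} the generator $t_n$ of $\H$ acts on $\W$ as the matrix $\mcal{T}_n$ relative to the basis $\mcal{C}$. Since $\bf Y = t_0 t_1$, we have $\bf Y^{-1} = t_1^{-1} t_0^{-1}$, so the matrix representing $\Y^{-1}$ relative to $\mcal{C}$ is $\mcal{T}_1^{-1}\mcal{T}_0^{-1}$. Reading off the rows of this product then yields the two tables of coefficients.

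To carry this out, first I would write down the inverses $(\mfrk{t}_0(i))^{-1}$ and $(\mfrk{t}_1(i))^{-1}$ block-by-block. By Lemma \ref{inv(t_i)} we have
\begin{equation*}
(\mfrk{t}_n(i))^{-1} = (k_n+k_n^{-1})I - \mfrk{t}_n(i),
\end{equation*}
so each inverse block is obtained from the corresponding block in Definition \ref{(2x2)mat} by subtracting it from $(k_n+k_n^{-1})I$, where $k_0 = \sqrt{r_1r_2/s^*}$ and $k_1 = q^{-d/2}$ from Definition \ref{k_0,1,2,3}. This produces explicit $2\times 2$ formulas for $(\mfrk{t}_0(i))^{-1}$ on blocks $(2i,2i+1)$ for $0\le i \le d-1$, and for $(\mfrk{t}_1(i))^{-1}$ on blocks $(2i-1,2i)$ for $1\le i \le d-1$, with the two $1\times 1$ blocks $(\mfrk{t}_1(0))^{-1} = [q^{d/2}]$ and $(\mfrk{t}_1(d))^{-1} = [q^{d/2}]$ at the corners.

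Next I would multiply the two block-diagonal matrices $\mcal{T}_1^{-1}$ and $\mcal{T}_0^{-1}$. The combined block structure overlaps the same way as in Lemma \ref{Y-action}, so the product is again five-diagonal, and the $i$-th pair of rows couples only the five basis vectors $\hat{C}^+_{i-1}$, $\hat{C}^-_i$, $\hat{C}^+_i$, $\hat{C}^-_{i+1}$ when applied to $\hat{C}^-_i$ (respectively the appropriate five-vector window when applied to $\hat{C}^+_i$). Comparing entries of the product with the stated coefficients in the two tables finishes the proof.

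The main obstacle is purely algebraic: verifying that each entry of $\mcal{T}_1^{-1}\mcal{T}_0^{-1}$ simplifies to the coefficient listed in the tables. Each such verification is a rational identity in $q, s^*, r_1, r_2$ of the same flavor as those that appear in Lemma \ref{Y-action}; in particular, several entries are sums of two products of four linear $q$-factors over a pair of quadratic denominators $(1-s^*q^{2i+1})(1-s^*q^{2i+2})$, which collapse once a common denominator is taken. There is nothing conceptually new beyond Lemma \ref{Y-action}, but the bookkeeping must be done carefully, since swapping the roles of $t_0t_1$ and $t_1^{-1}t_0^{-1}$ interchanges which factor is ``on top'' in each $2\times 2$ product and thereby shifts which index $i$ the $t_0$-block sits at relative to the $t_1$-block.
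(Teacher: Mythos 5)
Your proposal is correct and takes essentially the same route as the paper: the paper proves this lemma exactly as it proves Lemma \ref{Y-action}, namely by computing the relevant product of the block-diagonal matrices from Definition \ref{Mat_t_i} (here $(\mcal{T}_0\mcal{T}_1)^{-1}=\mcal{T}_1^{-1}\mcal{T}_0^{-1}$, with the inverse blocks available from Lemma \ref{inv(t_i)} as you note) and reading off the entries. One small bookkeeping remark: with the paper's convention the coefficients of $\Y^{-1}.\hat{C}^{\pm}_i$ are the entries of the corresponding \emph{column} of $\mcal{T}_1^{-1}\mcal{T}_0^{-1}$, not the row.
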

\begin{proof} Similar to the proof of Lemma \ref{Y-action}.
\end{proof}

\noindent
By Lemma \ref{Y-action} and Lemma \ref{Y^(-1)action} the matrices representing ${\bf Y}, {\bf Y}^{-1}$ relative to $\mcal{C}$ take the form
\begin{equation*}
\scalemath{0.8}{
\left(
\begin{array}{cccccccccccccc}
* & * & * & \\
* & * & * & \\
& * & * & * & * \\
& * & * & * & * \\ 
&&& * & * & * & * \\ 
&&& * & * & * & * \\ 
&&&&&&& \ddots &  \\
&&&&&&&& * & * &  &  \\ 
&&&&&&&& * & * &  &  \\ 
&&&&&&&& * & * & * & * \\ 
&&&&&&&& * & * & * & * \\ 
&&&&&&&&&& * & * & * \\ 
&&&&&&&&&& * & * & * \\ 
\end{array}
\right), }
\end{equation*}
\begin{equation*}
\scalemath{0.8}{
\left(
\begin{array}{cccccccccccccc}
  * & * & \\
 * & * & * & * \\
 * & * & * & * \\ 
&& * & * & * & * \\ 
&& * & * & * & * \\ 
&&&& * & * &  &  \\
&&&& * & * &  &  \\
&&&&  &  & \ddots &  \\
&&&&&&&*&*  & * & * \\ 
&&&&&&&*&* & * & * \\ 
&&&&&&&&& * & * & * &*\\ 
&&&&&&&&& * & * & * &*\\ 
&&&&&&&&&&& * &*\\ 
\end{array}
\right)},
\end{equation*}
respectively.

\begin{proposition} \label{A-action;W}
Let $\bf A$ be as in Definition {\rm \ref{A,B,B+}}.
Then for $0 \leq i \leq d-1$, ${\bf A}.\hat{C}^-_i$ and ${\bf A}.\hat{C}^+_i$
are given as a linear combination with the following terms and coefficients:

\medskip
 ${\bf A}.\hat{C}^-_i = $
\begin{equation}\label{A-action;C-}
\begin{tabular}{l | l}
{\rm term} & \hspace{2cm} {\rm coefficient} \\
\hline \hline \\
$\hat{C}^-_{i-1}$ & 
		$\sqrt{\tfrac{s^*q^d}{r_1r_2}}
		\left(\tfrac{(1-q^{i-d})(1-s^*q^{i+1})(1-r_1q^i)(1-r_2q^i)}
				{(1-s^*q^{2i})(1-s^*q^{2i+1})}\right)$\\ \\
$\hat{C}^+_{i-1}$ & 
		$\sqrt{\frac{s^*q^d}{r_1r_2}}
		\left(\frac{(1-q^{i-d})(1-s^*q^{i+1})}{1-s^*q^{2i+1}}\right) 
		\left(\frac{(1-r_1q^i)(1-r_2q^i)}{1-s^*q^{2i}}
			- \frac{(1-r_1q^{i+1})(1-r_2q^{i+1})}{1-s^*q^{2i+2}}\right)$\\ \\
$\hat{C}^-_{i}$ & 
		$\left(\sqrt{\frac{s^*q^d}{r_1r_2}}+\sqrt{\frac{r_1r_2}{s^*q^d}}\right)
				- \sqrt{\frac{s^*q^d}{r_1r_2}}
		\times 
		\left(\frac{(1-q^i)(1-s^*q^{i+d+1})(r_1-s^*q^{i+1})(r_2-s^*q^{i+1})}
					{s^*q^d(1-s^*q^{2i+1})(1-s^*q^{2i+2})}\right. $\\ \\
		& \hspace{5cm} 
		$\left. + \ \frac{(1-q^{i-d})(1-s^*q^{i+1})(1-r_1q^{i+1})(1-r_2q^{i+1})}
					{(1-s^*q^{2i+1})(1-s^*q^{2i+2})}\right)$\\ \\
$\hat{C}^+_{i}$ & 
		$\sqrt{\frac{s^*q^d}{r_1r_2}}
		\left(\frac{(r_1-s^*q^{i+1})(r_2-s^*q^{i+1})}{s^*q^d(1-s^*q^{2i+2})}\right)
		\left(\frac{(1-q^{i+1})(1-s^*q^{i+d+2})}{1-s^*q^{2i+3}}
			- \frac{(1-q^i)(1-s^*q^{i+d+1})}{1-s^*q^{2i+1}}\right)$\\ \\
$\hat{C}^-_{i+1}$ &
		$\sqrt{\frac{s^*q^d}{r_1r_2}}
		\left(\frac{(1-q^{i+1})(1-s^*q^{i+d+2})(r_1-s^*q^{i+1})(r_2-s^*q^{i+1})}
				{s^*q^d(1-s^*q^{2i+2})(1-s^*q^{2i+3})}\right)$
\end{tabular}
\end{equation}
and

\medskip
 ${\bf A}.\hat{C}^+_i =$
\begin{equation}\label{A-action;C+}
\begin{tabular}{l | l}
{\rm term} & \hspace{2cm} {\rm coefficient} \\
\hline \hline \\
$\hat{C}^+_{i-1}$ & 
		$\sqrt{\tfrac{s^*q^d}{r_1r_2}}
		\left(\tfrac{(1-q^{i-d})(1-s^*q^{i+1})(1-r_1q^{i+1})(1-r_2q^{i+1})}
				{(1-s^*q^{2i+1})(1-s^*q^{2i+2})}\right)$\\ \\
$\hat{C}^-_{i}$ & 
		$\sqrt{\frac{s^*q^d}{r_1r_2}}
		\left(\frac{(1-r_1q^{i+1})(1-r_2q^{i+1})}{1-s^*q^{2i+2}}\right) 
		\left(\frac{(1-q^{i-d})(1-s^*q^{i+1})}{1-s^*q^{2i+1}}
			- \frac{(1-q^{i-d+1})(1-s^*q^{i+2})}{1-s^*q^{2i+3}}\right)$\\ \\
$\hat{C}^+_{i}$ & 
		$\left(\sqrt{\frac{s^*q^d}{r_1r_2}}+\sqrt{\frac{r_1r_2}{s^*q^d}}\right)
				- \sqrt{\frac{s^*q^d}{r_1r_2}}
		\times 
		\left(\frac{(1-q^{i+1})(1-s^*q^{i+d+2})(r_1-s^*q^{i+1})(r_2-s^*q^{i+1})}
					{s^*q^d(1-s^*q^{2i+2})(1-s^*q^{2i+3})}\right. $\\ \\
		& \hspace{5cm} 
		$\left. + \ \frac{(1-q^{i-d+1})(1-s^*q^{i+2})(1-r_1q^{i+1})(1-r_2q^{i+1})}
					{(1-s^*q^{2i+2})(1-s^*q^{2i+3})}\right)$\\ \\
$\hat{C}^-_{i+1}$ & 
		$\sqrt{\frac{s^*q^d}{r_1r_2}}
		\left(\frac{(1-q^{i+1})(1-s^*q^{i+d+2})}{s^*q^d(1-s^*q^{2i+3})}\right)
		\left(\frac{(r_1-s^*q^{i+2})(r_2-s^*q^{i+2})}{1-s^*q^{2i+4}}
			- \frac{(r_1-s^*q^{i+1})(r_2-s^*q^{i+1})}{1-s^*q^{2i+2}}\right)$\\ \\
$\hat{C}^+_{i+1}$ &
		$\sqrt{\frac{s^*q^d}{r_1r_2}}
		\left(\frac{(1-q^{i+1})(1-s^*q^{i+d+2})(r_1-s^*q^{i+2})(r_2-s^*q^{i+2})}
				{s^*q^d(1-s^*q^{2i+3})(1-s^*q^{2i+4})}\right)$
\end{tabular}
\end{equation}
\end{proposition}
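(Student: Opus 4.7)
My plan is to deduce Proposition \ref{A-action;W} directly from the two preceding lemmas, since by Definition \ref{A,B,B+} we have ${\bf A} = {\bf Y} + {\bf Y}^{-1}$. Concretely, to compute ${\bf A}.\hat{C}^-_i$ I will add the expansion of ${\bf Y}.\hat{C}^-_i$ from Lemma \ref{Y-action} to the expansion of ${\bf Y}^{-1}.\hat{C}^-_i$ from Lemma \ref{Y^(-1)action}, and similarly for ${\bf A}.\hat{C}^+_i$. The five-term structure predicted in the tables matches up cleanly with the union of the supports of ${\bf Y}.\hat{C}^{\pm}_i$ (which are four-term and shifted toward higher index) and ${\bf Y}^{-1}.\hat{C}^{\pm}_i$ (which are four-term and shifted toward lower index), giving the expected coefficients of $\hat{C}^-_{i-1}$, $\hat{C}^+_{i-1}$, $\hat{C}^-_i$, $\hat{C}^+_i$, $\hat{C}^-_{i+1}$ for ${\bf A}.\hat{C}^-_i$, and symmetrically for ${\bf A}.\hat{C}^+_i$.

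The off-diagonal entries are then immediate: the coefficients of $\hat{C}^-_{i-1}$ and $\hat{C}^-_{i+1}$ in ${\bf A}.\hat{C}^-_i$ (and of $\hat{C}^+_{i-1}$, $\hat{C}^+_{i+1}$ in ${\bf A}.\hat{C}^+_i$) come entirely from a single one of ${\bf Y}, {\bf Y}^{-1}$, so no simplification is needed. For the coefficients of $\hat{C}^+_{i-1}$, $\hat{C}^+_i$ in ${\bf A}.\hat{C}^-_i$ (and of $\hat{C}^-_i$, $\hat{C}^-_{i+1}$ in ${\bf A}.\hat{C}^+_i$) each row involves one contribution from ${\bf Y}$ and one from ${\bf Y}^{-1}$; after factoring out the common prefactor of $\sqrt{s^*q^d/(r_1r_2)}$ times a shared rational factor, the remaining bracket reduces to the difference of two analogous rational expressions displayed in the tables.

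The only slightly delicate step will be the diagonal coefficients, namely that of $\hat{C}^-_i$ in ${\bf A}.\hat{C}^-_i$ (and of $\hat{C}^+_i$ in ${\bf A}.\hat{C}^+_i$), where both $\Y$ and $\Y^{-1}$ produce a genuine constant piece. After adding the two contributions from Lemmas \ref{Y-action} and \ref{Y^(-1)action}, I will expand the products and collect the four purely constant terms; two of them will yield $\sqrt{s^*q^d/(r_1r_2)}$ twice, while the cross terms (of the form $\sqrt{s^*q^d/(r_1r_2)}\cdot q^{-d}\cdot s^*$ and its companion) should combine to $\sqrt{r_1r_2/(s^*q^d)}$ once all four summands are brought over a common denominator and the identity $r_1 r_2 = s s^* q^{d+1}$ is applied if needed. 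The remaining non-constant pieces regroup, after a common-denominator computation, into the two rational terms displayed in the tables with the prefactor $-\sqrt{s^*q^d/(r_1r_2)}$.

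Once the diagonal entry is reconciled, the rest is routine term-by-term matching. The hard part is purely algebraic bookkeeping on the diagonal entries: making sure the additive constant collapses to exactly $\sqrt{s^*q^d/(r_1r_2)} + \sqrt{r_1r_2/(s^*q^d)}$ as claimed in \eqref{A-action;C-} and \eqref{A-action;C+}. No new structural input beyond Lemmas \ref{Y-action} and \ref{Y^(-1)action} is required; in particular, no property of $\Ga$ beyond what is already encoded in those two lemmas enters the proof.
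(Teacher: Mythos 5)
Your proposal is correct and is essentially the paper's own proof: since ${\bf A}={\bf Y}+{\bf Y}^{-1}$, the paper obtains Proposition \ref{A-action;W} precisely by adding the expansions in Lemma \ref{Y-action} and Lemma \ref{Y^(-1)action} term by term, exactly as you describe. The only caveat is that your sketch of how the diagonal constant $\sqrt{s^*q^d/(r_1r_2)}+\sqrt{r_1r_2/(s^*q^d)}$ emerges from the individual cross terms is not quite how the pieces split, but this is routine algebra that resolves correctly once everything is collected, and it does not affect the validity of the approach.
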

\begin{proof}
Combine Lemma \ref{Y-action} and Lemma \ref{Y^(-1)action}.
\end{proof}

\begin{proposition}\label{B-action;W}
Let $\bf B$ be as in Definition {\rm\ref{A,B,B+}}.
Then $\bf B$ acts on the basis $\mcal{C}$ as follows:
\begin{enumerate}
\item[\rm(i)] ${\bf B}.\hat{C}^-_i = 
				\left(\frac{1}{q^{i+1}\sqrt{s^*}}+q^{i+1}\sqrt{s^*}\right)\hat{C}^-_i$
				\quad $(0 \leq i \leq d-1).$
\item[\rm(ii)] ${\bf B}.\hat{C}^+_i = 
				\left(\frac{1}{q^{i+1}\sqrt{s^*}}+q^{i+1}\sqrt{s^*}\right)\hat{C}^+_i$
				\quad $(0 \leq i \leq d-1).$
\end{enumerate}
\end{proposition}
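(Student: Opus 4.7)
The proof will be a direct consequence of what has already been established, with essentially no calculation required. The plan is to read everything off Lemma \ref{X;qX}.

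First I would recall that by Definition \ref{A,B,B+} we have $\mathbf{B} = \mathbf{X} + \mathbf{X}^{-1}$, where $\mathbf{X} = t_3 t_0$. By Proposition \ref{H-mod;W}, the action of $t_n$ on $\mathcal{W}$ relative to the ordered basis $\mathcal{C} = \{\hat{C}^-_0,\hat{C}^+_0,\hat{C}^-_1,\hat{C}^+_1,\ldots,\hat{C}^-_{d-1},\hat{C}^+_{d-1}\}$ is given by $\mathcal{T}_n$. Hence the matrix representing $\mathbf{X}$ on $\mathcal{W}$ relative to $\mathcal{C}$ is $\mathcal{T}_3 \mathcal{T}_0$.

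Next I would invoke Lemma \ref{X;qX} (equation (\ref{X;T_30})), which says that $\mathcal{T}_3 \mathcal{T}_0$ is the diagonal matrix with successive entries
$$\tfrac{1}{q\sqrt{s^*}},\ q\sqrt{s^*},\ \tfrac{1}{q^2\sqrt{s^*}},\ q^2\sqrt{s^*},\ \ldots,\ \tfrac{1}{q^d\sqrt{s^*}},\ q^d\sqrt{s^*}.$$
Matching positions against $\mathcal{C}$, for $0\leq i\leq d-1$ the basis vector $\hat{C}^-_i$ sits in position $2i$ and $\hat{C}^+_i$ sits in position $2i+1$, so
$$\mathbf{X}.\hat{C}^-_i = \tfrac{1}{q^{i+1}\sqrt{s^*}}\,\hat{C}^-_i, \qquad \mathbf{X}.\hat{C}^+_i = q^{i+1}\sqrt{s^*}\,\hat{C}^+_i.$$
The two eigenvalues are mutual reciprocals, which is exactly what makes both rows of the proposition collapse to the same coefficient.

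Finally, applying $\mathbf{B} = \mathbf{X} + \mathbf{X}^{-1}$ to each of these eigenvectors yields the common scalar $\tfrac{1}{q^{i+1}\sqrt{s^*}} + q^{i+1}\sqrt{s^*}$, proving (i) and (ii) simultaneously. There is no real obstacle: the entire content was already packaged in Lemma \ref{X;qX}. The only thing to be careful about is the bookkeeping of indices in the ordered basis $\mathcal{C}$, and the trivial observation that $\lambda + \lambda^{-1}$ is invariant under $\lambda \leftrightarrow \lambda^{-1}$, which is why $\hat{C}^-_i$ and $\hat{C}^+_i$ share an eigenvalue for $\mathbf{B}$ even though they carry distinct eigenvalues for $\mathbf{X}$.
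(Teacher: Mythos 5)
Your proposal is correct and matches the paper's proof, which is simply ``Use (\ref{X;T_30})'': the eigenvalues of $\X$ on $\mcal{C}$ read off from $\mcal{T}_3\mcal{T}_0$ in Lemma \ref{X;qX} are $q^{-(i+1)}/\sqrt{s^*}$ and $q^{i+1}\sqrt{s^*}$, and $\B=\X+\X^{-1}$ then gives the common coefficient. Your careful index bookkeeping and the remark that $\lambda+\lambda^{-1}$ is symmetric under $\lambda\leftrightarrow\lambda^{-1}$ are exactly the details the paper leaves implicit.
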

\begin{proof} Use (\ref{X;T_30}).
\end{proof}

\begin{lemma}\label{eigval(B)}
Referring to Proposition {\rm \ref{B-action;W}}, the following scalars are mutually distinct:
\begin{align*}
&& \tfrac{1}{q^{i+1}\sqrt{s^*}}+q^{i+1}\sqrt{s^*} && (0 \leq i \leq d-1).
\end{align*}
\end{lemma}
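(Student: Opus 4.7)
The plan is to show directly that no two of the scalars can coincide, using only the genericity assumptions on the $q$-Racah parameters listed in Example \ref{q-rac;PA}. First I would abbreviate $\alpha_i = q^{i+1}\sqrt{s^*}$ for $0 \leq i \leq d-1$, so the scalars in question are $\alpha_i + \alpha_i^{-1}$. A short algebraic manipulation shows that $\alpha_i + \alpha_i^{-1} = \alpha_j + \alpha_j^{-1}$ if and only if $(\alpha_i - \alpha_j)(\alpha_i\alpha_j - 1) = 0$, that is, if and only if either $\alpha_i = \alpha_j$ or $\alpha_i\alpha_j = 1$.

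The first alternative $\alpha_i = \alpha_j$ is equivalent to $q^{i-j} = 1$. Assuming $i \neq j$ we have $1 \leq |i-j| \leq d-1$, and the constraints in Example \ref{q-rac;PA} guarantee $q^n \neq 1$ for $1 \leq n \leq d$; hence this case cannot occur. The second alternative $\alpha_i \alpha_j = 1$ is equivalent to $s^* q^{i+j+2} = 1$. For $0 \leq i, j \leq d-1$ we have $2 \leq i+j+2 \leq 2d$, and again Example \ref{q-rac;PA} guarantees $s^* q^n \neq 1$ for $2 \leq n \leq 2d$; so this case also cannot occur. Combining these two observations gives the claim.

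There is no real obstacle here; the argument is a direct appeal to the standard nondegeneracy inequalities built into the definition of $q$-Racah type. The entire proof fits in a few lines once the reformulation via $\alpha_i + \alpha_i^{-1}$ is observed.
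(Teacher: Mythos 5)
Your proof is correct and follows the same route as the paper, which simply invokes the nondegeneracy conditions of Example \ref{q-rac;PA}; you have merely made explicit the case analysis ($\alpha_i=\alpha_j$ versus $\alpha_i\alpha_j=1$, ruled out by $q^n\ne 1$ for $1\le n\le d$ and $s^*q^n\ne 1$ for $2\le n\le 2d$) that the paper's one-line proof leaves implicit.
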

\begin{proof}
By Example \ref{q-rac;PA}, $s^*q^i \ne 1$ for $2 \leq i \leq 2d$.
\end{proof}

\begin{corollary}
For $0 \leq i \leq d-1$,  the vectors $\hat C^-_i, \hat C^+_i$ form
a basis for an eigenspace of {\bf B} with eigenvalue 
$\frac{1}{q^{i+1}\sqrt{s^*}}+q^{i+1}\sqrt{s^*}$.
\end{corollary}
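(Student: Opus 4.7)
The plan is to combine Proposition \ref{B-action;W} with Lemma \ref{eigval(B)} and a dimension count. By Proposition \ref{B-action;W}, each of $\hat{C}^-_i$ and $\hat{C}^+_i$ is an eigenvector for $\bf B$ with eigenvalue $\lambda_i := \tfrac{1}{q^{i+1}\sqrt{s^*}} + q^{i+1}\sqrt{s^*}$, so the span of $\{\hat{C}^-_i, \hat{C}^+_i\}$ is certainly contained in the $\lambda_i$-eigenspace of $\bf B$ on $\W$; moreover these two vectors are linearly independent by Lemma \ref{OGB(W)} (in fact orthogonal). Thus the span is a $2$-dimensional subspace of the $\lambda_i$-eigenspace.

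To see that equality holds, I invoke Lemma \ref{eigval(B)}: the scalars $\lambda_0, \lambda_1, \ldots, \lambda_{d-1}$ are mutually distinct. Hence the corresponding eigenspaces of $\bf B$ on $\W$ intersect trivially, and the sum of their dimensions is at most $\dim \W = 2d$ by Corollary \ref{dim(W)=2d}. Since each of the $d$ eigenspaces already contains the $2$-dimensional subspace $\mathrm{Span}\{\hat{C}^-_i, \hat{C}^+_i\}$, the total dimension is at least $2d$, forcing each eigenspace to have dimension exactly $2$ and coincide with $\mathrm{Span}\{\hat{C}^-_i, \hat{C}^+_i\}$.

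Since this argument is essentially bookkeeping given what has already been established, there is no real obstacle; the only point that needs care is making sure $\hat{C}^-_i$ and $\hat{C}^+_i$ are linearly independent (which is immediate from Lemma \ref{OGB(W)}, since they are nonzero and orthogonal). The conclusion then is that $\{\hat{C}^-_i, \hat{C}^+_i\}$ is a basis for the $\lambda_i$-eigenspace of $\bf B$ acting on $\W$, as claimed. As a side benefit, this simultaneously shows that $\bf B$ is diagonalizable on $\W$ with spectrum $\{\lambda_i\}^{d-1}_{i=0}$, each eigenvalue occurring with multiplicity exactly $2$.
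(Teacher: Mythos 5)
Your proof is correct and follows essentially the same route as the paper, which cites Proposition \ref{B-action;W} together with Lemma \ref{eigval(B)}; your explicit dimension count (each of the $d$ distinct eigenvalues already accounts for a $2$-dimensional subspace inside the $2d$-dimensional space $\W$) simply spells out the bookkeeping the paper leaves implicit. No gaps.
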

\begin{proof}
Use Proposition \ref{B-action;W} and  Lemma \ref{eigval(B)}.
\end{proof}

\begin{proposition}\label{(B+)-action;W}
Let $\bf B^{\dagger}$ be as in Definition {\rm\ref{A,B,B+}}.
Then ${\bf B}^{\dagger}$
acts on the basis $\mcal{C}$ as follows:
\begin{enumerate}
\item[\rm(i)] ${\bf B}^{\dagger}.\hat{C}^-_i = 
				\left(\frac{1}{q^i\sqrt{s^*q}}+q^i\sqrt{s^*q}\right)\hat C^-_i$
				\qquad $(0 \leq i \leq d-1)$,
\item[\rm(ii)] ${\bf B}^{\dagger}.\hat{C}^+_{i} =
				\left(\frac{1}{q^{i+1}\sqrt{s^*q}}+q^{i+1}\sqrt{s^*q}\right)\hat C^+_{i}$
				\qquad $(0 \leq i \leq d-1)$.
\end{enumerate}
\end{proposition}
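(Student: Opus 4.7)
The plan is to proceed in exact analogy with the proof of Proposition \ref{B-action;W}, which handled $\B = t_3 t_0 + (t_3 t_0)^{-1}$ by reading off the diagonal of $\mcal{T}_3 \mcal{T}_0$ via equation (\ref{X;T_30}) of Lemma \ref{X;qX}. For the present statement, the key observation is that by Definition \ref{A,B,B+} we have $\B^{\dagger} = t_1 t_2 + (t_1 t_2)^{-1}$, and by Proposition \ref{H-mod;W} the generators $t_1, t_2$ are represented on $\W$ relative to the basis $\mcal{C}$ by the matrices $\mcal{T}_1, \mcal{T}_2$. Hence $t_1 t_2$ is represented relative to $\mcal{C}$ by the product $\mcal{T}_1 \mcal{T}_2$, which is given explicitly in formula (\ref{qX;T_12}) of Lemma \ref{X;qX}.

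Next, I would unpack that formula against the ordering of $\mcal{C}$ displayed in (\ref{basis(C)}). The basis vectors $\hat{C}^-_0, \hat{C}^+_0, \hat{C}^-_1, \hat{C}^+_1, \ldots, \hat{C}^-_{d-1}, \hat{C}^+_{d-1}$ occupy positions $0, 1, 2, \ldots, 2d-1$, and matching this with the diagonal entries listed in (\ref{qX;T_12}) yields $\mcal{T}_1\mcal{T}_2\,\hat{C}^-_i = q^{\,i}\sqrt{s^{*}q}\,\hat{C}^-_i$ for $0 \leq i \leq d-1$ and $\mcal{T}_1\mcal{T}_2\,\hat{C}^+_i = \frac{1}{q^{\,i+1}\sqrt{s^{*}q}}\,\hat{C}^+_i$ for $0 \leq i \leq d-1$. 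Since $t_1 t_2$ acts diagonally on $\mcal{C}$, so does $\B^{\dagger} = t_1 t_2 + (t_1 t_2)^{-1}$, and its eigenvalue on each basis vector is simply the corresponding diagonal entry plus its reciprocal. Adding these, one obtains exactly $\bigl(\tfrac{1}{q^{i}\sqrt{s^{*}q}} + q^{i}\sqrt{s^{*}q}\bigr)$ on $\hat{C}^-_i$ and $\bigl(\tfrac{1}{q^{i+1}\sqrt{s^{*}q}} + q^{i+1}\sqrt{s^{*}q}\bigr)$ on $\hat{C}^+_i$, which is precisely what parts (i) and (ii) assert.

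There is no substantive obstacle here; the proof is just three lines of bookkeeping. The only subtlety is making sure the block structure of $\mcal{T}_1$ and $\mcal{T}_2$ in Definition \ref{Mat_t_i} (with the $1\times 1$ blocks $\mfrk{t}_1(0), \mfrk{t}_2(0)$ at the upper left and $\mfrk{t}_1(d), \mfrk{t}_2(d)$ at the lower right, interlaced with the $2\times 2$ blocks in the middle) lines up correctly with the ordering of $\mcal{C}$, but this is exactly the alignment that was used to derive (\ref{qX;T_12}) in the first place. The result then follows immediately from Lemma \ref{X;qX}.
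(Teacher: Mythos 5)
Your proposal is correct and is exactly the paper's argument: the paper's proof of this proposition simply says to use (\ref{qX;T_12}), i.e.\ read the diagonal of $\mcal{T}_1\mcal{T}_2$ against the ordering of $\mcal{C}$ and add reciprocals, which is what you do. The bookkeeping of the diagonal entries (entry $0$ giving $\sqrt{s^*q}$ on $\hat{C}^-_0$, entry $2d-1$ giving $\tfrac{1}{q^d\sqrt{s^*q}}$ on $\hat{C}^+_{d-1}$, etc.) matches the stated eigenvalues, so nothing is missing.
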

\begin{proof}
Use (\ref{qX;T_12}). 
\end{proof}

\begin{lemma}\label{eigval(B+)}
Referring to Proposition {\rm \ref{(B+)-action;W}}, the following scalars are mutually distinct:
\begin{align*}
&& \tfrac{1}{q^i\sqrt{s^*q}}+q^i\sqrt{s^*q} && (0 \leq i \leq d-1).
\end{align*}
\end{lemma}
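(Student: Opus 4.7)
The plan is to reduce the distinctness claim to the two nonvanishing conditions from Example \ref{q-rac;PA}, in direct analogy with Lemma \ref{eigval(B)}. For $0 \leq i \leq d-1$, set $f(i) = q^{-i}(s^*q)^{-1/2} + q^i (s^*q)^{1/2}$. First, I would compute the difference $f(j)-f(i)$ for $0 \leq i < j \leq d-1$ and factor it; a short algebraic manipulation (writing $u=(s^*q)^{1/2}$, collecting terms, and pulling out $q^j - q^i$) shows that
\begin{equation*}
f(j) - f(i) \;=\; (q^j - q^i) \cdot \frac{s^*q^{i+j+1}-1}{u\, q^{i+j}}.
\end{equation*}

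Thus $f(i)=f(j)$ would force either $q^{j-i}=1$ or $s^*q^{i+j+1}=1$. The next step is to rule out each alternative using the standing $q$-Racah assumptions. Since $1 \leq j-i \leq d-1$, the first equality is excluded by the hypothesis in Example \ref{q-rac;PA} that $q^k \neq 1$ for $1 \leq k \leq d$. Since $2 \leq i+j+1 \leq 2d-1$, the second equality is excluded by the hypothesis $s^*q^k \neq 1$ for $2 \leq k \leq 2d$. Hence $f(j) \neq f(i)$, and the scalars $\{f(i)\}_{i=0}^{d-1}$ are mutually distinct.

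There is no real obstacle here: the argument is a direct mimicry of Lemma \ref{eigval(B)}, only with the parameter $s^*$ replaced by $s^*q$ and the index $i+1$ replaced by $i$, which shifts the range $i+j+1$ by one but leaves it safely inside the exclusion window $\{2,\ldots,2d\}$ guaranteed by the $q$-Racah assumption. The only point to watch is to verify that both $j-i$ and $i+j+1$ land in the correct ranges for all valid pairs $0 \leq i < j \leq d-1$; once this is checked the conclusion is immediate.
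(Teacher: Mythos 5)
Your proof is correct and is essentially the paper's argument made explicit: the paper simply cites the $q$-Racah non-degeneracy conditions ($q^k\ne1$ for $1\le k\le d$ and $s^*q^k\ne1$ for $2\le k\le 2d$ from Example \ref{q-rac;PA}), and your factorization $f(j)-f(i)=(q^j-q^i)\frac{s^*q^{i+j+1}-1}{u\,q^{i+j}}$ is exactly the computation those conditions are meant to settle. (Minor quibble: for $0\le i<j\le d-1$ the maximum of $i+j+1$ is $2d-2$, not $2d-1$, but either way it lies in the excluded window, so nothing is affected.)
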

\begin{proof}
By Example \ref{q-rac;PA}, $s^*q^i \ne 1$ for $2 \leq i \leq 2d$.
\end{proof}

\begin{corollary}
For $1 \leq i \leq d-1$ the vectors $\hat C^{+}_{i-1}, \hat C^{-}_{i}$
form a basis for an eigenspace of ${\bf B}^{\dagger}$
with eigenvalue $\frac{1}{q^i\sqrt{s^*q}}+q^i\sqrt{s^*q}$.
\end{corollary}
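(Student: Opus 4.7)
The plan is to read the result directly off Proposition~\ref{(B+)-action;W}, together with a slight strengthening of the distinctness statement in Lemma~\ref{eigval(B+)}. First, Proposition~\ref{(B+)-action;W}(i) shows that $\hat{C}^-_i$ is an eigenvector of $\B^{\dagger}$ with eigenvalue $\mu_i := \tfrac{1}{q^i\sqrt{s^*q}}+q^i\sqrt{s^*q}$, while Proposition~\ref{(B+)-action;W}(ii), applied at the index $i-1$ in place of $i$, shows that $\hat{C}^+_{i-1}$ is also an eigenvector of $\B^{\dagger}$ with the same eigenvalue $\mu_i$, valid for $1\leq i \leq d-1$. These two vectors are linearly independent, being distinct members of the basis $\mcal{C}$; equivalently they form a basis for $E^*_i\W$ by Lemma~\ref{W=ods(E*i)}(ii). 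Hence they span a two-dimensional subspace of the $\mu_i$-eigenspace of $\B^{\dagger}$.

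Next, I would argue that no other basis vector of $\mcal{C}$ has eigenvalue $\mu_i$. Since $\B^{\dagger}$ acts diagonally on the basis $\mcal{C}$ by Proposition~\ref{(B+)-action;W}, the full $\mu_i$-eigenspace is spanned by those $\hat{C}^{\pm}_j \in \mcal{C}$ whose $\B^{\dagger}$-eigenvalue equals $\mu_i$. So one must verify that, for $1\leq i \leq d-1$ and $j \in \{0,1,\ldots,d\}$ with $j \neq i$, we have $\mu_i \neq \mu_j$. The argument is identical to the one in Lemma~\ref{eigval(B+)}: the equation $\mu_i = \mu_j$ forces either $q^{i-j}=1$ or $s^*q^{i+j+1}=1$. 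Since $i \neq j$ with $i,j$ in the stated ranges, we have $1 \leq |i-j| \leq d$ and $2 \leq i+j+1 \leq 2d$, so the first case is impossible because $q^k \neq 1$ for $1\leq k \leq d$, and the second is impossible because $s^*q^k \neq 1$ for $2 \leq k \leq 2d$; both nonvanishing conditions are part of Example~\ref{q-rac;PA}.

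There is no real obstacle here; the corollary is essentially a bookkeeping statement about the diagonal action of $\B^{\dagger}$ on $\mcal{C}$. The only mild subtlety, worth making explicit, is the need to extend Lemma~\ref{eigval(B+)} slightly so that one of the indices is allowed to equal $d$, which is why I would spell out the brief calculation above rather than merely cite that lemma.
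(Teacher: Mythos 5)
Your proof is correct and follows essentially the same route as the paper, which simply cites Proposition~\ref{(B+)-action;W} together with the distinctness of the eigenvalues from Lemma~\ref{eigval(B+)}. Your extra remark that the comparison must also include the eigenvalue $\tfrac{1}{q^d\sqrt{s^*q}}+q^d\sqrt{s^*q}$ carried by $\hat{C}^+_{d-1}$ (an index not literally covered by Lemma~\ref{eigval(B+)}) is a careful and valid refinement, handled by the same computation using $q^k\ne1$ for $1\le k\le d$ and $s^*q^k\ne1$ for $2\le k\le 2d$, but it does not change the substance of the argument.
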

\begin{proof}
Use Proposition \ref{(B+)-action;W} and Lemma \ref{eigval(B+)}.
\end{proof}

\noindent
Recall the scalars $k_0$ and $k_1$ from (\ref{k0123}).
Recall by Lemma \ref{k_0;k_1} that $k_0 \ne \pm1$ and $k_1 \ne \pm1$.
We now describe the action of  $\frac{t_0-k_0^{-1}}{k_0-k^{-1}_0}$
and $\frac{t_1-k_1^{-1}}{k_1-k^{-1}_1}$ on the basis $\mcal{C}$.

\begin{proposition}\label{t_0/k_0-action}
The element $\frac{t_0-k_0^{-1}}{k_0-k^{-1}_0}$
acts on the basis $\mcal{C}$ as follows:
\begin{enumerate}
\item[\rm (i)] $\left(\frac{t_0-k_0^{-1}}{k_0-k^{-1}_0}\right). \hat{C}^-_i =
				\frac{(r_1-s^*q^{i+1})(r_2-s^*q^{i+1})}{(r_1r_2-s^*)(1-s^*q^{2i+2})}
				(\hat C^-_i + \hat C^+_i)$
				\qquad $(0 \leq i \leq d-1)$.
\item[\rm (ii)]$\left(\frac{t_0-k_0^{-1}}{k_0-k^{-1}_0}\right). \hat{C}^+_i =
				\frac{s^*(1-r_1q^{i+1})(1-r_2q^{i+1})}{(s^*-r_1r_2)(1-s^*q^{2i+2})}
				(\hat C^-_i + \hat C^+_i)$
				\qquad $(0 \leq i \leq d-1)$.
\end{enumerate}
\end{proposition}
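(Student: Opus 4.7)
The plan is to reduce the claim to a $2\times 2$ computation on each $\wt{E}^*_i$-eigenspace, using the block structure of $\mathcal{T}_0$. By Proposition \ref{H-mod;W} and Definition \ref{Mat_t_i}, the matrix $\mathcal{T}_0$ representing $t_0$ relative to $\mathcal{C}$ is block-diagonal with $2\times 2$ blocks $\mathfrak{t}_0(i)$ acting on $\mathrm{Span}\{\hat{C}^-_i, \hat{C}^+_i\}$ for $0 \le i \le d-1$; there are no exceptional $1\times 1$ blocks, so the argument is uniform in $i$.

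First I would observe that, by Lemma \ref{det/tr(t_i)}, each $\mathfrak{t}_0(i)$ has trace $k_0+k_0^{-1}$ and determinant $1$, so its characteristic polynomial is $(\lambda-k_0)(\lambda-k_0^{-1})$. Since $k_0 \neq k_0^{-1}$ by Lemma \ref{k_0;k_1}, the block $(\mathfrak{t}_0(i) - k_0^{-1} I)/(k_0 - k_0^{-1})$ is the rank-one spectral projection of $\mathfrak{t}_0(i)$ onto its $k_0$-eigenspace. Being rank one, it must send both $\hat{C}^-_i$ and $\hat{C}^+_i$ into a single fixed line, which I would identify explicitly as the line spanned by $\hat{C}^-_i + \hat{C}^+_i$.

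Next I would carry out the entry-by-entry computation. Using Definition \ref{(2x2)mat}(a) and $k_0 = \sqrt{r_1 r_2 / s^*}$ from \eqref{k0123}, the scalar $k_0 - k_0^{-1}$ equals $(r_1 r_2 - s^*)/\sqrt{s^* r_1 r_2}$, and subtracting $k_0^{-1} I$ from $\mathfrak{t}_0(i)$ cleanly cancels the additive constants $s^*$ and $1$ appearing in the diagonal entries of Definition \ref{(2x2)mat}(a). The elementary identity
\[
(r_1 - s^*q^{i+1})(r_2 - s^*q^{i+1}) \;-\; s^*(1 - r_1 q^{i+1})(1 - r_2 q^{i+1}) \;=\; (r_1 r_2 - s^*)(1 - s^* q^{2i+2}),
\]
verified by direct expansion, then shows that both rows of the resulting $2\times 2$ matrix coincide: its first column is a multiple of $(1,1)^{\top}$ with scalar equal to the coefficient in part (i), and its second column is a multiple of $(1,1)^{\top}$ with scalar equal to the coefficient in part (ii). Since $(1,1)^{\top}$ encodes $\hat{C}^-_i + \hat{C}^+_i$ in the basis $\{\hat{C}^-_i,\hat{C}^+_i\}$, this is exactly the desired action. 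The only potential obstacle is the algebraic bookkeeping, which is mechanical once the above identity and the formula for $k_0 - k_0^{-1}$ are in hand.
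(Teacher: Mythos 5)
Your proposal is correct and follows essentially the same route as the paper: the paper's proof is exactly the block computation with $\mathcal{T}_0$ from Proposition \ref{H-mod;W}, subtracting $k_0^{-1}I$ from each $2\times 2$ block $\mathfrak{t}_0(i)$ and dividing by $k_0-k_0^{-1}=(r_1r_2-s^*)/\sqrt{s^*r_1r_2}$, which immediately yields columns proportional to $\hat{C}^-_i+\hat{C}^+_i$ with the stated coefficients. Your spectral-projection framing and the identity $(r_1-s^*q^{i+1})(r_2-s^*q^{i+1})-s^*(1-r_1q^{i+1})(1-r_2q^{i+1})=(r_1r_2-s^*)(1-s^*q^{2i+2})$ are correct additions (the latter confirms the two coefficients sum to $1$), though the equality of the two rows already follows from the cancellation of the constants $s^*$ and $1$ alone.
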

\begin{proof}
Use the $t_0$-action on the basis $\mcal{C}$ from Proposition \ref{H-mod;W}.
\end{proof}

\begin{proposition}\label{t_1/k_1-action}
The element $\frac{t_1-k_1^{-1}}{k_1-k^{-1}_1}$ 
acts on the basis $\mcal{C}$ as follows:
\begin{enumerate}
\item[\rm (i)] $\left(\tfrac{t_1-k_1^{-1}}{k_1-k^{-1}_1}\right). \hat{C}^-_0 = \hat C^-_0,$
\item[\rm (ii)]	$\left(\frac{t_1-k_1^{-1}}{k_1-k^{-1}_1}\right). \hat{C}^-_i = 
					\frac{q^d(1-q^{i-d})(1-s^*q^{i+1})}{(q^d-1)(1-s^*q^{2i+1})}
					(\hat C^+_{i-1} + \hat C^-_{i})$
					\qquad $(1 \leq i \leq d-1),$

\item[\rm (iii)] $\left(\frac{t_1-k_1^{-1}}{k_1-k^{-1}_1}\right). \hat{C}^+_{i-1} = 
					\frac{(1-q^i)(1-s^*q^{i+d+1})}{(1-q^d)(1-s^*q^{2i+1})}
					(\hat C^+_{i-1}+\hat C^-_i)$
					\qquad $(1 \leq i \leq d-1),$

\item[\rm (iv)]$\left(\tfrac{t_1-k_1^{-1}}{k_1-k^{-1}_1}\right). \hat{C}^+_{d-1} = \hat C^+_{d-1}$.	
\end{enumerate}
\end{proposition}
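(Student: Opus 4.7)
The plan is a direct computation using the matrix $\mcal{T}_1$ that represents $t_1$ relative to the basis $\mcal{C}$, as established in Proposition \ref{H-mod;W}. First I would identify, for each $i$, exactly which pair of basis vectors is acted upon by each block of $\mcal{T}_1$. Comparing the ordering of $\mcal{C}$ in \eqref{basis(C)} with the block decomposition in Definition \ref{Mat_t_i}, the $1\times1$ block $\mfrk{t}_1(0)$ acts on $\hat{C}^-_0$, the $2\times2$ block $\mfrk{t}_1(i)$ (for $1\leq i\leq d-1$) acts on the ordered pair $\{\hat{C}^+_{i-1},\hat{C}^-_i\}$, and the $1\times1$ block $\mfrk{t}_1(d)$ acts on $\hat{C}^+_{d-1}$.

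For parts (i) and (iv), observe from Definition \ref{(2x2)mat}(b) that $\mfrk{t}_1(0)=\mfrk{t}_1(d)=[q^{-d/2}]=[k_1]$, by Definition \ref{k_0,1,2,3}. Hence $t_1$ acts as the scalar $k_1$ on each of $\hat{C}^-_0$ and $\hat{C}^+_{d-1}$, and therefore $\tfrac{t_1-k_1^{-1}}{k_1-k_1^{-1}}$ acts as the identity on these vectors. This yields (i) and (iv) at once.

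For parts (ii) and (iii), I would apply $\mfrk{t}_1(i)-k_1^{-1}I$ to each of the two coordinate vectors corresponding to $\hat{C}^+_{i-1}$ and $\hat{C}^-_i$, and then divide by $k_1-k_1^{-1}=q^{-d/2}-q^{d/2}$. The key observation is that after this normalization, the resulting $2\times2$ matrix has both its columns proportional to $(1,1)^t$; that is, the two diagonal entries become equal to the off-diagonal entries in their respective columns. Concretely, writing $\mfrk{t}_1(i)=\bigl[\begin{smallmatrix}a&b\\c&d\end{smallmatrix}\bigr]$ with the entries from Definition \ref{(2x2)mat}(b), one checks $(a-k_1^{-1})/(k_1-k_1^{-1})=c/(k_1-k_1^{-1})$ and $b/(k_1-k_1^{-1})=(d-k_1^{-1})/(k_1-k_1^{-1})$, with common values as prescribed in (iii) and (ii), respectively. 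Each identity reduces, after clearing the common denominator $1-s^*q^{2i+1}$, to a short algebraic identity in $q$ and $s^*$; e.g.\ for $(a-k_1^{-1})/(k_1-k_1^{-1})$ the numerator simplifies to $(1-q^i)(1-s^*q^{d+i+1})q^{-d/2}$ after collecting terms, matching $c$ up to the factor $q^{-d/2}$.

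The only obstacle is algebraic bookkeeping, and nothing substantively new is needed beyond $k_1=q^{-d/2}$ and the explicit form of $\mfrk{t}_1(i)$. No additional structural input (such as the relations among $\mcal{T}_0,\mcal{T}_1,\mcal{T}_2,\mcal{T}_3$) is required, since $t_1$ acts on $\mcal{C}$ blockwise and each block can be analyzed independently.
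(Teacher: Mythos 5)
Your proposal is correct and follows the same route as the paper: the paper's proof is exactly "use the $t_1$-action on $\mcal{C}$ from Proposition \ref{H-mod;W}," i.e., read off the blocks $\mfrk{t}_1(i)$ of $\mcal{T}_1$, note $\mfrk{t}_1(0)=\mfrk{t}_1(d)=[k_1]$, and verify the two column identities for $(\mfrk{t}_1(i)-k_1^{-1}I)/(k_1-k_1^{-1})$, which you do correctly (your block-to-basis-vector matching and the simplification of the numerator to $q^{-d/2}(1-q^i)(1-s^*q^{d+i+1})$ both check out).
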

\begin{proof}
Use the $t_1$-action on the basis $\mcal{C}$
from Proposition \ref{H-mod;W}.
\end{proof}

\noindent
We are ready to prove Theorem \ref{main_thm}.

\begin{proof}[Proof of Theorem \ref{main_thm}{\rm:}] 
We refer to the table in the theorem statement. 
For each row we compare 
the matrices representing the two displayed elements relative to the 
basis $\mcal{C}$. In each case we show that these matrices coincide.

\bigskip \noindent
{\bf A}: From Proposition \ref{A-action;W}
we find the matrix representation of $\bf A$.
From (\ref{A_C;odd-col}) and (\ref{A_C;even_col}) we obtain the
matrix representation of $A$. From this we get the matrix representation
of $\frac{1}{h\sqrt{sq}}(A-(\tht_0-h-hsq)I)$. In this representation,
eliminate $s$ using $r_1r_2=ss^*q^{d+1}$. The result coincides with the 
matrix representation of $\bf A$.

\bigskip \noindent
{\bf B}:
From Proposition \ref{B-action;W} we find the matrix representation of $\bf B$.
From Lemma \ref{[A*C]_C} we obtain the matrix representation of $\wt{A}^*$.
From this we get the matrix representation of 
$\frac{1}{\wt{h}^*\sqrt{\wt{s}^*q}}(\wt{A}^*-(\wt{\tht}^*_0-\wt{h}^*-\wt{h}^*\wt{s}^*q)I)$.  
Evaluate this representation using (\ref{q-rac;ttht*_i}).
The result coincides with the matrix representation of $\bf B$.

\bigskip \noindent
${\bf B}^{\dagger}$: From Proposition \ref{(B+)-action;W} we find
the matrix representation of ${\bf B}^{\dagger}$.
From Lemma \ref{[A*]_C} we obtain the matrix representation of  $A^*$.
From this we get the matrix representation of 
$\tfrac{1}{h^*\sqrt{s^*q}}(A^*-(\tht^*_0-h^*-h^*s^*q)I)$.
Evaluate this representation using (\ref{theta^*_i}).
The result coincides with the matrix representation of $\B^{\dagger}$.

\bigskip \noindent
$\frac{t_0-k^{-1}_0}{k_0-k^{-1}_0}$:
From Proposition \ref{t_0/k_0-action} we find the matrix
representation of $\frac{t_0-k_0^{-1}}{k_0-k_0^{-1}}$. 
The representation coincides with the matrix (\ref{[p_C]_C}), which is 
the representation of $\wt{\mfrk{p}}$. 

\bigskip \noindent
$\frac{t_1-k^{-1}_1}{k_1-k^{-1}_1}$:
From Proposition \ref{t_1/k_1-action} we find the matrix
representation of $\frac{t_1-k_1^{-1}}{k_1-k_1^{-1}}$. 
The representation coincides with the matrix (\ref{[p_x]_C}), which is
the representation of $\mfrk{p}$. 
\end{proof}

\noindent
We finish this paper with a comment.
\begin{corollary} Referring to Theorem {\rm \ref{main_thm}}, the $\H$-module $\W$ is irreducible.
\end{corollary}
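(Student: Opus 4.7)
The plan is to deduce $\H$-irreducibility of $\W$ from the already-established $\T$-irreducibility of $\W$, by showing that the image of $\T$ in $\End(\W)$ is contained in the image of $\H$ in $\End(\W)$. First I would recall Proposition \ref{W;irreducible}, which asserts that $\W$ is irreducible as a $\T$-module. Thus any nonzero $\T$-invariant subspace of $\W$ must coincide with $\W$.

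Next, let $U \subseteq \W$ be any nonzero $\H$-submodule. By definition, $U$ is invariant under every element of $\H$; in particular, $U$ is invariant under $\A$, $\B$, and $\B^{\dagger}$. Now apply Theorem \ref{main_thm}: on $\W$, the elements $\A$, $\B$, $\B^{\dagger}$ act, respectively, as nonzero scalar multiples of $A - \alpha I$, $\wt{A}^* - \beta I$, $A^* - \gamma I$ for appropriate scalars $\alpha, \beta, \gamma \in \mbb{C}$. Hence invariance of $U$ under $\A$, $\B$, $\B^{\dagger}$ is equivalent to invariance of $U$ under each of $A$, $\wt{A}^*$, $A^*$.

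By Definition \ref{Algebra(T)}, the algebra $\T$ is generated by $A$, $A^*$, and $\wt{A}^*$. Therefore $U$ is a $\T$-submodule of $\W$. Since $U \neq 0$ and $\W$ is irreducible as a $\T$-module by Proposition \ref{W;irreducible}, we conclude $U = \W$. This shows $\W$ has no proper nonzero $\H$-submodule, i.e.\ $\W$ is an irreducible $\H$-module.

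There is no real obstacle here, since all the heavy lifting was done in Theorem \ref{main_thm} and Proposition \ref{W;irreducible}; the argument is a short formal consequence. The only point requiring a moment's care is that the affine factors relating $\A, \B, \B^{\dagger}$ to $A, \wt{A}^*, A^*$ are invertible scalars (the leading coefficients $\tfrac{1}{h\sqrt{sq}}$, $\tfrac{1}{\wt{h}^*\sqrt{\wt{s}^*q}}$, $\tfrac{1}{h^*\sqrt{s^*q}}$ are all nonzero by Note \ref{h,h*} and Lemma \ref{(tht)tilde*}), so that invariance under $\A$, $\B$, $\B^{\dagger}$ does indeed transfer to invariance under $A$, $\wt{A}^*$, $A^*$ and vice versa.
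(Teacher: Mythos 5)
Your argument is correct and is essentially the paper's own proof: the paper likewise deduces $\H$-irreducibility by combining Definition \ref{Algebra(T)} (that $\T$ is generated by $A, A^*, \wt{A}^*$), Proposition \ref{W;irreducible}, and Theorem \ref{main_thm}. You merely spell out the transfer of invariance (including the nonvanishing of the affine scaling factors), which the paper leaves implicit.
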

\begin{proof}
By definition \ref{Algebra(T)} the algebra $\T$ is generated by $A, A^*, \wt{A}^*$. By Proposition \ref{W;irreducible} the $\T$-module $\W$ is irreducible. The result follows in view of Theorem \ref{main_thm}.
\end{proof}

\section {Appendix}
In this Appendix we consider the case of diameter $d=4$ in great detail.
We briefly review and summarize the results of Part I and Part II.
Concerning Part I, recall that
$\Ga$ is a $Q$-polynomial distance-regular graph of $q$-Racah type
that contains a Delsarte clique $C$. Recall the semisimple algebra $\T$ 
from Definition \ref{Algebra(T)}, whose generators are $A, A^*, \wt{A}^*$.
Recall the primary $\T$-module $\W$ from below Definition \ref{Algebra(T)}. 
The $\T$-module $\W$ is decomposed in two ways:
$$
\W = \Mx + \Mxp, \qquad\qquad  \W= \MC + \MCp. \qquad \text{(orthogonal direct sum)}
$$
On each of the four summands, we found a natural Leonard system.
For each Leonard system we found the parameter array, and we described
how these parameter arrays are related. 
We recall our five linear maps in ${\rm End}(\W)$:
\begin{equation}\label{5maps}
A, \qquad A^*, \qquad \wt{A}^*, \qquad \mfrk{p}, \qquad \wt{\mfrk{p}}.
\end{equation}
Recall from (\ref{diagram}) our five bases for $\W$:
\begin{equation}\label{5bases}
\mcal{C}, \qquad \mcal{B}, \qquad \mcal{B}_{alt}, \qquad \wt{\mcal{B}}, 
\qquad \wt{\mcal{B}}_{alt}.
\end{equation}
In Section \ref{matrices}, 
we displayed the matrix representing each map in (\ref{5maps})
relative to each basis in (\ref{5bases}). We also displayed the transition matrices
between certain pairs of bases among (\ref{5bases}).

\noindent
Concerning Part II, recall the algebra $\H$ from Definition \ref{DAHA} whose
generators are $\{t^{\pm1}_n\}^{3}_{n=0}$ and relations (\ref{daha1})--(\ref{daha3}).
Recall  $\X = t_3t_0, \Y=t_0t_1$ from Definition \ref{X,Y},
and $\A, \B, \B^{\dagger}$ from Definition \ref{A,B,B+}.
In Section \ref{H-mod(W)} we constructed a $\H$-module on $\W$.
For this module and up to affine transformation we showed that
$\A$ acts as $A$ and 
$\B$ (resp. $\B^{\dagger}$) acts as $\wt{A}^*$
(resp. $A^*$). Moreover, up to affine transformation, $t_0$ (resp. $t_1$)
acts as $\wt{\mfrk{p}}$ (resp. $\mfrk{p}$). 

\medskip \noindent
To give a concrete example we now take $d=4$.
In Section \ref{Apdx:I} below, we display the matrices representing each map
in (\ref{5maps}) relative to each basis in (\ref{5bases}) and the transition
matrices between all pairs of bases among (\ref{5bases}).
In Section \ref{Apdx:II}, we display the matrices representing each generator
$\{t_n\}^3_{n=0}$ of $\H$ and 
some related elements of $\H$ with respect to the basis $\mcal{C}$ from (\ref{5bases}).

\medskip \noindent
We recall
the scalars $h, h^*, s, s^*, r_1, r_2$ from above Note \ref{h,h*}. 
The scalars $h, h^*$ satisfy (\ref{scalar(h)}), (\ref{scalar(h*)}), respectively.
Recall the formulae
$\epsilon_i$ from (\ref{epsilon_i(4)}), $\xi_i$ from (\ref{xi_i}),
$\tau_i$ from (\ref{tau;q-term;eq}), and $\zeta_i$ from (\ref{zeta_i}). For $d=4$,
these become
\begin{align}
\label{xi;epsilon}
&\epsilon_i = \frac{(1-q^i)(1-s^*q^{i+5})}{q^4(1-q^{i-4})(1-s^*q^{i+1})}, & & &
\xi_i = q^{1-i}(1-q^{i-4})(1-s^*q^{i+1}), & & &
\end{align}
for $1 \leq  i \leq 3$ and
\begin{align}
\label{zeta;tau}
&\tau_i = \frac{s^*(1-r_1q^{i+1})(1-r_2q^{i+1})}{(r_1-s^*q^{i+1})(r_2-s^*q^{i+1})}, & & &
\zeta_i = q^{-i}(r_1-s^*q^{i+1})(r_2-s^*q^{i+1}), & & &
\end{align}
for $0 \leq i \leq 3$.
All entries of each matrix in this Appendix will be expressed in terms of 
$q, s, s^*, r_1, r_2$ and their square roots.

\subsection{The matrices from Part I}\label{Apdx:I}

Consider the five bases in (\ref{5bases}). For $d=4$ these bases are
\begin{align*}
\mcal{C} & = \{
\hat{C}^-_0, \quad \hat{C}^+_0,  \quad 
\hat{C}^-_1,  \quad \hat{C}^+_1,  \quad 
\hat{C}^-_2,  \quad \hat{C}^+_2,  \quad
\hat{C}^-_3,  \quad \hat{C}^+_3\}, \\
\mcal{B} & = \{
v_0, \quad v_1, \quad v_2, \quad v_3, \quad v_4, \quad \vp_0, \quad \vp_1, \quad \vp_2\}, \\
\mcal{B}_{alt} & =\{
v_0, \quad v_1, \quad \vp_0, \quad v_2, \quad \vp_1, \quad v_3, \quad \vp_2, \quad v_4\},\\
\wt{\mcal{B}} & = \{
\tv_0, \quad \tv_1, \quad \tv_2, \quad \tv_3, \quad \tvp_0, \quad \tvp_1, \quad \tvp_2, \quad \tvp_3\},\\
\wt{\mcal{B}}_{alt} & =\{
\tv_0, \quad \tvp_0, \quad \tv_1, \quad \tvp_1, \quad \tv_2, \quad \tvp_2, \quad \tv_3, \quad \tvp_3\}.
\end{align*}
For each of the $20$ ordered pairs of bases from above,
we now display the corresponding transition matrix.
This will be done in Example \ref{ex:tm(C<->Bxa)}
through Example \ref{ex:tm(Bx<->BC)} below.

\bigskip
\begin{example}\label{ex:tm(C<->Bxa)}
The transition matrix from $\mathcal{C}$ to $\mcal{B}_{alt}$ is
\begin{equation*}
\left[
\begin{array}{c|cc|cc|cc|c}
1	&0&	0		&0	&0			&0		&0			&0\\
\hline
0	& 1	& \xi_1		&0	&0			&0		&0			&0\\
0	& 1	& \xi_1\epsilon_1	&0	&0			&0		&0			&0\\
\hline
0	&0	&0 			& 1	& \xi_2		&0		&0			&0\\
0	& 0	&0 			& 1	& \xi_2\epsilon_2	&0		&0			&0\\
\hline
0	&0	&0			&0	&0			& 1		& \xi_3		&0\\
0	&0	&0			&0	&0			& 1		& \xi_3\epsilon_3	&0\\
\hline
0	&0	&0			&0	&0			&0		&0			& 1	\\
\end{array}\right],
\end{equation*}
where $\{\xi_i\}^3_{i=1}$ and $\{\epsilon_i\}^3_{i=1}$ are from (\ref{xi;epsilon}). Moreover,
\begin{equation}\label{xi/epsilon}
\xi_i\epsilon_i	= q^{-i-3}(1-q^i)(1-s^*q^{i+5}) \qquad (1 \leq i \leq 3).
\end{equation}
The transition matrix from $\mcal{B}_{alt}$ to $\mathcal{C}$ is 
\begin{equation*}\label{ex:tm(Bxa->C)}
\left[
\begin{array}{c|cc|cc|cc|c}
1 	&0	&0	&0	&0	&0	&0	&0\\
\hline
0	&\frac{\epsilon_1}{\epsilon_1-1} & \frac{1}{1-\epsilon_1}	&0	&0	&0	&0	&0\\ 
0	&\frac{1}{\xi_1(1-\epsilon_1)} 	& \frac{1}{\xi_1(\epsilon_1-1)}	&0	&0	&0	&0	&0\\
\hline
0	&0	&0	&	\frac{\epsilon_2}{\epsilon_2-1}	& \frac{1}{1-\epsilon_2}	&0	&0	&0 \\
0	&0	&0	&	\frac{1}{\xi_2(1-\epsilon_2)} 	& \frac{1}{\xi_2(\epsilon_2-1)}	&0	&0	&0\\
\hline
0	&0	&0	&0	&0	&	\frac{\epsilon_3}{\epsilon_3-1}	& \frac{1}{1-\epsilon_3} &0\\
0	&0	&0	&0	&0	&	\frac{1}{\xi_3(1-\epsilon_3)}		& \frac{1}{\xi_3(\epsilon_3-1)}	&0\\
\hline
0	&0	&0	&0	&0	&0	&0	& 1\\
\end{array}\right].
\end{equation*}
Moreover for $1 \leq i \leq 3$,
\begin{equation}\label{xi/epsilon-1}		
\begin{tabular}{lll}
$\tfrac{\epsilon_i}{\epsilon_i-1} = 	\tfrac{(1-q^{i})(1-s^*q^{i+5})}{(1-q^4)(1-s^*q^{2i+1})}$,  
& \qquad $\begin{matrix} & \\ & \end{matrix}$ &
$\tfrac{1}{1-\epsilon_i} =  \tfrac{q^4(1-q^{i-4})(1-s^*q^{i+1})}{(q^4-1)(1-s^*q^{2i+1})},$\\

$\tfrac{1}{\xi_i(1-\epsilon_i)} = \tfrac{q^{3+i}}{(q^4-1)(1-s^*q^{2i+1})},$ 
& \qquad $\begin{matrix} & \\ & \end{matrix}$& 
$\tfrac{1}{\xi_i(\epsilon_i-1)}  = \tfrac{q^{3+i}}{(1-q^4)(1-s^*q^{2i+1})}.$
\end{tabular}
\end{equation}

\end{example}

\bigskip
\begin{example}\label{ex:tm(C<->BCa)}
The transition matrix from $\mathcal{C}$ to $\wt{\mcal{B}}_{alt}$ is
\begin{equation*}\label{ex:tm(C->BCa)}
\left[
\begin{array}{cc|cc|cc|cc}
1	&	\zeta_0\tau_0	&0	&0	&0	&0	&0	&0	\\
1	&	\zeta_0	&0	&0	&0	&0	&0	&0	\\
\hline
0	&0	&	1	&	\zeta_1\tau_1	&0	&0	&0	&0	\\
 0	&0	&	1	&	\zeta_1	&0	&0	&0	&0	\\
\hline
0	&0	&0	&0	&	1	&	\zeta_2\tau_2	&0	& 0	\\
 0	&0	&0	&0	&	1	&	\zeta_2	&0	&0	\\
\hline
0	&0	&0	&0	&0	&0	&	1	&	\zeta_3\tau_3\\
0	&0	&0	&0	&0	&0	&	1	&	\zeta_3
\end{array}
\right],
\end{equation*}
where $\{\zeta_i\}^3_{i=0}$ and $\{\tau_i\}^3_{i=0}$ are from (\ref{zeta;tau}).
Moreover,
\begin{equation}\label{zeta/tau}
\zeta_i \tau_i = q^{-i}s^*(1-r_1q^{i+1})(1-r_2q^{i+1}), \qquad (0 \leq i \leq 3)
\end{equation}

\noindent
The transition matrix from $\wt{\mcal{B}}_{alt}$ to $\mathcal{C}$ is
\begin{equation*}\label{ex:tm(BCa->C)}
\left[
\begin{array}{cc|cc|cc|cc}
\frac{1}{1-\tau_0}	&	\frac{\tau_0}{\tau_0-1}	&0	&0&0	&0	&0	&0\\
\frac{1}{\zeta_0(\tau_0-1)}	&	\frac{1}{\zeta_0(1-\tau_0)}	&0	&0	&0	&0	&0	&0\\
\hline
0&0	&	\frac{1}{1-\tau_1}	&	\frac{\tau_1}{\tau_1-1}	&0	&0	&0	&0\\
0&0	&	\frac{1}{\zeta_1(\tau_1-1)}&	\frac{1}{\zeta_1(1-\tau_1)}&0	&0	&0	&0\\
\hline
0&0	&0	&0	&\frac{1}{1-\tau_2}	&\frac{\tau_2}{\tau_2-1}	&0	&0\\
0&0	&0	&0	&\frac{1}{\zeta_2(\tau_2-1)}	&	\frac{1}{\zeta_2(1-\tau_2)}	&0	&0\\
\hline
0&0	&0	&0	&0	&0	&\frac{1}{1-\tau_3}	&\frac{\tau_3}{\tau_3-1}\\
0&0	&0	&0	&0	&0	&\frac{1}{\zeta_3(\tau_3-1)}	&\frac{1}{\zeta_3(1-\tau_3)}\\
\end{array}
\right].
\end{equation*}
Moreover for $0 \leq i \leq 3$,
\begin{equation}\label{zeta/tau-1}
\begin{tabular}{lll}
$\tfrac{1}{1-\tau_i} = \tfrac{(r_1-s^*q^{i+1})(r_2-s^*q^{i+1})}{(r_1r_2-s^*)(1-s^*q^{2i+2})}$,
& \qquad $\begin{matrix} & \\ & \end{matrix}$&
$\tfrac{\tau_i}{\tau_i-1} = \tfrac{s^*(1-r_1q^{i+1})(1-r_2q^{i+1})}{(s^*-r_1r_2)(1-s^*q^{2i+2})}$,\\
$\tfrac{1}{\zeta_i(\tau_i-1)}	  =	\tfrac{q^i}{(s^*-r_1r_2)(1-s^*q^{2i+2})},$
& \qquad $\begin{matrix} & \\ & \end{matrix}$ &
$\tfrac{1}{\zeta_i(1-\tau_i)}	 = 	\tfrac{q^i}{(r_1r_2-s^*)(1-s^*q^{2i+2})}.$
\end{tabular}
\end{equation}
\end{example}

\bigskip
\begin{example}\label{ex:tm(Bxa<->BCa)}
The transition matrix from $\mcal{B}_{alt}$ to $\wt{\mcal{B}}_{alt}$ is
\begin{equation*}\label{ex:tm(Bxa->BCa)}
\left[
\begin{array}{cc|cc|cc|cc}
1	&\zeta_0\tau_0 &0&0&0&0&0&0\\

\hline

\frac{\epsilon_1}{\epsilon_1-1}	
&\frac{\epsilon_1\zeta_0}{\epsilon_1-1}	
&\frac{1}{1-\epsilon_1}	
&\frac{\zeta_1\tau_1}{1-\epsilon_1}	
&0&0&0&0\\


\frac{1}{\xi_1(1-\epsilon_1)}	
&\frac{\zeta_0}{\xi_1(1-\epsilon_1)}	
&\frac{1}{\xi_1(\epsilon_1-1)}	
&\frac{\zeta_1\tau_1}{\xi_1(\epsilon_1-1)}	
&0&0&0&0\\

\hline

0&0&\frac{\epsilon_2}{\epsilon_2-1}	
&\frac{\epsilon_2\zeta_1}{\epsilon_2-1}	
&\frac{1}{1-\epsilon_2}	
&\frac{\zeta_2\tau_2}{1-\epsilon_2}	
&0&0\\


0&0&\frac{1}{\xi_2(1-\epsilon_2)}	
&\frac{\zeta_1}{\xi_2(1-\epsilon_2)}	
&\frac{1}{\xi_2(\epsilon_2-1)}	
&\frac{\zeta_2\tau_2}{\xi_2(\epsilon_2-1)}	
&0&0\\

\hline

0&0&0&0&\frac{\epsilon_3}{\epsilon_3-1}	
&\frac{\epsilon_3\zeta_2}{\epsilon_3-1}	
&\frac{1}{1-\epsilon_3}	
&\frac{\zeta_3\tau_3}{1-\epsilon_3}	
\\


0&0&0&0&\frac{1}{\xi_3(1-\epsilon_3)}	
&\frac{\zeta_2}{\xi_3(1-\epsilon_3)}	
&\frac{1}{\xi_3(\epsilon_3-1)}	
&\frac{\zeta_3\tau_3}{\xi_3(\epsilon_3-1)}	
\\

\hline

0&0&0&0&0&0&
1 & \zeta_3
\end{array}
\right],
\end{equation*}
where $\{\xi_i\}^3_{i=1}, \{\epsilon_i\}^3_{i=1}$ are from (\ref{xi;epsilon})
and $\{\zeta_i\}^3_{i=0}, \{\tau_i\}^3_{i=0}$ are from (\ref{zeta;tau}).
Moreover

\begin{equation}\label{xi/ep/zeta/tau}
\left.
\begin{tabular}{lll}
$\tfrac{\epsilon_i}{\epsilon_i-1} 
= \tfrac{(1-q^i)(1-s^*q^{i+5})}{(1-q^4)(1-s^*q^{2i+1})},$
& \qquad $\begin{matrix} & \\ & \end{matrix}$& 
$\tfrac{\epsilon_i\zeta_{i-1}}{\epsilon_i-1}
= \tfrac{q^{-i+1}(1-q^i)(1-s^*q^{i+5})(r_1-s^*q^i)(r_2-s^*q^i)}{(1-q^4)(1-s^*q^{2i+1})},$ \\
$\tfrac{1}{\xi_i(1-\epsilon_i)} 
= \tfrac{q^{3+i}}{(q^4-1)(1-s^*q^{2i+1})},$
& \qquad $\begin{matrix} & \\ & \end{matrix}$& 
$\tfrac{\zeta_{i-1}}{\xi_i(1-\epsilon_i)} 
= \tfrac{q^{4}(r_1-s^*q^i)(r_2-s^*q^i)}{(q^4-1)(1-s^*q^{2i+1})},$ \\
$\tfrac{1}{1-\epsilon_i} 
= \tfrac{q^4(1-q^{i-4})(1-s^*q^{i+1})}{(q^4-1)(1-s^*q^{2i+1})},$
& \qquad $\begin{matrix} & \\ & \end{matrix}$& 
$\tfrac{\zeta_i\tau_i}{1-\epsilon_i}
= \tfrac{s^*q^{4-i}(1-q^{i-4})(1-s^*q^{i+1})(1-r_1q^{i+1})(1-r_2q^{i+1})}
			{(q^4-1)(1-s^*q^{2i+1})}, $ \\
$\tfrac{1}{\xi_i(\epsilon_i-1)} 
= \tfrac{q^{3+i}}{(1-q^4)(1-s^*q^{2i+1})}, $ 
& \qquad $\begin{matrix} & \\ & \end{matrix}$& 
$\tfrac{\zeta_i\tau_i}{\xi_i(\epsilon_i-1)} 
= \tfrac{s^*q^{3}(1-r_1q^{i+1})(1-r_2q^{i+1})}{(1-q^4)(1-s^*q^{2i+1})}. $
\end{tabular}
\right\}
\end{equation}

\medskip \noindent
The transition matrix from $\wt{\mcal{B}}_{alt}$ to $\mcal{B}_{alt}$ is
\begin{equation*}\label{ex:tm(BCa->Bxa)}
\left[
\begin{array}{c|cc|cc|cc|c}
\frac{1}{1-\tau_0}
&\frac{\tau_0}{\tau_0-1}
&\frac{\tau_0\xi_1}{\tau_0-1}
&0&0&0&0&0\\

\frac{1}{\zeta_0(\tau_0-1)}
&\frac{1}{\zeta_0(1-\tau_0)}
&\frac{\xi_1}{\zeta_0(1-\tau_0)}
&0&0&0&0&0\\

\hline

0&\frac{1}{1-\tau_1}
&\frac{\xi_1\epsilon_1}{1-\tau_1}
&\frac{\tau_1}{\tau_1-1}
&\frac{\tau_1\xi_2}{\tau_1-1}
&0&0&0\\

0&\frac{1}{\zeta_1(\tau_1-1)}
&\frac{\xi_1\epsilon_1}{\zeta_1(\tau_1-1)}
&\frac{1}{\zeta_1(1-\tau_1)}
&\frac{\xi_2}{\zeta_1(1-\tau_1)}
&0&0&0\\

\hline

0&0&0&\frac{1}{1-\tau_2}
&\frac{\xi_2\epsilon_2}{1-\tau_2}
&\frac{\tau_2}{\tau_2-1}
&\frac{\tau_2\xi_3}{\tau_2-1}
&0\\

0&0&0&\frac{1}{\zeta_2(\tau_2-1)}
&\frac{\xi_2\epsilon_2}{\zeta_2(\tau_2-1)}
&\frac{1}{\zeta_2(1-\tau_2)}
&\frac{\xi_3}{\zeta_2(1-\tau_2)}
&0\\

\hline

0&0&0&0&0&\frac{1}{1-\tau_3}
&\frac{\xi_3\epsilon_3}{1-\tau_3}
&\frac{\tau_3}{\tau_3-1}
\\

0&0&0&0&0&\frac{1}{\zeta_3(\tau_3-1)}
&\frac{\xi_3\epsilon_3}{\zeta_3(\tau_3-1)}
&\frac{1}{\zeta_3(1-\tau_3)}
\\
\end{array}
\right],
\end{equation*}
where $\{\xi_i\}^3_{i=1}, \{\epsilon_i\}^3_{i=1}$ are from (\ref{xi;epsilon})
and $\{\zeta_i\}^3_{i=0}, \{\tau_i\}^3_{i=0}$ are from (\ref{zeta;tau}).
Moreover

\begin{equation}\label{zeta/tau/xi/ep}
\left.
\begin{tabular}{lll}
$\tfrac{1}{1-\tau_i} 
= \tfrac{(r_1-s^*q^{i+1})(r_2-s^*q^{i+1})}
			{(r_1r_2-s^*)(1-s^*q^{2i+2})},$
& \qquad $\begin{matrix} & \\ & \end{matrix}$& 
$\tfrac{\xi_i\epsilon_i}{1-\tau_i}
= \tfrac{q^{-i-3}(1-q^i)(1-s^*q^{i+5})(r_1-s^*q^{i+1})(r_2-s^*q^{i+1})}
	{(1-s^*q^{2i+2})(r_1r_2-s^*)},$ \\
$\tfrac{1}{\zeta_i(\tau_i-1)} 
= \tfrac{q^i}{(s^*-r_1r_2)(1-s^*q^{2i+2})}, $
& \qquad $\begin{matrix} & \\ & \end{matrix}$& 
$\tfrac{\xi_i\epsilon_i}{\zeta_i(\tau_i-1)} 
= \tfrac{q^{-3}(1-q^i)(1-s^*q^{i+5})}{(1-s^*q^{2i+2})(s^*-r_1r_2)},$ \\
$\tfrac{\tau_i}{\tau_i-1} 
 =  \tfrac{s^*(1-r_1q^{i+1})(1-r_2q^{i+1})}
				{(s^*-r_1r_2)(1-s^*q^{2i+2})},$
& \qquad $\begin{matrix} & \\ & \end{matrix}$& 
$\tfrac{\tau_i\xi_{i+1}}{\tau_i-1} 
= \tfrac{s^*q^{-i}(1-q^{i-3})(1-s^*q^{i+2})(1-r_1q^{i+1})(1-r_2q^{i+1})}
	{(1-s^*q^{2i+2})(s^*-r_1r_2)},$ \\
$\tfrac{1}{\zeta_i(1-\tau_i)} 
=	\tfrac{q^i}{(r_1r_2-s^*)(1-s^*q^{2i+2})}, $
& \qquad $\begin{matrix} & \\ & \end{matrix}$&
$\tfrac{\xi_{i+1}}{\zeta_i(1-\tau_i)} 
= \tfrac{(1-q^{i-3})(1-s^*q^{i+2})}{(1-s^*q^{2i+2})(r_1r_2-s^*)}.$
\end{tabular}
\right\}
\end{equation}
\end{example}

\medskip
\begin{example}\label{ex:tm(Bx<->Bxa)}
The transition matrix from $\mcal{B}$ to $\mcal{B}_{alt}$ is
\begin{equation*}\label{ex:tm(Bx->Bxa)}
\left[
\begin{array}{c|cc|cc|cc|c}
1 &0&0&0&0&0&0&0\\
\hline
0& 1 &0&0&0&0&0&0\\
0&0&0&1&0&0&0&0\\
\hline
0&0&0&0&0&1&0&0\\
0&0&0&0&0&0&0&1\\
\hline
0&0&1&0&0&0&0&0\\
0&0&0&0&1&0 &0&0\\
\hline
0&0&0&0&0&0&1&0
\end{array}
\right].
\end{equation*}

\noindent
The transition matrix from $\mcal{B}_{alt}$ to $\mcal{B}$ is
\begin{equation*}\label{ex:tm(Bxa->Bx)}
\left[
\begin{array}{c|cc|cc|cc|c}
1&0&0&0&0&0&0&0\\
\hline
0&1&0&0&0&0&0&0\\
0&0&0&0&0&1&0&0\\
\hline
0&0&1&0&0&0&0&0\\
0&0&0&0&0&0&1&0\\
\hline
0&0&0&1&0&0&0&0\\
0&0&0&0&0&0&0&1\\
\hline
0&0&0&0&1&0&0&0
\end{array}
\right].
\end{equation*}
\end{example}

\bigskip
\begin{example}\label{ex:tm(BC<->BCa)}
The transition matrix from $\wt{\mathcal{B}}$ to $\wt{\mcal{B}}_{alt}$ is
\begin{equation*}\label{ex:tm(BC->BCa)}
\left[
\begin{array}{cc|cc|cc|cc}
1&0&0&0&0&0&0&0\\
0&0&1&0&0&0&0&0\\
\hline
0&0&0&0&1&0&0&0\\
0&0&0&0&0&0&1&0\\
\hline
0&1&0&0&0&0&0&0\\
0&0&0&1&0&0&0&0\\
\hline
0&0&0&0&0&1&0&0\\
0&0&0&0&0&0&0&1
\end{array}
\right].
\end{equation*}
The transition matrix from $\wt{\mcal{B}}_{alt}$ to $\wt{\mathcal{B}}$ is
\begin{equation*}\label{ex:tm(BCa->BC)}
\left[
\begin{array}{cc|cc|cc|cc}
1&0&0&0&0&0&0&0\\
0&0&0&0&1&0&0&0\\
\hline
0&1&0&0&0&0&0&0\\
0&0&0&0&0&1&0&0\\
\hline
0&0&1&0&0&0&0&0\\
0&0&0&0&0&0&1&0\\
\hline
0&0&0&1&0&0&0&0\\
0&0&0&0&0&0&0&1\\
\end{array}
\right].
\end{equation*}
\end{example}

\medskip
\begin{example}\label{ex:tm(C<->Bx)}
The transition matrix from $\mathcal{C}$ to $\mcal{B}$ is
\begin{eqnarray*}
\left[
\begin{array}{c|cc|cc|cc|c}
1	&0	&0	&0	&0	&0	&0	&0\\
\hline
0	& 1	&0&0&0& \xi_1	&0	&0\\
0	& 1	&0&0&0& \xi_1\epsilon_1	&0	&0	\\
\hline
0	&0 	& 1	&0&0&0& \xi_2	&0\\
0	&0 	& 1	&0&0&0& \xi_2\epsilon_2	&0\\
\hline
0	&0	&0	& 1	&0&0&0& \xi_3		\\
0	&0	&0	& 1	&0&0&0& \xi_3\epsilon_3	\\
\hline
0	&0	&0	&0	& 1	&0&0&0\\
\end{array}
\right],
\end{eqnarray*}
where $\{\xi_i\}^3_{i=1}$ and $\{\xi_i\epsilon_i\}^3_{i=1}$ 
are from (\ref{xi;epsilon}) and (\ref{xi/epsilon}), respectively.

\medskip \noindent
The transition matrix from $\mcal{B}$ to $\mathcal{C}$ is 
\begin{align*}\label{ex:tm(Bx->C)}
\left[
\begin{array}{c|cc|cc|cc|c}
1 	&0	&0	&0	&0	&0	&0	&0\\
\hline
0	&\frac{\epsilon_1}{\epsilon_1-1} & \frac{1}{1-\epsilon_1}	&0	&0	&0	&0	&0\\ 
0	&0	&0	&	\frac{\epsilon_2}{\epsilon_2-1}	& \frac{1}{1-\epsilon_2}	&0	&0	&0 \\	
\hline
0	&0	&0	&0	&0	&	\frac{\epsilon_3}{\epsilon_3-1}	& \frac{1}{1-\epsilon_3} &0\\
0	&0	&0	&0	&0	&0	&0	& 1\\
\hline
0	&\frac{1}{\xi_1(1-\epsilon_1)} 	& \frac{1}{\xi_1(\epsilon_1-1)}	&0	&0	&0	&0	&0\\
0	&0	&0	&	\frac{1}{\xi_2(1-\epsilon_2)} 	& \frac{1}{\xi_2(\epsilon_2-1)}	&0	&0	&0\\
\hline
0	&0	&0	&0	&0	&	\frac{1}{\xi_3(1-\epsilon_3)}		& \frac{1}{\xi_3(\epsilon_3-1)}	&0
\end{array}
\right].
\end{align*}
Each entry is from (\ref{xi/epsilon-1}).
\end{example}

\begin{example}
The transition matrix from $\mathcal{C}$ to $\wt{\mathcal{B}}$ is
\begin{align*}
\left[
\begin{array}{cccc|cccc}
1	&	0&0 &0 &\zeta_0\tau_0	&0	&0&0\\
1	&	0&0 &0 &\zeta_0	&0	&0	&0	\\
\hline
0	&	1	&0 &0 &0 &\zeta_1\tau_1	&0	&0\\
0	&	1	&0 &0 &0 &\zeta_1	&0	&0\\
\hline
0	&	0	&	1	&0	&0 &0 &\zeta_2\tau_2	&0	\\
0	&	0	&	1	&0	&0 &0 &\zeta_2	&0\\
\hline
0	&	0	&0	&	1	&0 &0 &0 &	\zeta_3\tau_3\\
0	&	0	&0	&	1	&0 &0 &0 &	\zeta_3
\end{array}
\right],
\end{align*}
where $\{\zeta_i\}^3_{i=0}$ and $\{\zeta_i\tau_i\}^3_{i=0}$ 
are from (\ref{zeta;tau}) and (\ref{zeta/tau}), respectively.

\medskip \noindent
The transition matrix from $\wt{\mathcal{B}}$ to $\mathcal{C}$ is 
\begin{equation*}\label{ex:tm(BC->C)}
\left[
\begin{array}{cc|cc|cc|cc}
\frac{1}{1-\tau_0}	&	\frac{\tau_0}{\tau_0-1}	&0	&0	&0	&0	&0	&0\\
0&0	&	\frac{1}{1-\tau_1}	&	\frac{\tau_1}{\tau_1-1}	&0	&0	&0	&0\\
0&0	&0	&0	&\frac{1}{1-\tau_2}	&\frac{\tau_2}{\tau_2-1}	&0	&0\\
0&0	&0	&0	&0	&0	&\frac{1}{1-\tau_3}	&\frac{\tau_3}{\tau_3-1}\\
\hline
\frac{1}{\zeta_0(\tau_0-1)}	&	\frac{1}{\zeta_0(1-\tau_0)}	&0	&0	&0	&0	&0	&0\\
0&0	&	\frac{1}{\zeta_1(\tau_1-1)}&	\frac{1}{\zeta_1(1-\tau_1)}&0	&0	&0	&0\\
0&0	&0	&0	&\frac{1}{\zeta_2(\tau_2-1)}	&	\frac{1}{\zeta_2(1-\tau_2)}	&0	&0\\
0&0	&0	&0	&0	&0	&\frac{1}{\zeta_3(\tau_3-1)}	&\frac{1}{\zeta_3(1-\tau_3)}\\
\end{array}
\right].
\end{equation*}
Each entry is from (\ref{zeta/tau-1}).
\end{example}

\begin{example}\label{ex:tm(Bx<->BCa)}
The transition matrix from $\mcal{B}$ to $\wt{\mcal{B}}_{alt}$ is
\begin{equation*}\label{ex:tm(Bx->BCa)}
\left[
\begin{array}{cc|cc|cc|cc}
1	&\zeta_0\tau_0	
&0&0&0&0&0&0
\\
\frac{\epsilon_1}{\epsilon_1-1}	
&\frac{\epsilon_1\zeta_0}{\epsilon_1-1}	
&\frac{1}{1-\epsilon_1}	
&\frac{\zeta_1\tau_1}{1-\epsilon_1}	
&0&0&0&0
\\
0&0&\frac{\epsilon_2}{\epsilon_2-1}	
&\frac{\epsilon_2\zeta_1}{\epsilon_2-1}	
&\frac{1}{1-\epsilon_2}	
&\frac{\zeta_2\tau_2}{1-\epsilon_2}	
&0&0
\\
0&0&0&0&\frac{\epsilon_3}{\epsilon_3-1}	
&\frac{\epsilon_3\zeta_2}{\epsilon_3-1}	
&\frac{1}{1-\epsilon_3}	
&\frac{\zeta_3\tau_3}{1-\epsilon_3}
\\
0&0&0&0&0&0&
1 & \zeta_3
\\
\hline
\frac{1}{\xi_1(1-\epsilon_1)}	
&\frac{\zeta_0}{\xi_1(1-\epsilon_1)}	
&\frac{1}{\xi_1(\epsilon_1-1)}	
&\frac{\zeta_1\tau_1}{\xi_1(\epsilon_1-1)}	
&0&0&0&0
\\
0&0&\frac{1}{\xi_2(1-\epsilon_2)}	
&\frac{\zeta_1}{\xi_2(1-\epsilon_2)}	
&\frac{1}{\xi_2(\epsilon_2-1)}	
&\frac{\zeta_2\tau_2}{\xi_2(\epsilon_2-1)}	
&0&0	
\\
0&0&0&0&\frac{1}{\xi_3(1-\epsilon_3)}	
&\frac{\zeta_2}{\xi_3(1-\epsilon_3)}	
&\frac{1}{\xi_3(\epsilon_3-1)}	
&\frac{\zeta_3\tau_3}{\xi_3(\epsilon_3-1)}	
\end{array}
\right].
\end{equation*}
Each entry is from (\ref{xi/ep/zeta/tau}).

\medskip \noindent
The transition matrix from $\wt{\mcal{B}}_{alt}$ to $\mcal{B}$ is 
\begin{equation*}\label{ex:tm(BCa->Bx)}
\left[
\begin{array}{ccccc|ccc}
\frac{1}{1-\tau_0}
&\frac{\tau_0}{\tau_0-1}
&0&0&0&\frac{\tau_0\xi_1}{\tau_0-1}
&0&0
\\
\frac{1}{\zeta_0(\tau_0-1)}
&\frac{1}{\zeta_0(1-\tau_0)}
&0&0&0&\frac{\xi_1}{\zeta_0(1-\tau_0)}
&0&0
\\
\hline
0&\frac{1}{1-\tau_1}
&\frac{\tau_1}{\tau_1-1}
&0&0&\frac{\xi_1\epsilon_1}{1-\tau_1}
&\frac{\tau_1\xi_2}{\tau_1-1}
&0
\\
0&\frac{1}{\zeta_1(\tau_1-1)}
&\frac{1}{\zeta_1(1-\tau_1)}
&0&0&\frac{\xi_1\epsilon_1}{\zeta_1(\tau_1-1)}
&\frac{\xi_2}{\zeta_1(1-\tau_1)}
&0
\\
\hline
0&0&\frac{1}{1-\tau_2}
&\frac{\tau_2}{\tau_2-1}
&0&0&\frac{\xi_2\epsilon_2}{1-\tau_2}
&\frac{\tau_2\xi_3}{\tau_2-1}
\\
0&0&\frac{1}{\zeta_2(\tau_2-1)}
&\frac{1}{\zeta_2(1-\tau_2)}
&0&0&\frac{\xi_2\epsilon_2}{\zeta_2(\tau_2-1)}
&\frac{\xi_3}{\zeta_2(1-\tau_2)}
\\
\hline
0&0&0&\frac{1}{1-\tau_3}
&\frac{\tau_3}{\tau_3-1}
&0&0&\frac{\xi_3\epsilon_3}{1-\tau_3}
\\
0&0&0&\frac{1}{\zeta_3(\tau_3-1)}
&\frac{1}{\zeta_3(1-\tau_3)}
&0&0&\frac{\xi_3\epsilon_3}{\zeta_3(\tau_3-1)}
\\
\end{array}
\right].
\end{equation*}
Each entry is from (\ref{zeta/tau/xi/ep}).
\end{example}

\begin{example}\label{ex:tm(Bxa<->BC)}
The transition from $\mcal{B}_{alt}$ to $\wt{\mathcal{B}}$ is
\begin{equation*}\label{ex:tm(Bxa->BC)}
\left[
\begin{array}{cccc|cccc}
1	&0&0&0&\zeta_0\tau_0	
&0&0&0
\\ \hline
\frac{\epsilon_1}{\epsilon_1-1}	
&\frac{1}{1-\epsilon_1}
&0&0&\frac{\epsilon_1\zeta_0}{\epsilon_1-1}		
&\frac{\zeta_1\tau_1}{1-\epsilon_1}	
&0&0
\\
\frac{1}{\xi_1(1-\epsilon_1)}	
&\frac{1}{\xi_1(\epsilon_1-1)}	
&0&0&\frac{\zeta_0}{\xi_1(1-\epsilon_1)}	
&\frac{\zeta_1\tau_1}{\xi_1(\epsilon_1-1)}	
&0&0
\\
\hline
0&\frac{\epsilon_2}{\epsilon_2-1}	
&\frac{1}{1-\epsilon_2}	
&0&0&\frac{\epsilon_2\zeta_1}{\epsilon_2-1}	
&\frac{\zeta_2\tau_2}{1-\epsilon_2}	
&0
\\
0&\frac{1}{\xi_2(1-\epsilon_2)}	
&\frac{1}{\xi_2(\epsilon_2-1)}	
&0&0&\frac{\zeta_1}{\xi_2(1-\epsilon_2)}	
&\frac{\zeta_2\tau_2}{\xi_2(\epsilon_2-1)}	
&0
\\ \hline
0&0&\frac{\epsilon_3}{\epsilon_3-1}	
&\frac{1}{1-\epsilon_3}
&0&0&\frac{\epsilon_3\zeta_2}{\epsilon_3-1}		
&\frac{\zeta_3\tau_3}{1-\epsilon_3}	
\\
0&0&\frac{1}{\xi_3(1-\epsilon_3)}	
&\frac{1}{\xi_3(\epsilon_3-1)}	
&0&0&\frac{\zeta_2}{\xi_3(1-\epsilon_3)}	
&\frac{\zeta_3\tau_3}{\xi_3(\epsilon_3-1)}	
\\ \hline
0&0&0&
1 &0&0&0& \zeta_3
\end{array}
\right].
\end{equation*}
Each entry is from (\ref{xi/ep/zeta/tau}).

\medskip \noindent
The transition matrix from $\wt{\mathcal{B}}$ to $\mcal{B}_{alt}$ is
\begin{equation*}\label{ex:tm(BC->Bxa)}
\left[
\begin{array}{c|cc|cc|cc|c}
\frac{1}{1-\tau_0}
&\frac{\tau_0}{\tau_0-1}
&\frac{\tau_0\xi_1}{\tau_0-1}
&0&0&0&0&0
\\
0&\frac{1}{1-\tau_1}
&\frac{\xi_1\epsilon_1}{1-\tau_1}
&\frac{\tau_1}{\tau_1-1}
&\frac{\tau_1\xi_2}{\tau_1-1}
&0&0&0
\\
0&0&0&\frac{1}{1-\tau_2}
&\frac{\xi_2\epsilon_2}{1-\tau_2}
&\frac{\tau_2}{\tau_2-1}
&\frac{\tau_2\xi_3}{\tau_2-1}
&0
\\
0&0&0&0&0&\frac{1}{1-\tau_3}
&\frac{\xi_3\epsilon_3}{1-\tau_3}
&\frac{\tau_3}{\tau_3-1}
\\
\hline
\frac{1}{\zeta_0(\tau_0-1)}
&\frac{1}{\zeta_0(1-\tau_0)}
&\frac{\xi_1}{\zeta_0(1-\tau_0)}
&0&0&0&0&0
\\
0&\frac{1}{\zeta_1(\tau_1-1)}
&\frac{\xi_1\epsilon_1}{\zeta_1(\tau_1-1)}
&\frac{1}{\zeta_1(1-\tau_1)}
&\frac{\xi_2}{\zeta_1(1-\tau_1)}
&0&0&0
\\
0&0&0&\frac{1}{\zeta_2(\tau_2-1)}
&\frac{\xi_2\epsilon_2}{\zeta_2(\tau_2-1)}
&\frac{1}{\zeta_2(1-\tau_2)}
&\frac{\xi_3}{\zeta_2(1-\tau_2)}
&0
\\
0&0&0&0&0&\frac{1}{\zeta_3(\tau_3-1)}
&\frac{\xi_3\epsilon_3}{\zeta_3(\tau_3-1)}
&\frac{1}{\zeta_3(1-\tau_3)}
\end{array}
\right].
\end{equation*}
Each entry is from (\ref{zeta/tau/xi/ep}).
\end{example}

\medskip
\begin{example}\label{ex:tm(Bx<->BC)}
The transition matrix from $\mcal{B}$ and $\wt{\mathcal{B}}$
is
\begin{equation*}\label{ex:tm(Bx->BC)}
\left[
\begin{array}{cccc|cccc}
1	&0&0&0&\zeta_0\tau_0	
&0&0&0
\\
\frac{\epsilon_1}{\epsilon_1-1}	
&\frac{1}{1-\epsilon_1}
&0&0&\frac{\epsilon_1\zeta_0}{\epsilon_1-1}		
&\frac{\zeta_1\tau_1}{1-\epsilon_1}	
&0&0
\\
0&\frac{\epsilon_2}{\epsilon_2-1}	
&\frac{1}{1-\epsilon_2}	
&0&0&\frac{\epsilon_2\zeta_1}{\epsilon_2-1}	
&\frac{\zeta_2\tau_2}{1-\epsilon_2}	
&0
\\
0&0&\frac{\epsilon_3}{\epsilon_3-1}	
&\frac{1}{1-\epsilon_3}
&0&0&\frac{\epsilon_3\zeta_2}{\epsilon_3-1}		
&\frac{\zeta_3\tau_3}{1-\epsilon_3}	
\\
0&0&0&
1 &0&0&0& \zeta_3
\\
\hline
\frac{1}{\xi_1(1-\epsilon_1)}	
&\frac{1}{\xi_1(\epsilon_1-1)}	
&0&0&\frac{\zeta_0}{\xi_1(1-\epsilon_1)}	
&\frac{\zeta_1\tau_1}{\xi_1(\epsilon_1-1)}	
&0&0
\\
0&\frac{1}{\xi_2(1-\epsilon_2)}	
&\frac{1}{\xi_2(\epsilon_2-1)}	
&0&0&\frac{\zeta_1}{\xi_2(1-\epsilon_2)}	
&\frac{\zeta_2\tau_2}{\xi_2(\epsilon_2-1)}	
&0
\\
0&0&\frac{1}{\xi_3(1-\epsilon_3)}	
&\frac{1}{\xi_3(\epsilon_3-1)}	
&0&0&\frac{\zeta_2}{\xi_3(1-\epsilon_3)}	
&\frac{\zeta_3\tau_3}{\xi_3(\epsilon_3-1)}	
\end{array}
\right].
\end{equation*}
Each entry is from (\ref{xi/ep/zeta/tau}).

\medskip \noindent
The transition matrix from $\wt{\mathcal{B}}$ to $\mcal{B}$ is 
\begin{equation*}\label{ex:tm(BC->Bx)}
\left[
\begin{array}{ccccc|ccc}
\frac{1}{1-\tau_0}
&\frac{\tau_0}{\tau_0-1}
&0&0&0&\frac{\tau_0\xi_1}{\tau_0-1}
&0&0
\\
0&\frac{1}{1-\tau_1}
&\frac{\tau_1}{\tau_1-1}
&0&0&\frac{\xi_1\epsilon_1}{1-\tau_1}
&\frac{\tau_1\xi_2}{\tau_1-1}
&0
\\
0&0&\frac{1}{1-\tau_2}
&\frac{\tau_2}{\tau_2-1}
&0&0&\frac{\xi_2\epsilon_2}{1-\tau_2}
&\frac{\tau_2\xi_3}{\tau_2-1}
\\
0&0&0&\frac{1}{1-\tau_3}
&\frac{\tau_3}{\tau_3-1}
&0&0&\frac{\xi_3\epsilon_3}{1-\tau_3}
\\
\hline
\frac{1}{\zeta_0(\tau_0-1)}
&\frac{1}{\zeta_0(1-\tau_0)}
&0&0&0&\frac{\xi_1}{\zeta_0(1-\tau_0)}
&0&0
\\
0&\frac{1}{\zeta_1(\tau_1-1)}
&\frac{1}{\zeta_1(1-\tau_1)}
&0&0&\frac{\xi_1\epsilon_1}{\zeta_1(\tau_1-1)}
&\frac{\xi_2}{\zeta_1(1-\tau_1)}
&0
\\
0&0&\frac{1}{\zeta_2(\tau_2-1)}
&\frac{1}{\zeta_2(1-\tau_2)}
&0&0&\frac{\xi_2\epsilon_2}{\zeta_2(\tau_2-1)}
&\frac{\xi_3}{\zeta_2(1-\tau_2)}
\\
0&0&0&\frac{1}{\zeta_3(\tau_3-1)}
&\frac{1}{\zeta_3(1-\tau_3)}
&0&0&\frac{\xi_3\epsilon_3}{\zeta_3(\tau_3-1)}
\\
\end{array}
\right].
\end{equation*}
Each entry is from (\ref{zeta/tau/xi/ep}).
\end{example}
\noindent
We are done with display of transition matrices.

\medskip \noindent
For the rest of this section, we do following.
Referring to (\ref{5maps}) and (\ref{5bases}),
we give the matrix representing each map in (\ref{5maps})
relative to each basis in (\ref{5bases}).
We start with $A$. Recall the formulae
$b_i, c_i$ from (\ref{b_0;q-terms})--(\ref{c_d;q-terms}) and 
$\tb_i, \tc_i$ from (\ref{be_0;q-term})--(\ref{ga_(d-1);q-term}),
and further
$\bp_i, \cp_i$ from (\ref{b0;h,s,...})--(\ref{c(d-2);h,s,...}) and
$\tbp_i, \tcp_i$ from (\ref{tbp0;h,s,...})--(\ref{tc_(d-1);h,s,...}).

\begin{example}
The matrix representing $A$ relative to the basis $\mathcal{C}$ is 
\begin{equation*}\label{ex:[A]_C}
\footnotesize{
\left[
\begin{array}{cc|cc|cc|cc}
\ta_0 - b_0 + \tb_0	& b_0-\tb_0	& \tb_0	& 0	 & 0&0 &0 & 0\\
c_{1}-\tc_{0}		& \ta_0-c_{1}+\tc_{0}	&\tb_0-b_1	& b_1	&0 &0 &0 &0 \\
\hline
c_1	&	\tc_1-c_1	& \ta_1 - b_1 + \tb_1	& b_1-\tb_1 	& \tb_1	& 0 & 0&0\\
0	&	\tc_1	&	c_{2}-\tc_{1}	& \ta_1-c_{2}+\tc_{1}	&\tb_1-b_2	& b_2 &0&0\\
\hline
0&0 & c_2	&	\tc_2-c_2	& \ta_2 - b_2 + \tb_2	& b_2-\tb_2 	& \tb_2 & 0\\
0 &0 & 0&	\tc_2	& c_{3}-\tc_{2} & \ta_2-c_{3}+\tc_{2}	& \tb_2-b_3	& b_3 \\
\hline
0&0 &0 &0 & 	c_3	&	\tc_3-c_3	& \ta_3 - b_3 + \tb_3	& b_3-\tb_3\\ 
0 &0 &0 &0 & 0	&	\tc_3	& c_{4}-\tc_{3}		& \ta_3-c_{4}+\tc_{3}
\end{array}\right].
}
\end{equation*}

\ \\ \noindent
The matrix representing $A$ relative to the basis $\mcal{B}$ is
\begin{equation*}\label{ex:[A]_Bx}
\left[
\begin{array}{ccccc|ccc}
a_0	& b_0	&0	&0	&0	&0	&0	& 0\\
c_1		& a_1	& b_1	&0	&0	&0	&0	&0\\
0		& c_2	& a_2	& b_2	&0	&0	&0	&0\\
0		&0		& c_3	& a_3 	& b_3	&0	&0	&0\\
0		&0		&0		& c_4	& a_4	&0 	&0	&0\\
\hline
0	&0		&0		&0		&0		& \ap_0& \bp_0&0\\
0	&0		&0		&0		&0		& \cp_1	& \ap_1& \bp_1\\
0	&0		&0		&0		&0		&	0		& \cp_2	& \ap_{2} \\
\end{array}
\right].
\end{equation*}

\ \\ \noindent
The matrix representing $A$ relative to $\mcal{B}_{alt}$ is
\begin{equation*}\label{ex:[A]_Bxa}
\left[
\begin{array}{c|cc|cc|cc|c}
a_0 & b_0 & 0 & 0 &0&0&0&0\\
\hline
c_1 & a_1 & 0 & b_1&0&0&0&0\\
0 & 0 & \ap_0 & 0 & \bp_0 & 0 &0&0\\
\hline
0 & c_2 & 0 & a_2 & 0 & b_2 &0&0\\
0&0& \cp_1 & 0 & \ap_1 & 0 & \bp_1 &0\\
\hline
0&0& 0 & c_3 & 0 & a_3 & 0 & b_3 \\
0&0&0&0& \cp_2 & 0 & \ap_2 & 0\\
\hline
0&0&0&0& 0 & c_4 & 0 & a_4
\end{array}
\right].
\end{equation*}

\ \\ \noindent
The matrix representing $A$ relative to the basis $\wt{\mathcal{B}}$ is 
\begin{equation*}\label{ex:[A]_BC}
\left[
\begin{array}{cccc|cccc}
\ta_0	&	\tb_0	&0	&0	&0	&0	&0	&0\\
\tc_1	&	\ta_1	&	\tb_1	&0	&0	&0	&0	&0	\\
0		&	\tc_2	&	\ta_2	&	\tb_2	&0	&0	&0	&0\\
0		&0	&	\tc_3	&	\ta_3	&0	&0	&0	&0\\	
\hline
0		&0	&0	&0	&	\tap_0	& \tbp_0	&0	&0\\
0		&0	&0	&0	&	\tcp_1	&	\tap_1	&	\tbp_1	&0\\
0		&0	&0	&0	&0	&	\tcp_2	&	\tap_2	&	\tbp_2\\
0		&0	&0	&0	&0	&0	&	\tcp_3	&	\tap_3	
\end{array}
\right].
\end{equation*}

\ \\ \noindent
The matrix representing $A$ relative to $\wt{\mcal{B}}_{alt}$ is 
\begin{equation*}\label{ex:[A]_BCa}
\left[
\begin{array}{cc|cc|cc|cc}
\ta_0 & 0 & \tb_0 & 0&0&0&0&0\\
0 & \tap_0 & 0 & \tbp_0&0&0&0&0\\
\hline
\tc_1 & 0 & \ta_1 & 0 & \tb_1 & 0&0&0\\
0 & \tcp_1 & 0 & \tap_1 & 0 & \tbp_1&0&0\\
\hline
0&0& \tc_2 & 0 & \ta_2 & 0 & \tb_2 & 0\\
0&0& 0 & \tcp_2 & 0 & \tap_2 & 0 & \tbp_2\\
\hline
0&0&0&0& \tc_3 & 0 & \ta_3 & 0\\
0&0&0&0& 0 & \tcp_3 & 0 & \tap_3
\end{array}
\right].
\end{equation*}
\end{example}

\medskip
We are done with $A$. 
We now consider $A^*$. Recall from (\ref{theta^*_i}) the formula
\begin{equation}\label{tht^*(d=4)}
\theta^*_i  =  \theta^*_0+h^*(1-q^{i})(1-s^*q^{i+1})q^{-i},
\end{equation}
for $0 \leq  i \leq 4$.

\begin{example}
The matrix representing $A^*$ relative to the basis $\mathcal{C}$ is
\begin{equation*}\label{ex:[A*]_C}
\diag(\tht^*_0,\tht^*_1,\tht^*_1,
	\tht^*_2,\tht^*_2,\tht^*_3,\tht^*_3,\tht^*_4).
\end{equation*}

\ \\ \noindent
The matrix representing $A^*$ relative to the basis $\mcal{B}$ is 
\begin{equation*}\label{ex:[A*]_Bx}
\diag(\tht^*_0,\tht^*_1,\tht^*_2,\tht^*_3,\tht^*_4,
		\tht^*_1,\tht^*_2,\tht^*_3).
\end{equation*}

\ \\ \noindent
The matrix representing $A^*$ relative to the basis $\mcal{B}_{alt}$ is
\begin{equation*}\label{ex:[A*]_Bxa}
\diag(\tht^*_0,\tht^*_1,\tht^*_1,
			\tht^*_2,\tht^*_2,\tht^*_3,\tht^*_3,\tht^*_4).
\end{equation*}

\ \\ \noindent
The matrix representing $A^*$ relative to the basis $\wt{\mathcal{B}}$ is
\begin{equation*}\label{ex:[A*]_BC}
\left[
\begin{array}{cccc|cccc}
\frac{\tht^*_0-\tau_0\tht^*_1}{1-\tau_0}&0&0&0&
				\frac{\zeta_0\tau_0(\tht^*_0-\tht^*_1)}{1-\tau_0}&0&0&0\\
0&\frac{\tht^*_1-\tau_1\tht^*_2}{1-\tau_1}&0&0&0&
				\frac{\zeta_1\tau_1(\tht^*_1-\tht^*_2)}{1-\tau_1}&0&0\\
0&0&\frac{\tht^*_2-\tau_2\tht^*_3}{1-\tau_2}&0&0&0&
				\frac{\zeta_2\tau_2(\tht^*_2-\tht^*_3)}{1-\tau_2}&0\\
0&0&0&\frac{\tht^*_3-\tau_3\tht^*_4}{1-\tau_3}&0&0&0&
				\frac{\zeta_3\tau_3(\tht^*_3-\tht^*_4)}{1-\tau_3}\\
\hline
\frac{\tht^*_0-\tht^*_1}{\zeta_0(\tau_0-1)}&0&0&0&
				\frac{\tau_0\tht^*_0-\tht^*_1}{\tau_0-1}&0&0&0\\
0&\frac{\tht^*_1-\tht^*_2}{\zeta_1(\tau_1-1)}&0&0&0&
				\frac{\tau_1\tht^*_1-\tht^*_2}{\tau_1-1}&0&0\\
0&0&\frac{\tht^*_2-\tht^*_3}{\zeta_2(\tau_2-1)}&0&0&0&
				\frac{\tau_2\tht^*_2-\tht^*_3}{\tau_2-1}&0\\
0&0&0&\frac{\tht^*_3-\tht^*_4}{\zeta_3(\tau_3-1)}&0&0&0&
				\frac{\tau_3\tht^*_3-\tht^*_4}{\tau_3-1}
\end{array}
\right],
\end{equation*}
where 
$\{\tht^*_i\}^3_{i=0}$ are from (\ref{tht^*(d=4)}) and 
$\{\zeta_i\}^3_{i=0}, \{\tau_i\}^3_{i=0}$ are from (\ref{zeta;tau}).
Moreover,
\begin{equation}\label{tht*/zeta/tau}
\left.
\begin{tabular}{ll}
$\tfrac{\tht^*_i-\tau_i\tht^*_{i+1}}{1-\tau_i} =  
\tht^*_i+\tfrac{h^*s^*q^{-i-1}(1-q)(1-r_1q^{i+1})(1-r_2q^{i+1})}{s^*-r_1r_2},$
& $\begin{matrix} & \\ & \end{matrix}$ \\
$\tfrac{\zeta_i\tau_i(\tht^*_i-\tht^*_{i+1})}{1-\tau_i}=
\tfrac{h^*s^*q^{-2i-1}(1-q)(r_1-s^*q^{i+1})(r_2-s^*q^{i+1})(1-r_1q^{i+1})(1-r_2q^{i+1})}{s^*-r_1r_2},$ 
& $\begin{matrix} & \\ & \end{matrix}$ \\
$\tfrac{\tht^*_i-\tht^*_{i+1}}{\zeta_i(\tau_i-1)}
=\tfrac{h^*q^{-1}(1-q)}{r_1r_2-s^*}, $
& $\begin{matrix} & \\ & \end{matrix}$ \\
$\tfrac{\tau_i\tht^*_i-\tht^*_{i+1}}{\tau_i-1}
=\tht^*_{i+1}+\tfrac{h^*s^*q^{-i-1}(1-q)(1-r_1q^{i+1})(1-r_2q^{i+1})}{r_1r_2-s^*}.$
& $\begin{matrix} & \\ & \end{matrix}$
$$
\end{tabular}
\right\}
\end{equation}

\noindent
The matrix representing $A^*$ relative to the basis $\wt{\mcal{B}}_{alt}$ is
\begin{equation*}\label{ex:[A*]_BCa}
\left[
\begin{array}{cc|cc|cc|cc}
\frac{\tht^*_0-\tau_0\tht^*_1}{1-\tau_0}&\frac{\zeta_0\tau_0(\tht^*_0-\tht^*_1)}{1-\tau_0}&0&0&0&0&0&0\\
\frac{\tht^*_0-\tht^*_1}{\zeta_0(\tau_0-1)}&\frac{\tau_0\tht^*_0-\tht^*_1}{\tau_0-1}&0&0&0&0&0&0\\
\hline
0&0&\frac{\tht^*_1-\tau_1\tht^*_2}{1-\tau_1}&\frac{\zeta_1\tau_1(\tht^*_1-\tht^*_2)}{1-\tau_1}&0&0&0&0\\
0&0&\frac{\tht^*_1-\tht^*_2}{\zeta_1(\tau_1-1)}&\frac{\tau_1\tht^*_1-\tht^*_2}{\tau_1-1}&0&0&0&0\\
\hline
0&0&0&0&\frac{\tht^*_2-\tau_2\tht^*_3}{1-\tau_2}&\frac{\zeta_2\tau_2(\tht^*_2-\tht^*_3)}{1-\tau_2}&0&0\\
0&0&0&0&\frac{\tht^*_2-\tht^*_3}{\zeta_2(\tau_2-1)}&\frac{\tau_2\tht^*_2-\tht^*_3}{\tau_2-1}&0&0\\
\hline
0&0&0&0&0&0&\frac{\tht^*_3-\tau_3\tht^*_4}{1-\tau_3}&\frac{\zeta_3\tau_3(\tht^*_3-\tht^*_4)}{1-\tau_3}\\
0&0&0&0&0&0&\frac{\tht^*_3-\tht^*_4}{\zeta_3(\tau_3-1)}&\frac{\tau_3\tht^*_3-\tht^*_4}{\tau_3-1}
\end{array}
\right].
\end{equation*}
Each entry is from (\ref{tht*/zeta/tau}).
\end{example}

\medskip \noindent
We are done with $A^*$.
We now consider $\wt{A}^*$. Recall from (\ref{(tht)dist}) the formula 
\begin{equation}\label{vtht^*;d=4}
\wt{\tht}^*_i = \wt{\tht}^*_0+\wt{h}^*(1-q^{i})(1-{s}^*q^{i+2})q^{-i},
\end{equation}
for $0 \leq  i \leq 4$.

\begin{example}
The matrix representing $\wt{A}^*$ relative to the basis $\mathcal{C}$ is
\begin{equation*}\label{ex:[A*C]_C}
\diag(\wt{\tht}^*_0, \wt{\tht}^*_0, \wt{\tht}^*_1, \wt{\tht}^*_1,
			\wt{\tht}^*_2,  \wt{\tht}^*_2, \wt{\tht}^*_3, \wt{\tht}^*_3).
\end{equation*}

\ \\ \noindent
The matrix representing $\wt{A}^*$ relative to the basis $\wt{\mathcal{B}}$ is
\begin{equation*}\label{ex:[A*C]_BC}
\diag(\wt{\tht}^*_0, \wt{\tht}^*_1, \wt{\tht}^*_2, \wt{\tht}^*_3,
			\wt{\tht}^*_0,\wt{\tht}^*_1,\wt{\tht}^*_2, \wt{\tht}^*_3).
\end{equation*}

\ \\ \noindent
The matrix representing $\wt{A}^*$ relative to the basis $\wt{\mcal{B}}_{alt}$ is
\begin{equation*}\label{ex:[A*C]_BCa}
\diag(\wt{\tht}^*_0, \wt{\tht}^*_0, \wt{\tht}^*_1, \wt{\tht}^*_1,
		\wt{\tht}^*_2, \wt{\tht}^*_2, \wt{\tht}^*_3, \wt{\tht}^*_3).
\end{equation*}

\ \\ \noindent
The matrix representing $\wt{A}^*$ relative to the basis $\mcal{B}$ is
\begin{equation*}\label{ex:[A*C]_Bx}
\left[
\begin{array}{ccccc|ccc}
{\wt{\tht}}^*_0 &0&0&0&0&0&0&0\\
0& \frac{\epsilon_1\wt{\tht}^*_0-\wt{\tht}^*_1}{\epsilon_1-1} &0&0&0& \frac{\epsilon_1\xi_1(\wt{\tht}^*_0-\wt{\tht}^*_1)}{\epsilon_1-1}&0&0\\
0&0& \frac{\epsilon_2\wt{\tht}^*_1-\wt{\tht}^*_2}{\epsilon_2-1} &0&0&0& \frac{\epsilon_2\xi_2(\wt{\tht}^*_1-\wt{\tht}^*_2)}{\epsilon_2-1}&0\\
0&0&0& \frac{\epsilon_3\wt{\tht}^*_2-\wt{\tht}^*_3}{\epsilon_3-1} &0&0&0& \frac{\epsilon_3\xi_3(\wt{\tht}^*_2-\wt{\tht}^*_3)}{\epsilon_3-1}\\
0&0&0&0&\wt{\tht}^*_3&0&0&0\\
 \hline 
0&\frac{\wt{\tht}^*_0-\wt{\tht}^*_1}{\xi_1(1-\epsilon_1)}&0&0&0&\frac{\wt{\tht}^*_0-\epsilon_1\wt{\tht}^*_1}{1-\epsilon_1}&0&0\\
0&0&\frac{\wt{\tht}^*_1-\wt{\tht}^*_2}{\xi_2(1-\epsilon_2)}&0&0&0&\frac{\wt{\tht}^*_1-\epsilon_2\wt{\tht}^*_2}{1-\epsilon_2}&0\\
0&0&0&\frac{\wt{\tht}^*_2-\wt{\tht}^*_3}{\xi_3(1-\epsilon_3)}&0&0&0&\frac{\wt{\tht}^*_2-\epsilon_3\wt{\tht}^*_3}{1-\epsilon_3}
\end{array}
\right],
\end{equation*}
where $\{\wt{\tht}^*_i\}^3_{i=0}$ are from (\ref{vtht^*;d=4}) and 
$\{\xi_i\}^3_{i=1}, \{\epsilon_i\}^3_{i=1}$ are from (\ref{xi;epsilon}).
Moreover,
\begin{equation}\label{vtht*;xi;ep}
\left.
\begin{tabular}{ll}
$\tfrac{\epsilon_i{\wt{\tht}}^*_{i-1}-{\wt{\tht}}^*_i}{\epsilon_i-1} =
	{\wt{\tht}}^*_{i-1} +
	\tfrac{\wt{h}^*(1-q)(1-q^{4-i})(1-s^*q^{i+1})}{1-q^4},$
& $\begin{matrix} & \\ & \end{matrix}$ \\
$\tfrac{\epsilon_i\xi_i({\wt{\tht}}^*_{i-1}-{\wt{\tht}}^*_i)}{\epsilon_i-1} =
	\tfrac{\wt{h}^*q^{1-2i}(1-q)(1-q^i)(1-q^{i-4})(1-s^*q^{i+1})(1-s^*q^{i+5})}
		{q^4-1},$
& $\begin{matrix} & \\ & \end{matrix}$ \\
$\tfrac{{\wt{\tht}}^*_{i-1}-{\wt{\tht}}^*_i}{\xi_i(1-\epsilon_i)}=
	\tfrac{\wt{h}^*q^{3}(1-q)}{1-q^4},$
& $\begin{matrix} & \\ & \end{matrix}$ \\
$\tfrac{{\wt{\tht}}^*_{i-1}-\epsilon_i{\wt{\tht}}^*_i}{1-\epsilon_i}=
{\wt{\tht}}^*_{i}+
\tfrac{\wt{h}^*(q-1)(1-q^{4-i})(1-s^*q^{i+1})}{1-q^4}.$
& $\begin{matrix} & \\ & \end{matrix}$ 		
\end{tabular}
\right\}
\end{equation}

\ \\ \noindent
The matrix representing $\wt{A}^*$ relative to the basis $\mcal{B}_{alt}$ is
\begin{equation*}\label{ex:[A*C]_Bxa}
\left[
\begin{array}{c|cc|cc|cc|c}
\wt{\tht}^*_0 &0&0&0&0&0&0&0\\
\hline
0& \frac{\epsilon_1\wt{\tht}^*_0-\wt{\tht}^*_1}{\epsilon_1-1} & \frac{\epsilon_1\xi_1(\wt{\tht}^*_0-\wt{\tht}^*_1)}{\epsilon_1-1}&0&0&0&0&0\\
0&\frac{\wt{\tht}^*_0-\wt{\tht}^*_1}{\xi_1(1-\epsilon_1)}&\frac{\wt{\tht}^*_0-\epsilon_1\wt{\tht}^*_1}{1-\epsilon_1}&0&0&0&0&0\\
\hline
0&0&0&\frac{\epsilon_2\wt{\tht}^*_1-\wt{\tht}^*_2}{\epsilon_2-1} & \frac{\epsilon_2\xi_2(\wt{\tht}^*_1-\wt{\tht}^*_2)}{\epsilon_2-1}&0&0&0\\
0&0&0&\frac{\wt{\tht}^*_1-\wt{\tht}^*_2}{\xi_2(1-\epsilon_2)}&\frac{\wt{\tht}^*_1-\epsilon_2\wt{\tht}^*_2}{1-\epsilon_2}&0&0&0\\
\hline
0&0&0&0&0&\frac{\epsilon_3\wt{\tht}^*_2-\wt{\tht}^*_3}{\epsilon_3-1} & \frac{\epsilon_3\xi_3(\wt{\tht}^*_2-\wt{\tht}^*_3)}{\epsilon_3-1}&0\\
0&0&0&0&0&\frac{\wt{\tht}^*_2-\wt{\tht}^*_3}{\xi_3(1-\epsilon_3)}&\frac{\wt{\tht}^*_2-\epsilon_3\wt{\tht}^*_3}{1-\epsilon_3}&0\\
\hline
0&0&0&0&0&0&0&\wt{\tht}^*_3
\end{array}
\right].
\end{equation*}
Each entry is from (\ref{vtht*;xi;ep}).
\end{example}

\medskip \noindent
We are done with $\wt{A}^*$. We now consider the map $\mfrk{p}$.
\begin{example}\label{ex:[pi^x]_C}
The matrix representing $\mfrk{p}$ relative to the basis $\mathcal{C}$ is 
\begin{equation*}
\left[
\begin{array}{c|cc|cc|cc|c}
1 & 0 &0 &0 &0 &0 &0 &0\\
\hline
0 & \frac{\epsilon_1}{\epsilon_1-1} & \frac{1}{1-\epsilon_1} &0 &0 &0 &0 &0\\
0 & \frac{\epsilon_1}{\epsilon_1-1} & \frac{1}{1-\epsilon_1} &0 & 0 & 0 &0 &0\\
\hline
0 &0 &0 & \frac{\epsilon_2}{\epsilon_2-1} & \frac{1}{1-\epsilon_2} & 0 & 0 &0\\
0 &0 &0 &\frac{\epsilon_2}{\epsilon_2-1} & \frac{1}{1-\epsilon_2} & 0 & 0 &0\\
\hline
0 &0 &0 &0 &0 & \frac{\epsilon_3}{\epsilon_3-1} & \frac{1}{1-\epsilon_{3}}&0\\
0 &0 &0 &0 &0 & \frac{\epsilon_3}{\epsilon_3-1} & \frac{1}{1-\epsilon_{3}} &0\\
\hline
0 &0 &0 &0 &0 &0 &0 & 1
\end{array}
\right],
\end{equation*}
where $\{\epsilon_i\}^3_{i=1}$ are from (\ref{xi;epsilon}). Moreover,
\begin{align}\label{208}
&&\tfrac{1}{1-\epsilon_i} = \tfrac{q^4(1-q^{i-4})(1-s^*q^{i+1})}{(q^4-1)(1-s^*q^{2i+1})},
&&
\tfrac{\epsilon_i}{\epsilon_i-1} =\tfrac{(1-q^i)(1-s^*q^{i+5})}{(1-q^4)(1-s^*q^{2i+1})},
&&
\end{align}
for $1 \leq i \leq 3$.

\ \\
The matrix representing $\mfrk{p}$ relative to the basis $\mcal{B}$ is
\begin{equation*}\label{ex:[pi^x]_Bx}
\left[
\begin{array}{ccccc|ccc}
1 & 0 & 0 & 0 & 0 &0&0&0\\
0 & 1 & 0 & 0 & 0 &0&0&0\\
0 & 0 & 1 & 0 & 0 &0&0&0\\
0 & 0 & 0 & 1 & 0 &0&0&0\\
0 & 0 & 0 & 0 & 1 &0&0&0\\
\hline
0&0&0&0&0&0&0&0\\
0&0&0&0&0&0&0&0\\
0&0&0&0&0&0&0&0
\end{array}
\right].
\end{equation*}

\ \\ \noindent
The matrix representing $\mfrk{p}$ relative to the basis $\mcal{B}_{alt}$ is 
\begin{equation*}\label{ex:[pi^x]_Bxa}
\left[
\begin{array}{c|cc|cc|cc|c}
1 & 0 & 0 & 0 & 0 & 0 & 0 & 0\\
\hline
0 & 1 & 0 & 0 & 0 & 0 & 0 & 0\\
0 & 0 & 0 & 0 & 0 & 0 & 0 & 0\\
\hline
0 & 0 & 0 & 1 & 0 & 0 & 0 & 0\\
0 & 0 & 0 & 0 & 0 & 0 & 0 & 0\\
\hline
0 & 0 & 0 & 0 & 0 & 1 & 0 & 0\\
0 & 0 & 0 & 0 & 0 & 0 & 0 & 0\\
\hline
0 & 0 & 0 & 0 & 0 & 0 & 0 & 1
\end{array}
\right].
\end{equation*}

\ \\ \noindent
The matrix representing $\mfrk{p}$ relative to $\wt{\mathcal{B}}$ is
\begin{equation*}\label{ex:[pi^x]_BC}
\scalemath{0.75}{
\left[
\begin{array}{cccc|cccc}
\frac{1-\epsilon_1(1-\tau_0)}{(1-\tau_0)(1-\epsilon_1)} &
\frac{-\tau_0}{(1-\tau_0)(1-\epsilon_1)}&
0 & 0 &
\frac{\zeta_0\tau_0}{(1-\tau_0)(1-\epsilon_1)} &
\frac{-\tau_0\tau_1\zeta_1}{(1-\tau_0)(1-\epsilon_1)} &
0 & 0 \\

&&&&&&&\\

\frac{-\epsilon_1}{(1-\tau_1)(1-\epsilon_1)} &
\frac{1-\epsilon_2+\tau_1\epsilon_2(1-\epsilon_1)}{(1-\tau_1)(1-\epsilon_1)(1-\epsilon_2)} &
\frac{-\tau_1}{(1-\tau_1)(1-\epsilon_2)}&
0&
\frac{-\zeta_0\epsilon_1}{(1-\tau_1)(1-\epsilon_1)}&
\frac{\zeta_1\tau_1(1-\epsilon_1\epsilon_2)}{(1-\tau_1)(1-\epsilon_1)(1-\epsilon_2)}&
\frac{-\tau_1\tau_2\zeta_2}{(1-\tau_1)(1-\epsilon_2)} &
0    \\

&&&&&&&\\

0&
\frac{-\epsilon_2}{(1-\tau_2)(1-\epsilon_2)} &
\frac{1-\epsilon_3+\tau_2\epsilon_3(1-\epsilon_2)}{(1-\tau_2)(1-\epsilon_2)(1-\epsilon_3)} &
\frac{-\tau_2}{(1-\tau_2)(1-\epsilon_3)}&
0&
\frac{-\zeta_1\epsilon_2}{(1-\tau_2)(1-\epsilon_2)}&
\frac{\zeta_2\tau_2(1-\epsilon_2\epsilon_3)}{(1-\tau_2)(1-\epsilon_2)(1-\epsilon_3)}&
\frac{-\tau_2\tau_3\zeta_3}{(1-\tau_2)(1-\epsilon_3)}  \\

&&&&&&&\\

0&0&
\frac{-\epsilon_3}{(1-\tau_3)(1-\epsilon_3)} &
\frac{1+\tau_3(\epsilon_3-1)}{(1-\tau_3)(1-\epsilon_3)}&
0&0&
\frac{-\zeta_2\epsilon_3}{(1-\tau_3)(1-\epsilon_3)}&
\frac{\zeta_3\tau_3\epsilon_3}{(1-\tau_3)(1-\epsilon_3)} \\

&&&&&&&\\
\hline
&&&&&&&\\

\frac{-1}{\zeta_0(1-\tau_0)(1-\epsilon_1)} &
\frac{1}{\zeta_0(1-\tau_0)(1-\epsilon_1)} &
0& 0&
\frac{\tau_0(\epsilon_1-1)-\epsilon_1}{(1-\tau_0)(1-\epsilon_1)}&
\frac{\zeta_1\tau_1}{\zeta_0(1-\tau_0)(1-\epsilon_1)}&
0&0\\

&&&&&&&\\ 

\frac{\epsilon_1}{\zeta_1(1-\tau_1)(1-\epsilon_1)} &
\frac{-1+\epsilon_1\epsilon_2}{\zeta_1(1-\tau_1)(1-\epsilon_1)(1-\epsilon_2)} &
\frac{1}{\zeta_1(1-\tau_1)(1-\epsilon_2)} &
0 &
\frac{\zeta_0\epsilon_1}{\zeta_1(1-\tau_1)(1-\epsilon_1)} &
\frac{\tau_1(\epsilon_2-1)+\epsilon_2(\epsilon_1-1)}{(1-\tau_1)(1-\epsilon_1)(1-\epsilon_2)} &
\frac{\zeta_2\tau_2}{\zeta_1(1-\tau_1)(1-\epsilon_2)} &
0 \\

&&&&&&&\\ 

0&
\frac{\epsilon_2}{\zeta_2(1-\tau_2)(1-\epsilon_2)} &
\frac{-1+\epsilon_2\epsilon_3}{\zeta_2(1-\tau_2)(1-\epsilon_2)(1-\epsilon_3)} &
\frac{1}{\zeta_2(1-\tau_2)(1-\epsilon_3)} &
0&
\frac{\zeta_1\epsilon_2}{\zeta_2(1-\tau_2)(1-\epsilon_2)} &
\frac{\tau_2(\epsilon_3-1)+\epsilon_3(\epsilon_2-1)}{(1-\tau_2)(1-\epsilon_2)(1-\epsilon_3)} &
\frac{\zeta_3\tau_3}{\zeta_2(1-\tau_2)(1-\epsilon_3)} \\

&&&&&&&\\ 

0&0&
\frac{\epsilon_3}{\zeta_3(1-\tau_3)(1-\epsilon_3)} &
\frac{-\epsilon_3}{\zeta_3(1-\tau_3)(1-\epsilon_3)} &
0&0&
\frac{\zeta_2\epsilon_3}{\zeta_3(1-\tau_3)(1-\epsilon_3)} &
\frac{1-\tau_3-\epsilon_3}{(1-\tau_3)(1-\epsilon_3)} 

\end{array}
\right]},
\end{equation*}
where $\{\epsilon_i\}^3_{i=1}$ are from (\ref{xi;epsilon}) and
$\{\zeta_i\}^3_{i=0},\{\tau_i\}^3_{i=0}$ are from (\ref{zeta;tau}).

\ \\ \noindent
The matrix representing $\mfrk{p}$ relative to $\wt{\mcal{B}}_{alt}$ is
\begin{equation*}\label{ex:[pi^x]_BCa}
\scalemath{0.75}{
\left[
\begin{array}{cc|cc|cc|cc}
\frac{1-\epsilon_1(1-\tau_0)}{(1-\tau_0)(1-\epsilon_1)} &
\frac{\zeta_0\tau_0}{(1-\tau_0)(1-\epsilon_1)} &
\frac{-\tau_0}{(1-\tau_0)(1-\epsilon_1)}&
\frac{-\tau_0\tau_1\zeta_1}{(1-\tau_0)(1-\epsilon_1)} &
0 & 0 & 0 & 0 \\

&&&&&&&\\ 

\frac{-1}{\zeta_0(1-\tau_0)(1-\epsilon_1)} &
\frac{\tau_0(\epsilon_1-1)-\epsilon_1}{(1-\tau_0)(1-\epsilon_1)}&
\frac{1}{\zeta_0(1-\tau_0)(1-\epsilon_1)} &
\frac{\zeta_1\tau_1}{\zeta_0(1-\tau_0)(1-\epsilon_1)}&
0&0&0&0\\

&&&&&&& \\ 
\hline
&&&&&&&\\

\frac{-\epsilon_1}{(1-\tau_1)(1-\epsilon_1)} &
\frac{-\zeta_0\epsilon_1}{(1-\tau_1)(1-\epsilon_1)}&
\frac{1-\epsilon_2+\tau_1\epsilon_2(1-\epsilon_1)}{(1-\tau_1)(1-\epsilon_1)(1-\epsilon_2)} &
\frac{\zeta_1\tau_1(1-\epsilon_1\epsilon_2)}{(1-\tau_1)(1-\epsilon_1)(1-\epsilon_2)}&
\frac{-\tau_1}{(1-\tau_1)(1-\epsilon_2)}&
\frac{-\tau_1\tau_2\zeta_2}{(1-\tau_1)(1-\epsilon_2)} &
0&0  \\

&&&&&&& \\ 

\frac{\epsilon_1}{\zeta_1(1-\tau_1)(1-\epsilon_1)} &
\frac{\zeta_0\epsilon_1}{\zeta_1(1-\tau_1)(1-\epsilon_1)} &
\frac{-1+\epsilon_1\epsilon_2}{\zeta_1(1-\tau_1)(1-\epsilon_1)(1-\epsilon_2)} &
\frac{\tau_1(\epsilon_2-1)+\epsilon_2(\epsilon_1-1)}{(1-\tau_1)(1-\epsilon_1)(1-\epsilon_2)} &
\frac{1}{\zeta_1(1-\tau_1)(1-\epsilon_2)} &
\frac{\zeta_2\tau_2}{\zeta_1(1-\tau_1)(1-\epsilon_2)} &
0&0 \\

&&&&&&&\\ 
\hline
&&&&&&&\\

0&0&
\frac{-\epsilon_2}{(1-\tau_2)(1-\epsilon_2)} &
\frac{-\zeta_1\epsilon_2}{(1-\tau_2)(1-\epsilon_2)}&
\frac{1-\epsilon_3+\tau_2\epsilon_3(1-\epsilon_2)}{(1-\tau_2)(1-\epsilon_2)(1-\epsilon_3)} &
\frac{\zeta_2\tau_2(1-\epsilon_2\epsilon_3)}{(1-\tau_2)(1-\epsilon_2)(1-\epsilon_3)}&
\frac{-\tau_2}{(1-\tau_2)(1-\epsilon_3)}&
\frac{-\tau_2\tau_3\zeta_3}{(1-\tau_2)(1-\epsilon_3)} \\

&&&&&&&\\ 

0&0&
\frac{\epsilon_2}{\zeta_2(1-\tau_2)(1-\epsilon_2)} &
\frac{\zeta_1\epsilon_2}{\zeta_2(1-\tau_2)(1-\epsilon_2)} &
\frac{-1+\epsilon_2\epsilon_3}{\zeta_2(1-\tau_2)(1-\epsilon_2)(1-\epsilon_3)} &
\frac{\tau_2(\epsilon_3-1)+\epsilon_3(\epsilon_2-1)}{(1-\tau_2)(1-\epsilon_2)(1-\epsilon_3)} &
\frac{1}{\zeta_2(1-\tau_2)(1-\epsilon_3)} &
\frac{\zeta_3\tau_3}{\zeta_2(1-\tau_2)(1-\epsilon_3)} \\

&&&&&&&\\ 
\hline
&&&&&&&\\

0&0&0&0&
\frac{-\epsilon_3}{(1-\tau_3)(1-\epsilon_3)} &
\frac{-\zeta_2\epsilon_3}{(1-\tau_3)(1-\epsilon_3)}&
\frac{1+\tau_3(\epsilon_3-1)}{(1-\tau_3)(1-\epsilon_3)}&
\frac{\zeta_3\tau_3\epsilon_3}{(1-\tau_3)(1-\epsilon_3)} \\

&&&&&&&\\ 

0&0&0&0&
\frac{\epsilon_3}{\zeta_3(1-\tau_3)(1-\epsilon_3)} &
\frac{\zeta_2\epsilon_3}{\zeta_3(1-\tau_3)(1-\epsilon_3)} &
\frac{-\epsilon_3}{\zeta_3(1-\tau_3)(1-\epsilon_3)} &
\frac{1-\tau_3-\epsilon_3}{(1-\tau_3)(1-\epsilon_3)} 
\end{array}
\right]},
\end{equation*}
where $\{\epsilon_i\}^3_{i=1}$ are from (\ref{xi;epsilon}) and
$\{\zeta_i\}^3_{i=0},\{\tau_i\}^3_{i=0}$ are from (\ref{zeta;tau}).
\end{example}

\medskip \noindent
We are done with $\mfrk{p}$. Finally we consider the map $\wt{\mfrk{p}}$.
\begin{example}\label{ex:[pi^C]_C}
The matrix representing $\wt{\mfrk{p}}$ relative to $\mathcal{C}$ is
\begin{equation*}
\left[
\begin{array}{cc|cc|cc|cc}
\frac{1}{1-\tau_0} & \frac{\tau_0}{\tau_0-1} &0&0&0&0&0&0\\
\frac{1}{1-\tau_0} & \frac{\tau_0}{\tau_0-1} &0&0&0&0&0&0\\
\hline
0&0&\frac{1}{1-\tau_1} & \frac{\tau_1}{\tau_1-1} &0&0&0&0\\
0&0&\frac{1}{1-\tau_1} & \frac{\tau_1}{\tau_1-1} &0&0&0&0\\
\hline
0&0&0&0&\frac{1}{1-\tau_2} & \frac{\tau_2}{\tau_2-1} &0&0\\
0&0&0&0&\frac{1}{1-\tau_2} & \frac{\tau_2}{\tau_2-1} &0&0\\
\hline
0&0&0&0&0&0&\frac{1}{1-\tau_3} & \frac{\tau_3}{\tau_3-1} \\
0&0&0&0&0&0&\frac{1}{1-\tau_3} & \frac{\tau_3}{\tau_3-1}
\end{array}
\right],
\end{equation*}
where $\{\tau_i\}^3_{i=0}$ are from (\ref{zeta;tau}).
Moreover,
\begin{align}\label{209}
&& 
\tfrac{1}{1-\tau_i} = \tfrac{(r_1-s^*q^{i+1})(r_2-s^*q^{i+1})}{(r_1r_2-s^*)(1-s^*q^{2i+2})},
&&
\tfrac{\tau_i}{\tau_{i}-1}=\tfrac{s^*(1-r_1q^{i+1})(1-r_2q^{i+1})}{(s^*-r_1r_2)(1-s^*q^{2i+2})},
&&
\end{align}
for $0 \leq  i \leq 3$.

\ \\ \noindent
The matrix representing $\wt{\mfrk{p}}$ relative to $\wt{\mathcal{B}}$ is
\begin{equation*}\label{ex:[pi^C]_BC}
\left[
\begin{array}{cccc|cccc}
1&0&0&0& 0&0&0&0\\
0&1&0&0& 0&0&0&0\\
0&0&1&0& 0&0&0&0\\
0&0&0&1& 0&0&0&0\\
\hline
0&0&0&0&0&0&0&0\\
0&0&0&0&0&0&0&0\\
0&0&0&0&0&0&0&0\\
0&0&0&0&0&0&0&0
\end{array}
\right].
\end{equation*}

\ \\ \noindent
The matrix representing $\wt{\mfrk{p}}$ relative to $\wt{\mcal{B}}_{alt}$ is
\begin{equation*}\label{ex:[pi^C]_BCa}
\left[
\begin{array}{cc|cc|cc|cc}
1&0&0&0&0&0&0&0\\
0&0&0&0&0&0&0&0\\
\hline
0&0&1&0&0&0&0&0\\
0&0&0&0&0&0&0&0\\
\hline
0&0&0&0&1&0&0&0\\
0&0&0&0&0&0&0&0\\
\hline
0&0&0&0&0&0&1&0\\
0&0&0&0&0&0&0&0
\end{array}
\right].
\end{equation*}

\ \\ \noindent
The matrix representing $\wt{\mfrk{p}}$ relative to $\mcal{B}$ is
\begin{equation*}\label{ex:[pi^C]_Bx}
\scalemath{0.8}{
\left[
\begin{array}{@{}c@{}@{}c@{}@{}c@{}@{}c@{}@{}c@{}|@{}c@{}@{}c@{}@{}c@{}}
\frac{1}{1-\tau_0} &
\frac{\tau_0}{\tau_0-1} &
0 & 0 & 0 &
\frac{\tau_0\xi_1}{\tau_0-1} &
0 & 0\\ 

&&&&&&&\\ 

\frac{-\epsilon_1}{(1-\tau_0)(1-\epsilon_1)} &
\frac{\epsilon_1\tau_0(\tau_1-1)+\tau_0-1}{(\tau_0-1)(\tau_1-1)(\epsilon_1-1)}&
\frac{-\tau_1}{(1-\epsilon_1)(1-\tau_1)} &
0 & 0 &
\frac{\xi_1\epsilon_1(1-\tau_0\tau_1)}{(1-\tau_0)(1-\tau_1)(1-\epsilon_1)} &
\frac{-\tau_1\xi_2}{(1-\epsilon_1)(1-\tau_1)} &
0 \\

&&&&&&&\\ 

0 &
\frac{-\epsilon_2}{(1-\tau_1)(1-\epsilon_2)} &
\frac{\epsilon_2\tau_1(\tau_2-1)+\tau_1-1}{(\tau_1-1)(\tau_2-1)(\epsilon_2-1)}&
\frac{-\tau_2}{(1-\epsilon_2)(1-\tau_2)} &
0 &
\frac{-\xi_1\epsilon_1\epsilon_2}{(1-\tau_1)(1-\epsilon_2)} &
\frac{\xi_2\epsilon_2(1-\tau_1\tau_2)}{(1-\tau_1)(1-\tau_2)(1-\epsilon_2)} &
\frac{-\tau_2\xi_3}{(1-\epsilon_2)(1-\tau_2)} \\

&&&&&&&\\ 

0 & 0 &
\frac{-\epsilon_3}{(1-\tau_2)(1-\epsilon_3)} &
\frac{\epsilon_3\tau_2(\tau_3-1)+\tau_2-1}{(\tau_2-1)(\tau_3-1)(\epsilon_3-1)}&
\frac{-\tau_3}{(1-\epsilon_3)(1-\tau_3)} &
0 &
\frac{-\xi_2\epsilon_2\epsilon_3}{(1-\tau_2)(1-\epsilon_3)} &
\frac{\xi_3\epsilon_3(1-\tau_2\tau_3)}{(1-\tau_2)(1-\tau_3)(1-\epsilon_3)} \\

&&&&&&&\\ 

0 & 0 & 0 &
\frac{1}{1-\tau_3} &
\frac{\tau_3}{\tau_3-1} &
0 & 0 &
\frac{\xi_3\epsilon_3}{1-\tau_3} \\

&&&&&&&\\ 
\hline
&&&&&&& \\

\frac{1}{\xi_1(\epsilon_1-1)(\tau_0-1)} &
\frac{\tau_0\tau_1-1}{\xi_1(1-\epsilon_1)(1-\tau_1)(1-\tau_0)} &
\frac{\tau_1}{\xi_1(1-\epsilon_1)(1-\tau_1)} &
0 & 0 &
\frac{\tau_0(\tau_1-1)+\epsilon_1(\tau_0-1)}{(1-\epsilon_1)(1-\tau_1)(1-\tau_0)} &
\frac{\tau_1\xi_2}{\xi_1(1-\epsilon_1)(1-\tau_1)} &
0 \\

&&&&&&&\\ 

0 &
\frac{1}{\xi_2(\epsilon_2-1)(\tau_1-1)} &
\frac{\tau_1\tau_2-1}{\xi_2(1-\epsilon_2)(1-\tau_2)(1-\tau_1)} &
\frac{\tau_2}{\xi_2(1-\epsilon_2)(1-\tau_2)} &
0 &
\frac{\xi_1\epsilon_1}{\xi_2(1-\epsilon_2)(1-\tau_1)} &
\frac{\tau_1(\tau_2-1)+\epsilon_2(\tau_1-1)}{(1-\epsilon_2)(1-\tau_2)(1-\tau_1)} &
\frac{\tau_2\xi_3}{\xi_2(1-\epsilon_2)(1-\tau_2)} \\

&&&&&&&\\

0 & 0 &
\frac{1}{\xi_3(\epsilon_3-1)(\tau_2-1)} &
\frac{\tau_2\tau_3-1}{\xi_3(1-\epsilon_3)(1-\tau_3)(1-\tau_2)} &
\frac{\tau_3}{\xi_3(1-\epsilon_3)(1-\tau_3)} &
0 &
\frac{\xi_2\epsilon_2}{\xi_3(1-\epsilon_3)(1-\tau_2)} &
\frac{\tau_2(\tau_3-1)+\epsilon_3(\tau_2-1)}{(1-\epsilon_3)(1-\tau_3)(1-\tau_2)} 
\end{array}
\right]},
\end{equation*}
where $\{\xi_i\}^3_{i=1}, \{\epsilon_i\}^3_{i=1}$ are from (\ref{xi;epsilon})
and $\{\tau_i\}^3_{i=0}$ are from (\ref{zeta;tau}).

\ \\ \noindent
The matrix representing $\wt{\mfrk{p}}$ relative to $\mcal{B}_{alt}$ is
\begin{equation*}\label{ex:[pi^C]_Bxa}
\scalemath{0.8}{
\left[
\begin{array}{@{}c@{}|@{}c@{}@{}c@{}|@{}c@{}@{}c@{}|@{}c@{}@{}c@{}|@{}c@{}}
\frac{1}{1-\tau_0} &
\frac{\tau_0}{\tau_0-1} &
\frac{\tau_0\xi_1}{\tau_0-1} &
0 & 0 & 0 &
0 & 0\\ 

&&&&&&&\\ 
\hline
&&&&&&&\\

\frac{-\epsilon_1}{(1-\tau_0)(1-\epsilon_1)} &
\frac{\epsilon_1\tau_0(\tau_1-1)+\tau_0-1}{(\tau_0-1)(\tau_1-1)(\epsilon_1-1)}&
\frac{\xi_1\epsilon_1(1-\tau_0\tau_1)}{(1-\tau_0)(1-\tau_1)(1-\epsilon_1)} &
\frac{-\tau_1}{(1-\epsilon_1)(1-\tau_1)} &
\frac{-\tau_1\xi_2}{(1-\epsilon_1)(1-\tau_1)} &
0 & 0 &
0 \\

&&&&&&&\\ 

\frac{1}{\xi_1(\epsilon_1-1)(\tau_0-1)} &
\frac{\tau_0\tau_1-1}{\xi_1(1-\epsilon_1)(1-\tau_1)(1-\tau_0)} &
\frac{\tau_0(\tau_1-1)+\epsilon_1(\tau_0-1)}{(1-\epsilon_1)(1-\tau_1)(1-\tau_0)} &
\frac{\tau_1}{\xi_1(1-\epsilon_1)(1-\tau_1)} &
\frac{\tau_1\xi_2}{\xi_1(1-\epsilon_1)(1-\tau_1)} &
0 & 
0 &
0 \\

&&&&&&&\\ 
\hline
&&&&&&&\\

0 &
\frac{-\epsilon_2}{(1-\tau_1)(1-\epsilon_2)} &
\frac{-\xi_1\epsilon_1\epsilon_2}{(1-\tau_1)(1-\epsilon_2)} &
\frac{\epsilon_2\tau_1(\tau_2-1)+\tau_1-1}{(\tau_1-1)(\tau_2-1)(\epsilon_2-1)}&
\frac{\xi_2\epsilon_2(1-\tau_1\tau_2)}{(1-\tau_1)(1-\tau_2)(1-\epsilon_2)} &
\frac{-\tau_2}{(1-\epsilon_2)(1-\tau_2)} &
\frac{-\tau_2\xi_3}{(1-\epsilon_2)(1-\tau_2)} &
0 \\

&&&&&&&\\ 

0 &
\frac{1}{\xi_2(\epsilon_2-1)(\tau_1-1)} &
\frac{\xi_1\epsilon_1}{\xi_2(1-\epsilon_2)(1-\tau_1)} &
\frac{\tau_1\tau_2-1}{\xi_2(1-\epsilon_2)(1-\tau_2)(1-\tau_1)} &
\frac{\tau_1(\tau_2-1)+\epsilon_2(\tau_1-1)}{(1-\epsilon_2)(1-\tau_2)(1-\tau_1)} &
\frac{\tau_2}{\xi_2(1-\epsilon_2)(1-\tau_2)} &
\frac{\tau_2\xi_3}{\xi_2(1-\epsilon_2)(1-\tau_2)} &
0 \\

&&&&&&&\\
\hline
&&&&&&&\\

0 & 0 &
0 &
\frac{-\epsilon_3}{(1-\tau_2)(1-\epsilon_3)} &
\frac{-\xi_2\epsilon_2\epsilon_3}{(1-\tau_2)(1-\epsilon_3)} &
\frac{\epsilon_3\tau_2(\tau_3-1)+\tau_2-1}{(\tau_2-1)(\tau_3-1)(\epsilon_3-1)}&
\frac{\xi_3\epsilon_3(1-\tau_2\tau_3)}{(1-\tau_2)(1-\tau_3)(1-\epsilon_3)} &
\frac{-\tau_3}{(1-\epsilon_3)(1-\tau_3)} \\

&&&&&&&\\ 

0 & 0 &
0 &
\frac{1}{\xi_3(\epsilon_3-1)(\tau_2-1)} &
\frac{\xi_2\epsilon_2}{\xi_3(1-\epsilon_3)(1-\tau_2)} &
\frac{\tau_2\tau_3-1}{\xi_3(1-\epsilon_3)(1-\tau_3)(1-\tau_2)} &
\frac{\tau_2(\tau_3-1)+\epsilon_3(\tau_2-1)}{(1-\epsilon_3)(1-\tau_3)(1-\tau_2)} &
\frac{\tau_3}{\xi_3(1-\epsilon_3)(1-\tau_3)} \\

&&&&&&&\\ 
\hline
&&&&&&&\\

0 & 0 &
0 &
0 &
0 &
\frac{1}{1-\tau_3} &
\frac{\xi_3\epsilon_3}{1-\tau_3} &
\frac{\tau_3}{\tau_3-1}   
\end{array}
\right]},
\end{equation*}
where $\{\xi_i\}^3_{i=1}, \{\epsilon_i\}^3_{i=1}$ are from (\ref{xi;epsilon})
and $\{\tau_i\}^3_{i=0}$ are from (\ref{zeta;tau}).
\end{example}

\bigskip
\subsection{The matrices from Part II}\label{Apdx:II}

Consider the basis $\mcal{C}$ from (\ref{5bases}).
Recall that for $d=4$ this basis consists of
\begin{equation}\label{C;d=4}
\hat{C}^-_0, \quad \hat{C}^+_0,  \quad 
\hat{C}^-_1,  \quad \hat{C}^+_1,  \quad 
\hat{C}^-_2,  \quad \hat{C}^+_2,  \quad
\hat{C}^-_3,  \quad \hat{C}^+_3.
\end{equation}
In this section, for each element
$$
\{t_n\}^3_{n=0}, \quad \X^{\pm1}, \quad \Y^{\pm1}, \quad
\A, \quad \B, \quad \B^{\dagger}, \quad
\tfrac{t_0-k_0^{-1}}{k_0-k_0^{-1}}, \quad
\tfrac{t_1-k_1^{-1}}{k_1-k_1^{-1}}
$$
of $\H$
we give the matrix representing that element relative to the basis (\ref{C;d=4}).

\ \\ \noindent
The matrix representing $t_0$ relative to (\ref{C;d=4}) is block diagonal:
\begin{equation*}
\left[
\begin{array}{cccc}
\mfrk{t}_0(0) &&& {\bf 0} \\
&\mfrk{t}_0(1)&&\\
&&\mfrk{t}_0(2)&\\
{\bf 0}&&&\mfrk{t}_0(3)
\end{array}
\right],
\end{equation*}
where for $0 \leq i \leq 3$,
$$
\mfrk{t}_{0}(i) = \left[
\begin{array}{ccc}
\frac{1}{\sqrt{s^*r_1r_2}}
\left(
\frac{(r_1-s^*q^{i+1})(r_2-s^*q^{i+1})}{1-s^*q^{2i+2}} + s^*
\right)
& \ \quad  \ &
-\sqrt{\frac{s^*}{r_1r_2}} \frac{(1-r_1q^{i+1})(1-r_2q^{i+1})}{1-s^*q^{2i+2}} \\

\ \\

\frac{1}{\sqrt{s^*r_1r_2}}
\frac{(r_1-s^*q^{i+1})(r_2-s^*q^{i+1})}{1-s^*q^{2i+2}}
& \ \quad  \ &
\sqrt{\frac{s^*}{r_1r_2}}
\left(1-\frac{(1-r_1q^{i+1})(1-r_2q^{i+1})}{1-s^*q^{2i+2}}\right)
\end{array}
\right].
$$

\bigskip \noindent
The matrix representing $t_1$ relative to (\ref{C;d=4}) is block diagonal:
\begin{equation*}
\left[
\begin{array}{ccccc}
\mfrk{t}_1(0) &&&&{\bf 0} \\
& \mfrk{t}_1(1) &&& \\
&& \mfrk{t}_1(2) && \\
&&& \mfrk{t}_1(3) & \\
{\bf 0}&&&& \mfrk{t}_1(4)
\end{array}
\right],
\end{equation*}
where for $1 \leq i \leq 3$,
$$
\mfrk{t}_{1}(i) =\left[
\begin{array}{ccc}
\frac{q^{2}(1-q^{i-4})(1-s^*q^{i+1})}{1-s^*q^{2i+1}} + \frac{1}{q^{2}}
& \ \quad  \ &
\frac{q^{2}(q^{i-4}-1)(1-s^*q^{i+1})}{1-s^*q^{2i+1}}\\

\ \\

\frac{(1-q^{i})(1-s^*q^{i+5})}{q^{2}(1-s^*q^{2i+1})}
& \ \quad  \ &
\frac{(q^{i}-1)(1-s^*q^{i+5})}{q^{2}(1-s^*q^{2i+1})} +\frac{1}{q^{2}}
\end{array}
\right]
$$
and 
$$\mfrk{t}_{1}(0)=\left[{q^{-2}}\right], \qquad \qquad 
\mfrk{t}_{1}(4)=\left[{q^{-2}}\right].$$

\bigskip \noindent
The matrix representing $t_2$ relative to (\ref{C;d=4}) is 
block diagonal:
\begin{equation*}
\left[
\begin{array}{ccccc}
\mfrk{t}_2(0) &&&&{\bf 0} \\
& \mfrk{t}_2(1) &&&\\
&& \mfrk{t}_2(2) && \\
&&& \mfrk{t}_2(3) & \\
{\bf 0}&&&& \mfrk{t}_2(4)
\end{array}
\right],
\end{equation*}
where for $1 \leq i \leq 3$,
$$
\mfrk{t}_{2}(i) =\left[
\begin{array}{ccc}
\frac{1}{\sqrt{s^*q^{5}}}
\left(
\frac{s^*q^{5}(1-q^{i})(1-q^{i-4})}{1-s^*q^{2i+1}}+1
\right)
& \ \quad  \ &
\frac{q^{i}\sqrt{s^*q^{5}}(1-q^{i-4})(1-s^*q^{i+1})}{1-s^*q^{2i+1}}
\\

\ \\

\frac{1}{q^{i}\sqrt{s^*q^{5}}}
\frac{(q^{i}-1)(1-s^*q^{i+5})}{1-s^*q^{2i+1}}
& \ \quad  \ &
\sqrt{s^*q^{5}} \left(
\frac{(q^{i}-1)(1-q^{i-4})}{1-s^*q^{2i+1}} + 1
\right)
\end{array}
\right]
$$
and
$$\mfrk{t}_{2}(0)=\left[\sqrt{s^*q^{5}}\right], \qquad \qquad
\mfrk{t}_{2}(4)=\left[\tfrac{1}{\sqrt{s^*q^{5}}}\right].$$

\bigskip \noindent
The matrix representing $t_3$ relative to (\ref{C;d=4}) is block diagonal:
\begin{equation*}
\left[
\begin{array}{cccc}
\mfrk{t}_3(0) &&&{\bf 0} \\
& \mfrk{t}_3(1) && \\
&& \mfrk{t}_3(2) & \\
{\bf 0}&&& \mfrk{t}_3(3)
\end{array}
\right],
\end{equation*}
where for $0 \leq  i \leq 3$,
$$
\mfrk{t}_{3}(i) :=\left[
\begin{array}{ccc}
\frac{1}{q^{i+1}\sqrt{r_1r_2}}
\left(
1 -\frac{(1-r_1q^{i+1})(1-r_2q^{i+1})}{1-s^*q^{2i+2}}
\right)
& \ \quad  \ &
\frac{1}{q^{i+1}\sqrt{r_1r_2}}
\frac{(1-r_1q^{i+1})(1-r_2q^{i+1})}{1-s^*q^{2i+2}} \\

\ \\

-\frac{q^{i+1}}{\sqrt{r_1r_2}}
\frac{(r_1-s^*q^{i+1})(r_2-s^*q^{i+1})}{1-s^*q^{2i+2}}
& \ \quad  \ &
\frac{q^{i+1}}{\sqrt{r_1r_2}}
\left(
\frac{(r_1-s^*q^{i+1})(r_2-s^*q^{i+1})}{1-s^*q^{2i+2}} + s^*\right)
\end{array}
\right].
$$
So the matrix representations of $t_0, t_3$ 
and the matrix representations of $t_1, t_2$ take the form
$$
\left(
\begin{array}{cccccccc}
* & * & & & & & & \\
* & * & & & & & & \\
& & * & * & & & & \\
& & * & * & & & & \\
& & & & * & * & & \\
& & & & * & * & & \\
& & & & & & * & * \\
& & & & & & * & * 
\end{array}
\right)
\qquad \text{and} \qquad
\left(
\begin{array}{cccccccc}
* & & & & & & & \\
& * & * & & & & & \\
& * & * & & & & & \\
& & & * & * & & & \\
& & & * & * & & & \\
& & & & & * & * & \\
& & & & & * & * & \\
& & & & & & & *
\end{array}
\right),
$$
respectively.

\bigskip \noindent
The matrix representing $\X$ relative to (\ref{C;d=4}) is diagonal:
\begin{equation*} \diag 
\left[
\tfrac{1}{q\sqrt{s^*}},\
q\sqrt{s^*},\
\tfrac{1}{q^2\sqrt{s^*}},\
q^2\sqrt{s^*},\
\tfrac{1}{q^3\sqrt{s^*}},\
q^3\sqrt{s^*},\
\tfrac{1}{q^4\sqrt{s^*}},\
q^4\sqrt{s^*}
\right].
\end{equation*}

\bigskip \noindent
The matrix representing $\X^{-1}$ relative to (\ref{C;d=4}) is diagonal:
\begin{equation*} \diag 
\left[
q\sqrt{s^*},\
\tfrac{1}{q\sqrt{s^*}},\
q^2\sqrt{s^*},\
\tfrac{1}{q^2\sqrt{s^*}},\
q^3\sqrt{s^*},\
\tfrac{1}{q^3\sqrt{s^*}},\
q^4\sqrt{s^*}, \
\tfrac{1}{q^4\sqrt{s^*}}
\right].
\end{equation*}

\bigskip \noindent
The matrix representing $\Y$ relative to (\ref{C;d=4}) is 
$\sqrt{\frac{s^*q^4}{r_1r_2}}$
times
\begin{equation*}
\left[
\begin{array}{ccccc}
{\bf a}_0 & {\bf b}_0 &&& {\bf 0}\\
&{\bf a}_1 & {\bf b}_1 &&\\
&&{\bf a}_2 & {\bf b}_2 &\\
{\bf 0} &&&{\bf a}_3 & {\bf b}_3
\end{array}
\right],
\end{equation*}
where 
$$
{\bf a}_0 = 
\frac{1}{s^*q^4}\left[
\begin{array}{c}
\frac{(r_1-s^*q)(r_2-s^*q)}{1-s^*q^2}+s^* \\
\\
\frac{(r_1-s^*q)(r_2-s^*q)}{1-s^*q^2}
\end{array}
\right],
\qquad \qquad 
{\bf b}_3 = 
\left[
\begin{array}{c}
-\frac{(1-r_1q^{4})(1-r_2q^{4})}{q^4(1-s^*q^{8})}
 \\ \\
\frac{1}{q^4}-\frac{(1-r_1q^{4})(1-r_2q^{4})}{q^4(1-s^*q^{8})}
\end{array}
\right],
$$
and for $1 \leq i \leq 3$,
\begin{align*}
&{\bf a}_{i} =  \\ &\frac{1}{s^*q^4}
\left[
\begin{array}{cc}
\frac{(1-q^{i})(1-s^*q^{i+5})}{1-s^*q^{2i+1}}
\left(\frac{(r_1-s^*q^{i+1})(r_2-s^*q^{i+1})}{1-s^*q^{2i+2}}+s^*\right)
&
\left(1- \frac{(1-q^{i})(1-s^*q^{i+5})}{1-s^*q^{2i+1}}\right)
\left(\frac{(r_1-s^*q^{i+1})(r_2-s^*q^{i+1})}{1-s^*q^{2i+2}}+s^*\right) \\
\\
\frac{(1-q^{i})(1-s^*q^{i+5})(r_1-s^*q^{i+1})(r_2-s^*q^{i+1})}
{(1-s^*q^{2i+1})(1-s^*q^{2i+2})}
&
\left(1- \frac{(1-q^{i})(1-s^*q^{i+5})}{1-s^*q^{2i+1}}\right)
\frac{(r_1-s^*q^{i+1})(r_2-s^*q^{i+1})}{1-s^*q^{2i+2}}
\end{array}
\right],
\end{align*}
and for $0 \leq i \leq 2$,
\begin{align*}
&{\bf b}_i = \\
& \left[
\begin{array}{cc}
-\frac{(1-r_1q^{i+1})(1-r_2q^{i+1})}{1-s^*q^{2i+2}} 
\left(\frac{(1-q^{i-3})(1-s^*q^{i+2})}{1-s^*q^{2i+3}}+q^{-4}\right)
&
\frac{(1-q^{i-3})(1-s^*q^{i+2})(1-r_1q^{i+1})(1-r_2q^{i+1})}
{(1-s^*q^{2i+3})(1-s^*q^{2i+2})}
\\ \\
\left(1-\frac{(1-r_1q^{i+1})(1-r_2q^{i+1})}{1-s^*q^{2i+2}} \right) 
\left(\frac{(1-q^{i-3})(1-s^*q^{i+2})}{1-s^*q^{2i+3}}+q^{-4}\right)
&
\frac{(1-q^{i-3})(1-s^*q^{i+2})}{1-s^*q^{2i+3}}
\left(\frac{(1-r_1q^{i+1})(1-r_2q^{i+1})}{1-s^*q^{2i+2}}-1\right)
\end{array}
\right].
\end{align*}

\bigskip \noindent
The matrix representing $\Y^{-1}$ relative to (\ref{C;d=4}) is 
$\sqrt{\frac{s^*q^4}{r_1r_2}}$ times
\begin{equation*}
\left[
\begin{array}{cccc}
{\bf a}_0 & & & {\bf 0} \\
{\bf c}_1 & {\bf a}_1 & &\\
& {\bf c}_2 & {\bf a}_2 &\\
& & {\bf c}_3 & {\bf a}_3 \\
{\bf 0} & & & {\bf c}_4
\end{array}
\right],
\end{equation*}
where
\begin{align*}
&{\bf a}_0 = 
\left[
\begin{array}{ccc}
\left(1-\frac{(1-r_1q)(1-r_2q)}{1-s^*q^{2}}\right)
\left(\frac{(1-q^{-4})(1-s^*q)}{1-s^*q}+\frac{1}{q^4}\right)
&\quad &
\frac{(1-r_1q)(1-r_2q)}{1-s^*q^{2}}
\left(\frac{(1-q^{-4})(1-s^*q)}{1-s^*q}+\frac{1}{q^4}\right)
\end{array}
\right], \\
\ \\
&{\bf c}_4 = \tfrac{1}{s^*q^4}
\left[
\begin{array}{ccc}
\frac{(r_1-s^*q^4)(r_2-s^*q^4)}{1-s^*q^{8}}
\left(\frac{(1-q^{4})(1-s^*q^9)}{1-s^*q^9}-1\right)
&\quad &
\left(\frac{(r_1-s^*q^{4})(r_2-s^*q^{4})}{1-s^*q^{8}}+s^*\right)
\left(\frac{(q^{4}-1)(1-s^*q^{9})}{1-s^*q^{9}}+1\right)
\end{array}
\right],
\end{align*}
and for $1 \leq i \leq 3$,
\begin{align*}
& {\bf a}_i =
\left[ 
\begin{array}{cc}
\tfrac{(1-q^{i-4})(1-s^*q^{i+1})}{1-s^*q^{2i+1}}
\left(1-\tfrac{(1-r_1q^{i+1})(1-r_2q^{i+1})}{1-s^*q^{2i+2}}\right)
&
\tfrac{(1-q^{i-4})(1-s^*q^{i+1})(1-r_1q^{i+1})(1-r_2q^{i+1})}
		{(1-s^*q^{2i+1})(1-s^*q^{2i+2})}\\
\ \\
\left(1-\tfrac{(1-r_1q^{i+1})(1-r_2q^{i+1})}{1-s^*q^{2i+2}}\right)
\left(\tfrac{(1-q^{i-4})(1-s^*q^{i+1})}{1-s^*q^{2i+1}}+\tfrac{1}{q^4}\right)
&		
\tfrac{(1-r_1q^{i+1})(1-r_2q^{i+1})}{1-s^*q^{2i+2}}
\left(\tfrac{(1-q^{i-4})(1-s^*q^{i+1})}{1-s^*q^{2i+1}}+\tfrac{1}{q^4}\right)
\end{array}
\right], \\ \ \\
& {\bf c}_i =  \frac{1}{s^*q^4} 
\left[
\begin{array}{cc}
\tfrac{(r_1-s^*q^{i})(r_2-s^*q^{i})}{1-s^*q^{2i}}
\left(\tfrac{(1-q^{i})(1-s^*q^{i+5})}{1-s^*q^{2i+1}}-1\right) 
&
\left(\tfrac{(r_1-s^*q^{i})(r_2-s^*q^{i})}{1-s^*q^{2i}}+s^*\right)
\left(\tfrac{(q^{i}-1)(1-s^*q^{i+5})}{1-s^*q^{2i+1}}+1\right) \\
\ \\
\tfrac{(1-q^{i})(1-s^*q^{i+5})(r_1-s^*q^{i})(r_2-s^*q^{i})}
		{(1-s^*q^{2i})(1-s^*q^{2i+1})}
&
\tfrac{(q^{i}-1)(1-s^*q^{i+5})}{1-s^*q^{2i+1}}
\left(\tfrac{(r_1-s^*q^{i})(r_2-s^*q^{i})}{1-s^*q^{2i}}+s^*\right)
\end{array}
\right].
\end{align*}

\medskip \noindent
Thus the matrix representations of $\Y, \Y^{-1} $ take the form
$$
\left(
\begin{array}{cccccccc}
*&*&*&&&&& \\
*&*&*&&&&& \\
&*&*&*&*&&& \\
&*&*&*&*&&& \\
&&&*&*&*&*& \\
&&&*&*&*&*& \\
&&&&&*&*&* \\
&&&&&*&*&* 
\end{array}
\right),
\qquad \qquad 
\left(
\begin{array}{cccccccc}
*&*&&&&&& \\
*&*&*&*&&&& \\
*&*&*&*&&&& \\
&&*&*&*&*&& \\
&&*&*&*&*&& \\
&&&&*&*&*&* \\
&&&&*&*&*&* \\
&&&&&&*&* 
\end{array}
\right),
$$
respectively.

\medskip \noindent
The matrix representing $\A$ relative to (\ref{C;d=4}) is 
$\sqrt{\frac{s^*q^4}{r_1r_2}}$ times
\begin{equation*}
\left[
\begin{array}{cccc}
{\bf a}_0 & {\bf b}_0 & & {\bf 0} \\
{\bf c}_1 & {\bf a}_1 & {\bf b}_1 & \\
& {\bf c}_2 & {\bf a}_2 & {\bf b}_2 \\
{\bf 0} & & {\bf c}_3 & {\bf a}_3
\end{array}
\right],
\end{equation*}
where for $0 \leq i \leq 3$,
\begin{align*}
&{\bf a}_i = \\
&\left[
\begin{array}{l|l}
1+{\frac{r_1r_2}{s^*q^4}}
-  \frac{(1-q^i)(1-s^*q^{i+5})(r_1-s^*q^{i+1})(r_2-s^*q^{i+1})}
					{s^*q^4(1-s^*q^{2i+1})(1-s^*q^{2i+2})}
& 
	\hspace{1cm}
	\frac{(1-r_1q^{i+1})(1-r_2q^{i+1})}{1-s^*q^{2i+2}}
\\ 
\hspace{2cm} 
 - \ \frac{(1-q^{i-4})(1-s^*q^{i+1})(1-r_1q^{i+1})(1-r_2q^{i+1})}
{(1-s^*q^{2i+1})(1-s^*q^{2i+2})}
& 
	\hspace{1cm}
	\times 
	\left(\frac{(1-q^{i-4})(1-s^*q^{i+1})}{1-s^*q^{2i+1}}
		- \frac{(1-q^{i-3})(1-s^*q^{i+2})}{1-s^*q^{2i+3}}\right) 
\\ \\ \hline \\
\hspace{1cm}
\frac{(r_1-s^*q^{i+1})(r_2-s^*q^{i+1})}{s^*q^4(1-s^*q^{2i+2})} 
& 
1+{\frac{r_1r_2}{s^*q^4}}- 
		\frac{(1-q^{i+1})(1-s^*q^{i+6})(r_1-s^*q^{i+1})(r_2-s^*q^{i+1})}
					{s^*q^4(1-s^*q^{2i+2})(1-s^*q^{2i+3})}
\\ 
\hspace{1cm}
\times	\left(\frac{(1-q^{i+1})(1-s^*q^{i+6})}{1-s^*q^{2i+3}}
			- \frac{(1-q^i)(1-s^*q^{i+5})}{1-s^*q^{2i+1}}\right) 
& 
	\hspace{2cm} 
		 - \ \frac{(1-q^{i-3})(1-s^*q^{i+2})(1-r_1q^{i+1})(1-r_2q^{i+1})}
					{(1-s^*q^{2i+2})(1-s^*q^{2i+3})}
\end{array}
\right], 
\\  \\ & \text{and for $0 \leq i \leq 2$, } \\ 
&{\bf b}_i =
\left[
\begin{array}{cc}
\tfrac{(1-q^{i-3})(1-s^*q^{i+2})(1-r_1q^{i+1})(1-r_2q^{i+1})}
				{(1-s^*q^{2i+2})(1-s^*q^{2i+3})}
& 0 
\\ \\
\frac{(1-q^{i-3})(1-s^*q^{i+2})}{1-s^*q^{2i+3}}
\left(\frac{(1-r_1q^{i+1})(1-r_2q^{i+1})}{1-s^*q^{2i+2}}
	- \frac{(1-r_1q^{i+2})(1-r_2q^{i+2})}{1-s^*q^{2i+4}}\right)
& 
\tfrac{(1-q^{i-3})(1-s^*q^{i+2})(1-r_1q^{i+2})(1-r_2q^{i+2})}
		{(1-s^*q^{2i+3})(1-s^*q^{2i+4})}
\end{array}
\right],
\\  \\ & \text{and for $1 \leq i \leq 3$, } \\ 
& {\bf c}_i =
\left[
\begin{array}{ccc}
\frac{(1-q^{i})(1-s^*q^{i+5})(r_1-s^*q^{i})(r_2-s^*q^{i})}
	{s^*q^4(1-s^*q^{2i})(1-s^*q^{2i+1})}
& \quad &
	\frac{(1-q^{i})(1-s^*q^{i+5})}{s^*q^4(1-s^*q^{2i+1})}
	\left(\frac{(r_1-s^*q^{i+1})(r_2-s^*q^{i+1})}{1-s^*q^{2i+2}}
		- \frac{(r_1-s^*q^{i})(r_2-s^*q^{i})}{1-s^*q^{2i}}\right) 
\\ \\
0 
& \quad &
\frac{(1-q^{i})(1-s^*q^{i+5})(r_1-s^*q^{i+1})(r_2-s^*q^{i+1})}
	{s^*q^4(1-s^*q^{2i+1})(1-s^*q^{2i+2})}
\end{array}
\right].
\end{align*}
Thus the matrix representation of $\A$ takes the form
$$
\left(
\begin{array}{cc|cc|cc|cc}
*&*&*&&&&&\\
*&*&*&*&&&&\\
\hline
*&*&*&*&*&&&\\
&*&*&*&*&*&&\\
\hline
&&*&*&*&*&*&\\
&&&*&*&*&*&*\\
\hline
&&&&*&*&*&*\\
&&&&&*&*&*
\end{array}
\right).
$$

\bigskip \noindent
The matrix representing $\B$ relative to (\ref{C;d=4}) is diagonal:
\begin{align*}
\diag \left[
\tfrac{1}{q\sqrt{s^*}} + q\sqrt{s^*},
 \tfrac{1}{q\sqrt{s^*}} + q\sqrt{s^*},
 \tfrac{1}{q^{2}\sqrt{s^*}} + q^{2}\sqrt{s^*},
 \tfrac{1}{q^{2}\sqrt{s^*}} + q^{2}\sqrt{s^*}, \right. \hspace{2.55cm}
 \nonumber\\
\hspace{3cm}\left.
 \tfrac{1}{q^{3}\sqrt{s^*}} + q^{3}\sqrt{s^*},
 \tfrac{1}{q^{3}\sqrt{s^*}} + q^{3}\sqrt{s^*},
 \tfrac{1}{q^{4}\sqrt{s^*}} + q^{4}\sqrt{s^*},
 \tfrac{1}{q^{4}\sqrt{s^*}} + q^{4}\sqrt{s^*}
\right].
\end{align*}

\bigskip \noindent
The matrix representing $\B^{\dagger}$ relative to (\ref{C;d=4}) is diagonal:
\begin{align*}
\diag \left[
\tfrac{1}{\sqrt{s^*q}} + \sqrt{s^*q},\
\tfrac{1}{q\sqrt{s^*q}} + q\sqrt{s^*q},\
\tfrac{1}{q\sqrt{s^*q}} + q\sqrt{s^*q},\
\tfrac{1}{q^2\sqrt{s^*q}} + q^2\sqrt{s^*q},\hspace{3.1cm} \right.\nonumber\\
\left. 
\tfrac{1}{q^2\sqrt{s^*q}} + q^2\sqrt{s^*q}, \
\tfrac{1}{q^3\sqrt{s^*q}} + q^3\sqrt{s^*q},\
\tfrac{1}{q^3\sqrt{s^*q}} + q^3\sqrt{s^*q},\
\tfrac{1}{q^{4}\sqrt{s^*q}} + q^{4}\sqrt{s^*q}
\right].
\end{align*}

\bigskip \noindent
The matrix representing $\frac{t_0-k_0^{-1}}{k_0-k_0^{-1}}$ relative to (\ref{C;d=4})
is
\begin{equation*}
\left[
\begin{array}{cccccccc}
\frac{1}{1-\tau_0} & \frac{\tau_0}{\tau_0-1} &0&0&0&0&0&0\\
\frac{1}{1-\tau_0} & \frac{\tau_0}{\tau_0-1} &0&0&0&0&0&0\\
0&0&\frac{1}{1-\tau_1} & \frac{\tau_1}{\tau_1-1} &0&0&0&0\\
0&0&\frac{1}{1-\tau_1} & \frac{\tau_1}{\tau_1-1} &0&0&0&0\\
0&0&0&0&\frac{1}{1-\tau_2} & \frac{\tau_2}{\tau_2-1} &0&0\\
0&0&0&0&\frac{1}{1-\tau_2} & \frac{\tau_2}{\tau_2-1} &0&0\\
0&0&0&0&0&0&\frac{1}{1-\tau_3} & \frac{\tau_3}{\tau_3-1} \\
0&0&0&0&0&0&\frac{1}{1-\tau_3} & \frac{\tau_3}{\tau_3-1}
\end{array}
\right].
\end{equation*}
Each entry is shown in (\ref{209}).

\bigskip \noindent
The matrix representing $\frac{t_1-k_1^{-1}}{k_1-k_1^{-1}}$ relative to (\ref{C;d=4}) is
\begin{equation*}
\left[
\begin{array}{cccccccc}
1 & 0 &0 &0 &0 &0 &0 &0\\
0 & \frac{\epsilon_1}{\epsilon_1-1} & \frac{1}{1-\epsilon_1} &0 &0 &0 &0 &0\\
0 & \frac{\epsilon_1}{\epsilon_1-1} & \frac{1}{1-\epsilon_1} &0 & 0 & 0 &0 &0\\
0 &0 &0 & \frac{\epsilon_2}{\epsilon_2-1} & \frac{1}{1-\epsilon_2} & 0 & 0 &0\\
0 &0 &0 &\frac{\epsilon_2}{\epsilon_2-1} & \frac{1}{1-\epsilon_2} & 0 & 0 &0\\
0 &0 &0 &0 &0 & \frac{\epsilon_3}{\epsilon_3-1} & \frac{1}{1-\epsilon_{3}}&0\\
0 &0 &0 &0 &0 & \frac{\epsilon_3}{\epsilon_3-1} & \frac{1}{1-\epsilon_{3}} &0\\
0 &0 &0 &0 &0 &0 &0 & 1
\end{array}
\right].
\end{equation*}
Each entry is shown in (\ref{208}).

\section{Acknowledgements}
This paper was the author's Ph.D. thesis at the University of Wisconsin-Madison.
The author would like to thank his advisor, Paul Terwilliger, for offering many valuable ideas
and suggestions. The author would also like to thank Kazumasa Nomura for giving this paper a close reading and offering valuable suggestions.


\small

\bigskip

\noindent Jae-Ho Lee \hfil\break
\noindent Department of Mathematics \hfil\break
\noindent University of Wisconsin \hfil\break
\noindent 480 Lincoln Drive \hfil\break
\noindent Madison, WI 53706-1388 USA \hfil\break
\noindent email: {\tt jhlee@math.wisc.edu }\hfil\break


\end{document}